\documentclass
[10pt
reqno,
centertags,
twoside,
openright]{amsbook}

\usepackage{amsmath, amsfonts, amsthm, amssymb, bbm, enumerate, stmaryrd}
\usepackage[retainorgcmds]{IEEEtrantools}
\usepackage[mathscr]{euscript}
\usepackage{graphicx}
\usepackage[all]{xy}
\xyoption{curve}
\SelectTips{cm}{}

\usepackage{wasysym}
\usepackage[english]{babel}
\usepackage[utf8]{inputenc}
\usepackage{paralist}
\usepackage{textcomp}
\usepackage[colorinlistoftodos]{todonotes}
\usepackage{caption}

\usepackage{fancyhdr}
\usepackage[bottom]{footmisc}
\usepackage[hyphens]{url}
\usepackage[
colorlinks=true, linkcolor=red, citecolor=green,
filecolor=black, urlcolor=cyan
]{hyperref}

\makeatletter
\def\@tocline#1#2#3#4#5#6#7{\relax
  \ifnum #1>\c@tocdepth
  \else
    \par \addpenalty\@secpenalty\addvspace{#2}
    \begingroup \hyphenpenalty\@M
    \@ifempty{#4}{%
      \@tempdima\csname r@tocindent\number#1\endcsname\relax
    }{%
      \@tempdima#4\relax
    }%
    \parindent\z@ \leftskip#3\relax \advance\leftskip\@tempdima\relax
    \rightskip\@pnumwidth plus4em \parfillskip-\@pnumwidth
    #5\leavevmode\hskip-\@tempdima #6\nobreak\relax
    \ifnum#1<0\hfill\else\dotfill\fi\hbox to\@pnumwidth{\@tocpagenum{#7}}\par
    \nobreak
    \endgroup
  \fi}
\makeatother

\calclayout
\makeatletter
\g@addto@macro{\thm@space@setup}{\thm@headfont{\bf}}
\makeatletter
\makeatother

\newtheorem{lem}{Lemma}[section]
\newtheorem{prop}[lem]{Proposition}
\newtheorem{cor}[lem]{Corollary}
\newtheorem{thm}[lem]{Theorem}

\newtheorem*{Thm1}{Theorem~1}
\newtheorem*{Thm2}{Theorem~2}
\newtheorem*{Thm3}{Theorem~3}
\newtheorem*{Thm4}{Theorem~4}

\newtheorem*{Cor}{Corollary}

\newtheorem*{Prop1}{Proposition~1}
\newtheorem*{Prop2}{Proposition~2}
\newtheorem*{Conj}{Conjecture}

\theoremstyle{remark}
\newtheorem{rem}[lem]{Remark}

\theoremstyle{definition}
\newtheorem{exa}[lem]{Example}
\newtheorem{exas}[lem]{Examples}
\newtheorem{defn}[lem]{Definition}
\newtheorem{quest}[lem]{Question}
\newtheorem{nn}[lem]{}
\newtheorem{nota}[lem]{Notation}

\numberwithin{equation}{section}

\newtheorem{nameless}[lem]{}

\renewcommand{\thesection}{\arabic{chapter}.\arabic{section}}

\newcommand{\smatrix}[1]{\left[\begin{smallmatrix}#1\end{smallmatrix}\right]}

\renewcommand{\mod}{\operatorname{\mathsf{mod}}\nolimits}
\newcommand{\inj}{\operatorname{\mathsf{inj}}\nolimits}

\newcommand{\Ann}{\operatorname{Ann}\nolimits}

\newcommand{\proj}{\operatorname{\mathsf{proj}}\nolimits}
\newcommand{\Proj}{\operatorname{\mathsf{Proj}}\nolimits}

\newcommand{\Der}{\operatorname{Der}\nolimits}

\newcommand{\Add}{\operatorname{\mathsf{Add}}\nolimits}
\newcommand{\Res}{\operatorname{Res}\nolimits}

\newcommand{\Inj}{\operatorname{\mathsf{Inj}}\nolimits}

\newcommand{\id}{\operatorname{id}\nolimits}
\newcommand{\Id}{\operatorname{Id}\nolimits}

\newcommand{\Mod}{\operatorname{\mathsf{Mod}}\nolimits}

\newcommand{\Aut}{\operatorname{Aut}\nolimits}

\newcommand{\sfEnd}{\operatorname{\mathsf{End}}\nolimits}
\newcommand{\End}{\operatorname{End}\nolimits}
\newcommand{\Hom}{\operatorname{Hom}\nolimits}

\renewcommand{\Im}{\operatorname{Im}\nolimits}
\newcommand{\Ker}{\operatorname{Ker}\nolimits}

\newcommand{\Coker}{\operatorname{Coker}\nolimits}
\newcommand{\coker}{\operatorname{coker}\nolimits}

\renewcommand{\dim}{\operatorname{dim}\nolimits}
\newcommand{\gldim}{\operatorname{gldim}\nolimits}

\newcommand{\Ext}{\operatorname{Ext}\nolimits}
\newcommand{\Map}{\operatorname{Map}\nolimits}

\newcommand{\Tor}{\operatorname{Tor}\nolimits}

\newcommand{\ev}{\mathrm{ev}}

\newcommand{\Ch}{\operatorname{\mathbf{C}}\nolimits}
\newcommand{\HH}{\operatorname{HH}\nolimits}
\newcommand{\OH}{\operatorname{H}\nolimits}
\newcommand{\Mor}{\operatorname{Mor}\nolimits}
\newcommand{\Ob}{\operatorname{Ob}\nolimits}
\newcommand{\Fun}{\operatorname{\mathsf{Fun}}\nolimits}

\newcommand{\abs}[1]{\ensuremath{\left\vert#1\right\vert}}

\newenvironment{bibcomment}{%
  \item[]%
  \begingroup%
  \par%
  \parshape0%
}{%
  \par%
  \endgroup%
}

\def\li{\varprojlim}

\def\A{{\mathsf A}}
\def\B{{\mathsf B}}
\def\cB{{\mathcal B}}
\def\sfB{{\mathsf B}}
\def\BB{{\mathbb B}}
\def\C{{\mathsf C}}

\def\D{{\mathsf D}}

\def\H{{\mathcal H}}

\def\L{{\mathcal L}}
\def\scrL{{\mathscr L}}

\def\P{{\mathsf P}}

\def\PP{{\mathbb P}}

\def\U{{\mathsf U}}

\def\Z{{\mathbb Z}}

\def\scrL{{\mathscr L}}
\def\scrR{{\mathscr R}}
\def\scrX{{\mathscr X}}
\def\scrY{{\mathscr Y}}
\def\scrZ{{\mathscr Z}}

\def\op{\mathrm{op}}

\newdir{> }{{}*!/+5pt/@{>}}
\newdir{ <}{{}*!/-5pt/@{<}}
\newdir{>>> }{{}*!/+10pt/@{>}}

\entrymodifiers={+!!<0pt,\fontdimen22\textfont2>}

\begin{document}
%
%
\setcounter{footnote}{1}
\begin{titlepage}\centering
\vspace*{48pt}
\textbf{
{
{
\Large{MONOIDAL CATEGORIES AND THE GERSTENHABER BRACKET IN HOCHSCHILD COHOMOLOGY}
}
}
}\\
\vspace*{48pt} \large{Reiner Hermann\footnote{\textsc{
Reiner Hermann, Institutt for matematiske fag, NTNU, 7491 Trondheim, Norway
}

\hspace*{12pt}\textit{E-mail address}: \url{
%rhermann@math.uni-bielefeld.de
reiner.hermann@math.ntnu.no
}}}
\end{titlepage}
\pagenumbering{gobble}
\hspace*{2pt}\newpage

\pagenumbering{gobble}
\cleardoublepage
\thispagestyle{empty}
\addtocontents{toc}{\protect\setcounter{tocdepth}{0}}
\section*{Abstract}
In this monograph, we extend S.\,Schwede's exact sequence interpretation of the Gerstenhaber bracket in Hochschild cohomology to certain exact and monoidal categories. Therefore we establish an explicit description of an isomorphism by A.\,Neeman and V.\,Retakh, which links $\Ext$-groups with fundamental groups of categories of extensions and relies on expressing the fundamental group of a (small) category by means of the associated Quillen groupoid.

As a main result, we show that our construction behaves well with respect to structure preserving functors between exact monoidal categories. We use our main result to conclude, that the graded Lie bracket in Hochschild cohomology is an invariant under Morita equivalence. For quasi-triangular bialgebras, we further determine a significant part of the Lie bracket's kernel, and thereby prove a conjecture by L.~Menichi. Along the way, we introduce $n$-extension closed and entirely extension closed subcategories of abelian categories, and study some of their properties.\\
\vfill\noindent
\textbf{MSC 2010 Subject Classification:} Primary 16E40; Secondary 14F35, 16T05, 18D10, 18E10, 18G15.\\
\textbf{Keywords:} Exact categories; Gerstenhaber algebras; Hochschild cohomology; Homological algebra; Hopf algebras; Monoidal categories.

\frontmatter

\tableofcontents

\setcounter{page}{0}
\chapter*{Introduction}
\addtocontents{toc}{\protect\setcounter{tocdepth}{0}}
\section*{Background}
\addtocontents{toc}{\protect\setcounter{tocdepth}{1}}
In 1945, G.\,Hochschild published the first out of a consecutive sequence of three articles (\cite{Ho45}, \cite{Ho46}, \cite{Ho47}), where he introduced the fundamental cohomology theory one now refers to as the theory of \textit{Hochschild cohomology}. It became a major tool in, for instance, the study of associative algebras and their representations, and was extended to various other algebraic fields. In the past decades, Hochschild's cohomology theory has been studied deeply by a large number of authors approaching the topic with very different backgrounds and hence leading to multifaceted developments; just to mention a few, P.\,A.\,Bergh (\cite{Ber07}),\linebreak R.-O.\,Buchweitz (\cite{Bu03}), H.\,Cartan and S.\,Eilenberg (\cite{CaEi56}), M.\,Gerstenhaber (\cite{Ge63}), D.\,Happel (\cite{Ha89}), T.\,Holm (\cite{Holm00}), B.\,Keller (\cite{Ke04},), J.-L.\,Lo\-day (\cite{Lo98}), S.\,MacLane (\cite{MaL58}), D.\,Quillen (\cite{Qu68}), S.\,Schwede (\cite{Sch98}), N.\,Snashall and \O.\,Solberg (\cite{SnSo04}), R.\,Taillefer (\cite{Ta04}), M.\,van den Bergh (\cite{VdB98}), S.\,Witherspoon (\cite{Wi04}) and A.\,Zimmermann (\cite{Zi07}) are amongst those. Since it seems that G.\,Hochschild did not notice all higher structures appearing in his theory in the early stages, he restricted himself to the study of the behavior of the resulting abelian groups. The present monograph focusses on a specific type of algebraic structure that naturally arises within the theory of Hochschild cohomology, namely its Gerstenhaber algebra structure (cf. \cite{Ge63}, \cite{GeSch86}).

Let $A$ be an associative and unital algebra over a commutative ring $k$, and let $A^\ev = A \otimes_k A^\op$ be its \textit{enveloping algebra}. It is easy to see that $A$-$A$-bimodules with central $k$-action are equivalent to left $A^\ev$-modules. The \textit{bar resolution}
$$
\xymatrix@C=20pt{
\cdots \ar[r]^-{b_2} & A \otimes_k A \otimes_k A \ar[r]^-{b_1} & A \otimes_k A \ar[r]^-{b_0} & A \ar[r] & 0 \ ,
}
$$
denoted by $\mathbb B_A \rightarrow A \rightarrow 0$ for short, resolves $A$ as an $A^\ev$-module. Let us assume that $A$ is projective over $k$. Then $\mathbb B_A$ is even a resolution of $A$ by projective $A^\ev$-modules. For an $A^\ev$-module $M$, the $n$-th cohomology of the complex $\Hom_{A^\ev}(\mathbb B_A, M)$, denoted by $\HH^n(A,M)$, computes the module $\Ext^n_{A^\ev}(A,M)$. It is the $n$-th \textit{Hochschild cohomology group of $A$} (\textit{with coefficients in $M$}). One easily verifies, that $\HH^0(A,A)$ is the center of $A$. If $\HH^1(A,M) = 0$, then any \textit{derivation} $A \rightarrow M$ has to be an inner one. Without relying on the $k$-projectivity of $A$, G.\,Hochschild further proved that $A$ is separable over $k$ if, and only if, $\HH^n(A,M) = 0$ for all $n > 0$ and all bimodules $M$. This is the same as saying that $A$ is a projective $A^\ev$-module.

The \textit{Hochschild $($co$)$complex} $\Hom_{A^\ev}(\mathbb B_A, A)$ carries a rich structure. To begin with, the assignment 
$$
(f \cup g)(a_1 \otimes \cdots \otimes a_{m+n+2}) = f(a_1 \otimes \cdots \otimes a_{m} \otimes 1_A)g(1_A \otimes a_{m+1} \otimes \cdots \otimes a_{m+n})
$$
for $A^\ev$-linear homomorphisms $f: A^{\otimes_k m} \rightarrow A$, $g: A^{\otimes_k n} \rightarrow A$, turns $\Hom_{A^\ev}(\mathbb B_A, A)$ into a differential graded $k$-algebra. Thus, it defines a product on the graded module $\HH^\bullet(A,A)$, the so called \textit{cup product}. It seems that, for the most part, the cup product in Hochschild cohomology is quite well understood. Although it is still very hard to calculate in concrete examples, there are several equivalent abstract descriptions. Classically, the cup product on $\HH^\bullet(A,A)$ is (as explained above) defined in terms of the bar resolution $\mathbb B_A$. However, given any other projective resolution $\mathbb P_A \rightarrow A \rightarrow 0$, one may express the cup product by means of a Hochschild diagonal approximation $\mathbb P_A \rightarrow \mathbb P_A \otimes_A \mathbb P_A$. The conceptually right way though is to see the cup product as the Yoneda product on $\Ext^\bullet_{A^\ev}(A,A)$, or, equivalently (but more intuitively), as the composition of morphisms inside the graded endomorphism ring $\Hom_{\mathbf D(A^\ev)}(A,A[\bullet])$ of $A$ in the derived category $\mathbf D(A^\ev)$ of left $A^\ev$-modules; note that $\Hom_{\mathbf D(A^\ev)}(A,A[\bullet])$ is the graded endomorphism ring of the unit object in the tensor triangulated category $(\mathbf D(A^\ev), \otimes_A^{\mathbf L}, A)$. From an abstract point of view, the cup product's latter interpretation is a very satisfying one, since it is an intrinsic formulation that does not rely on any choices. No matter which way of approaching it, the cup product has an important property: it is \textit{graded commutative} in the sense that $x \cup y = (-1)^{\abs{x}\abs{y}}y \cup x$ for all homogeneous $x,y \in \HH^\bullet(A,A)$, a result, which is attributed to M.\,Gerstenhaber (see \cite{Ge63}, and also \cite{Su02}).

It seems that, for reasons that are still very unclear, the actual mystery of Hochschild's theory lies in the additional piece of structure living on the Hochschild complex $\Hom_{A^\ev}(\mathbb B_A,A)$. A part of this extra structure has been discovered by M.\,Gersten\-haber in 1963 which he presented in his ground-breaking article \cite{Ge63}. In this article, he defined the so called \textit{circle product} on $\Hom_{A^\ev}(\mathbb B_A,A)$, which we (in divergence to Gerstenhaber's classical terminology) will denote by $\bullet$. This product is, in general, highly non-associative and does not commute with the differentials, but, interestingly enough, gives rise to a well-defined bilinear map $\{-,-\}_A$ of degree $-1$ on $\HH^\bullet(A,A)$. Representativewise, it is given by the graded commutator
$$
\{f,g\}_A = f \bullet g - (-1)^{(m-1)(n-1)} g \bullet f
$$
with respect to the circle product, where $f: A^{\otimes_k(m+2)} \rightarrow A$, $g: A^{\otimes_k(n+2)} \rightarrow A$ are $A^\ev$-linear maps. Thanks to this \textit{bracket}, the shifted module $\HH^{\bullet+1}(A,A)$ becomes a graded Lie algebra over $k$. And even more than that, the Lie bracket is linked to the cup product on $\HH^\bullet(A,A)$ through the \textit{graded Poisson identity} (i.e., for any homogeneous element $x \in \HH^\bullet(A,A)$, the $k$-linear map $\{x,-\}_A$ is a graded derivation on $\HH^\bullet(A,A)$ of degree $\abs{x}-1$). Thus, the triple $(\HH^\bullet(A,A), \cup, \{-,-\}_A)$ became the prototype of what we call a \textit{Gerstenhaber algebra} today. As it turned out, it unfortunately might also be one of the most difficult to understand incarnations of its kind. In what follows, the map $\{-,-\}_A$ is referred to as the \textit{Gerstenhaber bracket on $\HH^\bullet(A,A)$}.
\vspace*{6pt}\\
\indent Various authors bemoaned the inaccessibility of Gerstenhaber's bracket in Hoch\-schild cohomology. For instance, A.\,A.\,Voronov offers the following com\-plaint in his lecture notes for a graduate course on \textit{Topics in mathematical physics} (cf. \cite{Vo01}):
\begin{quote}
\it{The formula for this bracket, defined by M.\,Gerstenhaber, is not really inspirational to me, in spite of all those years I have spent staring at it.}
\end{quote}
J.\,Stasheff tries to demystify the unapproachability of the bracket by wondering if the issue might be that the Hochschild complex itself is not a differential graded Lie algebra on its own (cf. \cite{St93}):
\begin{quote}
\it{In his pioneering work on deformation theory of associative algebras, Gerstenhaber created a bracket on the Hochschild cohomology $\HH^\bullet(A,A)$, but this bracket seemed to be rather a tour de force since it was not induced from a differential graded Lie algebra structure on the underlying complex.}
\end{quote}
In his notes on differential graded categories, B.\,Keller introduces Hochschild cohomology for dg categories $\A$ as the cohomology of the differential graded endomorphisms $\Id_\A \rightarrow \Id_\A$ of the identity functor of $\A$. He makes the following crucial observation (cf. \cite{Ke06}):
\begin{quote}
\it{The Hochschild cohomology is naturally interpreted as the homology of the complex $\mathcal Hom(\Id_\A, \Id_\A)$ computed in the dg category $\mathbf R \mathcal Hom(\A,\A)$, where $\Id_\A$ denotes the identity functor of $\A$ $[\ldots]$. Then the cup product corresponds to the composition $($whereas the Gerstenhaber bracket has no obvious interpretation$)$.}
\end{quote}
A related remark may be found in an article by A.\,V.\,Shepler and S.\,Witherspoon, wherein the authors state the following when analyzing the Gerstenhaber bracket in Hochschild cohomology for skew group algebras (cf. \cite{ShWi12}):
\begin{quote}
\it{The cup product has another description as Yoneda composition of extensions of modules, which can be transported to any other projective resolution. However, the Gerstenhaber
bracket has resisted such a general description.}
\end{quote}
Indeed, a big part of the mysteriousness of the Gerstenhaber bracket lies in the fact, that it could only be understood in terms of the bar resolution and the resulting Hochschild complex for a long time. Results like the theorem of Hochschild-Kostant-Rosenberg (see \cite{HKR62}) led to first attempts to unearth the bracket in the special case of smooth commutative algebras. It was then Stasheff (see \cite{St93}), who described the Gerstenhaber bracket in Hochschild cohomology in terms of coderivations and the canonical bracket given in this situation. More explicitly, he interpreted the Hochschild complex by means of a differential graded tensor coalgebra $T=T_k(A)$; the differential $D(\delta) = b_{\bullet - 2} \circ \delta - (-1)^{\abs{\delta}} \delta \circ b_{\bullet - 2}$ turns the graded module $\mathrm{Coder}_k^\bullet(T) \subseteq \End_k^\bullet(T)$ of $k$-linear graded coderivations into a differential graded one. In fact, thanks to the graded commutator, $(\mathrm{Coder}_k^\bullet(T), D)$ is a differential graded Lie algebra. A more enhanced interpretation was introduced by S.\,Schwede in \cite{Sch98} describing the Gerstenhaber bracket in terms of bimodule extensions, that is, in terms of sequences of morphisms in the abelian monoidal category $(\Mod(A^\ev),\otimes_A,A)$. Finally, B.\,Keller offers in \cite{Ke04} an interpretation of the bracket in terms of the Lie algebra associated to the derived Picard group $\mathbf{DPic}^\bullet_A$ being the first and single attempt known to the author trying to describe the Gerstenhaber bracket in the world of triangulated categories. Keller's result implies that the Gerstenhaber bracket is a derived invariant, which (in finite characteristic) has been extended to the whole \textit{restricted Lie algebra structure} on $\HH^{\bullet+1}(A,A)$ (see \cite{Zi07}). However, it is still widely open, how one may \textit{intrinsically} see the Gerstenhaber bracket in the derived category $\mathbf D(A^\ev)$, that is, how to see the morphism $\{f,g\}_A : A \rightarrow A[m+n-1]$ for given morphisms $f: A \rightarrow A[m]$, $g: A \rightarrow A[n]$. This monograph does not deal with this question, but will provide a generalization of Schwede's construction for exact and monoidal categories, and state some of its applications -- both of which we will expose in the following section.
\addtocontents{toc}{\protect\setcounter{tocdepth}{0}}
\section*{Main results}
\addtocontents{toc}{\protect\setcounter{tocdepth}{1}}
Let $A$ be an associative and unital algebra over a commutative ring $k$ which is projective as a $k$-module. In 1998, Stefan Schwede gave an interpretation of the Gerstenhaber bracket in Hochschild cohomology by means of extensions in the abelian category of left modules over $A^\ev$ (cf. \cite{Sch98}). By regarding $\Mod(A^\ev)$ as a monoidal category with monoidal product functor $\otimes_A$ and unit object $A$, he introduced a map on self-extensions of the $A^\ev$-module $A$ (i.e., on self-extensions of the unit object),
$$
\boxtimes_A: \Ext^m_{A^\ev}(A,A) \times \Ext^n_{A^\ev}(A,A) \longrightarrow \Ext^{m+n}_{A^\ev}(A,A),
$$
leading to the following diagram of $(m+n)$-self-extensions of $A$
$$
\xymatrix@!C=25pt{
& \xi \boxtimes_A  \zeta \ar[dr] \ar[dl] &\\
\quad \xi \circ \zeta \quad & & (-1)^{mn} \zeta  \circ \xi \ ,
}
$$
where $\xi \circ \zeta$ denotes the Yoneda product of the $m$-self-extension $\xi$ and the $n$-self-extension $\zeta$. By mirroring the diagram, Schwede obtained a loop
$$
\xymatrix@!C=25pt{
& \xi \boxtimes_A  \zeta \ar[dr] \ar[dl] &\\
\quad \xi \circ \zeta \quad & & (-1)^{mn} \zeta \circ \xi\\
& (-1)^{mn }\zeta \boxtimes_A  \xi \ar[ur] \ar[ul] &
}
$$
which he denoted by $\Omega(\xi, \zeta)$. Observe that if we let $\mathcal Ext^n_{A^\ev}(A,A)$ be the category of $n$-extensions (the morphisms are morphisms of complexes $(f_n)_{n \in \mathbb Z}$ whose bordering morphisms $f_{-1}$ and $f_n$ equal the identity of $A$), we therefore have obtained an element $\Omega(\xi, \zeta)$ inside $\pi_1(\mathcal Ext^{m+n}_{A^\ev}(A,A), \xi \circ \zeta)$ (this requires to reinterpret the fundamental group of a category in terms of the \textit{Quillen groupoid}; see \cite[\S 1]{Qu72}). To turn this loop into a $(m+n-1)$-self-extension of $A$, Schwede used the fundamental isomorphism
$$
\xymatrix@C=20pt{
\mu : \Ext^{n-1}_{A^\ev}(A,A) \ar[r]^-\sim & \pi_1 \mathcal Ext^{n}_{A^\ev}(A,A)
} \quad \text{(for $n \geq 1$)}
$$
established by V.\,Retakh. In fact, Retakh stated it in the following significantly stronger version (cf. \cite{Re86}): Let $\A$ be an abelian category. Then for $X, Y \in \Ob \A$, 
$$
\xymatrix@C=20pt{
\Ext^{n-i}_{\A}(X,Y) \ar[r]^-\sim & \pi_i \mathcal Ext^{n}_{\A}(X,Y)
} \quad \text{(for $n \geq 1$ and $0 \leq i \leq n$)}.
$$
S.\,Schwede explicitly described the isomorphism $\mu$ by using the fact that $\Mod(A^\ev)$ has enough projective objects. Finally, the composition $[-,-]_A = \mu^{-1}\Omega(-,-)$ defines a map
$$
[-,-]_A : \Ext^{m}_{A^\ev}(A,A) \times \Ext^{n}_{A^\ev}(A,A) \longrightarrow \Ext^{m+n-1}_{A^\ev}(A,A),
$$
called the \textit{loop bracket}. Schwede proved that, under the canonical isomorphism $\HH^\bullet(A,A) \cong \Ext^\bullet_{A^\ev}(A,A)$, his loop bracket on $\Ext^\bullet_{A^\ev}(A,A)$ agrees with the Gerstenhaber bracket on $\HH^\bullet(A,A)$ (up to a sign; cf. \cite[Thm.\,3.1]{Sch98}). This monograph is aiming for widening Schwede's construction to monoidal categories (remember, that the category of $A^\ev$-modules is such a category) that do not necessarily have enough projective objects. 

To do so, we first find that a generalization of Retakh's isomorphism to \textit{Waldhausen categories} was presented by A.\,Neeman and V.\,Retakh in \cite{NeRe96}. Both of the results \cite{Re86} and \cite{NeRe96} are proven from a homotopy theoretical point of view and rely on maps that are given fairly abstractly. Inspired by some of their ideas, and by generalizing several of S.\,Schwede's observations, we will prove the following theorem.

\begin{Thm1}[= Lemmas \ref{lem:u_injective}, \ref{lem:u_surjective}, \ref{lem:isopi1} and Theorem \ref{thm:iso_pi0pi1}] 
Fix an integer $n \geq 1$. Let $\C$ be a factorizing exact category, let $X,Y$ be objects in $\C$, and let $\xi$ be an admissible $n$-extension of $X$ by $Y$. Then there there are explicit and mutually inverse isomorphisms
$$
\mathrm{NR}_\C^+: \Ext^{n-1}_\C(X,Y) \longrightarrow \pi_1(\mathcal Ext^{n}_\C(X,Y), \xi)
$$
and
$$
\mathrm{NR}_\C^-: \pi_1(\mathcal Ext^{n}_\C(X,Y), \xi) \longrightarrow \Ext^{n-1}_\C(X,Y).
$$
\end{Thm1}

See Definition \ref{def:factorizing} for an explanation of the term factorizing exact category. For now, we only remark that every abelian category is factorizing. Now assume that $\C$ is not only exact, but also \textit{monoidal}, i.e., that it comes equipped with \textit{monoidal product functor} $\otimes_\C: \C \times \C \rightarrow \C$ and a \textit{unit object} $\mathbbm 1_\C$ (with respect to $\otimes_\C$). Furthermore, we have to assume that $\C$ is (what we call) a \textit{strong exact monoidal category} (see Definition \ref{def:exactmonocat}). By basically copying S.\,Schwede's construction (however, one has to put some thought into the exact details), we construct a loop $\Omega_\C(\xi, \zeta)$ in $\mathcal Ext^{m+n}_\C(\mathbbm 1_\C, \mathbbm 1_\C)$ based at the Yoneda composition $\xi \circ \zeta$ of $\xi$ and $\zeta$, as well as a loop $\square_\C(\xi)$ based at $\xi \circ \xi$ if $n$ is even and $m = n$.
Thus, we obtain maps
\begin{align*}
[-,-]_\C &: \Ext^{m}_\C(\mathbbm 1_\C, \mathbbm 1_\C) \times \Ext^{n}_\C(\mathbbm 1_\C, \mathbbm 1_\C) \longrightarrow \Ext^{m+n-1}_\C(\mathbbm 1_\C, \mathbbm 1_\C)\\
sq_\C &:  \Ext^{2n}_{\C}(\mathbbm 1_\C, \mathbbm 1_\C) \longrightarrow \Ext^{4n-1}_{\C}(\mathbbm 1_\C, \mathbbm 1_\C)
\end{align*}
by assigning
$$
[\xi,\zeta]_\C = \mathrm{NR}_\C^- (\Omega_\C(\xi,\zeta)) \quad \text{and} \quad sq_\C(\xi) = \mathrm{NR}_\C^-(\square_\C(\xi)),
$$
where equivalence classes are implicitly taken when needed. As expected, our construction recovers Schwede's original one (see Lemma \ref{lem:consistancy}). The following observation is inspired by \cite{Ta04}.

\begin{Prop1}[$\subseteq$ Theorem \ref{thm:trivial_bracket}]
Assume that the strong exact monoidal category $\C$ is lax braided. Then the map $[-,-]_\C$ is constantly zero.
\end{Prop1}

For another strong exact monoidal category $\D$, we show the following main result. The proof heavily relies on the explicit form of the morphisms $\mathrm{NR}_\C^+$ and $\mathrm{NR}_\C^-$ exposed in Theorem 1.

\begin{Thm2}[= Theorem \ref{thm:bracketcomm} and Remark \ref{rem:thmcomrem}]
Let $\scrL: \C \rightarrow \D$ be an exact and almost strong $($or costrong$)$ monoidal functor. Then the induced graded algebra homomorphism $\Ext^\bullet_\C(\mathbbm 1_\C, \mathbbm 1_\C) \rightarrow \Ext^\bullet_\D(\mathbbm 1_\D, \mathbbm 1_\D)$ makes the diagrams
$$
\xymatrix@C=30pt{
\Ext^m_\C(\mathbbm 1_\C, \mathbbm 1_\C) \times \Ext^n_\C(\mathbbm 1_\C, \mathbbm 1_\C) \ar[r]^-{[-,-]_\C} \ar[d] & \Ext^{m+n-1}_\C(\mathbbm 1_\C, \mathbbm 1_\C) \ar[d]\\
\Ext^m_{\D}(\mathbbm 1_\D, \mathbbm 1_\D) \times \Ext^n_{\D}(\mathbbm 1_\D, \mathbbm 1_\D) \ar[r]^-{[-,-]_\D} & \Ext^{m+n-1}_{\D}(\mathbbm 1_\D,\mathbbm 1_\D)
}
$$
and
$$
\xymatrix@C=30pt{
\Ext^{2n}_\C(\mathbbm 1_\C, \mathbbm 1_\C) \ar[r]^-{{sq}_\C} \ar[d] & \Ext^{4n-1}_\C(\mathbbm 1_\C, \mathbbm 1_\C) \ar[d]\\
\Ext^{2n}_{\D}(\mathbbm 1_\D, \mathbbm 1_\D) \ar[r]^-{sq_{\D}} &
\Ext^{4n-1}_{\D}(\mathbbm 1_\D,\mathbbm 1_\D)
}
$$
commutative for all integers $m, n \geq 1$.
\end{Thm2}

The proof of Theorem 2 is involved and requires a lot of tedious calculations. That the effort pays off is illustrated by the upcoming Theorem 3, stating that the Gerstenhaber bracket is invariant under Morita equivalence. Recall that the Gerstenhaber bracket is already known to even be a derived invariant thanks to B.\,Keller. However, the methods Keller uses to manifest his result are completely different from ours.

\begin{Thm3}[= Theorem \ref{thm:moritaHH}]
Assume that $A$ and $B$ are Morita equivalent $k$-algebras of which one, and hence both, is supposed to be $k$-projective. Then there is an exact and almost strong monoidal equivalence inducing an isomorphism $\HH^\bullet(A,A) \rightarrow \HH^\bullet(B,B)$ of graded $k$-algebras such that the diagrams 
$$
\xymatrix@C=35pt{
\HH^m(A,A) \times \HH^n(A,A) \ar[r]^-{\{-,-\}_A} \ar[d]_-\cong & \HH^{m+n-1}(A,A) \ar[d]^-\cong \\
\HH^{m}(B,B) \times \HH^n(B,B) \ar[r]^-{\{-,-\}_B} & \HH^{m+n-1}(B,B)
}
$$and$$
\xymatrix@C=35pt{
\HH^{2n}(A,A) \ar[r]^{sq_A} \ar[d]_\cong & \HH^{4n-1}(A,A) \ar[d]^\cong\\
\HH^{2n}(B,B) \ar[r]^{sq_B} & \HH^{4n-1}(B,B)
}
$$
commute for any choice of integers $m,n \geq 1$.
\end{Thm3}

In order to show the result, we use the fact, that Morita equivalent algebras are characterized in terms of finitely generated projective generators, i.e., progenerators. Given such a progenerator for a pair of Morita equivalent algebras, we construct a progenerator for the corresponding enveloping algebras, which also is a comonoid in the category of bimodules it is living in. Thus, it will give rise to the desired monoidal functor.

In view of Proposition 1 and our Main Theorem 4, our attention is drawn towards the kernel of the Gerstenhaber bracket in Hochschild cohomology. Additional motivation is given by the following question raised by R.-O.\,Buchweitz.
\begin{quote}
\it{Can we determine the kernel of the adjoint representation $($map\-ping an element $x \in \HH^\bullet(A,A)$ to $\mathrm{ad}(x) = \{x,-\}_A)$, or, less restrictively, elements $x$ such that $$\{-,-\}_A : \HH^m(A,A) \otimes_k \HH^{n}(A,A) \longrightarrow \HH^{m+n-1}(A,A)$$ vanishes on $x \otimes x$?}
\end{quote}
Indeed, this is an intriguing problem. On the one hand, the graded Lie algebra $\mathfrak g^\bullet = (\HH^{\bullet + 1}(A,A), \{-,-\}_A)$ is of course a graded Lie representation over itself, and hence, one is interested in describing the annihilators $\Ann_{\mathfrak g^\bullet}(x) = \Ker(\mathrm{ad}(x))$ (for $x \in \HH^\bullet(A,A))$ and $\Ann_{\mathfrak g^\bullet}(\mathfrak g^\bullet) = \Ker(\mathrm{ad}(-))$ $($the latter is sometimes also called the \textit{center} $Z(\mathfrak g^\bullet)$ of the Lie algebra $\mathfrak g^\bullet)$. On the other hand, one knows that a deformation of the algebra $A$ arises from a so called \textit{$($non-commutative$)$ Poisson structure}, that is, an element $x \in \HH^2(A,A)$ with $\{x,x\}_A = 0$ (see \cite{Ber12}, \cite{Cr11}, \cite{HaTa10} and, above all, \cite{BlGe91}, \cite{Kon03}, \cite{Xu94}). Thus, a first step in understanding an algebra’s deformation theory lies in understanding the Gerstenhaber bracket's kernel. We will do so for a particular class of bialgebroids over $k$. For expository purposes, we state the result in the following weakened version.

\begin{Thm4}[$\subseteq$ Theorem \ref{thm:commutativeHopfalgebroid} and Corollary \ref{cor:vanishHopfalgebroid}]
Let $\mathcal B = (B, \nabla, \eta, \Delta, \varepsilon)$ be a quasi-triangular $($e.g.,~a cocommutative$)$ bialgebra over $k$ such that $B$ is $k$-projective. Then $\OH^\bullet(B,k) = \Ext^\bullet_B(k,k)$ identifies with a subalgebra of $\HH^\bullet(B,B)$ and the Gerstenhaber bracket $\{-,-\}_B$ on $\HH^\bullet(B,B)$ vanishes on $\OH^\bullet(B,k)$, i.e.,
$$
\{\alpha, \beta\}_B = 0 \quad \text{$($for all $\alpha, \beta \in \OH^\bullet(B,k))\,.$}
$$
\end{Thm4}
The theorem in particular implies the following conjecture raised by L.~Menichi in \cite[Conj.~23]{Me11}.
\begin{Conj}[$\subseteq$ Corollary \ref{cor:gerstenhabervanishhopfalgebra}]
Let $K$ be a field, and let $\mathcal B = (B, \nabla, \eta, \Delta, \varepsilon)$ be a quasi-triangular and finite dimensional bialgebra over $K$. The Lie bracket on the sub-Gerstenhaber algebra $\Ext^\bullet_B(K,K)$ of $\HH^\bullet(B,B)$ is trivial.
\end{Conj}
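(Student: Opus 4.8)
The plan is to read off the statement as the specialization of Theorem 4 to the case where the base commutative ring $k$ is the field $K$. First I would verify that the hypotheses of Theorem 4 are met. Quasi-triangularity of $\mathcal B = (B, \nabla, \eta, \Delta, \varepsilon)$ is assumed outright. The only remaining requirement is that the underlying algebra $B$ be projective over the base ring; but $B$, being a finite-dimensional vector space over the field $K$, is free and hence projective as a $K$-module. Here finite-dimensionality is in fact more than is needed: $K$-projectivity, which is automatic over any field, is all the argument uses, so the conclusion actually holds for arbitrary quasi-triangular bialgebras over $K$.

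Granting the hypotheses, Theorem 4 yields precisely the two assertions required. On the one hand, $\OH^\bullet(B,K) = \Ext^\bullet_B(K,K)$ identifies with a subalgebra of $\HH^\bullet(B,B)$; this is the identification that realizes $\Ext^\bullet_B(K,K)$ as the \emph{sub-Gerstenhaber algebra} appearing in Menichi's formulation. On the other hand, the Gerstenhaber bracket $\{-,-\}_B$ on $\HH^\bullet(B,B)$ restricts to zero on this subalgebra, that is, $\{\alpha,\beta\}_B = 0$ for all $\alpha,\beta \in \OH^\bullet(B,K)$. Since the Lie bracket carried by the sub-Gerstenhaber algebra $\Ext^\bullet_B(K,K)$ is by definition the restriction of $\{-,-\}_B$ along the above embedding, its triviality is exactly this vanishing, and the conjecture follows.

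There is essentially no obstacle at this level: the entire weight of the argument rests on Theorem 4, and the present deduction amounts to matching hypotheses and unwinding definitions. The one point deserving a word of care is that the embedding $\Ext^\bullet_B(K,K) \hookrightarrow \HH^\bullet(B,B)$ supplied by Theorem 4 be compatible with the Gerstenhaber structures, so that restricting $\{-,-\}_B$ genuinely recovers the bracket intrinsic to $\Ext^\bullet_B(K,K)$; this compatibility is already encoded in the subalgebra identification of Theorem 4, so no additional verification is needed.
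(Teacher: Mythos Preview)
Your proposal is correct and matches the paper's own treatment: the Conjecture is recorded there as being contained in Corollary~\ref{cor:gerstenhabervanishhopfalgebra}, which is precisely the specialization of Theorem~4 (equivalently, of Theorem~\ref{thm:commutativeHopfalgebroid} and Corollary~\ref{cor:vanishHopfalgebroid}) to a bialgebra over a field, and the paper likewise notes that only $k$-projectivity of $B$ is used, so finite-dimensionality is superfluous. Your remark that the compatibility of brackets is built into the embedding is exactly the content of the commutative diagram in Theorem~\ref{thm:commutativeHopfalgebroid}, which identifies $(\OH^\bullet(\mathcal B,R),\beta_{\mathcal B})$ as a strict Gerstenhaber subalgebra of $(\HH^\bullet(B),\{-,-\}_B)$.
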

In \cite{Me11} L.~Menichi provides a proof of the conjecture in case $\mathcal B$ is a cocommutative Hopf algebra. It heavily relies on the fact, that, since being cocommutative, the Hopf algebra's antipode $S$ is involutive, i.e., $S \circ S = \id_B$. As we approach the question from a completely different direction, this assumption will turn out to not be needed.

 M.\,Linckelmann proved that if $\mathcal H$ is a commutative Hopf algebra such that it is a finitely generated projective $k$-module, then $\HH^\bullet(H,H) \cong H \otimes_k \OH^\bullet(H,k)$. We combine Linckelmann's observation with Theorem 4 to obtain the following intriguing insight. It is inspired by supportive examples which easily can be found in related literature (see for instance \cite{LeZh13}; we will, however, elaborate on these examples in the respective section of this monograph).

\begin{Cor}[= Corollary \ref{cor:centerdetermins}]
Let $\mathcal H = (H, \nabla, \eta, \Delta, \varepsilon, S)$ be a commutative and quasi-triangular Hopf algebra over $k$, such that $H$ is finitely generated projective over $k$. The Gerstenhaber bracket $\{-,-\}_H$ on $\HH^\bullet(H,H)$ is completely determined by the induced action of the center $Z(H) = H$ on the cohomology ring $\OH^\bullet(H,k)$.
%
\end{Cor}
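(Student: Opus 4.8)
The plan is to combine Linckelmann's decomposition with the vanishing statement of Theorem~4 and to propagate the conclusion through the whole cohomology algebra by means of the graded Poisson identity. First I would invoke Linckelmann's isomorphism to write
$$
\HH^\bullet(H,H) \;\cong\; H \otimes_k \OH^\bullet(H,k)
$$
as graded $k$-algebras, where the left-hand factor sits in degree $0$ and equals $\HH^0(H,H) = Z(H) = H$ (the last identity because $H$ is commutative), while the right-hand factor is the cohomology ring $\OH^\bullet(H,k) = \Ext^\bullet_H(k,k)$, regarded via the subalgebra inclusion $\OH^\bullet(H,k) \hookrightarrow \HH^\bullet(H,H)$ furnished by Theorem~4. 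Since $\{-,-\}_H$ is $k$-bilinear it suffices to evaluate it on pure tensors, and every pure tensor factors as a cup product $z \otimes \alpha = (z \otimes 1) \cup (1 \otimes \alpha)$, with $z \otimes 1 \in Z(H)$ a degree-$0$ class and $1 \otimes \alpha$ the image of $\alpha \in \OH^\bullet(H,k)$.

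The next step records the two vanishing phenomena that make the argument work. For degree-$0$ classes $z_1, z_2 \in Z(H)$ the bracket $\{z_1, z_2\}_H$ lands in $\HH^{-1}(H,H) = 0$ and hence vanishes for purely degree-theoretic reasons; for classes $\alpha, \beta \in \OH^\bullet(H,k)$ the bracket $\{\alpha, \beta\}_H$ vanishes by Theorem~4, since $\mathcal H$ is quasi-triangular and $k$-projective. Consequently the only bracket among the two families of tensor factors that need not vanish is the mixed one, $\{z \otimes 1, 1 \otimes \alpha\}_H$ for $z \in Z(H)$ and $\alpha \in \OH^\bullet(H,k)$, and this is exactly what is meant by the \emph{induced action of the center on the cohomology ring}.

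Finally I would expand an arbitrary bracket by the graded Poisson identity. Applying the derivation property of $\{\xi,-\}_H$ to $\eta = (z_2\otimes1)\cup(1\otimes\beta)$ and then the derivation property in the first variable to $\xi = (z_1\otimes1)\cup(1\otimes\alpha)$, one rewrites $\{\xi,\eta\}_H$ as a signed sum in which each summand is a cup product of factors drawn from $z_1\otimes1$, $z_2\otimes1$, $1\otimes\alpha$, $1\otimes\beta$, with exactly one of the four elementary brackets
$$
\{z_1\otimes1,\,z_2\otimes1\}_H,\quad \{1\otimes\alpha,\,1\otimes\beta\}_H,\quad \{z_1\otimes1,\,1\otimes\beta\}_H,\quad \{1\otimes\alpha,\,z_2\otimes1\}_H
$$
inserted. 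The first two vanish by the previous step, while the last two are the mixed brackets in either order (related by the graded antisymmetry of $\{-,-\}_H$, hence determined by the action). Thus every surviving summand is assembled out of the cup product — which, via Linckelmann, is nothing but the multiplication of $H \otimes_k \OH^\bullet(H,k)$ — and out of the action of the center on $\OH^\bullet(H,k)$. Hence $\{\xi,\eta\}_H$, and therefore the entire Gerstenhaber bracket, is completely determined by this data.

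The hard part will not be the Leibniz bookkeeping but the compatibility of the various identifications. One must verify that Linckelmann's isomorphism carries the cup product to the tensor-product multiplication and, crucially, that the embedding $\alpha \mapsto 1 \otimes \alpha$ coincides with the subalgebra inclusion $\OH^\bullet(H,k) \hookrightarrow \HH^\bullet(H,H)$ of Theorem~4; only then does the vanishing $\{\alpha,\beta\}_H = 0$ apply to the right-hand tensor factor. Tracking the signs generated by the graded Poisson identity and checking that the mixed bracket is well defined as a genuine action — independent of the chosen factorization of a pure tensor — are the remaining points of care.
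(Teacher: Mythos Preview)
Your proposal is correct and follows essentially the same route as the paper: decompose via Linckelmann, factor pure tensors as cup products, expand with the graded Poisson identity, and kill two of the four elementary brackets (the degree-$0$ one by degree reasons, the $\OH^\bullet$ one by Theorem~4), leaving only the mixed brackets. The compatibility you flag as the hard part—that the embedding $\alpha \mapsto 1 \otimes \alpha$ under Linckelmann's isomorphism agrees with the split monomorphism $\scrL_{\mathcal H}^\sharp$ of Theorem~4—is precisely the content of Proposition~\ref{prop:morphismagree}, which the paper proves separately before the corollary; with that in hand, the remaining Leibniz bookkeeping is exactly the explicit computation the paper carries out.
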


If $\A$ is an abelian $k$-category, the category $\mathsf{End}_k(\A)$ of all $k$-linear endofunctors on $\A$ is a monoidal category in a natural way. The unit object, given by $\Id_\A$, gives rise to the $\Ext$-algebra $\Ext^\bullet_{\sfEnd_k(\A)}(\Id_\A, \Id_\A)$ which one may (and we will do so) call the \textit{Hochschild cohomology ring of $\A$}. If $\A$ is the category of left modules over an algebra, we prove the proposition below, which also shows, that the terminology is a sensible one. 

\begin{Prop2}[= Proposition \ref{abel:monoidal} and Corollary \ref{cor:abel:monoidal}]
Let $A$ be a $k$-algebra and let $\A$ be the category $\Mod(A)$. If $A$ is $k$-projective, then the left adjoint $\ev_A^\lambda$ of the evaluation functor $\ev_A: \mathsf{End}_k(\A) \rightarrow \Mod(A^\ev)$ defined by $\ev_A(\scrX) = \scrX(A)$ induces an isomorphism
$$
\xymatrix@C=20pt{
\HH^\bullet(A) \ar[r]^-{\sim} & \Ext^\bullet_{\mathsf{End}_k(\A)}(\Id_\A,\Id_\A)
}
$$
of graded $k$-algebras taking $\{-,-\}_A$ to $[-,-]_{\sfEnd_k(\A)}$.
\end{Prop2}
\addtocontents{toc}{\protect\setcounter{tocdepth}{0}}
\section*{Outline}
\addtocontents{toc}{\protect\setcounter{tocdepth}{1}}
In Chapter \ref{cha:prerequ}, we are going to recall the basic notions of Quillen exact categories and monoidal categories, including various relaxations of the standard definitions. Further, we will state some elementary observations on homological properties of exact categories, such as the existence of certain pullbacks and pushouts, and how functors between exact categories interact with them. We will conclude the chapter by a couple of examples being relevant for future investigations. Amongst those, the category of bimodules over a ring, and certain full subcategories of it will occur.

Chapter \ref{cha:extcats} focusses on the study of categories of admissible extensions over a fixed exact category $\C$. Following \cite{Sch98}, we will discuss some homotopical properties of those extension categories. The fundamental group of a (small) category admits a description as morphisms in the Quillen groupoid associated to it. We will recall the necessary setup to understand these constructions, and apply them to the categories of admissible extensions over $\C$. Before closing the chapter by defining $n$-extension closed subcategories, we will recall the so called Baer sum which turns $\Ext_\C^n(-,-)$ into a bifunctor with values in the category of abelian groups.

In \cite{Re86}, V.\,Retakh described isomorphisms, relating the $(n-1)$-st extension group with the fundamental group of the category of $n$-extensions (for all $n \geq 1$). In Chapter \ref{cha:retakh}, we are going to reestablish those isomorphisms in a very explicit manner, provided that the underlying exact category $\C$ is ``nice" enough. We will further discuss how these isomorphisms interact with functors between exact categories. If $\C$ is not only exact, but also monoidal, the corresponding monoidal product functor yields a candidate for an external operation on the category of $n$-extensions. We will discuss the interplay between this candidate and Retakh's isomorphisms.

Chapter \ref{cha:hochschild} addresses the classical theory of Hochschild cohomology. We will briefly define the cup product and the Gerstenhaber bracket at the level of the Hochschild complex, and explain why the Hochschild cohomology ring of an algebra is not only graded commutative, but in fact a so called strict Gerstenhaber algebra. We conclude the chapter by an example, which is widely known as the theorem of Hochschild-Kostant-Rosenberg.

Chapter \ref{ch:bracket} introduces a bracket and a squaring map on the $\Ext$-algebra of the tensor unit of a given exact and monoidal category. We will show that, firstly, by an argument presented in \cite{Ta04}, the bracket is constantly zero if the given monoidal category admits a braiding, and that, secondly, by the results obtained in Chapter \ref{cha:extcats}, the bracket and the squaring map behave nicely with respect to exact and monoidal functors. Afterwards, we will compare our construction with the original one by S.\,Schwede. As a result, we will conclude that if we apply our construction to the special case of bimodules over an algebra, we indeed recover Schwede's map (up to minor differences). This insight will enable us to prove that the Gerstenhaber bracket is an invariant under Morita equivalence. We are going to end the chapter by a short survey on the existence of braidings on the monoidal category of bimodules over an algebra.

In Chapter \ref{cha:app1}, we are going to describe parts of the kernel of the Gerstenhaber bracket in Hochschild cohomology for quasi-triangular bialgebras and, in particular, quasi-triangular Hopf algebras. The key ingredients leading to the result were developed in Chapter \ref{ch:bracket}, and its preceding chapters, and will be combined here. If $H$ is a finite dimensional commutative Hopf algebra over a field $k$, M.\,Linckelmann described its Hochschild cohomology ring in terms of its cohomology ring: $\HH^\bullet(H, H) \cong H \otimes_k \OH^\bullet(H,k)$. By previous considerations on the kernel of the Gerstenhaber bracket on $\HH^\bullet(H,H)$, we will deduce that it is determined by the images of the adjoint representation $\mathrm{ad}(h) = \{h,-\}_H$, $h \in H$, if $H$ is quasi-triangular.

Finally, in Chapter \ref{chap:identityfunc}, we will concern ourselves with Hochschild cohomology for abelian categories $\A$, provide a bracket in this setting (by using Chapter \ref{ch:bracket}), and conclude that it is the usual Gerstenhaber bracket in Hochschild cohomology if $\A$ is the category of left modules over an algebra being projective over the base ring.
\newpage
%
\begin{figure}[h!v!]
\centering
\small{\xymatrix@C=30pt@R=22pt{
&  &\\
& *+[F-:<3pt>]{\txt{Exact categories\\ (Section \ref{sec:exactcats})}} \ar@{-> }[d] \ar@{-> }[r] & *+[F-:<3pt>]{\txt{Exact and\\ monoidal categories\\ (Sections \ref{sec:monoidalcats}, \ref{sec:extcats_monoidal})}} \ar@/^2.3pc/@{->>> }[ddddl]\\
& *+[F-:<3pt>]{\txt{Homotopical properties \\ of extension categories\\ (Chapter \ref{cha:extcats})}} \ar@{-> }[d] &\\
*+[F--:<3pt>]{\txt{Graded Lie bracket\\ $\{-,-\}_A$ on $\HH^\bullet(A,A)$\\ (Gerstenhaber, 1963)\\ (Overview: Chapter \ref{cha:hochschild})}} \ar@{ <-> }[dd] & *+[F-:<3pt>]{\txt{Explicit description of \\ the Neeman-Retakh\\ isomorphism\\ (Chapter \ref{cha:retakh})}} \ar@{-> }[dd] & \\
& &\\
*+[F--:<3pt>]{\txt{Exact sequence inter-\\ pretation of $\{-,-\}_A$\\ (Schwede, 1998)}} \ar@{~> }[r] \ar@{-> }[d] & *+[F-:<3pt>]{\txt{Construction of\\ the bracket $[-,-]_\C$\\ (Chapter \ref{ch:bracket})}} \ar@{-> }[d] \ar@/_3pc/@{->>> }[l]_-{\C = \P(A)} \ar@{-> }[r] & *+[F-:<3pt>]{\txt{Vanishing of $[-,-]_\C$\\ for lax braided\\ categories\\ (Theorem \ref{thm:trivial_bracket})}} \ar@{-> }[d]\\
*+[F-:<3pt>]{\txt{Morita invariance\\ of $\{-,-\}_A$\\ (Theorem \ref{thm:moritaHH})}} & *+[F-:<3pt>]{\txt{Abstract structure\\ theorem for $[-,-]_\C$\\ (Theorem \ref{thm:bracketcomm})}} \ar@{-> }[dd] \ar@{-> }[r] \ar@/^3pc/@{->>> }[l]^-{\textmd{+ Morita theory}} & *+[F-:<3pt>]{\txt{Behaviour of $\{-,-\}_B$\\ for (pre-triangular)\\ bialgebroids $B$\\ (Theorem \ref{thm:commutativeHopfalgebroid})}} \ar@{-> }[d]\\
 & & *+[F-:<3pt>]{\txt{Vanishing of $\{-,-\}_B$ on\\ $\OH^\bullet(B,k)$ for pre-triangular\\ bialgebras $B$ (as conjec-\\ tured by Menichi, 2011)\\ (Corollary \ref{cor:gerstenhabervanishhopfalgebra})}} \ar@{-> }[d]\\
 & *+[F-:<3pt>]{\txt{A bracket for\\ endofunctors on\\ an abelian category\\ (Chapter \ref{chap:identityfunc})}} & *+[F-:<3pt>]{\txt{Structure of $\{-,-\}_A$ for\\ commutative quasi-trian- \\ gular Hopf algebras $A$\\ (Section \ref{sec:linckelmann})}} \\
}}\vspace*{5pt}
\caption*{\textbf{Figure 1.} Map of the main constructions, techniques and results.}
\end{figure}
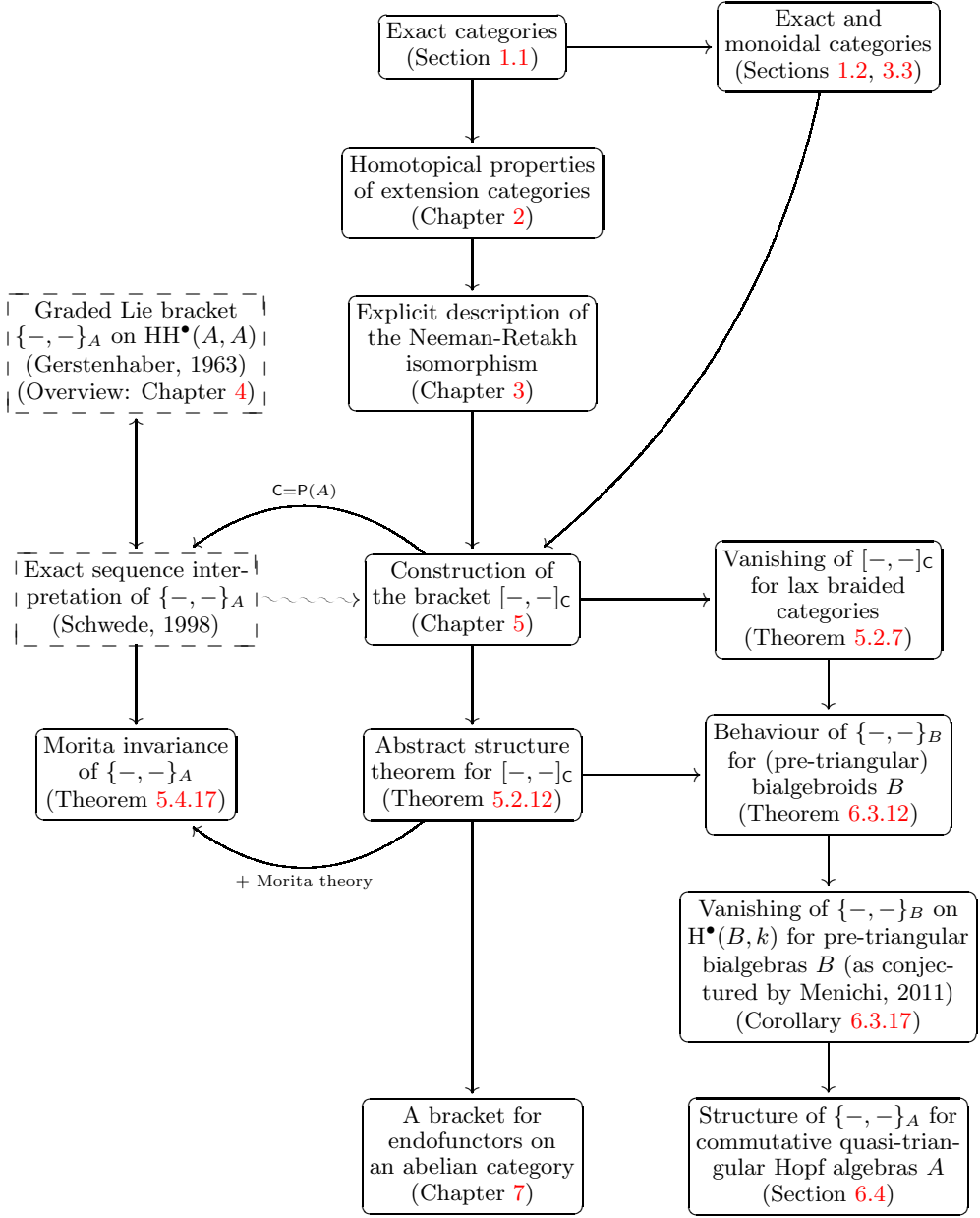
\vfill
\addtocontents{toc}{\protect\setcounter{tocdepth}{0}}
\section*{Conventions}
\addtocontents{toc}{\protect\setcounter{tocdepth}{1}}
We will not pay attention to set theoretical issues that can appear when applying constructions that rely on a base category (for instance, forming homotopy groups of a category, or the derived category of an abelian category). To feel more comfortable, the reader might want to assume that all occurring categories are (skeletally) small. For the entire monograph at hand, we further agree on the following notations.

\begin{enumerate}[\rm$\bullet$]
\item The term ring (algebra) shall always stand for \textit{associative and unital} ring (algebra). If $R$ is a ring, we denote its (additive and multiplicative) units by $0_R$ and $1_R$ or, for short (if there is no ambiguity), by $0$ and $1$. The \textit{opposite ring of $R$}, denoted by $R^\op$, has the same underlying abelian group as $R$, but its multiplicative structure is given by $r \star s := sr$ (for $r,s \in R$).
\item If not stated otherwise, $k$ will always denote a commutative ring.
\item The term module will always mean \textit{left} module.
\item For two integers $n_1 \leq n_2$ we put $[n_1,n_2] := \{n_1, n_1 + 1,\dots,n_2\}$. We let $$\mathfrak{S}_n := \{ \sigma: [1,n] \longrightarrow [1,n] \mid \sigma \text{ is bijective} \}$$ be the symmetric group on $n$ letters.
\item Let $V_1, \dots, V_n$ be $k$-modules. Every element $\sigma$ of the symmetric group $\mathfrak{S}_n$ gives rise to a $k$-linear map $V_1 \otimes_k \cdots \otimes_k V_n \rightarrow V_{\sigma(1)} \otimes_k \cdots \otimes_k V_{\sigma(n)}$ by
$$
\sigma(v_1 \otimes \cdots \otimes v_n) = v_{\sigma(1)} \otimes \cdots v_{\sigma(n)},
$$
where $v_i \in V_i$ for $i = 1, \dots, n$. In the particular case of $n = 2$, the symmetric group has just one non-trivial element. We denote the corresponding \textit{twist} map by $\tau = \tau_{V_1, V_2}$:
$$
\tau: V_1 \otimes_k V_2 \longrightarrow V_2 \otimes_k V_1, \ \tau(v_1 \otimes v_2) = v_2 \otimes v_1.
$$
\item If $R$ is a ring, let $\Mod(R)$ be the category of all $R$-modules, $\mod(R)$ be the full subcategory of all finitely generated $R$-modules and let $\mod_0(R)$ be the full subcategory of all $R$-modules of finite length. Accordingly, we use the notation $\Proj(R)$, $\proj(R)$, $\proj_0(R)$ and $\Inj(R)$, $\inj(R)$, $\inj_0(R)$ for the corresponding full subcategories of all projective and injective $R$-modules. Likewise, $\mathsf{Flat}(R)$, $\mathsf{flat}(R)$ and $\mathsf{flat}_0(R)$ denote the full subcategories of all flat $R$-modules (being finitely generated/of finite length over $R$).
\item If $\C$ is a category, we let $\Ob \C$ be the class of all objects in $\C$ and $\Mor \C$ the class of all morphisms in $\C$. For objects $X$ and $Y$ in $\C$, let $\Hom_\C(X,Y)$ ($\mathrm{Iso}_\C(X,Y)$) be the set of all morphisms (isomorphisms) between $X$ and $Y$ in $\C$. Further, we put $\End_\C(X) := \Hom_\C(X,X)$ and $\Aut_\C(X) := \mathrm{Iso}_\C(X,X)$. If $\C$ is a category of modules over a ring $R$, we write $\Hom_R$, $\End_R$, $\Aut_R$ and $\mathrm{Iso}_R$. We denote by $\id_X \in \End_\C(X)$ the identity morphism of $X$; alternative notions will be $1_{\End_\C(X)}$ or, for short, $1_X$. The \textit{opposite category of $\C$}, denoted by $\C^\op$, is defined by $\Ob \C^\op := \Ob \C$ and $\Hom_{\C^\op}(X,Y) := \Hom_\C(Y,X)$ (for objects $X, Y \in \C$). 
\item Let $\C$ be a category with a zero object (that is, an initial and terminal object) and $f \in \Hom_\C(X,Y)$. If $f$ has a kernel in $\C$, we denote it by $(\Ker(f), \ker(f))$, where $\Ker(f) \in \Ob \C$ and $\ker(f)$ is the defining morphism $\ker(f): \Ker(f) \rightarrow X$. Similarly, we denote a cokernel of $f$ (if existent) by $(\Coker(f), \coker(f))$. If $X' \rightarrow X \leftarrow X''$ and $X' \leftarrow X \rightarrow X''$ are diagrams in $\C$ that admit a pullback/pushout in $\C$, we denote by $X' \times_X X''$ the pullback of the first diagram, and by $X' \oplus_X X''$ the pushout of the second. 
\item The term functor shall always stand for \textit{covariant} functor. If $\scrX: \C \rightarrow \D$ is a functor, we denote by $\Im(\scrX) = \scrX\C$ its \textit{essential image}, i.e., the full subcategory
$$
\{D \in \D \mid D \cong \scrX(C) \text{ for some object $C \in \C$}\} \subseteq \D.
$$
\item If $\C$ is a category with a zero object, we use blackboard bold letters for complexes over $\C$. (Recall that a complex is a sequence $\cdots \rightarrow X_{i+1} \rightarrow X_i \rightarrow X_{i-1} \rightarrow \cdots$ of morphisms in $\C$, indexed over $\mathbb Z$, such that the composite of two successive morphisms is zero.) If $\mathbb X$ is a complex, we let $X_i$ be the object in degree $i$, and let $x_i: X_i \rightarrow X_{i-1}$ be the corresponding morphism in $\mathbb X$ ($i \in \mathbb Z$). We abbreviate this by $\mathbb X = (X_i, x_i)_i$. We let $\Ch(\C)$ be the category of complexes over $\C$. For \textit{cocomplexes} over $\C$ (i.e., complexes in $\mathbf C(\C^\op)$), we use upper indices: $\mathbb X = (X^i, x^i)_i$.
\item If $\C$ and $\D$ are categories, we let $\C \times \D$ be the corresponding \textit{product category}. Its objects are pairs $(X,Y)$, where $X$ runs through the objects of $\C$ and $Y$ runs through the objects of $\D$; its morphism sets are given by
$$
\Hom_{\C \times \D}((X,Y), (X',Y')) := \Hom_\C(X,X') \times \Hom_\D(Y,Y').
$$
If $\mathscr X: \C \times \D \rightarrow \mathsf T$ is a functor to some target category $\mathsf T$, then we write
$$
\mathscr X(f, Y) := \mathscr X(f, \id_Y) \quad \text{and} \quad \mathscr X(X, g) := \mathscr X(\id_X,g),
$$
where $X \in \Ob \C$, $Y \in \Ob \D$ and $f \in \Mor \C$, $g \in \Mor \D$. For 
$$
\scrX = \Hom_\C(-,-): \C^\op \times \C \longrightarrow \mathsf{Sets}
$$
we also introduce the notation
$$
f^\ast = f^\ast_Y := \Hom_\C(f,Y) \quad \text{and} \quad g_\ast = g_\ast^X := \Hom_\C(X,g).
$$
In terms of this convention, we get $f_\ast \circ g^\ast = g^\ast \circ f_\ast : \Hom_\C(Y',X) \rightarrow \Hom_\C(Y,X')$ if $f:X \rightarrow X'$ and $g: Y \rightarrow Y'$.
\item Let $\C$ be a category with finite biproducts (denoted by $\oplus$). If $A$ and $B$ are objects in $\C$, we let 
\begin{align*}
j^A &: A \longrightarrow A \oplus B, & q^A & : A \oplus B \longrightarrow A,\\
j_B &: B \longrightarrow A \oplus B, &  q_B & : A \oplus B \longrightarrow B
\end{align*}
be the canonical morphisms. Remember that they satisfy the following equations:
\begin{align*}
q^A \circ j^A &= \id_A, & j^A \circ q^A + j_B \circ q_B = \id_{A \oplus B} \\
q_B \circ j_B &= \id_B.
\end{align*}
Further, we let $d_A: A \rightarrow A \oplus A$ and $d^A: A \oplus A \rightarrow A$ denote the unique morphisms with
$$
q^A \circ d_A = \id_A = q_A \circ d_A \quad \text{and} \quad d^A \circ j^A = \id_A = d^A \circ j_A .
$$
More generally, if $A_1, \dots, A_r$ and $B_1, \dots, B_s$ are objects in $\C$, $j_\ell: A_\ell \rightarrow \bigoplus_i A_i$, and $q_k: \bigoplus_i B_i \rightarrow B_k$ are the canonical morphisms, and if $f_{k,\ell}: A_\ell \rightarrow B_k$ (for $\ell = 1, \dots, r$, $k = 1, \dots, s$) are morphisms in $\C$, then the matrix
$$
M: = \left[\begin{matrix}
f_{1,1} & f_{1,2} & \cdots & f_{1,r} \\
  f_{2,1} & f_{2,2} & \cdots & f_{2,r} \\
  \vdots  & \vdots  & \ddots & \vdots  \\
  f_{s,1} & f_{s,2} & \cdots & f_{s,r}
\end{matrix}\right] : \bigoplus_{\ell=1}^r A_\ell \longrightarrow \bigoplus_{k=1}^s B_k
$$
is defined to be the unique morphism in $\C$ with
$$
q_k \circ M \circ j_\ell = f_{k,\ell} \quad (\text{for $\ell = 1, \dots, r$, $k = 1, \dots, s$}).
$$
If $r = 1$, we will also use the notation
$$
\left[\begin{matrix}
f_{1,1} \\
f_{2,1} \\
\vdots \\
f_{s,1}
\end{matrix}\right] = f_{1,1} \oplus f_{2,1} \oplus \cdots \oplus f_{s,1}.
$$
Occasionally, a slightly more ambiguous notation will be employed in the case $s=1$:  $$\left[\begin{matrix}
f_{1,1} & f_{1,2} & \cdots & f_{1,r}
\end{matrix}\right] = f_{1,1} + \dots + f_{1,r}.$$
\end{enumerate}


\mainmatter


\chapter{Prerequisites}\label{cha:prerequ}
Within this chapter we will recall definitions and results being crucial for the observations we will make in the consecutive chapters. Thereunder, basics from the theory of (Quillen) exact categories and monoidal categories. Bialgebras and, more specifically, Hopf algebras will give rise to examples of (braided and symmetric) monoidal categories. Further examples will be stated in the chapter's final section.
\section{Exact categories}\label{sec:exactcats}
\begin{nn}
Exact categories were introduced by D.\,Quillen in \cite{Qu72} in oder to study the K-theory of additive categories which are not necessarily abelian. He defines them as full subcategories of abelian categories such that their image in the abelian category defines an extension closed subcategory. Equivalently, exact categories may be defined (without explicitly naming an ambient abelian category) as additive categories equipped with a distinguished class of sequences which are subject to certain axioms. In \cite{Ke90} B.\,Keller discusses these axioms, discovers some redundancy, and hence is able to restate the classical definition in a very concise way. In what follows, we will consider exact categories as (extension closed) subcategories of abelian categories. Let $k$ be a commutative ring.
\end{nn}
\begin{defn}\label{def:extensioncl}
Let $\C$ be a full subcategory of an abelian category $\A$. The subcategory $\C$ is called \textit{extension closed $($in $\A)$}, if whenever
$$
0 \longrightarrow C' \longrightarrow A \longrightarrow C'' \longrightarrow 0
$$
is a short exact sequence in $\A$ with $C'$ and $C''$ in $\C$, then $A$ belongs to $\C$.
\end{defn}
\begin{defn}\label{def:exactcat}
Let $\C$ be an additive $k$-category and let $i_\C: \C \rightarrow \A_\C$ be a full and faithful $k$-linear functor into an abelian $k$-category $\A_\C$.
\begin{enumerate}[\rm(1)]
\item The pair $(\C,i_\C)$ is an \textit{exact $k$-category} (or a \textit{Quillen exact $k$-category}) if the essential image $\Im(i_\C) = i_\C \C$ of $i_\C$ is an extension closed subcategory of $\A_\C$. 
\item A sequence $0 \rightarrow E' \rightarrow E \rightarrow E'' \rightarrow 0$ in $\mathsf C$ is called an \textit{admissible short exact sequence} (or a \textit{conflation}) provided its image under $i_\C$ is a short exact sequence in $\A_\C$. We denote the class of all admissible short exact sequences in $\C$ by $\sigma({\mathsf C})$.
\item A morphism $E' \rightarrow E$ in $\C$ is an \textit{admissible monomorphism} (or an \textit{inflation}) if it fits inside an admissible short exact sequence $0 \rightarrow E' \rightarrow E \rightarrow E'' \rightarrow 0$.
\item A morphism $E \rightarrow E''$ in $\C$ is an \textit{admissible epimorphism} (or a \textit{deflation}) if it fits inside an admissible short exact sequence $0 \rightarrow E' \rightarrow E \rightarrow E'' \rightarrow 0$.
\end{enumerate}
Let $\A$ be an abelian $k$-category, and let $\D$ be a full subcategory of $\A$. We call $\D$ an \textit{exact subcategory of $\A$}, if $\D$ together with the inclusion functor $\D \rightarrow \A$ defines an exact $k$-category.
\end{defn}
\begin{rem}\label{rem:exactsubcat}
\begin{enumerate}[\rm(1)]
\item When denoting an exact $k$-category, we will frequently suppress the structure giving inclusion and write $\C$ instead of $(\C, i_\C)$.
\item Evidently, extension closed subcategories are closed under taking finite biproducts. Note that if $\A$ is an abelian category and $\C$ is an extension closed subcategory of $\A$ containing a zero object and being closed under isomorphisms, then the pair $(\C, i_\C)$, where $i_\C = \mathrm{inc}: \C \rightarrow \A$ denotes the inclusion functor, is an exact category. In particular any abelian category $\A$ is Quillen exact, in such a way that $\sigma(\A)$ coincides with the class of all short exact sequences in $\A$ (which is obvious from the definition).
\end{enumerate}
\end{rem}
\begin{prop}[{\cite{Qu72}}]\label{prop:exactcat}
Let $\mathsf C$ be an exact $k$-category and $i = i_\C: \C \rightarrow \A_\C$ its defining embedding into the abelian $k$-category $\A = \A_\C$.
\begin{enumerate}[\rm (1)]
\item\label{prop:exactcat:0} For every object $E$ in $\C$, the identity morphism $\id_E$ is an admissible monomorphism and an admissible epimorphism.
\item\label{prop:exactcat:1} The class $\sigma(\C)$ of all admissible short exact sequences is
closed under isomorphisms and finite direct sums $($taken in the category of chain complexes over $\C$$)$. 
\item\label{prop:exactcat:1a} The composite of two admissible monomorphisms
\emph{(}admissible epimorphisms\emph{)} is an admissible monomorphism \emph{(}admissible epimorphism\emph{)}.
\item\label{prop:exactcat:2} In every admissible short exact sequence $0 \rightarrow E' \rightarrow E \rightarrow E''
\rightarrow 0$ in $\C$, $E' \rightarrow E$ is a kernel of $E \rightarrow E''$, and $E \rightarrow
E''$ is a cokernel of $E' \rightarrow E$.
\item\label{prop:exactcat:3} Let $0 \rightarrow E' \xrightarrow{f} E \xrightarrow{g} E''
\rightarrow 0$ be an admissible short exact sequence in $\C$ and let $p \in \Hom_\C(E', X)$, $q \in \Hom_\C(X,E'')$ be morphisms in $\C$. The pair $(f,p)$ has a pushout in $\C$, while the pair $(g,q)$ has a pullback in $\C$.
\end{enumerate}
\end{prop}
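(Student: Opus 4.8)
The plan is to carry out every construction inside the ambient abelian category $\A = \A_\C$, where the full exactness of an abelian category is available, and then push the result back to $\C$ using the two defining features of the embedding: $i_\C$ is fully faithful (so universal properties transfer), and $i_\C\C$ is extension closed (so newly built objects stay in $\C$). Throughout I identify $\C$ with its essential image and suppress $i_\C$, so that a sequence in $\C$ is a conflation exactly when it is short exact in $\A$.

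Parts (1), (2) and (4) are essentially formal. For (1), the sequences $0 \to E \xrightarrow{\id_E} E \to 0 \to 0$ and $0 \to 0 \to E \xrightarrow{\id_E} E \to 0$ are short exact in $\A$ (using $0 \in \C$), exhibiting $\id_E$ as an inflation and a deflation. For (2), an isomorphism in $\Ch(\C)$ maps under $i_\C$ to an isomorphism of complexes in $\A$, and short exactness is invariant under such isomorphisms; and since $i_\C$ preserves biproducts and the biproduct of two short exact sequences is short exact in $\A$ (with middle term again in $\C$, as $\C$ is closed under biproducts), $\sigma(\C)$ is closed under finite direct sums. For (4), in a conflation $0 \to E' \xrightarrow{f} E \xrightarrow{g} E'' \to 0$ the morphism $f$ is a kernel of $g$ in $\A$; given $h \in \Hom_\C(X,E)$ with $gh = 0$, the kernel property in $\A$ produces a unique factorization of $h$ through $f$, which by fullness and faithfulness of $i_\C$ is induced by a unique morphism in $\C$. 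Hence $f$ is a kernel of $g$ in $\C$, and the cokernel statement is dual.

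The two steps carrying real content are (3) and (5), and in both the point is to locate a single short exact sequence whose outer terms are already known to lie in $\C$, so that one application of extension-closedness places the new object in $\C$. For (3), let $f \colon E' \to E$ and $g \colon E \to E''$ be inflations; their cokernels $E/E'$ and $E''/E$, formed in $\A$, lie in $\C$ as third terms of conflations. The canonical sequence
$$
0 \longrightarrow E/E' \longrightarrow E''/E' \longrightarrow E''/E \longrightarrow 0
$$
coming from the filtration $E' \subseteq E \subseteq E''$ is short exact in $\A$, so extension-closedness forces $E''/E' \in \C$; then $0 \to E' \xrightarrow{gf} E'' \to E''/E' \to 0$ is a conflation and $gf$ is an inflation. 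The composite of deflations is handled dually. For (5), I form the pushout $P$ of $(f,p)$ in $\A$; the standard fact that pushing an inflation $f$ out along an arbitrary map preserves its cokernel yields a short exact sequence $0 \to X \to P \to E'' \to 0$ in $\A$ with $X, E'' \in \C$, whence $P \in \C$. Full faithfulness then upgrades the pushout square in $\A$, all of whose corners now lie in $\C$, to a pushout square in $\C$, since cocones with vertices in $\C$ correspond bijectively to cocones in $\A$. The pullback of $(g,q)$ is dual.

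I expect the main obstacle to be the bookkeeping in (3) and (5) --- choosing the correct auxiliary conflation (the three-step filtration in (3), the cokernel-preservation sequence for the pushout in (5)) whose outer terms are visibly in $\C$. Once extension-closedness has placed the relevant object in $\C$, the transfer of universal properties across the fully faithful embedding is routine.
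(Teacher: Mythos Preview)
Your proposal is correct and follows essentially the same approach as the paper. The only cosmetic difference is in (3): you invoke the filtration sequence $0 \to E/E' \to E''/E' \to E''/E \to 0$ directly, while the paper derives this same sequence explicitly via the Snake Lemma applied to the obvious diagram of cokernels; the content is identical.
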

\begin{proof}
The assertions (\ref{prop:exactcat:0}) and (\ref{prop:exactcat:1}) are valid for trivial reasons, and basically just use the fact that $i$ is an additive functor. In order to prove (\ref{prop:exactcat:1a}), let $f: E' \rightarrow E$ and $g: E \rightarrow E''$ be admissible monomorphisms. We obtain a commutative diagram
$$
\xymatrix@C=20pt{
0 \ar[r] & iE' \ar@{=}[r] \ar[d]_{i(f)} & iE' \ar[r] \ar[d]^{i(g \circ f)} & 0 \ar[r] \ar[d] & 0\\
0 \ar[r] & iE \ar[r]^{i(g)} \ar[d] & iE'' \ar[r] \ar[d] & \Coker(i(g)) \ar[r] \ar@{=}[d] & 0 \\
 & \Coker(i(f)) \ar[r]|-{\bbslash} & \Coker(i(g \circ f)) \ar[r]|-{\bbslash} & \Coker(i(g))
}
$$
where the highlighted arrows come from the universal property that cokernels carry. An application of the Snake Lemma yields the exactness of
$$
\xymatrix@C=20pt{
0 \ar[r] & \Coker(i(f)) \ar[r] & \Coker(i(g \circ f)) \ar[r] & \Coker(i(g)) \ar[r] & 0
}
$$
and hence $g \circ f$ is admissible since $i\C$ is extension closed. Analogously, $g \circ f$ is an
admissible epimorphism, if $g$ and $f$ are. The second statement is obvious. To see
(\ref{prop:exactcat:3}), we remark that in $\A$, we are allowed to complete the following diagram given by the labeled arrows via the unlabelled ones such that the lower row is also a short exact sequence in $\A$:
$$
\xymatrix{
0 \ar[r] & iE' \ar[r]^{i(f)} \ar[d]_-{i(p)} & iE \ar[r]^-{i(g)} \ar[d] & iE'' \ar[r] \ar@{=}[d] & 0 \ \ \\
0 \ar[r] & iX \ar[r]        & P \ar[r]            & iE'' \ar[r]                 & 0 \ .
}
$$
Since $\C$ is exact, the pushout $P = iX \oplus_{iE'} iE$ belongs to the essential image of $i$. Its fibre clearly is a pushout of $(f,p)$. In a similar way, the pullback along $g$ and $q$ exists in $\C$, too.
\end{proof}
\begin{defn}\label{def:exfunctor}
Let $\C$ and $\D$ be exact $k$-categories. A (not necessarily $k$-linear) functor $\scrX: \C \rightarrow \D$ is called \textit{exact} if it carries admissible short exact sequences in $\C$ to admissible short exact sequences in $\D$, that is, if $\scrX$ induces a map $\sigma(\C) \rightarrow \sigma(\D)$. Assume that $\D = \A$ is abelian. The functor $\scrX: \C \rightarrow \A$ is called 
\begin{enumerate}[\rm(1)]
\item \textit{left exact} if any admissible short exact sequence $0 \rightarrow E' \rightarrow E \rightarrow E'' \rightarrow 0$ in $\C$ is carried to an exact sequence 
$$
0 \longrightarrow \scrX(E') \longrightarrow \scrX(E) \longrightarrow \scrX(E'')
$$
in $\A$.
\item \textit{right exact} if any admissible short exact sequence $0 \rightarrow E' \rightarrow E \rightarrow E'' \rightarrow 0$ in $\C$ is carried to an exact sequence 
$$
\scrX(E') \longrightarrow \scrX(E) \longrightarrow \scrX(E'') \longrightarrow 0
$$
in $\A$.
\end{enumerate}
The $k$-linear left (right) exact functors $\C \rightarrow \A$ form a full subcategory $\underline{\Fun}_k^\lambda(\C,\A)$ ($\underline{\Fun}_k^\varrho(\C,\A)$) of the category of $k$-linear functors $\Fun_k(\C,\A)$. Their intersection,
$$
\underline{\Fun}_k(\C,\A) := \underline{\Fun}_k^\lambda(\C,\A) \cap \underline{\Fun}_k^\varrho(\C,\A),
$$
is the full subcategory of $\Fun_k(\C,\A)$ containing all exact $k$-linear functors $\C \rightarrow \A$.
\end{defn}
\begin{rem}\label{rem:properties_exfunc}
Let $\C$ be an additive $k$-category equipped with a class $\sigma(\C)$ of sequences $0 \rightarrow E' \xrightarrow{f} E \xrightarrow{g} E'' \rightarrow 0$ of morphisms in $\C$ (where $f$ is an ``admissible monomorphism" and $g$ is an ``admissible epimorphism") satisfying the properties (\ref{prop:exactcat:0})--(\ref{prop:exactcat:3}) of Proposition \ref{prop:exactcat} and, moreover, fulfilling the following two conditions.
\begin{enumerate}[\rm(i)]
\item\label{rem:properties_exfunc:1} If $E \rightarrow E''$ is a morphism in $\C$ that possesses a kernel in $\C$, and if $D \rightarrow E$ is a morphism in $\C$ such that $D \rightarrow E \rightarrow E''$ is an admissible epimorphism, then $E \rightarrow E''$ is an admissible epimorphism.
\item\label{rem:properties_exfunc:2} If $E' \rightarrow E$ is a morphism in $\C$ that possesses a cokernel in $\C$, and if $E \rightarrow F$ is a morphism in $\C$ such that $E' \rightarrow E \rightarrow F$ is an admissible monomorphism, then $E' \rightarrow E$ is an admissible epimorphism.
\end{enumerate}
Consider the category $\A_\C := \underline{\Fun}_k^\lambda(\C^\op, \Mod(k))$ of left exact $k$-linear functors $\scrX \colon \C^\op \rightarrow \Mod(k)$ (left exact in the sense that a distinguished sequence $0 \rightarrow E' \rightarrow E \rightarrow E'' \rightarrow 0$ in $\sigma(\C)$ is mapped to an exact sequence $0 \rightarrow \scrX(E'') \rightarrow \scrX(E) \rightarrow \scrX(E')$ in $\Mod(k)$). As exhibited in \cite{Qu72}, one can show, that $\C$ embeds into $\A_\C$ via the Yoneda functor and that $\A_\C$ is $k$-linear abelian. Furthermore, the essential image of $\C$ in $\A_\C$ is an extension closed subcategory and $\sigma(\C)$ is precisely the class of sequences in $\C$ which are carried over to short exact sequences in $\A_\C$. Thus $\C$ is exact in the sense of Definition \ref{def:exactcat}. B.\,Keller observed that, in fact, it suffices to assume that $\sigma(\C)$ satisfies \ref{prop:exactcat}(\ref{prop:exactcat:0})--(\ref{prop:exactcat:1a}) and \ref{prop:exactcat}(\ref{prop:exactcat:3}).
\end{rem}
\begin{prop}[{\cite{Ke90}}]
Let $\C$ be an additive $k$-category equipped with a class $\sigma(\C)$ of sequences $0 \rightarrow E' \rightarrow E \rightarrow E'' \rightarrow 0$ of morphisms in $\C$ fulfilling the properties $(\ref{prop:exactcat:0})$, $(\ref{prop:exactcat:1})$, $(\ref{prop:exactcat:1a})$ and $(\ref{prop:exactcat:3})$ of Proposition $\ref{prop:exactcat}$. Then also $\ref{prop:exactcat}(\ref{prop:exactcat:2})$ and $\ref{rem:properties_exfunc}$\emph{(\ref{rem:properties_exfunc:1})}--\emph{(\ref{rem:properties_exfunc:2})} hold true.\qed
\end{prop}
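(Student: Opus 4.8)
The statement is the axiom-reduction of Keller, so the plan is to keep the class $\sigma(\C)$ equipped only with the identity axiom (\ref{prop:exactcat:0}), the isomorphism/direct-sum closure (\ref{prop:exactcat:1}), the composition axiom (\ref{prop:exactcat:1a}) and the pushout--pullback axiom (\ref{prop:exactcat:3}), and to manufacture the kernel--cokernel property (\ref{prop:exactcat:2}) together with the two obscure conditions (\ref{rem:properties_exfunc:1})--(\ref{rem:properties_exfunc:2}) by diagram chases whose only real engine is (\ref{prop:exactcat:3}). First I would record that the members of $\sigma(\C)$ are honest complexes (so $g \circ f = 0$), which is built into (\ref{prop:exactcat:1}) once direct sums are read in $\Ch(\C)$. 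The crucial preliminary is to upgrade (\ref{prop:exactcat:3}) to its \emph{effective} form: pushing an admissible short exact sequence $0 \to E' \xrightarrow{f} E \xrightarrow{g} E'' \to 0$ out along a map $p\colon E' \to X$ yields an admissible short exact sequence $0 \to X \to P \to E'' \to 0$ together with a morphism of such sequences that is $\id_{E''}$ on the right, the comparison square being bicartesian; dually for pullbacks along maps into $E''$. I would bootstrap this from the split case: from (\ref{prop:exactcat:0}) and (\ref{prop:exactcat:1}) the canonical sequence $A \to A \oplus B \to B$ is a conflation, and a split monomorphism \emph{that possesses a cokernel} (resp.\ a split epimorphism possessing a kernel) is an inflation (resp.\ a deflation); the latter is proved by writing down an explicit splitting $P \cong A \oplus \Coker$, using only that the relevant cokernel exists.

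Granting the effective form of (\ref{prop:exactcat:3}), property (\ref{prop:exactcat:2}) is immediate. To see that $f$ is a kernel of $g$, pull the conflation back along the zero map $0 \to E''$; the pullback of $g$ along $0 \to E''$ is by definition $\Ker(g)$, and the preserved left-hand term identifies it with $f$. Dually, pushing out along $E' \to 0$ computes $\Coker(f)$ and identifies it with $g$. Hence every member of $\sigma(\C)$ is a kernel--cokernel pair.

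For the obscure conditions I would treat (\ref{rem:properties_exfunc:2}) and obtain (\ref{rem:properties_exfunc:1}) by reversing all arrows. Given $f\colon E' \to E$ with a cokernel $c\colon E \to Q$ and $j\colon E \to F$ such that $i := j \circ f$ is an inflation sitting in a conflation $0 \to E' \xrightarrow{i} F \xrightarrow{\pi} F'' \to 0$, I would push the inflation $i$ out along $f$. By the effective form of (\ref{prop:exactcat:3}) the induced map $\alpha\colon E \to P$ is an inflation and the defining square is bicartesian, so the pair $(f,i)$ exhibits $E'$ as the pullback of $\alpha$ and the co-edge $\beta\colon F \to P$. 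The universal property of the pushout applied to $\id_F$ and $j$ furnishes a retraction of $\beta$, so $\beta$ is a split monomorphism; and the pair $(0\colon F \to Q,\ c\colon E \to Q)$, which agrees on $E'$ because $c f = 0$, shows that $\beta$ has cokernel $Q = \Coker(f)$. By the split-monomorphism lemma $\beta$ is therefore an inflation, whence $\beta = \ker(\gamma)$ for its cokernel $\gamma\colon P \to Q$, with $\gamma \alpha = c$ by construction. Since the square is a pullback and $\beta$ is the kernel of $\gamma$, a short chase identifies $f$ with $\ker(\gamma \alpha) = \ker(c)$, i.e.\ $f = \ker(\Coker f)$.

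The remaining and genuinely delicate point is to promote this to the assertion that $f$ is an \emph{inflation}, that is, that $c = \Coker(f)$ is itself a deflation. The epimorphism $c$ is the composite $E \xrightarrow{\alpha} P \xrightarrow{\gamma} Q$ of an inflation followed by a deflation, and without an ambient abelian category at one's disposal this need not be admissible on formal grounds alone; extracting admissibility from the bicartesian bookkeeping (in effect a Noether-isomorphism argument internal to $\sigma(\C)$, together with the preservation of third terms established in the preliminary) is exactly the technical heart of Keller's reduction and the step I expect to absorb most of the work. Once $c$ is known to be a deflation, $0 \to E' \xrightarrow{f} E \xrightarrow{c} Q \to 0$ is a conflation and (\ref{rem:properties_exfunc:2}) follows; dualizing the entire argument gives (\ref{rem:properties_exfunc:1}).
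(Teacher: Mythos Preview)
The paper does not supply a proof of this proposition; it is stated with a terminal \qed\ and a citation to Keller's original article, where the argument first appeared. Your sketch follows precisely the route taken there (and reproduced in detail, for instance, in B\"uhler's survey on exact categories): one first secures the effective form of the pushout/pullback axiom, deduces the kernel--cokernel property by pulling back along the zero map, and then handles the ``obscure'' axioms by the bicartesian-square argument you describe, with the retraction of $\beta$ coming from the universal property of the pushout applied to $(j,\id_F)$.

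Two remarks on precision. First, you are right that the final step---promoting the factorisation $c = \gamma\alpha$ of an inflation followed by a deflation to the admissibility of $c$ itself---is the technical crux; this is where Keller's argument does real work, and your outline defers rather than resolves it. Second, your proposed bootstrap to the ``effective'' form of (\ref{prop:exactcat:3}) deserves scrutiny: as literally phrased in the paper, axiom (5) asserts only the \emph{existence} of the relevant pushout/pullback in $\C$, not that the resulting sequence again lies in $\sigma(\C)$. In Keller's own axiomatisation the effective form (deflations are stable under base change, inflations under cobase change) is taken as the axiom, so no bootstrap is needed; if you genuinely start from the weaker existence statement, it is not clear that the effective form follows without additional input. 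Apart from this point about what (\ref{prop:exactcat:3}) is meant to encode, your plan matches the standard proof.
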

\begin{lem}\label{lem:exact_func_push_pull}
Let $\mathsf C$ and $\mathsf D$ be exact $k$-categories. Let $\mathscr X: \mathsf C \rightarrow \mathsf D$ be an exact $k$-linear functor. Then $\mathscr X$ preserves pushouts along admissible monomorphisms and pullbacks along admissible epimorphisms.
\end{lem}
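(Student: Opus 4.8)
The plan is to reduce the preservation of pushouts and pullbacks to the single fact that $\scrX$ carries conflations to conflations (Definition \ref{def:exfunctor}), combined with a five-lemma argument performed inside the ambient abelian categories $\A_\C$ and $\A_\D$. I will treat the pushout case in detail; the pullback case is strictly dual.

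First I would unwind the pushout. Let $f \colon E' \to E$ be an admissible monomorphism with cokernel $g \colon E \to E''$, let $p \colon E' \to X$ be arbitrary, and write $P = X \oplus_{E'} E$ for the pushout, which exists by Proposition \ref{prop:exactcat}(\ref{prop:exactcat:3}). The construction in the proof of that proposition exhibits $P$ as fitting into a commutative diagram whose rows are the conflations $0 \to E' \xrightarrow{f} E \xrightarrow{g} E'' \to 0$ and $0 \to X \xrightarrow{v} P \xrightarrow{\bar g} E'' \to 0$, whose left-hand vertical maps are $p$ and the canonical $u \colon E \to P$, and whose right-hand vertical map is $\id_{E''}$; the left-hand square is precisely the pushout square. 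Since $\scrX$ is a functor it preserves the commutativity of the two squares and sends $\id_{E''}$ to $\id_{\scrX E''}$, and since $\scrX$ is exact it sends each row to a conflation in $\D$. Thus applying $\scrX$ yields once more a morphism of conflations in $\D$ that restricts to the identity on the cokernel terms and whose left-hand square is the image under $\scrX$ of the original pushout square.

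It therefore remains to establish the following claim in an arbitrary exact category $\D$: \emph{if a morphism of conflations $0 \to A \to B \to C \to 0$ and $0 \to A' \to B' \to C \to 0$ restricts to $\id_C$ on the cokernel terms, then the left square is a pushout in $\D$.} To prove this I would embed the diagram into $\A_\D$ via $i_\D$, which is full, faithful and exact, so that both rows become short exact sequences in $\A_\D$. The universal property of the pushout in $\A_\D$ then supplies a comparison morphism $i_\D A' \oplus_{i_\D A} i_\D B \to i_\D B'$; I would verify that it is a morphism of short exact sequences inducing $\id$ on $i_\D A'$ and on $i_\D C$, whence the five lemma forces it to be an isomorphism. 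Consequently $i_\D B'$ is the pushout in $\A_\D$, and since it lies in the essential image of the fully faithful functor $i_\D$ the universal property transports back to $\D$, so $B'$ is the pushout in $\D$. Applying the claim to the diagram of the previous paragraph shows that $\scrX P = \scrX X \oplus_{\scrX E'} \scrX E$, i.e.\ that $\scrX$ preserves this pushout. For the pullback of $g \colon E \to E''$ (an admissible epimorphism) along $q \colon X \to E''$, the pullback $E \times_{E''} X$ sits at the bottom of a morphism of conflations that is the identity on the \emph{kernel} terms, and the dual claim (comparing $i_\D B'$ with the pullback $i_\D B \times_{i_\D C} i_\D B'$ via the five lemma) shows $\scrX$ preserves it.

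The main obstacle is bookkeeping rather than conceptual: one must confirm that the comparison morphism really is a map of short exact sequences inducing the identity on both outer terms, so that the five lemma genuinely applies, and one must keep track of the fact that pushouts and pullbacks formed inside $\C$ and $\D$ coincide with those formed in the ambient abelian categories. This last point is exactly the content of the final lines of the proof of Proposition \ref{prop:exactcat}, where the pushout (respectively pullback) object is seen to lie in the essential image of the embedding, so that full faithfulness moves the universal property between the exact category and its abelian hull without ambiguity.
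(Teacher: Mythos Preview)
Your proof is correct, but the paper takes a shorter route. Rather than characterizing the pushout via a morphism-of-conflations condition and then invoking the five lemma inside $\A_\D$, the paper observes directly that the pushout $P = Y' \oplus_X Y$ of an admissible monomorphism $f \colon X \to Y$ along an arbitrary $f' \colon X \to Y'$ sits in a single conflation
\[
0 \longrightarrow X \xrightarrow{\;\smatrix{f\\ -f'}\;} Y \oplus Y' \longrightarrow P \longrightarrow 0
\]
in $\C$ (this is the cokernel description of the pushout, and it is admissible because $P$ lies in $\C$ by Proposition~\ref{prop:exactcat}(\ref{prop:exactcat:3})). Since $\scrX$ is exact and additive, it carries this conflation to the corresponding one in $\D$, so $\scrX P$ is the cokernel of $\smatrix{\scrX f \\ -\scrX f'}$, which is precisely the pushout of $\scrX f$ along $\scrX f'$. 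Your approach has the merit of isolating a reusable claim about morphisms of conflations; the paper's is shorter because it bypasses both the comparison morphism and the five lemma by encoding the pushout in a single conflation that $\scrX$ must preserve.
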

\begin{proof}
If $f: X \rightarrow Y$ is an admissible monomorphism in $\mathsf C$, then it fits inside some admissible
exact sequence
$$
0 \longrightarrow X \longrightarrow Y \longrightarrow \Coker{(f)} \longrightarrow 0
$$
in $\mathsf C$. Pushing out along any other morphism $f': X \rightarrow Y'$ in $\mathsf C$ yields a sequence
$$
(0 \longrightarrow Y' \longrightarrow Y \oplus_{X} Y' \longrightarrow \Coker{(f)} \longrightarrow 0) \quad \in \quad \mathsf \sigma({\mathsf C}).
$$
Furthermore, the sequence
$$
\xymatrix@C=26pt
{
0 \ar[r] & X \ar[r]^-{\left[\begin{smallmatrix}
f\\
-f'
\end{smallmatrix}\right]} & Y \oplus Y' \ar[r] & Y \oplus_X Y' \ar[r] & 0
}
$$
is an admissible exact one in $\mathsf C$ and hence will stay so in $\mathsf{D}$ after applying $\mathscr X$. By dualizing this simple argument, we also obtain that $\mathscr X$ preserves pullbacks along admissible epimorphisms.
\end{proof}
\section{Monoidal categories}\label{sec:monoidalcats}
In this section we will recall the notions of monoidal categories and structure preserving functors (monoidal functors) between them. We mainly follow the textbooks \cite{AgMa10} and \cite{MaL98}.
\begin{defn}\label{def:monoidalcat}
Let $\mathsf \C$ be a category, $\otimes: \C \times \C \rightarrow \C$ be a functor, $\mathbbm 1$ be an object in $\C$,
\begin{gather*}
\alpha: - \otimes (- \otimes-) \longrightarrow (- \otimes -) \otimes - \ , \\
\lambda: \mathbbm 1 \otimes - \longrightarrow \Id_{\C} \ , \\
\varrho: - \otimes \mathbbm 1 \longrightarrow \Id_{\C}
\end{gather*}
be isomorphisms of functors and let $\gamma_{X,Y}: X \otimes Y \rightarrow Y \otimes X$ (for  $X, Y \in \Ob \C$) be natural morphisms.

\begin{enumerate}[\rm(1)]
\item The 6-tuple $(\mathsf C, \otimes, \mathbbm 1, \alpha, \lambda, \varrho)$ is a \textit{monoidal category} provided that
the following diagrams commute:
$$
\xymatrix@C=40pt{
X \otimes (\mathbbm 1 \otimes Y) \ar[r]^{\alpha_{X,\mathbbm 1,Y}} \ar[d]_{X \otimes \lambda_Y} & (X \otimes \mathbbm 1) \otimes Y \ar[d]^{\varrho_{X} \otimes Y}\\
X \otimes Y \ar@{=}[r] & X \otimes Y
}
$$
and
$$
\xymatrix@C=40pt{
W \otimes (X \otimes (Y \otimes Z)) \ar[r]^-{\alpha_{W,X,Y \otimes Z}} \ar[d]_-{W \otimes \alpha_{X,Y,Z}} & (W\otimes X) \otimes (Y \otimes Z) \ar[r]^{\alpha_{W \otimes X, Y, Z}} & ((W \otimes X) \otimes Y) \otimes Z \ \ \\
W \otimes ((X \otimes Y) \otimes Z) \ar[rr]^-{\alpha_{W,X \otimes Y,Z}} & & (W \otimes (X \otimes Y)) \otimes Z \ar[u]_{\alpha_{W,X,Y} \otimes Z} \ ,
}
$$
where $W, X,Y,Z \in \Ob \mathsf C$. In this situation, $\otimes$ is a \textit{monoidal} (or \textit{tensor}) \textit{product functor for $\C$} and $\mathbbm 1$ is the \textit{$($tensor$)$ unit} of $\otimes$.
\item The 7-tuple $(\C, \otimes, \mathbbm 1, \alpha, \lambda, \varrho, \gamma)$ is a \textit{pre-braided monoidal category}, provided that $(\mathsf C, \otimes, \mathbbm 1, \alpha, \lambda, \varrho)$ is a monoidal category and $\gamma$ is an isomorphism such that $\varrho_{\mathbbm 1} \circ \gamma_{\mathbbm 1, \mathbbm 1} = \lambda_{\mathbbm 1}$. In this case, $\gamma$ is a \textit{pre-braiding} on the monoidal category $\C$.
\item The 7-tuple $(\C, \otimes, \mathbbm 1, \alpha, \lambda, \varrho, \gamma)$ is a \textit{lax braided monoidal category} provided that $(\mathsf C, \otimes, \mathbbm 1, \alpha, \lambda, \varrho)$ is a monoidal category such that $\varrho_{\mathbbm 1} \circ \gamma_{\mathbbm 1, \mathbbm 1} = \lambda_{\mathbbm 1}$ and such that the diagrams
$$
\xymatrix@C=40pt{
X \otimes (Y \otimes Z) \ar[r]^-{\alpha_{X,Y,Z}} \ar[d]_-{X \otimes \gamma_{Y,Z}} & (X\otimes Y) \otimes Z \ar[r]^{\gamma_{X \otimes Y, Z}} & Z \otimes (X \otimes Y) \ar@<-3pt>[d]^{\alpha_{Z,X,Y}} \ \ \\
X \otimes (Z \otimes Y) \ar[r]^-{\alpha_{X,Z,Y}} & (X \otimes Z) \otimes Y \ar[r]^-{\gamma_{X,Z} \otimes Y} & (Z \otimes X) \otimes Y \ ,
}
$$
and
$$
\xymatrix@C=40pt{
(X \otimes Y) \otimes Z \ar[r]^-{\alpha_{X,Y,Z}^{-1}} \ar[d]_-{\gamma_{X,Y} \otimes Z} & X \otimes (Y \otimes Z) \ar[r]^-{\gamma_{X, Y \otimes Z}} & (Y \otimes Z) \otimes X \ar[d]^-{\alpha_{Y,Z,X}^{-1}} \\
(Y \otimes X) \otimes Z \ar[r]^-{\alpha_{Y,X,Z}^{-1}} & Y \otimes (X \otimes Z) \ar[r]^-{Y \otimes \gamma_{X, Z}} & Y \otimes (Z \otimes X) 
}
$$
commute for all $X,Y,Z \in \Ob \C$. These diagrams define the so called \textit{triangle equations}. In this case, $\gamma$ is a \textit{lax braiding} on the monoidal category $\C$.
\item The 7-tuple $(\C, \otimes, \mathbbm 1, \alpha, \lambda, \varrho, \gamma)$ is a \textit{braided monoidal category} provided that $(\mathsf C, \otimes, \mathbbm 1, \alpha, \lambda, \varrho)$ is a monoidal category and $\gamma$ is an isomorphism satisfying the triangle equations. In this case, $\gamma$ is a \textit{braiding} on the monoidal category $\C$.
\item A braided monoidal category is called a \textit{symmetric monoidal
category}, if\linebreak$(\gamma_{X,Y})^{-1} = \gamma_{Y,X}$ for all $X, Y \in \Ob \C$. In this case, $\gamma$ is a \textit{symmetry} on the monoidal category $\C$.
\end{enumerate}
\end{defn}
\begin{rem}\label{rem:opposite} Let $(\C, \otimes, \mathbbm 1, \alpha, \lambda, \varrho)$ be a monoidal category.
\begin{enumerate}[\rm(1)]
\item We will often suppress a huge part of the structure morphisms and simply write $(\C, \otimes, \mathbbm 1)$ instead of $(\C, \otimes, \mathbbm 1, \alpha, \lambda, \varrho)$; if they are needed without explicitly being mentioned, we will refer to them as $\alpha_\C$, $\lambda_\C$ and $\varrho_\C$.
\item Note that in the definition of a lax braided monoidal category we do \textit{not} require that $\gamma$ is an isomorphism.
\item It follows from the axioms (cf. \cite[Prop.\,1.1]{JoSt93}) that the following diagrams commute for all $X, Y, Z \in \Ob \C$.
$$
\xymatrix{
\mathbbm 1 \otimes \mathbbm 1 \ar[r]^-{\lambda_{\mathbbm 1}} \ar@{=}[d] & \mathbbm 1 \ar@{=}@<-3.25pt>[d] \ \ \\
\mathbbm 1 \otimes \mathbbm 1 \ar[r]^-{\varrho_{\mathbbm 1}} & \mathbbm 1 \ ,
}\quad
\xymatrix{
X \otimes (Y \otimes \mathbbm 1) \ar[r]^-{X \otimes \varrho} \ar[d]_-{\alpha} & X \otimes Y \ar@{=}@<-3pt>[d] \ \ \\
(X \otimes Y) \otimes \mathbbm 1 \ar[r]^-{\varrho} & X \otimes Y \ ,
}\quad
\xymatrix{
\mathbbm 1 \otimes (X \otimes Y) \ar[r]^-{\lambda} \ar[d]_-{\alpha} & X \otimes Y \ar@{=}@<-3pt>[d] \ \ \\
(\mathbbm 1 \otimes X) \otimes Y \ar[r]^-{\lambda \otimes Y} & X \otimes Y \ .
}
$$
\item Let $\gamma$ be a braiding on $(\C, \otimes, \mathbbm 1, \alpha, \lambda, \varrho)$. The following diagrams commute (cf. \cite[Prop.\,2.1]{JoSt93}).
$$
\xymatrix{
\mathbbm 1 \otimes X \ar[r]^-{\lambda_X} \ar[d]_-{\gamma_{\mathbbm 1,X}} & X \ar@{=}@<-3pt>[d] \ \ \\
X \otimes \mathbbm 1 \ar[r]^-{\varrho_X} & X \ ,
}
\quad \quad \quad
\xymatrix{
X \otimes \mathbbm 1 \ar[r]^-{\varrho_X} \ar[d]_-{\gamma_{X,\mathbbm 1}} & X \ar@{=}@<-3pt>[d] \ \ \\
\mathbbm 1 \otimes X \ar[r]^-{\lambda_X} & X \ .
}
$$
In particular, $\gamma$ is a pre-braiding and a lax braiding on the monoidal category $\C$.
\item Note that if $(\mathsf C, \otimes, \mathbbm 1, \alpha, \lambda, \varrho)$ is a monoidal category (with braiding $\gamma$), then so is $\mathsf C^\op$ together with the structure morphisms $\alpha^{-1}$, $\lambda^{-1}$ and $\varrho^{-1}$ (with braiding $\gamma^{-1}$).
\end{enumerate}
\end{rem}
\begin{defn}
A monoidal category $(\C, \otimes, \mathbbm 1)$ is called \textit{tensor $k$-category}, if $\C$ is $k$-linear and the tensor product functor $\otimes: \C \times \C \rightarrow \C$ is $k$-bilinear on morphisms, that is, it factors through the \textit{tensor product category} $\C \otimes_k \C$ which is defined as follows:
\begin{align*}
\Ob(\C \otimes_k \C) &:= \Ob(\C \times \C),\\
\Hom_{\C \otimes_k \C}(\underline{X}, \underline{Y}) &:= \Hom_\C(X_1,Y_1) \otimes_k \Hom_\C(X_2, Y_2)
\end{align*}
for objects $\underline{X} = (X_1, X_2)$ and $\underline{Y} = (Y_1, Y_2)$ in $\C \times \C$. A tensor $k$-category is \textit{pre-braided} (\textit{lax braided}, \textit{braided}, \textit{symmetric}) if its underlying monoidal category is pre-braided (lax braided, braided, symmetric).
\end{defn}
\begin{defn}\label{def:monoidalfunc}
Let $(\mathsf C, \otimes_{\mathsf C}, \mathbbm 1_{\mathsf C})$ and $(\mathsf D, \otimes_{\mathsf D}, \mathbbm 1_{\mathsf D})$ be monoidal categories. Let $\mathscr L: \mathsf C \rightarrow \mathsf D$ be a functor,
\begin{align*}
& \phi_{X,Y}: \mathscr LX \otimes_{\mathsf D} \mathscr LY \longrightarrow \mathscr L(X \otimes_{\mathsf C} Y),\\
& \psi_{X,Y}: \scrL (X \otimes_{\mathsf C} Y) \longrightarrow \scrL X \otimes_{\mathsf D} \scrL Y,
\end{align*}
be natural morphisms in $\D$ ($X,Y \in \Ob\mathsf C$), and let
$$
\phi_0: \mathbbm 1_{\mathsf D} \longrightarrow \scrL \mathbbm 1_{\mathsf C}, \quad \psi_0: \scrL \mathbbm 1_{\mathsf C} \longrightarrow \mathbbm 1_{\mathsf D}
$$
be morphisms in $\mathsf D$.

\begin{enumerate}[\rm(1)]
\item The triple $(\scrL, \phi, \phi_0)$ is called \textit{lax monoidal functor} if the following diagrams commute for all $X,Y,Z \in \Ob \mathsf C$:

\begin{equation*}
\xymatrix@C=40pt{
\scrL X \otimes_\D (\scrL Y \otimes_\D \scrL Z) \ar[r]^{\scrL X \otimes_\D \phi_{Y,Z}} \ar[d]_{\alpha_\D\scrL} & \scrL X
\otimes_\D \scrL(Y \otimes_\C Z) \ar[r]^{\phi_{X,Y \otimes_\C Z}} & \scrL(X \otimes_\C (Y \otimes_\C Z)) \ar[d]^{\scrL \alpha_\C}\\
(\scrL X \otimes_\D \scrL Y) \otimes_\D \scrL Z \ar[r]^-{\phi_{X,Y} \otimes_\D \scrL Z} & \scrL (X \otimes_\C Y) \otimes_\D \scrL Z
\ar[r]^-{\phi_{X \otimes_\C Y,Z}} & \scrL ((X \otimes_\C Y) \otimes_\C Z)
}
\end{equation*}
\begin{equation*}
\xymatrix{
\mathbbm 1_\D \otimes_\C \scrL X  \ar[d]_{\phi_0 \otimes_\D \scrL X} \ar[r]^-{\lambda_{\D, \scrL X}} & \scrL X  \\
\scrL \mathbbm 1_\C \otimes_\D \scrL X \ar[r]^-{\phi_{\mathbbm 1_\C, X}} & \scrL(\mathbbm 1_\C \otimes_\C X) \ar[u]_{\scrL (\lambda_{\C,X})}
}
\quad
\xymatrix{
\scrL X \otimes_\D  \mathbbm 1_\D \ar[d]_{\scrL X \otimes_\D \phi_0} \ar[r]^-{\varrho_{\D,\scrL X}} & \scrL X  \\
\scrL X \otimes_\D \scrL \mathbbm 1_\C \ar[r]^-{\phi_{X, \mathbbm 1_\C}} & \scrL(X \otimes_\C \mathbbm 1_\C)
\ar[u]_{\scrL (\varrho_{\C,X})}
}
\end{equation*}
\item The triple $(\scrL, \psi, \psi_0)$ is called \textit{colax monoidal functor} if the functor $\scrL^\op: \C^\op \rightarrow \D^\op$ gives rise to a lax monoidal functor $(\scrL^\op, \psi, \psi_0)$.
\item The 5-tuple $(\scrL, \phi, \phi_0, \psi, \psi_0)$ is called \textit{bilax monoidal functor} if $(\scrL, \phi,\phi_0)$ is a lax monoidal functor and $(\scrL, \psi, \psi_0)$ is a colax monoidal functor such that $\psi_0 \circ \phi_0 = \id_{\mathbbm 1_\D}$ and the following diagrams commute for all objects $X, Y$ in $\C$.
\begin{align*}
\xymatrix@C=35pt{
\mathbbm 1_\D \ar[r]^-{\phi_0} \ar[d]_-{\lambda_{\D, \mathbbm 1_\D}} & \scrL \mathbbm 1_\C \ar[r]^-{\scrL(\lambda_{\C, \mathbbm 1_\C})} & \scrL(\mathbbm 1_\C \otimes_\C \mathbbm 1_\C) \ar[d]^-{\psi_{\mathbbm 1_\C, \mathbbm 1_\C}} \\
\mathbbm 1_\D \otimes_\D \mathbbm 1_\D \ar[rr]^-{\phi_0 \otimes_\D \phi_0} && \scrL \mathbbm 1_\C \otimes_\D \scrL\mathbbm 1_\C
}\\
\xymatrix@C=35pt{
\mathbbm 1_\D & \scrL \mathbbm 1_\C \ar[l]_-{\psi_0} & \scrL(\mathbbm 1_\C \otimes_\C \mathbbm 1_\C)  \ar[l]_-{\scrL(\lambda^{-1}_{\C, \mathbbm 1_\C})}  \\
\mathbbm 1_\D \otimes_\D \mathbbm 1_\D \ar[u]^-{\lambda^{-1}_{\D, \mathbbm 1_\D}} && \scrL \mathbbm 1_\C \otimes_\D \scrL \mathbbm 1_\C \ar[u]_-{\phi_{\mathbbm 1_\C, \mathbbm 1_\C}} \ar[ll]_-{\psi_0 \otimes_\D \psi_0} 
}
\end{align*}
$$
\xymatrix@C=65pt{
\scrL(X \otimes_\C \mathbbm 1_\C) \otimes_\D \scrL(\mathbbm 1_\C \otimes_\C Y) \ar[r]^-{\psi_{X,\mathbbm 1_\C} \otimes_\D \psi_{\mathbbm 1_\C, Y}}  \ar[d]_-{\phi_{X \otimes_\C \mathbbm 1_\C, \mathbbm 1_\C \otimes_\C Y}} & (\scrL X \otimes_\D \scrL \mathbbm 1_\C) \otimes_\D (\scrL \mathbbm 1_\C \otimes_\D \scrL Y) \ar[d]^-{\phi_{X,\mathbbm 1_\C} \otimes_\D \phi_{\mathbbm 1_\C, Y}} \\
\scrL(X \otimes_\C \mathbbm 1_\C \otimes_\C \mathbbm 1_\C \otimes_\C Y) \ar[r]^-{\psi_{X \otimes_\C \mathbbm 1_\C, \mathbbm 1_\C \otimes_\C Y}} & \scrL(X \otimes_\C \mathbbm 1_\C) \otimes_\D \scrL(\mathbbm 1_\C \otimes_\C Y)
}
$$
\item The triple $(\scrL, \phi, \phi_0)$ is called an \textit{almost strong monoidal functor} if it is a lax monoidal functor and $\phi_0$ is invertible. The triple $(\scrL, \psi, \psi_0)$ is called an \textit{almost costrong monoidal functor} if $(\scrL^\op, \psi, \psi_0)$ is an almost strong monoidal functor (that is, $(\scrL, \psi, \psi_0)$ is colax and $\psi_0$ is invertible).
\item The triple $(\scrL, \phi, \phi_0)$ is called a \textit{strong monoidal functor} if it is an almost strong monoidal functor and $\phi$ is invertible. The triple $(\scrL, \psi, \psi_0)$ is called a \textit{costrong monoidal functor} if it is an almost costrong monoidal functor and $\psi$ is invertible.
\item The 5-tuple $(\scrL, \phi, \phi_0, \psi, \psi_0)$ is called a \textit{bistrong monoidal functor} if it is a bilax monoidal functor such that $\phi$, $\phi_0$, $\psi$ and $\psi_0$ are invertible.
\end{enumerate}
\end{defn}
\begin{rem}
\begin{enumerate}[\rm(1)]
\item In \cite{AgMa10} the authors define bilax monoidal functors for functors between braided monoidal categories. Since the applications we are interested in frequently address the interplay of (ordinary) monoidal categories and monoidal categories that admit a braiding, we will stick to the weaker definition made above.
\item If $(\scrL, \phi, \phi_0)$ is a strong monoidal functor, then $(\scrL, \phi^{-1}, \phi_0^{-1})$ is costrong. In fact, this assignment defines a bijection between strong monoidal and costrong monoidal functors. Moreover, it is easily checked that $(\scrL, \phi, \phi_0, \phi^{-1}, \phi_0^{-1})$ is bistrong. One can show that if $(\scrL, \phi, \phi_0, \psi, \psi_0)$ is a bistrong monoidal functor, then $\psi = \phi^{-1}$ and $\psi_0 = \phi_0^{-1}$ is necessarily implied (compare the proof of \cite[Prop.\,3.41]{AgMa10} and the statement \cite[Prop.\,3.45]{AgMa10}). That is, a strong (or, equivalently, costrong, bistrong) monoidal structure on $\mathscr L$ is uniquely determined by $\mathscr L$.
\end{enumerate}
\end{rem}
\begin{defn}\label{def:flatcoflat}
Let $(\C, \otimes, \mathbbm 1)$ be a monoidal category.
\begin{enumerate}[\rm(1)]
\item Assume that $\C$ is an exact $k$-category. An object $X \in \Ob \C$ is called \textit{flat} (respectively \textit{coflat}), if $X \otimes - $ (respectively $- \otimes X$) is an exact functor. Denote by $\mathsf{Flat}(\C)$ ($\mathsf{Coflat}(\C)$) the full subcategory of $\C$ consisting of all flat (coflat) objects.
\item The monoidal category $(\C, \otimes, \mathbbm 1)$ is a \textit{weak exact monoidal $k$-category} (\textit{coweak exact monoidal $k$-category}) if $\C$ is an exact $k$-category and every object in $\C$ is flat (coflat). 
\end{enumerate}
\end{defn}
\begin{lem}\label{prop:flatcoflat23}
Let $(\C, \otimes, \mathbbm 1)$ be a monoidal category, such that $\C$ is an exact $k$-category. Then $\mathsf{Flat}(\C)$ and $\mathsf{Coflat}(\C)$ are monoidal categories whose monoidal structure is inherited from $\C$. 
\end{lem}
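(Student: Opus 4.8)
The plan is to verify the three things that make $\mathsf{Flat}(\C)$ a monoidal subcategory of $\C$: that it contains the unit, that it is closed under $\otimes$, and that the coherence data of $\C$ restrict to it. The whole argument rests on a single elementary observation, which I would record first: if $F, G \colon \C \to \C$ are functors, $\eta \colon F \xrightarrow{\sim} G$ is a natural isomorphism, and $G$ is exact, then $F$ is exact. Indeed, given an admissible short exact sequence $\xi \in \sigma(\C)$, the isomorphism $\eta$ furnishes an isomorphism of sequences between $F(\xi)$ and $G(\xi)$; since $G(\xi) \in \sigma(\C)$ and $\sigma(\C)$ is closed under isomorphism by Proposition \ref{prop:exactcat}(\ref{prop:exactcat:1}), we conclude $F(\xi) \in \sigma(\C)$. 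Applying this with $F = \mathbbm 1 \otimes -$, $G = \Id_\C$ and $\eta = \lambda$ shows at once that $\mathbbm 1 \otimes -$ is exact (the identity functor being trivially exact), so that $\mathbbm 1 \in \mathsf{Flat}(\C)$.

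Next I would establish closure under the tensor product. Let $X, Y \in \Ob \mathsf{Flat}(\C)$, so that both $X \otimes -$ and $Y \otimes -$ are exact; their composite $X \otimes (Y \otimes -)$ is then exact as well, being a composition of maps $\sigma(\C) \to \sigma(\C)$. The associator supplies a natural isomorphism $X \otimes (Y \otimes -) \xrightarrow{\sim} (X \otimes Y) \otimes -$, so by the observation above $(X \otimes Y) \otimes -$ is exact; that is, $X \otimes Y \in \mathsf{Flat}(\C)$. Hence $\otimes$ restricts to a functor $\mathsf{Flat}(\C) \times \mathsf{Flat}(\C) \to \mathsf{Flat}(\C)$.

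It remains to assemble these facts. Since $\mathsf{Flat}(\C)$ is a \emph{full} subcategory of $\C$ containing $\mathbbm 1$ and closed under $\otimes$, every component $\alpha_{X,Y,Z}$, $\lambda_X$, $\varrho_X$ with $X,Y,Z$ flat is a morphism of $\C$ between flat objects, hence a morphism of $\mathsf{Flat}(\C)$; naturality and the pentagon and triangle identities are inherited verbatim because they already hold in $\C$. Thus $(\mathsf{Flat}(\C), \otimes, \mathbbm 1, \alpha, \lambda, \varrho)$ is a monoidal category with structure inherited from $\C$. The statement for $\mathsf{Coflat}(\C)$ is entirely dual: one uses $\varrho \colon - \otimes \mathbbm 1 \xrightarrow{\sim} \Id_\C$ to see that $\mathbbm 1$ is coflat, and the associator isomorphism $- \otimes (X \otimes Y) \xrightarrow{\sim} (- \otimes X) \otimes Y$ together with exactness of the composite of $- \otimes X$ and $- \otimes Y$ to see that coflat objects are closed under $\otimes$. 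There is no serious obstacle: the only point demanding care is the transport-of-exactness lemma of the first paragraph, and even that reduces immediately to the closure of $\sigma(\C)$ under isomorphisms recorded in Proposition \ref{prop:exactcat}.
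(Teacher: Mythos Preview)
Your proof is correct and follows essentially the same approach as the paper: show $\mathbbm 1$ is flat via the unit isomorphism $\lambda$, show closure under $\otimes$ via the associator reducing $(X\otimes Y)\otimes -$ to the composite of the exact functors $X\otimes -$ and $Y\otimes -$, and dually for coflat. You are simply more explicit than the paper---spelling out the transport-of-exactness observation and the full-subcategory argument---but the substance is identical.
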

\begin{proof}
By definition, the functors $\mathbbm 1 \otimes - \cong \Id_\C \cong - \otimes \mathbbm 1$ are exact. Hence $\mathbbm 1$ is flat and coflat. If $X$, $Y$ are flat, and $X'$, $Y'$ are coflat, then
$$
(X \otimes Y) \otimes - \cong X \otimes (Y \otimes -) \cong (X \otimes -) \circ (Y \otimes -)
$$
and
$$
- \otimes (X' \otimes Y') \cong (- \otimes X') \otimes Y' \cong (- \otimes Y') \circ (- \otimes X')
$$
imply that $X \otimes Y$ is flat and $X' \otimes Y'$ is coflat. 
\end{proof}
\section{Examples: Exact and monoidal categories}\label{sec:examples}
\begin{exa}[Chain complexes]\label{exa:functors}
Let $\C$ be an exact $k$-category with defining embedding $i=i_\C: \C \rightarrow \A_\C$ into the abelian $k$-category $\A = \A_\C$. Denote by $\Ch(\C)$ the category of chain complexes in $\C$ and by $\Ch(\A)$ the category of chain complexes in $\A$. Both $\Ch(\C)$ and $\Ch(\A)$ are $k$-linear, additive categories and the functor $i$ induces
a $k$-linear, full and faithful functor $\Ch(i): \Ch(\C) \rightarrow \Ch(\A)$. By taking kernels and cokernels
degreewise, $\Ch(\A)$ is abelian, and the image of $\Ch(i)$ defines an extension closed subcategory of $\Ch(\A)$. For this reason, $\Ch(\C)$ is an exact category. In fact, a map in $\Ch(\C)$ is an admissible monomorphism (admissible epimorphism) if and only if it is an admissible monomorphism (admissible epimorphism) in $\C$ in every degree.
\end{exa}
\begin{exa}[Functors]
Let $\mathsf P$ be a (small) preadditive category and let $(\C,i_\C)$ be an exact $k$-category. The additive $k$-category of additive functors $\mathsf P \rightarrow \C$,
$$
\C^{\mathsf P} = \Fun_{\mathbb Z}(\mathsf P, \C) = \Add(\mathsf P, \C),
$$
embeds into the abelian category $\A_\C^{\mathsf P} = \Add(\mathsf P, \A_\C)$ via $i^{\mathsf P}_\C$. $\C^{\mathsf P}$ is extension closed because $\C$ is. Thus $\C^{\mathsf P}$ is a $k$-linear exact category. This construction enables us to recover the exact category of chain complexes over $\C$.

Let $\overrightarrow{\Delta}$ be the infinite quiver
$$
\xymatrix{
\cdots \ar[r]^-{a_{-1}} & \underset{-1}{\bullet} \ar[r]^-{a_{0}} & \underset{0}{\bullet} \ar[r]^-{a_1} & \underset{1}{\bullet} \ar[r]^-{a_2} & \cdots \ .
}
$$
We let $\underline{\mathsf W}_{\overrightarrow{\Delta}}$ be the category with objects $\Delta_0$ (the vertices of $\overrightarrow{\Delta}$) and, for $i, j \in \Delta_0$, with morphisms
$$
\Hom_{\underline{\mathsf W}_{\overrightarrow{\Delta}}}(i, j) = k{\Delta}(i,j) / k{\Delta}_2(i,j),
$$
where ${\Delta}_r(i,j)$ is the set of all paths $w$ form $i$ to $j$ of length $\mathrm{length}(w) \geq r$, and ${\Delta}(i,j) := {\Delta}_0(i,j)$. In particular, $\Hom_{\underline{\mathsf W}_{\overrightarrow{\Delta}}}(i, j) \neq 0$ if, and only if, $j = i$ or $j = i+1$. It is obvious that $\Ch(\C) \cong \Add(\underline{\mathsf W}_{\overrightarrow{\Delta}}^\op,\C) = \C^{\underline{\mathsf W}_{\overrightarrow{\Delta}}^\op}.$
\end{exa}
\begin{exa}[Bimodules]\label{exa:bimodules}
Let $A$ be a $k$-algebra. Then the categories $\Mod(A)$, $\mod(A)$, $\Proj(A)$, $\proj(A)$ are
$k$-linear and exact in the evident way. More generally, every abelian category $\A$ is exact, and so are its full subcategories $\Proj(\A)$ and $\Inj(\A)$. Let $A^\ev = A \otimes_k A^\op$ be the \textit{enveloping algebra of $A$}. We regard $A$ as an $A^\ev$-module through
$$
(a_1 \otimes a_2)a := a_1 a a_2 \quad \text{(for $a,a_1,a_2 \in A$)}.
$$
Every $A^\ev$-module $M$ may be turned into a left and right $A$-module (with central $k$-action) via
$$
a m := (a \otimes 1_A)m, \quad m a := (1_A \otimes a)m \quad (\text{for $a \in A, \ m \in M$}).
$$
Therefore we have three forgetful functors:
\begin{align*}
\scrL_A: \Mod(A^\ev)&\longrightarrow \Mod(A), \quad \mathscr R_A: \Mod(A^\ev) \longrightarrow \Mod(A^\op),\\ &\mathscr X_{A^\ev}: \Mod(A^\ev) \longrightarrow \Mod(k).
\end{align*}
Observe that $\mathscr X_{A^\ev}$ factors through both $\scrL_A$ and $\mathscr R_A$. Denote by $\P(A)$ and $\mathsf{p}(A)$ the full subcategories of $\Mod(A^\ev)$ respectively $\mod(A^\ev)$ consisting of all $A$-modules which are $A$-projective on both sides, that is,
$$
\P(A) = \big\{ M \in \Mod(A^\ev) \mid \scrL_A(M) \in \Proj(A), \ \mathscr R_A(M) \in \Proj(A^\op) \big\}
$$
and
$$
\mathsf{p}(A) = \big\{ M \in \mod(A^\ev) \mid \scrL_A(M) \in \Proj(A), \ \mathscr R_A(M) \in \Proj(A^\op) \big\}.
$$
The adjunction isomorphism $\Hom_{A}(A \otimes_k A, -) \cong \Hom_k(A, \Hom_A(A,-))$ implies $\Proj(A^\ev) \subseteq \P(A)$ and $\proj(A^\ev) \subseteq \mathsf{p}(A)$ in case $A$ is projective over $k$. But since, for example, $A$ does not need to be projective as an $A^\ev$-module, the other inclusion will not hold in general, even if $A$ is $k$-projective. In fact, G.\,Hochschild showed in \cite{Ho45} that, if $k$ is a field, $A$ being projective over $A^\ev$ is equivalent to $A$ being separable over $k$.

The tensor product $\otimes_A$ over $A$ defines a functor $\Mod(A^\ev) \times \Mod(A^\ev) \rightarrow \Mod(A^\ev)$. For every $A^\ev$-module $M$, there are natural $A^\ev$-module homomorphisms
$$
M \otimes_A A \cong M \cong A \otimes_A M
$$
and hence it is easy to believe, that $(\Mod(A^\ev), \otimes_A, A)$ is a monoidal category. The functor $\otimes_A$ restricts to $\P(A) \times \P(A) \rightarrow \P(A)$ (since, for instance, $\Hom_A(P \otimes_A Q, - ) \cong \Hom_A(P, \Hom_A(Q,-))$ for all $A^\ev$-modules $P,Q$). It follows that the triple $(\P(A), \otimes_A, A)$ is a monoidal category. Since projective $A$-modules are flat over $A$, every object in the monoidal category $(\P(A), \otimes_A, A)$ is flat and coflat. The category $\P(A)$ is going to reappear in various stages of this monograph.
\end{exa}
\begin{exa}[Endofunctors]\label{exa:endofunc}
 Let $\A$ be a $k$-linear abelian category. The category $\mathsf{End}_k(\A) = \mathsf{Fun}_k(\A, \A)$ of all $k$-linear endofunctors of $\A$ is $k$-linear and abelian, hence exact. The category $\mathsf{End}_k(\mathsf{A})$ has the structure of a monoidal category; with respect to this monoidal structure, every endofunctor of $\A$ is coflat, and every \textit{exact} endofunctor of $\A$ is flat and coflat (see Chapter \ref{chap:identityfunc}). The monoidal product functor is given by the composition of functors:
$$
\circ: \End_k(\mathsf{A}) \times \End_k(\mathsf{A}) \rightarrow \End_k(\mathsf{A}), \ (\mathscr X, \mathscr Y) \mapsto \mathscr X \circ \mathscr Y.
$$
The associativity isomorphism is simply given by the associativity of the composition of functors, whereas the left and right unit morphisms are given by the identity morphisms (e.g., $\Id_\mathsf{A} \circ \mathscr X = \mathscr X = \mathscr X \circ \Id_\mathsf{A}$ clearly does hold for evey $k$-linear functor $\mathscr X: \mathsf{A} \rightarrow \mathsf{A}$). Hence $(\mathsf{End}_k(\A), \circ, \Id_\A)$ is a monoidal category whose monoidal structure is \textit{strict} (that is, the structure morphisms are the \textit{identity} morphisms). In Chapter \ref{chap:identityfunc} we are going to investigate the category of $k$-linear endofunctors on an abelian $k$-linear category in further detail.
\end{exa}
\begin{exa}[Bialgebras]\label{exa:bialgebras}
Let $\mathcal B = (B, \nabla, \eta, \Delta, \varepsilon)$ be a bialgebra over $k$ (see Section \ref{sec:algcoalhopf} of the appendix of this monograph for examples and further details on bialgebras). Thanks to the comultiplication map $\Delta$, the $k$-module $M \otimes_k N$ is a $B$-module for every pair of $B$-modules $M$ and $N$. More concretely, the $B$-action on $M \otimes_k N$ is given by
$$
b(m \otimes n) := \Delta(b)\cdot(m \otimes n) = \sum_{(b)}b_{(1)}m \otimes b_{(2)}n \quad \text{(for $b \in B$, $m \in M$ and $n \in N$)}.
$$
The $B$-module that arises in this way will be denoted by $M \boxtimes_k N$. It is not difficult to see that ($\Mod(B), \boxtimes_k, k)$ carries the structure of a monoidal category. In the special case when $k$ is a field, $- \boxtimes_k M$ and $M \boxtimes_k -$ are exact functors, i.e., every object in $(\Mod(B), \boxtimes_k, k)$ is flat and coflat. In general we have to pass to smaller categories to achieve flatness (respectively coflatness) for objects. For instance, let $\Sigma \subseteq \mathsf{Flat}(k)$ be a subclass of objects, such that $\{0, k\}, \Sigma \otimes_k \Sigma \subseteq \Sigma$ and such that the full subcategory of $\Mod(k)$ defined by $\Sigma$ is closed under isomorphisms and an extension closed subcategory of $\Mod(k)$. Then the full subcategory 
$$
\C_\Sigma(\mathcal B) := \big\{ M \in \Mod(B) \mid \mathscr X_B(M) \in \Sigma \big\} \subseteq \Mod(B)
$$
is closed under isomorphisms and an extension closed subcategory of $\Mod(B)$ which contains $0$, $k$ and also $\C_\Sigma(\mathcal B) \boxtimes_k \C_\Sigma(\mathcal B)$, and whose objects are flat and coflat. Here $\mathscr X_B: \Mod(B) \rightarrow \Mod(k)$ denotes the forgetful functor. By specializing to $\Sigma = \Proj(k)$, we get the exact and monoidal $k$-category $(\C_{\Proj(k)}(\mathcal B), \boxtimes_k, k)$. It is closed under taking direct summands, and will be of particular interest in Section \ref{sec:kernel}.
\end{exa}



\chapter{Extension categories}\label{cha:extcats}
For a given exact category $\C$, and for fixed objects $X$ and $Y$, one may consider the class of admissible short exact sequences $0 \rightarrow Y \rightarrow E \rightarrow X \rightarrow 0$ in $\C$. It gives rise to a subcategory of $\Ch(\C)$ by putting $X$ in degree $-1$, and $Y$ in degree $1$. The morphisms are obtained by just allowing chain morphisms, that are the identity morphisms in degrees $-1$ and $1$. In an analogous way, the category of \textit{admissible $n$-extension of $X$ by $Y$} arises. Within this chapter, we will study some homotopical properties of those categories, leading to the insight, that their (lower) homotopy groups can be identified (see Chapter \ref{cha:retakh}). The content of the following two chapters is strongly related to \cite{Bu13}, \cite{NeRe96}, \cite{Re86} and \cite{Sch98}.
\section{Definition and properties}\label{sec:defprop}
\begin{nn}
In what follows, we let $\C$ be an exact $k$-category with defining embedding $i=i_\C: \C \rightarrow \A_\C$ into the abelian $k$-category $\A = \A_\C$. Let $X$ and $Y$ be two objects in $\C$, and let $n \geq 1$ be an integer. Within this section, we introduce and investigate the category of admissible $n$-extensions in $\C$. It should be pointed out that most of the observations presented here are direct generalizations of the ones in \cite{Sch98}.
\end{nn}
\begin{defn}\label{def:adexseq}
A sequence 
$$
\xymatrix@C=15pt{
\xi & \equiv & 0 \ar[r] & Y \ar[r] & E_{n-1} \ar[r] & \cdots \ar[r] & E_1 \ar[r] & E_0 \ar[r] & X \ar[r] & 0
}
$$
of morphisms in $\C$ is called an \textit{admissible $n$-extension of $X$ by $Y$} (or an \textit{admissible exact sequence of
length $n$}) if there exist factorizations
$$
\xymatrix{
&&& K_{n-2} \ar@{=}[r] & \cdots \ar@{=}[r] & K_2 \ar@[red][d] &&&\\
0 \ar[r] & Y \ar[r] & E_{n-1} \ar@[red][d] \ar[r] & E_{n-2} \ar@[red][u] \ar[r] & \cdots \ar[r] & E_1 \ar[r] \ar@[red][d] & E_0 \ar[r] & X \ar[r] & 0\\
&& K_{n-1} \ar@{=}[r] & K_{n-1} \ar@[red][u] && K_1 \ar@{=}[r] & K_1 \ar@[red][u] &&
}
$$
in $\mathsf C$ such that the sequences
$$
\xymatrix@C=18pt@R=10pt{
0 \ar[r] & E_i \ar@[red][r] & K_i \ar@[red][r] & E_{i-1} \ar[r] & 0\, \ \\
0 \ar[r] &Y \ar[r] & E_{n-1} \ar@[red][r] & K_{n-1} \ar[r] & 0\, , \\
0 \ar[r] & K_1 \ar@[red][r] & E_0 \ar[r] & X \ar[r] & 0\, ,
}
\quad \quad (\text{for $1 \leq i \leq n-1$}),
$$
are admissible short exact sequences in $\mathsf C$.
\end{defn}

\begin{defn}\label{def:catofext}
Let $\mathcal Ext^n_\C(X,Y)$ denote the category whose objects are the admissible $n$-extensions $\xi$ of $X$ by $Y$ in $\mathsf C$:
$$
\xymatrix@C=18pt{
\xi & \equiv & 0 \ar[r] & Y \ar[r]^-{e_n} & E_{n-1} \ar[r]^-{e_{n-1}} & \cdots \ar[r]^-{e_2} & E_1 \ar[r]^-{e_1} & E_0 \ar[r]^-{e_0} & X \ar[r] & 0 \ .
}
$$
A morphism in $\mathcal Ext^n_\C(X,Y)$ is given by a commutative diagram in $\C$ of the following shape:
$$
\xymatrix@C=15pt{
\xi \ar[d]_-\alpha & \equiv & 0 \ar[r] & Y \ar[r] \ar@{=}[d] & E_{n-1} \ar[r] \ar[d] & E_{n-2} \ar[r] \ar[d]
&\cdots \ar[r] & E_0
\ar[r] \ar[d] & X \ar@{=}[d] \ar[r] & 0 \ \ \\
\zeta & \equiv & 0 \ar[r] & Y \ar[r]                 & F_{n-1} \ar[r]          & F_{n-2} \ar[r]          &
\cdots \ar[r] & F_0
\ar[r]             & X \ar[r] & 0 \ .
}
$$
Let $\mathcal Ext^0_\C(X,Y)$ be the category with object set $\Hom_\C(X,Y)$ and trivial morphisms.
\end{defn}
\begin{nota}
We will abbreviate the middle segment of an admissible $n$-extension $\xi$ of $X$ by $Y$ by $\mathbb E$ and use the shorthand notation
$$
\xymatrix@!C=12pt{
\xi & \equiv & 0 \ar[r] & Y \ar[r] & \mathbb E \ar[r] & X \ar[r] & 0 \ .
}
$$
$\mathbb E$ may be regarded as a complex concentrated in degrees $0$ up to $n-1$. Clearly, every morphism $\alpha: \xi \rightarrow \mathsf \zeta$ in $\mathcal Ext^n_\C(X,Y)$ gives rise to a morphism $\alpha: \mathbb E \rightarrow \mathbb F$ of complexes.
\end{nota}
\begin{lem}\label{lem:prop_ext1} Let $\alpha: \xi \rightarrow \zeta$ be a morphism in $\mathcal Ext^n_{\C}(X,Y)$.
\begin{enumerate}[\rm(1)]
\item\label{lem:prop_ext1:1} If $\alpha$ is an admissible monomorphism in $\Ch(\C)$, the pushout of any diagram of the form
$$
\xymatrix{\zeta' & \mathsf \xi \ar[r]^-\alpha \ar[l] & \mathsf \zeta}
$$
exists in $\mathcal Ext^n_{\C}(X,Y)$.
\item\label{lem:prop_ext1:2} If $\alpha$ is an admissible epimorphism in $\Ch(\C)$, the pullback of any diagram of the form
$$
\xymatrix{\xi' \ar[r] & \zeta & \mathsf \xi \ar[l]_-\alpha}
$$
exists in $\mathcal Ext^n_{\C}(X,Y)$.
\end{enumerate}
\end{lem}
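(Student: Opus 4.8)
The plan is to construct the pushout by forming it degreewise in the exact category $\Ch(\C)$ and then to verify that the resulting complex is again an admissible $n$-extension; assertion (\ref{lem:prop_ext1:2}) will then follow formally from (\ref{lem:prop_ext1:1}) by passing to the opposite category.

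First I would record that $\alpha\colon \xi \to \zeta$ being an admissible monomorphism in $\Ch(\C)$ means, by Example \ref{exa:functors}, that each component $\alpha_i\colon E_i \to F_i$ is an admissible monomorphism in $\C$, and that in degrees $-1$ and $n$ it is the identity of $X$ and of $Y$. Since $\Ch(\C)$ is exact, Proposition \ref{prop:exactcat}(\ref{prop:exactcat:3}) applied to the admissible short exact sequence $0 \to \xi \xrightarrow{\alpha} \zeta \to \Coker(\alpha) \to 0$ and to the morphism $\xi \to \zeta'$ produces the pushout $P = \zeta \oplus_\xi \zeta'$ in $\Ch(\C)$; as the construction in the proof of that proposition together with Lemma \ref{lem:exact_func_push_pull} shows, it is computed degreewise, so $P_i = F_i \oplus_{E_i} F'_i$. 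Because $\alpha$ and the given map $\xi \to \zeta'$ are identities in the outer degrees, one has $P_{-1} = X$ and $P_n = Y$ with the two structural maps $\zeta \to P$ and $\zeta' \to P$ equal to the identity there; hence both are morphisms in $\mathcal Ext^n_\C(X,Y)$ as soon as $P$ is known to be an object of it.

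Next I would verify that $P$ is an admissible $n$-extension. Applying the exact embedding $\Ch(i_\C)$ to the admissible short exact sequence $0 \to \xi \to \zeta \oplus \zeta' \to P \to 0$ in $\Ch(\C)$ and invoking the long exact homology sequence shows that $\Ch(i_\C)(P)$ is acyclic in $\A$, since $\xi$, $\zeta$ and $\zeta'$ are. To upgrade plain exactness in $\A$ to an admissible factorisation in $\C$, I would work with the waist objects $K_\bullet$ of Definition \ref{def:adexseq}. The morphism $\alpha$ carries the factorising conflations of $\xi$ to those of $\zeta$, and by an induction along the complex — comparing the conflations $0 \to K_{i+1} \to E_i \to K_i \to 0$ for $\xi$ and for $\zeta$ by means of the snake lemma — one sees that $\alpha$ induces admissible monomorphisms $K_i^\xi \to K_i^\zeta$ whose cokernels $K_i^{\Coker(\alpha)}$ again lie in $\C$; the boundary cases $K_0 = X$ and $K_n = Y$, where $\alpha$ is the identity, start the induction. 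Pushing out the waist conflations then exhibits each waist of $P$ as $K_i^\zeta \oplus_{K_i^\xi} K_i^{\zeta'}$, an object of $\C$, and the corresponding short exact sequences are admissible because all three of their terms lie in $\C$ (Definition \ref{def:exactcat}). This step — keeping every cycle object inside $\C$, and not merely inside the ambient abelian category $\A$ — is the delicate point and the main obstacle of the proof; it is exactly here that the exact-category axioms of Proposition \ref{prop:exactcat} and the cancellation properties recorded in Remark \ref{rem:properties_exfunc} must be brought to bear.

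Finally, the universal property transfers at no cost: any competing cocone in $\mathcal Ext^n_\C(X,Y)$ is in particular a cocone in $\Ch(\C)$, so the pushout in $\Ch(\C)$ yields a unique mediating chain map, and reading this map in degrees $-1$ and $n$ shows that it is the identity on $X$ and on $Y$, hence a morphism of $\mathcal Ext^n_\C(X,Y)$; its uniqueness is inherited from $\Ch(\C)$. For (\ref{lem:prop_ext1:2}) I would apply (\ref{lem:prop_ext1:1}) to the opposite category $\C^\op$, which is again an exact $k$-category: reversing the arrows turns an admissible $n$-extension of $X$ by $Y$ into one of $Y$ by $X$, an admissible epimorphism into an admissible monomorphism, and the sought pullback into the pushout just constructed.
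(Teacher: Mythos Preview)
Your construction matches the paper's: form the pushout degreewise in $\Ch(\C)$ and observe that in degrees $-1$ and $n$, where both legs out of $\xi$ are the identity, the pushout of $\id$ along $\id$ is again $\id$, so the outer terms of $P$ are $X$ and $Y$. The paper stops right there. You go further and correctly isolate what the paper glosses over, namely that one must still check that $P$ is an \emph{admissible} $n$-extension, i.e., that each waist $K_i^P$ lies in $\C$.

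Here, however, your inductive argument has a genuine gap. The snake lemma in $\A$ does give short exact sequences $0 \to K_i^\xi \to K_i^\zeta \to K_i^C \to 0$ and $0 \to K_i^{\zeta'} \to K_i^P \to K_i^C \to 0$, but to conclude that $K_i^\alpha$ is an \emph{admissible} monomorphism you would need its cokernel $K_i^C$ to lie in $\C$; the inductive step only exhibits $K_i^C$ as the kernel of a map between objects of $\C$, and a general exact category is not closed under such kernels. The cancellation properties in Remark~\ref{rem:properties_exfunc} do not help either, since they presuppose the existence of that very kernel or cokernel in $\C$. In fact, for $n\ge 4$ the statement fails without extra hypotheses: take $\C\subseteq\Vect_k$ the full (extension-closed) subcategory of spaces of dimension $\neq 1$, let $\xi$ be any admissible $4$-extension of $k^2$ by $k^2$ with all waists $k^2$, set $\zeta=\xi\oplus C$ for an acyclic complex $C$ with $K_2^C\cong k$, and push the inclusion $\alpha\colon\xi\hookrightarrow\zeta$ out against $\zeta'=\sigma_4(k^2,k^2)$; then $K_2^P\cong k\notin\C$.

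So neither your argument nor the paper's one-line proof is complete for arbitrary exact $\C$. The paper only invokes this lemma when $\C$ is factorizing, and in every concrete instance this comes from closure under kernels of epimorphisms or cokernels of monomorphisms (Lemma~\ref{lem:admono}, Examples~\ref{exa:condition}); under either closure condition your waist induction does go through, since each $K_i^C$ is then forced into $\C$ and the rest follows by extension-closedness.
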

\begin{proof}
We only prove (\ref{lem:prop_ext1:1}) since (\ref{lem:prop_ext1:2}) follows similarly. The pushout $\mathbb P$ of the described diagram surely exists in $\Ch(\C)$ for $\Ch(\C)$ is exact. Since an admissible monomorphism in $\Ch(\C)$ is given by a chain morphism being an admissible monomorphism in every degree and, furthermore, a pushout is nothing other than a cokernel of a certain morphism, it is taken degreewise. But if $A$ is an object in $\C$, the pushout of
$$
\xymatrix{A & A \ar[r]^-{\id_A} \ar[l]_-{\id_A}  & A}
$$
clearly is $A$ together with the identity morphisms. So the diagram
$$
\xymatrix{\xi \ar[r]^-\alpha \ar[d] & \zeta \ar[d]\\
\zeta' \ar[r] & \mathbb P}
$$
in $\Ch(\C)$ actually belongs to $\mathcal Ext^n_{\C}(X,Y)$.
\end{proof}
\begin{defn}\label{def:closedker}
The exact category $\C$ is
\begin{enumerate}[\rm(1)]
\item \textit{closed under kernels of epimorphisms} if for a morphism $f$ in $\C$ the following condition is satisfied:
$$
\text{$f$ is an admissible epimorphism in $\C$ \ $\Longleftrightarrow$ \ $i(f)$ is an epimorphism in $\A$}.
$$
That is, $i$ \textit{detects admissible epimorphisms}.
\item \textit{closed under cokernels of mono\-morphisms} if for a morphism $f$ in $\C$ the following condition is satisfied:
$$
\text{$f$ is an admissible monomorphism in $\C$ \ $\Longleftrightarrow$ \ $i(f)$ is a monomorphism in $\A$}.
$$
That is, $i$ \textit{detects admissible monomorphisms}.\end{enumerate}
\end{defn}
\begin{exas}\label{exa:condition}
\begin{enumerate}[\rm(1)]
\item Clearly, if $\C$ is closed under kernels of epimorphisms (cokernels of monomorphisms), then $\C^\op$ is closed under cokernels of monomorphisms (kernels of epimorphisms).
\item Every abelian category is closed under kernels of epimorphisms and under cokernels of monomorphisms.
\item Let $A$ be a $k$-algebra. The exact category $\P(A)$ (cf. Example \ref{exa:bimodules}) is closed under kernels of epimorphisms: Let $f: P \rightarrow Q$ be a surjective map between objects in $\P(A)$. Then $\Ker(f)$ is, as a left $A$-module, a direct summand of $P$, and therefore $A$-projective on the left. Similarly, $\Ker(f)$ is also $A$-projective on the right and hence belongs to $\P(A)$.
\item Let $\mathcal B$ be a bialgebra over $k$. The exact category $\C_{\Proj(k)}(\mathcal B)$ (see Example \ref{exa:bialgebras}) is closed under kernels of epimorphisms.
\end{enumerate}
\end{exas}
\begin{nn}Let $\beta: \xi \rightarrow \zeta$ be a morphism in $\mathcal Ext^n_\C(X,Y)$, and assume that $\xi$ and $\zeta$ are given as follows.
$$
\xymatrix@C=18pt@R=10pt{
\xi & \equiv & 0 \ar[r] & Y \ar[r]^-{e_n} & E_{n-1} \ar[r]^-{e_{n-1}} & E_{n-2} \ar[r]^-{e_{n-2}}
&\cdots \ar[r]^-{e_1} & E_0 \ar[r]^-{e_0} & X \ar[r] & 0 \\
\zeta & \equiv & 0 \ar[r] & Y \ar[r]^-{f_{n}} & F_{n-1} \ar[r]^{f_{n-1}} & F_{n-2} \ar[r]^-{f_{n-2}} &
\cdots \ar[r]^-{f_1} & F_0 \ar[r]^-{f_0} & X \ar[r] & 0
}
$$
The objects
$$
X_p =
\begin{cases}
  E_{n-2},  & \text{if $p = n-1$},\\
  E_p \oplus E_{p-1}, & \text{if $1 \leq p \leq n-2$},\\
  E_0,  & \text{if $p = 0$},
\end{cases}
$$
in $\mathsf C$ form a complex $\mathbb X$ whose differentials $x_p: X_p \rightarrow X_{p-1}$ are given by $x_p = 0$ for $p \notin [1,n-1]$ and
$$
\xymatrix@C=18pt{
x_p: E_p \oplus E_{p-1} \ar[r]^-{\mathrm{can}} & E_{p-1} \ar[r]^-{\mathrm{can}} & E_{p-1} \oplus E_{p-2}
} \quad \text{(for $2 \leq p \leq n-2$)},
$$
whereas $x_{n-1}: E_{n-2} \rightarrow E_{n-2} \oplus E_{n-3}$ and $x_1: E_1 \oplus E_0 \rightarrow E_0$ are given by the canonical morphisms. The complex $\mathbb X \in \Ch(\C)$ is such that $\zeta \oplus \mathbb X$ belongs to $\mathcal Ext^n_\C(X,Y)$. Let us define a morohism $\widehat{\beta}: \xi \rightarrow \zeta \oplus \mathbb X$ in $\mathcal Ext^n_\C(X,Y)$; degreewise, it is given by $\widehat{\beta}_n = \id_Y$, $\widehat{\beta}_{-1} = \id_X$,
\begin{align*}
\widehat{\beta}_{n-1}:& \ E_{n-1} \longrightarrow F_{n-1} \oplus E_{n-2}, \ &&\widehat{\beta}_{n-1} = \beta_{n-1} \oplus e_{n-1},\\
\widehat{\beta}_p:& \ E_p \longrightarrow F_p \oplus E_p \oplus E_{p-1}, &&\widehat{\beta}_p = \beta_p \oplus \id_{E_p} \oplus
e_p \quad \quad \quad (\text{for $1 \leq p \leq n-2$}),\\
\intertext{and}
\widehat{\beta}_{0}:& \ E_0 \longrightarrow F_0 \oplus E_0, &&\widehat{\beta}_n = \beta_0 \oplus \id_{E_0}.
\end{align*}
One immediately verifies that $\widehat{\beta}$ is a morphism of chain complexes (see \cite{Sch98} for the case of modules over a ring).
\end{nn}
\begin{lem}\label{lem:monomorphism}
The morphisms $\widehat{\beta}$ and $\gamma := i(\widehat{\beta})$ are degreewise monomorphisms \emph{(}in particular, they are respective monomorphisms in $\Ch(\C)$ and $\Ch(\A)$\emph{)}.
\end{lem}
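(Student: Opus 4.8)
The plan is to verify the monomorphism property degree by degree, exploiting the fact that in all but one degree the component $\widehat{\beta}_p$ carries a split identity summand, and to treat the remaining top degree by a short exactness argument. Throughout, it suffices to analyse $\gamma = i(\widehat{\beta})$ in the abelian category $\A$ and then transfer back to $\C$: since $i = i_\C$ is additive, a split monomorphism in $\C$ stays a split monomorphism under $i$; and since $i$ is faithful, whenever $i(\widehat{\beta}_p)$ is a monomorphism in $\A$ the map $\widehat{\beta}_p$ is already a monomorphism in $\C$.

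First I would dispose of the ``easy'' degrees. In degrees $n$ and $-1$ the components $\widehat{\beta}_n = \id_Y$ and $\widehat{\beta}_{-1} = \id_X$ are identities. For $1 \leq p \leq n-2$ the map $\widehat{\beta}_p = \beta_p \oplus \id_{E_p} \oplus e_p \colon E_p \to F_p \oplus E_p \oplus E_{p-1}$ has the identity $\id_{E_p}$ as its middle matrix entry, so the projection onto the middle summand is a retraction and $\widehat{\beta}_p$ is a split monomorphism in $\C$. The same applies in degree $0$, where $\widehat{\beta}_0 = \beta_0 \oplus \id_{E_0}$ splits via the projection onto $E_0$. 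Applying the additive functor $i$ preserves all of these retractions, so $\gamma$ is a (split) monomorphism in each of these degrees as well.

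The only degree requiring real work is $p = n-1$, where $\widehat{\beta}_{n-1} = \beta_{n-1} \oplus e_{n-1} \colon E_{n-1} \to F_{n-1} \oplus E_{n-2}$ admits no obvious splitting. Here I would pass to $\A$ and compute $\Ker(i(\widehat{\beta}_{n-1})) = \Ker(i(\beta_{n-1})) \cap \Ker(i(e_{n-1}))$. Because $\xi$ is an admissible extension, $i(\xi)$ is exact at $E_{n-1}$, so $\Ker(i(e_{n-1})) = \Im(i(e_n))$, and $i(e_n)$ identifies $iY$ with this image. Since $\beta$ is a morphism of extensions with $\beta_n = \id_Y$, the compatibility $\beta_{n-1} \circ e_n = f_n \circ \beta_n = f_n$ shows that the restriction of $i(\beta_{n-1})$ to $\Im(i(e_n))$ agrees, through this identification, with $i(f_n)$. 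As $f_n$ is an admissible monomorphism, $i(f_n)$ is a monomorphism, whence $\Ker(i(\beta_{n-1})) \cap \Im(i(e_n)) = 0$. Thus $i(\widehat{\beta}_{n-1})$ has trivial kernel and is a monomorphism, and faithfulness of $i$ yields that $\widehat{\beta}_{n-1}$ is a monomorphism in $\C$ too.

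Finally, having established that $\widehat{\beta}$ and $\gamma$ are monomorphisms in every degree, the parenthetical claim follows formally: a chain map that is a monomorphism in each degree is a monomorphism in the ambient category of complexes (in $\Ch(\A)$ kernels are computed degreewise, and in $\Ch(\C)$ one again uses that the faithful functor $\Ch(i)$ reflects monomorphisms). The main obstacle is the top-degree computation, but it reduces to the single exactness relation $\Ker(e_{n-1}) = \Im(e_n)$ together with injectivity of $f_n$, so no lengthy calculation is needed.
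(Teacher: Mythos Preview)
Your proof is correct and follows essentially the same route as the paper's: the easy degrees are handled by the identity summand, and the single nontrivial degree $p=n-1$ is settled by observing that the kernel of $i(\widehat{\beta}_{n-1})$ lands in $\Ker(i(e_{n-1}))=\Im(i(e_n))$, where $i(\beta_{n-1})$ restricts to the monomorphism $i(f_n)$. The only difference is cosmetic---you phrase the key step in terms of intersections of subobjects, while the paper argues via the universal property of the kernel---but the underlying computation is identical.
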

\begin{proof}
For every $-1 \leq p \leq n$, $p \neq n-1$, $\gamma_p$ is a monomorphism by definition. This also holds true for $p = n-1$: We have to show that if $t$ is a morphism in $\A$ with $\gamma_{n-1} \circ t = 0$, then $t = 0$. In the abelian category $\A$, $\gamma_{n-1}$ has a kernel $\Ker(\gamma_{n-1})$; let $\ker(\gamma_{n-1})$ be the corresponding universal morphism. In particular, we get that 
$$
i(\beta_{n-1}) \circ \ker(\gamma_{n-1}) = 0 = i(e_{n-1}) \circ \ker(\gamma_{n-1}).
$$
The latter equality yields a unique morphism $h: \Ker(\gamma_{n-1}) \rightarrow iY$ with $i(e_n) \circ h= \ker(\gamma_{n-1})$ (remember, that $\xi$ is admissible exact, and thus $i(e_n)$ is the kernel of $i(e_{n-1})$). Therefore
$$
i(f_n) \circ h = i(\beta_{n-1} \circ e_n) \circ h = i(\beta_{n-1}) \circ i(e_n) \circ h = 0,
$$
and thus $h = 0$ (since $i(f_n)$ is a monomorphism). Hence the kernel of $\gamma_{n-1} = i(\widehat{\beta}_{n-1})$ is trivial. Since $i$ is injective on morphisms, also $\widehat{\beta}$ is a monomorphism in every degree.
\end{proof}
\begin{lem}\label{lem:admono}
Consider the following statements.
\begin{enumerate}[\rm(1)]
\item\label{lem:admono:1} $\C$ is closed under kernels of epimorphisms.
\item\label{lem:admono:2} $\C$ is closed under cokernels of monomorphisms.
\item\label{lem:admono:3} The morphism $\widehat{\beta}$ is an admissible monomorphism in $\Ch(\C)$.
\end{enumerate}
Then the implications $(\ref{lem:admono:1}) \Longrightarrow (\ref{lem:admono:3})$ and $(\ref{lem:admono:2}) \Longrightarrow (\ref{lem:admono:3})$ hold true.
\end{lem}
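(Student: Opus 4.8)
The plan is to verify that $\widehat{\beta}$ is a degreewise admissible monomorphism, since by Example \ref{exa:functors} a chain map is an admissible monomorphism in $\Ch(\C)$ exactly when it is one in each degree. Writing $\gamma = i(\widehat{\beta})$, which is a degreewise monomorphism by Lemma \ref{lem:monomorphism}, I would form the degreewise cokernel complex $\mathbb Q := \Coker(\gamma) \in \Ch(\A)$ and show that every term $Q_p$ lies in the essential image $i\C$. Once this is done, the degreewise exact sequence $0 \to i\xi \to i(\zeta \oplus \mathbb X) \to \mathbb Q \to 0$ is a short exact sequence in $\Ch(\A)$ all of whose terms lie in the extension closed subcategory $\Ch(\C)$, hence an admissible short exact sequence, and $\widehat{\beta}$ is the admissible monomorphism we want.

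Most terms of $\mathbb Q$ are harmless. In every degree $p \neq n-1$ the component $\widehat{\beta}_p$ carries an identity summand (it is $\beta_p \oplus \id_{E_p} \oplus e_p$, respectively $\beta_0 \oplus \id_{E_0}$), so it is a split monomorphism whose cokernel is visibly the genuine object $F_p \oplus E_{p-1}$ (respectively $F_0$) of $\C$. Thus the only term requiring real work is $Q_{n-1} = \Coker(\gamma_{n-1})$, where $\widehat{\beta}_{n-1} = \beta_{n-1} \oplus e_{n-1}$ has no identity summand; its cokernel is the a priori uncontrolled pushout $P \cong F_{n-1} \oplus_{E_{n-1}} E_{n-2}$ formed in $\A$. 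This is the crux, and the two hypotheses enter here quite differently.

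Under \ref{lem:admono:2} the conclusion is immediate: Lemma \ref{lem:monomorphism} gives that $\gamma_{n-1} = i(\widehat{\beta}_{n-1})$ is a monomorphism in $\A$, and since $\C$ is closed under cokernels of monomorphisms the embedding $i$ detects admissible monomorphisms (Definition \ref{def:closedker}), so $\widehat{\beta}_{n-1}$—indeed every $\widehat{\beta}_p$—is an admissible monomorphism in $\C$ straightaway, bypassing the cokernel computations altogether. Under \ref{lem:admono:1} I would instead exploit acyclicity. As $\xi$ and $\zeta$ are admissible $n$-extensions, $i\xi$ and $i(\zeta \oplus \mathbb X)$ are acyclic (the latter because $\zeta \oplus \mathbb X$ is again an admissible extension, $\mathbb X$ being a sum of elementary contractible complexes), so the long exact homology sequence attached to $0 \to i\xi \to i(\zeta \oplus \mathbb X) \to \mathbb Q \to 0$ forces $\mathbb Q$ to be acyclic. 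I then feed this acyclic $\mathbb Q$, whose terms all lie in $i\C$ except possibly $Q_{n-1}$, into an upward induction: since $\C$ is closed under kernels of epimorphisms, every epimorphism in $\A$ between objects of $i\C$ is an admissible epimorphism and hence has its kernel in $\C$. Starting from $Q_0 \in i\C$ and writing $\partial_j \colon Q_j \to Q_{j-1}$ for the differentials, exactness makes each corestriction $Q_{j+1} \twoheadrightarrow \Ker(\partial_j)$ an epimorphism between $\C$-objects, so the cycle objects $\Ker(\partial_j)$ remain in $\C$ as $j$ climbs. Reaching degree $n-2$ and using that $\gamma_{n-1}$ is monic—whence $Q_{n-1} \cong \Im(\partial_{n-1}) = \Ker(\partial_{n-2})$—yields $Q_{n-1} = P \in i\C$.

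\textbf{Main obstacle.} The delicate point is exactly the degree-$(n-1)$ term under hypothesis \ref{lem:admono:1}: there $\widehat{\beta}_{n-1}$ fails to be split, its cokernel $P$ is an uncontrolled pushout, and the only leverage available is that $\C$ detects \emph{epimorphisms}, not monomorphisms. Converting this epi-information into control over the single cokernel $P$ is precisely what the acyclicity of $\mathbb Q$ together with the inductive climb through its cycle objects is designed to accomplish; everything else reduces to the split-monomorphism bookkeeping in the remaining degrees.
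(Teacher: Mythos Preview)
Your proposal is correct and, for hypothesis (\ref{lem:admono:1}), follows essentially the same line as the paper: identify the cokernels in degrees $\neq n-1$ as objects of $\C$, observe acyclicity of the cokernel complex $\mathbb Q$, then climb inductively through the cycle objects using closure under kernels of epimorphisms to reach $Q_{n-1}$. Your treatment of (\ref{lem:admono:2}) is more direct than the paper's (which suggests running the dual induction, taking cokernels rather than kernels of the differentials of $\mathbb Q$): you simply invoke Definition~\ref{def:closedker} to conclude that each $\widehat{\beta}_p$ is admissible once Lemma~\ref{lem:monomorphism} certifies that $i(\widehat{\beta}_p)$ is monic. One small slip in your write-up: the reason $Q_{n-1} \cong \Ker(\partial_{n-2})$ is not that $\gamma_{n-1}$ is monic but that $Q_n = 0$ (since $\widehat{\beta}_n = \id_Y$), whence $\partial_{n-1}$ is monic by acyclicity of $\mathbb Q$.
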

\begin{proof}
Assume that $\C$ is closed under kernels of epimorphisms. First of all, note that, since $i(\widehat{\beta})$ is a monomorphism between acyclic complexes in $\Ch(\A)$, its cokernel in $\Ch(\A)$ will also be acyclic. Let $1 \leq p \leq n-2$. We have the following commutative diagram in $\A$, having exact rows.
$$
\xymatrix@R=40pt{
0 \ar[r] & iE_p \ar[r]^-{i(\widehat{\beta}_p)} \ar@{=}[d] & iF_p \oplus iE_p \oplus iE_{p-1} \ar[r] \ar[d]^{
\left[\begin{smallmatrix}
\id & -i(\beta_{p}) & 0 \\
0 & \id & 0 \\
0 & -i(e_{p}) & \id
\end{smallmatrix}\right]
} & \Coker(i(\widehat{\beta}_p)) \ar[r]  \ar@{-->}[d]& 0 \\
0 \ar[r] & iE_p \ar[r] & iF_p \oplus iE_p \oplus iE_{p-1} \ar[r] & iF_p \oplus iE_{p-1} \ar[r] & 0
}
$$
Here the dashed arrow is induced by the universal property of $\Coker(i(\widehat{\beta}_p))$. By the 5-Lemma, this morphism is already an isomorphism and therefore $\Coker(i(\widehat{\beta}_p)) \in i\C$. Similarly, $\Coker(i(\widehat{\beta}_0)) \in i\C$. It follows, that
$$
\Ker\left(\Coker(i(\widehat{\beta}_1)) \rightarrow \Coker(i(\widehat{\beta}_0))\right) \in i\C
$$
since $\C$ is closed under kernels of epimorphisms. By induction, we obtain:
$$
\Ker\left(\Coker(i(\widehat{\beta}_p)) \rightarrow \Coker(i(\widehat{\beta}_{p-1}))\right) \in i\C \quad (\text{for all $1 \leq p \leq n-2$}).
$$
But, notably, this means that
$$
\Coker(i(\widehat{\beta}_{n-1})) = \Ker\left(\Coker(i(\widehat{\beta}_{n-2})) \rightarrow
\Coker(i(\widehat{f}_{n-3}))\right) \in i\C
$$
and we are done. Since we as well could have investigated the cokernels instead of the kernels of the
morphisms occuring in $\Coker(i(\widehat{\beta}))$, the same proof works in case $\C$ is closed under cokernels of monomorphisms.
\end{proof}
\begin{defn}\label{def:factorizing}
The exact $k$-category $\C$ is called \textit{factorizing} if for any choice of $n \geq 1$, $X,Y \in \Ob \C$ and any morphism $\beta$ in $\mathcal Ext^n_\C(X,Y)$, the morphism $\widehat{\beta}$ is an admissible monomorphism in $\Ch(\C)$.
\end{defn}
By Lemma \ref{lem:admono} the exact category $\C$ is factorizing if it is closed under kernels of epimorphisms \textit{or} under cokernels of monomorphisms. The terminology is motivated by the following lemma.
\begin{lem}[{\cite[Lem.\,4.4]{Sch98}}]\label{lem:prop_ext2} If $\C$ is factorizing, then there is an
admissible monomorphism $\iota$ and a split epimorphism $\pi$ in $\Ch(\C)$, both belonging to $\mathcal Ext_\C^n(X,Y)$, such that the morphism $\beta: \xi \rightarrow \zeta$ factors as $\beta =
\pi \circ \iota$.
\end{lem}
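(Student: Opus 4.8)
The plan is to exhibit the factorization directly, using the auxiliary complex $\mathbb X$ and the morphism $\widehat\beta$ constructed just before Lemma \ref{lem:monomorphism}. I would set $\iota := \widehat\beta \colon \xi \to \zeta \oplus \mathbb X$ and take $\pi\colon \zeta \oplus \mathbb X \to \zeta$ to be the canonical projection onto the first summand, whose section is the canonical inclusion $\zeta \hookrightarrow \zeta \oplus \mathbb X$. So the entire content is to argue that these two maps have the three required properties and compose to $\beta$.

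First I would check that both $\iota$ and $\pi$ are legitimate morphisms in $\mathcal Ext^n_\C(X,Y)$. Since $\mathbb X$ is concentrated in degrees $0$ through $n-1$, the object $\zeta \oplus \mathbb X$ carries $Y$ in degree $n$ and $X$ in degree $-1$, exactly as $\zeta$ does, and it is an admissible $n$-extension by the remark preceding Lemma \ref{lem:monomorphism}. The morphism $\widehat\beta$ is a chain map with $\widehat\beta_n = \id_Y$ and $\widehat\beta_{-1} = \id_X$, hence lies in $\mathcal Ext^n_\C(X,Y)$; and it is an admissible monomorphism in $\Ch(\C)$ precisely because $\C$ is factorizing (Definition \ref{def:factorizing}). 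Dually, $\pi$ restricts to the identity in degrees $n$ and $-1$, where $\mathbb X$ vanishes, so it too is a morphism in $\mathcal Ext^n_\C(X,Y)$, and it is a split epimorphism in $\Ch(\C)$ with the section named above.

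Next I would verify the equality $\beta = \pi \circ \iota$ degreewise, reading off the explicit description of $\widehat\beta$. In degree $n-1$ one has $\widehat\beta_{n-1} = \beta_{n-1} \oplus e_{n-1}\colon E_{n-1} \to F_{n-1} \oplus E_{n-2}$, so projecting onto $F_{n-1}$ returns $\beta_{n-1}$; for $1 \le p \le n-2$ one has $\widehat\beta_p = \beta_p \oplus \id_{E_p} \oplus e_p$, whose $F_p$-component is $\beta_p$; and $\widehat\beta_0 = \beta_0 \oplus \id_{E_0}$ projects to $\beta_0$. In the two boundary degrees both maps are identities. Hence $\pi \circ \widehat\beta = \beta$ in every degree, as required.

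The point to emphasize is that essentially no hidden difficulty remains: the real work is already contained in Lemma \ref{lem:admono}, which guarantees that $\widehat\beta$ is an admissible monomorphism under either closure hypothesis, and in the verification (carried out when $\widehat\beta$ was introduced) that it is a chain map. The only item specific to this lemma is the bookkeeping that $\pi$ is a split epimorphism of extensions and that the summand-wise projection recovers $\beta$, both of which are immediate from the explicit formulas. Thus setting $\iota = \widehat\beta$ and $\pi$ the canonical projection yields the desired factorization $\beta = \pi \circ \iota$.
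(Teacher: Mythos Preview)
Your proposal is correct and follows exactly the same approach as the paper: set $\iota = \widehat\beta$ and let $\pi$ be the canonical projection $\zeta \oplus \mathbb X \to \zeta$. The paper's proof is terser, leaving the verifications $\pi \circ \iota = \beta$ and $\pi$ split as ``clear,'' but your more explicit degreewise check is entirely in line with it.
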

\begin{proof}
Let $\pi: \zeta \oplus \mathbb X \rightarrow \zeta$ be the canonical projection and put $\iota = \widehat{\beta}$. Clearly, $\pi$ is a split epimorphism and $\pi \circ
\iota = \beta$. Moreover, $\widehat{\beta}$ is an admissible monomorphism since $\C$ is factorizing.
\end{proof}
\section{Homotopy groups}\label{sec:homo_groups}
\begin{nn}
Let $\C$ be a small category. Recall that the \textit{classifying space $\mathscr B(\C)$ of $\C$} is given by the geometrical realization of the nerve $\mathscr N \C$ of $\C$. The \textit{$i$-th homotopy group $\pi_i\C$ of $\C$} then arises as the $i$-th homotopy group of the classifying space of $\C$, that is, $\pi_i\C := \pi_i(\mathscr B(\C))$ for $i \geq 0$. The $1$st homotopy group of $\C$ (i.e., $\pi_1 \C$) is also called the \textit{fundamental group of $\C$} (or, to be more precise, the fundamental group of $\C$ at some (suppressed) base point $X \in \Ob \C$). We will explain how to understand $\pi_0 \C$ and $\pi_1 \C$ in terms of morphisms in $\C$.
\end{nn}
\begin{nn}
Let $X, Y$ be objects in $\C$. A \textit{path $w = w(X,Y)$ from $X$ to $Y$ in $\C$} is a sequence of
objects $X_0, X_1, \dots, X_n \in \C$ and morphisms $a_0, a_1, \dots, a_{n-1}$ in $\C$ with
$$
a_i \in \Hom_\C(X_i, X_{i+1}) \quad \text{or} \quad a_i \in \Hom_\C(X_{i+1}, X_{i}) \quad \text{(for $0 \leq i \leq n-1$)},
$$
and $X_0 = X$ and $X_n = Y$. We will denote such a path $w$ by
$$
\xymatrix{
X = X_0 \ar@{<->}[r]^-{a_0} & X_1 \ar@{<->}[r]^-{a_1} & \cdots \ar@{<->}[r]^-{a_{n-2}} & X_{n-1}
\ar@{<->}[r]^-{a_{n-1}} & X_n = Y \ .
}
$$
The \textit{length of the path $w$} is $\mathrm{length}(w) :=n$. A path
from $X$ to $X$ in $\C$ is called a \textit{loop based at $X$}. Let $\mathrm{Path}_\C(X,Y)$ be the set of all paths
form $X$ to $Y$ in $\C$. Clearly, $\mathrm{Path}_\C(X,Y) \neq \emptyset$ if $\Hom_\C(X,Y) \cup \Hom_\C(Y,X) \neq \emptyset$. Moreover, note that the assignments
$$
X_i \mapsto Y_{n-i} = X_i, \ a_i \mapsto b_{n-i} = a_i \quad \text{(for $0 \leq i \leq n$)}
$$
induce a bijection $\mathrm{Path}_\C(X,Y) \rightarrow \mathrm{Path}_\C(Y,X)$. If $w$ is a path from $X$ to $Y$ in $\C$, we will denote its image under this map by (the formal symbol) $w^{-1}$. Henceforth,
$$
X \leftrightsquigarrow Y \ \Longleftrightarrow \ \mathrm{Path}_\C(X,Y) \neq \emptyset
$$
defines an equivalence relation on the set of objects of $\C$. Let $\pi_0 \C$ be the set of equivalence classes with respect to
$\leftrightsquigarrow$. If $\D$ is another small category and if $\scrX: \C \rightarrow \D$ is a functor, then $\scrX$ will respect the
relation $\leftrightsquigarrow$, i.e., $X \leftrightsquigarrow Y$ in $\C$ will imply $\scrX X \leftrightsquigarrow \scrX Y$ in $\D$. This shows that $\scrX$ gives rise to a well-defined map $\scrX^\sharp = \pi_0 \scrX : \pi_0 \C \rightarrow \pi_0 \D$. The category $\C$ is called \textit{connected} if $\mathrm{Path}_\C(X,Y) \neq \emptyset$ for every choice of objects $X$ and $Y$.
The \textit{path category of $\C$} is the category $\Pi(\C)$ given by the data
\begin{align*}
\mathrm{Ob}(\Pi(\C)) & = \mathrm{Ob}(\C)\\
\Hom_{\Pi(\C)}(X,Y) & = \mathrm{Path}_\C(X,Y) \quad (\text{for $X,Y \in \C$}).
\end{align*}
The composition of morphisms in $\Pi(\C)$ is given by splicing paths together (in the evident way). It is apparent that $\C$ faithfully embeds into $\Pi(\C)$. Let $w$ and $w'$ be paths from $X$ to $Y$ such that $\mathrm{length}(w) = \mathrm{length}(w') - 1$.
We say that $w$ and $w'$ are \textit{elementary homotopic} if there is a commutative diagram
\begin{equation*}
\Delta \quad \equiv \quad
\begin{aligned}
\xymatrix@!C=15pt{
& X_1 & \\
X_2 \ar[ur] \ar[rr] & & X_3 \ar[ul]
}
\end{aligned}
\end{equation*}
in $\C$ such that one arrow occurs in $w'$ and $w$ arises from $w'$ by replacing this arrow by the other two. Let $\sim$ be the smallest equivalence relation on $\mathrm{Path}_\C(X,Y)$ such that two paths of length difference $1$ are equivalent if, and only if, they are elementary homotopic. If $w \sim w'$ for some paths $w$ and $w'$, we say that $w$ and $w'$ are \textit{homotopically equivalent}. Put $$\underline{\mathrm{Path}}_\C(X,Y) := \frac{\mathrm{Path}_\C(X,Y)}{\sim}.$$ Since composition of paths respects the elementary homotopic relations, it induces a well defined map modulo $\sim$, i.e., we obtain a category $\mathsf G(\C)$:
\begin{align*}
\mathrm{Ob}(\mathsf G(\C)) & = \mathrm{Ob}(\C)\\
\Hom_{\mathsf G(\C)}(X,Y) & = \underline{\mathrm{Path}}_\C(X,Y) \quad (\text{for $X,Y \in \C$}).
\end{align*}
\end{nn}
\begin{defn}\label{def:quigroup}
Let $\mathsf G$ be a category. We call $\mathsf G$ a \textit{groupoid} if every morphism in $\mathsf G$ is invertible.
\end{defn}
\begin{nn}
The category $\mathsf G(\C)$ clearly is a groupoid and we call it the \textit{Quillen} (or \textit{fundamental}) \textit{groupoid associated to $\C$}. By mapping a morphism $f: X \rightarrow Y$ in $\C$ to the equivalence class of $f$ in $\Hom_{\mathsf G(\C)}(X,Y)$, we obtain a dense functor $g_\C : \C \rightarrow \mathsf G(\C)$. This functor is in general neither full nor faithful. For instance, if $\C$ has an initial or a terminal object, then $\C$ is contractible by \cite[\S 1, Cor.\,2]{Qu72}, and hence $\End_{\mathsf G(\C)}(X)$ is trivial for every object $X$ (since it is isomorphic to $\pi_1(\C,X)$ by the following proposition), whereas $\End_\C(X)$ does not have to be. However, the pair $(\mathsf G(\C), g_\C)$ has the following universal property: If $\scrX: \C \rightarrow \D$ is a functor such that $\scrX(f)$ is invertible in $\D$ for every morphism $f$ in $\C$, then $\scrX$ uniquely factors through $g_\C$.
\end{nn}
\begin{prop}\label{prop:iso_pi1}
Let $\C$ be an essentially small category. Then, for every object $X$ in $\C$, $\pi_1(\C,X) \cong \End_{\mathsf{G}(\C)}(X)$ as groups. Therefore every element of $\pi_1(\C,X)$ may be considered as a loop based at $X$ \emph{(}up to homotopy\emph{)}.
\end{prop}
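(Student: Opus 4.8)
The plan is to compute both sides through the combinatorial structure of the nerve $\mathscr N\C$, and to identify $\mathsf G(\C)$ with the edge-path groupoid of this simplicial set. Recall that $\mathscr B(\C) = |\mathscr N\C|$ is a CW complex whose $0$-cells are the objects of $\C$, whose $1$-cells are the non-degenerate morphisms of $\C$, and whose $2$-cells are attached along the boundary of each composable pair $X \xrightarrow{f} Y \xrightarrow{g} Z$, filling in the triangle whose three edges are $f$, $g$ and $g \circ f$. Since the fundamental group of a CW complex depends only on its $2$-skeleton, $\pi_1(\C, X) = \pi_1(\mathscr B(\C), X)$ can be read off from this data alone.

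First I would recall the standard presentation of $\pi_1$ of such a complex, namely the edge-path group of the simplicial set $\mathscr N\C$ in the sense of Gabriel--Zisman. Choosing a maximal tree in the component of $X$ in the $1$-skeleton, the fundamental group based at $X$ is generated by the edges (i.e.\ the morphisms of $\C$), subject to two families of relations: each degenerate edge $\id_X$ becomes trivial, and each $2$-cell coming from a composable pair $(f,g)$ imposes the relation $\overline{g \circ f} = \overline{g}\cdot \overline{f}$. Formally reversing each generating edge supplies inverses, so these generators and relations in fact describe a groupoid on $\Ob\C$, the edge-path groupoid $\Pi_1(\mathscr N\C)$.

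Next I would match this presentation with the explicit construction of $\mathsf G(\C)$ given above. A path in $\C$ is exactly a product of edges of $\mathscr N\C$ and their formal inverses, i.e.\ a morphism in the edge-path groupoid; and the elementary homotopy recorded by the triangle $\Delta$ --- replacing one edge of a commutative triangle by the other two --- is precisely the relation attached to the corresponding $2$-cell. Thus the quotient $\underline{\mathrm{Path}}_\C(X,Y)$ agrees with the morphism set of the edge-path groupoid, and the two groupoids coincide, $\mathsf G(\C) \cong \Pi_1(\mathscr N\C)$, compatibly with the dense functor $g_\C$ and the canonical comparison functor $\C \to \Pi_1(\mathscr N\C)$.

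Finally, invoking the classical theorem that the edge-path groupoid of a simplicial set is isomorphic to the fundamental groupoid of its geometric realization, one obtains
$$
\End_{\mathsf G(\C)}(X) \cong \Hom_{\Pi_1(\mathscr N\C)}(X,X) \cong \pi_1(|\mathscr N\C|, X) = \pi_1(\C, X)
$$
as groups, the group operation on the left (concatenation of loops) corresponding to composition of homotopy classes of paths. The main obstacle is bookkeeping rather than conceptual: one must check that concatenation, reversal of paths, and the constant loop on the left correspond under this identification to multiplication, inversion, and the unit in $\pi_1$, and that the higher simplices of $\mathscr N\C$ contribute no further relations --- the latter being exactly the statement that $\pi_1$ ignores cells of dimension $\geq 3$.
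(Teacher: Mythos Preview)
Your sketch is correct and is essentially the classical argument: identify $\mathsf G(\C)$ with the edge-path groupoid of the simplicial set $\mathscr N\C$ and invoke the CW description of $|\mathscr N\C|$ to see that only the $2$-skeleton matters. Note, however, that the paper does not give its own proof of this proposition at all---it simply cites \cite[Prop.~1]{Qu72}. What you have written is, in outline, precisely Quillen's argument in that reference, so there is no genuine divergence to compare; you have just unpacked the citation.

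One small point of care: the paper's elementary-homotopy triangle $\Delta$ is drawn with a fixed orientation (two arrows into $X_1$), whereas in a general path the constituent morphisms may point either way. To make your identification of $\mathsf G(\C)$ with the edge-path groupoid airtight you should note that every commutative triangle in $\C$, regardless of orientation, yields a $2$-simplex of $\mathscr N\C$ (possibly after reading one edge backwards via the formal inverses you introduce), so the relations match on the nose. This is routine, but worth saying explicitly since the paper's phrasing of $\Delta$ could otherwise look more restrictive than the full set of $2$-cell relations.
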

\begin{proof}
See \cite[Prop.\,1]{Qu72}.
\end{proof}
\begin{nn}
Let $\scrX: \C \rightarrow \D$ be a functor into a small category $\D$. By the universal property of the Quillen groupoid, we obtain a commutative diagram
$$
\xymatrix{
\C \ar[r]^-{\scrX} \ar[d]_-{g_\C} & \D \ar[d]^-{g_\D}\\
\mathsf G(\C) \ar@{-->}[r]^-{\scrX^\flat} & \mathsf G(\D)
}
$$
telling us that $\scrX$ induces a group homomorphism
$$
\pi_1(\scrX, X): \pi_1(\C,X) \cong \End_{\mathsf G(\C)}(X) \longrightarrow \End_{\mathsf G(\D)}(\scrX X) \cong \pi_1(\D,\scrX X)
$$
for every object $X$ in $\C$.
\end{nn}
\begin{nn}
Let $X'$ and $Y'$ be objects in $\C$ and $w_1$ and $w_2$ be paths from $X'$ to $X$ and $Y'$ to $Y$ respectively. The map
$$
\mathrm{Path}_\C(X,Y) \longrightarrow \mathrm{Path}_\C(X',Y'), \ w \mapsto w_1 w w_2^{-1}
$$
induces an isomorphism of groups:
$$
c_{w_1, w_2}: \underline{\mathrm{Path}}_\C(X,Y) \longrightarrow \underline{\mathrm{Path}}_\C(X',Y').
$$
For $w_1 = w_2 = w$ we will denote $c_{w,w}$ by $c_w$; it is simply conjugating with the group element defined by ${w}$. In
particular, we obtain an isomorphism
$$
c_w: \pi_1(\C,X) \cong \End_{\mathsf G(\C)}(X) \longrightarrow \End_{\mathsf G(\C)}(Y) \cong \pi_1(\C,Y),
$$
if $w$ is a path from $Y$ to $X$ in $\C$.
\end{nn}
\begin{lem}\label{lem:fund_conj_commutative}
Let $\scrX: \C \rightarrow \D$ be a functor and let $X$ and $Y$ be objects in $\C$. Let $v,w \in \mathrm{Path}_\C(Y,X)$. Then the diagram
\begin{equation}\label{eq:pi1_diagram}
\begin{aligned}
\xymatrix@C=35pt{
\pi_1(\C,X) \ar[r]^-{\pi_1(\scrX, X)} \ar[d]_{c_{v,w}} & \pi_1(\D,\scrX X) \ar[d]^-{c_{\scrX v, \scrX w}}\\
\pi_1(\C,Y) \ar[r]^-{\pi_1(\scrX, Y)} & \pi_1(\D,\scrX Y)
}
\end{aligned}
\end{equation}
commutes.
\end{lem}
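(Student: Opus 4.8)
The plan is to reduce the whole assertion to the functoriality of the induced functor $\scrX^\flat \colon \mathsf G(\C) \rightarrow \mathsf G(\D)$ between the associated Quillen groupoids. Recall that $\scrX^\flat$ is the unique functor with $\scrX^\flat \circ g_\C = g_\D \circ \scrX$; it agrees with $\scrX$ on objects, and on the class $[u]$ of a path $u$ it is given by $\scrX^\flat([u]) = [\scrX u]$, where $\scrX u$ is the zigzag obtained by applying $\scrX$ to every object and every morphism of $u$. First I would record the compatibilities that make this description legitimate: $\scrX$, being a functor, carries each elementary homotopy triangle $\Delta$ in $\C$ to such a triangle in $\D$ (so that $[\scrX u]$ depends only on $[u]$); it respects splicing of paths by construction; and, since $\scrX^\flat$ is a functor between groupoids, it preserves inverses, whence $\scrX^\flat([u]^{-1}) = [\scrX u]^{-1} = [\scrX(u^{-1})]$, the last equality holding because applying $\scrX$ commutes with the formal reversal $u \mapsto u^{-1}$ of a path.

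Next I would rewrite all four maps of the square in groupoid terms. Under the isomorphisms $\pi_1(\C, -) \cong \End_{\mathsf G(\C)}(-)$ of Proposition \ref{prop:iso_pi1}, the horizontal arrows $\pi_1(\scrX, X)$ and $\pi_1(\scrX, Y)$ are precisely the restrictions of $\scrX^\flat$ to the endomorphism groups $\End_{\mathsf G(\C)}(X)$ and $\End_{\mathsf G(\C)}(Y)$ (this is exactly how these homomorphisms were introduced above). The vertical arrows send a loop class to the class of a spliced path: for a loop $\ell$ based at $X$ one has $c_{v,w}([\ell]) = [v\,\ell\,w^{-1}]$, and likewise $c_{\scrX v, \scrX w}([m]) = [(\scrX v)\,m\,(\scrX w)^{-1}]$ for a loop $m$ based at $\scrX X$.

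Finally I would chase an arbitrary loop class $[\ell] \in \pi_1(\C, X)$ around both ways. The top-right composite gives
$$
c_{\scrX v, \scrX w}\bigl(\pi_1(\scrX, X)([\ell])\bigr) = c_{\scrX v, \scrX w}\bigl([\scrX \ell]\bigr) = \bigl[(\scrX v)\,(\scrX \ell)\,(\scrX w)^{-1}\bigr],
$$
while the left-bottom composite gives
$$
\pi_1(\scrX, Y)\bigl(c_{v,w}([\ell])\bigr) = \pi_1(\scrX, Y)\bigl([v\,\ell\,w^{-1}]\bigr) = \bigl[\scrX(v\,\ell\,w^{-1})\bigr].
$$
These two agree because applying $\scrX$ to the spliced zigzag $v\,\ell\,w^{-1}$ just splices the images $\scrX v$, $\scrX \ell$ and $\scrX(w^{-1}) = (\scrX w)^{-1}$. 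I do not expect a serious obstacle here: the content is entirely the bookkeeping that $\scrX$ commutes with splicing and with the formal reversal $u \mapsto u^{-1}$, i.e.\ the functoriality of $\scrX^\flat$. The one point that deserves care is that for $v \neq w$ the map $c_{v,w}$ is merely a bijection and not a group homomorphism, so the square must be read as a diagram of maps of sets (its two horizontal arrows being homomorphisms) and its commutativity verified elementwise, as above, rather than by a homomorphism argument.
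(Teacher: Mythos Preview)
Your proof is correct and takes essentially the same approach as the paper: both reduce the commutativity of the square to the functoriality of $\scrX^\flat \colon \mathsf G(\C) \to \mathsf G(\D)$. The paper presents this by factoring each vertical map $c_{v,w}$ as a composite of a post-composition and a pre-composition in the groupoid (yielding two stacked naturality squares for $\scrX^\flat$), whereas you carry out the equivalent element chase directly; the content is identical.
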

\begin{proof}
This is almost obvious, since the following extended version of the diagram (\ref{eq:pi1_diagram}) is commutative.
$$
\xymatrix@C=35pt{
\Hom_{\mathsf G(\C)}(X,X) \ar[r]^-{\scrX^\flat_{X,X}} \ar[d]_{\Hom_{\mathsf G(\C)}(X,w^{-1})} & \Hom_{\mathsf G(\D)}(\scrX X,\scrX X)
\ar[d]^-{\Hom_{\mathsf G(\D)}(\scrX X,\scrX w^{-1})}\\
\Hom_{\mathsf G(\C)}(X,Y) \ar[r]^-{\scrX^\flat_{X,Y}} \ar[d]_{\Hom_{\mathsf G(\C)}(v,Y)} & \Hom_{\mathsf G(\D)}(\scrX X,\scrX Y)
\ar[d]^-{\Hom_{\mathsf G(\D)}(\scrX v,\scrX Y)} \\
\Hom_{\mathsf G(\C)}(Y,Y) \ar[r]^-{\scrX^\flat_{Y,Y}} & \Hom_{\mathsf G(\D)}(\scrX Y,\scrX Y)
}
$$
Observe that the compositions of the vertical arrows correspond to $c_{v,w}$ and $c_{\scrX v, \scrX w}$ respectively.
\end{proof}
\section{Lower homotopy groups of extension categories}\label{sec:lowhomotopy}
\begin{nn}
Throughout this section, let $\C$ be an exact $k$-category and $n \geq 1$ be an integer. We fix objects $A,B, X, Y$ in $\C$. The $k$-modules $\Hom_\C(A,X)$ and $\Hom_\C(Y,B)$ (externally) act on the category $\mathcal Ext^n_\C(X,Y)$. In order to be more specific, let
$$
\xi \quad \equiv \quad 0 \longrightarrow Y \longrightarrow \mathbb E \longrightarrow X \longrightarrow 0
$$
be an admissible $n$-extension in $\mathcal Ext^n_\C(X,Y)$ and let $f \in \Hom_\C(A,X)$ and $g \in \Hom_\C(Y,B)$. We obtain admissible $n$-extensions $\xi \dashv g$ in $\mathcal Ext^n_\C(A,Y)$ and $f \vdash \xi$ in $\mathcal Ext^n_\C(X,B)$ through the following commutative diagram (arising by respectively taking the pullback and the pushout of the obvious morphisms).
$$
\xymatrix@C=15pt@R=18pt{
\xi \dashv f & \equiv & 0 \ar[r] & Y \ar@{=}[d] \ar[r] & E_{n-1} \ar@{=}[d] \ar[r] & E_{n-2} \ar@{=}[d] \ar[r] & \cdots \ar[r] & E_1 \ar@{=}[d] \ar[r] & P_f \ar[d] \ar[r] & A \ar[r] \ar[d]^-{f} & 0\\
\xi & \equiv & 0 \ar[r] & Y \ar[d]_-g \ar[r]^-{e_n} & E_{n-1} \ar[d] \ar[r] & E_{n-2} \ar@{=}[d] \ar[r] & \cdots \ar[r] & E_1\ar@{=}[d] \ar[r] & E_0 \ar[r]^-{e_0} \ar@{=}[d] & X \ar@{=}[d] \ar[r] & 0\\
g \vdash \xi & \equiv & 0 \ar[r] & B \ar[r] & Q_g \ar[r] & E_{n-2} \ar[r] & \cdots \ar[r] & E_1 \ar[r] & E_0 \ar[r] & X \ar[r] & 0
}
$$
In fact, by the universality of pullbacks and pushouts, we get functors
\begin{align*}
(-) \dashv f& : \mathcal Ext^n_\C(X,Y) \longrightarrow \mathcal Ext^n_\C(A,Y), \\
g \vdash(-) &: \mathcal Ext^n_\C(X,Y) \longrightarrow \mathcal Ext^n_\C(X,B).
\end{align*}
Assume that $f$ and $g$ are isomorphisms. Then $\xi \dashv f$ and $g \vdash \xi$ are of the following form:
\begin{equation}\label{eq:iso_mult}
\begin{aligned}
\xymatrix@C=30pt@R=12pt{
0 \ar[r] & Y \ar[r]^-{e_n} & E_{n-1} \ar[r]^-{e_{n-1}} & \cdots \ar[r]^-{e_1} & E_0 \ar[r]^-{f^{-1} \circ e_0} & A \ar[r] & 0 \ ,\\
0 \ar[r] & B \ar[r]^-{g^{-1} \circ e_n} & E_{n-1} \ar[r]^-{e_{n-1}} & \cdots \ar[r]^-{e_1} & E_0 \ar[r]^-{e_0} & X \ar[r] & 0 \ .
}
\end{aligned}
\end{equation}
Now assume that $A = X$ and $B = Y$. Since $\End_\C(X)$ and $\End_\C(Y)$ are $k$-algebras, we obtain two $k$-actions on $\mathcal Ext^n_\C(X,Y)$ which we also denote by $\dashv$ and $\vdash$. They are equivalent in the following sense:
$$
\forall a \in k : \xi \dashv (a \id_X) = (a \id_Y) \vdash \xi \quad \text{in $\pi_0\mathcal Ext^n_\C(X,Y)$.}
$$
If $a \in k$ is an invertible element, the $n$-extensions $\xi \dashv (a \id_X)$ and $(a \id_Y) \vdash \xi$ appear as the rows of the following commutative diagram (compare with (\ref{eq:iso_mult})):
$$
\xymatrix@C=25pt{
0 \ar[r] & Y \ar[r]^-{e_n} \ar@{=}[d] & E_{n-1} \ar[r]^-{e_{n-1}} \ar[d]_-{a^{-1}} & \cdots \ar[r]^-{e_1} & E_0 \ar[r]^-{a^{-1}e_0} \ar[d]^-{a^{-1}} & X \ar[r] \ar@{=}[d] & 0 \ ,\\
0 \ar[r] & Y \ar[r]^-{a^{-1}e_n} & E_{n-1} \ar[r]^-{e_{n-1}} & \cdots \ar[r]^-{e_1} & E_0 \ar[r]^-{e_0} & X \ar[r] & 0 \ .
}
$$
Let us remark that if $\C$ is abelian and $\xi$ is a short exact sequence $0 \rightarrow Y \rightarrow E \rightarrow X \rightarrow 0$ in $\C$, then $f \mapsto \xi \dashv f$ and $g \mapsto (-g) \vdash \xi$ are precisely the connecting homomorphisms
$$
\Hom_\C(A,X) \longrightarrow \Ext^1_\C(A,Y), \quad \Hom_\C(Y,B) \longrightarrow \Ext^1_\C(X,B)
$$
in the long exact sequences corresponding to $\xi$ (cf. \cite[\S 7, Prop.\,6.5]{Bou07} and \cite[XIV.1]{CaEi56}).
\end{nn}
\begin{nn} We are going to endow $\pi_0 \mathcal Ext^n_\C(X,Y)$ with the structure of an abelian group. Let $\xi$ and $\zeta$ be objects in $\mathcal Ext^n_\C(X,Y)$: 
\begin{align*}
\xi &\quad \equiv \quad 0 \longrightarrow Y \longrightarrow \mathbb E \longrightarrow X \longrightarrow 0\ ,\\
\zeta &\quad \equiv \quad 0 \longrightarrow Y \longrightarrow \mathbb F \longrightarrow X \longrightarrow 0 \ .
\end{align*}
For $n = 1$, we let $\xi \boxplus \zeta$ be the lower row of the following diagram:
$$
\xymatrix{
0 \ar[r] & Y \oplus Y \ar[r]  \ar@{=}[d] & E_0 \oplus F_0 \ar[r] & X \oplus X \ar[r] & 0\\
0 \ar[r] & Y \oplus Y \ar[d]_{
\left[\begin{smallmatrix}
1 & 1 \end{smallmatrix}\right]} \ar[r] & P \ar[d] \ar[r] \ar[u] & X \ar[u]_{
\left[\begin{smallmatrix}
1\\
1
\end{smallmatrix}\right]
} \ar[r] & 0\\
0 \ar[r] & Y \ar[r] & Q \ar[r] & X \ar@{=}[u] \ar[r] & 0
}
$$
If $n \geq 2$ we define $\xi \boxplus \zeta$ to be the equivalence class of the lower row of the commutative diagram 
$$
\xymatrix@C=18pt{
0 \ar[r] & {Y \oplus Y} \ar[r] & {E_{n-1} \oplus F_{n-1}} \ar[r] & \cdots \ar[r] & E_0 \oplus F_0 \ar[r]
& {X \oplus X} \ar[r] & 0 \ \ \\
0 \ar[r] & Y \oplus Y \ar[r] \ar@{=}[u] \ar[d]_{
\left[\begin{smallmatrix}
1 & 1 \end{smallmatrix}\right]
} & E_{n-1} \oplus F_{n-1} \ar[r] \ar@{=}[u] \ar[d] & \cdots \ar[r] & P \ar[r] \ar[u] \ar@{=}[d]  & X  \ar[u]_{
\left[\begin{smallmatrix}
1\\
1
\end{smallmatrix}\right]
} \ar@{=}[d] \ar[r] & 0 \ \ \\
0 \ar[r] & Y \ar[r] & Q \ar[r] & \cdots \ar[r] & P \ar[r] & X \ar[r] & 0 \ .
}
$$
Here, in both situations, $P$ denotes the pullback and $Q$ the pushout of the obvious diagrams. Clearly, the above construction is functorial. Therefore we obtain a bifunctor
$$
\boxplus: \mathcal Ext^n_\C(X,Y) \times \mathcal Ext^n_\C(X,Y) \rightarrow \mathcal Ext^n_\C(X,Y)
$$
and hence a well defined map
$$
+: \pi_0 \mathcal Ext^n_\C(X,Y) \times \pi_0 \mathcal Ext^n_\C(X,Y) \longrightarrow \pi_0 \mathcal Ext^n_\C(X,Y), \ ([\xi], [\zeta]) \mapsto [\xi \boxplus \zeta].
$$
There is a distinguished admissible $n$-extension $\sigma_n(X,Y)$ in $\mathcal Ext^n_\C(X,Y)$. Namely, if $n = 1$, let $\sigma_n(X,Y)$ be the
split admissible short exact sequence
$$
\xymatrix@C=20pt{
0 \ar[r] & Y \ar[r] & Y \oplus X \ar[r] & X \ar[r] & 0 \ .
}
$$
In all other cases, $\sigma_n(X,Y)$ is defined to be the admissible $n$-extension
$$
\xymatrix@C=20pt{
0 \ar[r] & Y \ar@{=}[r] & Y \ar[r] & 0 \ar[r] & \cdots \ar[r] & 0 \ar[r] & X \ar@{=}[r] & X \ar[r] & 0 \ .
}
$$
It is apparent that $[\xi \boxplus \sigma_n(X,Y)] = [\xi] = [\sigma_n(X,Y) \boxplus \xi]$ for every admissible $n$-extension $\xi$ of $X$ by $Y$.
\end{nn}
\begin{lem}[{\cite[III.5]{MaL95}}]\label{lem:extmodule}
The functors $\boxplus$ and $\vdash$ turn $\pi_0 \mathcal Ext^n_\C(X,Y)$ into a $k$-module. The identity element with respect to $+$ is given by the equivalence class of the sequence $\sigma_n(X,Y)$, whereas the inverse element of the equivalence class of some admissible $n$-extension $\xi$ is given by the equivalence class of $-\xi = (- \id_Y) \vdash \xi$.
\end{lem}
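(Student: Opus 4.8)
The plan is to follow the classical argument for the Baer sum (as in \cite[III.5]{MaL95}), transported into the exact category setting, using only the existence of the relevant pullbacks and pushouts guaranteed by Proposition \ref{prop:exactcat}(\ref{prop:exactcat:3}) and their functorial behaviour. Since $\boxplus$ is already a bifunctor on $\mathcal Ext^n_\C(X,Y)$, the map $+$ on $\pi_0$ is well defined, so what remains is to verify the abelian group axioms and the module axioms. Throughout I would record the Baer sum in the compact form
$$
\xi \boxplus \zeta \;\equiv\; d^Y \vdash \big((\xi \oplus \zeta) \dashv d_X\big),
$$
where $\xi \oplus \zeta$ denotes the direct-sum $n$-extension of $X \oplus X$ by $Y \oplus Y$, and $d_X \colon X \to X \oplus X$, $d^Y \colon Y \oplus Y \to Y$ are the diagonal and codiagonal of the biproduct; this is exactly the two-step construction recorded in the diagram defining $\boxplus$. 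Here the pullback is taken along the deflation of $\xi \oplus \zeta$ and the pushout along its inflation, both of which exist by Proposition \ref{prop:exactcat}(\ref{prop:exactcat:3}).

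The first step is to establish the elementary calculus of the external operations $\vdash$ and $\dashv$: functoriality ($\id_Y \vdash \xi \equiv \xi$, $(g_1 g_2)\vdash\xi \equiv g_1 \vdash (g_2 \vdash \xi)$, and dually for $\dashv$), the commutation rule $(g \vdash \xi)\dashv f \equiv g \vdash (\xi \dashv f)$, and the compatibility with biproducts, namely $(\xi_1 \oplus \xi_2)\dashv (f_1 \oplus f_2) \equiv (\xi_1 \dashv f_1)\oplus(\xi_2 \dashv f_2)$ together with its dual. These identities are proved degreewise by the universal properties of the pullbacks and pushouts, exactly as in the abelian case; the only point requiring the exact structure is that each pullback (pushout) is formed along a deflation (inflation), which holds because direct sums of admissible short exact sequences are admissible by Proposition \ref{prop:exactcat}(\ref{prop:exactcat:1}), and because pushout on the $Y$-side and pullback on the $X$-side are performed in disjoint degrees, hence commute.

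With this calculus in hand, the group structure falls out formally. Commutativity follows by conjugating with the twist automorphisms of $X \oplus X$ and $Y \oplus Y$, which carry $\xi \oplus \zeta$ to $\zeta \oplus \xi$ and are compatible with $d_X$ and $d^Y$; associativity follows from the associativity of $\oplus$ together with the commutation rules. The split extension $\sigma_n(X,Y)$ is a two-sided identity, as already observed. For inverses, the decisive identity is the additivity law on the $Y$-side,
$$
(g_1 + g_2)\vdash \xi \;\equiv\; (g_1 \vdash \xi)\boxplus(g_2 \vdash \xi)
\qquad (g_1,g_2 \in \End_\C(Y)),
$$
obtained by factoring $g_1 + g_2 = d^Y \circ (g_1 \oplus g_2) \circ d_Y$ and unwinding the compact form of $\boxplus$. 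Specializing to $g_1 = \id_Y$, $g_2 = -\id_Y$ gives $\xi \boxplus \big((-\id_Y)\vdash\xi\big) \equiv 0_Y \vdash \xi$, and a pushout along the zero morphism splits, so this class equals $[\sigma_n(X,Y)]$; thus $-[\xi] = [(-\id_Y)\vdash\xi]$.

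Finally, the $k$-module axioms for the action $a \cdot [\xi] = [(a\id_Y)\vdash\xi]$ are read off from the same rules: $1\cdot[\xi]=[\xi]$ and $(ab)\cdot[\xi]=a\cdot(b\cdot[\xi])$ from functoriality of $\vdash$; $(a+b)\cdot[\xi]=a\cdot[\xi]+b\cdot[\xi]$ from the additivity law above; and $a\cdot([\xi]\boxplus[\zeta])=(a\cdot[\xi])\boxplus(a\cdot[\zeta])$ from the distributivity of $\vdash$ over $\oplus$ combined with the commutation rule. I expect the main obstacle to be precisely the bookkeeping in the additivity law: one must check that the iterated pushout built from $d^Y \circ (g_1 \oplus g_2) \circ d_Y$ coincides, as an object of $\mathcal Ext^n_\C(X,Y)$ up to the equivalence defining $\pi_0$, with the pushout-then-Baer-sum appearing on the right-hand side, and this requires carefully matching two nested universal constructions degree by degree while keeping every intermediate object inside the exact category $\C$.
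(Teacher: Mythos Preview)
The paper does not supply its own proof of this lemma; it simply cites \cite[III.5]{MaL95} and moves on. Your outline is precisely the classical argument from that reference, correctly transported to the exact category setting by invoking Proposition~\ref{prop:exactcat}(\ref{prop:exactcat:3}) for the existence of the relevant pullbacks and pushouts, so there is nothing to object to.

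One small point of comparison: immediately after the lemma, in paragraph~\ref{nn:pathximinxi}, the paper does spell out the inverse computation $\xi \boxplus (-\xi) \sim \sigma_n(X,Y)$ explicitly, but for a different reason and by a different method. Rather than invoking the additivity law $(g_1+g_2)\vdash\xi \equiv (g_1\vdash\xi)\boxplus(g_2\vdash\xi)$ abstractly as you do, the paper writes down a concrete path of length two in $\mathcal Ext^n_\C(X,Y)$ connecting $\xi\boxplus(-\xi)$ to $\sigma_n(X,Y)$, because this explicit path is needed later in Section~\ref{sec:retakh2}. Your argument is cleaner for the purposes of the lemma itself; the paper's hands-on construction is tailored to its downstream use.
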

\begin{nn}
The operation $+$ on $\pi_0 \mathcal Ext^n_\C(X,Y)$ is commonly known as the \textit{Baer sum}. We will denote the $k$-module obtained from $\pi_0 \mathcal Ext^n_\C(X,Y)$ by $\Ext^n_\C(X,Y)$ and call it the \textit{$k$-module of admissible $n$-extensions of $X$ by $Y$}. Recall that if $\C$ is an abelian category with enough projectives, $\Ext^n_\C(X,Y)$ coincides with the value of the $n$-th right derived funtor of $\Hom_\C(-,Y)$ at $X$: $$\Ext^n_\C(X,Y) \cong R^n \Hom_\C(-,Y)(X) = H^n(\Hom_\C(\mathbb P_X,Y)),$$ where $\mathbbm P_X \rightarrow X \rightarrow 0$ is some projective resolution of $X$ in $\C$. 
\end{nn}
\begin{nn}\label{nn:pathximinxi}
For later usage, we will convince ourselves that, for given $\xi \in \mathcal
Ext^n_\C(X,Y)$,
$$
\xi \boxplus (-\xi) \ \text{is equivalent to} \ \sigma_n(X,Y)
$$
which precisely means that $[\xi] + [-\xi] = [\sigma_n(X,Y)]$. In Section \ref{sec:retakh2} it will be helpful to have a concrete path between $\xi \boxplus (-\xi)$ and $\sigma_n(X,Y)$ at hand.

To begin with, we consider the case $n = 1$. We have the following extended commutative diagram defining $\xi \boxplus (-\xi)$
$$
\xymatrix@C=27.5pt{
\xi & \equiv & 0 \ar[r] & Y \ar[r] & E_0 \ar[r]^{e_0} & X \ar[r] & 0 \ \ \\
\xi \oplus (-\xi) & \equiv & 0 \ar[r] & Y \oplus Y \ar[u]^{
\left[\begin{smallmatrix}
1 & 1 \end{smallmatrix}\right]} \ar[r]|{ \ \ }  \ar@{=}[d] & E_0 \oplus E_0 \ar[u]^{
\left[\begin{smallmatrix}
1 & 1
\end{smallmatrix}\right]
}
\ar[r]^{
\left[\begin{smallmatrix}
e_0 & 0 \\
0 & -e_0
\end{smallmatrix}\right]
} & X \oplus X \ar[u]_
{
\left[\begin{smallmatrix}
1 & -1
\end{smallmatrix}\right]
} \ar[r] & 0 \ \ \\
& & 0 \ar[r] & Y \oplus Y \ar[d]_{
\left[\begin{smallmatrix}
1 & 1 \end{smallmatrix}\right]} \ar[r] & P \ar[uul]_(0.7)u \ar[d] \ar[r] \ar[u] & X \ar[u]_{
\left[\begin{smallmatrix}
1\\
1
\end{smallmatrix}\right]} \ar[r] & 0 \ \
\\
\xi \boxplus (-\xi) & \equiv & 0 \ar[r] & Y \ar[r]^f & Q \ar[r]^g & X \ar@{=}[u] \ar[r] & 0 \ ,
}
$$
where the arrow $u : P \rightarrow Y$ is induced by the universal property of the kernel of $e_0$
(the kernel of $e_0$ is given by the pair $(Y,e_1)$). But now one easily checks that left diagram below commutes, which yields a unique morphism $s: Q \rightarrow Y$ completing it to the commutative diagram on the right hand side.
$$
\xymatrix{
Y \oplus Y \ar[d]_{
\left[\begin{smallmatrix}
1 & 1 \end{smallmatrix}\right]} \ar[r] & P \ar@/^1pc/[ddr]^u \ar[d] &\\
Y \ar@/^-1pc/[drr]_{\id_Y} \ar[r]^f & Q &\\
&& Y
}
\quad \quad \quad
\xymatrix{
Y \oplus Y \ar[d]_{
\left[\begin{smallmatrix}
1 & 1 \end{smallmatrix}\right]} \ar[r] & P \ar@/^1pc/[ddr]^u \ar[d] &\\
Y \ar@/^-1pc/[drr]_{\id_Y} \ar[r]^f & Q \ar[dr]^s &\\
&& Y
}
$$
Therefore, $f$ is a split monomorphism and the desired isomorphism $Q \cong Y \oplus X$ is given by
$$
\left[\begin{smallmatrix}
f \circ s\\
\id_Q - f \circ s
\end{smallmatrix}\right] :Q \rightarrow \Ker(g) \oplus \Ker(s) \cong Y \oplus X.
$$
Suppose that $n \geq 2$. The above considerations tell us, that the lower sequence in the diagram
$$
\xymatrix@C=35pt{
0 \ar[r] & \Ker(e_0) \oplus \Ker(e_0) \ar[r]  \ar@{=}[d] & E_0 \oplus E_0
\ar[r]^{\left[\begin{smallmatrix}
e_0 & 0 \\
0 & -e_0
\end{smallmatrix}\right]
} & X \oplus X \ar[r] & 0 \\
0 \ar[r] & \Ker(e_0) \oplus \Ker(e_0) \ar[d]_{
\left[\begin{smallmatrix}
1 & 1 \end{smallmatrix}\right]} \ar[r] & P \ar[d]^-v \ar[r] \ar[u] & X \ar[u]_{
\left[\begin{smallmatrix}
1\\
1
\end{smallmatrix}\right]
} \ar[r] & 0 \\
0 \ar[r] & \Ker(e_0) \ar[r] & \widetilde{Q} \ar[r]^{\widetilde{g}} & X \ar@{=}[u] \ar[r] & 0
}
$$
is split. Let $r: X \rightarrow \widetilde{Q}$ be such that $\widetilde{g} \circ r = \id_X$. We obtain the following commutative
diagram
$$
\xymatrix{
0 \ar[r] & {Y \oplus Y} \ar[r] & {E_{n-1} \oplus E_{n-1}} \ar[r] & \cdots \ar[r] & E_0 \oplus E_0 \ar[r] & {X \oplus X} \ar[r] & 0\\
0 \ar[r] & Y \oplus Y \ar[r] \ar@{=}[u] \ar[d]_{
\left[\begin{smallmatrix}
1 & 1 \end{smallmatrix}\right]
} & E_{n-1} \oplus E_{n-1} \ar@/^2pc/[dd]^(0.3){
\left[\begin{smallmatrix}
1 & 1 \end{smallmatrix}\right]
}
 \ar[r] \ar@{=}[u] \ar[d] & \cdots \ar[r] & P \ar[r] \ar[u] \ar@{=}[d]  & X  \ar[u]_{
\left[\begin{smallmatrix}
1\\
1
\end{smallmatrix}\right]
} \ar@{=}[d] \ar[r] & 0\\
0 \ar[r] & Y \ar@{=}[d] \ar[r] & Q \ar[d]_-t \ar[r]|(0.37){\ \ } & \cdots \ar[r] & P \ar[d]^-v \ar[r] & X \ar@{=}[d] \ar[r] & 0\\
0 \ar[r] & Y \ar[r] & E_{n-1} \ar[r] & \cdots \ar[r] & \widetilde{Q} \ar[r] & X \ar[r] & 0
}
$$
wherein the arrow $t$ is induced by the universal property of $Q$. (Observe that the squares in between the last two rows
really do commute.) We end up with the path
$$
\xymatrix@C=22pt{
\xi \boxplus (-\xi) \ar[d]_-{} & \equiv & 0 \ar[r] & Y \ar@{=}[d] \ar[r] & Q \ar[d]_t \ar[r] & \cdots \ar[r] & P
\ar[d]^-v \ar[r] & X \ar@{=}[d] \ar[r] &
0 \\
\widetilde{\xi} & \equiv & 0 \ar[r] & Y \ar@{=}[d] \ar[r] & E_{n-1} \ar[r] & \cdots \ar[r] & \widetilde{Q} \ar[r] & X \ar@{=}[d]
\ar[r] &
0 \\
\sigma_n(X,Y) \ar[u]^-{} & \equiv & 0 \ar[r] & Y \ar@{=}[r] & Y \ar[u] \ar[r] & \cdots \ar[r] & X \ar[u]_r \ar@{=}[r] & X
\ar[r] & 0
}
$$
and hence we are done.
\end{nn}
For extension categories, the $0$-th and the $i$-th homotopy groups are linked in the following manner.
\begin{prop}[{\cite[Thm.\,1]{Re86}}]\label{prop:retakhiso}
Let $\A$ be an abelian category. Then, for every $n \geq 1$ and $i = 0, \dots, n$, 
$$
\Ext^{n-i}_\A(B,A) 
\cong \pi_i \mathcal Ext^n_\A(B,A) \quad \text{$($for $A, B \in \Ob\A)$},
$$
as groups. In particular, $\pi_i \mathcal Ext^n_\A(B,A)$ is abelian for all $A, B \in \Ob\A$.
\end{prop}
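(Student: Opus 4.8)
The plan is to prove the statement by downward induction on $i$, reducing everything to the case $i=0$, which is essentially already in hand: by Lemma \ref{lem:extmodule} and the surrounding discussion, $\pi_0\mathcal Ext^n_\A(B,A)$ is precisely $\Ext^n_\A(B,A)$ as an abelian group, the group law being the Baer sum, and for $n-i=0$ the category $\mathcal Ext^0_\A(B,A)$ is discrete with object set $\Hom_\A(B,A)$, so that $\pi_0\mathcal Ext^0_\A(B,A)=\Hom_\A(B,A)=\Ext^0_\A(B,A)$. The inductive step I would aim for is a natural isomorphism
$$
\pi_i\mathcal Ext^n_\A(B,A)\;\cong\;\pi_{i-1}\mathcal Ext^{n-1}_\A(B,A)\qquad(1\le i\le n),
$$
after which iterating yields $\pi_i\mathcal Ext^n_\A(B,A)\cong\pi_0\mathcal Ext^{n-i}_\A(B,A)\cong\Ext^{n-i}_\A(B,A)$, as desired.

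To produce this shift isomorphism I would realize $\mathscr B\mathcal Ext^{n-1}_\A(B,A)$ as the loop space $\Omega\mathscr B\mathcal Ext^n_\A(B,A)$ based at the split extension $\sigma_n(B,A)$. Concretely, I would introduce an auxiliary path category $\mathcal P$ of $n$-extensions equipped with a contraction onto $\sigma_n(B,A)$; this category is contractible by construction, it maps to $\mathcal Ext^n_\A(B,A)$ by evaluating the endpoint, and the homotopy fibre of this map over $\sigma_n(B,A)$ is the loop space. The geometric heart of the argument is to identify this homotopy fibre with $\mathcal Ext^{n-1}_\A(B,A)$: given a loop based at $\sigma_n(B,A)$ one reads off an $(n-1)$-extension from the connecting data of the loop (pushing out and pulling back along its morphisms), and conversely an $(n-1)$-extension is glued into a loop by the inverse gluing. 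For $i=1$ this is exactly the explicit correspondence that the paper will construct as $\mathrm{NR}^{\pm}$, and I would verify directly that the two assignments are mutually inverse, descend to homotopy classes, and carry the Baer sum to composition of loops, so that the resulting bijection is a group isomorphism. Granting the homotopy equivalence $\Omega\mathscr B\mathcal Ext^n_\A(B,A)\simeq\mathscr B\mathcal Ext^{n-1}_\A(B,A)$ thus obtained, all higher groups shift simultaneously: $\pi_i\mathcal Ext^n_\A(B,A)=\pi_{i-1}\Omega\mathscr B\mathcal Ext^n_\A(B,A)\cong\pi_{i-1}\mathcal Ext^{n-1}_\A(B,A)$ in the relevant range $1\le i\le n$. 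The homotopy-theoretic framework for such path-loop fibrations is that of \cite{Qu72}, \cite{NeRe96} and \cite{Re86}.

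The abelianness of $\pi_i\mathcal Ext^n_\A(B,A)$ for $i\ge1$ is then immediate, since the isomorphism identifies it with the abelian group $\Ext^{n-i}_\A(B,A)$; alternatively, the operation $\boxplus$ equips $\mathscr B\mathcal Ext^n_\A(B,A)$ with a grouplike $H$-space structure, forcing all its homotopy groups to be abelian. The step I expect to be the main obstacle is the geometric identification of the homotopy fibre with $\mathcal Ext^{n-1}_\A(B,A)$, that is, checking that the loop-to-extension and extension-to-loop assignments are well defined up to homotopy and are mutually inverse group homomorphisms; this is precisely where the homotopy-theoretic input of Retakh and of Neeman--Retakh is genuinely needed, and where the facts that the relevant pushouts, pullbacks and cokernels remain inside $\A$ (automatic here since $\A$ is abelian, cf. Proposition \ref{prop:exactcat} and Examples \ref{exa:condition}) get used. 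A secondary, purely bookkeeping point is naturality in $A$ and $B$, which holds because the entire construction is assembled from pushouts and pullbacks that are functorial by the universal properties exploited throughout Section \ref{sec:lowhomotopy}.
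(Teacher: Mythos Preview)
The paper does not prove this proposition: it is stated with a reference to \cite{Re86} and no argument is given. Immediately after the statement the author announces that he will reestablish the isomorphism \emph{only for $i=1$}, and this is carried out in Chapter~\ref{cha:retakh} by constructing explicit mutually inverse maps $u_\C^{a,+}$ and $u_\C^{a,-}$ (Theorem~\ref{thm:iso_pi0pi1}) together with the base-point change $v_\C$ (Lemma~\ref{lem:isopi1}). So there is nothing to compare your proposal against at the level of the full statement.

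Your sketch is the right shape for the general case and is essentially the strategy of \cite{Re86} and \cite{NeRe96}: establish a loop-space equivalence $\mathscr B\mathcal Ext^{n-1}_\A(B,A)\simeq\Omega\mathscr B\mathcal Ext^{n}_\A(B,A)$ and iterate. You are also honest about where the work lies --- the identification of the homotopy fibre of the endpoint-evaluation with $\mathcal Ext^{n-1}_\A(B,A)$ is exactly the substantive step, and you do not actually carry it out; you cite the same sources the paper cites. In that sense your proposal is not a proof but a roadmap to the literature. One technical point to be careful with if you pursue this: to get a genuine fibration sequence (and hence the long exact sequence of homotopy groups) you need Quillen's Theorem~B or a Waldhausen-category variant, and verifying its hypotheses (that the relevant comma categories are contractible or that base change is a homotopy equivalence) is precisely where the abelian-category assumptions and the pushout/pullback manipulations enter. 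Your remark that ``the facts that the relevant pushouts, pullbacks and cokernels remain inside $\A$'' are used is correct but understates the difficulty --- it is not just that they exist, but that the resulting functors on fibres are homotopy equivalences.

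By contrast, the paper's explicit treatment for $i=1$ avoids the fibration machinery entirely: it writes down a concrete loop $w(\xi)$ for each $(n-1)$-extension $\xi$, checks by hand that this respects Baer sums (Lemma~\ref{lem:u_grouphom}), and builds an explicit inverse by reading off a pullback from a length-two representative of a loop (using Lemma~\ref{lem:looplength2}). This buys explicitness and functoriality statements (Lemma~\ref{lem:grhomcomm}) that the abstract homotopy argument would not deliver without further work, at the cost of only covering $i=1$.
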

We will (explicitly) reestablish this isomorphism for $i=1$ in the case of exact $k$-categories which are factorizing (in the sense of Definition \ref{def:factorizing}). This recovers a generalization of Proposition \ref{prop:retakhiso} stated in \cite{NeRe96}. A key ingredient will be the following result.
\begin{lem}[{\cite[Proof of Lem.\,4.5]{Sch98}}]\label{lem:looplength2}
Assume that $\C$ is factorizing. Let $\xi, \xi' \in \mathcal Ext^n_{\C}(X,Y)$ be admissible $n$-extensions and let $w$ be a path in $\mathcal Ext^n_\C(X,Y)$ from $\xi$ to $\xi'$. Then $w$ is homotopically equivalent to a path of the form
$$
\xymatrix{\xi \ar[r] & \zeta & \xi' \ar[l] \ .}
$$
\end{lem}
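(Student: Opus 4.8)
The plan is to work throughout in the Quillen groupoid $\mathsf G := \mathsf G(\mathcal Ext^n_\C(X,Y))$, exploiting that, by the very construction of $\mathsf G$, two paths with the same endpoints are homotopic if and only if they represent the same morphism in $\mathsf G$; in particular $\sim$ is a congruence for concatenation, so I may freely substitute homotopic subpaths. A peak $\xi \to \zeta \leftarrow \xi'$ is precisely a morphism of the form $[\text{right leg}]^{-1}\,[\text{left leg}]$, so the claim is that the class of $w$ in $\mathsf G$ admits such a representative. I will reach this by normalizing $w$ through two elementary moves: (i) \emph{composition}, replacing two consecutive same-direction edges $A \to B \to C$ by their composite $A \to C$ (an elementary homotopy across the evident commutative triangle); and (ii) \emph{valley filling}, replacing a valley $L \leftarrow V \to R$ by a peak $L \to W \leftarrow R$ whenever one leg is an admissible monomorphism, using the pushout from Lemma \ref{lem:prop_ext1}\,(\ref{lem:prop_ext1:1}): the pushout square commutes, and its two triangles exhibit valley and peak as homotopic (each being homotopic to the diagonal two-edge path through $W$).

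First I would make every edge of the path an admissible monomorphism. Applying Lemma \ref{lem:prop_ext2} (this is where \emph{factorizing} enters) to the morphism labelling each edge, I factor it as $\iota = \widehat{\beta}$ (an admissible monomorphism) followed by the canonical split epimorphism $\pi \colon \zeta \oplus \mathbb X \to \zeta$; by move (i) run in reverse this rewrites $w$ as a homotopic path all of whose edges are either admissible monomorphisms or such split epimorphisms. The crucial step is then to eliminate the split epimorphisms: the canonical section $j \colon \zeta \to \zeta \oplus \mathbb X$ of $\pi$ is a split monomorphism, hence an admissible monomorphism, and it is a morphism of the extension category; since $\pi \circ j = \id_\zeta$, the triangle $\zeta \xrightarrow{j} \zeta \oplus \mathbb X \xrightarrow{\pi} \zeta$ shows $[\pi]^{-1} = [j]$ in $\mathsf G$. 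Substituting each split-epimorphism edge (in either traversal direction) by the section edge $j$ traversed the opposite way leaves the endpoints of the edge unchanged and yields a homotopic path $w_2$ \emph{all of whose edges are admissible monomorphisms}.

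Now I would finish by induction. Since the composite of two admissible monomorphisms is one (Proposition \ref{prop:exactcat}\,(\ref{prop:exactcat:1a})), move (i) lets me assume $w_2$ is a strict zigzag of admissible monomorphisms; padding the two ends with identity edges if necessary, I write it as a concatenation of $p \ge 1$ peaks joined at valleys, every edge still a monomorphism. For $p = 1$ we are done. For $p \ge 2$, the first valley $\zeta_1 \leftarrow A \to \zeta_2$ has \emph{both} legs admissible monomorphisms, so move (ii) applies; moreover the pushout of an admissible monomorphism is again an admissible monomorphism, so the two new legs of the resulting peak $\zeta_1 \to W \leftarrow \zeta_2$ are monomorphisms as well. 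Splicing this peak back in and applying move (i) to the two resulting same-direction monomorphism pairs merges the first two peaks into one, lowering $p$ by one. Iterating collapses $w_2$ to a single peak $\xi \to \zeta \leftarrow \xi'$, which is the desired form.

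The main obstacle is exactly the one this argument is designed to circumvent: valley filling only works along an admissible monomorphism, and naively filling the valleys coming from the factorization reintroduces split-epimorphism legs, so the reduction need not terminate (the new valleys cannot be filled). The decisive point is therefore the section trick, which trades every split epimorphism for an admissible monomorphism and thereby makes the class of all-monomorphism zigzags \emph{closed} under valley filling; once inside that class, pushouts preserve monomorphisms and the peak-merging induction converges. I would also take small care with the degenerate cases (length-$0$ and length-$1$ paths, and the parity of the zigzag), all handled by inserting identity edges.
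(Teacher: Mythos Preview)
Your proof is correct and uses the same three ingredients as the paper's argument: the factorization $\beta = \pi \circ \iota$ from Lemma~\ref{lem:prop_ext2}, the section trick $[\pi]^{-1} = [j]$ to trade split epimorphisms for admissible monomorphisms, and pushouts along admissible monomorphisms (Lemma~\ref{lem:prop_ext1}) to fill valleys. The only difference is organizational: the paper processes the zigzag one valley at a time---factoring a single incoming leg, absorbing the split epi into the adjacent forward edge via the section, then pushing out---while you first globally rewrite every edge as an admissible monomorphism and only then run the valley-filling induction; your version makes termination slightly more transparent (the all-monomorphism class is visibly closed under the moves), but the two arguments are essentially the same.
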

\begin{proof}
If two adjacent arrows in $w$ point in the same direction, we may compose them to obtain a homotopically equivalent loop $w'$. So, without loss of generality, it can be assumed that two adjacent arrows in $w$ point in opposite directions. In this case, $w$ locally looks as follows:
$$
\xymatrix{
\cdots & \chi_{i-1} \ar[l]_{\alpha_{i-1}} \ar[r]^{\alpha_{i}} & \chi_i & \chi_{i+1} \ar[l]_{\alpha_{i+1}} \ar[r]^{\alpha_{i+2}}& \chi_{i+2} & \cdots \ . \ar[l]_{\alpha_{i+3}}
}
$$
We factorise $\alpha_{i+1} = \pi \circ \iota$, where $\iota$ is an admissible monomorphism in $\Ch(\C)$ and $\pi$ is a split epimorpism (as described in Lemma \ref{lem:prop_ext2}). Since $\pi$ splits, there is a morphism $q$ in $\mathcal Ext^n_\C(X,Y)$ such that $\pi \circ q = \id_{\chi_i}$. So $w$ is homotopically equivalent to
$$
\xymatrix{
\cdots & \chi_{i-1} \ar[l]_{\alpha_{i-1}} \ar[r]^-{q \circ\alpha_{i}} & \chi & \chi_{i+1} \ar[l]_-{\iota} \ar[r]^{\alpha_{i+2}}& \chi_{i+2} & \cdots \ . \ar[l]_{\alpha_{i+3}}
}
$$
But, since $\iota$ is an admissible monomorphism, the pushout of $\iota$ and $\alpha_{i+2}$ exists, and lies in $\mathcal Ext^n_\C(X,Y)$ (by Lemma \ref{lem:prop_ext1}). Therefore, we have a commutative diagram
$$
\xymatrix
{
\chi_{i+1} \ar[d]_\iota \ar[r]^{\alpha_{i+2}}& \chi_{i+2} \ar[d]^{p_2}\\
\chi  \ar[r]_{p_1} & \mathbb P
}
$$
yielding the loop
$$
\xymatrix@C=32pt{
\cdots & \chi_{i-1} \ar[l]_{\alpha_{i-1}} \ar[r]^-{p_1 \circ q \circ\alpha_{i}} & \mathbb P & \chi_{i+3} \ar[l]_-{p_2 \circ \alpha_{i+3}} \ar[r]^{\alpha_{i+4}} & \cdots 
}
$$
which is homotopically equivalent to $\omega$ and has length length$(w) - 2$. By proceeding in this manner, we either end up with one of the following types of loops.
$$
\xymatrix{\xi \ar[r]  & \chi & \xi' \ar[l]}
$$
$$
\xymatrix{\xi \ar[r]  & \chi_1 & \chi_2 \ar[l] \ar[r] & \xi'} 
$$
$$
\xymatrix{\xi & \chi_1 \ar[l] \ar[r] & \chi_2 & \xi' \ar[l]}
$$
$$
\xymatrix{\xi & \chi_1 \ar[l] \ar[r] & \chi_2 & \chi_3 \ar[l] \ar[r] & \xi'}
$$
If the resulting loop is the first one, we are done. If it is one of the others, we just have to insert $\id_{\xi}: \xi \longleftrightarrow \xi$ pointing in the appropriate direction, and move on with the algorithm described above.
\end{proof}
\section{$n$-Extension closed subcategories}
\begin{nn}
Let $k$ be a commutative ring. Throughout this section, fix an abelian $k$-category $\A$ and a full additive subcategory $\C \subseteq \A$. Let $j: \C \rightarrow \A$ be the corresponding inclusion functor. For objects $C$ and $D$ in $\C$ and an integer $n \geq 1$, we let $\mathcal Ext^n_\C(D,C) \subseteq \mathcal Ext^n_\A(D,C)$ be the full subcategory of $n$-extensions $0 \rightarrow C \rightarrow \mathbb E \rightarrow D \rightarrow 0$ in $\A$ whose middle term $\mathbb E$ belongs to $\Ch(\C)$. We let $\Ext^n_\C(D,C)$ denote $\pi_0 \mathcal Ext^n_\C(D,C)$ which is consistent with the definition for exact subcategories $\C$. 
\end{nn}
\begin{nn}\label{nn:nextcloseintro}
Let $0 \rightarrow C' \rightarrow A \rightarrow C'' \rightarrow 0$ be a short exact sequence in $\A$, whose bordering terms $C'$ and $C''$ are objects in $\C$. Then $A$ belongs to the essential image of $j$ in $\A$ if, and only if, there is a short exact sequence $0 \rightarrow C' \rightarrow C \rightarrow C'' \rightarrow 0$ in $\A$ with $C$ in $\C$ such that it is isomorphic to the one we have started with. It follows that the essential image of $j$ is an extension closed subcategory of $\A$ if, and only if, $j$ induces a bijection $\Ext^1_\C(C'',C') \rightarrow \Ext^1_\A(C'',C')$ for all $C',C'' \in \Ob \C$. In this section, we will study subcategories $\C$ which give rise to a bijection $\Ext^n_\C(C'',C') \cong \Ext^n_\A(C'',C')$ for $n \geq 2$.
\end{nn}
\begin{defn}\label{def:nextclosed}
Let $n \geq 1$ be an integer. 
\begin{enumerate}[\rm(1)]
\item The full subcategory $\C$ of $\A$ is \textit{$n$-extension closed} if the induced maps
$$
j^\sharp_m: \Ext^m_{\C}(X,Y) \longrightarrow \Ext^m_{\A}(X,Y) \quad \text{(for $m \leq n$)}
$$
are bijective for any pair of objects $X,Y$ in $\C$. 
\item The category $\C$ is \textit{entirely extension closed} if it is $n$-extension closed for all $n$.
\end{enumerate}
\end{defn}
\begin{rem}
\begin{enumerate}[\rm(1)]
\item From \ref{nn:nextcloseintro} the following observation is immediate: The subcategory $\C$ is $1$-extension closed if, and only if, the essential image of $j$ is extension closed, and thus, $(\C, j:\C \rightarrow \A)$ is a Quillen exact $k$-category if, and only if, $\C$ is $1$-extension closed.
\item Assume that $\C$ is $1$-extension closed (i.e., that it is an exact subcategory of $\A$). The maps $j^\sharp_m: \Ext^m_{\C}(X,Y) \rightarrow \Ext^m_{\A}(X,Y)$ are $k$-linear for all $m \geq 0$, and all $X$, $Y$ in $\C$. This follows from the fact that $j$ preserves pushouts (along admissible monomorphisms) and pullbacks (along admissible epimorphisms). We get the following obvious consequence.
\end{enumerate}
\end{rem}
\begin{lem}\label{lem:exh_triviality}
Assume that $\C$ is entirely extension closed. For every $X, Y \in \Ob \C$, the inclusion $j: \mathsf C \rightarrow \A$ defines an isomorphism
$$
j^\sharp_\bullet: \Ext^\bullet_{\mathsf C}(X,Y) \longrightarrow \Ext^\bullet_{\A}(X,Y)
$$
of graded $k$-modules. \qed
\end{lem}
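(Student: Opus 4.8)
The plan is simply to unwind the definitions, since the statement is flagged as an ``obvious consequence'' by the remark preceding it. First I would observe that, as $\C$ is entirely extension closed, it is in particular $1$-extension closed; by the remark following Definition \ref{def:nextclosed} this is exactly the assertion that $(\C, j)$ is an exact subcategory of $\A$. Thus the exact-category machinery is available, and the $k$-linearity recorded in that remark applies. This first step fixes the framework in which $\Ext^\bullet_\C(X,Y)$ carries its $k$-module structure at all.

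Next, for each fixed degree $m \geq 0$ I would argue that $j^\sharp_m \colon \Ext^m_\C(X,Y) \to \Ext^m_\A(X,Y)$ is an isomorphism of $k$-modules. Its $k$-linearity is the content of the remark: it follows from the fact that $j$ preserves pushouts along admissible monomorphisms and pullbacks along admissible epimorphisms (Lemma \ref{lem:exact_func_push_pull}), these being precisely the operations defining the Baer sum $\boxplus$ and the external action $\vdash$ that govern the $k$-module structure on extension classes. Bijectivity of $j^\sharp_m$ is exactly what the hypothesis supplies: for the given $m$ one chooses any $n \geq m$, and $n$-extension closedness forces $j^\sharp_m$ to be bijective. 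For the edge case $m = 0$ one instead invokes fullness of $\C$, so that $\Ext^0_\C(X,Y) = \Hom_\C(X,Y) = \Hom_\A(X,Y) = \Ext^0_\A(X,Y)$ and $j^\sharp_0$ is the identity. A $k$-linear bijection is an isomorphism of $k$-modules, so each $j^\sharp_m$ qualifies.

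Finally I would assemble these degreewise isomorphisms into the single morphism $j^\sharp_\bullet$ of graded $k$-modules. A graded $k$-module is nothing but the family of its homogeneous components, and a morphism of graded $k$-modules is a degree-preserving family of $k$-linear maps; hence $(j^\sharp_m)_{m \geq 0}$ is by construction such a morphism, and it is an isomorphism precisely because it is one in every degree. I do not expect any genuine obstacle here: the whole substance of the result resides in Definition \ref{def:nextclosed} together with the $k$-linearity remark, and the lemma merely repackages the pointwise data in graded form.
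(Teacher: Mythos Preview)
Your proposal is correct and matches the paper's treatment: the lemma is marked with a \qed\ immediately after its statement, i.e., no proof is given because it is flagged as an obvious consequence of Definition~\ref{def:nextclosed} and the preceding remark on $k$-linearity. Your unwinding of the definitions is exactly the intended argument; the only redundancy is that the case $m=0$ need not be singled out, since fullness of $\C \subseteq \A$ already makes $j^\sharp_0$ the identity on Hom-sets and this is subsumed in the definition of $n$-extension closed for any $n \geq 0$.
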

\begin{prop}\label{lem:exhproj}
Let $X$ and $Y$ be in $\C$. Assume that $X$ admits a projective resolution $\cdots \rightarrow P_2 \rightarrow P_1\rightarrow P_0 \rightarrow X \rightarrow 0$ in $\A$. Further, assume that
\begin{enumerate}[\rm(1)]
\item\label{lem:exhproj:0} $\C$ is $1$-extension closed,
\item\label{lem:exhproj:1} $P_i$ belongs to $\C$ for every $i \geq 0$, and
\item\label{lem:exhproj:2} for every object $A \in \Ob \C$, every $i \geq 0$ and for every epimorphism $f: P_i \rightarrow A$ in $\A$, the kernel $\Ker(f)$ of $f$ lies in $\C$ $($up to isomorphism$)$.
\end{enumerate}
\emph{(}For instance, one may assume that $\C$ is closed under kernels of epimorphisms to ensure that \emph{(\ref{lem:exhproj:2})} holds.\emph{)} Then the inclusion functor $j: \C \rightarrow \A$ induces an isomorphism $\Ext^\bullet_\C(X,Y) \rightarrow \Ext^\bullet_\A(X,Y)$.
\end{prop}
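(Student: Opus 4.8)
The plan is to use the resolution to reduce the whole graded statement, by dimension shifting, to the range $n\le 1$, where hypothesis (1) already gives the conclusion. Write the given resolution as $\cdots \to P_1 \xrightarrow{d_1} P_0 \xrightarrow{d_0} X \to 0$ and set $\Omega^0 X = X$ and $\Omega^{m}X = \Ker(d_{m-1})$ for $m\ge 1$. The first step is to show that every syzygy $\Omega^{m}X$ lies in $\C$; this is the only place hypothesis (4) is used, and it is precisely tailored to it. Arguing by induction on $m$: $\Omega^0X=X\in\C$, and if $\Omega^{m}X\in\C$ then the corestriction $P_m \to \Omega^{m}X$ of $d_m$ is an epimorphism in $\A$ from $P_m\in\C$ onto an object of $\C$, so (4) forces $\Omega^{m+1}X=\Ker(P_m\to\Omega^{m}X)\in\C$. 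Because $\C$ is $1$-extension closed, each short exact sequence
$$\epsilon_m : 0 \to \Omega^{m+1}X \to P_m \to \Omega^{m}X \to 0$$
is a sequence in $\A$ with all three terms in $\C$, hence an admissible short exact sequence of the exact category $\C$, and $j$ is an exact functor carrying $\epsilon_m$ to itself.

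The second ingredient is that each $P_i$ is a \emph{projective object of the exact category} $\C$. Indeed, since $j$ is fully faithful and every admissible epimorphism $E \twoheadrightarrow E''$ of $\C$ is an epimorphism in $\A$, the projectivity of $P_i$ in $\A$ allows one to lift any $P_i \to E''$ along $E \twoheadrightarrow E''$, the lift being automatically a morphism of $\C$. Consequently $\Ext^{m}_\C(P_i, Y)=0$ for all $m\ge 1$, exactly as $\Ext^{m}_\A(P_i, Y)=0$.

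With these inputs the induction is routine. I induct on $n$, proving simultaneously for all $p\ge 0$ that $j^\sharp_n\colon \Ext^n_\C(\Omega^p X, Y)\to \Ext^n_\A(\Omega^p X, Y)$ is bijective. For $n\le 1$ this is hypothesis (1), applied to the pair $\Omega^p X,\,Y$ in $\C$. For $n\ge 2$ I compare the long exact $\Ext$-sequences in the first variable attached to $\epsilon_p$ in $\C$ and to its image in $\A$; since $j$ is exact it induces a ladder between them, compatible with the connecting morphisms. The terms involving $P_p$ vanish by the preceding paragraph, so the connecting maps give isomorphisms $\Ext^{n-1}_\C(\Omega^{p+1}X, Y)\cong \Ext^n_\C(\Omega^p X, Y)$ and likewise over $\A$. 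The inductive hypothesis makes the vertical $j^\sharp_{n-1}$ at $\Omega^{p+1}X$ an isomorphism, and commutativity of the ladder then forces $j^\sharp_n$ at $\Omega^p X$ to be one as well. Taking $p=0$ and letting $n$ range yields the claimed graded isomorphism $\Ext^\bullet_\C(X,Y)\to\Ext^\bullet_\A(X,Y)$.

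The main obstacle is not any single computation but making the exact-category homological algebra legitimate: one must know that Yoneda $\Ext$ in the (possibly non-abelian) exact category $\C$ carries the usual long exact sequence in the first variable, and that an exact functor such as $j$ induces a morphism of these sequences commuting with the connecting maps. Granting this standard machinery (cf.\ the treatment of exact categories in \cite{Bu13}), together with the vanishing of $\Ext^{\ge 1}_\C$ on the relatively projective objects $P_i$, the proof is a clean dimension shift; the one genuinely hypothesis-dependent point is the stability of the syzygies under (4), which also explains the parenthetical remark that closure under kernels of epimorphisms suffices.
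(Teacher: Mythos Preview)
Your proof is correct and takes a genuinely different route from the paper's. You argue by dimension shifting: you verify that all syzygies $\Omega^p X$ lie in $\C$ (using hypothesis (3) --- you wrote ``(4)'', a harmless slip), observe that each $P_i$ is projective \emph{in the exact category} $\C$ so that $\Ext^{\ge 1}_\C(P_i,-)=0$, and then compare the long exact $\Ext$-sequences over $\C$ and $\A$ attached to $\epsilon_p$ to reduce to degrees $\le 1$. The paper, by contrast, works entirely at the level of explicit $n$-extensions: for surjectivity it lifts the identity of $X$ along the resolution to produce a chain map into a given $\xi$, pushes out along $\varphi_n$, and checks the resulting extension lies in $\C$; for injectivity it proves a ``weak functoriality'' statement --- every morphism $\xi\to\zeta$ in $\mathcal{E}xt^n_\A(X,Y)$ induces one $\xi'\to\zeta'$ in $\mathcal{E}xt^n_\C(X,Y)$ --- and uses it to transport a connecting zig-zag from $\A$ back to $\C$.

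Your argument is cleaner and more conceptual, but it imports the long exact sequence for Yoneda $\Ext$ in an exact category (and its naturality under exact functors), which the paper does not set up; you rightly flag this as the one non-trivial ingredient. The paper's hands-on approach is more self-contained within its own framework of extension categories, and has the additional payoff that the weak functoriality argument is reused verbatim later (in the proof of Lemma~\ref{lem:injectivityprop}). Either approach is fine for the statement at hand.
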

\begin{proof}
Let $X, Y \in \Ob \C$ and let $n \geq 1$ be an integer. We show that $j^\sharp_n$ is surjective. To this end, let $\xi\ : \ 0 \rightarrow Y \rightarrow E_{n-1} \rightarrow \cdots \rightarrow E_0 \rightarrow X \rightarrow 0$ be an exact sequence in $\A$. If $n=1$, the equivalence class of $\xi$ already belongs to $\Ext^1_\C(X,Y)$. Hence let us assume that $n \geq 2$. There is a morphism of complexes in $\Ch(\A)$ lifting the indetity of $X$:
$$
\xymatrix@C=20pt{
\cdots \ar[r] & P_{n+1} \ar[r] \ar[d] & P_n \ar[r] \ar[d]_-{\varphi_n} & P_{n-1} \ar[r] \ar[d]_-{\varphi_{n-1}} & P_{n-2} \ar[r] \ar[d]^-{\varphi_{n-2}} & \cdots \ar[r]  & P_0 \ar[r] \ar[d]^-{\varphi_0} & X \ar[r] \ar@{=}[d] & 0 \ \ \\
\cdots \ar[r] & 0 \ar[r] & Y \ar[r] & E_{n-1} \ar[r] & E_{n-2} \ar[r] & \cdots \ar[r] & E_0
\ar[r] & X \ar[r] & 0 \ .
}
$$
Now, consider the pushout diagram
$$
\xymatrix@C=18pt{
\cdots \ar[r] & P_{n+1} \ar[r] \ar[d] & P_n \ar[r] \ar[d]_-{\varphi_n} & P_{n-1} \ar[r] \ar[d] & P_{n-2} \ar[r] \ar@{=}[d] &
\cdots \ar[r]  & P_0 \ar[r] \ar@{=}[d] & X \ar[r] \ar@{=}[d] & 0 \ \ \\
\cdots \ar[r] & 0 \ar[r] & Y \ar[r] & Y \oplus_{\varphi_n} P_{n-1} \ar[r] & P_{n-2} \ar[r] & \cdots \ar[r] &
P_0 \ar[r] & X \ar[r] & 0 \ ,
}
$$
wherein the lower row defines an $n$-extension $\xi'$ of $X$ by $Y$ which is equivalent to the $n$-extension $\xi$. By (\ref{lem:exhproj:1}), $P_0, \dots, P_{n-2} \in \Ob \C$. Moreover, by (\ref{lem:exhproj:2}) and induction,
$$
\Ker(P_{i} \longrightarrow P_{i-1}) \in \Ob \C \quad \text{for $i = 0, \dots, n-2$},
$$
where $P_{-1} = X$. Hence $Y \oplus_{\varphi_n} P_{n-1} \in \Ob \C$ since it occurs as a $1$-extension of objects in $\C$:
$$
0 \longrightarrow Y \longrightarrow Y \oplus_{\varphi_n} P_{n-1} \longrightarrow \Ker(P_{n-2} \longrightarrow P_{n-3}) \longrightarrow 0 \ .
$$
Therefore $\xi'$ is an admissible $n$-extension of $X$ by $Y$ in $\C$, whose equivalence class in $\Ext^n_\C(X,Y)$ is mapped, via $j^\sharp_n$, to the equivalence class of $\xi$ in $\Ext^n_\A(X,Y)$. To deduce the injectivity of $j^\sharp_n$, we claim that the above construction is functorial in the following weak sense.
\begin{enumerate}
\item[\bf Claim:] Let $\xi \rightarrow \zeta$ be a morphism in $\mathcal Ext^n_\A(X,Y)$. Then there is a morphism $\xi' \rightarrow \zeta'$ in $\mathcal Ext^n_\C(X,Y)$.
\end{enumerate}
For the moment, let us assume that the claim is valid. Take an admissible $n$-extension $\xi$ in $\mathcal Ext^n_{\C}(X,Y)$ which is equivalent to the trivial $n$-extension in $\mathcal Ext^n_{\A}(X,Y)$; say $\xi$ is connected to $\sigma_n$ via a sequence $(\alpha_0, \alpha_1, \dots, \alpha_{r-1})$ of morphisms in $\mathcal Ext^n_{\A}(X, Y)$ with
$$
\alpha_i: \xi_i \longrightarrow \xi_{i+1} \quad \ \text{or} \quad \ \alpha_i: \xi_{i+1} \longrightarrow \xi_i \quad \text{(for $i = 0, \dots, r-1$)},
$$
where $\xi_{\mathrm{add}} = \xi$ and $\xi_r = \sigma_n$. What we want to show is that $\xi$ is linked to $\sigma_n$ by a sequence of morphisms in $\mathcal Ext^n_{\C}(X,Y)$. By the claimed weak functoriality, morphisms
$$
\alpha_i ': \xi_i' \longrightarrow \xi_{i+1}' \quad \ \text{or} \quad \ \alpha_i': \xi_{i+1}' \longrightarrow \xi_i' \quad \text{(for $i = 0, \dots, r-1$)},
$$
will be given. The morphisms $\beta_0 := \alpha_\xi: \xi' \rightarrow \xi$, $\beta_{r+1}:=\alpha_{\sigma_n}: \sigma_n' \rightarrow \sigma_n$ and $\beta_i = \alpha_{i-1}'$ (for $i = 1, \dots, r$) then define the desired connecting sequence $(\beta_0, \beta_1, \dots, \beta_{r+1})$ in $\mathcal Ext^n_{\C}(X,Y)$ and we are done.

Let us prove the claim. Let $\alpha: \xi \rightarrow \zeta$ be a morphism in $\mathcal Ext^n_\A(X,Y)$. As before, there is a morphism of complexes,
$$
\xymatrix@C=17pt{
\mathbb P \ar[d]_-{\varphi} & \cdots \ar[r] & P_{n+1} \ar[r] \ar[d] & P_n \ar[r] \ar[d]_-{\varphi_n} & P_{n-1} \ar[r] \ar[d]_-{\varphi_{n-1}} & P_{n-2} \ar[r] \ar[d]^-{\varphi_{n-2}} & \cdots \ar[r]  & P_0 \ar[r] \ar[d]^-{\varphi_0} & X \ar[r] \ar@{=}[d] & 0 \ \ \\
\xi & \cdots \ar[r] & 0 \ar[r] & Y \ar[r] & E_{n-1} \ar[r] & E_{n-2} \ar[r] & \cdots \ar[r] & E_0
\ar[r] & X \ar[r] & 0 \ ,
}
$$
which, when composed with $\alpha$, yields a morphism $\psi = \alpha \circ \varphi$ of complexes between the projective resolution of $X$ and $\zeta$ which lifts the identity of $X$. We have the commutative diagram
$$
\footnotesize
\xymatrix@!C=0pt@!R=0pt{
& 0 \ar[rr] & & Y \ar[rr]
   & & F_{n-1} \ar[rr] && F_{n-2} \ar[rr] && \cdots \ar[rr] && F_0 \ar[rr] && X \ar[rr] \ar@{=}[dd]|!{[d];[d]}\hole && 0
\\
0 \ar[rr] & & Y \ar@{=}[ur]\ar[rr]
 & & E_{n-1} \ar[rr] \ar[ur]^(0.4){\alpha_{n-1}} && E_{n-2} \ar[ur] \ar[rr] && \cdots \ar[rr] && E_0 \ar[ur] \ar[rr] && X \ar@{=}[dd] \ar@{=}[ur] \ar[rr] && 0
\\
& \cdots \ar'[r][rr] & & P_n \ar[dd]|!{[d];[d]}\hole \ar'[r][rr] \ar[uu]^-(.2){\psi_n}|!{[u];[u]}\hole
& & P_{n-1} \ar[uu]|!{[u];[u]}\hole \ar[dd]|!{[d];[d]}\hole  \ar'[r][rr] && P_{n-2} \ar[uu]|!{[u];[u]}\hole \ar[rr] && \cdots \ar'[r][rr] && P_0 \ar[uu]|!{[u];[u]}\hole \ar'[r][rr] && X \ar[rr] && 0
\\
\cdots \ar[rr] & & P_n \ar[dd] \ar[rr]\ar@{=}[ur] \ar[uu]^(.3){\varphi_n}
 & & P_{n-1} \ar@{=}[ur] \ar[uu] \ar[rr] \ar[dd]  && P_{n-2} \ar@{=}[ur] \ar[uu] \ar[rr] && \cdots \ar[rr] && P_0 \ar[uu] \ar@{=}[ur] \ar[rr] && X \ar@{=}[ur] \ar[rr] && 0
\\
& 0 \ar'[r][rr] & & Y \ar'[r][rr]
   & & Q \ar'[r][rr] && P_{n-2} \ar@{=}[uu]|!{[u];[u]}\hole \ar[rr] && \cdots \ar'[r][rr] && P_0 \ar@{=}[uu]|!{[u];[u]}\hole \ar'[r][rr] && X \ar[rr] \ar@{=}[uu]|!{[u];[u]}\hole && 0
\\
0 \ar[rr] & & Y \ar@{=}[ur]\ar[rr]
 & & P  \ar[rr] \ar[ur]|{\, f \,} && P_{n-2} \ar@{=}[uu] \ar@{=}[ur] \ar[rr] && \cdots \ar[rr] && P_0 \ar@{=}[uu] \ar@{=}[ur] \ar[rr] && X \ar@{=}[uu] \ar@{=}[ur] \ar[rr] && 0
}
$$
wherein $P = Y \oplus_{\varphi_n} P_{n-1}$, $Q = Y \oplus_{\psi_n} P_{n-1}$ and the arrow $f : P \rightarrow Q$ is induced by the universal property of $Y \oplus_{\varphi_n} P_{n-1}$. It completes the diagram to a commutative one and hence gives rise to a morphism $\alpha': \xi' \rightarrow \zeta'$ which differs from the identity morphism precisely in degree $n-1$, where it is given by $\alpha'_{n-1} = f$.
\end{proof}
\begin{rem}
The statement of Proposition \ref{lem:exhproj} remains valid if one replaces condition (\ref{lem:exhproj:2}) by the following one: For every $A \in \Ob \C$, every $i \geq 0$ and every monomorphism $f: A \rightarrow P_i$ in $\A$, the cokernel $\Coker(f)$ of $f$ lies in $\C$ (up to isomorphism). This is guaranteed, if $\C$ is closed under cokernels of monomorphisms.
\end{rem}
\begin{cor}\label{cor:exhproj}
Assume that $\A$ has enough projective objects. Further, assume that
\begin{enumerate}[\rm(1)]
\item $\C$ is $1$-extension closed,
\item\label{cor:exhproj:1} $\Proj(\A) \subseteq \C$, and
\item\label{cor:exhproj:2} for every object $A \in \Ob \C$ and for every epimorphism $f: P \rightarrow A$ in $\A$ with $P \in \Ob \Proj(\A)$, the kernel $\Ker(f)$ of $f$ lies in $\C$ $($up to isomorphism$)$.
\end{enumerate}
Then $\C$ is entirely extension closed. \qed
\end{cor}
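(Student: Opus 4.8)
The plan is to derive the statement directly from Proposition \ref{lem:exhproj} by verifying its three hypotheses for an arbitrary pair of objects $X, Y \in \C$. By Definition \ref{def:nextclosed}, being entirely extension closed amounts to the maps $j^\sharp_m \colon \Ext^m_\C(X,Y) \to \Ext^m_\A(X,Y)$ being bijective for all $m \geq 0$ and all $X, Y \in \C$, so it suffices to produce an isomorphism $j^\sharp_\bullet \colon \Ext^\bullet_\C(X,Y) \to \Ext^\bullet_\A(X,Y)$ of graded $k$-modules for each such pair; this is precisely the conclusion of Proposition \ref{lem:exhproj}.

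First I would fix $X, Y \in \Ob \C$. Because $\A$ is assumed to have enough projective objects, $X$ admits a projective resolution $\cdots \to P_1 \to P_0 \to X \to 0$ in $\A$ with every $P_i \in \Proj(\A)$; this supplies the standing hypothesis of Proposition \ref{lem:exhproj}. Next I would check its conditions in turn. Condition (1), that $\C$ be $1$-extension closed, is hypothesis (1) of the corollary. Condition (2), that each $P_i$ lie in $\C$, follows from the inclusion $\Proj(\A) \subseteq \C$ of hypothesis (2), since the $P_i$ are projective. For condition (3), I must show that for every $A \in \Ob \C$, every $i$, and every epimorphism $f \colon P_i \to A$ in $\A$, the kernel $\Ker(f)$ lies in $\C$; but each $P_i$ is a projective object of $\A$, so this is exactly the instance $P = P_i$ of hypothesis (3) of the corollary.

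With all three hypotheses verified, Proposition \ref{lem:exhproj} yields that $j$ induces an isomorphism $\Ext^\bullet_\C(X,Y) \to \Ext^\bullet_\A(X,Y)$. As $X$ and $Y$ were arbitrary, every $j^\sharp_m$ is bijective, so $\C$ is $n$-extension closed for all $n$, i.e.\ entirely extension closed. There is no genuine obstacle here: the only point requiring any care is that the quantifier in the corollary's condition (3) ranges over all projectives $P$, which is exactly why it applies verbatim to the resolution terms $P_i$ occurring in Proposition \ref{lem:exhproj}'s condition (3). One could equally invoke the remark following Proposition \ref{lem:exhproj} to run the dual argument in the case where $\C$ is closed under cokernels of monomorphisms.
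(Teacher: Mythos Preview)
Your proposal is correct and matches the paper's intent: the corollary is marked with \qed\ and no proof is given, since it is an immediate application of Proposition~\ref{lem:exhproj} once one observes that enough projectives in $\A$ supply the required resolution and that hypotheses (1)--(3) of the corollary specialize to conditions (1)--(3) of the proposition.
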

\begin{lem}\label{lem:kprojproj}
Let $A$ be a $k$-algebra and consider the full subcategory $\P(A)$ of $\Mod(A^\ev)$ defined in Example $\ref{exa:bimodules}$. Then $\mathsf P(A)$ contains $\Proj(A^\ev)$ if, and only if, $A$ is projective as a $k$-module.
\end{lem}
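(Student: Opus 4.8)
The plan is to prove the two implications separately and, in both directions, to reduce the question to the behaviour of the single free module $A^\ev$ regarded as a one-sided $A$-module. The key observation is that, as a left $A$-module, $A^\ev = A \otimes_k A^\op$ is exactly the extension of scalars $A \otimes_k V$ of the underlying $k$-module $V$ of $A$ along the unit $k \rightarrow A$ (the left $A$-action taking place on the first tensor factor), and symmetrically on the right. First I would record that $\P(A)$ is closed under arbitrary direct sums and under direct summands; this is immediate from the corresponding closure properties of $\Proj(A)$ and $\Proj(A^\op)$ together with the fact that the forgetful functors $\scrL_A$ and $\mathscr R_A$ commute with sums and summands. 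Consequently $\Proj(A^\ev) \subseteq \P(A)$ holds if and only if the free module $A^\ev$ itself lies in $\P(A)$, i.e. if and only if $A^\ev$ is projective both as a left and as a right $A$-module.

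For the implication starting from $k$-projectivity, assume $A$ is projective over $k$, so that $A$ is a direct summand of some free $k$-module $k^{(I)}$. Applying the additive functor $A \otimes_k -$ exhibits the left $A$-module $A \otimes_k A$ as a direct summand of $A \otimes_k k^{(I)} \cong A^{(I)}$, a free left $A$-module; hence $A^\ev$ is left $A$-projective, and the symmetric argument with $- \otimes_k A$ shows it is right $A$-projective. Thus $A^\ev \in \P(A)$, and by the reduction above $\Proj(A^\ev) \subseteq \P(A)$. Equivalently, one may argue through the adjunction isomorphism recalled in Example \ref{exa:bimodules}: $k$-projectivity of $A$ makes $\Hom_k(A,-)$, and therefore $\Hom_A(A^\ev,-)$, exact, which is projectivity of $A^\ev$ over $A$.

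For the converse, assume $\Proj(A^\ev) \subseteq \P(A)$; by the reduction, $A^\ev$ is projective as a left $A$-module. The idea is to realise $A$ as a $k$-module retract of a $k$-projective module manufactured from $A^\ev$. Concretely, the unit $a \mapsto 1_A \otimes a$ and the multiplication $\mu \colon A^\ev \rightarrow A$ exhibit $A$ as a $k$-module direct summand of $A^\ev$, so it would suffice to upgrade the left $A$-projectivity of $A^\ev$ to $k$-projectivity of $A^\ev$. To this end I would take a presentation of $A^\ev$ as a summand of a free left $A$-module, and combine the resulting $A$-linear dual basis with the extension--restriction adjunction of Example \ref{exa:bimodules} and the multiplication $\mu$ in order to distil a $k$-linear dual basis for $A$. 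The main obstacle lies precisely here: the adjunction only delivers exactness of $\Hom_k(A,-)$ along short exact sequences that are restrictions of $A$-modules, and transferring this to \emph{arbitrary} short exact sequences of $k$-modules --- equivalently, cancelling the spurious left $A$-coefficients that the dual basis produces --- is the delicate heart of the argument and is exactly what should pin down $k$-projectivity of $A$. I expect essentially all of the content of the converse to be concentrated in this transfer step.
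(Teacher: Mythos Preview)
Your reduction to the single module $A^\ev$ and your forward implication are correct and match the paper's approach exactly; the paper argues via the adjunction isomorphism $\Hom_A(A\otimes_k A,-)\cong\Hom_k(A,-)$, which is the same content as your direct-summand argument.

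For the converse, the obstacle you isolate is genuine and cannot be overcome: the statement is false in that direction. The paper's proof simply asserts that exactness of $\Hom_k(A,-)$ as a functor on $\Mod(A)$ is equivalent to $k$-projectivity of $A$, which is precisely the ``transfer to arbitrary short exact sequences of $k$-modules'' you flagged as delicate. But this transfer fails in general. Take $k=\mathbb Z$ and $A=\mathbb Q$: then $A^\ev=\mathbb Q\otimes_{\mathbb Z}\mathbb Q\cong\mathbb Q$, so every $A^\ev$-module is a $\mathbb Q$-vector space, hence projective on either side, and $\Proj(A^\ev)\subseteq\P(A)$ trivially; yet $\mathbb Q$ is not projective over $\mathbb Z$. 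Equivalently, $\Hom_{\mathbb Z}(\mathbb Q,-)$ is exact on $\Mod(\mathbb Q)$ (every short exact sequence there splits) while failing to be exact on $\Mod(\mathbb Z)$. So your hesitation was well founded: the ``if'' direction is the only one that survives, and that is also the only direction the paper actually uses later.
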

\begin{proof}
Every projective $A^\ev$-module belongs to $\mathsf P(A)$ if, and only if, the module $A \otimes_k A$ belongs to $\mathsf P(A)$ which holds true if, and only if,
$$
\Hom_A(A \otimes_k A, -) \cong \Hom_k(A, \Hom_A(A,-)) \cong \Hom_k(A,-)
$$
and
$$
\Hom_{A^\op}(A \otimes_k A, -) \cong \Hom_k(A, \Hom_{A^\op}(A,-)) \cong \Hom_k(A,-)
$$
are exact. But the latter (by definition) is valid if, and only if, $A$ is a projective $k$-module.
\end{proof}
\begin{cor}\label{cor:isohochschildproj}
Let $A$ be a $k$-algebra . If $A$ is $k$-projective, then $\P(A)$ is entirely extension closed, i.e., the inclusion functor $j: \P(A) \rightarrow \Mod(A^\ev)$ induces an isomorphism of graded $k$-modules $$j^\sharp_\bullet: \Ext^\bullet_{\P(A)}(M,N) \longrightarrow \Ext^\bullet_{A^\ev}(M,N)$$ for all objects $M$ and $N$ in $\P(A)$. $($It is an isomorphism of graded algebras in case $M = N$.$)$
\end{cor}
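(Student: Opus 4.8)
The plan is to obtain the statement as a direct application of Corollary~\ref{cor:exhproj}, taking the ambient abelian category to be $\A = \Mod(A^\ev)$ and the subcategory to be $\C = \P(A)$, and then to read off the asserted isomorphism from Lemma~\ref{lem:exh_triviality}. Thus the whole argument reduces to checking the three hypotheses of Corollary~\ref{cor:exhproj} in this concrete situation, which is precisely where the $k$-projectivity of $A$ will be used.

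First I would note that $\Mod(A^\ev)$ is a module category and so has enough projective objects. Since $\P(A)$ is an extension-closed, hence exact, subcategory of $\Mod(A^\ev)$ (Example~\ref{exa:bimodules}), the remark following Definition~\ref{def:nextclosed} shows that $\P(A)$ is $1$-extension closed, which is hypothesis~(1). Hypothesis~(2), namely $\Proj(A^\ev) \subseteq \P(A)$, is exactly the content of Lemma~\ref{lem:kprojproj}, and it is here that the assumption $A \in \Proj(k)$ enters (indeed, by that lemma the inclusion is equivalent to $A$ being $k$-projective). For hypothesis~(3), let $M \in \Ob \P(A)$ and let $f \colon P \to M$ be an epimorphism in $\Mod(A^\ev)$ with $P \in \Proj(A^\ev)$. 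By hypothesis~(2) we have $P \in \P(A)$, so $f$ is a surjection between objects of $\P(A)$; since $\P(A)$ is closed under kernels of epimorphisms (Example~\ref{exa:condition}(3)), the map $f$ splits on each side over $A$, whence $\Ker(f)$ is $A$-projective on both sides, i.e.\ $\Ker(f) \in \Ob \P(A)$. With all three hypotheses verified, Corollary~\ref{cor:exhproj} yields that $\P(A)$ is entirely extension closed.

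Granting entire extension closedness, Lemma~\ref{lem:exh_triviality} immediately gives that $j^\sharp_\bullet \colon \Ext^\bullet_{\P(A)}(M,N) \to \Ext^\bullet_{A^\ev}(M,N)$ is an isomorphism of graded $k$-modules for all $M, N \in \Ob \P(A)$. For the final clause, I would observe that when $M = N$ both graded modules carry the associative Yoneda product obtained by splicing admissible extensions, and that the fully faithful exact inclusion $j \colon \P(A) \to \Mod(A^\ev)$ carries the splice of two extensions in $\P(A)$ to the splice of their images; hence $j^\sharp_\bullet$ is multiplicative, and being bijective it is an isomorphism of graded $k$-algebras. Since every step is a direct appeal to an already-established result, I expect no substantial obstacle; the only points demanding a moment's care are matching hypothesis~(3) of Corollary~\ref{cor:exhproj} to the "closed under kernels of epimorphisms" property of $\P(A)$, and noting that the graded-algebra assertion follows from the compatibility of $j$ with the Yoneda product.
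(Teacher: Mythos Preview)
Your proof is correct and follows essentially the same approach as the paper: apply Corollary~\ref{cor:exhproj} by invoking Lemma~\ref{lem:kprojproj} for the inclusion $\Proj(A^\ev)\subseteq\P(A)$ and the fact that $\P(A)$ is closed under kernels of epimorphisms, then note that the exact inclusion $j$ preserves the Yoneda product. You have simply been more explicit in verifying each hypothesis separately and in citing Lemma~\ref{lem:exh_triviality} for the passage from entire extension closedness to the graded isomorphism.
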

\begin{proof}
If $A$ is $k$-projective, then $\Proj(A^\ev) \subseteq \P(A)$ by Lemma \ref{lem:kprojproj}. The assertion now follows from Corollary \ref{cor:exhproj}, because $\P(A)$ is closed under kernels of epimorphisms. $($Since $j$ is an exact functor, it will preserve the \textit{Yoneda product} on $\Ext^\bullet_{\P(A)}(M,M)$ which we will introduce later; see Section \ref{sec:yoneda}.$)$
\end{proof}



\chapter{The Retakh isomorphism}\label{cha:retakh}
In \cite{Re86} V.\,Retakh established a connection between the extension groups and the homotopy groups of extension categories over a fixed abelian category $\A$. More precisely, he showed that for every pair of objects $A, B$ in $\A$, and integers $n \geq 1$ and $i = 0, \dots, n$, there is an isomorphism $\Ext^{n-i}_\A(B,A) \cong \pi_i\mathcal Ext^n_\A(B,A)$ of (abelian) groups. In what follows, we will focus on the case $i = 1$. While S.\,Schwede gave an explicit description of these isomorphisms for the category of left modules over a ring (cf. \cite{Sch98}), we will do so for any exact category which is factorizing.
\section{An explicit description}\label{sec:retakh}
\begin{nn}
Let $k$ be a commutative ring. Throughout this section we fix an integer $n \geq 0$ as well as an exact $k$-category $\C$ with defining embedding $i=i_\C: \C \rightarrow \mathsf A_\C$ into the abelian $k$-category $\A = \A_\C$. Moreover, we fix objects $X,Y \in \Ob \C$ and an automorphism $a \in \Aut_\C(X)$. We assume that $\C$ is factorizing. Recall our convention that for objects $A$ and $B$ in $\C$,
\begin{align*}
j^A &: A \longrightarrow A \oplus B, & q^A & : A \oplus B \longrightarrow A,\\
j_B &: B \longrightarrow A \oplus B, &  q_B & : A \oplus B \longrightarrow B
\end{align*}
denote the canonical morphisms.
\end{nn}
\begin{nn}
The main result of the following two sections, namely, that $\Ext^{n}_\C(X,Y)$ and $\pi_1(\mathcal Ext^{n+1}_\C(X,Y), \xi)$ are isomorphic groups for any base point $\xi$, was (as mentioned above) proven by V.\,Retakh in \cite{Re86} for abelian categories and A.\,Neeman and V.\,Retakh in \cite{NeRe96} in the more general context of so called Waldhausen categories. However, their proofs involve a good amount of abstract homotopy theory and therefore are not quite concrete. In contrast to their approach, we will construct the desired isomorphism explicitly.

The first step in doing so, is to acquire a group homomorphism
$$
u^{a,+}_\C:  \Ext^{n}_\C(X,Y) \longrightarrow \pi_1 (\mathcal Ext^{n+1}_\C(X,Y), \mathsf \sigma_{n+1}(X,Y))
$$
which will be vital for our further investigations. In fact, $u^{a,+}_\C$ will turn out to be bijective. In the following, we denote the equivalence class of a loop $w$ in $\mathcal Ext^n_\C(X,Y)$ based at $\sigma_n(X,Y)$ by $[w] \in \pi_1 (\mathcal Ext^n_\C(X,Y), \sigma_n(X,Y))$.
\end{nn}
\begin{nn}\label{nn:defnuplus}
Let us define for each $\xi$ in $\mathcal Ext^n_\C(X,Y)$ a loop $w(\xi)$ in $\mathcal Ext^{n+1}_\C(X,Y)$ based at $\sigma_{n+1}(X,Y)$. To begin with, consider the case $n = 0$, and recall that $\Ext^0_\C(X,Y) = \Hom_\C(X,Y)$ by definition. For a given morphism $g \in \Hom_\C(X,Y)$, the matrix 
$$L_a(g) = 
\left[\begin{matrix}
\id_Y & g\circ a\\
0 & \id_X
\end{matrix}\right] : Y \oplus X \longrightarrow Y \oplus X
$$
gives rise to a morphism $\alpha: \sigma_1(X,Y) \rightarrow \sigma_1(X,Y)$, i.e., a loop $w(g)$ of length 1 based at $\sigma_1(X,Y)$. Suppose that $n \geq 1$.
Let
$$
\xymatrix{\xi & \equiv & 0 \ar[r] & Y \ar[r]^-{e_{n}} & E_{n-1} \ar[r]^-{e_{n-1}} & \cdots \ar[r]^-{e_1} & E_0 \ar[r]^-{e_0} & X
\ar[r] & 0}
$$
be an admissible $n$-extension of $X$ by $Y$. For a morphism $f: A \rightarrow X$ in $\C$ let $M_a(f)$ be the matrix
$$
M_a(f) = \left[\begin{matrix}
a \circ f\\
- a \circ f
\end{matrix}\right] : A \longrightarrow X \oplus X .
$$
It gives rise to an admissible $(n+1)$-extension $0 \rightarrow Y \rightarrow \mathbb E_+ \rightarrow X \rightarrow 0$ in $\C$:
$$
\xi_+ \quad \equiv \quad
\xymatrix@C=23pt{
0 \ar[r] & Y \ar[r]^-{e_{n}} & E_{n-1} \ar[r]^-{e_{n-1}} & \cdots \ar[r]^-{e_1} & E_0 \ar[r]^-{M_a(e_0)} & X \oplus X \ar[r]^-{
\left[\begin{smallmatrix}
1 & 1
\end{smallmatrix}\right]
} & X \ar[r] & 0 \ . \\
}
$$
The extension $\xi_+$ fits into the commutative diagram
$$
\xymatrix@C=22pt@R=22pt{
0 \ar[r] & Y \ar@{=}[d] \ar@{=}[r] & Y \ar[d]_-{e_{n}} \ar[r] & 0 \ar[r] \ar[d]& \cdots \ar[r] & 0 \ar[r] \ar[d] & X \ar@{=}[r] \ar[d]^-{\left[\begin{smallmatrix}
1\\
0
\end{smallmatrix}\right]} & X \ar@{=}[d] \ar[r] & 0\\
0 \ar[r] & Y \ar[r]^-{e_{n}} & E_{n-1} \ar[r]^-{e_{n-1}} & E_{n-2} \ar[r]^{e_{n-2}} & \cdots \ar[r]^-{e_1} & E_0 \ar[r]^-{M_a(e_0)} & X \oplus X \ar[r]^-{
\left[\begin{smallmatrix}
1 & 1
\end{smallmatrix}\right]
} & X \ar[r] & 0\\
0 \ar[r] & Y \ar@{=}[u] \ar@{=}[r] & Y \ar[u]^-{e_{n}} \ar[r] & 0 \ar[r] \ar[u] & \cdots \ar[r] & 0 \ar[r] \ar[u] & X \ar@{=}[r] \ar[u]_{\left[\begin{smallmatrix}
0\\
1
\end{smallmatrix}\right]} & X \ar@{=}[u] \ar[r] & 0
}
$$
which is a loop $w(\xi)$ in $\mathcal Ext^{n+1}_\C(X,Y)$ based at $\sigma_{n+1}(X,Y)$. Note that any morphism $\xi \rightarrow \zeta$ in $\mathcal Ext^n_\C(X,Y)$ will give rise to a morphism $\xi_+ \rightarrow \zeta_+$. Hence the assignment $u^{a,+}_\C([\xi]) = [w(\xi)]$ yields a well-defined map
$$
u^{a,+}_\C: \Ext^n_\C(X,Y) \longrightarrow \pi_1(\mathcal Ext^{n+1}_\C(X,Y), \sigma_{n+1}(X,Y)).
$$
\end{nn}
\begin{lem}\label{lem:u_grouphom}
The map $u^{a,+}_\C$ is a group homomorphism.
\end{lem}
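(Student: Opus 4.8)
The plan is to compare two multiplications on the target group by an Eckmann--Hilton argument. Abbreviate $\mu = u^{a,+}_\C$ and $\sigma = \sigma_{n+1}(X,Y)$. By Lemma~\ref{lem:extmodule} the operation on $\Ext^n_\C(X,Y)$ is the Baer sum $+$, induced by the bifunctor $\boxplus$, and by Proposition~\ref{prop:iso_pi1} the operation on $\pi_1(\mathcal Ext^{n+1}_\C(X,Y),\sigma)$ is concatenation $*$ of loops, i.e. composition in the Quillen groupoid $\mathsf G(\mathcal Ext^{n+1}_\C(X,Y))$.

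First I would put a second operation $\boxplus_*$ on $\pi_1(\mathcal Ext^{n+1}_\C(X,Y),\sigma)$. Since $\boxplus$ is a functor on the product category and $\sigma\boxplus\sigma\cong\sigma$, it passes to the Quillen groupoid and, after conjugating with the canonical isomorphism $\sigma\boxplus\sigma\cong\sigma$ (Lemma~\ref{lem:fund_conj_commutative}), induces a binary operation $\boxplus_*$ on $\End_{\mathsf{G}}(\sigma)=\pi_1(\mathcal Ext^{n+1}_\C(X,Y),\sigma)$. Functoriality of $\boxplus$ gives the interchange law $(\gamma*\gamma')\boxplus_*(\delta*\delta')=(\gamma\boxplus_*\delta)*(\gamma'\boxplus_*\delta')$, and the natural isomorphisms $\xi\boxplus\sigma_{n+1}\cong\xi\cong\sigma_{n+1}\boxplus\xi$ show that the constant loop is a common two-sided unit for $*$ and $\boxplus_*$. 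The Eckmann--Hilton argument then forces $\boxplus_*=*$, and both to be commutative (so $\pi_1(\mathcal Ext^{n+1}_\C(X,Y),\sigma)$ is abelian, in accordance with Proposition~\ref{prop:retakhiso}).

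Granting this, the homomorphism property reduces to the single identity
\[
[w(\xi\boxplus\zeta)] \;=\; [w(\xi)]\boxplus_*[w(\zeta)],
\]
since then $\mu([\xi]+[\zeta])=[w(\xi\boxplus\zeta)]=[w(\xi)]\boxplus_*[w(\zeta)]=[w(\xi)]*[w(\zeta)]=\mu([\xi])\cdot\mu([\zeta])$. As $w(\xi)$ is the length-two loop $\sigma\to\xi_+\leftarrow\sigma$ of \ref{nn:defnuplus}, applying $\boxplus$ levelwise to $w(\xi)$ and $w(\zeta)$ yields the loop $\sigma\boxplus\sigma\to\xi_+\boxplus\zeta_+\leftarrow\sigma\boxplus\sigma$. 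Hence the displayed identity is exactly the claim that $\xi\mapsto\xi_+$ commutes with Baer sum: there is an isomorphism $(\xi\boxplus\zeta)_+\cong\xi_+\boxplus\zeta_+$ in $\mathcal Ext^{n+1}_\C(X,Y)$ carrying the two canonical morphisms from $\sigma$ on one side to those on the other.

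Proving this last isomorphism is the main obstacle, and I would attack it by a direct comparison of the two $(n+1)$-extensions. After the pushout along the codiagonal $Y\oplus Y\to Y$ at the left end and the common middle terms $E_i\oplus F_i$, the two extensions differ only in their lowest two degrees: $(\xi\boxplus\zeta)_+$ first forms the diagonal pullback of $e_0\oplus f_0$ and then appends the $M_a$-augmentation $\xrightarrow{M_a(-)} X\oplus X \xrightarrow{\left[\begin{smallmatrix}1&1\end{smallmatrix}\right]} X$, whereas $\xi_+\boxplus\zeta_+$ first appends the $M_a$-augmentation to each summand and then takes the diagonal pullback. The desired identification is produced from the universal properties of these pullbacks, the relation $\left[\begin{smallmatrix}1&1\end{smallmatrix}\right]\circ M_a(e_0)=0$, and the middle-four permutation of the four copies of $X$ coming from the direct sum; that each intervening sequence is an admissible conflation, so that we remain inside $\mathcal Ext^{n+1}_\C(X,Y)$, is guaranteed by $\C$ being factorizing together with Proposition~\ref{prop:exactcat} and Lemma~\ref{lem:prop_ext1}. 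Once the permutation is inserted all squares commute and the two basepoint morphisms are matched, which completes the argument.
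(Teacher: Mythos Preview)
Your Eckmann--Hilton strategy is sound and genuinely different from the paper's approach: the paper simply concatenates the loops $w(\xi)$ and $w(\zeta)$ and, through a sequence of explicit pushouts and pullbacks, deforms the resulting length-four loop into $w(\xi\boxplus\zeta)$. Your idea of transporting the Baer sum to a second operation $\boxplus_*$ on $\pi_1$ and invoking Eckmann--Hilton to identify it with concatenation is perfectly valid (and yields abelianness of the target for free).

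The gap is in your final reduction. You assert an \emph{isomorphism} $(\xi\boxplus\zeta)_+\cong\xi_+\boxplus\zeta_+$ in $\mathcal Ext^{n+1}_\C(X,Y)$, but no such isomorphism exists: compute the degree-$0$ terms. In $(\xi\boxplus\zeta)_+$ the object in degree $0$ is $X\oplus X$ (the $M_a$-augmentation applied to the single map $d_0\colon P\to X$). In $\xi_+\boxplus\zeta_+$ the object in degree $0$ is the pullback of $(X\oplus X)\oplus(X\oplus X)\xrightarrow{[1\ 1]\oplus[1\ 1]}X\oplus X$ along the diagonal, which is $\{(a,b,c,d):a+b=c+d\}\cong X\oplus X\oplus X$. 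So the two extensions are not isomorphic. What you actually need, and what is true, is a \emph{morphism} of $(n{+}1)$-extensions between them intertwining the two structure maps from $\sigma$; this makes the two length-two loops elementarily homotopic. Such a morphism does exist---for instance, it is visible in the paper's proof, where the map $\left[\begin{smallmatrix}0&0\\0&1\\1&0\end{smallmatrix}\right]\colon X\oplus X\to X\oplus X\oplus X$ plays exactly this role---but producing it and checking the required commutativities is precisely the explicit diagram chase you were hoping to avoid. Your phrase ``the desired identification is produced from the universal properties of these pullbacks'' glosses over the mismatch between two and three copies of $X$ and does not constitute a proof. In short: the conceptual reduction is correct, but the remaining verification is of the same nature and length as the paper's direct argument, and the word ``isomorphism'' must be replaced by ``morphism compatible with the basepoint maps.''
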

\begin{proof}
First of all, assume that $n = 0$ and let $g$ and $g'$ be in $\Hom_\C(X,Y) = \Ext^0_\C(X,Y)$. Since
$$
\left[\begin{matrix}
\id_Y & g \circ a\\
0 & \id_X
\end{matrix}\right] \cdot
\left[\begin{matrix}
\id_Y & g' \circ a\\
0 & \id_X
\end{matrix}\right] 
=
\left[\begin{matrix}
\id_Y & (g + g')\circ a\\
0 & \id_X
\end{matrix}\right] 
$$
it follows that $u^{a,+}_\C(g + g') = u_\C^{a,+}(g)u_\C^{a,+}(g')$. For the remainder of the proof, let $n \geq 1$ and $\xi$ and $\zeta$ be admissible $n$-extensions in $\mathcal Ext^{n}_\C(X,Y)$. Let us first investigate the case $n = 1$; we will deduce the general result from it. The sum $[\xi] + [\zeta]$ is given by the equivalence class of the lower sequence in the
diagram
$$
\xymatrix@C=22pt@R=22pt{
0 \ar[r] & Y \oplus Y \ar[r]  \ar@{=}[d] & E_0 \oplus F_0 \ar[r] & X \oplus X \ar[r] & 0\\
0 \ar[r] & Y \oplus Y \ar[d]_{
\left[\begin{smallmatrix}
1 & 1 \end{smallmatrix}\right]} \ar[r] & P \ar[d] \ar[r] \ar[u] & X \ar[u]_{
\left[\begin{smallmatrix}
1\\
1
\end{smallmatrix}\right]
} \ar[r] & 0\\
0 \ar[r] & Y \ar[r]^-{d_1} & Q \ar[r]^-{d_0} & X \ar@{=}[u] \ar[r] & 0
}
$$
being obtained by successively taking a pullback and then a pushout. On the other hand, $u_\C^{a,+}([\xi])u_\C^{a,+}([\zeta])$ is represented by the loop
$$
\xymatrix@C=26pt{
\sigma_2(X,Y) \ar[d] & \equiv & 0 \ar[r] & Y \ar@{=}[d] \ar@{=}[r] & Y \ar[d]_-{e_{1}} \ar[r]^-{0}  \ar[d] & X \ar@{=}[r]
\ar[d]^-{
\left[\begin{smallmatrix}
1\\
0
\end{smallmatrix}\right]
} & X \ar@{=}[d] \ar[r] & 0\\
\xi_+ & \equiv & 0 \ar[r] & Y \ar[r]^-{e_1} & E_0 \ar[r]^-{
M_a(e_0)
} & X \oplus X \ar[r]^-{
\left[\begin{smallmatrix}
1 & 1
\end{smallmatrix}\right]
} & X \ar[r] & 0 \\
\sigma_2(X,Y) \ar[u] \ar[d] & \equiv & 0 \ar[r] & Y \ar@{=}[u] \ar@{=}[r] \ar@{=}[d] & Y \ar[u]^-{e_1} \ar[r]^-{0} \ar[u]
\ar[d]_{f_1} & X \ar[d]^-{\left[\begin{smallmatrix}
1\\
0
\end{smallmatrix}\right]} \ar@{=}[r] \ar[u]_{
\left[\begin{smallmatrix}
0\\
1
\end{smallmatrix}\right]
} & X \ar@{=}[u] \ar@{=}[d] \ar[r] & 0 \\
\zeta_+ & \equiv & 0 \ar[r] & Y \ar[r]^-{f_1} & F_0 \ar[r]^-{
M_a(f_0)
} & X \oplus X \ar[r]^-{
\left[\begin{smallmatrix}
1 & 1
\end{smallmatrix}\right]
} & X \ar[r] & 0 \\
\sigma_2(X,Y) \ar[u] & \equiv & 0 \ar[r] & Y \ar@{=}[u] \ar@{=}[r] & Y \ar[u]^-{f_1} \ar[r]^-{0} \ar[u] & X \ar@{=}[r]
\ar[u]_{
\left[\begin{smallmatrix}
0\\
1
\end{smallmatrix}\right]
} & X \ar@{=}[u] \ar[r] & 0
}
$$
which, after pushing out $\xi_+ \leftarrow \sigma_2(X,Y) \rightarrow \zeta_+$, is homotopically equivalent to a loop of the form
\begin{equation}\label{eq:lem:u_grouphom:result}
\begin{aligned}
\xymatrix@C=30pt{
0 \ar[r] &Y \ar@{=}[d] \ar@{=}[r] & Y \ar[d]_-{p \circ e_1} \ar[r]^-{0}  \ar[d] & X \ar@{=}[r] \ar[d]^-{
\left[\begin{smallmatrix}
0\\
0\\
1
\end{smallmatrix}\right]
} & X \ar@{=}[d] \ar[r] & 0 \ \ \\
0 \ar[r] & Y \ar[r] & \widehat{P} \ar[r] & X \oplus X \oplus X \ar[r]^-{
\left[\begin{smallmatrix}
1 & 1 & 1
\end{smallmatrix}\right]
} & X \ar[r] & 0 \ \ \\
0 \ar[r] & Y \ar@{=}[u] \ar@{=}[r] & Y \ar[u]^-{q \circ f_1} \ar[r]^-{0} \ar[u] & X \ar@{=}[r] \ar[u]_{
\left[\begin{smallmatrix}
0\\
1\\
0
\end{smallmatrix}\right]
} & X \ar@{=}[u] \ar[r] & 0 \ .
}
\end{aligned}
\end{equation}
In order to see this, one has to take the following pushout diagrams in $\C$ into account:
$$
\xymatrix{
Y \ar[r]^-{e_1} \ar[d]_-{f_1} & E_0 \ar@<-3pt>[d]^p \ \ &\\
F_0 \ar[r]^q & \widehat{P} \ , &
}
\xymatrix{
& X \ar[r]^-{\left[\begin{smallmatrix}
1\\
0
\end{smallmatrix}\right]} \ar[d]_-{\left[\begin{smallmatrix}
0\\
1
\end{smallmatrix}\right]} & X \oplus X \ \  \ar@<-3pt>[d]^-{\left[\begin{smallmatrix}
1 & 0\\
0 & 1\\
0 & 0
\end{smallmatrix}\right]} \\
& X \oplus X \ar[r]^-{\left[\begin{smallmatrix}
0 &1\\
0 & 0\\
1 & 0
\end{smallmatrix}\right]} & X \oplus X \oplus X \ .
}
$$
The universal property of $Q$ yields the (uniquely determined) arrow $Q \rightarrow \widehat{P}$ in the commutative diagram
$$
\xymatrix{
0 \ar[r] & Y \oplus Y \ar[d]_-{\left[\begin{smallmatrix}
1\\
1
\end{smallmatrix}\right]} \ar[r] & P \ar[r] \ar[d] & E_0 \oplus F_0 \ar@<-3pt>[d]^-{\left[\begin{smallmatrix}
p & q
\end{smallmatrix}\right]} \ \ \\
0 \ar[r] & Y \ar@{=}[d] \ar[r]^{d_1} & Q \ar[r] & \widehat{P} \ \ \\
0 \ar[r] & Y \ar[rr]^-{\left[\begin{smallmatrix}
e_1\\
f_1
\end{smallmatrix}\right]} & & E_0 \oplus F_0 \ , \ar@<3pt>[u]_-{\left[\begin{smallmatrix}
p & q
\end{smallmatrix}\right]}
}
$$
and this arrow is such that
$$
\xymatrix@C=30pt{
0 \ar[r] & Y \ar@{=}[d] \ar@{=}[r] & Y \ar[d] \ar[r]^-{0}  \ar[d] & X \ar@{=}[r] \ar[d]^-{
\left[\begin{smallmatrix}
1\\
0
\end{smallmatrix}\right]
} & X \ar@{=}[d] \ar[r] & 0\\
0 \ar[r] & Y \ar@{=}[d] \ar[r] & Q \ar[r]^-{
M_a(d_0)
} \ar[d] & X \oplus X \ar[d]^-{\left[\begin{smallmatrix}
0 & 0\\
0 & 1\\
1 & 0
\end{smallmatrix}\right]} \ar[r]^-{
\left[\begin{smallmatrix}
1 & 1
\end{smallmatrix}\right]
} & X \ar@{=}[d] \ar[r] & 0\\
0 \ar[r] & Y \ar[r] & \widehat{P} \ar[r] & X \oplus X \oplus X \ar[r]^-{
\left[\begin{smallmatrix}
1 & 1 & 1
\end{smallmatrix}\right]
} & X \ar[r] & 0\\
0 \ar[r] & Y \ar@{=}[u] \ar[r] & Q \ar[u] \ar[r]^-{
M_a(d_0)
} & X \oplus X \ar[u]_-{\left[\begin{smallmatrix}
0 & 0\\
0 & 1\\
1 & 0
\end{smallmatrix}\right]} \ar[r]^-{
\left[\begin{smallmatrix}
1 & 1
\end{smallmatrix}\right]
} & X \ar@{=}[u] \ar[r] & 0\\
0 \ar[r] & Y \ar@{=}[u] \ar@{=}[r] & Y \ar[u] \ar[r]^-{0} \ar[u] & X \ar@{=}[r] \ar[u]_{
\left[\begin{smallmatrix}
0\\
1
\end{smallmatrix}\right]
} & X \ar@{=}[u] \ar[r] & 0
}
$$
commutes. But this diagram is a loop which is homotopically equivalent to the loop (\ref{eq:lem:u_grouphom:result}) above, and to
$$
\xymatrix@C=34pt{
0 \ar[r] & Y \ar@{=}[d] \ar@{=}[r] & Y \ar[d]_-{d_{1}} \ar[r]^-{0}  \ar[d] & X \ar@{=}[r] \ar[d]^-{
\left[\begin{smallmatrix}
1\\
0
\end{smallmatrix}\right]
} & X \ar@{=}[d] \ar[r] & 0\\
0 \ar[r] & Y \ar[r]^-{d_{1}} & Q \ar[r]^-{
\left[\begin{smallmatrix}
M_a(d_0)
\end{smallmatrix}\right]
} & X \oplus X \ar[r]^-{
\left[\begin{smallmatrix}
1 & 1
\end{smallmatrix}\right]
} & X \ar[r] & 0\\
0 \ar[r] & Y \ar@{=}[u] \ar@{=}[r] & Y \ar[u]^-{d_{1}} \ar[r]^-{0} \ar[u] & X \ar@{=}[r] \ar[u]_{
\left[\begin{smallmatrix}
0\\
1
\end{smallmatrix}\right]
} & X \ar@{=}[u] \ar[r] & 0
}
$$
which defines $u_\C^{a,+}([\xi] + [\zeta])$. We get $u_\C^{a,+}([\xi] + [\zeta]) = u_\C^{a,+}([\xi])u_\C^{a,+}([\zeta])$ as required.

Now, finally, assume that $n \geq 2$. The product $u_\C^{a,+}([\xi])u_\C^{a,+}([\zeta])$ is represented by the loop
$$
\sigma_{n+1}(X,Y) \longrightarrow \xi_+ \longleftarrow \sigma_{n+1}(X,Y) \longrightarrow \zeta_+ \longleftarrow \sigma_{n+1}(X,Y)
$$
which expands as
$$
\xymatrix@C=22pt@R=22pt{
0 \ar[r] & Y \ar@{=}[d] \ar@{=}[r] & Y \ar[d] \ar[r] & 0 \ar[r] \ar[d] & \cdots \ar[r] & 0 \ar[r] \ar[d] & X \ar@{=}[r] \ar[d]^-{\left[\begin{smallmatrix}
1\\
0
\end{smallmatrix}\right]} & X \ar@{=}[d] \ar[r] & 0 \ \ \\
0 \ar[r] & Y \ar[r]^-{e_n} & E_{n-1} \ar[r]^-{e_{n-1}} & E_{n-2} \ar[r]^-{e_{n-2}} & \cdots \ar[r]^-{e_1} & E_0 \ar[r]^-{M_a(e_0)} & X \oplus X \ar[r]^-{\left[\begin{smallmatrix}
1 & 1
\end{smallmatrix}\right]} & X \ar[r] & 0 \ \ \\
0 \ar[r] & Y \ar@{=}[u] \ar@{=}[d] \ar@{=}[r] & Y \ar[u] \ar[r] \ar[d] & 0 \ar[d] \ar[r] \ar[u] & \cdots \ar[r] & 0 \ar[r] \ar[u] \ar[d] & X \ar@{=}[r] \ar[u]_-{\left[\begin{smallmatrix}
0\\
1
\end{smallmatrix}\right]} \ar[d]^-{\left[\begin{smallmatrix}
1\\
0
\end{smallmatrix}\right]} & X \ar@{=}[u] \ar@{=}[d] \ar[r] & 0 \ \ \\
0 \ar[r] & Y \ar[r]^-{f_{n}} & F_{n-1} \ar[r]^-{f_{n-1}} & F_{n-2} \ar[r]^-{f_{n-2}} & \cdots \ar[r]^-{f_1} & F_0 \ar[r]^-{M_a(f_0)} & X \oplus X \ar[r]^-{\left[\begin{smallmatrix}
1 & 1
\end{smallmatrix}\right]} & X \ar[r] & 0 \ \ \\
0 \ar[r] & Y \ar@{=}[u] \ar@{=}[r] & Y \ar[u] \ar[r] & 0 \ar[r] \ar[u] & \cdots \ar[r] & 0 \ar[r] \ar[u] & X \ar@{=}[r] \ar[u]_-{\left[\begin{smallmatrix}
0\\
1
\end{smallmatrix}\right]} & X \ar@{=}[u] \ar[r] & 0 \ .
}
$$
As above, we take the pushout of $\xi_+ \leftarrow \sigma_{n+1}(X,Y) \rightarrow \zeta_+$ to obtain a loop of shorter length. Degreewise, the pushout is given by
$$
\xymatrix{
Y \ar[r]^-{e_{n}} \ar[d]_-{f_n} & E_{n-1} \ar@<-3pt>[d]^r \ \ \\
F_{n-1} \ar[r]^s & Q \ ,
}
\xymatrix{
& 0 \ar[r] \ar[d] & E_i \ \  \ar@<-3pt>[d]^-{\left[\begin{smallmatrix}
1\\
0
\end{smallmatrix}\right]} \\
& F_i \ar[r]^-{\left[\begin{smallmatrix}
0\\
1
\end{smallmatrix}\right]} & E_i \oplus F_i \ ,
}
\xymatrix{
& X \ar[r]^-{\left[\begin{smallmatrix}
1\\
0
\end{smallmatrix}\right]} \ar[d]_-{\left[\begin{smallmatrix}
0\\
1
\end{smallmatrix}\right]} & X \oplus X \ \  \ar@<-3pt>[d]^-{\left[\begin{smallmatrix}
1 & 0\\
0 & 1\\
0 & 0
\end{smallmatrix}\right]} \\
& X \oplus X \ar[r]^-{\left[\begin{smallmatrix}
0 &1\\
0 & 0\\
1 & 0
\end{smallmatrix}\right]} & X \oplus X \oplus X \ ,
}
$$
where $i = 0, \dots, n-2$. The resulting loop, denoted by $w$, looks as follows:
\begin{equation*}\label{eq:lem:u_grouphom:result2}
\begin{aligned}
\footnotesize{
\xymatrix@C=18pt@R=20pt{
0 \ar[r] & Y \ar@{=}[r] \ar@{=}[d] & Y \ar[r] \ar[d] & 0 \ar[d] \ar[r] & \cdots \ar[r] & 0 \ar[r] \ar[d] & X \ar@{=}[r]  \ar[d]^-{\left[\begin{smallmatrix}
0\\
0\\
1
\end{smallmatrix}\right]} & X \ar@{=}[d] \ar[r] & 0 \ \ \\
0 \ar[r] & Y \ar[r] & Q \ar[r] & E_{n-2} \oplus F_{n-2} \ar[r] & \cdots \ar[r] & E_0 \oplus F_0 \ar[r] & X \oplus X \oplus X \ar[r]^-{\left[\begin{smallmatrix}
1 & 1 & 1
\end{smallmatrix}\right]} & X \ar[r] \ar@{=}[d] & 0 \ \ \\
0 \ar[r] & Y \ar@{=}[u] \ar@{=}[r] & Y \ar[u] \ar[r] & 0 \ar[r] \ar[u] & \cdots \ar[r] & 0 \ar[r] \ar[u] & X \ar@{=}[r] \ar[u]_-{\left[\begin{smallmatrix}
0\\
1\\
0
\end{smallmatrix}\right]} & X \ar[r] & 0 \ .
}}
\end{aligned}
\end{equation*}
Now consider the pullback diagram
$$
\xymatrix@C=35pt{
E_0 \oplus F_0 \ar[r]^-{\left[\begin{smallmatrix}
e_0 & 0\\
0 & f_0
\end{smallmatrix}\right]} & X \oplus X \ \ \\
P \ar[u]^-{\left[\begin{smallmatrix}
v_1\\
v_2
\end{smallmatrix}\right]}   \ar[r]^{v} & X \ . \ar@<3pt>[u]_-{\left[\begin{smallmatrix}
1\\
1
\end{smallmatrix}\right]}
}
$$
It gives rise to factorizations of the morphisms in the loop $w$:
$$
\footnotesize{
\xymatrix@C=18pt@R=20pt{
0 \ar[r] & Y \ar@{=}[r] \ar@{=}[d] & Y \ar[r] \ar[d] & 0 \ar[d] \ar[r] & \cdots \ar[r] & 0 \ar[r] \ar[d] & X \ar@{=}[r]  \ar[d]^-{\left[\begin{smallmatrix}
1\\
0
\end{smallmatrix}\right]} & X \ar@{=}[d] \ar[r] & 0 \ \ \\
0 \ar[r] & Y \ar[r] & Q \ar[r] \ar@{=}[d] & E_{n-2} \oplus F_{n-2} \ar[r] \ar@{=}[d] & \cdots \ar[r] & P \ar[r] \ar[d]^-{
\left[\begin{smallmatrix}
v_1\\
v_2
\end{smallmatrix}\right]} & X \oplus X \ar[d]^-{
\left[\begin{smallmatrix}
0 & 0\\
0 & 1\\
1 & 0
\end{smallmatrix}\right]
} \ar[r]^-{\left[\begin{smallmatrix}
1 & 1
\end{smallmatrix}\right]} & X \ar[r] & 0 \ \ \\
0 \ar[r] & Y \ar@{=}[u] \ar@{=}[d] \ar[r] & Q \ar[r] & E_{n-2} \oplus F_{n-2} \ar[r] & \cdots \ar[r] & E_0 \oplus F_0 \ar[r] & X \oplus X \oplus X \ar[r]^-{\left[\begin{smallmatrix}
1 & 1 & 1
\end{smallmatrix}\right]} & X \ar@{=}[u] \ar@{=}[d] \ar[r] & 0 \ \ \\
0 \ar[r] & Y \ar[r] & Q \ar[r] \ar@{=}[u] & E_{n-2} \oplus F_{n-2} \ar[r] \ar@{=}[u] & \cdots \ar[r] & P \ar[r] \ar[u]_-{
\left[\begin{smallmatrix}
v_1\\
v_2
\end{smallmatrix}\right]}  & X \oplus X \ar[u]_-{
\left[\begin{smallmatrix}
0 & 0\\
0 & 1\\
1 & 0
\end{smallmatrix}\right]
} \ar[r]^-{\left[\begin{smallmatrix}
1 & 1
\end{smallmatrix}\right]} & X \ar[r] & 0 \ \ \\
0 \ar[r] & Y \ar@{=}[u] \ar@{=}[r] & Y \ar[u] \ar[r] & 0 \ar[r] \ar[u] & \cdots \ar[r] & 0 \ar[r] \ar[u] & X \ar@{=}[r] \ar[u]_-{\left[\begin{smallmatrix}
0\\
1
\end{smallmatrix}\right]} & X \ar@{=}[u] \ar[r] & 0 \ .
}}
$$
But this loop is homotopically equivalent to $u_\C^{a,+}([\xi] + [\zeta])$ since the diagram
$$
\xymatrix@C=55pt{
Y \oplus Y \ar[r]^-{\left[\begin{smallmatrix}
e_{n} & 0\\
0 & f_{n}
\end{smallmatrix}\right]} \ar[d]_-{\left[\begin{smallmatrix}
1 & 1\\
\end{smallmatrix}\right]} & E_{n-1} \oplus F_{n-1} \ar[d]^-{\left[\begin{smallmatrix}
r & s
\end{smallmatrix}\right]}  \\
Y \ar[r]^{r \circ e_n} & Q
}
$$
defines a pushout. This completes the proof.
\end{proof}
\begin{nn}
In order to see that $u_\C^{a,+}$ is a bijection, we will construct its inverse map
$$
u_\C^{a,-}: \pi_1 (\mathcal Ext^{n+1}_\C(X,Y), \sigma_{n+1}(X,Y)) \longrightarrow \Ext^{n}_\C(X,Y).
$$
Assume that $n = 0$ and let $w$ be a loop in $\mathcal Ext^1_\C(X,Y)$ based at $\sigma_{1}(X,Y)$. Since every morphism in $\mathcal Ext^1_\C(X,Y)$ automatically is an isomorphism (due to the 5-Lemma), $w$ is homotopically equivalent to a morphism $\alpha: \sigma_{1}(X,Y) \rightarrow \sigma_{1}(X,Y)$ in $\mathcal Ext^1_\C(X,Y)$, which, for its part, is again an isomorphism. By assumption, we have that $q_X \circ (\alpha_0 - \id_{Y \oplus X}) = 0$, so there is a unique morphism $f(w): Y \oplus X \rightarrow Y$ in $\C$ such that $\alpha_0 - \id_{Y \oplus X} = j^Y \circ f(w)$. Hence the morphism 
$$
u_\C^{a,-}([w]) := f(w) \circ j_X \circ a^{-1}
$$
belongs to $\Hom_\C(X,Y) = \mathcal Ext^0_\C(X,Y)$. Now let us suppose that $n \geq 1$ and that $w$ is a loop in $\mathcal Ext^{n+1}_\C(X,Y)$ based at $\sigma_{n+1}(X,Y)$. By Lemma \ref{lem:looplength2} we know that there is a loop $w' = (\sigma_{n+1}(X,Y) \xrightarrow{\alpha} \xi \xleftarrow{\beta} \sigma_{n+1}(X,Y))$,
$$
\xymatrix@C=20pt@R=20pt{
0 \ar[r] & Y \ar@{=}[d] \ar@{=}[r] & Y \ar[d]_-{\alpha_{n}} \ar[r] & 0 \ar[r] \ar[d]_-{\alpha_{n-1}} & \cdots \ar[r] & 0 \ar[r] \ar[d]^-{\alpha_1} & X \ar@{=}[r] \ar[d]^-{\alpha_0} & X \ar@{=}[d] \ar[r] & 0 \ \ \\
0 \ar[r] & Y \ar[r]^-{e_{n+1}} & E_{n} \ar[r]^-{e_{n}} & E_{n-1} \ar[r]^{e_{n-1}} & \cdots \ar[r]^-{e_2} & E_1 \ar[r]^{e_1} & E_0 \ar[r]^{e_0} & X \ar[r] & 0 \ \ \\
0 \ar[r] & Y \ar@{=}[u] \ar@{=}[r] & Y \ar[u]^-{\beta_{n}} \ar[r] & 0 \ar[r] \ar[u]^-{\beta_{n-1}} & \cdots \ar[r] & 0 \ar[r] \ar[u]_{\beta_1} & X \ar@{=}[r] \ar[u]_{\beta_0} & X \ar@{=}[u] \ar[r] & 0 \ ,
}
$$
such that $[w] = [w']$. Without loss of generality, we may assume that $w = w'$. By forming the difference $\alpha - \beta$, we obtain a commutative diagram
$$
\xymatrix{
X \ar@{=}[r] \ar[d]_{\alpha_0 - \beta_0} & X \ar[d]^0 \\
E_0 \ar[r]^{e_0} & X
}
$$
telling us, that $e_0 \circ (\alpha_0 - \beta_0) = 0$. Therefore there is a unique morphism $w_-: X \rightarrow \Ker(e_0)$ such that $\ker(e_0) \circ w_- = \alpha_0 - \beta_0$. We get the following pullback diagram.
\begin{equation}\label{eq:inverseloop}
\begin{aligned}
\xymatrix@C=22pt{
0 \ar[r] & Y \ar@{=}[d] \ar[r]^-{e_{n+1}} & E_{n} \ar@{=}[d] \ar[r]^-{e_{n}} & \cdots \ar[r]^-{e_3} & E_2 \ar@{=}[d] \ar[r] & P \ar[d] \ar[r] & X \ar[d]^{w_- \circ a} \ar[r] & 0 \\
0 \ar[r] & Y \ar[r]^-{e_{n+1}} & E_{n} \ar[r]^-{e_{n}} & \cdots \ar[r]^-{e_3} & E_2 \ar[r]^-{e_{2}} & E_1 \ar[r]^-{e_{1}} & \Ker(d_0) \ar[r] & 0
}
\end{aligned}
\end{equation}
The upper sequence, denoted by $\xi(w)$, is an admissible $n$-extension and hence belongs to $\mathcal Ext^{n}_\C(X,Y)$. Thus we put $$u_\C^{a,-}([w]) = [\xi(w)].$$ We will divide the proof of $u_\C^{a,+}$ and $u_\C^{a,-}$ being mutually inverse into two parts.
\end{nn}
\begin{lem}\label{lem:u_injective}
The composition $u_\C^{a,-} \circ u_\C^{a,+}$ is the identity. Hence $u_\C^{a,+}$ is injective.
\end{lem}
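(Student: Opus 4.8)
The plan is to unwind the two constructions on representatives and verify that their composite fixes every class, handling $n=0$ and $n\geq 1$ separately. For $n=0$, take $g\in\Hom_\C(X,Y)=\Ext^0_\C(X,Y)$. By definition $u_\C^{a,+}(g)$ is the class of the length-one loop whose underlying automorphism of $\sigma_1(X,Y)$ is $L_a(g)=\smatrix{\id_Y & g\circ a\\ 0 & \id_X}$. Since $L_a(g)-\id_{Y\oplus X}=\smatrix{0 & g\circ a\\ 0 & 0}=j^Y\circ\smatrix{0 & g\circ a}$, the morphism $f(w)$ from the definition of $u_\C^{a,-}$ is $\smatrix{0 & g\circ a}$, whence
$u_\C^{a,-}(u_\C^{a,+}(g))=f(w)\circ j_X\circ a^{-1}=(g\circ a)\circ a^{-1}=g$. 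This is a one-line matrix check.

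For $n\geq 1$ fix $\xi\equiv(0\to Y\xrightarrow{e_n}E_{n-1}\to\cdots\to E_0\xrightarrow{e_0}X\to 0)$. The key observation is that the loop $w(\xi)$ representing $u_\C^{a,+}([\xi])$ constructed in \ref{nn:defnuplus} is \emph{already} of the reduced shape $\sigma_{n+1}(X,Y)\xrightarrow{\alpha}\xi_+\xleftarrow{\beta}\sigma_{n+1}(X,Y)$ demanded by the recipe for $u_\C^{a,-}$, with $\alpha_0=\smatrix{1\\0}$ and $\beta_0=\smatrix{0\\1}$ in degree $0$; hence no appeal to Lemma~\ref{lem:looplength2} is needed and I can run the recipe on $w=w(\xi)$ directly. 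Throughout one must keep the degree shift in mind: in the recipe's notation the object of $\xi_+$ in degree $0$ is $X\oplus X$, its degree-$0$ map is $[1\ 1]$, its penultimate map is $M_a(e_0)$, and its objects in degrees $1,\dots,n$ are $E_0,\dots,E_{n-1}$ with the original maps $e_1,\dots,e_n$.

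Running the recipe: $\alpha_0-\beta_0=\smatrix{1\\-1}\colon X\to X\oplus X$, and indeed $[1\ 1]\circ\smatrix{1\\-1}=0$. I identify $\Ker([1\ 1])$ with $X$ along the mono $\ker=\smatrix{1\\-1}$; since this mono already realises $\alpha_0-\beta_0$, the induced morphism $w_-\colon X\to\Ker([1\ 1])$ is $\id_X$. Under the same identification the penultimate map factors as $M_a(e_0)=\smatrix{a\circ e_0\\ -a\circ e_0}=\smatrix{1\\-1}\circ(a\circ e_0)$, so its corestriction to $\Ker([1\ 1])$ is $a\circ e_0\colon E_0\to X$. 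Consequently the pullback $P$ defining $\xi(w)$ in \eqref{eq:inverseloop} is the pullback of $E_0\xrightarrow{\,a\circ e_0\,}X\xleftarrow{\,w_-\circ a=a\,}X$. Because $a$ is an automorphism, pulling back along it makes the projection $P\to E_0$ an isomorphism; transporting the remaining maps along this isomorphism turns the map $P\to X$ into $e_0$ and the incoming map $E_1\to P$ into $e_1$. Therefore $\xi(w)\cong\xi$ in $\mathcal{Ext}^n_\C(X,Y)$, giving $u_\C^{a,-}(u_\C^{a,+}([\xi]))=[\xi]$ and hence $u_\C^{a,-}\circ u_\C^{a,+}=\id$, so $u_\C^{a,+}$ is injective.

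The main obstacle is purely the bookkeeping: correctly tracking the degree shift between the $n$-extension $\xi$ and the $(n+1)$-extension $\xi_+$, and fixing once and for all the identification $\Ker([1\ 1])\cong X$ together with the corestriction of $M_a(e_0)$. Once these choices are made consistently, the collapse of the pullback along the isomorphism $a$ is routine, and the $a$'s cancel so the result is independent of the chosen automorphism.
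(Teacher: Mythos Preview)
Your proof is correct and follows essentially the same route as the paper's: directly read off the reduced form of $w(\xi)$, compute $w_-$, and observe that the pullback collapses back to $\xi$. The only cosmetic difference is your choice of kernel identification $\ker[1\ 1]=\smatrix{1\\-1}$, which gives $w_-=\id_X$ and then cancels the two copies of $a$ in the pullback, whereas the paper implicitly uses $\smatrix{a\\-a}$ to obtain $w_-=a^{-1}$ so that $w_-\circ a=\id_X$ immediately; both lead to $\xi(w(\xi))\cong\xi$ in one step.
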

\begin{proof}
We start by looking at the case $n = 0$. Let $g \in \Hom_\C(X,Y)$. One easily checks that $f(w(g))$ coincides with $g \circ a \circ q_X$ and thus 
\begin{align*}
(u_\C^{a,-} \circ u_\C^{a,+})(g) &= f(w(g)) \circ j_X \circ a^{-1}\\
&= g \circ a \circ q_X \circ j_X \circ a^{-1} = g.
\end{align*}
Now let $n \geq 1$ and choose an admissible $n$-extension $\xi$ in $\mathcal Ext^{n}_\C(X,Y)$. By definition, $u_\C^{a,+}([\xi])$ is represented by the loop $w(\xi)$ given by
$$
\xymatrix@C=22pt{
0 \ar[r] & Y \ar@{=}[d] \ar@{=}[r] & Y \ar[d]_-{e_{n-1}} \ar[r] & 0 \ar[r] \ar[d]& \cdots \ar[r] & 0 \ar[r] \ar[d] & X \ar@{=}[r] \ar[d]^-{
\left[\begin{smallmatrix}
1\\
0
\end{smallmatrix}\right]
} & X \ar@{=}[d] \ar[r] & 0 \\
0 \ar[r] & Y \ar[r]^-{e_{n}} & E_{n-1} \ar[r]^-{e_{n-1}} & E_{n-2} \ar[r]^{e_{n-2}} & \cdots \ar[r]^-{e_1} & E_0 \ar[r]^-{
M_a(e_0)
} & X \oplus X \ar[r]^-{
\left[\begin{smallmatrix}
1 & 1
\end{smallmatrix}\right]
} & X \ar[r] & 0 \\
0 \ar[r] & Y \ar@{=}[u] \ar@{=}[r] & Y \ar[u]^-{e_{n-1}} \ar[r] & 0 \ar[r] \ar[u] & \cdots \ar[r] & 0 \ar[r] \ar[u] & X \ar@{=}[r] \ar[u]_{
\left[\begin{smallmatrix}
0\\
1
\end{smallmatrix}\right]
} & X \ar@{=}[u] \ar[r] & 0
}
$$
and the corresponding morphism $w(\xi)_-$ is simply the automorphism $a^{-1}$. So in this situation, the upper row in (\ref{eq:inverseloop}) displays itself as
$$
\xymatrix@C=20pt{
0 \ar[r] & Y \ar[r]^-{e_{n}} & E_{n-1} \ar[r]^-{e_{n-1}} & E_{n-1} \ar[r]^{e_{n-2}} & \cdots \ar[r]^-{e_1} & E_0 \ar[r]^{e_0} & X \ar[r] & 0
}
$$
which verifies that $(u_\C^{a,-} \circ u_\C^{a,+})([\xi]) = [\xi]$.
\end{proof}
\begin{lem}\label{lem:u_surjective}
The composition $u_\C^{a,+} \circ u_\C^{a,-}$ is the identity. Hence $u_\C^{a,+}$ is surjective.
\end{lem}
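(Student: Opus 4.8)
The plan is to mirror the two-step structure of the proof of Lemma \ref{lem:u_injective}: first reduce to a normal form, then exhibit an explicit morphism of extensions witnessing the required homotopy. Given a loop $w$ based at $\sigma_{n+1}(X,Y)$, Lemma \ref{lem:looplength2} lets me assume it is already reduced, $w = (\sigma_{n+1}(X,Y)\xrightarrow{\alpha}\xi\xleftarrow{\beta}\sigma_{n+1}(X,Y))$ with $\xi = (0\to Y\xrightarrow{e_{n+1}}E_n\to\cdots\to E_0\xrightarrow{e_0}X\to 0)$; commutativity in top degree forces $\alpha_n=\beta_n=e_{n+1}$, and in bottom degree $e_0\circ\alpha_0=e_0\circ\beta_0=\id_X$. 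I will then compute $u_\C^{a,+}(u_\C^{a,-}([w])) = [w(\xi(w))]$ and show it equals $[w]$ by producing a single morphism of $(n+1)$-extensions compatible with the two apex maps, which forces equality in the Quillen groupoid by Proposition \ref{prop:iso_pi1}.

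The case $n=0$ is a short matrix check. The reduced loop is a single automorphism $\alpha\colon\sigma_1(X,Y)\to\sigma_1(X,Y)$ whose degree-$0$ component has the form $\alpha_0=\smatrix{\id_Y & h\\ 0 & \id_X}$ for some $h\colon X\to Y$. Then $f(w)=\smatrix{0 & h}$, so $u_\C^{a,-}([w])=f(w)\circ j_X\circ a^{-1}=h\circ a^{-1}$, and $L_a(h\circ a^{-1})=\smatrix{\id_Y & h\\ 0 & \id_X}=\alpha_0$; hence $u_\C^{a,+}(u_\C^{a,-}([w]))$ is represented by $\alpha$ itself, i.e. equals $[w]$.

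The heart of the argument is the case $n\ge 1$. Write $\xi(w)=(0\to Y\to E_n\to\cdots\to E_2\to P\xrightarrow{p}X\to 0)$, where $P$ is the pullback of $E_1\xrightarrow{\bar e_1}\Ker(e_0)\xleftarrow{w_-\circ a}X$ from diagram (\ref{eq:inverseloop}), with projections $\pi_P\colon P\to E_1$ and $p\colon P\to X$ and $\bar e_1$ the corestriction of $e_1$ (so $e_1=\ker(e_0)\circ\bar e_1$ and $\bar e_1\circ\pi_P=(w_-\circ a)\circ p$); then $w(\xi(w))=(\sigma_{n+1}(X,Y)\xrightarrow{\alpha'}\xi(w)_+\xleftarrow{\beta'}\sigma_{n+1}(X,Y))$. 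I propose to define $\phi\colon\xi(w)_+\to\xi$ by the identity in degrees $\ge 2$, by $\phi_1=\pi_P$, and by $\phi_0=\smatrix{\alpha_0 & \beta_0}\colon X\oplus X\to E_0$. The only square of $\phi$ that is not immediate is the one in degrees $1,0$, and it reduces precisely to the defining pullback relation: since $M_a(p)=\smatrix{a\circ p\\ -a\circ p}$ and $\ker(e_0)\circ w_-=\alpha_0-\beta_0$, one gets $\phi_0\circ M_a(p)=(\alpha_0-\beta_0)\circ a\circ p=\ker(e_0)\circ w_-\circ a\circ p=e_1\circ\pi_P$. The remaining squares are trivial (in degrees $2,1$ one uses that $\pi_P$ composed with the pullback-induced map $E_2\to P$ returns $e_2$, and $e_0\circ\phi_0=\smatrix{1 & 1}$ at the bottom). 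Reading off degrees $0$ and $n$ then gives $\phi\circ\alpha'=\alpha$ and $\phi\circ\beta'=\beta$, using $\smatrix{\alpha_0 & \beta_0}\smatrix{1\\0}=\alpha_0$, $\smatrix{\alpha_0 & \beta_0}\smatrix{0\\1}=\beta_0$, and $\alpha_n=\beta_n=e_{n+1}$. Because $\phi$, $\alpha$, $\beta$, $\alpha'$, $\beta'$ all become invertible in $\mathsf G(\mathcal Ext^{n+1}_\C(X,Y))$ with $[\alpha]=[\phi][\alpha']$ and $[\beta]=[\phi][\beta']$, both reduced loops are sent to the same endomorphism of $\sigma_{n+1}(X,Y)$ (independently of the convention for reading off the group element), whence $[w(\xi(w))]=[w]$.

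I expect the main obstacle to be purely organizational: verifying that $\phi$ is genuinely a morphism of complexes — in particular that $\pi_P$ intertwines the induced map $E_2\to P$ with $e_2$ — and keeping careful track of the fact that $\alpha_n=\beta_n=e_{n+1}$ is forced, so that the upper $n-1$ degrees of $\phi$ can be taken to be identities without disturbing compatibility with $\alpha$ and $\beta$. Once $\phi$ is in hand the passage to the groupoid is formal. Combined with Lemma \ref{lem:u_injective}, this shows that $u_\C^{a,+}$ and $u_\C^{a,-}$ are mutually inverse, and in particular that $u_\C^{a,+}$ is surjective.
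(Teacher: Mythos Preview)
Your proof is correct and follows essentially the same route as the paper's. You construct the same morphism $\phi\colon\xi(w)_+\to\xi$ (the paper writes its components as $h$ and $\smatrix{\alpha_0 & \beta_0}$), verify the key square in degrees $1,0$ via the same pullback identity, and then deduce $[w(\xi(w))]=[w]$ from $\phi\circ\alpha'=\alpha$ and $\phi\circ\beta'=\beta$; the paper phrases this last step as an explicit length-four loop $w''$ homotopic to both, which is exactly your groupoid cancellation written out in elementary homotopies.
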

\begin{proof}
First of all, we turn our attention towards the case $n=0$. Let $w$ be a loop in $\mathcal Ext^1_\C(X,Y)$ based at $\sigma_1(X,Y)$ which is homotopically equivalent to an isomorphism $\alpha: \sigma_1(X,Y) \rightarrow \sigma_1(X,Y)$. By definition, $u_\C^{a,-}([w]) = f(w) \circ j_X \circ a^{-1}$ for a map $f(w)$ such that $\alpha_0 - \id_{Y \oplus X}=j^Y \circ f(w)$. From the latter formula we deduce the following equalities:
\begin{align*}
q_X \circ \alpha_0 \circ j_X = \id_X, & \quad q^Y \circ \alpha_0 \circ j_X = f(w) \circ j_X,\\
q^Y \circ \alpha_0 \circ j^Y = \id_Y, & \quad q_X \circ \alpha_0 \circ j^Y = 0 \ .
\end{align*}
Therefore the morphism $Y \oplus X \rightarrow Y \oplus X$ given by the matrix 
$$
L_a(f(w)\circ j_X \circ a^{-1}) = 
\left[\begin{matrix}
\id_Y & f(w) \circ j_X \\
0 & \id_X
\end{matrix}\right]
$$
coincides with 
$$
\alpha_0 = 
\left[\begin{matrix}
q_X \circ \alpha_0 \circ j_X & q^Y \circ \alpha_0 \circ j_X \\
q_X \circ \alpha_0 \circ j^Y & q^Y \circ \alpha_0 \circ j^Y
\end{matrix}\right]
$$
and hence $(u_\C^{a,+} \circ u_\C^{a,-})([w])=u_\C^{a,+}(f(w) \circ j_X \circ a^{-1}) = [w]$.

Now let us consider the case $n \geq 1$. Let $w$ be a loop in $\mathcal Ext^{n+1}_\C(X,Y)$ based at $\sigma_{n+1}(X,Y)$.  We claim that $(u_\C^{a,+} \circ u_\C^{a,-})([w])=u_\C^{a,+}([\xi(w)]) = [w]$. Remember that $\xi(w)$ is given by the upper sequence in the following pullback diagram:
$$
\xymatrix@C=23pt{
0 \ar[r] & Y \ar@{=}[d] \ar[r]^-{e_{n+1}} & E_{n} \ar@{=}[d] \ar[r]^-{e_{n}} & \cdots \ar[r]^-{e_3} & E_2 \ar@{=}[d] \ar[r]^{\overline{e}_2} & P \ar[d]^-{h} \ar[r]^{\overline{e}_1} & X \ar[d]^{w_- \circ a} \ar[r] & 0 \ \ \\
0 \ar[r] & Y \ar[r]^-{e_{n+1}} & E_{n} \ar[r]^-{e_{n}} & \cdots \ar[r]^-{e_3} & E_2 \ar[r]^-{d_{2}} & E_1 \ar[r]^-{e_{1}} & \Ker(e_0) \ar[r] & 0 \ ,
}
$$
where $w_-: X \rightarrow \Ker(e_0)$ is such that $\ker(e_0) \circ w_- = \alpha_0 - \beta_0$. There is a loop $w'$ of the form $\sigma_{n+1}(X,Y) \xrightarrow{\alpha} \xi \xleftarrow{\beta} \sigma_{n+1}(X,Y) $ which is homotopically equivalent to $w$:
$$
\xymatrix@C=23pt@R=23pt{
0 \ar[r] & Y \ar@{=}[d] \ar@{=}[r] & Y \ar[d]_-{\alpha_{n}} \ar[r] & 0 \ar[r] \ar[d]_-{\alpha_{n-1}} & \cdots \ar[r] & 0 \ar[r] \ar[d]^-{\alpha_1} & X \ar@{=}[r] \ar[d]^-{\alpha_0} & X \ar@{=}[d] \ar[r] & 0 \ \ \\
0 \ar[r] & Y \ar[r]^-{e_{n+1}} & E_{n} \ar[r]^-{e_{n}} & E_{n-1} \ar[r]^{e_{n-2}} & \cdots \ar[r]^-{e_2} & E_1 \ar[r]^{e_1} & E_0 \ar[r]^{e_0} & X \ar[r] & 0 \ \ \\
0 \ar[r] & Y \ar@{=}[u] \ar@{=}[r] & Y \ar[u]^{\beta_{n}} \ar[r] & 0 \ar[r] \ar[u]^{\beta_{n-1}} & \cdots \ar[r] & 0 \ar[r] \ar[u]_{\beta_1} & X \ar@{=}[r] \ar[u]_{\beta_0} & X \ar@{=}[u] \ar[r] & 0 \ .
}
$$
Since $e_0 \circ \alpha_0 = \id_X = e_0 \circ \beta_0$ and 
$$
\left[\begin{matrix}
\alpha_0 & \beta_0
\end{matrix}\right] \circ \left[\begin{matrix}
a \circ \overline{e}_1\\
-a \circ\overline{e}_1
\end{matrix}\right] = (\alpha_0-\beta_0) \circ a \circ \overline{e}_1 = \ker(e_0) \circ w_- \circ a \circ \overline{e}_1 = e_1 \circ h
$$
the diagram
$$
\xymatrix@C=23pt@R=23pt{
0 \ar[r] & Y \ar@{=}[d] \ar@{=}[r] & Y \ar[d]_-{e_{n+1}} \ar[r] & 0 \ar[r] \ar[d]& \cdots \ar[r] & 0 \ar[r] \ar[d] & X \ar@{=}[r] \ar[d]^-{f_0} & X \ar@{=}[d] \ar[r] & 0\\
0 \ar[r] & Y \ar@{=}[d] \ar[r]^-{e_{n+1}} & E_{n} \ar[r] \ar@{=}[d] & E_{n-1} \ar[r] \ar@{=}[d] & \cdots \ar[r] & E_1  \ar[r]^-{e_1} & E_0 \ar[r] & X \ar@{=}[d] \ar[r] & 0\\
0 \ar[r] & Y \ar[r]^-{e_{n+1}} \ar@{=}[d] & E_{n} \ar@{=}[d] \ar[r]^-{e_{n}} & E_{n-1} \ar@{=}[d] \ar[r]^{e_{n-1}} & \cdots \ar[r]^-{\overline{e}_2} & P \ar[d]_h \ar[u]^h \ar[r]^-{
M_a(\overline{e}_1)
} & X \oplus X \ar[r]^-{
\left[\begin{smallmatrix}
1 & 1
\end{smallmatrix}\right]
} \ar[u]_{
\left[\begin{smallmatrix}
\alpha_0 & \beta_0
\end{smallmatrix}\right]
} \ar[d]^{
\left[\begin{smallmatrix}
\alpha_0 & \beta_0
\end{smallmatrix}\right]
}& X \ar@{=}[d] \ar[r] & 0 \\
0 \ar[r] & Y  \ar[r]^-{e_{n+1}} & E_{n} \ar[r] & E_{n-1} \ar[r] & \cdots \ar[r] & E_1 \ar[r]^-{e_1} & E_0 \ar[r] & X \ar[r] & 0\\
0 \ar[r] & Y \ar@{=}[u] \ar@{=}[r] & Y \ar[u]^-{e_{n+1}} \ar[r] & 0 \ar[r] \ar[u] & \cdots \ar[r] & 0 \ar[r] \ar[u] & X \ar@{=}[r] \ar[u]_{\beta_0} & X \ar@{=}[u] \ar[r] & 0
}
$$
is commutative and hence defines a loop $w''$ based at $\sigma_{n+1}(X,Y)$ which is homotopically equivalent to $w'$. Since the equalities
$$
\left[\begin{matrix}
\alpha_0 & \beta_0
\end{matrix}\right] \circ \left[\begin{matrix}
\id_X\\
0
\end{matrix}\right] = \alpha_0, \quad \left[\begin{matrix}
\alpha_0 & \beta_0
\end{matrix}\right] \circ \left[\begin{matrix}
0\\
\id_X
\end{matrix}\right] = \beta_0
$$
hold for trivial reasons, $w''$ is homotopically equivalent to
$$
\xymatrix@C=23pt{
0 \ar[r] & Y \ar@{=}[d] \ar@{=}[r] & Y \ar[d]_-{e_{n+1}} \ar[r] & 0 \ar[r] \ar[d]& \cdots \ar[r] & 0 \ar[r] \ar[d] & X \ar@{=}[r] \ar[d]^-{
\left[\begin{smallmatrix}
1\\
0
\end{smallmatrix}\right]
} & X \ar@{=}[d] \ar[r] & 0\\
0 \ar[r] & Y \ar[r]^-{e_{n+1}} & E_{n} \ar[r]^-{e_{n}} & E_{n-1} \ar[r]^{e_{n-1}} & \cdots \ar[r]^-{\overline{e}_2} & P \ar[r]^-{
M_a(\overline{e}_1)
} & X \oplus X \ar[r]^-{
\left[\begin{smallmatrix}
1 & 1
\end{smallmatrix}\right]
} & X \ar[r] & 0 \\
0 \ar[r] & Y \ar@{=}[u] \ar@{=}[r] & Y \ar[u]^-{e_{n+1}} \ar[r] & 0 \ar[r] \ar[u] & \cdots \ar[r] & 0 \ar[r] \ar[u] & X \ar@{=}[r] \ar[u]_{
\left[\begin{smallmatrix}
0\\
1
\end{smallmatrix}\right]
} & X \ar@{=}[u] \ar[r] & 0
}
$$
which represents the equivalence class $u_\C^{a,+}([\xi(w)])$. Therefore we are done.
\end{proof}
We subsume our observations in the following theorem.
\begin{thm}\label{thm:iso_pi0pi1}
For every integer $n \geq 0$, the map $$u_\C^{a,+}:  \Ext^{n}_\C(X,Y) \rightarrow \pi_1 (\mathcal Ext^{n+1}_\C(X,Y), \sigma_{n+1}(X,Y))$$ is an isomorphism of groups whose inverse map is given by $u_\C^{a,-}$. In particular, $\pi_1 (\mathcal Ext^{n+1}_\C(X,Y), \sigma_{n+1}(X,Y))$ is abelian. \qed
\end{thm}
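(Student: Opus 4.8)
The statement is a synthesis of the three preceding lemmas together with the $k$-module structure on extension groups, so the plan is simply to assemble those results and verify that nothing beyond them is needed. First I would invoke Lemma \ref{lem:u_grouphom} to record that $u_\C^{a,+}$ is a group homomorphism for every $n \geq 0$; this already fixes the structural compatibility, and all that remains is bijectivity together with the identification of the inverse.

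Next I would combine Lemmas \ref{lem:u_injective} and \ref{lem:u_surjective}. The former gives $u_\C^{a,-} \circ u_\C^{a,+} = \id_{\Ext^n_\C(X,Y)}$, and the latter gives $u_\C^{a,+} \circ u_\C^{a,-} = \id_{\pi_1(\mathcal Ext^{n+1}_\C(X,Y),\, \sigma_{n+1}(X,Y))}$. Having both composites equal to the respective identity maps shows that $u_\C^{a,+}$ and $u_\C^{a,-}$ are mutually inverse bijections of underlying sets. Since $u_\C^{a,+}$ is moreover a group homomorphism by the first step, it is a group isomorphism, and a two-sided set-theoretic inverse of a group isomorphism is automatically itself a homomorphism; hence $u_\C^{a,-}$ is the inverse group isomorphism, exactly as claimed.

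Finally I would deduce the abelianness of the fundamental group. By Lemma \ref{lem:extmodule} the group $\Ext^n_\C(X,Y) = \pi_0 \mathcal Ext^n_\C(X,Y)$ carries the structure of a $k$-module via the Baer sum $\boxplus$ and the action $\vdash$, so in particular its additive group is abelian. Transporting this structure along the isomorphism $u_\C^{a,+}$ forces $\pi_1(\mathcal Ext^{n+1}_\C(X,Y),\, \sigma_{n+1}(X,Y))$ to be abelian as well, which is the concluding assertion.

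\textbf{Main obstacle.} There is no genuine difficulty left at this stage: the substantive work---the pushout and pullback diagram chases establishing that $u_\C^{a,+}$ respects the Baer sum (Lemma \ref{lem:u_grouphom}), and the two explicit homotopy-reduction arguments showing the composites are identities (Lemmas \ref{lem:u_injective} and \ref{lem:u_surjective})---has already been carried out. The only point requiring a moment's care in the assembly is to keep straight which composite lands in which group, so that one correctly concludes a \emph{two-sided} inverse rather than merely a one-sided one; once that bookkeeping is in place, the isomorphism and the abelianness follow formally.

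\qed
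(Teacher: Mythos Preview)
Your proposal is correct and matches the paper's approach exactly: the theorem is stated with a \qed because it is a direct synthesis of Lemmas \ref{lem:u_grouphom}, \ref{lem:u_injective}, and \ref{lem:u_surjective}, with the abelianness following from Lemma \ref{lem:extmodule}. There is nothing to add.
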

\begin{nn}
Let $\tau$ and $\xi$ be addmissile $n$-extension of $X$ by $Y$. If $w$ is a loop in $\mathcal Ext^n_\C(X,Y)$ based at $\tau$, we may regard it as a diagram in $\mathcal Ext^n_\C(X,Y)$ to which one may apply the functor $- \boxplus \xi$. Thus we obtain a loop $w \boxplus \xi$ based at $\tau \boxplus \xi$. Recall that, for fixed $\xi$, we have a canonical path $\tau \boxplus \xi \boxplus (-\xi) \rightarrow \tau \boxplus \widetilde{\xi} \leftarrow \tau \boxplus \sigma_n(X,Y) \xleftarrow{\cong} \tau$ in $\mathcal Ext^n_\C(X,Y)$ (cf. the exposition in subparagraph \ref{nn:pathximinxi}).
\end{nn}
\begin{lem}\label{lem:isopi1}
Let $n \geq 1$ be an integer and $\tau, \xi \in \mathcal Ext^n_\C(X,Y)$ be two admissible $n$-extensions. The map
$$
v_\C(\tau, \xi): \pi_1 (\mathcal Ext^n_\C(X,Y), \tau) \longrightarrow \pi_1 (\mathcal Ext^n_\C(X,Y), \tau \boxplus \xi), \ [w]
\mapsto [w \boxplus \xi]
$$
is an isomorphism of groups.
\end{lem}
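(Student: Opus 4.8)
The plan is to recognize $v_\C(\tau,\xi)$ as the map on fundamental groups induced by the functor $-\boxplus\xi\colon\mathcal Ext^n_\C(X,Y)\to\mathcal Ext^n_\C(X,Y)$, and then to invert it \emph{up to conjugation} by adding $-\xi$. First I would record that, since $\boxplus$ is a bifunctor, fixing the second variable yields a genuine functor $-\boxplus\xi$ which sends a loop $w$ based at $\tau$ to the loop $w\boxplus\xi$ based at $\tau\boxplus\xi$; hence $v_\C(\tau,\xi)=\pi_1(-\boxplus\xi,\tau)$ is automatically a group homomorphism, being functor-induced. The same observation identifies the composite $v_\C(\tau\boxplus\xi,-\xi)\circ v_\C(\tau,\xi)$ with the homomorphism $\pi_1(F,\tau)$ induced by the composite functor $F=(-\boxplus\xi)\boxplus(-\xi)$.

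The crux is to compare $F$ with the identity functor. Using the associativity of the Baer sum (which holds up to natural isomorphism by the universal properties defining $\boxplus$) together with the unit isomorphism $-\boxplus\sigma_n(X,Y)\cong\Id$, the canonical path of \ref{nn:pathximinxi}, applied through $\tau\boxplus-$, assembles into a natural transformation (a natural zigzag) $\eta\colon F\Rightarrow\Id$ whose component at $\tau$ is precisely the canonical path $P\colon\tau\boxplus\xi\boxplus(-\xi)\to\tau\boxplus\widetilde{\xi}\leftarrow\tau\boxplus\sigma_n(X,Y)\xleftarrow{\cong}\tau$ recalled just before the statement; naturality in $\tau$ is exactly the bifunctoriality of $\boxplus$ in its first argument. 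Passing to the Quillen groupoid turns $\eta$ into a natural isomorphism $\mathsf G(F)\cong\mathsf G(\Id)$, so by the standard relation between induced maps and conjugation (the mechanism underlying Lemma \ref{lem:fund_conj_commutative}) one gets $\pi_1(\Id,\tau)=c_P\circ\pi_1(F,\tau)$. As $\pi_1(\Id,\tau)=\id$, this forces $v_\C(\tau\boxplus\xi,-\xi)\circ v_\C(\tau,\xi)=\pi_1(F,\tau)=c_P^{-1}$, which is an isomorphism since the conjugation maps $c_P$ are.

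From here the conclusion is formal. The composite being an isomorphism shows $v_\C(\tau,\xi)$ is injective and $v_\C(\tau\boxplus\xi,-\xi)$ surjective. Running the identical argument with the roles of $\xi$ and $-\xi$ interchanged — adding first $-\xi$ and then $\xi$, and using the canonical path from $\tau\boxplus\xi\boxplus(-\xi)\boxplus\xi$ back to $\tau\boxplus\xi$ — shows that $v_\C(\tau\boxplus\xi\boxplus(-\xi),\xi)\circ v_\C(\tau\boxplus\xi,-\xi)$ is likewise an isomorphism, whence $v_\C(\tau\boxplus\xi,-\xi)$ is also injective, hence bijective. Writing $v_\C(\tau,\xi)=v_\C(\tau\boxplus\xi,-\xi)^{-1}\circ\bigl(v_\C(\tau\boxplus\xi,-\xi)\circ v_\C(\tau,\xi)\bigr)$ then exhibits $v_\C(\tau,\xi)$ as a composite of isomorphisms. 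I expect the main obstacle to be the middle step: checking rigorously that the canonical path is natural in $\tau$ and that a natural zigzag of functors induces, on fundamental groups, maps agreeing up to conjugation — this is exactly where the Quillen-groupoid description of $\pi_1$ and Lemma \ref{lem:fund_conj_commutative} carry the weight, and where care is needed since $\tau$ and $\tau\boxplus\xi$ generally lie in different connected components of $\mathcal Ext^n_\C(X,Y)$.
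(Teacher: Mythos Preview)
Your proof is correct and follows the same core strategy as the paper: show that the composite $v_\C(\tau\boxplus\xi,-\xi)\circ v_\C(\tau,\xi)$ equals conjugation by the canonical path, hence is an isomorphism, and conclude by symmetry. The difference is one of packaging. The paper carries out an explicit induction on the length of the loop $w$, sliding the canonical path $v$ through $w$ one arrow at a time using commutative squares supplied by the bifunctoriality of $\boxplus$; at the end of the induction one sees directly that $w\boxplus\xi\boxplus(-\xi)$ is homotopic to $v^{-1}wv$. You instead recognize this induction as an instance of the general principle that a natural transformation (or natural zigzag) between functors $F\Rightarrow G$ forces $\pi_1(G)=c_\eta\circ\pi_1(F)$ after passing to the Quillen groupoid. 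Your version is cleaner and makes transparent what is really being used, at the cost of invoking a standard fact the paper does not isolate; the paper's induction is exactly the unwinding of that fact in this case. One small remark: Lemma~\ref{lem:fund_conj_commutative} is about functors commuting with conjugation maps, not about natural transformations inducing conjugation, so it is not quite the mechanism you need---but the principle you invoke is correct and elementary (it is the groupoid version of the statement that naturally isomorphic functors are homotopic).
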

\begin{proof}
We show that 
$$
v_\C(\tau \boxplus \xi, \tau \boxplus \xi \boxplus (-\xi)) \circ v_\C(\tau, \xi) = c_{v},
$$
where $c_v$ is conjugation by the path $v$ given by $\tau \boxplus \xi \boxplus (-\xi) \rightarrow \tau \boxplus \widetilde{\xi} \leftarrow \tau$. Put $\xi_{\mathrm{add}} : = \xi \boxplus (-\xi)$ for short, and let $w$ be a loop in $\mathcal Ext^n_\C(X,Y)$ based at $\tau$. Assume that $w$ is given as follows:
$$
\tau = \chi_0 \longrightarrow \chi_1 \longleftarrow \chi_2 \longrightarrow \cdots \longleftarrow \chi_{r-1} \longrightarrow \chi_r = \tau.
$$
We show, by induction on $i$, that the loop
\begin{equation}\label{eq:looptau}
\begin{aligned}
\tau \boxplus \xi_{\mathrm{add}} \longrightarrow \tau &\boxplus \widetilde{\xi} \longleftarrow \tau = \chi_0 \longrightarrow \chi_1 \longleftarrow \chi_2 \longrightarrow \cdots\\ &\longleftarrow \chi_{r-1} \longrightarrow \chi_r = \tau \longrightarrow  \tau \boxplus \widetilde{\xi} \longleftarrow \tau \boxplus \xi_{\mathrm{add}}
\end{aligned}
\end{equation}
is homotopically equivalent to the loop
\begin{align*}
\tau \boxplus \xi_{\mathrm{add}} \longrightarrow \chi_1&\boxplus \xi_{\mathrm{add}} \longleftarrow \chi_2 \boxplus \xi_{\mathrm{add}} \longrightarrow  \cdots \\ &\longleftarrow \chi_{i} \boxplus \xi_{\mathrm{add}} \longrightarrow \chi_i \boxplus \widetilde{\xi} \longleftarrow \chi_i \longrightarrow \chi_{i+1} \longleftarrow \cdots\\ &\longrightarrow \chi_r = \tau \longrightarrow  \tau \boxplus \widetilde{\xi} \longleftarrow \tau \boxplus \xi_{\mathrm{add}}
\end{align*}
for any $i \geq 0$. It then follows that the loop $w \boxplus \xi \boxplus (-\xi)$ is homotopically equivalent to the conjugate of $w$ by $v$. Let us just deal with the case $i = 0$. The induction step then is an easy exercise. Since $- \boxplus -$ is a bifunctor, we get the commutative diagram
\begin{equation*}
\begin{aligned}
\xymatrix{
\tau \boxplus \xi_{\mathrm{add}} \ar[r] & \tau \boxplus \widetilde{\xi} & \tau \ar[r] \ar[l] \ar[d] & \chi_1 \ar[d] & \chi_2 \ar[l] \\
&& \tau \boxplus \widetilde{\xi} \ar[r] & \chi_1 \boxplus \widetilde{\xi} &\\
&& \tau \boxplus \xi_{\mathrm{add}} \ar[u] \ar[r] & \chi_1 \boxplus \xi_{\mathrm{add}} \ar[u] &
}
\end{aligned}
\end{equation*}
Thus it follows that the loop (\ref{eq:looptau}) is homotopically equivalent to
\begin{align*}
\tau \boxplus \xi_{\mathrm{add}} \longrightarrow \chi_1 &\boxplus \xi_{\mathrm{add}} \longrightarrow \chi_1 \boxplus \widetilde{\xi} \longleftarrow \chi_1 \longleftarrow \chi_2 \longrightarrow \cdots\\ &\longleftarrow \chi_{r-1} \longrightarrow \chi_r = \tau \longrightarrow  \tau \boxplus \widetilde{\xi} \longleftarrow \tau \boxplus \xi_{\mathrm{add}} \ .
\end{align*}
\end{proof}
By combining Theorem \ref{thm:iso_pi0pi1} with Lemma \ref{lem:isopi1} we instantaneously get.
\begin{thm}\label{prop:pi0_pi1_iso}
For any integer $n \geq 1$ and any admissible $n$-extension $\xi$ of $X$ by $Y$, the group $\pi_1(\mathcal Ext^n_\C(X,Y),\xi)$ is
isomorphic to $\Ext^{n-1}_\C(X,Y)$. Therefore, $\pi_1(\mathcal Ext^n_\C(X,Y),\xi)$ is abelian.\qed
\end{thm}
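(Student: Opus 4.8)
The plan is to transport the isomorphism already established at the split base point $\sigma_n(X,Y)$ to an arbitrary base point $\xi$, using the Baer sum as the transport mechanism. Substituting $n-1$ for $n$ in Theorem \ref{thm:iso_pi0pi1} (legitimate since $n \geq 1$, so $n-1 \geq 0$) supplies a group isomorphism
$$
u_\C^{a,+}: \Ext^{n-1}_\C(X,Y) \xrightarrow{\ \sim\ } \pi_1(\mathcal Ext^n_\C(X,Y), \sigma_n(X,Y)),
$$
so the whole task reduces to producing an isomorphism $\pi_1(\mathcal Ext^n_\C(X,Y), \xi) \cong \pi_1(\mathcal Ext^n_\C(X,Y), \sigma_n(X,Y))$ of groups.

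The subtlety is that $\pi_0 \mathcal Ext^n_\C(X,Y) = \Ext^n_\C(X,Y)$ is in general nonzero, so the category is not connected and $\xi$ need not be joined to $\sigma_n(X,Y)$ by any path; a naive conjugation is therefore unavailable. The remedy is to route through the component of $\xi \boxplus (-\xi)$, which \emph{is} connected to $\sigma_n(X,Y)$. Concretely, I would first apply Lemma \ref{lem:isopi1} with base point $\tau = \xi$ and summand $-\xi = (-\id_Y) \vdash \xi$ to obtain the isomorphism
$$
v_\C(\xi, -\xi): \pi_1(\mathcal Ext^n_\C(X,Y), \xi) \xrightarrow{\ \sim\ } \pi_1(\mathcal Ext^n_\C(X,Y), \xi \boxplus (-\xi)).
$$
Then I would invoke the explicit path constructed in \ref{nn:pathximinxi} joining $\xi \boxplus (-\xi)$ to $\sigma_n(X,Y)$; conjugation by this path is an isomorphism $\pi_1(\mathcal Ext^n_\C(X,Y), \xi \boxplus (-\xi)) \xrightarrow{\sim} \pi_1(\mathcal Ext^n_\C(X,Y), \sigma_n(X,Y))$ of the type $c_w$ discussed earlier. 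Composing these two isomorphisms with $u_\C^{a,+}$ yields $\pi_1(\mathcal Ext^n_\C(X,Y), \xi) \cong \Ext^{n-1}_\C(X,Y)$.

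For the final assertion, abelianness comes for free: $\Ext^{n-1}_\C(X,Y)$ is a $k$-module by Lemma \ref{lem:extmodule}, in particular an abelian group, and any group isomorphic to it is abelian. I do not expect a genuine obstacle, since the two cited results do the heavy lifting; the only point to keep honest is the second step, namely that $\xi \boxplus (-\xi)$ lies in the same connected component as $\sigma_n(X,Y)$ via a concrete path — which is exactly what the construction in \ref{nn:pathximinxi} provides — so that conjugation applies legitimately across what would otherwise be distinct components.
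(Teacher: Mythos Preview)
Your proof is correct and follows essentially the same approach as the paper: combine Theorem~\ref{thm:iso_pi0pi1} (shifted to $n-1$) with Lemma~\ref{lem:isopi1} to move the base point. The only minor difference is directional: the paper more directly applies Lemma~\ref{lem:isopi1} with $\tau = \sigma_n(X,Y)$ and summand $\xi$, then uses the natural isomorphism $\sigma_n(X,Y) \boxplus \xi \cong \xi$ (a single morphism in the extension category) rather than routing through $\xi \boxplus (-\xi)$ and the longer path of~\ref{nn:pathximinxi}; both variants are valid instances of the same idea.
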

\section{Compatibility results}\label{sec:retakh2}
Within this section, we discuss reformulations of the isomorphisms introduced priorly, and study their behavior with respect to exact functors between exact categories. We keep the notations and conventions of the previous section.
\begin{nn}
Let $b \in \Aut_\C(Y)$ be an automorphism of $Y$. We obtain additional maps
$$
\underline{u}_\C^{b,+}: \Ext^n_\C(X,Y) \rightarrow \pi_1(\mathcal Ext^{n+1}_{\C}(X,Y), \sigma_{n+1}(X,Y)) \quad \text{(for $n \in \mathbb Z_{\geq
0}$)},
$$
by dualizing the loop construction made for $u_\C^{a,+}$: In the case $n = 0$, the loop $\sigma_{1}(X,Y) \rightarrow \sigma_{1}(X,Y)$ a morphism $g \in \Hom_\C(X,Y)$ will be mapped to by $\underline{u}_\C^{b,+}$ is given by the matrix
$$
\underline{L}_b(g) =
\left[\begin{matrix}
\id_Y & b \circ g \\
0 & \id_X
\end{matrix}\right] : Y \oplus X \longrightarrow Y \oplus X.
$$
For $n \geq 1$ consider the matrix
$$
\underline{M}(f) = \underline{M}_b(f)=
\left[\begin{matrix}
f \circ b\\
-f \circ b
\end{matrix}\right]^T : Y \oplus Y \longrightarrow B
$$
defined for any morphism $f: Y \rightarrow B$ in $\C$; given some admissible $n$-extension $\xi \in \Ob \mathcal Ext^n_\C(X,Y)$, we obtain a loop in the following manner:
$$
\xymatrix@C=23pt{
0 \ar[r] & Y \ar@{=}[d] \ar@{=}[r] & Y \ar[r] & 0 \ar[r] & \cdots \ar[r] & 0 \ar[r]  & X \ar@{=}[r]  & X \ar@{=}[d] \ar[r] & 0 \ \ \\
0 \ar[r] & Y \ar[r]^-{
\left[\begin{smallmatrix}
1\\
1
\end{smallmatrix}\right]} & Y \oplus Y
\ar[u]^{
\left[\begin{smallmatrix}
1 & 0
\end{smallmatrix}\right]
}
\ar[d]_{
\left[\begin{smallmatrix}
0 & 1
\end{smallmatrix}\right]
}
\ar[r]^-{
\underline{M}(e_n)
} & {E_{n-1}} \ar[u] \ar[d] \ar[r]^{e_{n-1}} & \cdots \ar[r]^-{e_2} & E_1 \ar[u] \ar[d] \ar[r]^-{e_1} & E_0 \ar[r]^-{e_0} \ar[u]^{e_0} \ar[d]_{e_0} & X \ar[r] & 0 \ \ \\
0 \ar[r] & Y \ar@{=}[u] \ar@{=}[r] & Y \ar[r] & 0 \ar[r] & \cdots \ar[r] & 0 \ar[r] & X \ar@{=}[r]  & X \ar@{=}[u] \ar[r] & 0 \ .
}
$$
Unsurprisingly, $\underline{u}_\C^{b,+}$ is bijective, and its inverse map $\underline{u}_\C^{b,-}$ may be constructed in a very similar fashion as the one for ${u}_\C^{a,+}$. We will elaborate on the correlation between the maps $u_\C^{a,+}$ and $\underline{u}_\C^{b,+}$.
\end{nn}

\begin{defn}\label{def:homotopic}
Let $\alpha, \beta : \xi \rightarrow \zeta$ be morphisms in $\mathcal Ext^n_\C(X,Y)$. We say that
$\alpha$ and $\beta$ are \textit{homotopic} if there exist morphisms $s_i: E_i \rightarrow F_{i+1}$ for $0 \leq i \leq n-2$ such
that
\begin{align*}
f_1 \circ s_0 &= f_0 - \beta_0,\\
f_i \circ s_{i} + s_{i-1} \circ e_i &= \alpha_i - \beta_i & & \text{(for $1 \leq i \leq n-2$)},\\
s_{n-2} \circ e_{n-1} &= \alpha_{n-1} - \beta_{n-1}.
\end{align*}
Such a family $(s_i)_i$ is then called a (\textit{chain}) \textit{homotopy between $\alpha$ and $\beta$}.
\end{defn}
\begin{rem}
Observe that $\alpha$ and $\beta$ being homotopic precisely means that the induced morphisms $\alpha^\natural$ and $\beta^\natural$ between the middle segments $\mathbb E$ and $\mathbb F$ are homotopic in the usual sense (cf. \cite[Sec.\,1.4]{Wei94}).
\end{rem}
\begin{lem}[{\cite[Lem.\,4.2]{Sch98}}]\label{lem:homotopic}
Let $\alpha, \beta : \xi \rightarrow \zeta$ be homotopic morphisms in $\mathcal Ext^n_\mathsf{C}(X,Y)$. Then $\alpha$ and $\beta$ are homotopically equivalent $($when regarded as paths in $\mathcal Ext^n_\mathsf{C}(X,Y))$.
\end{lem}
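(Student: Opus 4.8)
The plan is to prove the stronger-looking but more convenient statement that $\alpha$ and $\beta$ become \emph{equal} after passing to the Quillen groupoid $\mathsf G(\mathcal Ext^n_\C(X,Y))$. Indeed, by the construction of $\mathsf G$, the morphisms of the groupoid are precisely the homotopy classes of paths, so if $g: \mathcal Ext^n_\C(X,Y)\to\mathsf G(\mathcal Ext^n_\C(X,Y))$ denotes the canonical functor, then $g(\alpha)=g(\beta)$ is literally the assertion that the length-$1$ paths $\alpha$ and $\beta$ are homotopically equivalent. The leverage is that $g$ sends \emph{every} morphism to an invertible one, so any genuine identity $r\circ\iota=\mathrm{id}$ holding in $\mathcal Ext^n_\C(X,Y)$ forces $g(\iota)=g(r)^{-1}$. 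Hence it suffices to realize $\alpha$ and $\beta$ as $H\circ\iota_0$ and $H\circ\iota_1$ for a single morphism $H$ and two sections $\iota_0,\iota_1$ of one common retraction $r$; the homotopy $s$ will be used only to build $H$.

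Concretely, I would take the cylinder object $Z:=\xi\oplus\mathbb X$, where $\mathbb X$ is the canonical acyclic complex attached to $\xi$ in the construction preceding Lemma \ref{lem:monomorphism}, so that $Z\in\mathcal Ext^n_\C(X,Y)$. It carries the split epimorphism $r:=\pi:Z\to\xi$ onto the $\xi$-summand, together with two sections: the summand inclusion $\iota_0:\xi\to Z$ (for which $r\circ\iota_0=\mathrm{id}_\xi$ holds trivially) and $\iota_1:=\widehat{\mathrm{id}_\xi}$, the map built in the same paragraph with $\beta=\mathrm{id}_\xi$ (for which $r\circ\iota_1=\mathrm{id}_\xi$ as well). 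I would then use the homotopy $s=(s_i)$ to define $H=(\alpha,H^{\mathbb X}):Z=\xi\oplus\mathbb X\to\zeta$, whose restriction to the $\xi$-summand is $\alpha$ and whose restriction to $\mathbb X$ is given in degree $p$ by $H^{\mathbb X}_p=\left[\begin{smallmatrix}-f_{p+1}\circ s_p & -s_{p-1}\end{smallmatrix}\right]:E_p\oplus E_{p-1}\to F_p$. A short computation using $f_p\circ f_{p+1}=0$ shows $f_p\circ H^{\mathbb X}_p=H^{\mathbb X}_{p-1}\circ x_p$, so that $H^{\mathbb X}$, and hence $H$, is a chain morphism fixing $X$ and $Y$; and the defining homotopy relations $\alpha_p-\beta_p=f_{p+1}\circ s_p+s_{p-1}\circ e_p$ yield exactly $H\circ\iota_0=\alpha$ and $H\circ\iota_1=\beta$.

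With these data assembled, the conclusion is formal. Applying $g$ gives $g(r)\,g(\iota_0)=g(\mathrm{id}_\xi)=g(r)\,g(\iota_1)$, and since $g(r)$ is invertible in the groupoid we obtain $g(\iota_0)=g(\iota_1)$. Postcomposing with $g(H)$ then yields $g(\alpha)=g(H\circ\iota_0)=g(H)\,g(\iota_0)=g(H)\,g(\iota_1)=g(H\circ\iota_1)=g(\beta)$, which is precisely the asserted homotopy equivalence of the paths $\alpha$ and $\beta$.

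The main obstacle I anticipate is purely bookkeeping at the two boundary degrees: the formulas for $\mathbb X$, for $\widehat{\mathrm{id}_\xi}$ and for $H^{\mathbb X}$ degenerate in degrees $n-1$ and $0$ (where $\mathbb X$ has a single summand), so the chain-morphism identities and the equalities $r\circ\iota_j=\mathrm{id}_\xi$, $H\circ\iota_0=\alpha$, $H\circ\iota_1=\beta$ must be verified there directly, using the convention $s_{-1}=s_{n-1}=0$ forced by the fact that $\alpha$ and $\beta$ restrict to the identity on $X$ and $Y$. The case $n=1$ is degenerate, since then the homotopy is empty and the relations reduce to $\alpha=\beta$; I would dispose of it at the outset.
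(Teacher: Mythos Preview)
Your argument is correct and follows essentially the same strategy as the paper's proof: build a cylinder object for $\xi$ equipped with two sections $\iota_0,\iota_1$ of a common retraction $r$ and a morphism $H$ to $\zeta$ realizing $\alpha$ and $\beta$, then use that in the Quillen groupoid two sections of the same retraction coincide. The paper constructs the cylinder $\mathbb C$ from scratch (with $C_i=E_i\oplus E_{i-1}\oplus E_i$ in the middle and a cokernel in degree $n-1$), whereas you recognize that the object $\xi\oplus\mathbb X$ already produced before Lemma~\ref{lem:monomorphism} does the job, with $\iota_1=\widehat{\id_\xi}$ and $r=\pi$ coming for free from Lemma~\ref{lem:prop_ext2}; this recycling is a nice economy, and it makes admissibility of the cylinder immediate since $\mathbb X$ is visibly split exact. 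Your explicit groupoid formulation of the last step ($g(r)$ invertible forces $g(\iota_0)=g(\iota_1)$) is exactly the content of the paper's path manipulation, only said more crisply.
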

\begin{proof}
We recall (and adapt) S.\,Schwede's proof for convenience of the reader. Fix a chain homotopy $s_i: E_i \rightarrow F_{i+1}$, $i = 0, \dots, n-2$, between $\alpha$ and $\beta$. Assume that there is an admissible $n$-extension $\mathbb C$ in $\mathcal Ext^n_\C(X,Y)$ admitting morphisms $\iota_0, \iota_1 \colon \xi \rightarrow \mathbb C$, $\pi: \mathbb C \rightarrow \xi$ and $\Sigma : \mathbb C \rightarrow \zeta$ such that $\pi \circ \iota_0 = \id_\xi = \pi \circ \iota_1$ and $\Sigma \circ \iota_0 = \beta$, $\Sigma \circ \iota_1 = \alpha$. (Such a $\mathbb C$ is the admissible extension analogue of a \textit{cylinder object} for complexes; cf. \cite[III.2 and V.2.7]{GeMa03}.) It then follows that the path represented by $\alpha$ is homotopically equivalent to
$$
\xymatrix{
\xi \ar[r]^-{\iota_1} & \mathbb C \ar[r]^-\Sigma & \zeta \ .
}
$$
Because of $\pi \circ \iota_0 = \id_\xi = \pi \circ \iota_1$ this path is homotopically equivalent to
$$
\xymatrix{
\xi \ar[r]^-{\iota_0} & \mathbb C \ar[r]^-\Sigma & \zeta \ ,
}
$$
that is, it is homotopically equivalent to $\beta$. Thus it suffices to construct an extension $\mathbb C$ having the properties above.

Define $\mathbb C$ to be the following complex $(C_\bullet, c_\bullet)$: Let it be concentrated in degrees $-1, \dots, n$, wherein it is given by
$$
C_i := 
\begin{cases}
Y & \text{if $i = n$},\\
\Coker(\delta) & \text{if $i = n-1$},\\
E_i \oplus E_{i-1} \oplus E_i & \text{if $0 < i < n-1$},\\
E_0 \oplus E_0 & \text{if $i = 0$},\\
X & \text{if $i = -1$}.
\end{cases}
$$
Here $\delta$ is the morphism $\id_{E_{n-1}} \oplus e_{n-1} \oplus (-\id_{E_{n-1}}): E_{n-1} \rightarrow E_{n-1} \oplus E_{n-2} \oplus E_{n-1}$ (i.e., $C_{n-1}$ is the pushout of $(\id_{E_{n-1}} \oplus e_{n-1}, \id_{E_{n-1}})$, which is an object of $\C$). The differential $c_0: E_0 \oplus E_0 \rightarrow X$ is given by the matrix $\left[\begin{matrix} e_0 & e_0 \end{matrix}\right]$, whereas $c_n: Y \rightarrow \Coker(\delta)$ is simply induced by the inclusion
$$
\xymatrix{ Y \ar[r]^-{e_n} & E_{n-1} \ar[r]^-{j^{E_{n-1}}} & E_{n-1} \oplus (E_{n-2} \oplus E_{n-1})}
$$
into the first summand. For $i > 1$, the morphisms $\widetilde{c}_i: E_i \oplus E_{i-1} \oplus E_i \rightarrow E_{i-1} \oplus E_{i-2} \oplus E_{i-1}$ defined by
$$
\widetilde{c}_i = 
\left[\begin{matrix}
e_i & (-1)^{n-i} \id_{E_{i-1}} & 0\\
0 & e_{i-1} & 0\\
0 & (-1)^{n-i-1} \id_{E_{n-1}} & e_{i}
\end{matrix}\right]
$$
give rise to the differentials $c_i$ for $i > 2$; namely, set $c_i :=\widetilde{c}_i$ for $i \neq n-1$ and $c_{n-1}$ is the unique morphism $\Coker(\delta) \rightarrow E_{n-2} \oplus E_{n-3} \oplus E_{n-2}$ with $c_{n-1} \circ \coker(\delta) = \widetilde{c}_{n-1}$ (note that $\widetilde{c}_{n-1} \circ \delta = 0$). Finally, $c_1: E_1 \oplus E_0 \oplus E_1 \rightarrow E_0 \oplus E_0$ is the morphism
$$
c_1 = 
\left[\begin{matrix}
e_1 & (-1)^{n-1} \id_{E_{0}} & 0\\
0 & (-1)^n \id_{E_0} & e_{1} 
\end{matrix}\right] .
$$
It is not hard to see that the identity morphism of $i_\C \mathbb C$ in $\Ch(\A)$ is null-homotopic, and thus $i_\C \mathbb C$ is exact. By construction, $\mathbb C$ is hence admissible exact. The maps $\iota_0$ and $\iota_1$ are induced by the inclusions of $E_i$ into the outer summands of $C_i$. Consider the morphisms $\widetilde{\pi}_i: E_i \oplus E_{i-1} \oplus E_i \rightarrow E_i$,
$$
\widetilde{\pi}_i = \left[\begin{matrix}\id_{E_i} & 0 & \id_{E_i}\end{matrix}\right] \quad \text{(for $0 < i < n-2$).}
$$
Put $\pi_i :=  \widetilde{\pi}_i$ for $0 < i < n-1$, and let $\pi_{n-1}$ be the morphism which is induced by $\widetilde{\pi}_{n-1}$. Further, let $\pi_0: E_0 \oplus E_0 \longrightarrow E_0$ be given by the matrix
$$
\pi_0 = \left[\begin{matrix}\id_{E_0} & \id_{E_0}\end{matrix}\right].
$$
The morphism $\Sigma: \mathbb C \rightarrow \zeta$ of admissible $n$-extensions may be realized as the matrices
\begin{align*}
\Sigma_0 &= \left[\begin{matrix}
\beta_0 & \alpha_0
\end{matrix}\right],\\
\Sigma_i &= \left[\begin{matrix}
\beta_i & (-1)^{n-i}s_{i-1} & \alpha_i
\end{matrix}\right] \quad \text{(for $0 < i < n-1$)}
\intertext{and $\Sigma_{n-1}$ is induced by}
\widetilde{\Sigma}_{n-1} &= \left[\begin{matrix}
\beta_{n-1} & - s_{n-2}  & \alpha_{n-1}
\end{matrix}\right] .
\end{align*}
Clearly $\iota_0$, $\iota_1$, $\pi$ and $\Sigma$ have the desired properties.
\end{proof}
\begin{prop}\label{lem:mapsagree}
Let $n \geq 0$ be a fixed integer. Consider the automorphisms $a=(-1)^{n+1} \id_X$ and $b = (-1)^{n+1}\id_Y$. Then $u_\C^{\id_X,+} = \underline{u}_\C^{b,+}$ and $u_\C^{a,+} = \underline{u}_\C^{\id_Y,+}$ $($and therefore, $u_\C^{\id_X,-} = \underline{u}_\C^{b,-}$ and $u_\C^{a,-} = \underline{u}_\C^{\id_Y,-})$.
\end{prop}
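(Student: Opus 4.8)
The plan is to establish $u_\C^{\id_X,+} = \underline{u}_\C^{b,+}$ first and then to deduce $u_\C^{a,+} = \underline{u}_\C^{\id_Y,+}$ from it. Since all four maps in sight are group isomorphisms (Theorem \ref{thm:iso_pi0pi1} for the $u_\C^{a,\pm}$, together with the analogous fact recorded for $\underline{u}_\C^{b,\pm}$), and since the $(-)$-maps are by definition their inverses, it suffices to prove the two $(+)$-identities, and these may be checked on each class $[\xi]\in\Ext^n_\C(X,Y)$ separately. Moreover, the passage from the first identity to the second is merely a matter of transferring the scalar $(-1)^{n+1}$ across: replacing $a=\id_X$ by $a=c\,\id_X$ rescales the $X$-end of the loop $w(\xi)$, and by the relation $\xi\dashv(c\,\id_X)=(c\,\id_Y)\vdash\xi$ in $\pi_0\mathcal Ext^{n+1}_\C(X,Y)$ (valid for every $c\in k$) this has, up to equivalence, the same effect as rescaling the $Y$-end; taking $c=(-1)^{n+1}$ and using $\bigl((-1)^{n+1}\bigr)^2=1$ turns the first identity into the second. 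Thus the whole proposition reduces to a single comparison of the $X$-doubled loop with the $Y$-doubled loop, with the sign $(-1)^{n+1}$ to be accounted for.

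For $n=0$ both loops have length one, namely the automorphisms $L_{\id_X}(g)=\smatrix{\id_Y & g\\ 0 & \id_X}$ and $\underline{L}_b(g)=\smatrix{\id_Y & b\circ g\\ 0 & \id_X}$ of $\sigma_1(X,Y)$; computing their images under the explicit inverse $u_\C^{\id_X,-}$ — read off the unique $f$ with $\alpha_0-\id=j^Y\circ f$ and compose with $j_X$ — shows that they represent the same class precisely when the sign $b=(-1)^{0+1}\id_Y=-\id_Y$ is inserted, the sign being the one introduced by the dualization $\C\rightsquigarrow\C^\op$. For $n\geq 1$ the loops $w(\xi)=u_\C^{a,+}([\xi])$ and $\underline{w}(\xi)=\underline{u}_\C^{b,+}([\xi])$ differ more seriously: $w(\xi)$ is the span $\sigma_{n+1}(X,Y)\to\xi_+\leftarrow\sigma_{n+1}(X,Y)$ obtained by doubling $\xi$ at its $X$-end through $M_a(e_0)$, whereas $\underline{w}(\xi)$ is the cospan $\sigma_{n+1}(X,Y)\leftarrow\underline{\xi}_+\to\sigma_{n+1}(X,Y)$ obtained by doubling $\xi$ at its $Y$-end through $\underline{M}_b(e_n)$. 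I would produce a homotopy between them through the chain-homotopy criterion of Lemma \ref{lem:homotopic}: build a cylinder-type $(n+1)$-extension $\mathbb C$ over $\xi$, modelled on the complex constructed in the proof of that lemma, carrying morphisms onto $\xi_+$ and $\underline{\xi}_+$ together with an explicit chain homotopy interpolating the two collapses. The alternating signs $(-1)^{n-i}$ occurring in the differential of $\mathbb C$ are designed to accumulate to the overall sign $(-1)^{n+1}$, which is exactly the scalar prescribed for $a$ and $b$.

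The principal obstacle is that $w(\xi)$ and $\underline{w}(\xi)$ sit over different middle objects and are oppositely oriented (a span versus a cospan), so there is no direct morphism $\xi_+\to\underline{\xi}_+$ in $\mathcal Ext^{n+1}_\C(X,Y)$ with which to compare them; the comparison must instead be routed through pushouts, pullbacks, and the cylinder $\mathbb C$. The genuinely delicate point is the sign, and I expect the cleanest bookkeeping comes from phrasing the comparison through the inverse maps, where $u_\C^{a,-}$ appears as a pullback at the $X$-end and $\underline{u}_\C^{b,-}$ as a pushout at the $Y$-end of a loop reduced — via Lemma \ref{lem:looplength2} and its dual — to length two; the required equality then becomes the familiar statement that the two connecting homomorphisms attached to a long exact sequence agree up to the sign $(-1)^{n+1}$. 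Checking degree by degree that the interpolating family is an honest chain homotopy and that its signs combine to precisely $(-1)^{n+1}$ is the one laborious step; once it is in place, Lemma \ref{lem:homotopic} converts the homotopy into a homotopy equivalence of loops, yielding the stated equalities in $\pi_1$ and hence, by inversion, also $u_\C^{\id_X,-}=\underline{u}_\C^{b,-}$ and $u_\C^{a,-}=\underline{u}_\C^{\id_Y,-}$.
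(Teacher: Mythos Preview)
Your proposal has the right overall framework --- reduce to a chain-homotopy statement and invoke Lemma~\ref{lem:homotopic} --- but you are working much harder than necessary, and the extra complication you introduce is precisely where your argument becomes vague. The ``principal obstacle'' you identify (that $\xi_+$ and $\underline\xi^+$ are different objects connected by a span and a cospan, so admit no direct comparison morphism) is a self-imposed one. The paper sidesteps it completely: since the $u$-loop is $\sigma_{n+1}\xrightarrow{\alpha_1}\xi_+\xleftarrow{\alpha_2}\sigma_{n+1}$ and the $\underline u$-loop is $\sigma_{n+1}\xleftarrow{\beta_1}\underline\xi^+\xrightarrow{\beta_2}\sigma_{n+1}$, their equality in $\pi_1$ amounts to the equality $\alpha_1\beta_1=\alpha_2\beta_2$ in the Quillen groupoid, i.e.\ to the two honest composites $\alpha_1\circ\beta_1,\ \alpha_2\circ\beta_2:\underline\xi^+\rightrightarrows\xi_+$ being homotopically equivalent as paths. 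By Lemma~\ref{lem:homotopic} it is enough that they be chain homotopic, and the difference $\Delta=\alpha_1\beta_1-\alpha_2\beta_2$ is null-homotopic via the utterly explicit homotopy $s_i=(-1)^i\id_{E_i}$. No cylinder object, no interpolation, no routing through pushouts and pullbacks is needed; the whole thing is a two-line computation once you compose through $\sigma_{n+1}$.

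Two smaller points. Your reduction of the second identity to the first via the $\pi_0$-relation $\xi\dashv(c\,\id_X)=(c\,\id_Y)\vdash\xi$ is not an argument as stated: that relation lives in $\pi_0$, not $\pi_1$, and does not by itself let you transport the scalar across the loop construction. The paper simply remarks that the second identity ``follows similarly'', i.e.\ by a parallel explicit homotopy, not by a formal reduction. Your treatment of $n=0$ is also roundabout; the paper just observes that $L_{\id_X}(g)^{-1}=\underline L_b(g)$, which (once you unwind the opposite orientations of the two length-one loops) gives the equality in $\pi_1$ on the nose.
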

\begin{proof}
We merely show the first equation; the second one follows similarly. For any morphism $g: X \rightarrow Y$ in $\C$ we have
$$
L_{\id_X}(g)^{-1} = 
\left[\begin{matrix}
\id_Y & g \\
0 & \id_X
\end{matrix}\right]^{-1} = \left[\begin{matrix}
\id_Y & -g \\
0 & \id_X
\end{matrix}\right] = \underline{L}_{b}(g)
$$
Therefore $u_\C^{\id_X,+} = \underline{u}_\C^{b, +}$ follows immediately if $n = 0$. 

Let $n \geq 1$ and let $\xi$ be an admissible $n$-extension of $X$ by $Y$ in $\C$. By taking the previous lemma into account, we see that it suffices to show that the morphisms in the diagram
$$
\xymatrix{
\sigma_{n+1}(X,Y) \ar[r]^-{\alpha_1} & \xi_+ \\
\xi^+ \ar[u]^-{\beta_1} \ar[r]^-{\beta_2} & \sigma_{n+1}(X,Y) \ar[u]_-{\alpha_2} \ ,
}
$$
resulting out of the loops defining $u_\C^{\id_X,+}$ and $\underline{u}_\C^{b,+}$, are such that $\alpha_1 \circ \beta_1$ and $\alpha_2 \circ \beta_2$ are homotopic. Put $\Delta :=\alpha_1 \circ \beta_1 - \alpha_2 \circ \beta_2$ and $M(e_0) := M_{\id_X}(e_0)$. Clearly, $\Delta$ is given by
$$
\small
\xymatrix@C=23pt{
0 \ar[r] & Y \ar[r] \ar[d]_0 & Y \oplus Y \ar[r]^-{\underline{M}(e_n)} \ar[d]_-{
\left[\begin{smallmatrix}
e_n & - e_n
\end{smallmatrix}\right]} & E_{n-1} \ar[d]_-0 \ar[r] & \cdots \ar[r] & E_1 \ar[d]^-0 \ar[r] & E_0 \ar[d]^-{
\left[\begin{smallmatrix}
e_0 \\
-e_0
\end{smallmatrix}\right]
} \ar[r] & X \ar[d]^-0 \ar[r] & 0 \\
0 \ar[r] & Y \ar[r] & E_{n-1} \ar[r] & E_{n-2} \ar[r] & \cdots \ar[r] & E_0 \ar[r]^-{M_{}(e_0)} & X \oplus X \ar[r] & X \ar[r] & 0
}
$$
and the maps
$$
s_i = (-1)^i \id_{E_i} \quad (\text{for $0 \leq i \leq n-1$}),
$$
satisfy
\begin{align*}
M(e_0) \circ s_0 &= \Delta_0,\\
e_{i}\circ s_{i} + s_{i-1} \circ e_{i} &= \Delta_i \quad \quad (\text{for $1 \leq i \leq n-1$}),\\
s_{n-1} \circ \underline{M}(e_n) &= \Delta_{n}.
\end{align*}
Therefore we are done.
\end{proof}
We close this section with a compatibility result relating the functors $\pi_0$, $\pi_1$ with the maps $u_\C^{a,+}$ and $v_\C(\xi) := v_\C(\sigma_n(X,Y), \xi)$ defined previously.
\begin{lem}\label{lem:grhomcomm}
Let $\scrX : \mathsf C \rightarrow \D$ be a $k$-linear exact functor between exact $k$-categories and let $n \geq 0$ be an integer. Let $X$ and $Y$ be objects in $\C$ and $\xi$ be an object in $\mathcal Ext ^n_{\C}(X,Y)$.
\begin{enumerate}[\rm(1)]
\item $\scrX$ induces a functor $\scrX_n : \mathcal E xt^n_{\mathsf C}(X,Y) \rightarrow \mathcal Ext^n_{\mathsf D}(\scrX X,\scrX Y)$ with $\scrX_n(\sigma_n(X,Y)) = \sigma_n(\scrX X,\scrX Y)$. Hence we obtain group homomorphisms
$$
\scrX^\sharp_n = \pi_0 \scrX_n : \Ext^n_{\mathsf C}(X,Y) \longrightarrow \Ext^n_{\mathsf D}(\scrX X, \scrX Y)
$$
and
$$
\scrX^\flat_n = \pi_1 (\scrX_n, \xi) : \pi_1 (\mathcal E xt^n_{\mathsf C}(X,Y), \xi) \longrightarrow \pi_1(
\mathcal E xt^n_{\D}(\scrX X,\scrX Y), \scrX_n \xi).
$$
\item Assume that $\C$ and $\D$ are factorizing and set $\xi' :=  \scrX_n \xi$. Then the following diagrams commute:
$$
\xymatrix@C=40pt{
\pi_1 (\mathcal E xt^n_{\C}(X,Y), \sigma_n(X,Y)) \ar[r]^-{v_\C(\xi)} \ar[d]_-{\pi_1 (\scrX_n, \sigma_n(X,Y))} & \pi_1 (\mathcal
E xt^n_{\mathsf C}(X,Y), \xi) \ar[d]^-{\pi_1 (\scrX_n, \xi)} \ \ \\
\pi_1( \mathcal E xt^n_{\mathsf D}(\scrX X,\scrX Y), \sigma_n(\scrX X,\scrX Y)) \ar[r]^-{v_\D(\xi')} & \pi_1( \mathcal Ext^n_{\mathsf D}(\scrX X, \scrX Y), \xi')
}
$$
and, for every automorphism $a \in \Aut_\C(X)$,
$$
\xymatrix@C=40pt{
\Ext^n_{\mathsf C}(X,Y) \ar[r]^-{u_\C^{a,+}} \ar[d]_-{\scrX^\sharp_n} & \pi_1 (\mathcal E xt^{n+1}_{\mathsf C}(X,Y), \sigma_n(X,Y)) \ar[d]^-{\scrX^\flat_n} \ \ \\
\Ext^n_{\mathsf D}(\scrX X, \scrX Y) \ar[r]^-{u_\D^{\scrX a,+}} & \pi_1( \mathcal E xt^{n+1}_{\mathsf D}(\scrX X, \scrX Y), \sigma_n(\scrX X,\scrX Y)) \ .
}
$$
\end{enumerate}
\end{lem}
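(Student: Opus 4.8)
The first step is to manufacture the functor $\scrX_n$. Since $\scrX$ is exact it carries every conflation in $\C$ to a conflation in $\D$; applying it to the factorisation of an admissible $n$-extension recorded in Definition \ref{def:adexseq} therefore produces an admissible $n$-extension of $\scrX X$ by $\scrX Y$, and applying $\scrX$ degreewise to a morphism of $n$-extensions (which is the identity in degrees $-1$ and $n$) again yields such a morphism because $\scrX(\id)=\id$. Functoriality of $\scrX_n$ is then immediate from that of $\scrX$. As $\scrX$ is $k$-linear it is additive, hence preserves zero objects and finite biproducts; reading off the definition of $\sigma_n(X,Y)$ gives $\scrX_n(\sigma_n(X,Y))=\sigma_n(\scrX X,\scrX Y)$. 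The induced maps $\scrX^\sharp_n=\pi_0\scrX_n$ and $\scrX^\flat_n=\pi_1(\scrX_n,\xi)$ then exist by the functoriality of $\pi_0$ and $\pi_1$ discussed in Section \ref{sec:homo_groups}; the latter is automatically a group homomorphism, while for the former I would observe that the Baer sum $\boxplus$ is assembled from biproducts together with a pullback along an admissible epimorphism and a pushout along an admissible monomorphism, all of which $\scrX$ preserves (the last two by Lemma \ref{lem:exact_func_push_pull}), so that $\scrX_n(\xi\boxplus\zeta)=\scrX_n\xi\boxplus\scrX_n\zeta$ and $\scrX^\sharp_n$ respects $+$.

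\textbf{Plan for the second square of Part (2).} This is where the explicitness of $u_\C^{a,+}$ pays off, and I would dispatch it first. The point is simply that $\scrX$ intertwines the matrices defining the loop $w(\xi)$. Using additivity to identify $\scrX(X\oplus X)$ with $\scrX X\oplus\scrX X$, together with preservation of composition, one checks
$$
\scrX(M_a(f)) = M_{\scrX a}(\scrX f), \qquad \scrX(L_a(g)) = L_{\scrX a}(\scrX g),
$$
and that $\scrX$ sends the structural biproduct morphisms $\smatrix{1\\0}$, $\smatrix{0\\1}$, $\smatrix{1&1}$ appearing in \ref{nn:defnuplus} to the corresponding morphisms over $\D$. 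Consequently $\scrX_{n+1}(\xi_+)=(\scrX_n\xi)_+$ with respect to the automorphism $\scrX a$, and in fact $\scrX_{n+1}(w(\xi))=w(\scrX_n\xi)$ on the nose. Reading off both composites then gives $\scrX^\flat_{n+1}(u_\C^{a,+}[\xi])=[\scrX_{n+1}w(\xi)]=[w(\scrX_n\xi)]=u_\D^{\scrX a,+}(\scrX^\sharp_n[\xi])$, as required.

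\textbf{Plan for the first square of Part (2).} Here I would exploit that $v_\C(\xi)$ is precisely the map on $\pi_1$ induced by the Baer-sum functor $-\boxplus\xi\colon\mathcal Ext^n_\C(X,Y)\to\mathcal Ext^n_\C(X,Y)$, landing in $\pi_1$ based at $\sigma_n(X,Y)\boxplus\xi$. Since $\scrX$ preserves biproducts and the pullbacks and pushouts building $\boxplus$, the two functors $\scrX_n\circ(-\boxplus\xi)$ and $(-\boxplus\xi')\circ\scrX_n$ coincide, whence by functoriality of $\pi_1$ the equality $\pi_1(\scrX_n,\sigma_n(X,Y)\boxplus\xi)\circ v_\C(\xi)=v_\D(\xi')\circ\pi_1(\scrX_n,\sigma_n(X,Y))$ holds. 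The remaining point is that the diagram records $v_\C(\xi)$ as landing at $\xi$ rather than at $\sigma_n(X,Y)\boxplus\xi$, so one must check that $\scrX_n$ transports the canonical connecting path of \ref{nn:pathximinxi} from $\sigma_n(X,Y)\boxplus\xi$ to $\xi$ into the analogous path in $\D$; this holds because that path is built solely from biproduct, pullback and pushout morphisms preserved by $\scrX$, and Lemma \ref{lem:fund_conj_commutative} then guarantees that the two base-point-change conjugations match up.

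\textbf{Main obstacle.} The genuinely bookkeeping-heavy part is the first square: tracking the non-canonical identification $\sigma_n(X,Y)\boxplus\xi\cong\xi$ and confirming that $\scrX_n$ carries the explicit connecting path of \ref{nn:pathximinxi} correctly, so that the base-point-change isomorphisms on the two sides are compatible via Lemma \ref{lem:fund_conj_commutative}. The second square, by contrast, reduces to the purely formal observation that an additive exact functor commutes with the matrix constructions $M_a$ and $L_a$, and so requires essentially no homotopy-theoretic input.
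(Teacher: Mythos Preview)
Your proposal is correct and follows essentially the same route as the paper. One small correction: the connecting path you need for the first square is not the one from \ref{nn:pathximinxi} (which links $\xi\boxplus(-\xi)$ to $\sigma_n$) but rather the natural isomorphism $w\colon\xi\to\xi\boxplus\sigma_n(X,Y)$; the paper's argument decomposes $v_\C(\xi)$ as $c_w\circ\pi_1(\xi\boxplus-,\sigma_n)$ and then invokes Lemma \ref{lem:fund_conj_commutative} exactly as you suggest, using that $\scrX_n(\xi\boxplus\sigma_n)\cong\scrX_n\xi\boxplus\sigma_n(\scrX X,\scrX Y)$ and that $\scrX$ carries $w$ to the corresponding isomorphism in $\D$.
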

\begin{proof}
By applying it degreewise, the functor $\scrX$ induces a functor $\Ch^b(\C) \rightarrow \Ch^b(\D)$ that
sends admissible extensions to admissible extensions (for $\scrX$ is exact). Hence we obtain the functor $\scrX_n$ by simply restricting to the subcategory $\mathcal Ext^n_{\C}(X,Y)$ of $\Ch^b(\C)$. The maps $\scrX^\sharp_n$ and $\scrX^\flat_n$ come for free as explained in section \ref{sec:homo_groups}, and $\scrX^\flat_n$ is a group homomorphism. Since $\scrX$ preserves pushouts along admissible monomorphisms and pullbacks along admissible epimorphisms by Lemma \ref{lem:exact_func_push_pull}, $\scrX^\sharp_n \colon \Ext^n_{\C}(X,Y) \rightarrow \Ext^n_{\D}(\scrX X,\scrX Y)$ will respect the Baer sum.

Let us prove the claimed commutativity of the diagrams. Since $\scrX$ is exact,
$$
\scrX_n(\xi \boxplus \sigma_n(X,Y)) \cong \scrX_n\xi \boxplus \scrX_n\sigma_n(X,Y) = \scrX_n\xi \boxplus \sigma_n(\scrX X,\scrX Y).
$$
When combining this with Lemma \ref{lem:fund_conj_commutative}, we see that the diagram
$$
\xymatrix@R=30pt@!C=73pt{
& \pi_1 (\mathcal E xt^n_{\C}(X,Y), \sigma_n(X,Y)) \ar[dl]_-{\pi_1 (\xi \boxplus -, \sigma_n(X,Y)) \quad \ } \ar[dr]^-{\quad \ \pi_1 (\scrX_n, \sigma_n(X,Y))} \ar@/^3pc/[ddl]^(0.6){v_\C(\xi)} &\\
\pi_1 (\mathcal E xt^n_{\C}(X,Y), \xi \boxplus \sigma_n(X,Y)) \ar[d]_-{c_{w}} & & \pi_1( \mathcal E xt^n_{\D}(\scrX X, \scrX Y), \sigma_n(\scrX X,\scrX Y)) \ar[d]^-{\pi_1 (\xi' \boxplus -,\sigma_n(\scrX X,\scrX Y))} \ar@/_3pc/[ddl]_(.5){v_\D(\xi')}\\
\pi_1 (\mathcal E xt^n_{\C}(X,Y), \xi) \ar[dr]_-{\pi_1 (\scrX_n, \xi) \ } & & \pi_1( \mathcal E xt^n_{\mathsf D}(\scrX X, \scrX Y), \xi' \boxplus \sigma_n(\scrX X,\scrX Y)) \ar[dl]^-{c_{\scrX w}}\\
& \pi_1( \mathcal Ext^n_{\D}(\scrX X, \scrX Y), \xi') &
}
$$
commutes (insert the morphism $\scrX^\flat_n = \pi_1 (\scrX_n, \xi \boxplus \sigma_n(X,Y))$,
$$
\scrX^\flat_n: \pi_1 (\mathcal E xt^n_{\C}(X,Y), \xi \boxplus \sigma_n(X,Y)) \longrightarrow  \pi_1( \mathcal E xt^n_{\mathsf D}(\scrX X, \scrX Y), \xi' \boxplus \sigma_n(\scrX X,\scrX Y)),
$$
at the right place), where $w$ is the natural isomorphism $\xi \rightarrow \xi \boxplus \sigma_n(X,Y)$. For the second diagram, we just remark that, for any $n$-extension $\xi$ in $\mathcal Ext^n_\C(X,Y)$, the $(n+1)$-extension $\scrX(\xi_+)$ is given by
$$
\xymatrix@C=33pt
{
0 \ar[r] & \scrX Y \ar[r]^-{\scrX e_{n}} & \cdots \ar[r]^-{\scrX e_1} & \scrX E_0 \ar[r]^-{M_{\scrX a}(\scrX e_0)} & \scrX X \oplus \scrX X \ar[r]^-{\left[\begin{smallmatrix}
1 & 1
\end{smallmatrix}\right]} & \scrX X \ar[r] & 0
}
$$
which coincides with $(\scrX \xi)_+$ (see \ref{nn:defnuplus} to bring the definitions of $\xi_+$ and $(\scrX \xi)_+$ back to mind).
\end{proof}
\section{Extension categories for monoidal categories}\label{sec:extcats_monoidal}
\begin{nn}
Throughout the following paragraph, let $\C$ and $\D$ be exact $k$-categories together with their defining embeddings $i_\C: \C \rightarrow \A_\C$, $i_\D: \D \rightarrow \A_\D$ into the abelian $k$-categories $\A_\C$ and $\A_\D$. Furthermore, fix objects $X, X', Y, Y' \in \Ob \C$ and integers $m,n \geq 1$. Let
$$
(\C, \otimes_\C, \mathbbm 1_\C) \quad \text{and} \quad (\D,\otimes_\D, \mathbbm 1_\D)
$$
be fixed monoidal structures on $\mathsf C$ and $\D$ respectively. We abbreviate $\otimes_\C$ and $\mathbbm 1_\C$ by $\otimes$ and $\mathbbm 1$. Recall that a monoidal category is a tensor $k$-category if its monoidal product functor is $k$-bilinear on morphisms.
\end{nn}
\begin{nn}Let $\xi$ be an admissible $m$-extension of $X$ by $Y$ and let $\zeta$ be an admissible $n$-extension of $X'$ by $Y'$:
$$
\xymatrix@R=10pt@C=15pt{
\xi & \equiv & 0 \ar[r] & Y \ar[r] & \mathbb E \ar[r] & X \ar[r] & 0\ , \\
\zeta & \equiv & 0 \ar[r] & Y' \ar[r] & \mathbb F \ar[r] & X' \ar[r] & 0 \ .
}
$$
We define their \textit{monoidal product} $\xi \boxtimes_\C \zeta$ (which will turn out to be an object in $\mathcal Ext^{m+n}_\mathsf{C}(X \otimes X', Y \otimes Y')$ if $\mathsf C$ is adequately chosen) to be the (usual) monoidal product of the truncated complexes $\xi^\natural: 0 \rightarrow Y \rightarrow \mathbb E$ and $\zeta^\natural: 0 \rightarrow Y' \rightarrow \mathbb F$ (as described in, for instance, \cite[Sec.\,2.7]{Be98}) followed by $e_0 \otimes f_0 : E_0 \otimes F_0
\rightarrow X \otimes X'$. More explicitly, $\xi \boxtimes_\C \zeta$ is given by
$$
(\xi \boxtimes_\C \zeta)_i := \bigoplus_{p = 0}^i{E_p \otimes F_{i-p}} \quad \text{(for $0 \leq i \leq m + n$)},
$$
and $(\xi \boxtimes_\C \zeta)_{-1} := X \otimes X'$ along with the differentials $\partial_i: (\xi \boxtimes_\C \zeta)_i \rightarrow (\xi \boxtimes_\C \zeta)_{i-1}$,
\begin{align*}
\partial_i := \sum_{p=1}^i \big( e_p \otimes {F_{i-p}}& + (-1)^{p} {E_p} \otimes f_{i-p} \big) \quad \text{(for $1 \leq i \leq m+n$)}
\end{align*}
and $\partial_{0} := e_0 \otimes f_0 \in \Hom_{\mathsf C}(E_0 \otimes F_0, X \otimes X')$. Be aware of the fact, that if one does not assume that $(\C, \otimes, \mathbbm 1)$ is a tensor $k$-category, then the above construction does not even need to yield a complex, much less an admissible extension.
\end{nn}
\begin{defn}\label{def:exactmonocat}
The triple $(\C, \otimes, \mathbbm 1)$ is
\begin{enumerate}[\rm(1)]
\item an \textit{exact monoidal category} if $\xi \boxtimes_\C \zeta$ is an admissible $(m+n)$-extension for all $m,n \in \mathbb Z_{\geq 1}$ and all $\xi \in \mathcal Ext^m_\C(\mathbbm 1,\mathbbm 1), \ \zeta \in \mathcal Ext^n_\C(\mathbbm 1,\mathbbm 1)$. 
\item a \textit{strong exact monoidal category} if it is an exact monoidal category and the exact $k$-category $\C$ is factorizing. 
\item a \textit{very strong exact monoidal category} if the exact category $\C$ is factorizing, and $\xi \boxtimes_\C \zeta$ is an admissible $(m+n)$-extension for all $m,n \in \mathbb Z_{\geq 1}$, $X, X', Y, Y' \in \Ob \C$ and all $\xi \in \mathcal Ext^m_\C(X,Y), \ \zeta \in \mathcal Ext^n_\C(X',Y')$.
\end{enumerate}
\end{defn}
\begin{lem}\label{lem:exactcomplex}
Let $\A_1, \A_2$ and $\sfB$ be abelian categories. Let $\C_1 \subseteq \A_1$ and $\C_2 \subseteq \A_2$ be exact subcategories. Assume that $\scrX: \C_1 \times \C_2 \rightarrow \sfB$ is an additive functor which is bilinear on morphisms. Then, for given bounded complexes $\mathbb{X} \in \Ch^b(\C_1)$, $\mathbb{Y} \in \Ch^b(\C_2)$ with $X_p = Y_p = 0$ for $p \leq -2$, the bounded complex $\mathbb T_\scrX (\mathbb X, \mathbb Y)$ defined by $\mathbb T_\scrX (\mathbb X, \mathbb Y)_{-1} = \scrX(X_{-1}, Y_{-1})$ and
$$
\mathbb T_\scrX (\mathbb X, \mathbb Y)_i:=\mathrm{Tot}(\scrX(\mathbb X_{\geq 0}, \mathbb Y_{\geq 0}))_i = \bigoplus_{p=0}^i \scrX(X_p, Y_{i-p})
\quad \text{$($for  $i \geq 0)$}, 
$$
with the usual differentials $\partial_i$ for $i \neq 0$ and $\partial_{0} = \scrX(x_0, y_0)$, is acyclic in the following two situations.
\begin{enumerate}[\rm(1)]
\item\label{lem:exactcomplex:1} The complexes $\mathbb{X}$ and $\mathbb Y$ are acyclic, and the functors $\scrX(X, -)$ $($for all $X$ in $\C_1)$ and $\scrX(-,Y_{-1})$ are exact.
\item\label{lem:exactcomplex:2} The complexes $\mathbb{X}$ and $\mathbb Y$ are acyclic, and the functors $\scrX(X_{-1}, -)$ and $\scrX(-,Y)$ $($for all $Y$ in  $\C_2)$ are exact.
\end{enumerate}
\end{lem}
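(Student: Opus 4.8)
\emph{Strategy and reduction.} The plan is to first collapse the two cases into one, then recognize $\mathbb T_\scrX(\mathbb X,\mathbb Y)$ as the augmentation of a genuine first-quadrant total complex, and finally prove acyclicity in two steps: a spectral sequence for the positive degrees and a direct diagram computation for the bottom two terms. For the reduction, I would introduce the flip $\scrX^{\mathrm{fl}}\colon \C_2\times\C_1\to\sfB$, $\scrX^{\mathrm{fl}}(Q,P)=\scrX(P,Q)$. The standard sign-twisted isomorphism of total complexes identifies $\mathbb T_{\scrX^{\mathrm{fl}}}(\mathbb Y,\mathbb X)$ with $\mathbb T_\scrX(\mathbb X,\mathbb Y)$ (the collapsed term $\scrX(X_{-1},Y_{-1})$ and the map $\partial_0=\scrX(x_0,y_0)$ are symmetric), and acyclicity is invariant under isomorphism; under this flip the hypotheses of situation (\ref{lem:exactcomplex:2}) become exactly those of situation (\ref{lem:exactcomplex:1}) for $\scrX^{\mathrm{fl}}$. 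Hence it suffices to treat (\ref{lem:exactcomplex:1}). Now, since $\mathbb X$ is acyclic and concentrated in degrees $\geq -1$, the map $x_0\colon X_0\to X_{-1}$ is an epimorphism with $\Ker x_0=\Im x_1$, i.e.\ $\mathbb X_{\geq 0}$ is a resolution of $X_{-1}$; likewise $\mathbb Y_{\geq 0}$ resolves $Y_{-1}$. By construction $\mathbb T_\scrX(\mathbb X,\mathbb Y)$ coincides with $Q:=\Tot(\scrX(\mathbb X_{\geq 0},\mathbb Y_{\geq 0}))$ in degrees $\geq 0$ and carries the extra object $\scrX(X_{-1},Y_{-1})$ in degree $-1$, attached by $\partial_0=\scrX(x_0,y_0)$ (this is a differential because $x_0x_1=0=y_0y_1$). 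Thus the claim reduces to: $H_n(Q)=0$ for $n\geq 1$, $\partial_0$ is epi, and $\Ker\partial_0=\Im(\partial_1\colon Q_1\to Q_0)$.

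\emph{Vanishing in positive degrees.} Here I would run the spectral sequence of the first-quadrant double complex $\scrX(X_p,Y_q)$ ($p,q\geq 0$), filtered by $p$. Because $X_p\in\C_1$, the functor $\scrX(X_p,-)$ is exact, so the $p$-th column $\scrX(X_p,\mathbb Y_{\geq 0})$ is a resolution of $\scrX(X_p,Y_{-1})$; hence the first page is concentrated in the bottom row, with $E^1_{p,0}=\scrX(X_p,Y_{-1})$ and induced horizontal differential $\scrX(x_p,Y_{-1})$. Since $\scrX(-,Y_{-1})$ is exact, this bottom row $\scrX(\mathbb X_{\geq 0},Y_{-1})$ resolves $\scrX(X_{-1},Y_{-1})$, so $E^2$ is concentrated at $(0,0)$. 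Boundedness of both $\mathbb X$ and $\mathbb Y$ guarantees convergence, giving $H_n(Q)=0$ for $n\geq 1$ (and, incidentally, $H_0(Q)\cong\scrX(X_{-1},Y_{-1})$).

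\emph{The bottom terms.} It remains to verify exactness of $\mathbb T_\scrX(\mathbb X,\mathbb Y)$ at degrees $0$ and $-1$. Writing $\partial_0=\scrX(x_0,Y_{-1})\circ\scrX(X_0,y_0)$, each factor is an epimorphism — $\scrX(X_0,y_0)$ since $\scrX(X_0,-)$ is exact and $y_0$ epi, and $\scrX(x_0,Y_{-1})$ since $\scrX(-,Y_{-1})$ is exact and $x_0$ epi — so $\partial_0$ is epi. Put $\phi=\scrX(X_0,y_0)$, so that $\Ker\phi=\Im\scrX(X_0,y_1)$ and, taking preimages of kernels, $\Ker\partial_0=\phi^{-1}(\Ker\scrX(x_0,Y_{-1}))=\phi^{-1}(\Im\scrX(x_1,Y_{-1}))$. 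Using the functoriality identity $\scrX(X_0,y_0)\circ\scrX(x_1,Y_0)=\scrX(x_1,Y_{-1})\circ\scrX(X_1,y_0)$ together with surjectivity of $\scrX(X_1,y_0)$, I get $\phi(\Im\scrX(x_1,Y_0))=\Im\scrX(x_1,Y_{-1})$, whence
$$
\Ker\partial_0=\phi^{-1}\big(\Im\scrX(x_1,Y_{-1})\big)=\Im\scrX(x_1,Y_0)+\Ker\phi=\Im\scrX(x_1,Y_0)+\Im\scrX(X_0,y_1)=\Im\partial_1 .
$$
Combined with the vanishing of the previous paragraph, this shows $\mathbb T_\scrX(\mathbb X,\mathbb Y)$ is acyclic, and the symmetric argument settles situation (\ref{lem:exactcomplex:2}).

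\emph{Expected difficulty.} The main obstacle is precisely the lowest part of the complex: the collapsed construction replaces the bottom hook of the honest total complex by the single object $\scrX(X_{-1},Y_{-1})$, so the spectral sequence (which sees only the first quadrant) must be supplemented by the explicit check that $\partial_0$ realizes $H_0(Q)$ correctly. Keeping careful track of which exactness hypothesis is used at each place — exactness of $\scrX(X_p,-)$ for \emph{all} $p$ versus exactness of $\scrX(-,Y_{-1})$ for the \emph{single} object $Y_{-1}$ — is the delicate bookkeeping, and is exactly what the asymmetric hypotheses of the two situations are tailored to provide.
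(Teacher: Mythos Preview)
Your proof is correct and follows the same overall architecture as the paper's (spectral sequence for the positive degrees, separate treatment of the bottom), but your execution is tighter in two places. First, the paper does not run the spectral sequence directly on the first-quadrant bicomplex $\scrX(\mathbb X_{\geq 0},\mathbb Y_{\geq 0})$; instead it shows $\abs{\scrX(\mathbb X_{\geq 0},\mathbb Y)}$ is acyclic via a spectral sequence and then feeds this into the long exact sequence of the degreewise-split truncation $0\to\abs{\scrX(\mathbb X_{\geq 0},\mathbb Y_{\leq -1})}\to\abs{\scrX(\mathbb X_{\geq 0},\mathbb Y)}\to\abs{\scrX(\mathbb X_{\geq 0},\mathbb Y_{\geq 0})}\to 0$ to conclude $H_n Q=0$ for $n\neq 0$. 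Your single collapsing spectral sequence achieves the same in one step. Second, for exactness at degrees $0$ and $-1$ the paper isolates a separate claim that $\abs{\scrX(\xi,\zeta)}$ is a $2$-extension for admissible short exact sequences $\xi,\zeta$ (proved via a $3\times 3$ diagram and the long exact homology sequence), whereas your direct kernel/image computation $\Ker\partial_0=\phi^{-1}(\Im\scrX(x_1,Y_{-1}))=\Im\scrX(x_1,Y_0)+\Ker\phi=\Im\partial_1$ is more elementary. Both routes use the hypotheses in exactly the same places; yours simply avoids the intermediate truncation machinery.
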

\begin{proof}
Let us only show that (\ref{lem:exactcomplex:1}) implies the acyclicity of $\mathbb T_\scrX (\mathbb X, \mathbb Y)$ (dual arguments apply when (\ref{lem:exactcomplex:2}) is assumed). In the following we will write $\abs{\scrX(\mathbb C, \mathbb D)}$ for
$\mathrm{Tot}(\scrX(\mathbb C, \mathbb D))$.  Without loss of generality, we may assume that both complexes $\mathbb X$ and $\mathbb Y$ are different from the zero complex. If $\C$ is any exact $k$-category, $r \in \mathbb Z$ is an integer, and $\mathbb C$ is a complex in $\Ch(\C)$, then it yields a commutative diagram
$$
\xymatrix{
& \vdots \ar[d] & \vdots \ar[d] & \vdots \ar[d] &\\
0 \ar[r] & 0 \ar[r] \ar[d] & C_{r+2} \ar[r] \ar[d] & C_{r+2} \ar[r] \ar[d] & 0\\
0 \ar[r] & 0 \ar[r] \ar[d] & C_{r+1} \ar[r] \ar[d] & C_{r+1} \ar[r] \ar[d] & 0\\
0 \ar[r] & C_{r} \ar[r] \ar[d] & C_{r} \ar[r] \ar[d] & 0 \ar[r] \ar[d] & 0\\
0 \ar[r] & C_{r-1} \ar[r] \ar[d] & C_{r-1} \ar[r] \ar[d] & 0 \ar[r] \ar[d] & 0\\
& \vdots & \vdots & \vdots &
}
$$
in $\Ch(\C)$ and hence an admissible short exact sequence
$$
\xi_{\mathbb C}^{r} \quad \equiv \quad 0 \longrightarrow \mathbb{C}_{\leq r} \longrightarrow \mathbb C \longrightarrow \mathbb C_{\geq r+1}
\longrightarrow 0 
$$
in $\Ch(\C)$. With this observation at hand, we are going to divide the proof into two intermediate steps.
\begin{enumerate}[\rm(a)]
\item\label{item:proofexacttensor} We show that the induced sequences
\begin{align*}
0 &\longrightarrow \abs{\scrX(\mathbb{X}_{\leq -1}, \mathbb C)} \longrightarrow \abs{\scrX(\mathbb{X},
\mathbb C)} \longrightarrow
\abs{\scrX(\mathbb{X}_{\geq 0}, \mathbb C)} \longrightarrow 0 \ ,
\\ &\\
0 &\longrightarrow \abs{\scrX(\mathbb D, \mathbb{Y}_{\leq -1})} \longrightarrow \abs{\scrX(\mathbb D, \mathbb{Y})} \longrightarrow
\abs{\scrX(\mathbb D, \mathbb{Y}_{\geq 0})} \longrightarrow 0
\end{align*}
are short exact ones in $\Ch^b(\mathsf B)$ for every $\mathbb C \in \Ch^b(\C_2)$ and $\mathbb D \in \Ch^b(\C_1)$.

\item\label{item:proofexacttensor:2} By using (\ref{item:proofexacttensor}) we deduce that $\scrX(\mathbb X_{\geq 0}, \mathbb Y_{\geq 0})$ has trivial homology in positive, non-zero degrees if $\mathbb X$ and $\mathbb Y$ are acyclic.
\end{enumerate}
Observe that for every $i \in \mathbb Z$, 
$$
0 \longrightarrow \abs{\scrX(\mathbb{X}_{\leq -1}, \mathbb C)}_i \longrightarrow \abs{\scrX(\mathbb{X},
\mathbb C)}_i \longrightarrow
\abs{\scrX(\mathbb{X}_{\geq 0}, \mathbb C)}_i \longrightarrow 0
$$
and
$$
0 \longrightarrow \abs{\scrX(\mathbb D, \mathbb{Y}_{\leq -1})}_i \longrightarrow \abs{\scrX(\mathbb D, \mathbb{Y})}_i \longrightarrow
\abs{\scrX(\mathbb D, \mathbb{Y}_{\geq 0})}_i \longrightarrow 0
$$
simply are the split exact sequences
\begin{align*}
0 &\longrightarrow \scrX(X_{-1}, C_{i+1}) \longrightarrow \bigoplus_{p \geq -1} \scrX(X_p, C_{i-p}) \longrightarrow
\bigoplus_{p \geq 0} \scrX(X_p, C_{i-p}) \longrightarrow 0 \ , \\
&\\
0 &\longrightarrow {\scrX(D_{i+1}, {Y}_{-1})} \longrightarrow \bigoplus_{p \geq -1}{\scrX(D_p, Y_{i-p})} \longrightarrow \bigoplus_{p \geq 0}{\scrX(D_p, {Y}_{i-p})} \longrightarrow 0 \ .
\end{align*}
Hence assertion (\ref{item:proofexacttensor}) follows immediatly. Let us prove (\ref{item:proofexacttensor:2}). The complex $\abs{\scrX(\mathbb X_{\geq 0}, \mathbb Y)}$ is acyclic: There is a convergent spectral sequence for the double complex $\scrX(\mathbb X_{\geq 0}, \mathbb Y)$ with $E^2$- and $E^\infty$-terms
$$
E^2_{p,q} = H_p^h H^v_q \scrX(\mathbb X_{\geq 0}, \mathbb Y) \ \Longrightarrow \ H_{p+q}\abs{\scrX(\mathbb X_{\geq 0}, \mathbb Y)} = E^\infty_{p,q}
$$
(cf. \cite[Sec.\,5.6]{Wei94}); here $H_q^v$ denotes the homology with respect to the vertical differentials, and $H_p^h$ denotes the homology with respect to the horizontal differentials. Since $\mathbb Y$ is acyclic and $\scrX(X,-)$ is exact for all objects $X$ in $\C_1$, it follows that $H^v_q \scrX(\mathbb X_{\geq 0}, \mathbb Y) = 0$ for all $q \in \mathbb Z$ and thus
$$
0 = E^2_{p,q} = E^3_{p,q} = \cdots = E^r_{p,q} = \cdots = E^\infty_{p,q} \quad (\text{for all $p,q \in \mathbb Z$ and $r \geq 2$})
$$
by \cite[Prop.\,XV.5.1]{CaEi56}. In particular, $H_n\abs{\scrX(\mathbb X_{\geq 0}, \mathbb Y)} = 0$ for all $n \in \mathbb Z$. Therefore the long exact sequence in homology (see \cite[Thm.\,6.10]{Ro09}),
$$
\xymatrix{
\cdots \ar[r] & H_{n+1} \abs{\scrX(\mathbb X_{\geq 0}, \mathbb Y_{\geq 0})} \ar[r] & H_n \abs{\scrX(\mathbb X_{\geq 0}, \mathbb Y_{\leq -1})} \ar[r] & H_n \abs{\scrX(\mathbb X_{\geq 0}, \mathbb Y)} \ar `r[d] `[l] `[llld] `[dll] [dll]\\
& H_n \abs{\scrX(\mathbb X_{\geq0},\mathbb Y_{\geq 0})} \ar[r] & H_{n-1} \abs{\scrX(\mathbb X_{\geq 0}, \mathbb
Y_{\leq -1})} \ar[r] & \cdots \ ,
}
$$
yields that $H_n\abs{\scrX(\mathbb X_{\geq 0}, \mathbb Y_{\geq 0})} \cong H_{n-1}\abs{\scrX(\mathbb X_{\geq 0}, \mathbb Y_{\leq -1})}$, where the right hand side is zero for $n \neq 0$. Hence the second claim follows.
\begin{center}
\textbf{Conclusion}
\end{center}
We have proven that $H_n \abs{\scrX(\mathbb X_{\geq 0}, \mathbb Y_{\geq 0})} = 0$ for all $n \neq 0$.
Moreover, 
$$
H_0 \abs{\scrX(\mathbb X_{\geq 0}, \mathbb Y_{\geq 0})} \cong H_{-1} \abs{\scrX(\mathbb
X_{\geq 0}, \mathbb Y_{\leq -1})} \cong \scrX(X_{-1}, Y_{-1}).
$$
The universal property of the cokernel provides the commutative diagram
$$
\xymatrix@C=30pt{
\cdots \ar[r] & \abs{\scrX(\mathbb X, \mathbb Y)}_1 \ar[r] \ar@{=}[d] & \abs{\scrX(\mathbb X, \mathbb Y)}_0 \ar@{->>}[r]
\ar@{=}[d] & H_0 \abs{\scrX(\mathbb X_{\geq 0}, \mathbb Y_{\geq 0})} \ar@{-->}[d] \ar[r] & 0\\
\cdots \ar[r] & \abs{\scrX(\mathbb X, \mathbb Y)}_1 \ar[r] & \abs{\scrX(\mathbb X, \mathbb Y)}_0 \ar[r]^-{\scrX(x_0,y_0)}
& \scrX(X_{-1}, Y_{-1}) \ar[r] & 0\\
}
$$
where the induced dotted arrow has to be an isomorphism. Indeed, it is an epimorphism right away, since $\scrX(x_0,y_0) = \scrX(x_0,Y_{-1}) \circ \scrX(X_0,y_0)$ is epimorphic. The following observation will immediately imply that it is also monomorphic, and hence the lemma is proven.
\begin{enumerate}[\rm(c)]
\item Let $\xi \ \equiv \ 0 \rightarrow A \xrightarrow{f} B \xrightarrow{g} C \rightarrow 0$ be an admissible short exact sequence in $\C_1$ and $\zeta \ \equiv \ 0 \rightarrow X \xrightarrow{s} Y \xrightarrow{t} Z \rightarrow 0$ be an admissible short exact sequence in $\C_2$. Assume that $\scrX(-,Z)$ is an exact functor $\C_1 \rightarrow \B$. Then $\abs{\scrX(\xi, \zeta)}$ is a $2$-extension in $\B$.
\end{enumerate}
In degrees different form zero, the differentials in $\abs{\scrX(\xi, \zeta)}$ are given by $\delta_2 = \scrX(f,X) \oplus \scrX(A,s)$ and $\delta_1 = - \scrX(B,s) + \scrX(f,Y)$. Thus, we have the commutative diagram
$$
\xymatrix{
0 \ar[r] & 0 \ar[r] \ar[d] & \scrX(B,X) \ar@{=}[r] \ar[d]_-{\mathrm{can}} & \scrX(B,X) \ar[r] \ar[d]^-{\scrX(B,s)} & 0 \ar[d] \ar[r] & 0 \\
0 \ar[r] & \scrX(A,X) \ar[r]^-{\delta_2} \ar@{=}[d] & \scrX(B,X) \oplus \scrX(A,Y) \ar[r]^-{\delta_1} \ar[d]_-{\mathrm{can}} & \scrX(B,Y) \ar[r]^-{\scrX(g,t)} \ar[d]^-{\scrX(B,t)} & \scrX(C,Z) \ar@{=}[d] \ar[r] & 0\\
0 \ar[r] & \scrX(A,X) \ar[r]^-{\scrX(A,s)} & \scrX(A,Y) \ar[r]^-{\scrX(f,t)} & \scrX(B,Z) \ar[r]^-{\scrX(g,Z)} & \scrX(C,Z) \ar[r] & 0
}
$$
which, when read from top to bottom, defines a short exact sequence in $\Ch(\B)$, and in which the upper row and the lower row are exact in $\B$. But then, also the middle row $\abs{\scrX(\xi, \zeta)}$ has to be exact (for instance, by the long exact sequence in homology).
\end{proof}
\begin{cor}\label{cor:tensorexact}
Consider the following statements on $(\C, \otimes, \mathbbm 1)$.
\begin{enumerate}[\rm(1)]
\item\label{cor:tensorexact:1} The exact $k$-category $\C$ is closed under kernels of epimorphisms.
\item\label{cor:tensorexact:2} The exact $k$-category $\C$ is closed under cokernels of monomorphisms.
\item\label{cor:tensorexact:3} $(\C, \otimes, \mathbbm 1)$ is a tensor $k$-category, and every object is flat.
\item\label{cor:tensorexact:3*} $(\C, \otimes, \mathbbm 1)$ is a tensor $k$-category, and every object is co\-flat.
\item\label{cor:tensorexact:4} $(\C, \otimes, \mathbbm 1)$ is a very strong exact monoidal category.
\item\label{cor:tensorexact:5} $(\C, \otimes, \mathbbm 1)$ is a strong exact monoidal category.
\item\label{cor:tensorexact:6} $(\C, \otimes, \mathbbm 1)$ is an exact monoidal category.
\end{enumerate}
Then the implications
\begin{align*}
(\ref{cor:tensorexact:1}) + (\ref{cor:tensorexact:3}) \Longrightarrow (\ref{cor:tensorexact:5}), \quad (\ref{cor:tensorexact:1}) + (\ref{cor:tensorexact:3*}) \Longrightarrow (\ref{cor:tensorexact:5}), \quad (\ref{cor:tensorexact:2}) + (\ref{cor:tensorexact:3}) \Longrightarrow (\ref{cor:tensorexact:5}), \quad (\ref{cor:tensorexact:2}) + (\ref{cor:tensorexact:3*}) \Longrightarrow (\ref{cor:tensorexact:5})\\
(\ref{cor:tensorexact:1}) + (\ref{cor:tensorexact:3}) + (\ref{cor:tensorexact:3*}) \Longrightarrow (\ref{cor:tensorexact:4}), \quad (\ref{cor:tensorexact:2}) + (\ref{cor:tensorexact:3}) + (\ref{cor:tensorexact:3*}) \Longrightarrow (\ref{cor:tensorexact:4}) \quad \text{and} \quad (\ref{cor:tensorexact:4}) \Longrightarrow (\ref{cor:tensorexact:5}) \Longrightarrow (\ref{cor:tensorexact:6})
\end{align*}
hold true.
\end{cor}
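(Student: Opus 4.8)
The plan is to decouple each hypothesis into two independent tasks: showing that $\C$ is \emph{factorizing}, and showing that the monoidal products $\xi \boxtimes_\C \zeta$ are admissible extensions. The factorizing property is handed to us directly: by Lemma \ref{lem:admono} and the remark following Definition \ref{def:factorizing}, either \ref{cor:tensorexact:1} or \ref{cor:tensorexact:2} already guarantees that $\C$ is factorizing. The chain $(\ref{cor:tensorexact:4}) \Rightarrow (\ref{cor:tensorexact:5}) \Rightarrow (\ref{cor:tensorexact:6})$ is then immediate from Definition \ref{def:exactmonocat}: a very strong exact monoidal category is factorizing and, specializing $X = X' = Y = Y' = \mathbbm 1$ in its defining condition, is exact monoidal, hence strong; and a strong exact monoidal category is by definition exact monoidal. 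So everything reduces to the admissibility of $\xi \boxtimes_\C \zeta$.

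First I would identify $\xi \boxtimes_\C \zeta$ with the total complex $\mathbb T_\scrX(\xi, \zeta)$ of Lemma \ref{lem:exactcomplex}, taking $\scrX = i_\C \circ \otimes \colon \C \times \C \to \A$, which is additive and bilinear on morphisms precisely because $(\C, \otimes, \mathbbm 1)$ is a tensor $k$-category; its differentials and boundary terms match the explicit formulas defining $\boxtimes_\C$. Since $\mathbbm 1 \otimes - \cong \Id_\C \cong - \otimes \mathbbm 1$ are exact, the unit is automatically both flat and coflat. For the exact monoidal conclusion only the case $X = X' = Y = Y' = \mathbbm 1$ matters, so the degree $-1$ terms of both complexes are $\mathbbm 1$: condition \ref{cor:tensorexact:3} makes $\scrX(C, -)$ exact for all $C$ and $\scrX(-, \mathbbm 1)$ exact, placing us in the first situation of Lemma \ref{lem:exactcomplex}, while condition \ref{cor:tensorexact:3*} symmetrically lands us in the second; either way $i_\C(\xi \boxtimes_\C \zeta)$ is acyclic in $\A$. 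For the very strong conclusion the boundary objects $X, X', Y, Y'$ are arbitrary, and I would then invoke \ref{cor:tensorexact:3} and \ref{cor:tensorexact:3*} jointly to verify the hypotheses of the first situation of Lemma \ref{lem:exactcomplex} for all of them.

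It remains to upgrade acyclicity in $\A$ to admissibility in $\C$, for which I would run a syzygy induction from the deflation end. Writing $K_0 = X \otimes X'$ and $\partial_\bullet$ for the differentials, the map $\partial_0$ is a morphism of $\C$ whose image under $i_\C$ is an epimorphism; by closure under kernels of epimorphisms (Definition \ref{def:closedker}) it is an admissible epimorphism, yielding a conflation $0 \to K_1 \to (\xi \boxtimes_\C \zeta)_0 \to K_0 \to 0$ with $K_1 \in \C$. Inductively, the corestriction $(\xi \boxtimes_\C \zeta)_i \to K_i$ is again an epimorphism in $\A$ by exactness and a morphism of $\C$, hence admissible, producing $K_{i+1} = \Ker(\partial_i) \in \C$; at the top one finds $K_{m+n} \cong Y \otimes Y'$, so the factorizations required by Definition \ref{def:adexseq} are all in place and $\xi \boxtimes_\C \zeta$ is an admissible $(m+n)$-extension. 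Under condition \ref{cor:tensorexact:2} the dual induction, building from the inflation end via cokernels of monomorphisms, does the same job.

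Assembling these pieces yields all seven implications: \ref{cor:tensorexact:1} (or \ref{cor:tensorexact:2}) supplies factorizing together with the syzygy lift, \ref{cor:tensorexact:3} (or \ref{cor:tensorexact:3*}) supplies unit-case acyclicity and hence \ref{cor:tensorexact:5}, while adjoining both \ref{cor:tensorexact:3} and \ref{cor:tensorexact:3*} upgrades acyclicity to arbitrary boundary objects and hence gives \ref{cor:tensorexact:4}. I expect the main obstacle to be the bookkeeping in the acyclicity step: the exactness hypotheses of Lemma \ref{lem:exactcomplex} single out the degree $-1$ boundary objects, so one must check them against the flatness and coflatness actually available, and recognize that the unit case needs only one of \ref{cor:tensorexact:3}, \ref{cor:tensorexact:3*} (because $\mathbbm 1$ is automatically flat and coflat) whereas arbitrary boundary objects genuinely require both.
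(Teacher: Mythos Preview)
Your proposal is correct and follows essentially the same approach as the paper: invoke Lemma~\ref{lem:exactcomplex} for acyclicity in $\A$, use Lemma~\ref{lem:admono} for the factorizing property, and lift acyclicity to admissibility via the kernel/cokernel induction (which the paper alludes to as ``the arguments inside the proof of Lemma~\ref{lem:admono}''). Your write-up is simply more explicit about the syzygy induction and about why the unit case needs only one of (\ref{cor:tensorexact:3}), (\ref{cor:tensorexact:3*}) while the general case needs both.
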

\begin{proof}
The implications $(\ref{cor:tensorexact:4}) \Longrightarrow (\ref{cor:tensorexact:5}) \Longrightarrow (\ref{cor:tensorexact:6})$ are clear. All others are immediate consequences of Lemma \ref{lem:exactcomplex} and Lemma \ref{lem:admono} (since every acyclic complex in $\A$ with terms in $\C$ will be, by the assumptions made, admissible exact in $\C$; see the arguments inside the proof of Lemma \ref{lem:admono}). Notice that the tensor unit $\mathbbm 1$ is, by definition, both flat and coflat in $\C$.
\end{proof}
\begin{defn}[Translation functor]\label{def:transfunc}
Let $f: X \rightarrow X'$ and $g: Y' \rightarrow Y$ be isomorphisms in $\mathsf C$. The \textit{translation functor $\mathrm{Tr} = \mathrm{Tr}_{f,g}$} (\textit{with respect to $f$ and $g$}) $$\mathrm{Tr} : \mathcal Ext^n_\C(X,Y) \longrightarrow \mathcal Ext^n_\C(X',Y')$$ is given by the assignment 
\begin{align*}
\mathrm{Tr}&(
\xymatrix@C=15pt{
0 \ar[r] & Y \ar[r] & E_{n-1} \ar[r] & \cdots \ar[r] & E_0 \ar[r] & X \ar[r] & 0
})\\
&= \xymatrix@C=15pt{
0 \ar[r] & Y' \ar[r]^-g & Y \ar[r] & E_{n-1} \ar[r] & \cdots \ar[r] & E_0 \ar[r] & X \ar[r]^-f &  X' \ar[r] & 0
} .
\end{align*}
It is an isomorphism of categories with inverse functor $\mathrm{Tr}_{f^{-1},g^{-1}}$.
\end{defn}
\begin{nn}Assume that $(\C, \otimes, \mathbbm 1)$ is an exact monoidal category, and let $\mathrm{Tr}$ be $\mathrm{Tr}_{\lambda_{\mathbbm 1}^{-1}, \lambda_{\mathbbm 1}}$. We obtain a family of bifunctors
$$
\boxtimes_\C: \mathcal Ext^m_\C(\mathbbm 1,\mathbbm 1) \times \mathcal Ext^n_\C(\mathbbm 1,\mathbbm 1)
\rightarrow \mathcal Ext^{m+n}_\C(\mathbbm 1,\mathbbm 1) \quad (m, n \geq 1).
$$
In fact, $\boxtimes_\C$ sends $(\xi, \zeta)$ to $\mathrm{Tr}(\xi \boxtimes_\C \zeta)$. By the usual abuse of notation, we are going to use the identification $\xi \boxtimes_\C \zeta = \mathrm{Tr}(\xi \boxtimes_\C \zeta)$ for simplicity.
\end{nn}
\begin{nn} 
Assume that both $(\C, \otimes_\C, \mathbbm 1_\C)$ and $(\D, \otimes_\D, \mathbbm 1_\D)$ are exact monoidal categories. Let $(\scrL, \phi, \phi_0): \C \rightarrow \D$ be an exact and lax monoidal functor such that $\phi_0: \mathbbm 1_\D \rightarrow \scrL\mathbbm 1_\C$ is an isomorphism (i.e., $(\scrL, \phi, \phi_0)$ is exact and almost costrong). Note that since
$$
\xymatrix@C=30pt{
\mathbbm 1_\D \otimes_\D \scrL \mathbbm 1_\C \ar[r]^-{\lambda_{\scrL \mathbbm 1_\C}} \ar[d]_{\phi_0 \otimes \scrL\mathbbm 1_\C} & \scrL \mathbbm 1_\C\\
\scrL \mathbbm 1_\C \otimes_\D \scrL \mathbbm 1_\C \ar[r]^-{\phi_{\mathbbm 1_\C, \mathbbm 1_\C}} & \scrL(\mathbbm 1_\C \otimes_\C \mathbbm
1_\C) \ar[u]_-{\scrL(\lambda_{\mathbbm 1_\C})}
}
$$
commutes, this forces $\phi_{\mathbbm 1_\C, \mathbbm 1_\C}: \scrL\mathbbm 1_\C \otimes_\D \scrL \mathbbm 1_\C \rightarrow \scrL(\mathbbm 1_\C \otimes_\C \mathbbm 1_\C)$ to be an isomorphism, too. The lax monoidal functor $(\scrL, \phi,\phi_0)$ gives rise to the following morphisms in $\D$:
\begin{align*}
\Delta : \mathbbm 1_\D \rightarrow \scrL \mathbbm 1_\C \otimes_\D \scrL \mathbbm 1_\C,& \quad \Delta = (\phi_0
\otimes \scrL(\id_{\mathbbm 1_\C})) \circ \lambda_{\scrL \mathbbm 1_\C}^{-1} \circ \phi_0, \\
\underline{\Delta}: \mathbbm 1_\D \rightarrow \scrL (\mathbbm 1_\C \otimes_\C \mathbbm 1_\C),& \quad \underline{\Delta}
= \scrL(\lambda_{\mathbbm 1_\C}^{-1}) \circ \phi_0
\intertext{as well as}
\nabla : \scrL \mathbbm 1_\C \otimes_\D \scrL \mathbbm 1_\C \rightarrow \mathbbm 1_\D,& \quad \nabla = \phi_0^{-1} \circ
\scrL(\lambda_{\mathbbm 1_\C}) \circ \phi_{\mathbbm 1_\C, \mathbbm 1_\C}, \\
\overline{\nabla}: \scrL(\mathbbm 1_\C \otimes_\C \mathbbm 1_\C) \rightarrow \mathbbm 1_\D, & \quad \overline{\nabla}
= \nabla \circ \phi_{\mathbbm 1_\C, \mathbbm 1_\C}^{-1} = \phi_0^{-1} \circ \scrL(\lambda_{\mathbbm 1_\C}).
\end{align*}
One may ask oneself, if the induced diagram
\begin{equation}\label{eq:extdiag}\begin{aligned}
\xymatrix@R=30pt@!C=50pt{
& \mathcal Ext^m_\C(\mathbbm 1_\C,\mathbbm 1_\C) \times \mathcal Ext^n_\C(\mathbbm 1_\C,\mathbbm 1_\C) \ar[dl]_-{\boxtimes_\C} \ar[dr]^-{\scrL_m \times \scrL_n} &\\
\mathcal Ext^{m+n}_\C(\mathbbm 1_\C,\mathbbm 1_\C) \ar[d]_{\scrL_{m+n}} && \mathcal Ext^m_{\D}(\scrL \mathbbm 1_\C,\scrL \mathbbm 1_\C) \times \mathcal Ext^n_{\D}(\scrL \mathbbm 1_\C,\scrL \mathbbm 1_\C) \ar[d]^-{\boxtimes_\D}\\
\mathcal Ext^{m+n}_\D(\scrL \mathbbm 1_\C,\scrL \mathbbm 1_\C) \ar[dr]_-{\mathrm{Tr}_{\phi_0, \phi_0^{-1}}} &&\mathcal Ext^{m+n}_{\D}(\scrL \mathbbm 1_\C \otimes_\D \scrL \mathbbm 1_\C, \scrL \mathbbm 1_\C \otimes_\D \scrL \mathbbm 1_\C) \ar[dl]^-{\mathrm{Tr}_{\Delta, \nabla}}\\
& \mathcal Ext^{m+n}_\D(\mathbbm 1_\D,\mathbbm 1_\D) &
}
\end{aligned}\end{equation}
is going to commute then. In general, this cannot be expected. Nevertheless, we have the following sufficiently nice statement.
\end{nn}
\begin{prop}\label{prop:laxfunc_comm}
Let $(\scrL, \phi, \phi_0): \C \rightarrow \D$ be an exact and almost strong monoidal functor. Then, with the notations made above, the diagram $(\ref{eq:extdiag})$ commutes on the level of $\pi_0$.
\end{prop}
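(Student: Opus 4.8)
The plan is to show that the two composites in (\ref{eq:extdiag}) agree after passing to $\pi_0$ by exhibiting, for each pair $(\xi,\zeta)$ with $\xi\in\mathcal Ext^m_\C(\mathbbm 1,\mathbbm 1)$ and $\zeta\in\mathcal Ext^n_\C(\mathbbm 1,\mathbbm 1)$, a single morphism in the pointed category $\mathcal Ext^{m+n}_\D(\mathbbm 1_\D,\mathbbm 1_\D)$ connecting the image under the upper-right composite to the image under the lower-left composite. Since $\pi_0$ of a category is its set of connected components and any morphism furnishes a path of length one, the existence of such a morphism forces the two classes in $\Ext^{m+n}_\D(\mathbbm 1_\D,\mathbbm 1_\D)$ to coincide, which is precisely commutativity on $\pi_0$.

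First I would untangle the translations. Because $\scrL$ is a functor it commutes with $\mathrm{Tr}$ (translation merely pre- and post-composes the outer differentials with isomorphisms), and two translations compose via $\mathrm{Tr}_{f_2,g_2}\circ\mathrm{Tr}_{f_1,g_1}=\mathrm{Tr}_{f_2 f_1,\,g_1 g_2}$. Hence the lower-left composite applied to $(\xi,\zeta)$ equals $\mathrm{Tr}_{\overline{\nabla},\underline{\Delta}}$ applied to $\scrL$ of the \emph{untranslated} product $\xi\boxtimes_\C\zeta$, an $(m+n)$-extension of $\scrL(\mathbbm 1\otimes_\C\mathbbm 1)$ by itself, where $\overline{\nabla}=\phi_0^{-1}\circ\scrL(\lambda_{\mathbbm 1_\C})$ and $\underline{\Delta}=\scrL(\lambda_{\mathbbm 1_\C}^{-1})\circ\phi_0$ are the maps recorded before the proposition. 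The upper-right composite is $\mathrm{Tr}_{\Delta,\nabla}$ applied to the untranslated product $\scrL_m\xi\boxtimes_\D\scrL_n\zeta$, an $(m+n)$-extension of $\scrL\mathbbm 1_\C\otimes_\D\scrL\mathbbm 1_\C$ by itself.

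Next I would build the connecting morphism out of the natural transformation $\phi$. In the degrees $0\le i\le m+n-1$ the two middle complexes read $\bigoplus_{p}\scrL E_p\otimes_\D\scrL F_{i-p}$ and $\bigoplus_p\scrL(E_p\otimes_\C F_{i-p})=\scrL\big(\bigoplus_p E_p\otimes_\C F_{i-p}\big)$, so I set $\Phi_i:=\bigoplus_p\phi_{E_p,F_{i-p}}$, while on the two boundary objects $\Phi$ is $\phi_{\mathbbm 1_\C,\mathbbm 1_\C}$. That $\Phi$ is a chain map is immediate from the naturality of $\phi$ in each variable: a binary product of complexes involves no associativity constraint, the differentials being built solely from $e_p\otimes(-)$ and $(-)\otimes f_q$, and naturality yields $\scrL(e_p\otimes_\C F)\circ\phi_{E_p,F}=\phi_{E_{p-1},F}\circ(\scrL e_p\otimes_\D\scrL F)$ together with the analogous identity for the $f$-differentials, so every square commutes. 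It then remains to check that, after the two translations, $\Phi$ restricts to the identity on the boundary copies of $\mathbbm 1_\D$, which amounts to the identities $\overline{\nabla}\circ\phi_{\mathbbm 1_\C,\mathbbm 1_\C}=\nabla$ and $\phi_{\mathbbm 1_\C,\mathbbm 1_\C}\circ\Delta=\underline{\Delta}$. The first is immediate from $\overline{\nabla}=\nabla\circ\phi_{\mathbbm 1_\C,\mathbbm 1_\C}^{-1}$, while the second follows from the left unit coherence axiom of a lax monoidal functor: evaluating $\scrL(\lambda_{\C,X})\circ\phi_{\mathbbm 1_\C,X}\circ(\phi_0\otimes_\D\scrL X)=\lambda_{\D,\scrL X}$ at $X=\mathbbm 1_\C$ gives
\[
\phi_{\mathbbm 1_\C,\mathbbm 1_\C}\circ\Delta=\phi_{\mathbbm 1_\C,\mathbbm 1_\C}\circ(\phi_0\otimes_\D\scrL\mathbbm 1_\C)\circ\lambda_{\scrL\mathbbm 1_\C}^{-1}\circ\phi_0=\scrL(\lambda_{\mathbbm 1_\C}^{-1})\circ\phi_0=\underline{\Delta}.
\]
With both boundary squares commuting, $\Phi$ descends to a genuine morphism in $\mathcal Ext^{m+n}_\D(\mathbbm 1_\D,\mathbbm 1_\D)$ from the upper-right object to the lower-left one, completing the argument.

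The routine but delicate part will be the bookkeeping of the four translation isomorphisms and verifying that the induced boundary components of $\Phi$ are \emph{literally} identities rather than merely isomorphisms; here the hypothesis that $\scrL$ is almost strong (that is, $\phi_0$ invertible) is used decisively, since — via the unit square displayed just before the proposition — it guarantees that $\phi_{\mathbbm 1_\C,\mathbbm 1_\C}$ is invertible and hence that all of $\Delta,\nabla,\underline{\Delta},\overline{\nabla}$ are isomorphisms, so that every occurrence of $\mathrm{Tr}$ is defined. No associativity coherence of $\phi$ enters, precisely because only binary products of complexes occur, and this is what keeps the chain-map verification to a single application of naturality.
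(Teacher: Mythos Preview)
Your proof is correct and follows essentially the same approach as the paper: both construct the connecting morphism $\vartheta$ (your $\Phi$) whose $i$-th component is the diagonal matrix with entries $\phi_{E_p,F_{i-p}}$, verify the chain-map condition via naturality of $\phi$, and check the boundary squares using the unit coherence axiom for lax monoidal functors. Your treatment is slightly more explicit about the translation bookkeeping and about why only naturality (not associativity coherence) of $\phi$ is needed, but the argument is the same.
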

\begin{proof}
Fix two sequences $\xi \in \mathcal Ext^m_\C(\mathbbm 1_\C, \mathbbm 1_\C)$ and $\zeta \in
\mathcal Ext^n_\C(\mathbbm 1, \mathbbm 1)$,
$$
\xymatrix@C=15pt@R=10pt{
\xi & \equiv & 0 \ar[r] & \mathbbm 1_\C \ar[r] & \mathbb E \ar[r] & \mathbbm 1_\C \ar[r] & 0 \ ,\\
\zeta & \equiv & 0 \ar[r] & \mathbbm 1_\C \ar[r] & \mathbb F \ar[r] & \mathbbm 1_\C \ar[r] & 0 \ .
}
$$
It suffices to name a morphism
$$
\vartheta: \mathrm{Tr}_{\Delta, \nabla}(\scrL_m\xi \boxtimes_\D \scrL_n \zeta) \longrightarrow
\mathrm{Tr}_{\phi_0, \phi_0^{-1}}(\scrL_{m+n}(\xi \boxtimes_\C \zeta))
$$
within the category $\mathcal Ext^{m+n}_{\D}(\mathbbm 1_\D, \mathbbm 1_\D)$. Let the $i$-th component of $\vartheta$ be given by the following diagonal matrix:
$$
\vartheta_i := \left[
\begin{matrix}
\phi_{E_0, F_i} & 0 & \cdots & 0 & 0\\
0 & \phi_{E_1, F_{i-1}} & \cdots & 0 & 0\\
\vdots & \vdots & \ddots & \vdots & \vdots \\
0 & 0 & \cdots  & \phi_{E_{i-1}, F_1} & 0\\
0 & 0 & \cdots & 0 & \phi_{E_i, F_0}
\end{matrix}\right] \quad (\text{for $1 \leq i \leq m+ n-1$}). 
$$
We convince ourselves that this actually gives rise to the desired morphism. The commutativity of
$$
\xymatrix{
\scrL E_0 \otimes_\D \scrL F_0 \ar[r] \ar[d] & \scrL \mathbbm 1_\C \otimes_\D \scrL \mathbbm 1_\C \ar[r] \ar[d] & \scrL(\mathbbm 1_\C \otimes_\C \mathbbm 1_\C) \ar[r] \ar@{=}[d] & \scrL \mathbbm 1_\C \ar@{=}[d] \ar[r] & \mathbbm 1_\D \ar@{=}[d]\\
\scrL (E_0 \otimes_\C F_0) \ar[r] & \scrL (\mathbbm 1_\C \otimes_\C \mathbbm 1_\C) \ar@{=}[r] & \scrL(\mathbbm 1_\C \otimes_\C \mathbbm 1_\C) \ar[r] & \scrL \mathbbm 1_\C \ar[r] & \mathbbm 1_\D
}
$$
is automatic. The same holds for the diagrams
$$
\xymatrix{
\scrL E_p \otimes_\D \scrL F_q \ar[r] \ar[d] & (\scrL E_{p-1} \otimes_\D \scrL F_q) \oplus (\scrL
E_p \otimes_\D \scrL F_{q-1}) \ \ \ar@<-3.75pt>[d]\\
\scrL (E_p \otimes_\C F_q) \ar[r] & \scrL (E_{p-1} \otimes_\C F_q) \oplus \scrL (E_p
\otimes_\C F_{q-1}) \ ,
}
$$
(for $1 \leq p \leq m$ and $1 \leq q \leq n$ with $p + q < n+m$). Finally, set
$$
\delta:=(\phi_0 \otimes \scrL \mathbbm 1_\C) \circ \lambda_{\scrL \mathbbm 1_\C}^{-1}
$$
and consider the diagram
$$
\xymatrix@C=30pt{
\mathbbm 1_\D \ar[r]^{\phi_0} \ar@{=}[d] & \scrL \mathbbm 1_\C \ar[r]^-{\delta} \ar@{=}[d] &
\scrL \mathbbm 1_\C \otimes_\D \scrL \mathbbm 1_\C \ar[r] \ar[d]^-{\phi_{\mathbbm 1_\C, \mathbbm 1_\C}} & (\scrL \mathbbm 1_\C \otimes_\D \scrL F_{n-1}) \oplus (\scrL E_{m-1} \otimes_\D \scrL \mathbbm 1_\C) \ar@<-1.75pt>[d]\\
\mathbbm 1_\D \ar[r]_{\phi_0} & \scrL \mathbbm 1_\D \ar[r]^-{\scrL(\lambda_{\mathbbm
1_\C}^{-1})} & \scrL(\mathbbm 1_\C \otimes_\C \mathbbm 1_\C) \ar[r] & \scrL(\mathbbm 1_\C \otimes_\C F_{n-1}) \oplus \scrL(E_{m-1} \otimes_\C \mathbbm 1_\C)
}
$$
wherein the very leftmost squre evidently is commutative. The very rightmost square commutes because $\phi$ is a morphism of functors. Since $\scrL$ is a lax monoidal functor,
$$
\lambda_{\scrL \mathbbm 1_\C} = \scrL(\lambda_{\mathbbm 1_\C}) \circ \phi_{\mathbbm 1_\C, \mathbbm 1_\C} \circ (\phi_0 \otimes_\D \scrL\mathbbm 1_\C),
$$
so the middle square commutes, too. This proves the claim.
\end{proof}
\begin{rem}\label{rem:laxfunc_comm}
Note that by a similar argument, Proposition \ref{prop:laxfunc_comm} also holds true, if one replaces the exact and almost strong monoidal functor $(\scrL, \phi, \phi_0)$ by an exact and almost costrong monoidal functor $(\scrL, \psi, \psi_0)$.
\end{rem}



\chapter{Hochschild cohomology}\label{cha:hochschild}
\section{Basic definitions}\label{sec:basdef}
\begin{nn}
In this section, we will recall the definition of Hochschild cohomology as it was introduced by G.\,Hochschild in \cite{Ho45}. To this end, we fix a commutative ring $k$ and a $k$-algebra $A$. If $B$ is any other $k$-algebra, the $k$-module $A \otimes_k B$ is an associative $k$-algebra via
$$
(a \otimes b)(a'\otimes b') = aa' \otimes bb' \quad (\text{for $a,a' \in A$, $b,b' \in B$})
$$
with unit given by $1_A \otimes 1_B$. Hence $A^\ev := A \otimes_k A^\op$ is a $k$-algebra, the so called \textit{enveloping algebra of $A$}. Note that every left module $M$ over $A^\ev$ is an $A$-$A$-bimodule (with central $k$-action) thanks to
$$
am := (a \otimes 1_A)m \quad (\text{for $a \in A$, $m \in M$})
$$
and
$$
ma :=m(1_A \otimes a) \quad (\text{for $a \in A$, $m \in M$}).
$$
In fact, the hereby defined assignment ${_{A^\ev}}M \mapsto {_A M _A}$ establishes an equivalence between the category of $A^\ev$-modules and the category of $A$-$A$-bimodules on which $k$ acts centrally:
$$
\xymatrix{\Mod(A^\ev) \ar[r]^-{\sim} & \mathsf{Bimod}_k(A,A)}.
$$
\end{nn}

\begin{nn}
There are two natural $k$-algebra homomorphisms:
$$
e_\lambda: A \longrightarrow A \otimes_k A^\op, \ e_\lambda(a) = a \otimes 1_A
$$
and
$$
e_\varrho: A^\op \longrightarrow A \otimes_k A^\op, \ e_\varrho(a) = 1_A \otimes a.
$$
They give rise to two forgetful functors $e_\lambda^\ast = \mathrm{Res}(e_\lambda): \Mod(A^\ev) \rightarrow \Mod(A)$, $e_\varrho^\ast = \mathrm{Res}(e_\varrho)\colon \Mod(A^\ev) \rightarrow \Mod(A^\op)$. The functors coincide with the ones defined in Example \ref{exa:bimodules}.
\end{nn}

\begin{nameless}[The Hochschild complex]
Let $M$ be an $A^\ev$-module. For any integer $n \geq 0$, let $S^n(A) = A^{\otimes_k n}$ be the $n$-fold tensor product of $A$ with itself over $k$. Furthermore, put $C^n(A,M) = \Hom_k(S^n(A), M)$. Together with the maps $\partial^n \colon C^{n}(A,M) \rightarrow C^{n+1}(A,M)$,
\begin{align*}
(\partial^n f)(a_1 \otimes \cdots \otimes a_{n+1}) = a_1 & f(a_2 \otimes \cdots \otimes a_{n+1})\\ &+ \sum_{i=1}^n (-1)^i {f(a_1 \otimes
\cdots \otimes a_i a_{i+1} \otimes \cdots \otimes a_{n+1})}\\ &+ (-1)^{n+1}f(a_1 \otimes \cdots \otimes a_n)a_{n+1}
\end{align*}
(for $a_1, \dots, a_{n+1} \in A$) the $k$-modules $C^n(A,M)$, $n \geq 0$, form a complex $\mathbb C (A,M)$. Indeed, we have
$$
(\partial^{n+1} \circ \partial^n)(f) = 0 \quad \forall f \in C^n(A,M).
$$
If $\widetilde{S}^n(A)$ denotes the module $A^{\otimes_k (n+2)}$ we clearly get
$$
\widetilde{S}^n(A) \cong A^\ev \otimes_k S^n(A).
$$
The following definition is fundamental.
\end{nameless}
\begin{defn}\label{def:hcohom}
Let $M$ be an $A^\ev$-module. The \textit{$n$-th Hochschild cohomology of $A$ with coefficients in $M$} is $$\HH^n(A,M) := H^n (\mathbb C(A,M)) = \frac{\Ker(\partial^n)}{\Im(\partial^{n-1})}.$$ 
\end{defn}
\begin{nn}
Let $M$ be an $A^\ev$-module. Denote by $M^A$ the set of all elements $m \in M$ with $am = ma$ for all $a \in A$. It is well known (and easily verfied), that $\HH^0(A,M) \cong M^A$. In particular, $\HH^0(A,A)$ is isomorphic to the center of $A$.
\end{nn}
\begin{nameless}[The cup product]
Let $M$, $N$ be two $A^\ev$-modules, and let $f \in C^m(A,M)$, $g \in C^n(A,N)$ for given integers $m, n \geq 0$. The homomorphism $f \cup g: A^{\otimes_R(m+n)} \rightarrow M \otimes_A N$,
$$
(f \cup g)(a_1 \otimes \cdots \otimes a_{m+n}) = f(a_1 \otimes \cdots \otimes a_m) \otimes g(a_{m+1} \otimes \cdots \otimes a_{m+n}),
$$
is an element of $C^{m+n}(A,M \otimes_A N)$, and thus we get a map
$$
\cup: C^{m}(A,M) \times C^{n}(A,N) \rightarrow C^{m+n}(A,M \otimes_A N).
$$
It turns $\mathbb C(A,A)$ into a differential graded (dg) algebra (after, of course, using the standard identification $A\otimes_A A \cong A$). The product on $\mathbb C(A,A)$ reads as follows
$$
(f \cup g)(a_1 \otimes \cdots \otimes a_{m+n}) = f(a_1 \otimes \cdots \otimes a_m) g(a_{m+1} \otimes \cdots \otimes a_{m+n})
$$
for $f \in C^m(A,A)$, $g \in C^n(A,A)$ and $a_1, \dots, a_{m+n} \in A$. The fact that $\mathbb C(A,A)$ is a dg algebra ensures that the \text{cup product} on $\mathbb C(A,A)$ passes down to a well-defined product map on $\HH^\bullet(A,A):= \bigoplus_{n \geq 0} \HH^n(A,A)$:
$$
\cup: \HH^m(A,A) \times \HH^n(A,A) \rightarrow \HH^{m+n}(A,A).
$$
\end{nameless}
\begin{lem}\label{lem:hhalgebra}
$(\HH^\bullet(A,A),+,\cup)$ is an associative and unital graded $k$-algebra. Its unit is given by the unit map $k \rightarrow A$.
\end{lem}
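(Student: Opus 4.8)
The plan is to exploit that the cup product is already strictly associative and unital at the level of cochains, and then transport these properties to cohomology using the fact --- recorded in the preceding paragraph --- that $\mathbb C(A,A)$ is a differential graded algebra, so that $\cup$ descends to a well-defined operation on $\HH^\bullet(A,A)$. Concretely, $k$-bilinearity of the map $\cup \colon C^m(A,A) \times C^n(A,A) \rightarrow C^{m+n}(A,A)$ is immediate from the defining formula and survives on the induced map on cohomology; moreover this map respects the grading, so $\HH^m \times \HH^n \to \HH^{m+n}$ exhibits the desired graded structure. It therefore remains to verify associativity and to produce a unit.

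For associativity, I would compute $(f \cup g) \cup h$ and $f \cup (g \cup h)$ directly on a generator $a_1 \otimes \cdots \otimes a_{m+n+p}$, for $f \in C^m(A,A)$, $g \in C^n(A,A)$ and $h \in C^p(A,A)$. Both expressions evaluate to
$$
f(a_1 \otimes \cdots \otimes a_m)\, g(a_{m+1} \otimes \cdots \otimes a_{m+n})\, h(a_{m+n+1} \otimes \cdots \otimes a_{m+n+p}),
$$
where the bracketing of the three factors is irrelevant precisely because multiplication in $A$ is associative. Hence $\cup$ is (strictly) associative on $\mathbb C(A,A)$, and the induced product on $\HH^\bullet(A,A)$ is associative as well, since it is defined by choosing representatives.

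For the unit, I would use the identification $C^0(A,A) = \Hom_k(S^0(A), A) = \Hom_k(k, A) \cong A$, under which the unit map $k \rightarrow A$ corresponds to the element $1_A$; call the associated cochain $u$. First I would check that $u$ is a $0$-cocycle: by the formula for $\partial^0$ one has $(\partial^0 u)(a) = a\,u - u\,a = a1_A - 1_A a = 0$, so $u$ determines a class $[u] \in \HH^0(A,A)$. Then, for any $f \in C^n(A,A)$, evaluating on $a_1 \otimes \cdots \otimes a_n$ gives $(u \cup f)(a_1 \otimes \cdots \otimes a_n) = 1_A\, f(a_1 \otimes \cdots \otimes a_n) = f(a_1 \otimes \cdots \otimes a_n)$, and symmetrically $f \cup u = f$; thus $u$ is a strict two-sided unit for $\cup$ already at the cochain level. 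Passing to cohomology, $[u]$ is a two-sided identity for $\cup$ on $\HH^\bullet(A,A)$, and it lives in degree $0$ as a graded unit should.

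There is no genuine obstacle here; the only points demanding care are the index bookkeeping in the cochain-level associativity computation and the correct interpretation of the degenerate cases $S^0(A) = k$ and of the degree-$0$ differential $\partial^0$, which I would spell out explicitly to make the identification of the unit with the unit map $k \rightarrow A$ unambiguous.
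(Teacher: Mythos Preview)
Your proof is correct and follows exactly the approach the paper intends: the paper states this lemma without proof, having just recorded that $\mathbb C(A,A)$ is a differential graded $k$-algebra, and your argument simply makes explicit the cochain-level verifications (associativity via associativity of multiplication in $A$, the unit via $C^0(A,A)\cong A$ and $\partial^0 u=0$) that this assertion packages.
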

In view of the considerations above, we make the following definition.
\begin{defn}\label{def:hcohomring}
The graded $k$-algebra
$$
\HH^\bullet(A) := \HH^\bullet(A,A) = \bigoplus_{n \geq 0}\HH^n(A,A)
$$
is the \textit{Hochschild cohomology ring of $A$}.
\end{defn}
\begin{nameless}[Hochschild cohomology and extension groups] Chose a projective resolution $\mathbb P_A \rightarrow A \rightarrow 0$ of $A$ as an $A^\ev$-module. Then, by taking the $n$-th cohomology of $\Hom_{A^\ev}(\PP_A,M)$ for an $A^\ev$-module $M$, we obtain the group of $n$-extensions of $A$ by $M$ over $A^\ev$,
$$
\Ext^n_{A^\ev}(A,M) = H^n(\Hom_{A^\ev}(\PP_A,M)) = \frac{\Ker \Hom_{A^\ev}(p_{n+1},M)}{\Im \Hom_{A^\ev}(p_{n},M)}
$$
($p_\bullet$ denotes the differential on $\mathbb P_A$). This group does not depend on the chosen projective resolution in the sense that if one takes another projective resolution of $A$ over $A^\ev$, then the result will be isomorphic to the group above. The graded $k$-module $\Ext^\bullet_{A^\ev}(A,A)$ is a graded $k$-algebra by the \textit{Yoneda product} (see also Section \ref{sec:yoneda}). For every $A^\ev$-module $M$ there is a map 
$$
\chi_M: \HH^\bullet(A,M) \longrightarrow \Ext^\bullet_{A^\ev}(A,M)
$$
which is a homomorphism of graded $k$-algebras in case $M = A$. To define it, we recall a standard bimodule resolution of $A$. The \textit{bar resolution $\mathbb B_A = (B_\bullet, b_\bullet)$ of $A$} is given by $B_n = A^{\otimes_k (n+2)} = \widetilde{S}^n(A)$ for $n \geq 0$ and $B_{-1} = A$. Furthermore, let $b_n: B_{n} \rightarrow B_{n-1}$ for $n \geq 0$ be given as
$$
b_n (a_0 \otimes \cdots \otimes a_{n+1}) = \sum_{i=0}^n{(-1)^i a_0 \otimes \cdots \otimes a_i a_{i+1} \otimes
\cdots \otimes a_{n+1}},
$$
where $a_0, \dots, a_{n+1} \in A$. Provided by
$$
(a \otimes b)(a_0 \otimes \cdots \otimes a_{n+1}) = aa_0 \otimes \cdots \otimes a_{n+1}b, \quad a,b,a_0,\dots,a_{n+1}
\in A,
$$
each $B_n$, $n \geq -1$, is a left $A^\ev$-module, and the maps $b_n$, $n \geq 0$, are $A^\ev$-linear homomorphisms. It is not hard to see that
$$
\xymatrix@C=20pt{
\BB_A: & \cdots \ar[r]^-{b_2} & B_1 \ar[r]^-{b_1} & B_0 \ar[r]^-{b_0} & A \ar[r] & 0
}
$$
is an acyclic complex. $\BB_A \rightarrow A \rightarrow 0$ will be a projective resolution of $A$ as an $A^\ev$-module, if $A$ is projective over $k$. This follows by putting together the following facts:
\begin{enumerate}
\item $A \otimes_k A$ is a projective $A^\ev$-module.
\item If $P$ and $Q$ are projective $A^\ev$-modules, and if $A$ is projective over $k$, then $P$ and $Q$ are also $k$-projective, since $A^\ev$ is. Hence
$$
\Hom_{A^\ev}(P \otimes_k Q, -) \cong \Hom_k(Q, \Hom_{A^\ev}(P,-))
$$
is an exact functor and consequently, $P \otimes_k Q$ is a projective left $A^\ev$-module.
\item If $n \geq 0$ is even, then $B_n \cong (A^\ev)^{\otimes_k\left(\frac{n}{2} + 1\right)}$ as left $A^\ev$-modules.
\item If $n \geq 0$ is odd, we have $B_n \cong (A^\ev)^{\otimes_k\left(\frac{n+1}{2}\right)} \otimes_k A$ and
$$
\Hom_{A^\ev}(B_n, - ) \cong \Hom_k (A, \Hom_{A^\ev}((A^\ev)^{\otimes_k\left(\frac{n+1}{2}\right)},-).
$$
\end{enumerate}
When applying $\Hom_{A^\ev}(-,M)$ to $\BB_A$ we obtain the cocomplex $\Hom_{A^\ev}(\BB_A,M)$ whose differential is given by $\Hom_{A^\ev}(b_\bullet,M)$. By the adjointess of the functors $A^\ev \otimes_k -$ and $\Hom_{A^\ev}(A^\ev,-)$, we obtain the chain
\begin{align*}
\Hom_k(S^n(A),M) &\cong \Hom_k(S^n(A),\Hom_{A^\ev}(A^\ev,M))\\ &\cong \Hom_{A^\ev}(A^\ev \otimes_k S^n(A), M)\\ & \cong \Hom_{A^\ev}(\widetilde{S}^n(A), M)\\
& = \Hom_{A^\ev}(B_n, M)
\end{align*}
of isomorphisms. Hence we see that the $n$-th degree of $\Hom_{A^\ev}(\BB_A,M)$ coincides with $C^n(A,M)$ for every $n \geq 0$. The following result is well-known.
\begin{lem}\label{lem:HH=HH}
Let $M$ be an $A^\ev$-module. The adjunction isomorphism corresponding to the adjoint pair $(A^\ev \otimes_k -, \Hom_{A^\ev}(A^\ev, -))$ gives rise to an isomrophism $\mathbb C(A,M) \cong \Hom_{A^\ev}(\BB_A,M)$ of complexes.
\end{lem}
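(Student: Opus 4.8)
The plan is to observe that the isomorphism claimed in the lemma has essentially already been assembled degreewise in the chain of isomorphisms preceding the statement; what genuinely remains is to verify that these degreewise maps constitute a morphism of complexes, after which the bijectivity of each component forces the whole to be an isomorphism of complexes.

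First I would make the degreewise isomorphism completely explicit. Unwinding the identification $\widetilde{S}^n(A) \cong A^\ev \otimes_k S^n(A)$ (sending $a_0 \otimes \cdots \otimes a_{n+1}$ to $(a_0 \otimes a_{n+1}) \otimes (a_1 \otimes \cdots \otimes a_n)$, where $a_0 \otimes a_{n+1} \in A \otimes_k A^\op = A^\ev$) together with the adjunction isomorphism $\Hom_{A^\ev}(A^\ev \otimes_k S^n(A), M) \cong \Hom_k(S^n(A), M)$ (which evaluates an $A^\ev$-linear map at $1_{A^\ev} \otimes -$), one finds that the resulting composite carries $f \in C^n(A,M) = \Hom_k(S^n(A),M)$ to the $A^\ev$-linear map $\widetilde f \colon B_n \to M$ determined by
$$
\widetilde f(a_0 \otimes a_1 \otimes \cdots \otimes a_n \otimes a_{n+1}) = a_0\, f(a_1 \otimes \cdots \otimes a_n)\, a_{n+1}.
$$
Its inverse sends $g \in \Hom_{A^\ev}(B_n, M)$ to the $k$-linear map $g(1_A \otimes - \otimes 1_A)$. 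Both assignments are $k$-linear and mutually inverse by construction, so in each degree $n \geq 0$ we obtain an isomorphism of $k$-modules.

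Next I would verify compatibility with the differentials, that is, $\widetilde{\partial^n f} = \widetilde f \circ b_{n+1}$ for all $f$. This is a direct computation on a generator $a_0 \otimes \cdots \otimes a_{n+2}$. Evaluating the left-hand side gives $a_0\,(\partial^n f)(a_1 \otimes \cdots \otimes a_{n+1})\,a_{n+2}$, and expanding $\partial^n f$ by its defining formula produces three kinds of summands: the term $a_0 a_1 f(a_2 \otimes \cdots \otimes a_{n+1}) a_{n+2}$, the interior alternating contractions, and the term $(-1)^{n+1} a_0 f(a_1 \otimes \cdots \otimes a_n) a_{n+1} a_{n+2}$. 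Evaluating the right-hand side $\widetilde f \circ b_{n+1}$ on the same element and using the $A^\ev$-linearity of $\widetilde f$ to absorb the outer factors, the $i=0$ term of $b_{n+1}$ yields $(a_0 a_1) f(\cdots) a_{n+2}$, the terms $1 \le i \le n$ reproduce the interior sum with matching signs, and the $i = n+1$ term yields $a_0 f(\cdots)(a_{n+1} a_{n+2})$. The two expressions agree summand by summand, so the relevant square commutes. Finally, since a morphism of complexes each of whose components is an isomorphism is itself an isomorphism of complexes, the family $(\widetilde{(-)})_{n \geq 0}$ is the desired isomorphism $\mathbb C(A,M) \cong \Hom_{A^\ev}(\BB_A, M)$.

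The only real work lies in the index bookkeeping of the differential check, and the point to watch is that the signs of $\partial^n$ and of $b_{n+1}$ line up: this happens precisely because multiplying by the outer factors $a_0$ and $a_{n+2}$ converts the two boundary terms of $\partial^n$ (the $a_1 f(\cdots)$ and $f(\cdots) a_{n+1}$ contributions) into the extreme $i = 0$ and $i = n+1$ terms of the bar differential, while the remaining terms match up index for index. I do not expect any conceptual obstacle beyond this careful sign-and-index matching.
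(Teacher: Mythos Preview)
Your proof is correct. The paper does not actually supply a proof of this lemma: it is introduced with ``The following result is well-known'' and stated without argument, the degreewise isomorphism having been displayed just before. Your write-up fills in precisely the verification the paper omits, namely the explicit form $\widetilde f(a_0 \otimes \cdots \otimes a_{n+1}) = a_0\, f(a_1 \otimes \cdots \otimes a_n)\, a_{n+1}$ of the adjunction map and the check that $\widetilde{\partial^n f} = \widetilde f \circ b_{n+1}$; the summand-by-summand matching you describe is exactly right.
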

Under the above isomorphism the cup product translates to the following multiplication on $\Hom_{A^\ev}(\mathbb B_A,A)$: For integers $m, n \geq 0$ and $f \in \Hom_{A^\ev}(B_m,M)$, $g \in \Hom_{A^\ev}(B_n,M)$ the homomorphism $f \cup g \in \Hom_{A^\ev}(B_{m+n}, A)$ is given by
$$
(f \cup g)(a_0 \otimes \cdots a_{m+n+1}) = f(1_A \otimes a_0 \otimes \cdots \otimes a_{n}) \otimes g(a_{n+1}
\otimes \cdots \otimes a_{m + n +1} \otimes 1_A),
$$
where $a_0, \dots, a_{m+n+1} \in A$.

We are now ready to establish the morphism $\chi_M$. By Lemma \ref{lem:comparison} there is a morphism
$$
\xymatrix{
\mathbb B_A & \equiv \quad & \cdots \ar[r] & B_2 \ar[r] & B_1 \ar[r] & B_0 \ar[r] & A \ar[r] & 0\\
\mathbb P_A \ar[u]^-{\chi_{\mathbb P_A}} & \equiv \quad & \cdots \ar[r] & P_2 \ar[r] \ar[u] & P_1 \ar[r] \ar[u] & P_0 \ar[r] \ar[u] & A \ar[r] \ar@{=}[u] & 0
}
$$
of complexes over $A^\ev$. Hence (by applying the functor $\Hom_{A^\ev}(-,M)$ to the diagram) we get a morphism
$$
\chi_{\mathbb P_A}^\ast = \Hom_{A^\ev}(\chi_{\mathbb P_A}, M): \Hom_{A^\ev}(\mathbb B_A, M) \longrightarrow \Hom_{A^\ev}(\mathbb P_A, M).
$$
The desired map $\chi_M$ is now obtained by taking the cohomology of $\Hom_{A^\ev}(\chi_{\mathbb P_A}, M)$:
$$
\chi_M = H^\bullet(\chi_{\mathbb P_A}^\ast): \HH^\bullet(A,M) \longrightarrow \Ext^\bullet_{A^\ev}(A,M).
$$
Note that by the uniqueness statement in Lemma \ref{lem:comparison}, the map $\chi_M$ does not depend on the chosen chain map $\chi_{\mathbb P_A}$. Moreover, if $\mathbb Q_A \rightarrow A \rightarrow 0$ is another resolution of $A$ by projective $A^\ev$-modules, then there is a (in a certain sense unique) isomorphism $\gamma_n \colon H^n(\Hom_{A^\ev}(\mathbb P_A, M)) \rightarrow H^n(\Hom_{A^\ev}(\mathbb Q_A, M))$ for every integer $n \geq 0$ such that $ \gamma_n \circ H^n(\chi_{\mathbb P_A}^\ast) = H^n(\chi_{\mathbb Q_A}^\ast)$. Since $\mathbb B_A \rightarrow A \rightarrow 0$ is an $A^\ev$-projective resolution of $A$ if $A$ is $k$-projective, $\chi_M$ will be an isomorphism for every $A^\ev$-module $M$ in that case.
\end{nameless}
\section{Gerstenhaber algebras}\label{sec:galgebras}
A satisfying definition of a Gerstenhaber algebra over the commutative ring $k$ is, at least to the knowledge of the author, nowhere written up properly. Following M.\,Gerstenhaber and S.\,D.\,Schack \cite{GeSch86} we make the following attempt (see also \cite{Lei80} and \cite{QFS99} for several remarks on $\mathbb Z_2$-graded, i.e., \textit{super} Lie algebras, which can be easily transfered to the general graded case).

If $M = \bigoplus_{n \in \mathbb Z} M^n$ is a graded $k$-module, and if $i, j \in \Z$, we let $M^{i\bullet+j}$ be the graded $k$-module with
$$
(M^{i\bullet+j})^n = M^{in+j} \quad (\text{for $n \in \Z$}).
$$
\begin{defn}\label{def:galgebra}
Let $A = \bigoplus_{n \in \Z}{A^n}$ be a graded $k$-algebra. Further, let $[-,-]: A \times A \rightarrow A$ be a $k$-bilinear map of degree $-1$ (that is, $[a,b] \in A^{\abs{a}+\abs{b}-1}$ for all homogeneous $a,b \in A$).
\begin{enumerate}[\rm(1)]
\item The pair $(A,[-,-])$ is a \textit{Gerstenhaber algebra} (or \textit{G-algebra}) \textit{over $k$} if
\begin{enumerate}
\item[(G1)] $A$ is graded commutative, i.e., $ab = (-1)^{\abs{a}\abs{b}}ba$ for all homogeneous $a,b \in A$;
\item[(G2)] $[a,b] = -(-1)^{(\abs{a}-1)(\abs{b}-1)}[b,a]$ for all homogeneous $a,b \in A$;
\item[(G3)] $[a,a] = 0$ for all homogeneous $a \in A$ of odd degree;
\item[(G4)] $[[a,a],a] = 0$ for all homogeneous $a \in A$ of even degree;
\item[(G5)] the graded Jacobi identity holds:
$$
[a,[b,c]] = [[a,b],c] + (-1)^{(\abs{a}-1)(\abs{b}-1)}[b,[a,c]]
$$
for all homogeneous $a,b,c \in A$;
\item[(G6)] the graded Poisson identity holds:
$$
[a,bc] = [a,b]c + (-1)^{(\abs{a}-1)\abs{b}} b[a,c]
$$
for all homogeneous $a,b,c \in A$.
\end{enumerate}
\item Assume that $(A,[-,-])$ is a Gerstenhaber algebra over $k$. We call $(A, [-,-])$ a \textit{strict} Gerstenhaber algebra over $k$ if there is a map $sq: A^{2\bullet} \rightarrow A^{4\bullet-1}$ of degree $0$ such that
\begin{enumerate}
\item[(G7)] $sq(ra) = r^2 sq(a)$ for all $r \in k$ and all homogeneous $a \in A^{2\bullet}$;
\item[(G8)] $sq(a+b) = sq(a) + sq(b) + [a,b]$ for all homogeneous $a,b \in A^{2\bullet}$;
\item[(G9)] $[a,sq(b)] = [[a,b],b]$ for all homogeneous $a,b \in A^{2\bullet}$;
\item[(G10)] $sq(ab) = a^2sq(b) + sq(a)b^2 + a[a,b]b$ for all homogeneous $a,b \in A^{2\bullet}$.
\end{enumerate}
\end{enumerate}
\end{defn}
The map $[-,-]$ is called a \textit{Gerstenhaber bracket} for $A$, whereas $sq$ is a \textit{squaring map} for the Gerstenhaber algebra $(A, [-,-])$. Note that any
graded commutative $k$-algebra can be viewed as a (strict) Gerstenhaber algebra over $k$ with trivial bracket (and trivial squaring map).
\begin{rem}\label{rem:strict_galg}
Fix a Gerstenhaber algebra $(A, [-,-])$ over $k$.
\begin{enumerate}[\rm(1)]
\item The graded Jacobi identity measures how far $[-,-]$ is away from being associative, whereas the graded Poisson identity translates to $[a,-]$ being a graded derivation of $A$ of degree $\abs{a} - 1$ (for $a \in A$ homogeneous).

\item Assume that $2$ is a unit in $k$. Then the Gerstenhaber algebra $(A,[-,-])$ becomes a strict one via
$$
sq(a) = 2^{-1}[a,a], \quad (\text{for $a \in A^{2n}, \ n \geq 1$}).
$$
In fact, there is even no other choice due to the next statement.

\item Let $sq: A^{2\bullet} \rightarrow A^{4\bullet-1}$ be a map satisfying (G7) and (G8). For any homogeneous $a \in A$ of degree $\abs{a} \in 2 \mathbb N$ we get:
\begin{align*}
2sq(a) = sq(2a) - 2sq(a) = [a,a].
\end{align*}
Consequently, there is at most one map $sq$ such that $(A, [-,-], sq)$ is a strict Gerstenhaber algeba if $2$ is not a zero devisor in $A$.
\end{enumerate}
\end{rem}
\begin{nn} A familiy $(C^n)_{n \in \Z}$ of $k$-modules is called a \textit{pre-Lie system} if there exist $k$-bilinear maps
$\bullet_i: C^m \times C^n \rightarrow C^{m+n}$, where $m,n \geq 0$ and $0 \leq i \leq m$, such that
$$
(f \bullet_i g) \bullet_j h =
\begin{cases}
(f \bullet_j h) \bullet_{i + p} g, & \textmd{if $0 \leq j \leq i-1$}\\
f \bullet_i (g \bullet_{j-i} h), & \textmd{if $i \leq j \leq n+1$}
\end{cases}
$$
for all $f \in C^m$, $g \in C^n$ and $h \in C^p$. Put
$$
f \bullet g = \sum_{i=0}^m{(-1)^{in}f \bullet_i g} \quad (\text{for $f \in C^m, \ g \in C^n$}).
$$
Several considerations of M.\,Gerstenhaber on this topic (\cite[Sec.\,2, Thm.\,1]{Ge63} and \cite[Sec.\,6, Thm.\,2]{Ge63}) give the following result.
\end{nn}
\begin{thm}\label{thm:pre-Lie-2-Lie}
Let $A = \bigoplus_{n \in \Z}{C^n}$. Then the assignment
$$
[f,g] = f \bullet g - (-1)^{mn}g \bullet f \quad (\text{for $f \in C^m, \ g \in C^n$}),
$$
yields a graded Lie bracket on $A$.
\end{thm}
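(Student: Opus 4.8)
The plan is to verify the three defining properties of a graded Lie bracket—$k$-bilinearity, graded antisymmetry, and the graded Jacobi identity—for the commutator $[f,g] = f \bullet g - (-1)^{mn} g \bullet f$, where $f \in C^m$ and $g \in C^n$. Bilinearity is immediate, since each $\bullet_i$ is $k$-bilinear by hypothesis, hence so is $\bullet = \sum_i (-1)^{in}\bullet_i$ and therefore so is the commutator. Graded antisymmetry is a one-line sign computation: substituting the definition into $-(-1)^{mn}[g,f]$ yields $-(-1)^{mn}\bigl(g \bullet f - (-1)^{mn} f \bullet g\bigr) = f \bullet g - (-1)^{mn} g \bullet f = [f,g]$, using $(-1)^{2mn}=1$. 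So the only genuine work lies in the Jacobi identity.

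The key is not to attack Jacobi directly, but to first isolate the relevant structural property of the single product $\bullet$, following Gerstenhaber. For $f \in C^m$, $g \in C^n$, $h \in C^p$ I would write the associator $\mathrm{As}(f,g,h) = (f \bullet g)\bullet h - f\bullet(g \bullet h)$ and prove the graded right-symmetry
$$\mathrm{As}(f,g,h) = (-1)^{np}\,\mathrm{As}(f,h,g),$$
which says that $(C^\bullet, \bullet)$ is a graded (right) pre-Lie algebra. To establish it, expand $f \bullet g = \sum_{i=0}^m (-1)^{in} f \bullet_i g$ on both sides, substitute into $(f \bullet g)\bullet h$, and invoke the defining relation of the pre-Lie system to rewrite each $(f \bullet_i g)\bullet_j h$ as $(f \bullet_j h)\bullet_{i+p} g$ when $0 \le j \le i-1$ and as $f \bullet_i (g \bullet_{j-i} h)$ when $j \ge i$. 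The terms of the second type reassemble, after the reindexing $j \mapsto j-i$, precisely into $f \bullet (g \bullet h)$ and thus cancel in the associator; the terms of the first type, after the reindexing $i \mapsto i+p$, assemble into $(-1)^{np}$ times the corresponding terms for $\mathrm{As}(f,h,g)$.

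With the graded pre-Lie identity in hand, the graded Jacobi identity becomes a purely formal consequence. I would expand the signed cyclic sum $(-1)^{mp}[f,[g,h]] + (-1)^{mn}[g,[h,f]] + (-1)^{pn}[h,[f,g]]$ into its twelve $\bullet$-monomials; each double commutator produces left- and right-nested triple products which combine into associator expressions $\mathrm{As}(-,-,-)$. Using the antisymmetry already proven to normalize the orderings, the associator pieces pair off and cancel by the graded right-symmetry above, and the surviving pieces cancel against one another. This is exactly the standard derivation of a graded Lie algebra from a graded pre-Lie algebra, so at this point I would either cite that general fact or carry out the short bracket expansion explicitly.

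The main obstacle is the sign-and-index bookkeeping in the middle step. One must match the ranges $0 \le i \le m$ and $0 \le j$ against the case split $j \le i-1$ versus $j \ge i$ in the pre-Lie relation without an off-by-one error (note in particular that the upper bound on $j$ in the second branch should be read as $n+i$ rather than as printed), and one must carry the three sign factors $(-1)^{in}$, $(-1)^{jp}$, and the reindexing sign coming from $i \mapsto i+p$ consistently through the computation. Once the graded pre-Lie identity is correctly set up, both the antisymmetry and the passage to Jacobi are formal and sign-robust.
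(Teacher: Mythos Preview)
The paper does not prove this theorem; it simply attributes it to Gerstenhaber with the citations \cite[Sec.\,2, Thm.\,1]{Ge63} and \cite[Sec.\,6, Thm.\,2]{Ge63}. Your proposal is exactly Gerstenhaber's argument: establish the graded right-symmetry of the associator $\mathrm{As}(f,g,h) = (-1)^{np}\mathrm{As}(f,h,g)$ from the pre-Lie system axioms, and then derive the graded Jacobi identity formally from that. You also correctly spotted the typo in the paper's statement of the pre-Lie system relation, where the upper bound in the second branch should be $i+n$ rather than $n+1$.
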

\begin{nn}
The (shifted) Hochschild complex of $A$ carries the structure of a pre-Lie system. For an $A$-$A$-bimodule $M$ (with central $k$-action), and $f \in C^m(A,M)$, $g \in C^n(A,A)$ define
\begin{align*}
(f {\bullet}_i g)(&a_1 \otimes \cdots a_{m+n-1})\\
&= f(a_1 \otimes \cdots \otimes a_i \otimes g(a_{i+1} \otimes \cdots \otimes a_{i + n}) \otimes a_{i+n+1} \otimes \cdots \otimes a_{m+n-1}),
\end{align*}
where $0 \leq i \leq m-1$. The $k$-bilinearity of the hereby obtained map ${\bullet}_i: C^m(A,M) \times C^n(A,A) \rightarrow C^{m+n-1}(A,M)$ is obvious. It is easy to see, that $C^{\bullet+1}(A,A)$ becomes a pre-Lie system. Hence $C^{\bullet+1}(A,A)$ is a graded Lie algebra with Lie bracket $\{-,-\} = \{-,-\}_A$ obtained from Theorem \ref{thm:pre-Lie-2-Lie}. For this reasons the following equations hold true for any $f \in C^m(A,A)$, $g \in C^n(A,A)$ and $h \in C^p(A,A)$:
\begin{equation}\label{eq:anticom}
\{f,g\} = f \bullet g - (-1)^{(m-1)(n-1)}g \bullet f;
\end{equation}
\begin{equation}\label{eq:gradedjacobi}
\begin{aligned}
(-1)^{(m-1)(p-1)}\{\{f,g\},h\} &+ (-1)^{(m-1)(n-1)}\{\{g,h\},f\}\\ &+ (-1)^{(n-1)(p-1)}\{\{h,f\},g\} = 0
\end{aligned}
\end{equation}
\end{nn}
\begin{lem}[{\cite{Ge63}}]\label{lem:fundformula}
Let $M$ be an $A$-$A$-bimodule and $f \in C^m(A,M), \ g \in C^n(A,A)$. We have
\begin{equation}\label{eq:fundformula}
\partial^{m+n-1}(f \bullet g) = (-1)^{n-1}\partial^m(f) \bullet g + f \bullet \partial^n(g) + (-1)^n [f,g]_\cup,
\end{equation}
where $[f,g]_\cup = f \cup g - (-1)^{mn}g \cup f$.
\end{lem}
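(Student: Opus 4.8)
The plan is to prove the identity directly at the level of the Hochschild cochains, by evaluating both sides on an arbitrary elementary tensor $a_1\otimes\cdots\otimes a_{m+n}$ and matching the resulting terms. Recall that $f\bullet g=\sum_{i=0}^{m-1}(-1)^{i(n-1)}f\bullet_i g$, where $f\bullet_i g$ feeds the value $g(a_{i+1}\otimes\cdots\otimes a_{i+n})$ into the $(i+1)$-st slot of $f$. Applying $\partial^{m+n-1}$ to $f\bullet_i g$ produces the two outer bimodule actions together with the interior contractions $a_j\mapsto a_ja_{j+1}$. First I would fix $i$ and sort each of these summands according to the position of the contracted (or capped) pair relative to the block $a_{i+1},\dots,a_{i+n}$ that is consumed by $g$: (a) \emph{contractions internal to} $g$, in which both factors lie strictly inside that block; (b) \emph{contractions internal to} $f$, in which neither factor meets the block, including the two outer caps whenever $g$ does not sit at the very front or back; and (c) \emph{boundary terms}, in which a contraction or an outer cap meets the edge of the block and hence multiplies an algebra element directly against the output of $g$.

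Next I would reassemble the three classes and compare with the right-hand side. Summing the class-(a) terms against the prefactors $(-1)^{i(n-1)}$ recovers $f\bullet\partial^n(g)$, because for each fixed $i$ these summands are exactly the interior part of $\partial^n g$ inserted into the $i$-th slot of $f$; likewise, re-indexing the insertion point identifies the class-(b) terms with $(-1)^{n-1}\partial^m(f)\bullet g$, the sign $(-1)^{n-1}$ arising from moving the differential past the degree-$(n-1)$ factor $g$. What remains are the boundary terms of class (c) together with the outer-cap parts of $\partial^n g$ and $\partial^m f$ that pair an algebra element with the output of $g$. I would then verify that these boundary contributions — those coming from $\partial(f\bullet g)$, from $(\partial f)\bullet g$ and from $f\bullet(\partial g)$ — cancel against one another in pairs as $i$ ranges over its values, in a telescoping sum, so that only the two extreme contributions survive. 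The survivors are $f(a_1\otimes\cdots\otimes a_m)\,g(a_{m+1}\otimes\cdots\otimes a_{m+n})$ and $g(a_1\otimes\cdots\otimes a_n)\,f(a_{n+1}\otimes\cdots\otimes a_{m+n})$, that is, $f\cup g$ and $g\cup f$; collecting their signs produces the term $(-1)^n[f,g]_\cup$.

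I expect the genuine difficulty to lie entirely in the sign bookkeeping and in verifying the telescoping cancellation of class (c); the reassembly of classes (a) and (b) is essentially formal once the classification is pinned down. A reliable way to organise the signs is to carry out the count first for $m=1$ (where $f\bullet g=f\bullet_0 g$ is literal substitution) and then induct on $m$, pinning the base sign against the low-degree case $m=n=1$, which one can evaluate by hand. Alternatively — if one is willing to use that the multiplication $\mu\in C^2(A,A)$ satisfies $\partial(-)=\pm\{\mu,-\}$ and that the cup product is expressible through $\mu$ and the operations $\bullet_i$ — the formula can be deduced more conceptually from the pre-Lie relations of \cite{Ge63} together with the graded Jacobi identity \eqref{eq:gradedjacobi}, trading the explicit term-chasing for the associativity of $\mu$; I would retain this as an independent cross-check on the signs obtained from the direct computation.
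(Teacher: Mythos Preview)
The paper does not supply its own proof of this lemma; it simply cites \cite{Ge63} and moves on to the consequences (graded commutativity of the cup product and the well-definedness of the bracket on cohomology). Your proposed direct computation---classifying the summands of $\partial(f\bullet g)$ according to whether the contraction lies inside the $g$-block, outside it, or at its boundary, and then observing the telescoping cancellation of the boundary terms---is exactly the standard argument, and is essentially what Gerstenhaber carries out in \cite[Sec.~7, Thm.~3]{Ge63}. Your alternative route via $\partial(-)=\pm\{\mu,-\}$ and the pre-Lie relations is also classical and a good sanity check on the signs.
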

If we take $M = A$ and $f \in \Ker(\partial^m)$, $g \in \Ker(\partial^n)$, an immediate consequence of the lemma above is
$$
f \cup g - (-1)^{mn}g \cup f = [f,g]_\cup = (-1)^n\partial^{m+n-1}(f \bullet g) \ \equiv \ 0 \quad (\text{$\mathrm{mod} \Im(\partial^{m+n-1})$}).
$$
So at the level of cohomology, the cup product is graded commutative, and therefore we get the following well known result.
\begin{cor}[{\cite{Ge63}}]\label{cor:gradedcomm}
The Hochschild cohomology ring $\HH^\bullet(A)$ of a $k$-algebra $A$ is graded commutative.
\end{cor}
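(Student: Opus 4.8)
The plan is to read off graded commutativity directly from Gerstenhaber's fundamental formula, Lemma~\ref{lem:fundformula}, specialised to the case $M = A$. Since the cup product descends to a well-defined associative and unital product on $\HH^\bullet(A,A)$ by Lemma~\ref{lem:hhalgebra}, it suffices to verify axiom (G1) for homogeneous classes, namely that $x \cup y = (-1)^{\abs{x}\abs{y}} y \cup x$ in $\HH^\bullet(A,A)$.

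First I would fix homogeneous classes $x \in \HH^m(A,A)$ and $y \in \HH^n(A,A)$ and choose representing cocycles $f \in C^m(A,A)$ and $g \in C^n(A,A)$, so that $\partial^m(f) = 0$ and $\partial^n(g) = 0$. Substituting $f$ and $g$ into the fundamental formula \eqref{eq:fundformula} makes the first two summands on its right-hand side vanish, since they carry the factors $\partial^m(f)$ and $\partial^n(g)$; this leaves
$$
\partial^{m+n-1}(f \bullet g) = (-1)^n [f,g]_\cup = (-1)^n\bigl(f \cup g - (-1)^{mn} g \cup f\bigr).
$$
Reading this identity modulo $\Im(\partial^{m+n-1})$ shows that $f \cup g - (-1)^{mn} g \cup f$ is a coboundary, hence represents $0$ in $\HH^{m+n}(A,A)$. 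Passing to cohomology classes therefore yields $x \cup y = (-1)^{mn} y \cup x$, which, because $\abs{x} = m$ and $\abs{y} = n$, is precisely the graded commutativity relation (G1).

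I do not expect a genuine obstacle here: the entire computational weight has already been discharged in the proof of Lemma~\ref{lem:fundformula}, and what remains is the purely formal step of specialising to cocycles and discarding the two boundary terms. The only point demanding a little care is the sign bookkeeping — one must confirm that the factor $(-1)^n$ in front of $[f,g]_\cup$ is immaterial, being a unit in $k$, and that the bracket $[f,g]_\cup = f \cup g - (-1)^{mn} g \cup f$ carries exactly the sign occurring in (G1), so that the vanishing of this coboundary is equivalent to the asserted graded commutativity and not to some variant sign convention.
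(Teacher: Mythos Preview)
Your argument is correct and matches the paper's approach exactly: specialise Lemma~\ref{lem:fundformula} to $M=A$ with $f,g$ cocycles so that the two boundary terms drop, leaving $[f,g]_\cup$ as a coboundary. The paper records precisely this computation in the paragraph preceding the corollary.
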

We observed that the shifted Hochschild complex of an algebra $A$ is a graded Lie algebra, with Lie bracket $\{-,-\}_A$. The fundamental formula stated in Lemma \ref{lem:fundformula} ensures, that $\{-,-\}_A$ is a well-defined map on $\HH^{\bullet+1}(A)$, and thus, also $\HH^{\bullet+1}(A)$ is a graded Lie algebra over $k$. The following even stronger statement holds true.
\begin{thm}[{\cite{Ge63}}]\label{thm:hh_gerstenhaber}
For any $k$-algebra $A$, the triple $$(\HH^\bullet(A), \{-,-\}_A, sq_A)$$ is a strict Gerstenhaber algebra over $k$.
\end{thm}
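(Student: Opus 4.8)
The plan is to check the ten axioms (G1)--(G10) of Definition \ref{def:galgebra} for the triple $(\HH^\bullet(A), \{-,-\}_A, sq_A)$, observing first that all of the purely Lie-theoretic axioms are already in hand. Axiom (G1), graded commutativity of $\cup$, is precisely Corollary \ref{cor:gradedcomm}. Axioms (G2)--(G5) together say nothing more than that $\HH^{\bullet+1}(A)$ is a graded Lie algebra under $\{-,-\}_A$, which is exactly what the pre-Lie system $C^{\bullet+1}(A,A)$ yields via Theorem \ref{thm:pre-Lie-2-Lie} once Lemma \ref{lem:fundformula} has guaranteed that the bracket descends to cohomology: (G2) is equation (\ref{eq:anticom}), (G5) is the graded Jacobi identity (\ref{eq:gradedjacobi}), while (G3) and (G4) are the two ``self-bracket'' axioms of a graded (super) Lie algebra for, respectively, even and odd elements of the shifted complex. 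Indeed (G3) is visible already on cochains, since for $f$ of odd degree $m$ the sign $(-1)^{(m-1)(m-1)}$ equals $1$ and hence $\{f,f\}_A = f\bullet f - f\bullet f = 0$. Finally (G6), the graded Poisson identity, is Gerstenhaber's statement that $\{a,-\}_A$ is a graded derivation for the cup product (cf. Remark \ref{rem:strict_galg}(1) and \cite{Ge63}).

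This leaves the squaring map, which is where the real work concentrates. Following Gerstenhaber, I would define it at the cochain level by the self-circle-product: for a cochain $f$ of even degree $m$ set $sq(f) := f\bullet f$. That this lands in cocycles when $f$ is a cocycle follows from the fundamental formula (\ref{eq:fundformula}) with $g=f$ and $n=m$, namely
\[
\partial(f\bullet f) = (-1)^{m-1}\partial(f)\bullet f + f\bullet\partial(f) + (-1)^m[f,f]_\cup ,
\]
whose first two summands vanish on cocycles and whose last summand is $(-1)^m\bigl(f\cup f - (-1)^{m^2} f\cup f\bigr) = 0$ because $m$ is even. One thus sets $sq_A([f]) := [f\bullet f]$; this is consistent with Remark \ref{rem:strict_galg}(3), as $\{f,f\}_A = 2(f\bullet f)$ in even degree. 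Axioms (G7) and (G8) are then immediate on representatives: $sq(rf) = (rf)\bullet(rf) = r^2(f\bullet f)$, and $(f+g)\bullet(f+g) = f\bullet f + g\bullet g + (f\bullet g + g\bullet f)$, where for even $f,g$ one has $\{f,g\}_A = f\bullet g + g\bullet f$, giving $sq(f+g) = sq(f) + sq(g) + \{f,g\}_A$.

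The substantive content is threefold. First, $sq_A$ must be shown to be well defined on cohomology: replacing $f$ by $f+\partial h$ must alter $f\bullet f$ only by a coboundary, and rewriting the cross terms $f\bullet\partial h + \partial h\bullet f$ and $\partial h\bullet\partial h$ as exact cochains is a delicate, sign-sensitive application of (\ref{eq:fundformula}) and the pre-Lie identities for the operations $\bullet_i$. Second, axiom (G9), $\{a, sq(b)\}_A = \{\{a,b\}_A, b\}_A$, is the graded ``restricted'' identity $\mathrm{ad}(sq(b)) = \mathrm{ad}(b)\circ\mathrm{ad}(b)$ for the odd (shifted) element $b$, and should be produced directly from the pre-Lie axioms. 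Third, and hardest, is (G10), $sq(ab) = a^2 sq(b) + sq(a) b^2 + a\{a,b\}_A b$, the characteristic-free analogue of the Poisson rule for $sq$, which demands an explicit cochain computation intertwining the circle product, the cup product and the fundamental formula.

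I expect the well-definedness of $sq_A$ together with axiom (G10) to be the main obstacle, since neither reduces to the graded Lie structure already established and both require honest bookkeeping of the pre-Lie operations rather than a formal argument. All the needed manipulations are, however, exactly those performed by Gerstenhaber in \cite{Ge63}; the task is to assemble them and to verify that every step is valid over an arbitrary commutative base ring $k$, so that no division by $2$ (as in Remark \ref{rem:strict_galg}(2)) is ever invoked.
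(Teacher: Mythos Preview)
Your outline is correct and in fact more detailed than what the paper provides: the paper does not give a proof at all, but simply cites \cite{Ge63} and adds a short paragraph explaining how $sq_A$ is defined (namely $sq_A([f]) = [f\bullet f]$, noting that the fundamental formula shows $f\bullet f$ is a cocycle when $f$ is, and a coboundary when $f$ is). Your identification of (G1)--(G6) as already settled by the preceding material, your cochain-level verification of (G7)--(G8), and your singling out of well-definedness, (G9) and (G10) as the genuinely laborious parts to be extracted from Gerstenhaber's original computations, all match the intended logic. One small caution: your claim that (G4) comes for free as part of the ``graded Lie algebra'' package via Theorem~\ref{thm:pre-Lie-2-Lie} is slightly glib, since over a general base ring $[[a,a],a]=0$ for even $a$ does not follow formally from (G2) and (G5) alone (Jacobi only gives $3[[a,a],a]=0$); it does, however, follow from the explicit pre-Lie description or, equivalently, from (G9) once $sq$ is in place, so this is not a real gap.
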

The squaring map $sq_A: \HH^{2\bullet} \rightarrow \HH^{4\bullet-1}$ arises as follows. The fundamental formula (\ref{eq:fundformula}) shows, that if $f \in C^{2n}(A,A)$ is a cocycle (for some $n \geq 1$), then so is $f \bullet f$. Moreover, $f \bullet f$ will be a coboundary if $f$ was. Hence the assignment $f \mapsto f \bullet f$ establishes the (well-defined) map $sq_A$.
\begin{exa}
Let $k$ be a field of characteristic zero and let $R$ be a commutative ring. Let $\mathfrak{g}$ be a Lie algebra over $R$, with Lie bracket $[-,-]$. The exterior algebra $\Lambda^\bullet_R (\mathfrak{g})$ of $\mathfrak{g}$ over $R$ is a (strict) graded commutative $R$-algebra (see Section \ref{sec:exasHopf}). Thanks to the \textit{Schouten–Nijenhuis bracket}, it also carries the structure of a graded Lie algebra over $R$. More precisely, let $g_1 \wedge \cdots \wedge g_m$ and $h_1 \wedge \cdots \wedge h_n$ be homogeneous elements in $\Lambda^\bullet_R (\mathfrak{g})$. The element
\begin{align*}
[g_1 &\wedge \cdots \wedge g_m, h_1 \wedge \cdots \wedge h_n]\\ &= \sum_{s = 1}^m \sum_{t=1}^n {(-1)^{s+t} g_1 \wedge \cdots \wedge \widehat{g_s} \wedge \cdots \wedge g_m \wedge [g_s, h_t] \wedge h_1 \wedge \cdots \wedge \widehat{h_t} \wedge \cdots \wedge h_n}
\end{align*}
sits inside $\Lambda_R^{m+n-1}(\mathfrak{g})$ (where $\widehat{g_s}$ and $\widehat{h_t}$ stand for the omission of $g_s$ and $h_t$). In fact, it can be shown that $(\Lambda^\bullet_R (\mathfrak{g}), \wedge, [-,-])$ is a Gerstenhaber algebra over $R$.

Now let $A$ be a commutative algebra over $k$. The $k$-linear derivations $\Der_k(A)$ form, via the commutator, a Lie algebra over $k$, but in general not over $A$. However, $\Der_k(A)$ is an $A$-$A$-bimodule and we can still apply all constructions mentioned above. In particular, the Schouten–Nijenhuis bracket can be used to produce a Gerstenhaber algebra
$$
(\Lambda^\bullet_A \Der_k(A), \wedge, [-,-]).
$$
In fact, the bracket $[-,-]$ is the unique extension of $[D,a]=D(a)$ and $[D,E] = D\circ E - E\circ D$ (for $a \in A$ and $D,E \in \Der_k(A)$) such that $(\Lambda^\bullet_A \Der_k(A), \wedge, [-,-])$ is a Gerstenhaber algebra. We have a canonical homomorphism into the Hochschild complex: $$\Lambda^\bullet_A \Der_k(A) \longrightarrow C^\bullet(A,A).$$ The map is given by the assignment
$$
D_1 \wedge \cdots \wedge D_n \ \mapsto \ \frac{1}{n!}\sum_{\sigma \in \mathfrak{S}_n}{(-1)^{\mathrm{sgn}(\sigma)} D_{\sigma(1)} \cup \cdots \cup D_{\sigma(n)}},
$$
where $D_1, \dots, D_n \in \Der_k(A)$. It is well-known that this map can be used to compute the Gerstenhaber algebra $\HH^\bullet(A)$, if $A$ is smooth over $k$:

A $k$-algebra $\Gamma$ is called \textit{smooth} over $k$ if it is a perfect complex in $\mathbf D(\Mod(\Gamma^\ev))$ (i.e., it is quasi-isomorphic to a bounded complex of finitely generated projective $\Gamma^\ev$-modules). One may think of smooth algebras as having finite projective dimension over their enveloping algebra; see \cite[Thm.\,9.1.2]{Gi05} for a more ``classical'' definition of smoothness. In this context, it is also worth recalling that (à la J.\,Cuntz and D.\,Quillen \cite{CuQu95}) \textit{formally smooth} (or \textit{quasi-free}) algebras over $k$ are, by definition, precisely those algebras over $k$ which are of projective dimension at most $1$ over their enveloping algebra. The following theorem is classical. \cite[Thm.\,9.1.3]{Gi05} provides an elementary and very clear proof.
\begin{thm}[Hochschild-Kostant-Rosenberg, {\cite{HKR62}}]
Let $A$ be a commutative $k$-algebra. If $A$ is smooth over $k$ $($e.g., $A = k[x_1, \dots, x_n])$, then the map
$$
(\Lambda^\bullet_A \Der_k(A),0) \longrightarrow (C^\bullet(A,A), \partial^\bullet)
$$
is a quasi-isomorphism. It induces an isomorphism $\Lambda^\bullet_A \Der_k(A) \cong \HH^\bullet(A)$ of $($strict$)$ Gerstenhaber algebras.
\end{thm}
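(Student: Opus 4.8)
The plan is to split the assertion into three layers. Writing $\Phi\colon \Lambda^\bullet_A \Der_k(A) \to C^\bullet(A,A)$ for the antisymmetrisation map of the statement, I would show first that $\Phi$ is a morphism of complexes (the source carrying the zero differential), then that it is a quasi-isomorphism, and finally that the induced map on cohomology respects the full strict Gerstenhaber structure. That $\Phi$ is a chain map amounts to checking $\partial^n(\Phi(D_1 \wedge \cdots \wedge D_n)) = 0$; since the relation $\partial^1 D = 0$ is precisely the Leibniz rule for $D$, expanding each $D_i$ via that rule makes all middle terms produced by $\partial^n$ cancel in pairs against each other because of the alternating signs in $\Phi$. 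This is the computation of \cite{Ge63}, and I would only indicate the pairing of terms rather than carry it out.

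For the quasi-isomorphism I would reduce to the polynomial case. A morphism of complexes of $A$-modules is a quasi-isomorphism if and only if it remains one after localising at every prime, because homology commutes with the flat functor of localisation; and both $\HH^\bullet(-)$ and $\Lambda^\bullet_{(-)}\Der_k(-)$ are assembled from finitely generated projective resolutions over the enveloping algebra, so they commute with localisation by flat base change. Localising at a prime we may therefore assume $\Omega^1_{A/k}$ is free and, after an étale base change permitted by smoothness, that $A = k[x_1,\dots,x_n]$. In that case $A^\ev = A \otimes_k A$ is again a polynomial ring, the elements $t_i := x_i \otimes 1 - 1 \otimes x_i$ form a regular sequence with $A \cong A^\ev/(t_1,\dots,t_n)$, and the Koszul complex on $(t_1,\dots,t_n)$ is a finite resolution of $A$ by free $A^\ev$-modules. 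Applying $\Hom_{A^\ev}(-,A)$ kills every differential, since each $t_i$ acts as zero on $A$, and one reads off $\Ext^n_{A^\ev}(A,A) \cong \Lambda^n_A \Der_k(A)$, the module $\Der_k(A)$ being free on $\partial/\partial x_1,\dots,\partial/\partial x_n$. Lifting $\id_A$ to an explicit comparison map between this Koszul resolution and the bar resolution $\mathbb B_A$ and tracing it through the morphism $\chi_A$ of Section~\ref{sec:basdef} then identifies the resulting isomorphism with $\Phi$ on cohomology.

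It remains to upgrade this to an isomorphism of strict Gerstenhaber algebras. Multiplicativity is settled on cohomology: by Corollary~\ref{cor:gradedcomm} the cup product on $\HH^\bullet(A)$ is graded commutative, so the cup product of two degree-one classes is antisymmetric and matches the wedge of the corresponding derivations; since both algebras are generated in degree one over degree zero, $\Phi$ is a ring homomorphism after passing to cohomology. For the bracket I would use that, by the graded Poisson identity (G6), both the Schouten--Nijenhuis bracket on $\Lambda^\bullet_A \Der_k(A)$ and the Gerstenhaber bracket $\{-,-\}_A$ on $\HH^\bullet(A)$ are determined by their restrictions to the generating degrees $0$ and $1$, i.e. to functions and derivations, where one checks directly that $\{D,a\}_A = D(a)$ and $\{D,E\}_A = D \circ E - E \circ D$ agree with the defining values of the Schouten bracket. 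As we work in characteristic zero, Remark~\ref{rem:strict_galg}(2) forces $sq = \tfrac{1}{2}[-,-]$ on both sides, so the squaring maps agree automatically once the brackets do.

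The main obstacle will be the bracket compatibility. The difficulty is that $\Phi$ is not a morphism of differential graded Lie algebras at the chain level -- the circle product of two antisymmetrised cochains is itself not antisymmetrised -- so the identification of the two brackets genuinely lives on cohomology and cannot be read off from a chain-level formula. I therefore expect the real work to lie in verifying that the Schouten bracket is the \emph{unique} Gerstenhaber bracket on $\Lambda^\bullet_A \Der_k(A)$ extending $[D,a]=D(a)$ and $[D,E]=D\circ E-E\circ D$, and in checking that $\{-,-\}_A$ satisfies the same generating relations, after which the graded Jacobi and Poisson identities (cf. Lemma~\ref{lem:fundformula}) propagate the agreement to all of $\HH^\bullet(A)$.
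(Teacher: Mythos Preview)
The paper does not actually prove this theorem. It is stated as a classical result attributed to \cite{HKR62}, with the remark that ``\cite[Thm.\,9.1.3]{Gi05} provides an elementary and very clear proof''; no argument is given in the text itself. So there is no proof in the paper for your proposal to be compared against.

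That said, your outline follows one of the standard routes and is essentially sound. A few points deserve tightening. First, your localisation/base-change step is the most delicate: Hochschild \emph{cohomology} does not commute with arbitrary flat base change in general, and you need the perfectness of $A$ over $A^\ev$ (which is exactly the paper's definition of smoothness) to justify it; you should make that dependence explicit rather than appealing loosely to ``finitely generated projective resolutions''. Second, ``after an \'etale base change permitted by smoothness'' is doing a lot of work --- you are conflating Zariski localisation with passage to \'etale-local coordinates, and you should either invoke that smooth algebras are \'etale-locally polynomial and check that $\Phi$ is compatible with \'etale base change, or stay Zariski-local and work with a regular sequence cutting out the diagonal (which exists once $\Omega^1_{A/k}$ is free). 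Third, your argument for compatibility with the bracket is correct in spirit but the key input is not just the Poisson identity: you also need that $\HH^\bullet(A)$ is \emph{generated as an algebra} in degrees $0$ and $1$, which you know only after you have already identified it with $\Lambda^\bullet_A\Der_k(A)$; so the logical order is to establish the ring isomorphism first, then transport the bracket comparison through it. Your handling of the squaring map via Remark~\ref{rem:strict_galg}(2) is fine, since the ambient example fixes $\mathrm{char}(k)=0$.
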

\end{exa}



\chapter{A bracket for monoidal categories}\label{ch:bracket}
Using our explicit description of Retakh's isomorphism for factorizing exact categories, we are going to define a map $[-,-]_\C: \Ext^m_\C(\mathbbm 1_\C,\mathbbm 1_\C) \times \Ext^n_\C(\mathbbm 1_\C,\mathbbm 1_\C) \rightarrow \Ext^{m+n-1}_\C(\mathbbm 1_\C,\mathbbm 1_\C)$ for any strong exact monoidal category $(\C, \otimes_\C, \mathbbm 1_\C)$. After stating some of its properties, we will show that it yields an honest generalization of the map S.\,Schwede introduced in \cite{Sch98}. Thereupon, we will employ this observation to deduce, that the Gerstenhaber bracket is an invariant under Morita equivalence.
\section{The Yoneda product}\label{sec:yoneda}
\begin{nn}
Let $\C$ be an exact $k$-categories. Further, let $X$ be an object in $\C$. The graded $k$-module $\Ext^{\bullet}_\C(X,X)$ may be endowed with the structure of a graded $k$-algebra. To this end, let $A,B,C \in \Ob \C$ be objects in $\C$. Given two admissible extensions $\xi \in \mathcal Ext^m_\C(B,C)$ and $\xi' \in \mathcal Ext^n_\C(A,B)$ we define their \textit{Yoneda product} $\xi \circ \xi' \in \mathcal Ext^{m + n}_\C(A,C)$ to be the following $(m+n)$-extension $\xi \circ \xi'$. If $m, n \geq 1$ we obtain $\xi \circ \xi'$ by simply splicing $\xi$ and $\xi'$ together:
$$
\xymatrix@C=15pt{
\xi & \equiv & 0 \ar[r] & C \ar[r] & E_{m-1} \ar[r] & \cdots \ar[r] & E_0 \ar[d]_-{f_n\circ e_0} \ar[r]^-{e_0} & B \ar[r] \ar@{=}[d]& 0 && \\
&& 0 & A \ar[l] & F_0 \ar[l] & \cdots \ar[l] & \ar[l] F_{n-1} & B \ar[l]_-{f_n} & 0 \ar[l] & \equiv & \xi' \ .
}
$$
It is obvious that the resulting sequence is indeed an admissible exact one. If $m = 0 = n$, $\xi$ and $\zeta$ are morphisms in $\C$ and we let their Yoneda product be their composition (in this sense, the Yoneda product extends the product on the $k$-algebra $\Ext^0_\C(X,X) = \End_\C(X)$ to the whole graded object $\Ext^\bullet_\C(X,X)$ being the justification for denoting it by $\circ$). If $m=0$ and $n \geq 1$, we let $\xi \circ \xi'$ be the lower sequence of the following pushout diagram:
$$
\xymatrix@C=15pt{
\xi' & \equiv & 0 \ar[r] & B \ar[r] \ar[d]_{\xi} & F_{n-1} \ar[r] \ar[d] & F_{n-2} \ar[r] \ar@{=}[d] & \cdots \ar[r] & F_0 \ar[r] \ar@{=}[d] & A \ar@{=}[d] \ar[r] & 0 \ \ \\
&& 0 \ar[r] & C \ar[r]  & P \ar[r] & F_{n-2} \ar[r]  & \cdots \ar[r] & F_0 \ar[r] & A \ar[r] & 0 \ .
}
$$
Similarly, if $m \geq 1$ and $n = 0$, let the lower sequence of the pullback diagram
$$
\xymatrix@C=15pt{
\xi & \equiv & 0 \ar[r] & C \ar[r] \ar@{=}[d] & E_{m-1} \ar[r] \ar@{=}[d] & \cdots \ar[r] & E_1 \ar[r] \ar@{=}[d] & E_0 \ar[r] & B \ar[r] & 0\\
& & 0 \ar[r] & C \ar[r]  & E_{m-1} \ar[r] & \cdots \ar[r] & E_1 \ar[r]  & Q \ar[r] \ar[u] & A  \ar[u]_{\xi'} \ar[r] & 0
}
$$
be $\xi \circ \xi'$. Clearly the Yoneda product respects the equivalence relation defining $\pi_0 \mathcal Ext^n_\C(-,-)$. Therefore it induces a well-defined $k$-bilinear map
$$
\circ : \Ext^m_\C(B,C) \times \Ext^n_\C(A,B) \longrightarrow \Ext^{m+n}_\C(A,C),
$$
and, in particular, maps
\begin{align*}
\nabla_{A,C} &: \Ext^m_\C(C,C) \otimes_k \Ext^n_\C(A,C) \longrightarrow \Ext^{m+n}_\C(A,C) && (\text{for $B = C$}),\\
\tilde{\nabla}_{A,C} &: \Ext^m_\C(A,C) \otimes_k \Ext^n_\C(A,A) \longrightarrow \Ext^{m+n}_\C(A,C) && (\text{for $B = A$}),\\
\intertext{and}
\nabla_{A,A} &: \Ext^m_\C(A,A) \otimes_k \Ext^n_\C(A,A) \longrightarrow \Ext^{m+n}_\C(A,A) && (\text{for $A=B=C$}).
\end{align*}
Notice that the Yoneda product recovers the $k$-action on $\Ext^n_\C(A,B)$ defined in Section \ref{sec:lowhomotopy}:
$$
k \times \Ext^n_\C(X,X) \longrightarrow \Ext^n_\C(X,X), \ (a, x) \mapsto (a \id_X) \circ x = a \vdash \xi.
$$
The following result is classical, and due to N.\,Yoneda (cf. \cite{Yo54}).
\end{nn}
\begin{lem}\label{lem:ext-algebra}
The triple $(\Ext^\bullet_\C(X,X), +, \circ)$ is a \emph{(}positively\emph{)} graded $k$-algebra with unit $\id_A$. The graded $k$-module $\Ext^\bullet_\C(Y,X)$ is a graded $\Ext^\bullet_\C(X,X)$-$\Ext^\bullet_\C(Y,Y)$-bimodule.
\end{lem}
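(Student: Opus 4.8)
The plan is to establish the three separate assertions of Lemma \ref{lem:ext-algebra} in turn: that the Yoneda product $\circ$ is associative, that it has $\id_X$ as a two-sided unit, and that the bimodule axioms hold. Throughout, the most economical route is to verify each identity on the level of the categories $\mathcal Ext^\bullet_\C(-,-)$ and then pass to $\pi_0$, since the Yoneda product was already shown to respect the defining equivalence relation; this way associativity and unitality need only be checked up to a canonical isomorphism of admissible extensions rather than on the nose.

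First I would treat the generic case $m,n,p \geq 1$, where all Yoneda products are obtained by splicing. Here associativity $(\xi \circ \xi') \circ \xi'' = \xi \circ (\xi' \circ \xi'')$ is essentially immediate: both sides are the single long admissible sequence obtained by concatenating the middle segments of $\xi$, $\xi'$, $\xi''$ with the composites of the bordering morphisms inserted between consecutive blocks, and the two bracketings produce literally the same complex. The $k$-bilinearity of $\circ$ has already been recorded, and one checks $k$-biadditivity is compatible with splicing by the functoriality of pullback and pushout. The split sequence $\sigma_1$ together with the degenerate picture of $\mathcal Ext^0$ make $\id_X \in \End_\C(X) = \Ext^0_\C(X,X)$ behave as a unit: for $m \geq 1$, $\id_X \circ \xi$ is the pushout of $\xi$ along $\id_X$, which returns $\xi$ up to the canonical isomorphism, and dually $\xi \circ \id_X$ is a pullback along the identity.

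The genuinely fiddly part is bookkeeping the mixed cases in which one or more of $m,n,p$ equals $0$, because there the Yoneda product is defined by a pushout or pullback rather than by splicing, and associativity must be checked by comparing iterated pushouts/pullbacks. I would dispatch these by invoking Proposition \ref{prop:exactcat}(\ref{prop:exactcat:3}) to guarantee the relevant pushouts and pullbacks exist in $\C$, and then using the universal properties together with the standard fact that pushout along a composite agrees with the iterated pushout (and dually for pullbacks); the compatibility of a pushout in the first variable with a pullback in the last variable is exactly the interchange law $f_\ast \circ g^\ast = g^\ast \circ f_\ast$ recorded in the conventions. This reduces every mixed case to a diagram chase in the ambient abelian category $\A_\C$, where all objects involved lie in the essential image of $i_\C$ because $\C$ is extension closed.

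I expect the main obstacle to be organizing this case analysis cleanly rather than any deep difficulty: there are four boundary cases for associativity alone (according to which of $m,n,p$ vanish), and each requires identifying the two constructions as canonically isomorphic extensions. Once associativity and unitality are in hand, the bimodule structure on $\Ext^\bullet_\C(Y,X)$ is a formal consequence. The left action $\Ext^\bullet_\C(X,X) \times \Ext^\bullet_\C(Y,X) \to \Ext^\bullet_\C(Y,X)$ and right action $\Ext^\bullet_\C(Y,X) \times \Ext^\bullet_\C(Y,Y) \to \Ext^\bullet_\C(Y,X)$ are both given by $\circ$, their associativity and unitality are instances of the identities already proved with the objects $A,B,C$ chosen appropriately, and the commuting-actions axiom $(\alpha \circ \beta)\circ \gamma = \alpha \circ(\beta \circ \gamma)$ for $\alpha \in \Ext^\bullet_\C(X,X)$, $\beta \in \Ext^\bullet_\C(Y,X)$, $\gamma \in \Ext^\bullet_\C(Y,Y)$ is simply the associativity of $\circ$ over the three-object chain $Y,Y,X,X$. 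Thus no new argument is needed for the bimodule claim beyond the general associativity established first.
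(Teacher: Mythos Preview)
Your proposal is correct. The paper itself does not give a proof of this lemma at all: it simply records the statement as classical and attributes it to Yoneda with a reference to \cite{Yo54}. So your outline goes well beyond what the paper does; the associativity-by-splicing in positive degrees, the case analysis for degree-zero factors via pushout/pullback universal properties and the interchange $f_\ast g^\ast = g^\ast f_\ast$, and the observation that the bimodule axioms are instances of the already-established associativity are exactly the standard verification. There is nothing to compare against in the paper's own argument.
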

\begin{lem}\label{lem:exactfuncyoneda}
Let $\scrX: \C \rightarrow \D$ be a $k$-linear and exact functor, and let $A \in \Ob\C$. Then, for every object $B \in \Ob \C$, the map
$$
\scrX^\sharp_\bullet: \Ext^\bullet_\C(B,A) \longrightarrow \Ext^\bullet_\D(\scrX B, \scrX A)
$$
is a homomorphism of graded $\Ext^\bullet_\C(A,A)$-modules. In particular, it is a homomorphism of graded $k$-algebras, in case $A = B$.
\end{lem}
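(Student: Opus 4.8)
The plan is to reduce the lemma to a single fact, namely that $\scrX^\sharp_\bullet$ preserves the Yoneda product, and to harvest everything else from results already proven. Additivity is free: by Lemma~\ref{lem:grhomcomm}(1) the induced functors $\scrX_n$ send $\sigma_n(X,Y)$ to $\sigma_n(\scrX X,\scrX Y)$, and since $\scrX$ preserves pushouts along admissible monomorphisms and pullbacks along admissible epimorphisms (Lemma~\ref{lem:exact_func_push_pull}), the maps $\scrX^\sharp_n=\pi_0\scrX_n$ respect the Baer sum. Hence the only real content is the multiplicativity identity
$$
\scrX^\sharp_\bullet(\xi \circ \xi') = \scrX^\sharp_\bullet(\xi) \circ \scrX^\sharp_\bullet(\xi'),
$$
asserted for all objects $U,V,W \in \Ob\C$ and all composable classes $\xi \in \Ext^m_\C(V,W)$, $\xi' \in \Ext^n_\C(U,V)$, the product being taken in $\Ext^{m+n}_\D(\scrX U,\scrX W)$. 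Both conclusions of the lemma are special cases: the module statement is $(V,W)=(A,A)$ with $U=B$ arbitrary, while the ``in particular'' (the algebra case $A=B$) is $U=V=W=A$, where the unit is preserved because $\scrX^\sharp_0(\id_A)=\scrX(\id_A)=\id_{\scrX A}$ and where the target module structure is by restriction of scalars along the ring map $\scrX^\sharp_\bullet\colon\Ext^\bullet_\C(A,A)\to\Ext^\bullet_\D(\scrX A,\scrX A)$, so no circularity arises.

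To establish multiplicativity I would run through the four cases in the definition of $\circ$ from Section~\ref{sec:yoneda}. For $m,n\geq 1$ the product $\xi\circ\xi'$ is the splice of representatives joined at the composite $f_n\circ e_0$ of the two bordering morphisms; applying $\scrX$ degreewise reproduces exactly the splice of $\scrX_m\xi$ and $\scrX_n\xi'$, the connecting morphism becoming $\scrX(f_n\circ e_0)=\scrX(f_n)\circ\scrX(e_0)$, and exactness of $\scrX$ ensures the admissibility factorizations are carried over, so that $\scrX_{m+n}(\xi\circ\xi')=\scrX_m(\xi)\circ\scrX_n(\xi')$ on the nose. When $m=0$ (respectively $n=0$) the product is defined by a pushout along an admissible monomorphism (respectively a pullback along an admissible epimorphism), and Lemma~\ref{lem:exact_func_push_pull} says precisely that $\scrX$ carries that pushout (pullback) to the pushout (pullback) defining $\scrX(\xi)\circ\scrX(\xi')$. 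For $m=n=0$ the product is ordinary composition of morphisms in $\C$, preserved by any functor. Thus in every case the two sides agree as objects of $\mathcal Ext^{m+n}_\D(\scrX U,\scrX W)$.

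The one point demanding care, and the mild obstacle, is that these equalities must hold in $\pi_0$, i.e. up to the equivalence relation defining $\Ext$, rather than as literal equalities of complexes. In the pushout and pullback cases the relevant universal property furnishes a canonical comparison morphism between $\scrX$ of the Yoneda product and the Yoneda product of the images; I would verify that this comparison is a genuine morphism of $(m+n)$-extensions, restricting to the identity on the bordering objects $\scrX U$ and $\scrX W$, hence an isomorphism in $\mathcal Ext^{m+n}_\D(\scrX U,\scrX W)$ and therefore an equality of classes after passing to $\pi_0$. Since both the Yoneda product and $\scrX^\sharp_\bullet$ are already known to descend to $\pi_0$, checking the identity on representatives suffices. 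Granting this, $\scrX^\sharp_\bullet$ is multiplicative; specializing to $U=V=W=A$ exhibits it as a homomorphism of graded $k$-algebras, and the general multiplicativity exhibits it as a homomorphism of graded $\Ext^\bullet_\C(A,A)$-modules, completing the proof.
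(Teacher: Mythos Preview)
Your proposal is correct and follows essentially the same approach as the paper's proof: split the Yoneda product into its defining cases, observe that splicing of positive-degree extensions is preserved on the nose by applying $\scrX$ degreewise, and invoke Lemma~\ref{lem:exact_func_push_pull} for the mixed cases involving a degree-zero factor. The paper is considerably terser (it calls the $m,n\geq 1$ case ``evident'' and does not dwell on the passage to $\pi_0$), but your additional care on that last point is harmless and arguably welcome.
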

\begin{proof}
By definition, it is evident, that $\scrX^\sharp_{\geq 1}$ commutes with the Yoneda product. Since $\scrX$ is exact, it preserves pullbacks along admissible epimorphisms and pushouts along admissible monomorphisms (cf. Lemma \ref{lem:exact_func_push_pull}), and therefore
$$
\scrX^\sharp_{n+1}(f \circ x) = \scrX^\sharp_0(f)\circ\scrX^\sharp_n(x), \quad \scrX^\sharp_{n+1}(x \circ f) = \scrX^\sharp_n(x)\circ\scrX^\sharp_0(f)
$$
(for $n \geq 0$, $f \in \End_\C(X)$, $x \in \Ext^n_\C(X,X)$). In particular, $\scrX^\sharp_\bullet$ is $k$-linear. Since $\scrX(\id_A) = \id_{\scrX A}$, the map $\scrX^\sharp_\bullet$ is also unital.
\end{proof}
\begin{rem}\label{rem:admisexactyoneda}
Note that Definition \ref{def:adexseq} may be reformulated in terms of the Yoneda product: A sequence
$$
\xymatrix@C=15pt{
\xi & \equiv & 0 \ar[r] & Y \ar[r] & E_{n-1} \ar[r] & \cdots \ar[r] & E_0 \ar[r] & X \ar[r] & 0 
}
$$
of morphisms in $\C$ is an admissible $n$-extension if, and only if, there exist objects
$$
X = K_0, K_1, \dots, K_{n-1}, K_n = Y \quad \text{in $\C$}
$$
and admissible short exact sequences
$$
\xi_i \in \Ob \mathcal Ext^1_\C(K_{i-1}, K_i) \quad (\text{for $i = 1, \dots, n$})
$$
such that $\xi_1 \circ \cdots \circ \xi_n$ coincides with $\xi$.
\end{rem}
\section{The bracket and its properties}\label{sec:prop_bracket}
\begin{nn}
Let $(\C, \otimes_\C, \mathbbm 1_\C)$ be a strong exact monoidal $k$-category. For every integer $n \geq 1$, let $\sigma_n$ be $\sigma_n(\mathbbm 1_\C, \mathbbm 1_\C)$ and let $\xi_n$ be an admissible $n$-extension of $\mathbbm 1_\C$ by $\mathbbm 1_\C$ in $\C$. Throughout this section,
$$
u_\C=u_\C^{\id_{\mathbbm 1_\C}, +}: \Ext^\bullet_\C(\mathbbm 1_\C, \mathbbm 1_\C) \longrightarrow \pi_1(\mathcal Ext^{\bullet}_\C(\mathbbm 1_\C, \mathbbm 1_\C),\sigma_\bullet)
$$
and
$$
v_\C(\xi_\bullet) = v_\C(\sigma_\bullet, \xi_\bullet): \pi_1(\mathcal Ext^{\bullet}_\C(\mathbbm 1_\C, \mathbbm 1_\C), \sigma_\bullet) \longrightarrow \pi_1(\mathcal Ext^{\bullet}_\C(\mathbbm 1_\C, \mathbbm 1_\C), \xi_\bullet)
$$
will denote the isomorphisms of graded abelian groups constructed in Section \ref{sec:retakh}. Despite the ambiguity, we are going to suppress the target base point, and write $v_\C$ instead of $v_\C(\xi_n)$ in order to enhance legibility. The reader is hereby forewarned that therefore base points might (and will) be changed in calculations without being explicitly mentioned, and is hence advised to carefully keep track of each modification. We will abbreviate $\otimes_\C$, $\boxtimes_\C$, $\mathbbm 1_\C$ by $\otimes$, $\boxtimes$, $\mathbbm 1$ respectively (see Section \ref{sec:extcats_monoidal} for the definition of $\boxtimes_\C$).
\end{nn}
\begin{nn}
The strong exact monoidal category $(\C, \otimes, \mathbbm 1)$ naturally gives rise to morphisms of admissible extensions linking $\xi \boxtimes \zeta$ with $\xi \circ \zeta$ and $(-1)^{mn}\zeta \circ \xi$ for every pair of admissible extensions $\xi$, $\zeta$ of $\mathbbm 1$ by $\mathbbm 1$ in $\C$. To be more precise, let us fix positive integers $m, n \geq 0$, as well as an $m$-extension $\xi$ and an $n$-extension $\zeta$ of $\mathbbm 1$ by $\mathbbm 1$. The morphisms
$$
(\xi \boxtimes \zeta)_i = \bigoplus_{p + q = i} E_p \otimes F_q
\xymatrix@C=25pt{
\ar[r]^-{\text{can}} & E_0 \otimes F_i \ar[r]^{e_0 \otimes F_i} \ar[r] & \mathbbm 1 \otimes F_i \ar[r]^-{\lambda_{F_i}} & F_i
}
$$
(for $0 \leq i < n$) and
$$
(\xi \boxtimes \zeta)_j = \bigoplus_{p + q = j} E_p \otimes F_q
\xymatrix@C=25pt{
\ar[r]^-{\text{can}} & E_{j-n} \otimes F_n \ar[r]^-{\varrho_{E_{j-n}}} & E_{j-n}
}
$$
(for $n \leq j \leq m+n$) define the components of a morphism 
$$
L = L({\xi,\zeta}): \xi \boxtimes \zeta \longrightarrow \xi \circ \zeta
$$
in $\mathcal Ext^{m+n}_\C(\mathbbm 1, \mathbbm 1)$. Analogously, the morphisms
$$
(\xi \boxtimes \zeta)_i = \bigoplus_{p + q = i} E_p \otimes F_q
\xymatrix@C=40pt{
\ar[r]^-{\text{can}} & E_i \otimes F_0 \ar[r]^{E_i \otimes f_0} & E_i \otimes \mathbbm 1 \ar[r]^-{(-1)^{mn}\varrho_{E_i}} & E_i
}
$$
(for $0 \leq i < m$) and 
$$
(\xi \boxtimes \zeta)_j = \bigoplus_{p + q = j} E_p \otimes F_q
\xymatrix@C=40pt{
\ar[r]^-{\text{can}} & E_m \otimes F_{j-m} \ar[r]^-{(-1)^{\eta_j}\lambda_{F_{j-m}}} & F_{j-m}
}
$$
(for $m \leq j \leq m+n$ and $\eta_j = m(m+n-j)$) give rise to a morphism 
$$
R = R({\xi, \zeta}) : \xi \boxtimes \zeta \longrightarrow (-1)^{mn} \zeta \circ \xi
$$
in $\mathcal Ext^{m+n}_\C(\mathbbm 1, \mathbbm 1)$.
Hence we obtain a roof
\begin{equation}\label{eq:roof}
\begin{aligned}
\xymatrix@!C=25pt{
& \xi \boxtimes \zeta \ar[dl]_{L}  \ar[dr]^{R}& \\
\xi \circ \zeta && (-1)^{mn} \zeta \circ \xi
}
\end{aligned}
\end{equation}
in $\mathcal Ext^{m+n}_\C(\mathbbm 1,\mathbbm 1)$ (i.e., a path of length two), showing that
$$
(-1)^{mn} \zeta \circ \xi \equiv \xi \circ \zeta \quad \text{(mod $\sim$)}.
$$
This should be interpreted as follows.
\end{nn}
\begin{lem}\label{lem:gradcomm}
The graded $k$-algebra $(\Ext^\bullet_\C(\mathbbm 1_\C, \mathbbm 1_\C), +, \circ)$ is graded commutative, that is, $x \circ y = (-1)^{\abs{x} \abs{y}}y \circ x$ for all homogeneous elements $x,y \in \Ext^\bullet_\C(\mathbbm 1_\C, \mathbbm 1_\C)$.
\end{lem}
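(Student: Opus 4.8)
The plan is to read off graded commutativity almost directly from the roof (\ref{eq:roof}) by applying the functor $\pi_0$, treating the degenerate degrees separately. Suppose first that $m = \abs{x}$ and $n = \abs{y}$ are both at least $1$, and represent $x$ and $y$ by admissible extensions $\xi \in \mathcal Ext^m_\C(\mathbbm 1,\mathbbm 1)$ and $\zeta \in \mathcal Ext^n_\C(\mathbbm 1,\mathbbm 1)$. The morphisms $L = L(\xi,\zeta)$ and $R = R(\xi,\zeta)$ constructed just above the lemma assemble into a path of length two
$$
\xi \circ \zeta \xleftarrow{\;L\;} \xi \boxtimes \zeta \xrightarrow{\;R\;} (-1)^{mn}\,\zeta \circ \xi
$$
in $\mathcal Ext^{m+n}_\C(\mathbbm 1,\mathbbm 1)$. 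By the description of $\pi_0$ as the set of path components under $\leftrightsquigarrow$ (Section \ref{sec:homo_groups}), the mere existence of such a path gives $\xi \circ \zeta \leftrightsquigarrow (-1)^{mn}\zeta\circ\xi$, hence $[\xi\circ\zeta] = [(-1)^{mn}\zeta\circ\xi]$ in $\Ext^{m+n}_\C(\mathbbm 1,\mathbbm 1) = \pi_0 \mathcal Ext^{m+n}_\C(\mathbbm 1,\mathbbm 1)$.

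Next I would unwind the sign. The symbol $(-1)^{mn}\zeta\circ\xi$ denotes the extension obtained from $\zeta\circ\xi$ via the $k$-action $(-1)^{mn}\vdash(-)$; by Lemma \ref{lem:extmodule} this represents the class $(-1)^{mn}[\zeta\circ\xi]$, and since the Yoneda product is $k$-bilinear (Lemma \ref{lem:ext-algebra}) we have $(-1)^{mn}[\zeta\circ\xi] = (-1)^{mn}\,[\zeta]\circ[\xi] = (-1)^{mn}\,y\circ x$. Combining with the previous paragraph yields $x\circ y = (-1)^{mn}\,y\circ x = (-1)^{\abs{x}\abs{y}}\,y\circ x$ whenever $\abs{x},\abs{y}\geq 1$.

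It remains to treat the degenerate degrees $m = 0$ or $n = 0$, where $\boxtimes$ — and hence the roof — is not defined. When both degrees vanish the claim reduces to commutativity of $\End_\C(\mathbbm 1) = \Ext^0_\C(\mathbbm 1,\mathbbm 1)$, which is the Eckmann--Hilton argument applied to the two unital products on $\End_\C(\mathbbm 1)$ given by composition and by $\otimes$ (sharing the same unit through $\lambda_{\mathbbm 1} = \varrho_{\mathbbm 1}$). When exactly one degree vanishes, say $x = f \in \End_\C(\mathbbm 1)$ and $y = [\zeta]$ with $n\geq 1$, the products $f\circ\zeta$ and $\zeta\circ f$ are by definition the pushout $f\vdash\zeta$ and the pullback $\zeta\dashv f$ (Section \ref{sec:yoneda}); I would show they agree in $\pi_0$ by the same pushout/pullback comparison diagram used for the scalar action in Section \ref{sec:lowhomotopy}, now with the endomorphism $f$ of the tensor unit in place of a scalar, which is legitimate because $\End_\C(\mathbbm 1)$ acts centrally by the Eckmann--Hilton observation.

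The genuine work of this lemma was already carried out in the construction of $L$ and $R$ preceding it — verifying that these are honest morphisms of admissible extensions, and that the whole roof is well defined on the level of $\pi_0$. Consequently the positive-degree case is essentially an application of $\pi_0$ to (\ref{eq:roof}), and the only place demanding real care is the degenerate-degree case, where one must check by an explicit diagram chase that left and right multiplication by an endomorphism of $\mathbbm 1$ coincide after passing to path components.
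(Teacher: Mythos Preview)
Your argument is correct and coincides with the paper's: the paper offers no formal proof, merely pointing to the roof (\ref{eq:roof}) and reading off the equivalence in $\pi_0$, which is precisely your treatment of the case $m,n\geq 1$. You are in fact more thorough than the text, which is silent on the degree-zero cases; for the mixed-degree step the precise mechanism you want is that any $f\in\End_\C(\mathbbm 1)$ induces, via the unit isomorphisms, a natural endomorphism of $\Id_\C$, giving a chain map $\zeta\to\zeta$ with $f$ at both ends and hence $[f\vdash\zeta]=[\zeta\dashv f]$ by the standard pushout/pullback factorization.
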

\begin{nn}
The roof (\ref{eq:roof}) gives rise to a loop $\Omega_\C(\xi,\zeta)$ in $\mathcal Ext^{m+n}_\C(\mathbbm 1, \mathbbm 1)$ (based at $\xi \circ \zeta$):
\begin{equation}\label{eq:loop}
\begin{aligned}
\xymatrix@!C=25pt{
& \xi \boxtimes \zeta \ar[dl]_-{L} \ar[dr]^-{R} & \\
\xi \circ \zeta &	& (-1)^{mn} \zeta \circ \xi &\\
& (-1)^{mn} \zeta \boxtimes \xi \ar[ul]^-{R} \ar[ur]_-{L} & .
}
\end{aligned}
\end{equation}
By taking the maps $u_\C$ and $v_\C$ into account, we get a map
\begin{align*}
[-,-]_\C: \Ext^{m}_\C(\mathbbm 1,\mathbbm 1) & \times \Ext^{n}_\C(\mathbbm 1,\mathbbm 1) \longrightarrow \Ext^{m+n-1}_\C(\mathbbm 1,\mathbbm 1),\\ [\xi, \zeta]_\C &= (u_\C^{-1} \circ v_\C^{-1})(\Omega_\C(\xi, \zeta)).
\end{align*}
If $n$ is even, the northern hemisphere of the loop (\ref{eq:loop}) (that is, the roof (\ref{eq:roof})) is a loop of length two based at $\xi \circ \zeta$; we will denote it by $\square_\C(\xi)$. If $n$ is even, we get $\square_\C(\xi) + \square_\C(\xi) =  \Omega_\C(\xi, \xi)$ in $\pi_1(\mathcal Ext^{2n}_\C(\mathbbm 1,\mathbbm 1), \xi \circ \xi)$, and if, in addition, $2$ is invertible in $k$, $\square_\C(\xi) =  2^{-1}\Omega_\C(\xi, \xi)$. Thus, if $n$ is even, we obtain a map
$$
sq_\C: \Ext^{n}_\C(\mathbbm 1,\mathbbm 1) \longrightarrow \Ext^{2n-1}_\C(\mathbbm 1,\mathbbm 1), \ [\xi] \mapsto (u_\C^{-1} \circ v_\C^{-1})([\square_\C(\xi)]).
$$
satisfying $sq_\C(\xi) + sq_\C(\xi) = [\xi,\xi]_\C$, and $sq_\C(\xi) = 2^{-1} [\xi,\xi]_\C$ in case $2$ is invertible in $k$. The aim of the upcoming considerations is to relate the structure of
$$
(\Ext^\bullet_\C(\mathbbm 1,\mathbbm 1), [-,-]_\C, sq_\C)
$$
to the structure of the underlying monoidal category, and to investigate its behavior with respect to monoidal functors.
\end{nn}
\begin{nn}
If the strong exact monoidal category $(\C, \otimes, \mathbbm 1)$ is lax braided, there are (by definition) functorial morphisms $\gamma_{X,Y}$ in $\C$, relating $X \otimes Y$ and $Y \otimes X$ for every pair of objects in $X, Y$ in $\C$. Following the idea presented in \cite{Ta04}, we will show that a (lax) braiding $\gamma$ will give rise to a morphism $\Gamma(\xi, \zeta): \xi \boxtimes \zeta \rightarrow (-1)^{\abs{\xi}\abs{\zeta}}\zeta \boxtimes \xi$ for every pair $\xi$, $\zeta$ of admissible extensions. The conclusion will be, that $[-,-]_\C$ is constantly zero, if $(\C, \otimes, \mathbbm 1)$ is lax braided.
\end{nn}
\begin{lem}\label{lem:laxbraiding}
Assume that the strong exact monoidal category $(\C, \otimes, \mathbbm 1)$ is lax braided with braiding $\gamma$. Then for every pair $(\xi, \zeta) \in \mathcal Ext^{m}_\C(\mathbbm 1,\mathbbm 1) \times \mathcal Ext^{n}_\C(\mathbbm 1,\mathbbm 1)$ the natural transformation $\gamma$ induces a morphism $\Gamma({\xi, \zeta}) :  \xi \boxtimes \zeta \rightarrow (-1)^{mn} \zeta \boxtimes \xi$ of
admissible $(m+n)$-extensions with
$$
R({\zeta, \xi}) \circ \Gamma({\xi, \zeta}) = L({\xi, \zeta})\quad \text{and} \quad \Gamma({\xi, \zeta}) \circ L({\zeta, \xi}) = R({\xi, \zeta}).
$$
Moreover, in case $(\C, \otimes, \mathbbm 1, \gamma)$ is symmetric, the above morphisms $\Gamma({\xi,\zeta})$ may be chosen to be isomorphisms fulfilling $\Gamma({(-1)^{mn} \zeta, \xi}) \circ \Gamma({\xi, \zeta}) = \id_{\xi \boxtimes \zeta}$.
\end{lem}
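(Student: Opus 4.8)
The plan is to produce $\Gamma(\xi,\zeta)$ explicitly as the braiding-induced comparison map between the two total complexes, following the strategy of \cite{Ta04}. Write $\xi \equiv (0\to\mathbbm 1\to\mathbb E\to\mathbbm 1\to 0)$ and $\zeta\equiv (0\to\mathbbm 1\to\mathbb F\to\mathbbm 1\to 0)$ with $\mathbb E=(E_p,e_p)$ and $\mathbb F=(F_q,f_q)$. Since $(\xi\boxtimes\zeta)_i=\bigoplus_{p+q=i}E_p\otimes F_q$ while $((-1)^{mn}\zeta\boxtimes\xi)_i=\bigoplus_{p+q=i}F_q\otimes E_p$, I would set
$$
\Gamma(\xi,\zeta)_i:=\sum_{p+q=i}(-1)^{pq}\,\gamma_{E_p,F_q}\colon \bigoplus_{p+q=i}E_p\otimes F_q\longrightarrow\bigoplus_{p+q=i}F_q\otimes E_p,
$$
the Koszul-signed assembly of the structural morphisms $\gamma_{E_p,F_q}\colon E_p\otimes F_q\to F_q\otimes E_p$, and then verify that this candidate has all the required properties.

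First I would check that $\Gamma(\xi,\zeta)$ is a morphism of complexes. Comparing $\Gamma_{i-1}\circ\partial_i$ with $\partial_i\circ\Gamma_i$ on a summand $E_p\otimes F_q$, the naturality of $\gamma$ in each variable rewrites $\gamma_{E_{p-1},F_q}\circ(e_p\otimes F_q)$ as $(F_q\otimes e_p)\circ\gamma_{E_p,F_q}$ and $\gamma_{E_p,F_{q-1}}\circ(E_p\otimes f_q)$ as $(f_q\otimes E_p)\circ\gamma_{E_p,F_q}$; a short bookkeeping then shows that the Koszul signs $(-1)^{pq}$ are exactly what makes both sides agree (equivalently, they are forced by the recursions $\epsilon_{p-1,q}=(-1)^q\epsilon_{p,q}$ and $\epsilon_{p,q}=(-1)^p\epsilon_{p,q-1}$ normalized at the border). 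That $\Gamma$ restricts to the identity on the two bordering copies of $\mathbbm 1$, so that it is a genuine morphism in $\mathcal Ext^{m+n}_\C(\mathbbm 1,\mathbbm 1)$ after the translation $\mathrm{Tr}_{\lambda_{\mathbbm 1}^{-1},\lambda_{\mathbbm 1}}$, follows from the unit-compatibility identities $\varrho_X\circ\gamma_{\mathbbm 1,X}=\lambda_X$ and $\lambda_X\circ\gamma_{X,\mathbbm 1}=\varrho_X$, which in the braided case are recorded in Remark \ref{rem:opposite}(4) and which for a mere lax braiding still follow from the triangle equations together with $\varrho_{\mathbbm 1}\circ\gamma_{\mathbbm 1,\mathbbm 1}=\lambda_{\mathbbm 1}$ as in \cite{JoSt93}.

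Next I would verify the two relations coupling $\Gamma$ to the comparison maps $L$ and $R$ of the roof $(\ref{eq:roof})$, namely $R(\zeta,\xi)\circ\Gamma(\xi,\zeta)=L(\xi,\zeta)$ and $L(\zeta,\xi)\circ\Gamma(\xi,\zeta)=R(\xi,\zeta)$, which express that $\Gamma$ intertwines the two roofs appearing in the loop $(\ref{eq:loop})$. These are degreewise identities: on a summand $E_p\otimes F_q$ the projections built into $L$ and $R$ kill all but one term, so the check reduces to naturality of $\gamma$ along $e_0$, $f_0$ followed by the unit identities of Remark \ref{rem:opposite}(4). The delicate point, and the step I expect to be the main obstacle, is the sign reconciliation: the interior Koszul signs $(-1)^{pq}$, the factors $(-1)^{mn}$ and $(-1)^{\eta_j}$ built into $R$, and the global scaling $(-1)^{mn}$ carried by the target $(-1)^{mn}\zeta\boxtimes\xi$ must cancel precisely, where (as in the diagram following the $\vdash$-action) the scaling of the target is realized by compensating isomorphisms on the interior terms, which is what absorbs the leftover interior sign. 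I would carry this out by splitting into the degree ranges used in the definitions of $L$ and $R$ and treating each range uniformly.

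Finally, for the symmetric case I would compute $\Gamma(\zeta,\xi)\circ\Gamma(\xi,\zeta)$ summandwise: on $E_p\otimes F_q$ it equals $(-1)^{pq}(-1)^{qp}\,\gamma_{F_q,E_p}\circ\gamma_{E_p,F_q}$, and since $(-1)^{2pq}=1$ and $\gamma_{F_q,E_p}=\gamma_{E_p,F_q}^{-1}$ by symmetry, this is $\id_{E_p\otimes F_q}$. Hence each $\gamma_{E_p,F_q}$ is invertible, $\Gamma(\xi,\zeta)$ is an isomorphism, and reading the identity with the target base point gives $\Gamma((-1)^{mn}\zeta,\xi)\circ\Gamma(\xi,\zeta)=\id_{\xi\boxtimes\zeta}$, as claimed.
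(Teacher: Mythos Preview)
Your proposal is essentially the paper's own proof. The paper likewise defines $\Gamma_i$ as the diagonal matrix of braidings $\gamma_{E_r,F_{i-r}}$ with Koszul signs, checks it is a chain map via naturality of $\gamma$ and the axiom $\varrho_{\mathbbm 1}\circ\gamma_{\mathbbm 1,\mathbbm 1}=\lambda_{\mathbbm 1}$, verifies the intertwining with $L$ and $R$ by splitting into the two degree ranges, and obtains the symmetric statement from $\gamma_{Y,X}=\gamma_{X,Y}^{-1}$. The only cosmetic difference is that the paper normalizes with an extra global factor $(-1)^{mn}$, writing $\Gamma_i=(-1)^{mn}\sum_r(-1)^{r(i-r)}\gamma_{E_r,F_{i-r}}$, which corresponds to realizing $(-1)^{mn}\zeta\boxtimes\xi$ with the sign on the rightmost boundary map rather than via interior rescalings as you suggest; your remark that this sign can be absorbed into the chosen realization of the target is exactly the point. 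Your formulation of the second identity as $L(\zeta,\xi)\circ\Gamma(\xi,\zeta)=R(\xi,\zeta)$ is the composition that actually type-checks and matches the commutative rhombus used in the proof of Theorem~\ref{thm:trivial_bracket}.
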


\begin{proof}
For every $0 \leq i \leq m + n -1$ let the $i$-th component of $\Gamma({\xi, \zeta})$ be given as
\begin{align*}
\Gamma_i & = \Gamma_i(\xi, \zeta) = (-1)^{mn} \sum_{r=0}^i{(-1)^{r(i-r)} \gamma_{E_r, F_{i-r}}}\\ 
& = (-1)^{mn} \left[
\begin{matrix}
\gamma_{E_0, F_i} & 0 & \cdots & 0 & 0\\
0 & (-1)^{i-1}\gamma_{E_1, F_{i-1}} & \cdots & 0 & 0\\
\vdots & \vdots & \ddots & \vdots & \vdots \\
0 & 0 & \cdots  & (-1)^{i-1}\gamma_{E_{i-1}, F_1} & 0\\
0 & 0 & \cdots & 0 & \gamma_{E_i, F_0}
\end{matrix}\right] .
\end{align*}
We check that all diagrams which (in order to get a morphism $\Gamma({\xi, \zeta}): \xi \otimes \zeta \rightarrow
(-1)^{mn} \zeta \otimes \xi$) should commute, really do so. First of all, we take care of the boundary maps. But the commutativity of the diagrams
$$
\xymatrix@C=60pt{
E_0 \otimes F_0 \ar[r]^-{e_0 \otimes f_0} \ar[d]_{(-1)^{mn}\gamma_{E_0,F_0}} & \mathbbm 1 \otimes \mathbbm 1 \ar[r]^-{\lambda_\mathbbm 1} \ar[d]^{\gamma_{\mathbbm 1,\mathbbm 1}} & \mathbbm 1 \ \ \ar@{=}@<-3pt>[d]\\
F_0 \otimes E_0 \ar[r]^-{(-1)^{mn} f_0 \otimes e_0}	   & \mathbbm 1 \otimes \mathbbm 1 \ar[r]^-{\lambda_\mathbbm 1}	    & \mathbbm 1 \ ,
}
$$
$$
\xymatrix@C=60pt@R=40pt{
\mathbbm 1 \ar[r]^{\lambda_\mathbbm 1} \ar@{=}[d] & \mathbbm 1 \otimes \mathbbm 1 \ar[r]^-{
\left[\begin{smallmatrix}
(-1)^m \mathbbm 1 \otimes f_n\\
e_m \otimes \mathbbm 1
\end{smallmatrix}\right] 
}
\ar[d]_{\gamma_{\mathbbm 1,\mathbbm 1}} & (\mathbbm 1 \otimes F_{n-1}) \oplus (E_{m-1} \otimes \mathbbm 1) \ar[d]^-{
\left[\begin{smallmatrix}
0 & (-1)^{n}\gamma_{E_{m-1},\mathbbm 1}\\
(-1)^{m}\gamma_{\mathbbm 1, F_{n-1}} & 0
\end{smallmatrix}\right] 
}\\
\mathbbm 1 \ar[r]^{\lambda_\mathbbm 1} & \mathbbm 1 \otimes \mathbbm 1 \ar[r]^-{
\left[\begin{smallmatrix}
(-1)^n \mathbbm 1 \otimes e_m\\
f_n \otimes \mathbbm 1
\end{smallmatrix}\right] 
}  & (\mathbbm 1 \otimes E_{m-1}) \oplus (F_{n-1} \otimes \mathbbm 1)
}
$$
is automatic due to the axioms (remember, $\gamma$ is a natural transormation $- \otimes - \rightarrow (- \otimes -) \circ
\tau$ and fulfills $\lambda_{\mathbbm 1} = \varrho_{\mathbbm 1} = \lambda_{\mathbbm 1} \circ \gamma_{\mathbbm 1, \mathbbm 1}$).
Moreover, by the naturality of $\gamma$ and careful sign-reading, the diagrams
$$
\xymatrix@C=60pt@R=40pt{
E_{r} \otimes F_{s} \ar[r]^-{\smatrix{(-1)^{r} E_r \otimes f_s \\ 
e_r \otimes F_s}} \ar[d]_{(-1)^{mn + rs} \gamma_{E_r, E_s}} & (E_{r} \otimes F_{s-1}) \oplus (E_{r-1} \otimes F_{s}) \ar[d]^{
(-1)^{mn + rs}\smatrix{0 & (-1)^{s}\gamma_{E_{r-1}, F_s}\\ (-1)^{r} \gamma_{E_r, F_{s-1}} & 0}
}
\\
F_s \otimes E_r \ar[r]^-{\smatrix{(-1)^{s} E_s \otimes e_r \\ f_s \otimes E_r}} & (F_{s} \otimes E_{r-1}) \oplus (F_{s-1} \otimes E_{r})
}
$$
also commute for all $1 \leq i \leq m+n-1$ and all $(r,s) \in \{0,\dots,m\} \times \{0,\dots,n\}$ with $r+s = i$. We still have to
show the two claimed properties of the family $\Gamma({\xi, \zeta}), \ \xi \in \mathcal Ext^m_\C(\mathbbm 1,\mathbbm 1), \
\zeta \in \mathcal Ext^n_\C(\mathbbm 1,\mathbbm 1)$.

Let $0 \leq i \leq m+n$ and put $\nu := m(m+n-i)$. The commutativity of the diagrams
$$
\xymatrix@!C=65pt@R=30pt{
(\xi \boxtimes \zeta)_i \ar@{->>}[d]_{\text{can}} \ar@{=}[r] & (\xi \boxtimes \zeta)_i \ar@{->>}[d]^{\text{can}} \ar@{=}[r] & (\xi \boxtimes \zeta)_i \ar[d]^-{\Gamma_i} &\\
E_i \otimes F_0 \ar@{=}[r] \ar@{=}[d]_{(-1)^{mn}} & E_i \otimes F_0 \ar[d]^-{(-1)^{mn}\gamma_{E_i, F_0}} & ((-1)^{mn}\zeta \boxtimes \xi)_i \ar@{->>}[d]^-{\text{can}} & \\
E_i \otimes F_0 \ar[r]^{\gamma_{E_i, F_0}} \ar[d]_{E_i \otimes f_0} & F_0 \otimes E_i \ar@{=}[r] \ar[d]^{f_0 \otimes E_i} & F_0 \otimes E_i  \ar[d]^{f_0 \otimes E_i} & (\text{for $0 \leq i < m$})\\
E_i \otimes \mathbbm 1 \ar[r]^{\gamma_{E_i,\mathbbm 1}} \ar[d]_{\varrho_{E_i}} & \mathbbm 1 \otimes E_i \ar@{=}[r] \ar[d]^{\lambda_{E_i}} & \mathbbm 1 \otimes E_i \ar[d]^{\lambda_{E_i}} &\\
E_i \ar@{=}[r] & E_i \ar@{=}[r] & E_i &
}
$$
and (notice that $E_m = \mathbbm 1$)
$$
\xymatrix@!C=65pt@R=30pt{
(\xi \boxtimes \zeta)_i \ar@{->>}[d]_{\text{can}} \ar@{=}[r] & (\xi \boxtimes \zeta)_i \ar@{->>}[d]^{\text{can}} \ar@{=}[r] & (\xi \boxtimes \zeta)_i \ar[d]^-{\Gamma_i} & \\
E_m \otimes F_{i-m} \ar@{=}[r] \ar@{=}[d]_{(-1)^{\nu}} & E_m \otimes F_{i-m} \ar[d]^{(-1)^\nu \gamma_{E_m,F_{i-m}}} & ((-1)^{mn}\zeta \boxtimes \xi)_i \ar@{->>}[d]^-{\text{can}} & \quad (\text{for $m \leq i \leq m + n$}) \\
E_m \otimes F_{i-m} \ar[r]^-{\gamma_{E_m, F_{i-m}}} \ar[d]_{\lambda_{F_{i-m}}} & F_{i-m} \otimes E_m \ar@{=}[r] \ar[d]^{\varrho_{F_{i-m}}} & F_{i-m} \otimes E_m \ar[d]^{\varrho_{F_{i-m}}}  &\\
F_{i-m} \ar@{=}[r] & F_{i-m} \ar@{=}[r] & F_{i-m} &
}
$$
translate to the desired equality: $R_i({\zeta, \xi}) \circ \Gamma_i = L_i({\xi, \zeta})$. The second equation may be deduced similarly. 

If now $(\C, \otimes, \mathbbm 1, \gamma)$ is even symmetric, the axiomatics tell us that, for any two objects $X, Y \in \Ob\C$, the composition $\gamma_{X,Y} \circ \gamma_{X,Y}$ is supposed to be the identity morphism of $X \otimes Y$. Hence the final claim is a direct consequence.
\end{proof}
\begin{thm}\label{thm:trivial_bracket}
Assume that the strong exact monoidal category $(\C, \otimes, \mathbbm 1)$ is lax braided. Then $\Omega_\C(\xi, \zeta)$ is homotopically equivalent to the trivial loop based at $\xi \circ \zeta$ for all
pairs $(\xi, \zeta) \in \mathcal Ext^{m}_\C(\mathbbm 1,\mathbbm 1) \times \mathcal Ext^{n}_\C(\mathbbm 1,\mathbbm 1)$. In particular, the map $[-,-]_\C$ is constantly zero.
\end{thm}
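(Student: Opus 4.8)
The plan is to show that the loop $\Omega_\C(\xi,\zeta)$ represents the neutral element of $\pi_1(\mathcal Ext^{m+n}_\C(\mathbbm 1,\mathbbm 1),\xi\circ\zeta)$; the assertion about $[-,-]_\C$ then follows at once, since $[\xi,\zeta]_\C=(u_\C^{-1}\circ v_\C^{-1})(\Omega_\C(\xi,\zeta))$ and both $u_\C$ and $v_\C$ are group isomorphisms (Theorem \ref{thm:iso_pi0pi1} and Lemma \ref{lem:isopi1}) carrying the identity to $0$. By Proposition \ref{prop:iso_pi1} it is equivalent, and far more convenient, to argue inside the Quillen groupoid $\mathsf G=\mathsf G(\mathcal Ext^{m+n}_\C(\mathbbm 1,\mathbbm 1))$, where $\pi_1$ is realized as $\End_{\mathsf G}(\xi\circ\zeta)$ and every arrow has become invertible.

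First I would read the four arrows bounding the loop (\ref{eq:loop}) off starting from the basepoint $\xi\circ\zeta$, so that the element of $\mathsf G$ it represents is
$$
[\Omega_\C(\xi,\zeta)]=R(\zeta,\xi)\circ L(\zeta,\xi)^{-1}\circ R(\xi,\zeta)\circ L(\xi,\zeta)^{-1},
$$
where $L(\xi,\zeta)\colon\xi\boxtimes\zeta\to\xi\circ\zeta$ and $R(\xi,\zeta)\colon\xi\boxtimes\zeta\to(-1)^{mn}\zeta\circ\xi$ (and symmetrically for the pair $(\zeta,\xi)$) are the morphisms introduced before (\ref{eq:roof}).

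Next I would invoke Lemma \ref{lem:laxbraiding}, whose very purpose is to supply the diagonal $\Gamma:=\Gamma(\xi,\zeta)\colon\xi\boxtimes\zeta\to(-1)^{mn}\zeta\boxtimes\xi$ together with the two identities $R(\zeta,\xi)\circ\Gamma=L(\xi,\zeta)$ and $L(\zeta,\xi)\circ\Gamma=R(\xi,\zeta)$. These are precisely two commuting triangles in $\mathcal Ext^{m+n}_\C(\mathbbm 1,\mathbbm 1)$, i.e. elementary homotopies in the sense of Section \ref{sec:homo_groups}, and in $\mathsf G$ they rearrange to $L(\xi,\zeta)^{-1}=\Gamma^{-1}\circ R(\zeta,\xi)^{-1}$ and $L(\zeta,\xi)^{-1}=\Gamma\circ R(\xi,\zeta)^{-1}$. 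Substituting both into the displayed word, the inner factor $R(\xi,\zeta)^{-1}\circ R(\xi,\zeta)$ collapses, and then so does $\Gamma\circ\Gamma^{-1}$, leaving $R(\zeta,\xi)\circ R(\zeta,\xi)^{-1}=\id_{\xi\circ\zeta}$. Hence $[\Omega_\C(\xi,\zeta)]$ is trivial, which is exactly what is needed.

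The genuine content here is carried entirely by Lemma \ref{lem:laxbraiding}; the theorem itself is only the bookkeeping turning its two relations into the cancellation above. Consequently the step demanding the most care is not a calculation but keeping the orientations and the signs $(-1)^{mn}$ straight — checking that $R(\zeta,\xi)$ really terminates at the basepoint $\xi\circ\zeta$ (the signs $(-1)^{mn}$ on its source and target cancel) and that the two triangles share the common edge $\Gamma$ in the correct direction. Once the loop is written as a word in $\mathsf G$ and the two relations are applied, no further input is required. I would close by noting that the argument never uses invertibility of $\gamma$, so the hypothesis ``lax braided'' indeed suffices, exactly as in \cite{Ta04}.
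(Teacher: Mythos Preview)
Your proof is correct and follows the same idea as the paper's: both use the diagonal $\Gamma$ supplied by Lemma \ref{lem:laxbraiding} to split the loop into two commuting triangles and then collapse it. The only difference is presentational---the paper draws the diamond with $\Gamma$ inserted and walks through a chain of elementary homotopies pictorially, while you write the loop as a word in the Quillen groupoid and cancel algebraically; these are equivalent encodings of the same contraction. One small note: your second relation $L(\zeta,\xi)\circ\Gamma=R(\xi,\zeta)$ is the one that actually type-checks against the diagram (the paper's statement of the lemma has the factors in the other order, which appears to be a typo there), so your bookkeeping is in fact the cleaner of the two.
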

\begin{proof}
The previous Lemma translates into the commutativity of the diagram
$$
\xymatrix@!C=25pt{
& \xi \boxtimes \zeta \ar[dl]_-{L} \ar[dr]^-{R} \ar[dd]_{\Gamma(\zeta, \xi)} & \\
\xi \circ \zeta &	& (-1)^{mn} \zeta \circ \xi\\
& (-1)^{mn} \zeta \boxtimes \xi \ar[ul]^-{R} \ar[ur]_-{L} & .
}
$$
We obtain the following chain of elementary homotopic loops based at $\xi \circ \zeta$:
$$
\xymatrix@!C=25pt{
& \xi \boxtimes \zeta \ar[dl]_-{L} \ar[dr]^-{R} & & & & \xi \boxtimes \zeta \ar[dl]_-{R \circ \Gamma} \ar[dr]^-{L \circ \Gamma} & \\
\xi \circ \zeta &	& (-1)^{mn} \zeta \circ \xi & \quad \sim & \xi \circ \zeta &	& (-1)^{mn} \zeta \circ \xi  \\
& (-1)^{mn} \zeta \boxtimes \xi \ar[ul]^-{R} \ar[ur]_-{L} & & & & (-1)^{mn} \zeta \boxtimes \xi  \ar[ul]^-{R} \ar[ur]_-{L} &
}
$$
\begin{align*}
& \xymatrix@C=25pt{
\sim & \xi \circ \zeta & (-1)^{mn} \zeta \boxtimes \xi \ar@<-1ex>[l]_-{L} \ar@<1ex>[l]^-{L} \ar@<-1ex>[r]_-{R} \ar@<1ex>[r]^-{R} & (-1)^{mn} \zeta \circ \xi
}\\
& \xymatrix@!C=25pt{
\sim & \xi \circ \zeta  \ar[r]^-{\id_{\xi \circ \zeta}} & \xi \circ \zeta
} \quad ,
\end{align*}
that is, $\Omega_\C(\xi, \zeta)$ is homotopically equivalent to the trivial loop. Hence 
$$[-,-]_\C = (u_\C^{-1} \circ v_\C^{-1})(\Omega_\C(-,-))$$
vanishes.
\end{proof}
\begin{rem}
Let $(\C, \otimes, \mathbbm 1)$ be a strong exact monoidal category that possesses a lax braiding $\gamma$. Observe that the proof of Theorem \ref{thm:trivial_bracket} heavily relies on the somehow symmetric shape of $\Omega_\C(\xi,\zeta)$. Hence Lemma \ref{lem:laxbraiding} cannot be used to deduce any triviality statements relating the squaring map $sq_\C$ in general. However, it tells us that, for $\xi$ an admissible self-extension of $\mathbbm 1$ of even length, $sq_\C(\xi)$ coincides with the equivalence class of the loop
$$
\xymatrix@C=30pt{
\xi \circ \xi  \ar[r]^-{\Gamma(\xi, \xi)} & \xi \circ \xi \ .
}
$$
\end{rem}
\begin{nn}
Let us close this section with a comparison result. It will enable us to relate the bracket and the square operations coming from two exact monoidal categories provided they are connected via some exact and almost (co)strong monodial functor. The following lemma is essential for its proof. Let $(\D, \otimes_\D, \mathbbm 1_\D)$ be another strong exact monoidal category.
\end{nn}
\begin{lem}\label{lem:morloopcomp}
Let $\xi$ be a $m$-extension in $\mathcal Ext^{m}_\C(\mathbbm 1_\C, \mathbbm 1_\C)$ and let $\zeta$ be a $n$-extension in $\mathcal Ext^{n}_\C(\mathbbm 1_\C, \mathbbm 1_\C)$. Let $(\scrL, \phi, \phi_0): \C \rightarrow \D$ be an exact and almost strong monoidal functor, and let $\vartheta: \scrL_m \xi \boxtimes_\D \scrL_n \zeta \rightarrow \scrL_{m+n}(\xi \boxtimes_\C \zeta)$ be the morphism in $\mathcal Ext^{m+n}_\D(\mathbbm 1, \mathbbm 1)$ defined within the proof of Proposition \emph{\ref{prop:laxfunc_comm}}. Then the following diagrams commute.
\begin{align*}
\Xi \quad \quad &\equiv \quad \quad \quad \quad \quad \
\begin{aligned}
\xymatrix{
\scrL_m \xi \boxtimes_\D \scrL_n \zeta \ar[d]_-{L} \ar[r]^-\vartheta & \scrL_{m+n}(\xi \boxtimes_\C \zeta) \ar[d]^-{\scrL_{m+n}(L)}\\
(\scrL_m \xi) \circ (\scrL_n \zeta) \ar[r]^-{=} & \scrL_{m+n} (\xi \circ \zeta)
}
\end{aligned}\\&\\
\Xi' \quad \quad &\equiv \quad \quad
\begin{aligned}
\xymatrix{
\scrL_m \xi \boxtimes_\D \scrL_n \zeta \ar[d]_-{R} \ar[r]^-\vartheta & \scrL_{m+n}(\xi \boxtimes_\C \zeta) \ar[d]^-{\scrL_{m+n}(R)}\\
(-1)^{mn}(\scrL_m \zeta) \circ (\scrL_n \xi) \ar[r]^-{=} & (-1)^{mn}\scrL_{m+n} (\zeta \circ \xi)
}
\end{aligned}
\end{align*}
\end{lem}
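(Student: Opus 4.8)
The plan is to establish the two identities $\scrL_{m+n}(L)\circ\vartheta = L(\scrL_m\xi,\scrL_n\zeta)$ and $\scrL_{m+n}(R)\circ\vartheta = R(\scrL_m\xi,\scrL_n\zeta)$ by a degreewise comparison, checking for each homological degree $i$ that the induced square of morphisms in $\D$ commutes. Observe first that the lower horizontal arrows of $\Xi$ and $\Xi'$ are genuine identities: since the Yoneda product is splicing and $\scrL$ is a functor commuting with composition, the splice morphism $f_n\circ e_0$ of $\xi\circ\zeta$ is sent to $\scrL f_n\circ\scrL e_0$, so $\scrL_{m+n}(\xi\circ\zeta)=(\scrL_m\xi)\circ(\scrL_n\zeta)$ on the nose. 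As $\scrL$ is additive it preserves the finite biproducts $(\scrL_m\xi\boxtimes_\D\scrL_n\zeta)_i = \bigoplus_{p+q=i}\scrL E_p\otimes_\D\scrL F_q$ and carries the canonical projection $\mathrm{can}$ occurring in $L$ and $R$ to the corresponding projection of $\D$-biproducts; because $\vartheta_i$ is the diagonal matrix with entries $\phi_{E_p,F_q}$, composing it with such a projection simply selects the single entry $\phi_{E_0,F_i}$ (respectively $\phi_{E_i,F_0}$, or $\phi_{E_m,F_{i-m}}$). Thus in every degree the comparison collapses to a square built from one $\phi$-component, one application of $\scrL e_0$ or $\scrL f_0$, and the unit isomorphisms.

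On these one-summand squares I would invoke two facts. First, the naturality of $\phi$ as a natural transformation $\scrL(-)\otimes_\D\scrL(-)\to\scrL(-\otimes_\C-)$ handles the squares containing $e_0$ or $f_0$; for instance $\scrL(e_0\otimes_\C F_i)\circ\phi_{E_0,F_i} = \phi_{\mathbbm 1_\C,F_i}\circ(\scrL e_0\otimes_\D\scrL F_i)$. Second, the two unit-coherence axioms of a lax monoidal functor (Definition \ref{def:monoidalfunc}(1)), which read $\lambda_{\D,\scrL X} = \scrL(\lambda_{\C,X})\circ\phi_{\mathbbm 1_\C,X}\circ(\phi_0\otimes_\D\scrL X)$ and $\varrho_{\D,\scrL X} = \scrL(\varrho_{\C,X})\circ\phi_{X,\mathbbm 1_\C}\circ(\scrL X\otimes_\D\phi_0)$, convert the $\D$-coherence map appearing in $L$ and $R$ of the image extensions into $\scrL(\lambda_\C)$ or $\scrL(\varrho_\C)$ precomposed with the relevant $\phi$-component. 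Concatenating the naturality square with the appropriate unit axiom is exactly what rewrites $L(\scrL_m\xi,\scrL_n\zeta)_i$ as $\scrL(L_i)\circ\vartheta_i$, and likewise for $R$; this disposes of every degree $i$ with $0\le i\le m+n-1$.

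The genuinely delicate part is the two boundary degrees, where the bordering objects of the two extensions differ ($\scrL\mathbbm 1_\C\otimes_\D\scrL\mathbbm 1_\C$ versus $\scrL(\mathbbm 1_\C\otimes_\C\mathbbm 1_\C)$) and the translation data $\Delta$, $\nabla$, $\underline\Delta$, $\overline\nabla$ enter. Here I would substitute their explicit descriptions from the paragraph preceding Proposition \ref{prop:laxfunc_comm}, namely $\Delta = (\phi_0\otimes_\D\scrL\mathbbm 1_\C)\circ\lambda_{\scrL\mathbbm 1_\C}^{-1}\circ\phi_0$ and $\nabla = \phi_0^{-1}\circ\scrL(\lambda_{\mathbbm 1_\C})\circ\phi_{\mathbbm 1_\C,\mathbbm 1_\C}$, and use that $\phi_0$ is invertible (the functor is almost strong), which by the commuting square displayed in that paragraph forces $\phi_{\mathbbm 1_\C,\mathbbm 1_\C}$ to be invertible as well. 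In fact the very square verified inside the proof of Proposition \ref{prop:laxfunc_comm}, relating $\mathbbm 1_\D\to\scrL\mathbbm 1_\C\to\scrL\mathbbm 1_\C\otimes_\D\scrL\mathbbm 1_\C$ with its $\scrL$-counterpart, is precisely the boundary compatibility needed for $\Xi$, and its mirror handles the top degree. I expect the bookkeeping of $\lambda$ against $\varrho$ at these two ends to be the main obstacle, since one must match the unit map chosen in $\boxtimes_\D$ against the one hidden in $\mathrm{Tr}_{\Delta,\nabla}$, using the invertibility of $\phi_0$ to pass freely between $\mathbbm 1_\D$ and $\scrL\mathbbm 1_\C$.

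Finally, the diagram $\Xi'$ for $R$ is verified by the same scheme with the roles of $e_0,\lambda$ and $f_0,\varrho$ interchanged and the factors $(-1)^{mn}$ and $(-1)^{\eta_j}$ carried along. These signs cause no trouble: $\vartheta$ is sign-free, while $R$ on the source and $\scrL_{m+n}(R)$ on the target carry identical sign prefactors, so they cancel in the comparison. Alternatively one may deduce $\Xi'$ from $\Xi$ by passing to the opposite monoidal categories, under which an almost strong functor becomes almost costrong and the roofs $L$ and $R$ are exchanged, compare Remark \ref{rem:laxfunc_comm}.
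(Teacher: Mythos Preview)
Your proposal is correct and follows essentially the same route as the paper: a degreewise verification using the diagonal form of $\vartheta_i$, naturality of $\phi$, and the unit-coherence axioms of a lax monoidal functor. The paper's proof is slightly more economical in one respect: it does not single out the boundary degrees $-1$ and $m+n$ at all, because morphisms in $\mathcal Ext^{m+n}_\D(\mathbbm 1_\D,\mathbbm 1_\D)$ are by definition the identity there, so commutativity is automatic. The translation data $\Delta,\nabla,\phi_0,\phi_0^{-1}$ enter only through the fact that the translated extension $\scrL_m\xi$ has augmentation $\phi_0^{-1}\circ\scrL(e_0)$, and the paper absorbs this directly into the left column of its degreewise square (writing $(\phi_0^{-1}\otimes_\D\scrL F_i)\circ(\scrL(e_0)\otimes_\D\scrL F_i)$ followed by $\lambda_{\scrL F_i}$). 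Your separate discussion of the boundaries is therefore not wrong, just more elaborate than necessary; once you realise the translation is already baked into the components of $L$ and $R$ for the translated extensions, the verification becomes uniform across all $0\le i\le m+n-1$.
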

\begin{proof}
Let us show that the diagram $\Xi$ commutes. Recall that for $0 \leq i \leq m+n-1$, $\vartheta_i$ is given by
$$
\vartheta_i = \left[
\begin{matrix}
\phi_{E_0, F_i} & 0 & \cdots & 0 & 0\\
0 & \phi_{E_1, F_{i-1}} & \cdots & 0 & 0\\
\vdots & \vdots & \ddots & \vdots & \vdots \\
0 & 0 & \cdots  & \phi_{E_{i-1}, F_1} & 0\\
0 & 0 & \cdots & 0 & \phi_{E_i, F_0}
\end{matrix}\right]
$$
Let $0 \leq i < n$. Since the diagram
$$
\xymatrix@C=30pt{
{\bigoplus_{r=0}^i \scrL E_r \otimes_\D \scrL F_{i-r}} \ar[d]_-{\mathrm{can}} \ar[r]^-{\vartheta_i} & {\bigoplus_{r=0}^i \scrL(E_r \otimes_\C F_{i-r})} \ar[d]^-{\mathrm{can}}\\
\scrL E_0 \otimes_\D \scrL F_i \ar[d]_-{(\phi_0^{-1} \otimes_\D \scrL F_i) \circ (\scrL(e_0) \otimes_\D \scrL F_i)} \ar[r]^-{\phi_{E_0, F_i}} & \scrL (E_0 \otimes_\C F_i) \ar[d]^-{\scrL(e_0 \otimes_\C F_i)} \\
\mathbbm 1_\D \otimes_\D \scrL F_i \ar[d]_-{\lambda_{\scrL F_i}} \ar[r]^-{\phi_{E_0, F_i} \circ (\phi_0 \otimes_\D \scrL F_i)} & \scrL (\mathbbm 1_\C \otimes_\C F_i) \ar[d]^-{\scrL(\lambda_{F_i})}\\
\scrL F_i \ar@{=}[r] & \scrL F_i
}
$$
commutes, so does $\Xi_i$. Further, the commutativity of
$$
\xymatrix@C=30pt{
{\bigoplus_{r=0}^i \scrL E_r \otimes_\D \scrL F_{i - r}} \ar[d]_-{\mathrm{can}} \ar[r]^-{\vartheta_i} & {\bigoplus_{r=0}^i \scrL(E_r \otimes_\C F_{i-r})} \ar[d]^-{\mathrm{can}} \\
\scrL E_{i-n} \otimes_\D \scrL \mathbbm 1_\C \ar[r]^-{\phi_{E_{i-n}, \mathbbm 1_\C}} \ar[d]_-{\varrho_{\scrL E_{i-n}} \circ (\scrL E_{i-n} \otimes_\D \phi_0^{-1})} & \scrL (E_{i-n} \otimes_\C \mathbbm 1_\C) \ar[d]^-{\scrL(\varrho_{E_{i-n}})}\\
\scrL E_{i-n} \ar@{=}[r] & \scrL E_{i-n}
}
$$
implies that $\Xi_{i}$ does commute for $n \leq i \leq m+n$ as well. The assertion for the diagram $\Xi'$ follows similarly.
\end{proof}
\begin{prop}\label{prop:functor_homot}
Let $(\scrL,\phi, \phi_0): \C \rightarrow \D$ be an exact and almost strong monoidal functor. Let $(\xi, \zeta)$ be a given pair of extensions in  $\mathcal Ext^m_\C(\mathbbm 1, \mathbbm{1}) \times \mathcal Ext^n_\C(\mathbbm 1_\C, \mathbbm 1_\C)$. Then the loops
$$
\scrL_{m+n}(\Omega_\C(\xi, \zeta)) \quad \text{and} \quad \Omega_\D(\scrL_m \xi, \scrL_n \zeta)
$$
are homotopically equivalent.
\end{prop}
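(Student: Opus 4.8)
The plan is to deduce the statement from the two commuting squares $\Xi$ and $\Xi'$ of Lemma \ref{lem:morloopcomp} by an argument carried out entirely inside the Quillen groupoid of the relevant extension category over $\D$. Since $\scrL$ is exact it preserves the Yoneda product of representatives, so that $\scrL_{m+n}(\xi\circ\zeta)=(\scrL_m\xi)\circ(\scrL_n\zeta)$ and $\scrL_{m+n}(\zeta\circ\xi)=(\scrL_n\zeta)\circ(\scrL_m\xi)$; hence the loops $\scrL_{m+n}(\Omega_\C(\xi,\zeta))$ and $\Omega_\D(\scrL_m\xi,\scrL_n\zeta)$ are based at the same object and share both of their ``valleys'' $\scrL_{m+n}(\xi\circ\zeta)$ and $(-1)^{mn}\scrL_{m+n}(\zeta\circ\xi)$. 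The two loops therefore differ only in their two apexes: over the pair $(\xi,\zeta)$ the apex is $\scrL_m\xi\boxtimes_\D\scrL_n\zeta$ in one loop and $\scrL_{m+n}(\xi\boxtimes_\C\zeta)$ in the other, with the transition supplied by $\vartheta(\xi,\zeta)$, and symmetrically over $(\zeta,\xi)$ with $\vartheta(\zeta,\xi)$. It thus suffices to compare the northern and southern roofs separately and to splice the resulting homotopies along the shared valley.

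For the northern roof, abbreviate $\vartheta:=\vartheta(\xi,\zeta)$, $B:=\scrL_m\xi\boxtimes_\D\scrL_n\zeta$, $B':=\scrL_{m+n}(\xi\boxtimes_\C\zeta)$, and write $L_\D,R_\D$ for the legs $L(\scrL_m\xi,\scrL_n\zeta),R(\scrL_m\xi,\scrL_n\zeta)$ and $L_\C,R_\C$ for $\scrL_{m+n}(L(\xi,\zeta)),\scrL_{m+n}(R(\xi,\zeta))$. Lemma \ref{lem:morloopcomp} asserts $L_\C\circ\vartheta=L_\D$ and $R_\C\circ\vartheta=R_\D$, each of which I read as a commuting triangle. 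Starting from the northern roof of $\Omega_\D(\scrL_m\xi,\scrL_n\zeta)$,
$$\scrL_{m+n}(\xi\circ\zeta)\xleftarrow{\,L_\D\,}B\xrightarrow{\,R_\D\,}(-1)^{mn}\scrL_{m+n}(\zeta\circ\xi),$$
I perform the elementary homotopy (in the sense of Section \ref{sec:homo_groups}) that replaces the left leg $L_\D$ by the composite $L_\C\circ\vartheta$, and then the one that replaces the right leg $R_\D$ by $R_\C\circ\vartheta$. This produces the homotopically equivalent path
$$\scrL_{m+n}(\xi\circ\zeta)\xleftarrow{\,L_\C\,}B'\xleftarrow{\,\vartheta\,}B\xrightarrow{\,\vartheta\,}B'\xrightarrow{\,R_\C\,}(-1)^{mn}\scrL_{m+n}(\zeta\circ\xi),$$
whose middle segment $B'\xleftarrow{\vartheta}B\xrightarrow{\vartheta}B'$ is a backtrack along the single edge $\vartheta$ and is therefore homotopically trivial, $\mathsf G$ being a groupoid (this is the same cancellation already employed in the proof of Theorem \ref{thm:trivial_bracket}). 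Deleting it leaves exactly the northern roof $\scrL_{m+n}(\xi\circ\zeta)\xleftarrow{L_\C}B'\xrightarrow{R_\C}(-1)^{mn}\scrL_{m+n}(\zeta\circ\xi)$ of $\scrL_{m+n}(\Omega_\C(\xi,\zeta))$.

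Running the identical three moves for the pair $(\zeta,\xi)$ — now using $\vartheta(\zeta,\xi)$ together with the two squares $\Xi,\Xi'$ of Lemma \ref{lem:morloopcomp} instantiated at $(\zeta,\xi)$ — shows that the southern roof of $\Omega_\D(\scrL_m\xi,\scrL_n\zeta)$ is homotopically equivalent to the southern roof of $\scrL_{m+n}(\Omega_\C(\xi,\zeta))$. As $\sim$ is a congruence for concatenation of paths, gluing the two roof-homotopies along the shared valley $(-1)^{mn}\scrL_{m+n}(\zeta\circ\xi)$ yields the desired equivalence $\Omega_\D(\scrL_m\xi,\scrL_n\zeta)\simeq\scrL_{m+n}(\Omega_\C(\xi,\zeta))$. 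Most of the genuine work has already been discharged in Lemma \ref{lem:morloopcomp}; the one point demanding care, and hence the main obstacle, is the consistent bookkeeping of the Baer-sign decorations $(-1)^{mn}$ on sources and targets when Lemma \ref{lem:morloopcomp} is invoked for the swapped pair. Since $\scrL$ is additive, the morphism $(-1)^{mn}\id$ is carried by $\scrL_{m+n}$ to $(-1)^{mn}\id$ and the maps $\vartheta$ restrict to morphisms between the correspondingly decorated extensions, so these signs match up automatically and no obstruction arises.
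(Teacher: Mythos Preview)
Your proof is correct and follows essentially the same approach as the paper's: both invoke the two commuting squares $\Xi,\Xi'$ of Lemma~\ref{lem:morloopcomp} (together with their counterparts for the swapped pair $(\zeta,\xi)$) and then perform the obvious homotopy moves in the Quillen groupoid. The paper packages this into a single commutative diagram containing both loops and the four connecting $\vartheta$-rhomboids, concluding that the inner diamond $\scrL_{m+n}(\Omega_\C(\xi,\zeta))$ is homotopic to the outer $\Omega_\D(\scrL_m\xi,\scrL_n\zeta)$; your roof-by-roof presentation with an explicit backtrack cancellation spells out the same moves in words.
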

\begin{proof}
Put $\nu := m+n$. According to Lemma \ref{lem:morloopcomp}, the flank rhomboids in the following diagram commute.
$$
\small{
\xymatrix@!C=23pt@R=23pt{
& \scrL_m \xi \boxtimes_\D \scrL_n \zeta \ar@[blue][dl]_-{L} \ar[rr]^-{\vartheta}  & & \scrL_\nu (\xi \boxtimes_\C \zeta) \ar@[red][dl]_-{\scrL_\nu (L)} \ar@[red][dr]^-{\scrL_\nu (R)}  & & \scrL_m \xi \boxtimes_\D \scrL_n \zeta \ar@[blue][dr]^-{R} \ar[ll]_-\vartheta & \\
(\scrL_m \xi) \circ (\scrL_n \zeta) \ar[rr]^-{=} & & \scrL_\nu (\xi \circ \zeta) & & (-1)^{mn}\scrL_\nu (\zeta \circ \xi) & & (-1)^{mn}(\scrL_n \zeta) \circ (\scrL_m\xi) \ar[ll]_-{=} \\
& (-1)^{mn}\scrL_m \zeta \boxtimes_\D \scrL_n \xi \ar@[blue][ul]^-{R} \ar[rr]_-{\vartheta} & & (-1)^{mn}\scrL_\nu (\zeta \boxtimes_\C \xi) \ar@[red][ul]^-{\scrL_\nu (R)} \ar@[red][ur]_{\scrL_\nu(L)} & & (-1)^{mn}\scrL_m \zeta \boxtimes_\D \scrL_n \xi \ar@[blue][ur]_-{L} \ar[ll]^-\vartheta &
}}
$$
It follows that the loop $\scrL_{m+n}(\Omega_\C(\xi, \zeta))$ (represented by the internal diamond) is homotopically equivalent to
$$
\xymatrix@!C=25pt{
& \scrL_m \xi \boxtimes_\D \scrL_n \zeta \ar@[blue][dl]_-{L} \ar@[blue][dr]^-{R}  &  \\
(\scrL_m \xi) \circ (\scrL_n \zeta) &  &(-1)^{mn}(\scrL_n \zeta) \circ (\scrL_m\xi)\\
& (-1)^{mn}\scrL_m \zeta \boxtimes_\D \scrL_n \xi \ar@[blue][ul]^-{R} \ar@[blue][ur]_{L} &
}
$$
which precisely is $\Omega(\scrL_m \xi, \scrL_n \zeta)$.
\end{proof}
\begin{thm}\label{thm:bracketcomm}
Let $(\scrL,\phi, \phi_0): \C \rightarrow \D$ be an exact and almost strong monoidal functor. Then the diagrams
$$
\xymatrix@C=30pt{
\Ext^m_\C(\mathbbm 1_\C, \mathbbm 1_\C) \times \Ext^n_\C(\mathbbm 1_\C, \mathbbm 1_\C) \ar[r]^-{[-,-]_\C} \ar[d]_{\scrL^\sharp_{m} \times \scrL^\sharp_n} & \Ext^{m+n-1}_\C(\mathbbm 1_\C, \mathbbm 1_\C) \ar[d]^-{\scrL^\sharp_{m+n-1}}\\
\Ext^m_{\D}(\mathbbm 1_\D, \mathbbm 1_\D) \times \Ext^n_{\D}(\mathbbm 1_\D, \mathbbm 1_\D) \ar[r]^-{[-,-]_\D} & \Ext^{m+n-1}_{\D}(\mathbbm 1_\D,\mathbbm 1_\D)
}
$$
and
$$
\xymatrix@C=30pt{
\Ext^{2n}_\C(\mathbbm 1_\C, \mathbbm 1_\C) \ar[r]^-{{sq}_\C} \ar[d]_{\scrL^\sharp_{2n}} & \Ext^{4n-1}_\C(\mathbbm 1_\C, \mathbbm 1_\C) \ar[d]^-{\scrL^\sharp_{4n-1}}\\
\Ext^{2n}_{\D}(\mathbbm 1_\D, \mathbbm 1_\D) \ar[r]^-{sq_{\D}} &
\Ext^{4n-1}_{\D}(\mathbbm 1_\D,\mathbbm 1_\D)
}
$$
commute for all integers $m, n \geq 1$.
\end{thm}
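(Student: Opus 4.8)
The plan is to reduce the statement to the two compatibility results already at our disposal, namely Proposition~\ref{prop:functor_homot} (comparing the loops $\Omega$ under $\scrL$) and Lemma~\ref{lem:grhomcomm} (comparing the isomorphisms $u$ and $v$ under $\scrL$), and then to account for the fact that $\scrL$ sends $\mathbbm 1_\C$ to $\scrL\mathbbm 1_\C$ rather than to $\mathbbm 1_\D$ by threading in the translation functor $\mathrm{Tr}_{\phi_0,\phi_0^{-1}}$.

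First I would unwind the definition $[\xi,\zeta]_\C = (u_\C^{-1}\circ v_\C^{-1})(\Omega_\C(\xi,\zeta))$. By Theorems~\ref{thm:iso_pi0pi1} and \ref{prop:pi0_pi1_iso} the composite $v_\C\circ u_\C$ is a group isomorphism $\Ext^{m+n-1}_\C(\mathbbm 1,\mathbbm 1)\xrightarrow{\sim}\pi_1(\mathcal Ext^{m+n}_\C(\mathbbm 1,\mathbbm 1),\xi\circ\zeta)$, so the claim $\scrL^\sharp_{m+n-1}([\xi,\zeta]_\C)=[\scrL^\sharp_m\xi,\scrL^\sharp_n\zeta]_\D$ amounts to the commutativity, evaluated on the base loop $\Omega_\C(\xi,\zeta)$, of a square whose horizontal arrows are $v_\bullet\circ u_\bullet$, whose left vertical arrow is $\scrL^\sharp_{m+n-1}$, and whose right vertical arrow is the map on fundamental groups induced by $\scrL$ followed by the translation isomorphism. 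The observation I would exploit is that $\scrL$ is exact, hence preserves the Yoneda splicing (Lemma~\ref{lem:exactfuncyoneda}), so that $\scrL_{m+n}(\xi\circ\zeta)=(\scrL_m\xi)\circ(\scrL_n\zeta)$; this identifies the base points on the two sides and lets me compare the relevant $\pi_1$ groups.

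The heart of the argument is to glue the two available diagrams together. Lemma~\ref{lem:grhomcomm}(2), applied with the automorphism $a=\id_{\mathbbm 1_\C}$ (so that $\scrL a=\id_{\scrL\mathbbm 1_\C}$), yields two commuting squares expressing precisely that $\scrL^\flat$ intertwines $u_\C^{\id,+}$ with $u_\D^{\id,+}$ and $v_\C$ with $v_\D$; stacking them gives the compatibility of $v_\bullet\circ u_\bullet$ with the functor-induced maps, this time for self-extensions of $\scrL\mathbbm 1_\C$. Proposition~\ref{prop:functor_homot} then identifies $\scrL_{m+n}(\Omega_\C(\xi,\zeta))$ with $\Omega_\D(\scrL_m\xi,\scrL_n\zeta)$ up to homotopy, which is exactly the image of the base loop under the right vertical arrow. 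Combining these two inputs proves the theorem for the \emph{bare} functor, that is, with target $\Ext^{\bullet}_\D(\scrL\mathbbm 1_\C,\scrL\mathbbm 1_\C)$.

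It remains to transport everything along $\phi_0\colon\mathbbm 1_\D\to\scrL\mathbbm 1_\C$. Since $\scrL$ is almost strong, $\phi_0$ (and hence $\phi_{\mathbbm 1_\C,\mathbbm 1_\C}$) is an isomorphism, so the translation functor $\mathrm{Tr}_{\phi_0,\phi_0^{-1}}$ of Definition~\ref{def:transfunc} is an isomorphism of extension categories; Proposition~\ref{prop:laxfunc_comm} together with diagram~$(\ref{eq:extdiag})$ guarantees that this translation carries the monoidal product $\boxtimes_\D$, and therefore the roofs $L,R$ built from it, over to the ones defining the bracket on $\Ext^\bullet_\D(\mathbbm 1_\D,\mathbbm 1_\D)$. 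I would then check that $\mathrm{Tr}_{\phi_0,\phi_0^{-1}}$ commutes with $u_\D$ and $v_\D$ up to the predicted change of automorphism, which is a direct consequence of the explicit matrix descriptions of $u_\D^{a,+}$ and of Lemma~\ref{lem:isopi1}, since conjugating an extension by an isomorphism respects both constructions (compare Proposition~\ref{lem:mapsagree}); assembling the bare-functor result with this translation yields one large commutative diagram and the first square of the theorem. The squaring map is handled verbatim: replacing the full loop $\Omega$ by its northern hemisphere $\square$ throughout, and observing that the proof of Proposition~\ref{prop:functor_homot} already establishes the homotopy $\scrL_{2n}(\square_\C(\xi))\simeq\square_\D(\scrL_n\xi)$ at the level of roofs (the flank rhomboids in that proof commute), delivers the second square for free. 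The main obstacle I anticipate is the careful base-point bookkeeping: because $v_\C$ deliberately shifts the base point of $\pi_1$ and because $\scrL$ does not fix the unit object, one must keep precise track of which fundamental group each element inhabits at each stage, and verify that the translation by $\phi_0$ is compatible with the base-point shift built into $v$.
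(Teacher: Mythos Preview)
Your proposal is correct and follows essentially the same route as the paper: combine Lemma~\ref{lem:grhomcomm} (compatibility of $u$ and $v$ with the induced functor on extension categories) with Proposition~\ref{prop:functor_homot} (compatibility of the loops $\Omega$), then chain the resulting equalities. The paper carries out exactly this computation, writing $\scrL^\sharp_{m+n-1}\circ u_\C^{-1}\circ v_\C^{-1} = u_\D^{-1}\circ v_\D^{-1}\circ\scrL^\flat_{m+n}$ via two applications of Lemma~\ref{lem:grhomcomm}, and then feeding $\Omega_\C(\xi_1,\xi_2)$ through using Proposition~\ref{prop:functor_homot}.

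The one organizational difference is your handling of the unit isomorphism. You split the argument into ``bare functor'' (landing in $\Ext^\bullet_\D(\scrL\mathbbm 1_\C,\scrL\mathbbm 1_\C)$) followed by transport along $\mathrm{Tr}_{\phi_0,\phi_0^{-1}}$, and you propose to verify separately that this translation commutes with $u_\D$ and $v_\D$. The paper instead absorbs the translation into the very definition of the induced functors $\scrL_n$, $\scrL_n^\sharp$, $\scrL_n^\flat$ from the outset (so that they land directly in $\mathcal Ext^n_\D(\mathbbm 1_\D,\mathbbm 1_\D)$), which is why Proposition~\ref{prop:laxfunc_comm} and Lemma~\ref{lem:morloopcomp} are already stated with the translations built in. This makes the paper's proof of the theorem itself shorter, at the cost of front-loading the translation bookkeeping into those earlier results. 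Your decomposition is perfectly valid and arguably more transparent about where $\phi_0$ enters; the content is the same.
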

\begin{proof}
For an admissible $n$-extension $\xi$ of $\mathbbm 1_\C$ by $\mathbbm 1_\C$, recall that the homomorphisms $\scrL_n^\sharp$ and $\scrL_n^\flat$,
\begin{align*}
\scrL_n^\sharp = \pi_0 \scrL_n &: \Ext^n_\C(\mathbbm 1_\C, \mathbbm 1_\C) \longrightarrow \Ext^{n}_{\D}(\mathbbm 1_\D,\mathbbm 1_\D),\\
\scrL_n^\flat = \pi_1 (\scrL_n,\xi) &: \pi_1 (\mathcal Ext^n_\C(\mathbbm 1_\C, \mathbbm 1_\C), \xi) \longrightarrow \pi_1 (\mathcal Ext^{n}_{\D}(\mathbbm 1_\D,\mathbbm 1_\D), \scrL_n \xi),
\end{align*}
are given by applying $\scrL$ degreewise. Let $x_1=[\xi_1] \in \Ext^m_\C(\mathbbm 1_\C, \mathbbm 1_\C)$ and $x_2 = [\xi_2] \in \Ext^n_\C(\mathbbm 1_\C, \mathbbm 1_\C)$, where $\xi_1$ and $\xi_2$ are admissible extensions in $\C$ representing the classes $x_1$ and $x_2$ respectively. For $\zeta = \xi_1 \circ ((-1)^{mn}\xi_2)$ and $\zeta'=\scrL_{m+n}(\zeta) = \scrL_m\xi_1 \circ ((-1)^{mn}\scrL_n \xi_2)$ we observe that
\begin{equation}\label{thm:bracketcomm:eq1}
\begin{aligned}
\scrL^\sharp_{m+n-1}& \circ u_{\C}^{-1} \circ v_\C^{-1} &&\\
&= u_{\D}^{-1} \circ \scrL^\flat_{m+n} \circ v_\C^{-1} && \text{(by Lemma \ref{lem:grhomcomm})}\\
&= u_{\D}^{-1} \circ v_\D^{-1} \circ \check{F}_{m+n} && \text{(by Lemma \ref{lem:grhomcomm})}\\
&= u_{\D}^{-1} \circ v_\D^{-1} \circ \scrL^\flat_{m+n}  &&
\end{aligned}
\end{equation}
and hence
\begin{equation*}
\begin{aligned}
(\scrL^\sharp_{m+n-1} &\circ [-,-]_\C)(x_1,x_2)\\ &= \scrL^\sharp_{m+n-1}([x_1,x_2]_\C) &&\\
&= \scrL^\sharp_{m+n-1}((u_{\C}^{-1} \circ v_\C^{-1})([\Omega_\C(\xi_1, \xi_2)])) &&\text{(by defintion of $[-,-]_\C$)}\\
&= (u_{\D}^{-1} \circ v_\D^{-1}  \circ \scrL^\flat_{m+n})([\Omega_\C(\xi_1, \xi_2)]) && \text{(by equation (\ref{thm:bracketcomm:eq1}))}\\
&= (u_{\D}^{-1} \circ v_\D^{-1})(\scrL^\flat_{m+n}([\Omega_\C(\xi_1, \xi_2)])) &&\\
&= (u_{\D}^{-1} \circ v_\D^{-1})([{\scrL}_{m+n}\Omega_\C(\xi_1, \xi_2)]) &&\text{(by definition of $\scrL^\flat_{m+n}$)}\\
&= (u_{\D}^{-1} \circ v_\D^{-1})([\Omega_\D(\scrL_m(\xi_1), \scrL_n(\xi_2))]) &&\text{(by Proposition \ref{prop:functor_homot})}\\
&= [[\scrL_m(\xi_1)],[\scrL_n (\xi_2)]]_\D &&\\
&= [\scrL^\sharp_m(x_1), \scrL^\sharp_n(x_2)]_\D &&\text{(by definition of $\scrL^\flat_{m}$ and $\scrL^\flat_{n}$)}\\
&=([-,-]_\D \circ (\scrL^\sharp_m \times \scrL^\sharp_n))(x_1,x_2).&&
\end{aligned}
\end{equation*}
For the commutativity of the second diagram, one argues similarly.
\end{proof}
\begin{rem}\label{rem:thmcomrem}
In light of Remark \ref{rem:laxfunc_comm}, one easily checks that Proposition \ref{prop:functor_homot} also holds true in the setting of exact and \textit{colax} monoidal functors. Hence the statement of Theorem \ref{thm:bracketcomm} remains valid, when the exact and almost strong monoidal functor $(\scrL, \phi, \phi_0)$ is replaced by an exact and almost costrong monoidal functor $(\scrL', \phi', \phi_0') \colon \C \rightarrow \D$.
\end{rem}
\begin{nn}
Since $\Ext^\bullet_\C(\mathbbm 1_\C,\mathbbm 1_\C)$ is a graded commutative $k$-algebra, one may raise the following question.
\begin{quest}\label{con:gerstenhaber}
Under which requirements on $\C$ is $(\Ext^\bullet_\C(\mathbbm 1_\C,\mathbbm 1_\C), [-,-]_\C, sq_\C)$ a $($strict$)$ Gerstenhaber algebra in the sense of Definition $\ref{def:galgebra}$?
\end{quest}
Within the framework of this monograph we are not going to elaborate on this question. However, we remark, that already the $k$-bilinearity of the map $[-,-]_\C$ is not clear at all. Even in the special case of bimodules over a ring, an intrinsic proof could not be deduced. Nevertheless, it will turn out that the abstract construction, and its properties, already yield a couple of interesting applications, as exposed in the following sections. As a first step, we will prove that our bracket in fact recovers S.\,Schwede's original one in case $\C$ is a category of bimodules over a $k$-algebra.
\end{nn}
\section{The module case -- Schwede's original construction}\label{sec:schwede}
\begin{nn} Let $\Gamma$ be any ring and let $n \geq 0$ be an integer. Fix two $\Gamma$-modules $L$ and $M$. In \cite{Sch98} S.\,Schwede (explicitly) constructs an isomorphism
$$
\mu: \Ext^n_\Gamma(M,L) \cong H^n(\Hom_{\Gamma}(\mathbb P_M, L)) \longrightarrow \pi_1(\mathcal Ext^{n+1}_{\Gamma}(M,L), \kappa)
$$
of groups for some specific base point $\kappa$ and any fixed projective resolution $\mathbb P_M \rightarrow M \rightarrow 0$ of $M$ over $\Gamma$. In this section, we will show that, for $\C = \Mod(\Gamma)$, it coincides (up to the sign $(-1)^{n+1}$) with the map
\begin{align*}
v_\C \circ u_\C : \Ext^n_\Gamma(M,L) \longrightarrow\pi_1(\mathcal Ext^{n+1}_{\Gamma}&(M,L), \sigma_n(M,L))\\ &\longrightarrow \pi_1(\mathcal Ext^n_{\Gamma}(M,L), \kappa)
\end{align*}
which we have defined earlier in Section \ref{sec:retakh}. Hence, in particular, the following will hold true.
\begin{thm}[{\cite[Thm.\,3.1]{Sch98}}]\label{thm:schwede_comm}
Let $A$ be a $k$-algebra which is projective over $k$ and let $\P(A) \subseteq \Mod(A^\ev)$ be the full subcategory of all $A^\ev$-modules which are $A$-projective on both sides $($see Example $\ref{exa:bimodules}$$)$. Then, under the identification of Corollary $\ref{cor:isohochschildproj}$, the following diagrams commute $($up to the sign $(-1)^{m+1}$, which may, and will be ignored$)$ for every $m,n \in \mathbb N$.
$$
\xymatrix@C=30pt{
\HH^m(A) \times \HH^n(A) \ar[r]^-{\{-,-\}_A} \ar[d]_-{\chi_A \times \chi_A} & \HH^{m+n-1}(A) \ar[d]^-{\chi_A}\\
\Ext^m_{\P(A)}(A,A) \times \Ext^n_{\P(A)}(A,A) \ar[r]^-{[-,-]_\C} & \Ext^{m+n-1}_{\P(A)}(A,A)
}
$$ 
$$
\xymatrix@C=30pt{
\HH^{2n}(A) \ar[r]^-{sq_A} \ar[d]_-{\chi_A} & \HH^{4n-1}(A) \ar[d]^-{\chi_A}\\
\Ext^{2n}_{\P(A)}(A,A) \ar[r]^-{sq_\C} & \Ext^{4n-1}_{\P(A)}(A,A)
}
$$ 
\end{thm}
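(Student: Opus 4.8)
The plan is to reduce the entire statement to the single comparison announced in the preceding paragraph: for $\C = \Mod(\Gamma)$ and arbitrary $\Gamma$-modules $M,L$, Schwede's isomorphism $\mu$ agrees with $(-1)^{n+1}$ times the composite $v_\C \circ u_\C$ of Section~\ref{sec:retakh}. Granting this, the theorem follows almost formally. Both the loop bracket of \cite{Sch98} and our bracket $[-,-]_\C$ have the shape $(\text{fundamental isomorphism})^{-1} \circ \Omega(-,-)$, and the loop $\Omega$ is built from the very same roof $\xi \circ \zeta \leftarrow \xi \boxtimes \zeta \rightarrow (-1)^{mn}\zeta \circ \xi$: Schwede's operation $\boxtimes_A$ and our $\boxtimes_\C$ coincide on self-extensions of the unit $A$, since both are the total complex of the truncated factors followed by $e_0 \otimes_A f_0$. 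Hence once $v_\C \circ u_\C = (-1)^{n+1}\mu$ is established, we obtain $[\xi,\zeta]_\C = (u_\C^{-1} \circ v_\C^{-1})(\Omega_\C(\xi,\zeta)) = (-1)^{n+1}\,\mu^{-1}(\Omega(\xi,\zeta))$, and Schwede's Theorem~3.1 \cite[Thm.\,3.1]{Sch98} identifies $\mu^{-1}(\Omega(\xi,\zeta))$ with $\chi_A(\{\xi,\zeta\}_A)$ up to a sign. The two signs merge into the single global factor $(-1)^{m+1}$ recorded in the statement, which we ignore.

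To establish the comparison I would first recall Schwede's explicit description of $\mu$ attached to a fixed projective resolution $\mathbb P_M \rightarrow M \rightarrow 0$ — legitimate since $\Mod(\Gamma)$ is abelian (hence factorizing) and has enough projectives — under the identification $\Ext^n_\Gamma(M,L) \cong H^n(\Hom_\Gamma(\mathbb P_M, L))$. Next I would translate a cohomology class, represented by a cocycle $f \colon P_n \rightarrow L$, into a Yoneda $n$-extension by pushing out the truncated resolution along $f$; this standard dictionary between the derived-functor and the $\pi_0\mathcal Ext$ pictures is exactly the bridge needed, since our $u_\C^{\id,+}$ is phrased directly in terms of $n$-extensions (the loop construction of~\ref{nn:defnuplus}). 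The base point is handled by $v_\C = v_\C(\sigma_{n+1}, \kappa)$, which transports $\pi_1$ from $\sigma_{n+1}$ to Schwede's base point $\kappa$ along the canonical path of~\ref{nn:pathximinxi}. With both loops written out in $\mathcal Ext^{n+1}_\Gamma(M,L)$, the problem becomes the production of an explicit homotopy between $v_\C(u_\C([\xi]))$ and $(-1)^{n+1}\mu([\xi])$; here Lemma~\ref{lem:homotopic} is the decisive tool, as it lets me replace a homotopy of loops by a chain homotopy between the connecting morphisms of admissible extensions, which is far more tractable.

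The main obstacle is precisely this reconciliation of two a priori unrelated explicit recipes: our loop $w(\xi)$ is built by the $X \oplus X$-thickening governed by the matrix $M_a(e_0)$, whereas Schwede's loop is manufactured from the comparison map into the projective resolution. Matching them term by term — and, crucially, correctly accounting for the Koszul sign $(-1)^{n+1}$ that separates the two — is the delicate, computational heart of the argument. I expect the reduction via Lemma~\ref{lem:homotopic} to confine the difficulty to writing down a single explicit family of homotopy morphisms $s_i$ and reading off the resulting signs, in the spirit of the chain homotopy $s_i = (-1)^i\id_{E_i}$ constructed in the proof of Proposition~\ref{lem:mapsagree}.

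Finally, the squaring diagram is treated in complete parallel: the northern hemisphere $\square_\C(\xi)$ of the loop corresponds, under the same comparison of fundamental isomorphisms, to Schwede's squaring construction $f \mapsto f \bullet f$, so the compatibility of $sq_\C$ with $sq_A$ (up to the same ignored sign) drops out of the bracket comparison. It then remains to transport everything from $\Mod(A^\ev)$ to $\P(A)$. Since $A$ is $k$-projective, the bar resolution and every self-extension of $A$ occurring above already lie in $\P(A)$, which by Corollary~\ref{cor:tensorexact} is a strong exact monoidal category; the inclusion $j \colon \P(A) \rightarrow \Mod(A^\ev)$ is exact and strict monoidal, so by Corollary~\ref{cor:isohochschildproj} it identifies the two graded $\Ext$-algebras, and by Lemmas~\ref{lem:grhomcomm} and~\ref{lem:exactfuncyoneda} it carries loops, Yoneda products and the operations $u_\C, v_\C$ compatibly. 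Consequently the computation carried out in $\Mod(A^\ev)$ remains valid verbatim in $\P(A)$, which completes the proof.
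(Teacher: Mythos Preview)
Your proposal is correct and follows essentially the same route as the paper: the theorem is not proved from scratch but reduced to the single comparison $\mu \sim (-1)^{n+1}\,v_\C \circ u_\C$ (this is Lemma~\ref{lem:consistancy}), after which one invokes Schwede's original result \cite[Thm.\,3.1]{Sch98} together with the identification of loops $\Omega_\C = \Omega$ to conclude. The only tactical difference is in how the comparison itself is carried out: you propose to invoke Lemma~\ref{lem:homotopic} and produce an explicit chain homotopy, whereas the paper instead writes down a morphism $\beta\colon \kappa \to \xi_+$ fitting into commutative triangles with $\alpha_\kappa$, $q_1$, $q_2$, thereby obtaining the elementary homotopies directly, and then handles the change of base point $\kappa(0) \rightsquigarrow \kappa(\varphi)$ by showing $\mu(\psi) \equiv \mu(\psi)_{\varphi=0} \boxplus \kappa(\varphi)$ up to conjugation. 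Both tactics are viable; the paper's avoids the bookkeeping of a chain homotopy at the cost of an extra pushout/pullback calculation.
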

The maps $\{-,-\}_A$ and $sq_A$ respectively denote the Gerstenhaber bracket and the squaring map on $\HH^\bullet(A)$. Let us recall S.\,Schwede's map. Assume that the projective resolution $\mathbb P_M \rightarrow M \rightarrow 0$ is given as follows.
$$
\xymatrix@C=20pt{
\cdots \ar[r]^-{\pi_{n+2}} & P_{n+1} \ar[r]^-{\pi_{n+1}} & P_{n} \ar[r]^-{\pi_{n}} & P_{n-1} \ar[r]^-{\pi_{n-1}} & \cdots \ar[r]^-{\pi_1} & P_0 \ar[r]^-{\pi_0} & M \ar[r] & 0
}
$$
Fix a $\Gamma$-linear map $\varphi: P_{n+1} \rightarrow L$ satisfying $\varphi \circ \pi_{n+2} = 0$. The pushout diagram
$$
\xymatrix@C=20pt{
\cdots \ar[r]^-{\pi_{n+2}} & P_{n+1} \ar[d]_-{\varphi} \ar[r]^-{\pi_{n+1}} & P_{n} \ar[r]^-{\pi_{n}} \ar[d] & P_{n-1} \ar[r]^-{\pi_{n-1}} \ar@{=}[d] & \cdots \ar[r]^-{\pi_1} & P_0 \ar[r]^-{\pi_0} \ar@{=}[d] & M \ar[r] \ar@{=}[d] & 0\\
0 \ar[r] & L \ar[r]^-{p_{n+1}} & P \ar[r]^-{p_n} & P_{n-1} \ar[r]^-{p_{n-1}} & \cdots \ar[r]^-{p_1} & P_0 \ar[r]^-{p_0} & M  \ar[r] & 0
}
$$
has exact rows. We may regard $P$ as the quotient
$$
P = \frac{L \oplus P_{n}}{\{(\varphi(x), -\pi_{n+1}(x)) \mid x \in P_{n+1}\}} = \Coker(\varphi \oplus (-\pi_{n+1}))
$$
and hence express the maps $p_{n+1}: L \rightarrow P$, $p_n: P \rightarrow P_{n-1}$ as $p_{n+1}(l) = (l,0)$ and $p_n(l,p) = \pi_{n}(p)$ (for $p \in P_n$, $l \in L$).

If $\psi: P_{n} \rightarrow L$ is a $\Gamma$-module homomorphism, then $\varphi' = \varphi + \psi \circ \pi_{n+1}$ will also satisfy $\varphi' \circ \pi_{n+2} = 0$. Hence we may consider $\kappa(\varphi')$ as well. The assignment $(l,p) \mapsto (l - \psi(p),p)$ gives rise to a well-defined map
$$
\frac{L \oplus P_{n}}{\{(\varphi(x), -\pi_{n+1}(x)) \mid x \in P_{n+1}\}} \longrightarrow \frac{L \oplus P_{n} }{\{(\varphi'(x), -\pi_{n+1}(x)) \mid x \in P_{n+1}\}} \ ,
$$
which itself defines a morphism $\mu(\psi): \kappa(\varphi) \rightarrow \kappa(\varphi + \psi \circ \pi_{n+1})$ of $(n+1)$-extensions. If we chose $\psi$ such that $\psi \circ \pi_{n+1} = 0$, $\mu(\psi)$ will be an endomorphism of $\kappa(\varphi)$ in the category $\mathcal Ext^{n+1}_{\Gamma}(M,L)$, i.e., a loop of length $1$ based at $\kappa(\varphi)$. In \cite[Sec.\,4]{Sch98} it is shown that hereby one obtains a well-defined map
$$
\mu: H^n(\Hom_{\Gamma}(\mathbb P_M, L)) \longrightarrow \pi_1(\mathcal Ext^{n+1}_{\Gamma}(M,L), \kappa(\varphi))
$$
which is an isomorphism (cf. \cite[Thm.\,1.1]{Sch98}). The latter result is going to be recovered by the following lemma.
\end{nn}
\begin{lem}\label{lem:consistancy}
Let $\C$ be the category $\Mod(\Gamma)$. The maps $\mu$ and $v_\C \circ u_\C$ are, up to the sign $(-1)^n$ and conjugation with some path of length $1$, equal $($after identifying $\Ext^n_\Gamma(M,L)$ with $H^n(\Hom_{\Gamma}(\mathbb P_M, L)))$.
\end{lem}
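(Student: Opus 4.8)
The plan is to evaluate both maps on an explicit cocycle and then produce, by hand, a homotopy of loops in $\mathcal Ext^{n+1}_\Gamma(M,L)$ identifying the two results, with the sign and the conjugating path emerging from the bookkeeping. Fix the projective resolution $\mathbb P_M \to M \to 0$ and a class in $H^n(\Hom_\Gamma(\mathbb P_M,L))$ represented by a cocycle $\psi \colon P_n \to L$ with $\psi \circ \pi_{n+1} = 0$. Under the classical identification $\Ext^n_\Gamma(M,L) \cong H^n(\Hom_\Gamma(\mathbb P_M,L))$, this class corresponds to the Yoneda $n$-extension
$$
\xi_\psi \quad \equiv \quad 0 \longrightarrow L \longrightarrow L \oplus_{P_n} P_{n-1} \longrightarrow P_{n-2} \longrightarrow \cdots \longrightarrow P_0 \longrightarrow M \longrightarrow 0,
$$
where $L \oplus_{P_n} P_{n-1}$ denotes the pushout of $\psi$ along $\pi_n$. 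First I would record the explicit shape of $\xi_\psi$ and, from \ref{nn:defnuplus}, of $(\xi_\psi)_+$, so that the length-two loop $w(\xi_\psi)$ representing $u_\C([\xi_\psi]) = u_\C^{\mathrm{id}_M,+}([\xi_\psi])$ is written entirely in terms of the $P_i$, the $\pi_i$ and $\psi$; in particular the $\psi$-data sits in degree $n$, adjacent to the coefficient object $L$, precisely as it does in Schwede's extension $\kappa(\varphi)$.

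The first genuine step is to match the two base points. Schwede's loop $\mu(\psi)$ is based at $\kappa(\varphi)$, whereas $v_\C \circ u_\C$ produces a loop based at $\sigma_{n+1}(M,L)$ which $v_\C = v_\C(\kappa(\varphi))$ transports to the base point $\sigma_{n+1}(M,L) \boxplus \kappa(\varphi) \cong \kappa(\varphi)$. Since the fundamental group based at different points is identified by conjugation along a connecting path (subparagraph \ref{nn:pathximinxi}), and since $v_\C$ is by definition the Baer-sum base-point change $[w] \mapsto [w \boxplus \kappa(\varphi)]$ followed by the canonical identification $\sigma_{n+1}(M,L) \boxplus \kappa(\varphi) \cong \kappa(\varphi)$, this matching of base points is exactly what is recorded as \emph{conjugation with some path of length $1$} in the statement. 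After this reduction it remains to compare, as automorphisms in the Quillen groupoid $\mathsf G(\mathcal Ext^{n+1}_\Gamma(M,L))$, the reduced loop underlying $w(\xi_\psi) \boxplus \kappa(\varphi)$ with $(-1)^n$ times the length-one automorphism loop $\mu(\psi)$.

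For the comparison itself I would use the homotopy calculus of Section \ref{sec:retakh2}. The morphism $\mu(\psi)$ is the automorphism of $\kappa(\varphi)$ that is the identity outside degree $n$ and sends $(l,p) \mapsto (l - \psi(p), p)$ on the quotient $P = (L \oplus P_n)/\{(\varphi(x), -\pi_{n+1}(x))\}$; on the other side, the loop $w(\xi_\psi) \boxplus \kappa(\varphi)$ encodes the same cocycle through the pushout presentation of $(\xi_\psi)_+$. I would exhibit an explicit chain homotopy, in the sense of Definition \ref{def:homotopic}, between the composite of the two legs of $w(\xi_\psi) \boxplus \kappa(\varphi)$ and the suitably signed morphism $\mu(\psi)$; the contracting maps $s_i$ are to be assembled from the identities $\mathrm{id}_{P_i}$ together with the comparison of the presentation $P = \Coker(\varphi \oplus (-\pi_{n+1}))$ with the degree-$n$ term of $(\xi_\psi)_+ \boxplus \kappa(\varphi)$. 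Lemma \ref{lem:homotopic} then upgrades this chain homotopy to a homotopy equivalence of paths, which is precisely the desired equality in $\pi_1$ (abelian by Theorem \ref{thm:iso_pi0pi1}). The global sign $(-1)^n$ I expect to come from Lemma \ref{lem:mapsagree}: Schwede pushes his cocycle into the coefficient object $L$, so $\mu$ is naturally the $Y$-side construction $\underline u_\C^{\,\mathrm{id}_L,+}$, while $u_\C = u_\C^{\,\mathrm{id}_M,+}$ is the $X$-side one, and \ref{lem:mapsagree} relates the two via the automorphisms $(-1)^{n+1}\mathrm{id}$, contributing the parity.

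The main obstacle is the sign and base-point bookkeeping rather than any conceptual difficulty. Three independent sources of signs must be reconciled into the single factor $(-1)^n$: the alternating signs in the doubling matrix $M_a$ and in the Baer sum implicit in $v_\C$, the sign relating $u_\C^{\,\mathrm{id}_M,+}$ to $\underline u_\C^{\,\mathrm{id}_L,+}$ from Lemma \ref{lem:mapsagree}, and Schwede's own quotient sign conventions. The second delicate point is to verify that the connecting path realizing the base-point change $\sigma_{n+1}(M,L) \boxplus \kappa(\varphi) \cong \kappa(\varphi)$ may indeed be taken of length one and contributes no extra homotopy that would alter the class; here the concrete path produced in \ref{nn:pathximinxi}, together with the fact that $\pi_1(\mathcal Ext^{n+1}_\Gamma(M,L),-)$ is abelian, keeps the conjugation under control.
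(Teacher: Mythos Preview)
Your overall strategy is close in spirit, but the paper organizes the argument differently and your central comparison step, as written, is not well-posed. The paper first treats the case $\varphi = 0$, where $\kappa = \kappa(0)$ has degree-$n$ term $L \oplus P_n/\Im(\pi_{n+1})$ and comes with a canonical morphism $\alpha_\kappa \colon \kappa \to \sigma_{n+1}(M,L)$. It does not invoke $v_\C$ at this stage; instead it directly conjugates $(-1)^{n+1}u_\C(\xi)$ by the length-one path $\alpha_\kappa$ (this is where Proposition~\ref{lem:mapsagree} enters, as you correctly anticipate), producing an explicit length-four loop at $\kappa$. The key ingredient is then a concrete morphism $\beta \colon \kappa \to \xi_+$, given in degrees $i < n$ by the chosen lifts $\psi_i$ and in degree $n$ by $(l,p) \mapsto (l, l - \psi(p))$, satisfying $q_1 \circ \beta = \alpha_\kappa$. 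This commutative triangle collapses the loop to length two by an elementary homotopy, and a second commutative triangle involving $\mu(\psi)$ finishes. Only afterwards, for arbitrary $\varphi$, does the paper bring in $v_\C$ and the Baer sum, via the observation $\mu(\psi) \equiv \mu(\psi)_{\varphi = 0} \boxplus \kappa(\varphi)$ up to conjugation, together with an explicit computation of $\kappa(0) \boxplus \kappa(\varphi)$.

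The genuine gap in your proposal is the step where you plan to ``exhibit an explicit chain homotopy, in the sense of Definition~\ref{def:homotopic}, between the composite of the two legs of $w(\xi_\psi) \boxplus \kappa(\varphi)$ and the suitably signed morphism $\mu(\psi)$''. The two legs of $w(\xi_\psi) \boxplus \kappa(\varphi)$ both point from $\sigma_{n+1} \boxplus \kappa(\varphi)$ into $(\xi_\psi)_+ \boxplus \kappa(\varphi)$, so there is no composite; and neither shares source and target with $\mu(\psi) \colon \kappa(\varphi) \to \kappa(\varphi)$, so Definition~\ref{def:homotopic} does not apply to the pair you name. What the argument actually requires is not a chain homotopy between parallel maps but a morphism from the base point into the apex of the loop that makes a triangle commute --- precisely the role of the paper's $\beta$ --- so that elementary homotopies in the Quillen groupoid collapse the loop to a single arrow comparable with $\mu(\psi)$. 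Once you supply such a $\beta$ (and this is where the formula $(l,p) \mapsto (l, l - \psi(p))$ is forced), Lemma~\ref{lem:homotopic} is not needed at all.
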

\begin{proof}
First of all, we deal with the case $\varphi = 0$. Let $\kappa$ be $\kappa(0)$. It follows that
$$
P = \frac{L \oplus P_{n}}{\{(\varphi(x), -\pi_{n+1}(x)) \mid x \in P_{n+1}\}} = L \oplus \frac{P_n}{\Im(\pi_{n+1})}
$$
and there is a morphism $\alpha_\kappa: \kappa \rightarrow \sigma_{n+1}(M,L)$ of $(n+1)$-extensions. We claim that the diagram
\begin{equation}\label{eq:uvconjcomm}\tag{$\diamond$}
\begin{aligned}
\xymatrix@C=15pt{
\Ext_\Gamma^n(M,L) \ar[d]_-\cong \ar[r]^-{u_\C} & \pi_1(\mathcal Ext^{n+1}_{\Gamma}(M,L), \sigma_{n+1}(M,L)) \ar[d]^-{c_{\alpha_\kappa}}\\
\OH^n(\Hom_{\Gamma}(\mathbb P_M, L)) \ar[r]^-\mu  & \pi_1(\mathcal Ext^{n+1}_{\Gamma}(M,L), \kappa)
}
\end{aligned}
\end{equation}
commutes. Let an exact sequence $\xi$ in $\mathcal Ext^n_\C(M,L)$ be given whose equivalence class corresponds to $[\psi] \in H^n(\Hom_{\Gamma}(\mathbb P_M, L))$ under the standard isomorphism $\Ext^n_\C(M,L) \cong H^n(\Hom_{\Gamma}(\mathbb P_M, L))$ (cf. \cite[Sec.\,3.4]{Wei94}). Recall that $\psi$ can be chosen to be $\psi_n$ occurring as part of a lifting of the identity morphism of $M$ with respect to $\mathbb P_M$:
$$
\xymatrix@C=20pt{
 \cdots \ar[r]^-{\pi_{n+2}} & P_{n+1} \ar[r]^-{\pi_{n+1}} & P_{n} \ar[r]^-{\pi_{n}} \ar[d]_-{\psi_n} & P_{n-1} \ar[r]^-{\pi_{n-1}} \ar[d]_-{\psi_{n-1}} & \cdots \ar[r]^-{\pi_1} & P_0 \ar[r]^-{\pi_0} \ar[d]^-{\psi_0} & M \ar[r] \ar@{=}[d] & 0 \ \ \\
\xi \quad\quad \equiv & 0 \ar[r] & L \ar[r]^-{e_n} & E_{n-1} \ar[r]^-{e_{n-1}} & \cdots \ar[r]^-{e_1} & E_0 \ar[r]^-{e_0} & M  \ar[r] & 0 \ .
}
$$
Put $P_{n}/ \pi_{n+1} = P_{n} / \Im(\pi_{n+1})$. The conjugate of $(-1)^{n+1} u_\C(\xi)$ by $\alpha_\kappa$ looks as follows (at this point, we use Proposition \ref{lem:mapsagree}).
$$
\xymatrix@C=14pt@R=17pt{
\kappa \ar[d]_-{\alpha_\kappa} & 0 \ar[r] & L \ar@{=}[d] \ar[r] & L \oplus \frac{P_n}{\pi_{n+1}} \ar[d] \ar[r] & P_{n-1} \ar[r] \ar[d] & \cdots \ar[r] & P_1 \ar[r] \ar[d] & P_0 \ar[r] \ar[d] & M \ar[r] \ar@{=}[d] & 0\\
\sigma_{n+1}(M,L) & 0 \ar[r] & L \ar@{=}[r] & L \ar[r] & 0 \ar[r] & \cdots \ar[r] & 0 \ar[r] & M \ar@{=}[r] & M \ar[r] & 0\\
\xi_+ \ar[u]^-{q_1} \ar[d]_-{q_2} & 0 \ar[r] & L \ar[r] \ar@{=}[d] \ar@{=}[u] & L \oplus L \ar[r] \ar[u] \ar[d] & E_{n-1} \ar[u] \ar[d] \ar[r] & \cdots \ar[r] & E_1 \ar[u] \ar[d] \ar[r] & E_0 \ar[u] \ar[d] \ar[r] & M \ar@{=}[u] \ar@{=}[d] \ar[r] & 0\\
\sigma_{n+1}(M,L) & 0 \ar[r] & L \ar@{=}[r] & L \ar[r] & 0 \ar[r] & \cdots \ar[r] & 0 \ar[r] & M \ar@{=}[r] & M \ar[r] & 0\\
\kappa \ar[u]^-{\alpha_\kappa}& 0 \ar[r] & L \ar@{=}[u] \ar[r] & L \oplus \frac{P_n}{\pi_{n+1}} \ar[u] \ar[r] & P_{n-1} \ar[u] \ar[r] & \cdots \ar[r] & P_1 \ar[r] \ar[u] & P_0 \ar[u] \ar[r] & M \ar@{=}[u] \ar[r] & 0
}
$$
We claim that the equivalence class of the loop above equals the equivalence class of $\mu(\kappa)$. In fact, the following assignments define a morphism $\beta: \kappa \rightarrow \xi_+$ of $(n+1)$-extensions
\begin{align*}
\beta_{i}(p_i) &= \psi_i(p_i) && (\text{for $i=0, \dots, n-1$ and $p_i \in P_i$}),\\
\beta_{n}(l, p) &= (l, l - \psi(p)) && (\text{for $l \in L$, $p \in P_{n}/ \Im(\pi_{n+1})$});
\end{align*}
$\beta$ is such that the diagram
$$
\xymatrix@R=12pt@C=25pt{
\kappa \ar[rd]^-{\alpha_\kappa} \ar[dd]_-{\beta} & \\
& \sigma_{n+1}(M,L) \\
\xi_+ \ar[ur]_-{q_1} &
}
$$
commutes. Hence the loop is homotopically equivalent to the following one.
$$
\xymatrix@C=14pt@R=18pt{
\kappa \ar[d]_-{q_2 \circ \beta} & 0 \ar[r] & L \ar@{=}[d] \ar[r] & L \oplus \frac{P_n}{\pi_{n+1}} \ar[d]_-{
\id + (-\psi)
} \ar[r] & P_{n-1} \ar[r] \ar[d] & \cdots \ar[r] & P_1 \ar[r] \ar[d] & P_0 \ar[r] \ar[d]|{=}^-{\pi_0}_-{e_0\circ\psi_0} & M \ar[r] \ar@{=}[d] & 0\\
\sigma_{n+1}(M,L) & 0 \ar[r] & L \ar@{=}[r] & L \ar[r] & 0 \ar[r] & \cdots \ar[r] & 0 \ar[r] & M \ar@{=}[r] & M \ar[r] & 0\\
\kappa \ar[u]^-{\alpha_\kappa}& 0 \ar[r] & L \ar@{=}[u] \ar[r] & L \oplus \frac{P_n}{\pi_{n+1}} \ar[u] \ar[r] & P_{n-1} \ar[u] \ar[r] & \cdots \ar[r] & P_1 \ar[r] \ar[u] & P_0 \ar[u] \ar[r] & M \ar@{=}[u] \ar[r] & 0
}
$$
Since $\mu(\kappa)$ fits inside the commutative diagram
\begin{equation*}
\begin{aligned}
&\xymatrix@R=12pt@C=25pt{
\kappa \ar[rd]^-{q_2 \circ \beta} \ar[dd]_-{\mu(\kappa)} & \\
& \sigma_{n+1}(M,L) \ ,\\
\kappa \ar[ur]_-{\alpha_{\kappa}} &
}
\end{aligned}
\end{equation*}
we have accomplished the desired result (recall the definition of elementary homotopic).
Finally, for arbitrary $\varphi$, we observe that $\mu(\psi) \equiv \mu(\psi)_{\varphi=0} \boxplus \kappa(\varphi)$ (up to conjugation). In fact,
$$
\xymatrix@C=35pt{
P_0 \oplus P_0 \ar[r]^-{
\big[\begin{smallmatrix}
\pi_0 & 0 \\
0 & \pi_0
\end{smallmatrix}\big]}
 & M \oplus M\\
P_0 \oplus \Ker(\pi_0) \ar[r]^-{
\big[\begin{smallmatrix}
\pi_0 & 0
\end{smallmatrix}\big]} \ar[u]^-{
\big[\begin{smallmatrix}
\id & \mathrm{inc} \\
\id & 0
\end{smallmatrix}\big]}
 & M \ar[u]_-{
\big[\begin{smallmatrix}
\id \\ 
\id
\end{smallmatrix}
\big]
}
}
$$
is clearly a pullback diagram. Further,
$$
\xymatrix{
L \oplus L \ar[d]_-{
\big[\begin{smallmatrix}
\id & \id
\end{smallmatrix}
\big]
} \ar[r] & (L \oplus P_n/ \pi_{n+1}) \oplus P \ar[d]^-{} \\
L \ar[r] & P_n/ \pi_{n+1} \oplus P
}
$$
is a pushout diagram, where the maps $L \rightarrow P_n/ \pi_{n+1} \oplus P$ and $(L \oplus P_n/ \pi_{n+1}) \oplus P \rightarrow P_n/ \pi_{n+1} \oplus P$ are induced by the maps
\begin{align*}
L \longrightarrow P_n \oplus (L \oplus P_n), &\ l \mapsto (0,l,0) ,
\intertext{and}
(L \oplus P_n) \oplus (L \oplus P_n) \longrightarrow P_n \oplus (L \oplus P_n), &\ (l,p,l',p') \mapsto (p, l+ l', p')
\end{align*}
respectively. It follows, that
$$
\kappa(0) \boxplus \kappa(\varphi) =  \mathbb{P}^\natural \oplus \kappa(\varphi)
$$
where $\mathbb{P}^\natural$ denotes the $(n-1)$-extension
$$
\xymatrix@C=20pt{
0 \ar[r] & P_{n}/{\pi_{n+1}} \ar[r] & P_{n-1} \ar[r]^-{\pi_{n-1}} & \cdots \ar[r]^-{\pi_2} & P_1 \ar[r]^-{\pi_1} & \Ker(\pi_0) \ar[r] & 0 \ .
}
$$
If $j: \kappa(\varphi) \rightarrow \mathbb{P}^\natural \oplus \kappa(\varphi)$ and $q: \mathbb{P}^\natural \oplus \kappa(\varphi) \rightarrow \kappa(\varphi)$ are the canonical maps, we immediately see that $\mu(\kappa)_{\varphi=0} \oplus \kappa(\varphi)$ conjugated by $j$ is (homotopically equivalent to) the loop
$$
\xymatrix@C=35pt{
\kappa(\varphi) \ar[r]^-j & \mathbb{P}^\natural \oplus \kappa(\varphi) \ar[r]^-{
\big[\begin{smallmatrix}
\id & 0\\
0 & \mu(\kappa)
\end{smallmatrix}\big]
}
& \mathbb{P}^\natural \oplus \kappa(\varphi) \ar[r]^-q & \kappa(\varphi) \ .
}
$$
Ultimately, the diagram below commutes (the triangles containing the arrow $c_{\alpha_\kappa}$ commute by Lemma \ref{lem:fund_conj_commutative} and by the commutativity of (\ref{eq:uvconjcomm}); the commutativity of the lower part, i.e., the equality $c_j \circ v_\C \circ \mu_{\varphi = 0} = \mu$, follows from the observations that were made above).
$$
\xymatrix{
\Ext_\Gamma^n(M,L) \ar[d]_-\cong \ar[r]^-{(-1)^{n+1}u_\C} & \pi_1(\mathcal Ext^{n+1}_{\Gamma}(M,L), \sigma_{n+1}(M,L)) \ar[d]^-{v_\C} \ar@/_1pc/@[red][ldd]_-{c_{\alpha_\kappa}}\\
H^n(\Hom_{\Gamma}(\mathbb P_M, L)) \ar@<-9ex>@/_1.5pc/@[blue][dd]_-\mu \ar@[blue][d]_-\mu & \pi_1(\mathcal Ext^{n+1}_\Gamma(M,L), \sigma_{n+1}(M,L) \boxplus \kappa(\varphi))\ar[d]^-{c_{\alpha_\kappa} \boxplus \kappa(\varphi)}\\
\pi_1(\mathcal Ext^{n+1}_{\Gamma}(M,L), \kappa(0)) \ar@[blue][r]^-{v_\C} & \pi_1 (\mathcal Ext^{n+1}_\Gamma(M,L), \kappa(0) \boxplus \kappa(\varphi)) \ar@[blue][d]^-{c_j}\\
\pi_1(\mathcal Ext^{n+1}_\Gamma(M,L), \kappa(\varphi)) \ar@[blue]@{=}[r] & \pi_1(\mathcal Ext^{n+1}_\Gamma(M,L), \kappa(\varphi))
}
$$
\end{proof}
\section{Morita equivalence}
\begin{nn}\label{nn:morita}
Let $A$ and $B$ be two algebras over the commutative ring $k$. Remember that $A$ and $B$ are called \textit{Morita equivalent} if their associated categories of left modules are equivalent:
$$
\xymatrix@C=18pt{
\mathscr U: \Mod(A) \ar[r]^-\sim & \Mod(B).
}
$$
It is a classical fact (see for example \cite[\S 22]{AnFu92}), that $A$ and $B$ are Morita equivalent if, and only if, there is an $A$-module $P$ fulfilling the following properties:
\begin{enumerate}[\rm(1)]
\item\label{nn:morita:1} $P$ is a \textit{progenerator for $A$} (that is, it is a finitely generated projective $A$-module and a generator for the category $\Mod(A)$ in the sense that for every $A$-module $M$ there is an $A$-module $P_M$ and an epimorphism $P_M \rightarrow M$, where $P_M$ is a possibly infinite direct sum of copies of $P$; e.g., $P=A$ is a progenerator for $A$).
\item\label{nn:morita:2} $B \cong \End_A(P)^\op$ as $k$-algebras.
\end{enumerate}
In this situation, $P$ is naturality a right module over (its opposite endomorphism ring) $B$ and we will call $P$ a \textit{Morita bimodule for $A$}. If $A$ and $B$ are Morita equivalent, the functor $\mathscr U$ is (up to equivalence) given by $\Hom_A(P,-)$, where $P$ is the progenerator $\mathscr U^{-1}(B)$, and the quasi-invers functor $\mathscr U^{-1}$ can be expressed as $P \otimes_B -$. A classical example of a pair of Morita equivalent algebras is given by $A$ and $M_n(A)$ ($=$ the ring of $n \times n$-matrices over $A$) for any $k$-algebra $A$ and any integer $n \geq 1$ (the free module $A^n$ is a progenerator for $A$ with adequate endomorphism ring). It is not surprising that the Hochschild cohomology ring is invariant under Morita equivalence.
\begin{thm}
If $A$ and $B$ are Morita equivalent, then $\HH^\bullet(A)$ and $\HH^\bullet(B)$ are isomorphic as graded $k$-algebras.
\end{thm}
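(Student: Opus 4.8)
The plan is to produce an explicit progenerator bimodule and promote the Morita equivalence $\mathscr U : \Mod(A) \to \Mod(B)$ to a structure-preserving equivalence between the enveloping module categories, so that the invariance of $\HH^\bullet$ becomes a special case of the machinery already developed. By the classical Morita theory recalled in \ref{nn:morita}, there is a progenerator $P$ for $A$ with $B \cong \End_A(P)^\op$, making $P$ an $A$-$B$-bimodule; write $Q = \Hom_A(P,A)$ for the dual $B$-$A$-bimodule. The key observation is that $P^\ev := P \otimes_k Q$ (suitably viewed as an $A^\ev$-$B^\ev$-bimodule, using $A^\ev = A \otimes_k A^\op$ and $B^\ev = B \otimes_k B^\op$) is a progenerator for $A^\ev$ whose opposite endomorphism ring is $B^\ev$. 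Concretely, $\End_{A^\ev}(P \otimes_k Q)^\op \cong \End_A(P)^\op \otimes_k \End_{A^\op}(Q)^\op \cong B \otimes_k B^\op = B^\ev$, which shows that $A^\ev$ and $B^\ev$ are Morita equivalent, implemented by the functor $\scrU^\ev = \Hom_{A^\ev}(P^\ev, -) \colon \Mod(A^\ev) \to \Mod(B^\ev)$.

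First I would verify that $P^\ev$ carries the structure of a comonoid in the monoidal category $(\Mod(A^\ev), \otimes_A, A)$ — equivalently, that the equivalence $\scrU^\ev$ intertwines the tensor products $\otimes_A$ and $\otimes_B$ up to coherent natural isomorphism and sends the unit $A$ to the unit $B$. The comultiplication and counit are induced by the evaluation and coevaluation maps $Q \otimes_A P \cong B$ and $P \otimes_B Q \cong A$ coming from the progenerator data; these are precisely the isomorphisms witnessing that $P$ and $Q$ are mutually inverse Morita bimodules. This comonoid structure is exactly what endows $\scrU^\ev$ with the data of an (almost) strong monoidal functor $(\scrL, \phi, \phi_0)$ in the sense of Definition \ref{def:monoidalfunc}, with $\phi_0 \colon B \to \scrU^\ev(A)$ an isomorphism. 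Since any equivalence of module categories is automatically exact, $\scrU^\ev$ is an exact and almost strong monoidal equivalence.

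With this functor in hand, Lemma \ref{lem:exactfuncyoneda} guarantees that the induced map on $\Ext$-algebras,
$$
(\scrU^\ev)^\sharp_\bullet \colon \Ext^\bullet_{A^\ev}(A,A) \longrightarrow \Ext^\bullet_{B^\ev}(B,B),
$$
is a homomorphism of graded $k$-algebras, and being induced by an equivalence it is bijective. Transporting through the canonical identification $\HH^\bullet(A,A) \cong \Ext^\bullet_{A^\ev}(A,A)$ (and likewise for $B$) via the maps $\chi_A$, $\chi_B$, we obtain an isomorphism $\HH^\bullet(A) \to \HH^\bullet(B)$ of graded $k$-algebras, which proves the theorem. (This is the cup-product statement; Theorem \ref{thm:moritaHH} will additionally use Theorem \ref{thm:bracketcomm} applied to $\scrU^\ev$ to conclude compatibility with the Gerstenhaber bracket and the squaring map.)

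The main obstacle I anticipate is the careful bookkeeping needed to show that $\scrU^\ev$ is genuinely \emph{monoidal} rather than merely an equivalence of abelian categories — that is, constructing the natural isomorphism $\phi_{M,N} \colon \scrU^\ev(M) \otimes_B \scrU^\ev(N) \to \scrU^\ev(M \otimes_A N)$ and checking the associativity and unit coherence diagrams of Definition \ref{def:monoidalfunc}. The heart of this is the comonoid structure on $P^\ev$, i.e.\ making precise and coherent the statement that conjugation by a Morita bimodule respects the bimodule tensor product $\otimes_A$; the coherence amounts to the compatibility of the evaluation/coevaluation isomorphisms $P \otimes_B Q \cong A$ and $Q \otimes_A P \cong B$ with the associators, which is where the genuinely technical verification lies.
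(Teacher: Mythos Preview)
Your approach is essentially the paper's own: the paper likewise forms the progenerator $Q = P \otimes_k P^\vee$ for $A^\ev$ (your $P^\ev$, with $P^\vee = \Hom_A(P,A)$ your $Q$), verifies $\End_{A^\ev}(P \otimes_k P^\vee)^\op \cong B^\ev$ via Lemma~\ref{lem:isosmoritaR}, equips $P \otimes_k P^\vee$ with an explicit comonoid structure $(\Delta_P,\varepsilon_P)$ in $(\Mod(A^\ev),\otimes_A,A)$, and deduces in Proposition~\ref{prop:laxmodmorita} that $\Hom_{A^\ev}(P\otimes_k P^\vee,-)$ is almost strong monoidal. The only difference is that the paper restricts to $\P(A)\to\P(B)$ (Proposition~\ref{prop:moritamonoequ}) because it needs a \emph{strong exact} monoidal category to invoke the bracket machinery for Theorem~\ref{thm:moritaHH}; for the bare graded-algebra isomorphism your direct route through $\Mod(A^\ev)$ suffices, and you correctly flag that the bracket compatibility then comes from Theorem~\ref{thm:bracketcomm}.
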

In fact, this even holds true when replacing Morita equivalent by \textit{derived equivalent}\footnote{The $k$-algebras $A$ and $B$ are \textit{derived equivalent}, if their bounded derived categories $\mathbf D^b(\Mod(A))$ and $\mathbf D^b(\Mod(B))$ are equivalent as triangulated categories.} as shown in \cite[Thm.\,4.2]{Ha89} in the special case of finite dimensional algebras and derived equivalences arising from tilting modules, and then in \cite[Prop.\,2.5]{Ri91} in the most general case. (Note that Morita equivalent algebras are always (for trivial reasons) derived equivalent; however the converse does not hold true in general.) The question of whether the Gerstenhaber bracket is preserved under derived equivalences, was considered by B.\,Keller in \cite{Ke04}, wherein he claims that it is indeed a derived invariant. To realize the result, he compares the graded Lie algebra $\HH^{\bullet+1}(A)$ with the Lie algebra associated to the \textit{derived Picard group of $A$} via an isomorphism of graded Lie algebras:
$$
\xymatrix@C=18pt{
\HH^{\bullet+1}(A) \ar[r]^-\sim & \mathrm{Lie}(\mathbf{DPic}_A^\bullet).
}
$$
The right hand side does not depend on $A$, but rather on the underlying triangulated category $\mathbf D^b(\Mod(A))$. However, the proof given by B.\,Keller is, at least for the author, not very enlightening. Using the methods established so far, we will confirm B.\,Keller's observation for Morita equivalent algebras. In fact, we will even show that the whole \textit{strict} Gerstenhaber structure is preserved under Morita equivalence.
\end{nn}
\begin{lem}\label{lem:dereqproj}
Let $A$ and $B$ be Morita equivalent $k$-algebras. Then $A$ is flat $($projective$)$ as a $k$-module if, and only if, $B$ is flat $($projective$)$ as a $k$-module.
\end{lem}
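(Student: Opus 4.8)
The plan is to exploit the defining property of Morita equivalence recalled in \ref{nn:morita}: there is a Morita bimodule $P$ which is a progenerator for $A$, i.e.\ a finitely generated projective left $A$-module, and an isomorphism $B \cong \End_A(P)^\op$ of $k$-algebras. The whole statement will follow once I show that the underlying $k$-module of $B$ is a direct summand of a finitely generated free $A$-module, because flatness and projectivity are both inherited by finite direct sums and by direct summands (retracts).

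First I would use that $P$, being finitely generated projective over $A$, admits an $A$-module $P'$ and an integer $n \geq 1$ with $P \oplus P' \cong A^n$ as left $A$-modules. Writing $R = \End_A(A^n)$ and letting $e \in R$ be the idempotent projecting $A^n$ onto the summand $P$, one has the standard corner-ring identification $\End_A(P) \cong eRe$; since $e^2 = e$, the $k$-linear map $x \mapsto exe$ is an idempotent on $R$ whose image is $eRe$, so that $\End_A(P)$ is a direct summand of $R$ as a $k$-module (this is just the Peirce decomposition of $R$). Now $\Hom_A(A,A) \cong A$ as $k$-modules via $f \mapsto f(1)$, and additivity of $\Hom_A(-,-)$ gives $R = \End_A(A^n) \cong A^{n^2}$ as $k$-modules; passing to the opposite ring does not alter the underlying $k$-module. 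Hence $B \cong \End_A(P)^\op$ is, as a $k$-module, a direct summand of $A^{n^2}$. If $A$ is flat (respectively projective) over $k$, then so is the finite direct sum $A^{n^2}$, and therefore so is its direct summand $B$.

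For the reverse implication I would invoke the symmetry of the Morita relation: $B$ is again Morita equivalent to $A$, so \ref{nn:morita} supplies a progenerator $Q$ for $B$ which is finitely generated projective over $B$ with $A \cong \End_B(Q)^\op$. The argument of the previous paragraph, applied verbatim with the roles of $A$ and $B$ interchanged, shows that $A$ is a $k$-module direct summand of $B^{m^2}$ for some $m \geq 1$, so that $k$-flatness (projectivity) of $B$ forces the same for $A$. The only point requiring care --- and the closest thing to an obstacle --- is the bookkeeping of the $k$-linear structure: one must note that $k$ acts centrally throughout, so that $\End_A(P)$ is genuinely a $k$-algebra whose underlying $k$-module is the claimed retract of $A^{n^2}$, and that forming the opposite algebra leaves this $k$-module untouched. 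Everything else is formal.
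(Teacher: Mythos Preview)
Your proof is correct and follows essentially the same approach as the paper: exhibit $\End_A(P)$ as a $k$-module direct summand of $\End_A(A^n)\cong A^{n^2}$, then invoke the symmetry of Morita equivalence for the converse. The paper writes the direct-sum decomposition $\End_A(A^n)\cong \End_A(P)\oplus \End_A(P')\oplus \Hom_A(P,P')\oplus \Hom_A(P',P)$ directly from additivity of $\Hom$, whereas you phrase it via the Peirce decomposition with the idempotent $e$, but these are the same argument.
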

\begin{proof}
In view of \ref{nn:morita}(\ref{nn:morita:1})--(\ref{nn:morita:2}), it suffices to show the following statement: Let $\Gamma$ be a $k$-algebra, and let $P$ be a finitely generated projective $\Gamma$-module. Then $\Gamma$ being a flat (projective) $k$-module implies that $\End_\Gamma(P)$ is a flat (projective) $k$-module. 

Let $Q$ be a $\Gamma$-module with $P \oplus Q \cong \Gamma^m$ for some integer $m \geq 0$. Then, as $k$-modules,
\begin{align*}
\Gamma^{m^2} &\cong \End_\Gamma(\Gamma^m)\\
&\cong \End_\Gamma(P) \oplus \End_\Gamma(Q) \oplus \Hom_\Gamma(P,Q) \oplus \Hom_\Gamma(Q,P),
\end{align*}
which finishes the proof.
\end{proof}
\begin{nn}
In the following, we fix a $k$-algebra $A$ which is projective as a $k$-module. Let $P$ be a progenerator for $A$ and let $B$ be the endomorphism ring $\End_{A}(P)^\op$. Hence $A$ and $B$ are Morita equivalent, and we have the following mutually inverse equivalences of categories. (Remember that $P$ naturally is a right $B$-module.)
\begin{align*}
\mathscr U_A = \Hom_A(P,-) : \Mod(A) \longrightarrow \Mod(B), \quad \mathscr V_A = P \otimes_B - : \Mod(B) \longrightarrow \Mod(A)
\end{align*}
We have already observed that $B$ has to be projective as a $k$-module. Let $(-)^\vee = (-)^\vee_A$ be the contravariant functor $\Hom_A(-,A) : \Mod(A) \rightarrow \Mod(A^\op)$. Since $P$ is a right $B$-module, $P^\vee$ will be a left module over $B$, by $(bf)(p) = (f \circ b)(p)$ (for $b \in B$, $f \in P^\vee$ and $p \in P$). According to the following statement, $(-)^\vee$ defines a duality $\proj(A) \rightarrow \proj(A^\op)$.
\end{nn}
\begin{lem}\label{lem:isodiamonddual}
Let $R$ be a ring, $U$ be a finitely generated projective $R$-module and let $M$ be an $R^\op$-module. Then the map
\begin{align*}
h_U: M \otimes_R U \longrightarrow \Hom&_{R^\op}(\Hom_R(U,R),M)\\
h_U(m \otimes u)(\varphi)&= m \varphi(u)
\end{align*}
is linear, bijective and natural, in both $M$ and $U$. If $M$ is a $S$-$R$-bimodule for some ring $S$, then $h_U$ is also $S$-linear.
\end{lem}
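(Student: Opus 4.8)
The plan is to treat $h = (h_U)_U$ as a natural transformation between the two additive functors $M \otimes_R -$ and $\Hom_{R^\op}(\Hom_R(-,R), M)$ on the category $\proj(R)$ of finitely generated projective left $R$-modules, and then to reduce the isomorphism claim to the free rank-one case by a standard d\'evissage. First I would verify that $h_U$ is well defined and additive: since $\Hom_R(U,R)$ carries the $R^\op$-structure $(\varphi \cdot r)(u) = \varphi(u)r$, the assignment $\varphi \mapsto m\varphi(u)$ is $R^\op$-linear because $m(\varphi\cdot r)(u) = m\varphi(u)r$, so $h_U(m\otimes u)$ genuinely lands in $\Hom_{R^\op}(\Hom_R(U,R),M)$; and $h_U$ respects the tensor relation over $R$ because $(mr)\varphi(u) = m\varphi(ru)$ by left $R$-linearity of $\varphi$. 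Naturality in $M$ is immediate, and naturality in $U$ amounts to checking, for $f\colon U \to V$, that both routes around the square send $m\otimes u$ to the map $\psi \mapsto m\,\psi(f(u))$, which is a one-line computation using that $\Hom_R(f,R)$ is precomposition with $f$.

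Next I would establish the isomorphism for $U = R$. Under the canonical identifications $M \otimes_R R \cong M$, $\Hom_R(R,R) \cong R$ (as $R^\op$-modules via right multiplication), and $\Hom_{R^\op}(R,M) \cong M$ (evaluation at $1_R$), the map $h_R$ corresponds to the identity of $M$: indeed $h_R(m\otimes 1_R)$ evaluated at the identity $\id_R \in \Hom_R(R,R)$ returns $m$. Hence $h_R$ is bijective. Because both functors are additive and $h$ is natural, $h$ is compatible with finite biproducts, so $h_{R^n}$ is an isomorphism for every $n$.

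Finally I would pass from free modules to arbitrary finitely generated projectives. Write $U \oplus Q \cong R^n$ with inclusion $\iota\colon U \to R^n$ and retraction $\pi\colon R^n \to U$ satisfying $\pi\iota = \id_U$. Naturality applied to $\iota$ and $\pi$ exhibits $h_U$ as a retract of the isomorphism $h_{R^n}$: concretely, $h_U = \Hom_{R^\op}(\Hom_R(\iota,R),M)\circ h_{R^n}\circ (M\otimes_R\pi)$ up to the evident identifications, and the idempotent $\iota\pi$ is carried by $h$ to a corresponding idempotent on the target, so $h_U$ is the restriction of $h_{R^n}$ to matching direct summands and is therefore itself bijective. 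The $S$-linearity in the bimodule case is then routine: when $M$ is an $S$-$R$-bimodule, both sides are left $S$-modules through the $S$-action on $M$, and $h_U(sm\otimes u)(\varphi) = (sm)\varphi(u) = s\bigl(m\varphi(u)\bigr)$ shows $h_U$ is $S$-linear. The only point demanding genuine care, rather than the formal d\'evissage, is keeping the left/right and $R$ versus $R^\op$ module structures straight throughout, so that every occurring $\Hom$ and $\otimes$ is formed over the correct side and the naturality squares are the ones I actually need; once the structures are pinned down the argument is purely formal.
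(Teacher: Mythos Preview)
Your proposal is correct and follows essentially the same approach as the paper: the paper's proof simply asserts that linearity and naturality are immediate and that bijectivity is obvious for $U$ free of finite rank, while you have carefully spelled out precisely this standard d\'evissage (well-definedness, the case $U=R$, passage to $R^n$ by additivity, and then to retracts). There is no meaningful difference in strategy.
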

\begin{proof}
The linearity and the naturally are immediate. Further, $h_U$ is bijective, since it is (obviously) bijective in case $U$ is free of finite rank.
\end{proof}
\begin{nn}
The dual $P^\vee$ of $P$ is a progenerator for $A^\op$ with $\End_{A^\op}(P^\vee)^\op \cong \End_A(P) = B^\op$. In particular, we obtain two additional mutually inverse equivalences:
\begin{align*}
\mathscr U_{A^\op} = \Hom_{A^\op}(P^\vee_{A},-) &: \Mod(A^\op) \longrightarrow \Mod(B^\op),\\ \mathscr V_{A^\op} = - \otimes_B P^\vee_{A} &: \Mod(B^\op) \longrightarrow \Mod(A^\op).
\end{align*}
\end{nn}
The isomorphisms stated below are very well-known (see for instance \cite[Ch.\,IX]{CaEi56}); since the classical literature presents them in a slightly too weak fashion, we reprove them in the generality that we need.
\begin{lem}\label{lem:isosmoritaR}
Let $R$ be a $k$-algebra, $U$, $V$ and $W$ be finitely generated projective $R$-modules, and let $S = \End_R(U)^\op$. Further, let $(-)^\vee$ denote the contravariant functor $\Hom_R(-,R): \Mod(R) \rightarrow \Mod(R^\op)$. Then the following assertions hold true.
\begin{enumerate}[\rm(1)]
\item\label{lem:isosmoritaR:1} The map
\begin{align*}
s_M: \Hom_{R \otimes R^\op}(V \otimes_k W^\vee, M) &\longrightarrow \Hom_R(V,\Hom_{R^\op}(W^\vee, M)), \\ s_M(f)(v)(\varphi) &= f(v \otimes \varphi)
\end{align*}
is linear, bijective and natural in $V$, $W$ and in the $R \otimes_k R^\op$-module $M$. If $V = W = U$, the map $s_M$ is in addition a $S$-$S$-bimodule homomorphism.
\item\label{lem:isosmoritaR:2} The map
\begin{align*}
t_M: \Hom_{R \otimes R^\op}(V \otimes_k W^\vee, M) &\longrightarrow \Hom_{R^\op}(W^\vee,\Hom_{R}(V, M)), \\ t_M(f)(\varphi)(v) &= f(v \otimes \varphi)
\end{align*}
is linear, bijective and natural in $V$, $W$ and in the $R \otimes_k R^\op$-module $M$. If $V = W = U$, the map $t_M$ is in addition a $S$-$S$-bimodule homomorphism.
\item\label{lem:isosmoritaR:3} The map
\begin{align*}
a: \Hom_R(U,V) \otimes_k \Hom_{R^\op}(U^\vee, V^\vee) &\longrightarrow \Hom_{R \otimes_k R^\op}(U \otimes_k U^\vee, V \otimes_k V^\vee),\\
a(f \otimes g)(u \otimes \varphi) &= f(u) \otimes g(\varphi)
\end{align*}
is a $S$-$S$-bimodule homomorphism and bijective. Moreover, if $V = U$, it is a homomorphism of $k$-algebras.
\end{enumerate}
\end{lem}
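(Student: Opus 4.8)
The plan is to verify each of the three maps directly by exhibiting an explicit inverse, or by reducing to the free case via the splitting given by projectivity. Throughout I will use that $U$, $V$, $W$ are finitely generated projective over $R$, so each is a direct summand of some finite free module $R^m$; since all the maps $s_M$, $t_M$ and $a$ are natural in their module arguments and additive, it suffices in each case to check bijectivity when the projective modules in question are \emph{free of finite rank}, where everything becomes a matter of the standard adjunctions and the identity $\Hom_R(R,N)\cong N$.

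For \eqref{lem:isosmoritaR:1} and \eqref{lem:isosmoritaR:2}, I would first observe that $R\otimes_k R^\op$-linearity of a map $f\colon V\otimes_k W^\vee\to M$ amounts to separate $R$-linearity in the $V$-slot and $R^\op$-linearity in the $W^\vee$-slot, because the left and right $R$-actions on $V\otimes_k W^\vee$ come from the two tensor factors. Hence $s_M$ and $t_M$ are simply the two tensor-hom adjunction isomorphisms (curry in one variable versus the other), and their linearity and naturality in $V$, $W$, $M$ are immediate from the formulas. Bijectivity follows by reducing to $V=R^r$, $W^\vee=(R^\op)^s$, where both sides become finite products of copies of $M$ and the maps are the obvious identifications. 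The only genuinely substantive point is the $S$-$S$-bimodule compatibility when $V=W=U$: here I must check that the left and right actions of $S=\End_R(U)^\op$ transported through $s_M$ (respectively $t_M$) agree, which is a direct unwinding using $s_M(f)(v)(\varphi)=f(v\otimes\varphi)$ together with how $S$ acts on $U$ on the one side and on $U^\vee$ (via $(-)^\vee$) on the other. I expect this bookkeeping of which action lands in which slot to be the main obstacle, but it is purely formal once the conventions of Lemma \ref{lem:isodiamonddual} are fixed.

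For \eqref{lem:isosmoritaR:3}, I would again reduce the bijectivity to the free case: when $U,V$ are free of finite rank the source and target are both finite free $k$-modules built from $\Hom_R(R,R)\cong R$, and $a$ becomes the canonical isomorphism $\Hom_R(U,V)\otimes_k\Hom_{R^\op}(U^\vee,V^\vee)\cong\Hom_{R\otimes_k R^\op}(U\otimes_k U^\vee,V\otimes_k V^\vee)$ of the corresponding matrix modules. The general case follows by splitting $U$, $V$ as summands of free modules and using that $a$ is natural and additive in both factors. That $a$ is an $S$-$S$-bimodule homomorphism is checked directly from the formula $a(f\otimes g)(u\otimes\varphi)=f(u)\otimes g(\varphi)$, and the final claim that $a$ is a $k$-algebra homomorphism when $V=U$ amounts to verifying $a\bigl((f\otimes g)\circ(f'\otimes g')\bigr)=a(f\otimes g)\circ a(f'\otimes g')$ on elementary tensors $u\otimes\varphi$; this is the identity $f(f'(u))\otimes g(g'(\varphi))$ on both sides, so it is immediate. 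Thus no step requires more than the free-module reduction and a short diagram/element chase.
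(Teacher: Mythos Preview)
Your proposal is correct and matches the paper's approach closely. The only minor difference is that for (\ref{lem:isosmoritaR:1}) and (\ref{lem:isosmoritaR:2}) the paper simply writes down the explicit inverses $s_M^{-1}(f)(v\otimes\varphi)=f(v)(\varphi)$ and $t_M^{-1}(f)(v\otimes\varphi)=f(\varphi)(v)$ (which work without any projectivity hypothesis), rather than reducing to the free case; for (\ref{lem:isosmoritaR:3}) and the $S$-$S$-bimodule checks your argument coincides with the paper's.
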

\begin{proof}
One easily checks that the maps in (\ref{lem:isosmoritaR:1}) and (\ref{lem:isosmoritaR:2}) are linear and natural in every variable. Their inverse maps are given by $s_M^{-1}(f)(v \otimes \varphi) = f(v)(\varphi)$ and $t_M^{-1}(f)(v \otimes \varphi) = f(\varphi)(v)$. Assume that $V = W = U$. Take $s \in S = \End_R(U)^\op$ and $f \in \Hom_R(U,V)$, $g \in \Hom_{R^\op}(U^\vee, V^\vee)$ and remember, that the left $S$-module structure on $\Hom_R(U,M)$ is given by 
$$
(sf)(u) = (f \circ s)(u) \quad \text{(for $u \in U$)}
$$
whereas the right $S$-module structure on $\Hom_{R^\op}(U^\vee, M)$ is given by
$$
(gs)(\varphi) = g(\varphi \circ s) \quad \text{(for $\varphi \in W^\vee$)}.
$$
Thus, $s_M$ and $t_M$ are evidently $S$-linear on both sides. The same apparently holds true for the map $a$ in (\ref{lem:isosmoritaR:3}). It is bijective, for it is when $U$ and $V$ are free of finite rank. Finally, it is clear that $a$ is an algebra homomorphism in case $U = V$.
\end{proof}
\begin{nn}
We claim that $P \otimes_k P^\vee$ is a progenerator for $A \otimes_k A^\op$ whose endomorphism ring is isomorphic to $B \otimes_k B^\op$. To begin with, $P \otimes_k P^\vee$ is a projective $A \otimes_k A^\op$-module since the $k$-linear map
\begin{equation*}
\begin{aligned}
s_M: \Hom_{A \otimes A^\op}(P \otimes_k P^\vee, M) &\longrightarrow \Hom_A(P,\Hom_{A^\op}(P^\vee, M)), \\ s_M(f)(p)(\varphi) &= f(p \otimes \varphi)
\end{aligned}
\end{equation*}
defines a natural isomorphism by Lemma \ref{lem:isosmoritaR}(\ref{lem:isosmoritaR:1}). Since $A \otimes_k A^\op$ is a generator, it suffices to name an epimorphism $\bigoplus_i P \otimes_k P^\vee \rightarrow A \otimes_k A^\op$ of $A \otimes_k A^\op$-modules to verify that $P \otimes_k P^\vee$ is a generator for $A \otimes_k A^\op$. But this epimorphism can be realized by tonsoring together an epimorphism $\pi: P^m \rightarrow A$ with the epimorphism $\sigma^\vee: (P^\vee)^m \cong (P^m)^\vee \rightarrow A^\vee \cong A^\op$, where $\sigma: A \rightarrow P^m$ is a (split) monomorphism in $\Mod(A)$. Finally,
\begin{equation*}
\begin{aligned}
a: \Hom_A(P,P) \otimes_k \Hom_{A^\op}(P^\vee, P^\vee) &\longrightarrow \Hom_{A \otimes_k A^\op}(P \otimes_k P^\vee, P \otimes_k P^\vee)\\
a(f \otimes g)(p \otimes \varphi) &= f(p) \otimes g(\varphi)
\end{aligned}
\end{equation*}
is an isomorphism of $k$-algebras (chose $R = A$ and $U = V = P$ in Lemma \ref{lem:isosmoritaR}(\ref{lem:isosmoritaR:3})). Therefore $\End_{A^\ev}(P \otimes_k P^\vee)^\op \cong B \otimes_k B^\op$ as claimed. Hence we have proven the following result.
\begin{lem}
The $k$-algebras $\Gamma:=A \otimes_k A^\op$ and $\Lambda:=B \otimes_k B^\op$ are Morita equivalent.
\end{lem}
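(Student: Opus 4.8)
The plan is to verify the two-part progenerator criterion \ref{nn:morita}(\ref{nn:morita:1})--(\ref{nn:morita:2}) for the pair $(\Gamma, \Lambda)$, taking $\Pi := P \otimes_k P^\vee$ as the candidate progenerator for $\Gamma = A^\ev$. All the necessary ingredients have essentially been assembled in the discussion preceding the statement, so the task reduces to certifying that $\Pi$ is a finitely generated projective generator for $\Gamma$ and that $\End_\Gamma(\Pi)^\op \cong \Lambda$; once these are in hand, the criterion delivers the Morita equivalence immediately.

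First I would record that $\Pi$ is finitely generated over $\Gamma$: if $p_1, \dots, p_r$ generate $P$ over $A$ and $\varphi_1, \dots, \varphi_s$ generate $P^\vee$ over $A^\op$, then the finitely many elementary tensors $p_i \otimes \varphi_j$ generate $\Pi$ over $\Gamma$, since $(a \otimes a')(p_i \otimes \varphi_j) = a p_i \otimes \varphi_j a'$. Projectivity then follows from Lemma \ref{lem:isosmoritaR}(\ref{lem:isosmoritaR:1}): the natural isomorphism $s_M$ identifies $\Hom_\Gamma(\Pi, -)$ with $\Hom_A(P, \Hom_{A^\op}(P^\vee, -))$, a composite of exact functors (as $P$ and $P^\vee$ are projective over $A$ respectively $A^\op$), whence $\Hom_\Gamma(\Pi, -)$ is exact. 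That $\Pi$ is a generator for $\Gamma$ was shown by exhibiting an epimorphism $\bigoplus_i \Pi \rightarrow A \otimes_k A^\op$, obtained by tensoring a split epimorphism $P^m \rightarrow A$ with its dual $(P^\vee)^m \rightarrow A^\op$.

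Finally, the endomorphism ring is computed by Lemma \ref{lem:isosmoritaR}(\ref{lem:isosmoritaR:3}) with $R = A$, $U = V = P$: the map $a$ is a $k$-algebra isomorphism $\End_A(P) \otimes_k \End_{A^\op}(P^\vee) \xrightarrow{\sim} \End_\Gamma(\Pi)$. Passing to opposite rings and using $(R \otimes_k S)^\op = R^\op \otimes_k S^\op$ together with $\End_A(P)^\op = B$ (by definition of $B$) and $\End_{A^\op}(P^\vee)^\op \cong B^\op$ (established just above the statement), one obtains $\End_\Gamma(\Pi)^\op \cong \End_A(P)^\op \otimes_k \End_{A^\op}(P^\vee)^\op \cong B \otimes_k B^\op = \Lambda$. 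Assembling these four facts, $\Pi$ is a progenerator for $\Gamma$ realizing $\Lambda$ as the opposite of its endomorphism ring, so \ref{nn:morita}(\ref{nn:morita:1})--(\ref{nn:morita:2}) yields that $\Gamma$ and $\Lambda$ are Morita equivalent.

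There is no genuine obstacle here, as every ingredient is already in place; the only point demanding care is the bookkeeping of the $\op$'s, namely tracking that $a$ is multiplicative for the ordinary (non-opposite) composition and that the opposite of the tensor product splits as the tensor product of the opposites, so that the answer lands on $B \otimes_k B^\op$ rather than on its mirror image.
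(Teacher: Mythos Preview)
Your proposal is correct and follows essentially the same approach as the paper: the paper's proof is the discussion immediately preceding the lemma, which establishes that $P \otimes_k P^\vee$ is a progenerator for $\Gamma$ with opposite endomorphism ring $\Lambda$ using Lemma~\ref{lem:isosmoritaR}(\ref{lem:isosmoritaR:1}) for projectivity, the tensored epimorphism for the generator property, and Lemma~\ref{lem:isosmoritaR}(\ref{lem:isosmoritaR:3}) for the endomorphism ring. You add the explicit verification of finite generation and spell out the $\op$-bookkeeping, but otherwise the argument is identical.
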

Note that $P$, $P^\vee$ and $P \otimes_k P^\vee$ are also finitely generated projective generators when considered as right modules over $B$, $B^\op$ and $B \otimes_k B^\op$ respectively (see \cite[\S 22]{AnFu92}). 

Put $Q := P \otimes_k P^\vee$. The Morita equivalent algebras $\Gamma$ and $\Lambda \cong \End_\Lambda(Q)^\op$ give rise to two mutually inverse equivalences
\begin{align*}
\mathscr U_\Gamma = \Hom_\Gamma(Q,-) : \Mod(\Gamma) \longrightarrow \Mod(\Lambda), \quad \mathscr V_\Gamma = Q \otimes_\Lambda - : \Mod(\Lambda) \longrightarrow \Mod(\Gamma),
\end{align*}
where we consider $Q$ as a right $\Lambda$-module in the usual way. We are interested in how these equivalences work on the full subcategories $\P(A) \subseteq \Mod(\Gamma)$ and $\P(B) \subseteq \Mod(\Lambda)$ (for the definition and properties of $\P(A)$ and $\P(B)$ see Example \ref{exa:bimodules}).
\end{nn}
\begin{prop}\label{prop:laxmodmorita}
The functor $\mathscr U_\Gamma = \Hom_\Gamma(Q,-)$ is a lax monoidal functor $(\Mod(A^\ev), \otimes_A, A) \rightarrow (\Mod(B^\ev), \otimes_B, B)$ with the unit morphism $B \rightarrow \mathscr U_\Gamma A$ being an isomorphism $($that is, $\mathscr U_\Gamma$ is almost strong monoidal$)$. Therefore, $\mathscr V_\Gamma = Q \otimes_\Lambda -$ is an almost costrong monoidal functor $(\Mod(B^\ev), \otimes_B, B) \rightarrow (\Mod(A^\ev), \otimes_A, A)$.
\end{prop}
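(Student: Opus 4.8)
The plan is to exhibit an explicit lax monoidal structure on $\mathscr U_\Gamma = \Hom_\Gamma(Q,-)$, where $Q = P \otimes_k P^\vee$, and then deduce the costrong statement for $\mathscr V_\Gamma = Q \otimes_\Lambda -$ by passing to opposite categories (this is exactly the formal pattern encoded in Definition \ref{def:monoidalfunc}(2): $\mathscr V_\Gamma$ is colax iff $\mathscr V_\Gamma^\op$ is lax). First I would recall that $Q$ carries a comonoid structure in the monoidal category $(\Mod(A^\ev), \otimes_A, A)$: since $P$ is a progenerator with $B \cong \End_A(P)^\op$, the counit $\varepsilon \colon Q = P \otimes_k P^\vee \to A$ is the canonical evaluation $p \otimes \varphi \mapsto \varphi(p)$, and the comultiplication $Q \to Q \otimes_A Q$ is built from a dual basis (a coevaluation) for the finitely generated projective module $P$. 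The point is that a comonoid $Q$ makes $\Hom_\Gamma(Q,-)$ into a \emph{lax} monoidal functor in the standard way, with the multiplicative structure map
$$
\phi_{M,N} \colon \Hom_\Gamma(Q,M) \otimes_B \Hom_\Gamma(Q,N) \longrightarrow \Hom_\Gamma(Q, M \otimes_A N)
$$
obtained by sending $f \otimes g$ to the composite $Q \to Q \otimes_A Q \xrightarrow{f \otimes g} M \otimes_A N$, and the unit map $\phi_0 \colon B \to \Hom_\Gamma(Q,A)$ induced by the counit $\varepsilon$.

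The key steps in order are then: (i) verify that the comultiplication and counit on $Q$ satisfy the coassociativity and counit axioms, which reduces via Lemma \ref{lem:isosmoritaR} to the dual basis identities for $P$ over $A$; (ii) check that $\phi_{M,N}$ and $\phi_0$ are genuinely $B^\ev$-linear and natural in $M$ and $N$, using the $S$-bimodule linearity statements in Lemma \ref{lem:isosmoritaR}(\ref{lem:isosmoritaR:1})--(\ref{lem:isosmoritaR:3}) with $S = \End_A(P)^\op = B^\op$; (iii) confirm the three lax-monoidal coherence diagrams of Definition \ref{def:monoidalfunc}(1) commute, which again unwind to the comonoid axioms together with the naturality of the associator and unitors; and (iv) prove that $\phi_0$ is an isomorphism. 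For step (iv) I would invoke the natural isomorphism $s_A \colon \Hom_{A^\ev}(P \otimes_k P^\vee, A) \xrightarrow{\sim} \Hom_A(P, \Hom_{A^\op}(P^\vee, A))$ of Lemma \ref{lem:isosmoritaR}(\ref{lem:isosmoritaR:1}) combined with $h_P$ of Lemma \ref{lem:isodiamonddual}, which together identify $\Hom_\Gamma(Q,A)$ with $\Hom_A(P,P) = B$ as $B^\ev$-modules; tracing through these identifications shows the image of this chain is precisely $\phi_0$, so $\phi_0$ is invertible and $\mathscr U_\Gamma$ is almost strong.

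Having established that $(\mathscr U_\Gamma, \phi, \phi_0)$ is lax with $\phi_0$ invertible, the claim for $\mathscr V_\Gamma$ follows formally. Indeed $\mathscr V_\Gamma$ is a quasi-inverse of $\mathscr U_\Gamma$, and I would define its colax structure $(\psi, \psi_0)$ as the mates of $(\phi, \phi_0)$ under the adjunction (equivalently, apply the lax construction to $\mathscr U_\Gamma^\op = \Hom_{\Gamma^\op}(Q^\vee, -)$ on the opposite categories, using that $(\Mod(A^\ev))^\op$ inherits a monoidal structure by Remark \ref{rem:opposite}(5)). Since mating sends the invertible unit map $\phi_0$ to an invertible map $\psi_0$, the resulting $(\mathscr V_\Gamma, \psi, \psi_0)$ is an almost costrong monoidal functor, as asserted.

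The main obstacle I anticipate is not any single coherence diagram but the bookkeeping of module structures and signs across the several natural isomorphisms of Lemma \ref{lem:isosmoritaR}: one must be careful that the $B$-$B$-bimodule action arising from the right $B$-module structure on $P$ (and the induced left $B$-module structure on $P^\vee$, via $(bf)(p) = (f \circ b)(p)$) is exactly matched up on both sides of $\phi_{M,N}$, so that $\phi$ lands in $B^\ev$-modules rather than merely $k$-modules. In other words, the genuinely delicate point is verifying the $B^\ev$-linearity and naturality in step (ii), where the comultiplication on $Q$ and the two bimodule structures must be shown to be compatible; the commutativity of the coherence diagrams in step (iii) is then largely a formal consequence once the comonoid axioms from step (i) are in hand.
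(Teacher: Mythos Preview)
Your proposal is correct and follows essentially the same route as the paper: both construct the comonoid structure $(\Delta_P, \varepsilon_P)$ on $Q = P \otimes_k P^\vee$ and use it to define $\phi_{M,N}(f \otimes g) = (f \otimes_A g) \circ \Delta_P$, then establish that $\phi_0$ is an isomorphism via the chain of natural isomorphisms from Lemmas~\ref{lem:isodiamonddual} and~\ref{lem:isosmoritaR}. The only cosmetic differences are that the paper uses $t_A$ from Lemma~\ref{lem:isosmoritaR}(\ref{lem:isosmoritaR:2}) rather than $s_A$ from part~(\ref{lem:isosmoritaR:1}) in identifying $\Hom_\Gamma(Q,A)$ with $B$, and that you are more explicit about deriving the colax structure on $\mathscr V_\Gamma$ via mates under the adjunction, whereas the paper leaves this implicit in the word ``Therefore''.
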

\begin{proof}
To verify that $\mathscr U_\Gamma$ is a lax monoidal functor, we have to name homomorphisms
$$
\phi_{M,N}: \mathscr U_\Gamma M \otimes_B \mathscr U_\Gamma N \longrightarrow \mathscr U_\Gamma(M \otimes_A N) \quad \text{and} \quad \phi_0: B \longrightarrow \mathscr U_\Gamma A 
$$
of $\Gamma$-modules (the former being natural in $M$ and $N$). As a preliminary observation, note that by Lemma \ref{lem:isodiamonddual}, we obtain the following isomorphism
$$
P^\vee \otimes_A P \cong \Hom_{A^\op}(\Hom_A(P,A), P^\vee) = \End_{A^\op}(P^\vee) \cong \End_A(P)^\op = B,
$$
and thus the $A \otimes_k A^\op$-module homomorphisms stated below:
\begin{align*}
\Delta_P: P \otimes_k P^\vee \longrightarrow P \otimes_k B & \otimes_k P^\vee \cong (P \otimes_k P^\vee) \otimes_A (P \otimes_k P^\vee),\\
\Delta_P(p \otimes \varphi) &= p \otimes 1_B \otimes \varphi
\end{align*}
and
\begin{align*}
\varepsilon_P: P \otimes_k P^\vee \longrightarrow A, \ \varepsilon_P(p \otimes \varphi) = \varphi(p).
\end{align*}
The $B \otimes_k B^\op$-module $\mathscr U_\Gamma(A) = \Hom_{\Gamma}(Q,A) = \Hom_{A \otimes_k A^\op}(P \otimes_k P^\vee, A)$ can be viewed as $k$-algebra through
$$
f \star g := \mu \circ (f \otimes_A g) \circ \Delta_P \quad \text{(for $f,g \in \Hom_\Gamma(Q,A)$)}
$$
with unit $\varepsilon_P$ (here $\mu$ denotes the (unit) isomorphism $A \otimes_A A \rightarrow A$). As a $B \otimes_k B^\op$-module, $\mathscr V_\Gamma(A) = \Hom_{\Gamma}(Q,A)$ is isomorphic to $B$:
\begin{align*}
\mathscr U_\Gamma(A) &= \Hom_{A \otimes_k A^\op}(P \otimes_k P^\vee, A)\\
&\cong \Hom_{A^\op}(P^\vee, \Hom_{A}(P, A)) && \text{(by Lemma \ref{lem:isosmoritaR}(\ref{lem:isosmoritaR:2}))}\\
&= \Hom_{A^\op}(P^\vee,P^\vee)\\
&\cong \Hom_A(P,P)^\op && \text{(since $(-)^\vee$ is a duality)}\\
&= B \ .
\end{align*}
We are now in the position to define the disered homomorphisms $\phi_0$ and $\phi_{M,N}$ for any pair of $A \otimes_k A^\op$-modules $M$ and $N$. Indeed,
\begin{align*}
\phi_{M,N}: \Hom_{\Gamma}(P \otimes_k P^\vee, M) \otimes_B \Hom_{\Gamma}&(P \otimes_k P^\vee, M) \longrightarrow \Hom_{\Gamma}(P \otimes_k P^\vee, M \otimes_A N)\\
\phi_{M,N}(f \otimes g) &= (f \otimes_A g) \circ \Delta_P
\end{align*}
and
$$\xymatrix@C=15pt{
\phi_0: B \ar[r]^-\sim & \Hom_{\Gamma}(P \otimes_k P^\vee,A) = \mathscr U_\Gamma(A)
}$$
fit into the commutative diagrams of Definition \ref{def:monoidalfunc}, as one easily (but tediously) verifies.
\end{proof}
The proposition implies the following well-known fact.
\begin{cor}
If $R$ and $S$ are commutative $k$-algebras, and if $R$ and $S$ are Morita equivalent, then $R \cong S$.
\end{cor}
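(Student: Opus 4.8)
The plan is to deduce the corollary from Proposition \ref{prop:laxmodmorita} together with the commutativity of the algebras involved. Suppose $R$ and $S$ are commutative $k$-algebras that are Morita equivalent. First I would note that the Hochschild cohomology in degree zero recovers the center: for any $k$-algebra $\Gamma$ we have $\HH^0(\Gamma,\Gamma) = \Ext^0_{\Gamma^\ev}(\Gamma,\Gamma) \cong Z(\Gamma)$, the center of $\Gamma$. Since $R$ is commutative, $Z(R) = R$, and likewise $Z(S) = S$. Thus it suffices to produce an isomorphism $Z(R) \cong Z(S)$ of $k$-algebras that is induced by the Morita context, i.e.\ to show that Morita equivalence preserves the center as a $k$-algebra.

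The key input is Proposition \ref{prop:laxmodmorita}, which supplies an exact and almost strong monoidal functor $\mathscr U_\Gamma = \Hom_\Gamma(Q,-) \colon (\Mod(R^\ev),\otimes_R,R) \to (\Mod(S^\ev),\otimes_S,S)$, where $\Gamma = R^\ev$, $\Lambda = S^\ev$, and $Q = P \otimes_k P^\vee$ for a Morita progenerator $P$. (Here I should briefly remark that $R$ being commutative is $k$-projective is not needed for the degree-zero statement; the monoidal equivalence itself does not require $k$-projectivity in order to compare the tensor units, although the cited proposition is stated in that generality.) Because $\mathscr U_\Gamma$ is almost strong monoidal, Lemma \ref{lem:exactfuncyoneda} (applied through the induced map on $\Ext$-algebras) yields a homomorphism of graded $k$-algebras
$$
\mathscr U_\Gamma^\sharp \colon \Ext^\bullet_{R^\ev}(R,R) \longrightarrow \Ext^\bullet_{S^\ev}(S,S),
$$
and since $\mathscr U_\Gamma$ is an equivalence of categories with quasi-inverse $\mathscr V_\Gamma$, this homomorphism is an isomorphism in every degree. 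Restricting to degree zero and using $\HH^0 \cong Z(-)$ gives a $k$-algebra isomorphism $R = Z(R) \cong Z(S) = S$.

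The step requiring the most care is the identification in degree zero. I would verify that the unit comparison morphism $\phi_0 \colon S \to \mathscr U_\Gamma R$ constructed in the proof of Proposition \ref{prop:laxmodmorita} is exactly the map implementing the isomorphism $\End_{R^\ev}(R) \cong \End_{S^\ev}(S)$ on endomorphism rings of the tensor units, so that $\mathscr U_\Gamma^\sharp$ in degree zero is the ring homomorphism $Z(R) \to Z(S)$ induced functorially by the equivalence rather than some a priori only additive map. Concretely, $\Ext^0_{R^\ev}(R,R) = \End_{R^\ev}(R)$, and for a commutative ring this is canonically $R$ itself; the almost strong monoidal structure guarantees that $\mathscr U_\Gamma^\sharp$ respects the Yoneda product (which in degree zero is just composition), hence multiplicativity of the resulting bijection $R \to S$. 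The main obstacle is therefore purely bookkeeping: tracing $\phi_0$ and the naturality isomorphisms of Lemma \ref{lem:isosmoritaR} through to confirm that the induced degree-zero map is a genuine $k$-algebra isomorphism and not merely a $k$-module isomorphism. Once that compatibility is checked, the conclusion $R \cong S$ is immediate.
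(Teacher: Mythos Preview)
Your proposal is correct and follows essentially the same idea as the paper: identify $R = Z(R) \cong \Hom_{R^\ev}(R,R)$ and $S = Z(S) \cong \Hom_{S^\ev}(S,S)$, and use that the equivalence $\mathscr U_{R\otimes_k R}$ from Proposition~\ref{prop:laxmodmorita} (together with the unit isomorphism $\phi_0$) carries one endomorphism ring to the other. The paper's argument is slightly leaner in that it does not invoke Lemma~\ref{lem:exactfuncyoneda} or the graded $\Ext$-machinery at all---any equivalence of categories already induces a $k$-algebra isomorphism on endomorphism rings of corresponding objects---so your detour through the monoidal and Yoneda-product compatibilities, while correct, is more than what is needed in degree zero.
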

\begin{proof}
Clearly, $R = Z(R) \cong \Hom_{R^\ev}(R,R)$ and $S = Z(S) \cong \Hom_{S^\ev}(S,S)$. But $\Hom_{R^\ev}(R,R)$ and $\Hom_{S^\ev}(S,S)$ are isomorphic (via $\mathscr U_{R \otimes_k R}$).
\end{proof}
\begin{rem}
The maps $\Delta_P: P \otimes_k P^\vee \rightarrow (P \otimes_k P^\vee) \otimes_A (P \otimes_k P^\vee)$ and $\varepsilon_P: P \otimes_k P^\vee \rightarrow A$ turn $P \otimes_k P^\vee$ into a so called \textit{comonoid} in the monoidal category $(\Mod(A^\ev), \otimes_A, A)$. Other examples of comonoids are coalgebras over the commutative ring $k$ which are precisely the comonoids in the monoidal category $(\Mod(k), \otimes_k, k)$; the coalgebra axioms indicate which requirements are put upon the structure maps $\Delta_P$ and $\varepsilon_P$. See \cite{Sz05} for a characterization of monoidal Morita equivalences in terms of the general language of (strong) comonoids and bialgebroids (the latter are going to be defined in Section \ref{sec:hopfalgebroids}).
\end{rem}
\begin{rem}
The map $\Delta_P$ can be made more precise by utilizing the dual basis lemma. The projectivity of $P$ yields the existence of a set $I$ (which may be chosen as a finite one, since $P$ is finitely generated) and families $(x_i)_{i \in I} \subseteq P$, $(\varphi_i)_{i \in I} \subseteq \Hom_A(P,A)$ such that
$$
x = \sum_{i \in I}\varphi_i(x)x_i \quad \text{(for all $x \in P$)},
$$
that is, $\sum_{i \in I}\varphi_i(-)x_i = \id_P$. Now $\Delta_P$ displays itself as
$$
\Delta_P(p \otimes \varphi) = \sum_{i \in I} (p \otimes \varphi_i) \otimes (x_i \otimes \varphi).
$$
The element $\sum_{i \in I} \varphi_i \otimes x_i$ is (on the nose) the preimage of $\id_P$ under the isomorphism of Lemma \ref{lem:isodiamonddual}.
\end{rem}
\begin{exa}
Let $R$ be a $k$-algebra. Then $R$ is Morita equivalent to $M_n(R)$ with Morita bimodule given by $R^n$. Via the induced Morita bimodule $R^n \otimes_k (R^\op)^n \cong R^n \otimes_k R^n$ for $R^\ev$, the enveloping algebra $R^\ev = R \otimes_k R^\op$ of $R$ is then Morita equivalent to $M_n(R) \otimes_k M_n(R)^\op \cong M_{n^2}(R \otimes_k R^\op)$. The maps $\Delta_{R^n}$ and $\varepsilon_{R^n}$ are given as follows:
$$
\Delta_{R^n}(\underline{r} \otimes \underline{r}') = \sum_{i=1}^n(\underline{r} \otimes e_i) \otimes (e_i \otimes \underline{r}')
$$
and
$$
\varepsilon_{R^n}(\underline{r} \otimes \underline{r}') = \langle \underline{r}, \underline{r}'\rangle = \sum_{i=1}^n r_i r'_i,
$$
where $e_1, \dots, e_n$ is the standard basis of $R^n$, and $\underline{r} = (r_1, \dots, r_n)$, $ \underline{r}' = (r'_1, \dots, r'_n)$ are arbitrary elements.
\end{exa}
\begin{prop}\label{prop:moritamonoequ}
The functors $\mathscr U_\Gamma = \Hom_\Gamma(Q,-)$ and $\mathscr V_\Gamma = Q \otimes_\Lambda -$ restrict to mutually inverse exact and almost strong, respectively almost costrong, monoidal equivalences
\begin{align*}
\Hom_\Gamma(Q,-) |_{\P(A)} &: (\P(A), \otimes_A, A) \longrightarrow (\P(B), \otimes_B, B)\\
Q \otimes_{\Lambda} - \ |_{\P(B)} &: (\P(B), \otimes_B, B) \longrightarrow (\P(A), \otimes_A, A)
\end{align*}
between strong exact monoidal categories.
\end{prop}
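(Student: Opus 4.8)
The plan is to verify the three requisite properties in turn: (i) that the equivalences $\mathscr U_\Gamma$ and $\mathscr V_\Gamma$ restrict to the subcategories $\P(A)$ and $\P(B)$; (ii) that each restriction is exact; and (iii) that the almost strong (respectively almost costrong) monoidal structure furnished by Proposition \ref{prop:laxmodmorita} carries over to the restrictions. For (i), I would check that $\mathscr U_\Gamma = \Hom_\Gamma(Q,-)$ sends an object of $\P(A)$ into $\P(B)$. Since $M \in \P(A)$ means $\scrL_A(M)$ and $\mathscr R_A(M)$ are projective on each side, and since $\mathscr U_\Gamma$ is (up to the canonical isomorphisms) built from the Morita equivalences $\mathscr U_A = \Hom_A(P,-)$ and $\mathscr U_{A^\op} = \Hom_{A^\op}(P^\vee,-)$ via the adjunction isomorphisms $s_M$ and $t_M$ of Lemma \ref{lem:isosmoritaR}(\ref{lem:isosmoritaR:1})--(\ref{lem:isosmoritaR:2}), one sees that the one-sided module structures of $\mathscr U_\Gamma M$ over $B$ and $B^\op$ are precisely $\mathscr U_A$ and $\mathscr U_{A^\op}$ applied to the one-sided structures of $M$. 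As Morita equivalences preserve projectivity, this places $\mathscr U_\Gamma M$ in $\P(B)$; the symmetric argument handles $\mathscr V_\Gamma$.

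For (ii), exactness is immediate once one recalls that $\P(A)$ and $\P(B)$ are exact categories whose conflations are the sequences that are (split) exact on each side (Example \ref{exa:bimodules} together with the fact that $\P(A)$ is closed under kernels of epimorphisms, Examples \ref{exa:condition}). The functor $\mathscr U_\Gamma = \Hom_\Gamma(Q,-)$ with $Q = P \otimes_k P^\vee$ a projective $\Gamma$-module is exact on all short exact sequences of $\Gamma$-modules, and in particular carries admissible short exact sequences in $\P(A)$ to admissible ones in $\P(B)$; the same holds for $\mathscr V_\Gamma = Q \otimes_\Lambda -$ since $Q$ is projective, hence flat, as a right $\Lambda$-module. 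Thus both restrictions induce maps $\sigma(\P(A)) \to \sigma(\P(B))$ and conversely, in the sense of Definition \ref{def:exfunctor}.

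For (iii), the structure morphisms $\phi_{M,N}$ and $\phi_0$ constructed in Proposition \ref{prop:laxmodmorita} are natural in $M, N$ and satisfy the coherence diagrams of Definition \ref{def:monoidalfunc}, with $\phi_0$ an isomorphism; restricting them to objects of $\P(A)$ changes nothing, so the restriction of $\mathscr U_\Gamma$ is again almost strong monoidal. The key point that elevates this to a genuine monoidal \emph{equivalence} of the two strong exact monoidal categories is that $\phi_{M,N}$ becomes an isomorphism once $M$ and $N$ lie in $\P(A)$: for such modules the comultiplication $\Delta_P$ can be made explicit through the dual basis of the finitely generated projective $A$-module $P$ (as recorded in the remark following the corollary), and one checks that $\phi_{M,N} = (- \otimes_A -) \circ \Delta_P$ is invertible by reducing, via the naturality in $M$ and $N$ together with the isomorphisms of Lemma \ref{lem:isosmoritaR}, to the case where the one-sided structures are free of finite rank. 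Dually, the colax structure $\psi$ on $\mathscr V_\Gamma$ becomes invertible on $\P(B)$. Finally, since $\mathscr U_\Gamma$ and $\mathscr V_\Gamma$ are already mutually inverse equivalences of the ambient module categories, their restrictions are mutually inverse equivalences of $\P(A)$ and $\P(B)$; that these ambient categories are strong exact monoidal is Corollary \ref{cor:tensorexact} applied to Example \ref{exa:bimodules}. I expect the main obstacle to be the verification that $\phi_{M,N}$ is an isomorphism for $M, N \in \P(A)$: this is where the finiteness and projectivity of $P$ are genuinely used, and it requires combining the dual basis description of $\Delta_P$ with the naturality reductions of Lemma \ref{lem:isosmoritaR} rather than a direct computation.
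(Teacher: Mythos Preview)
Your outline for (i) and (ii) is essentially the paper's argument, but your description in (i) is imprecise in a way that hides the actual computation. It is not true that the left $B$-module underlying $\mathscr U_\Gamma M$ is $\mathscr U_A$ applied to the left $A$-module underlying $M$. What the isomorphism $s_M$ of Lemma~\ref{lem:isosmoritaR}(\ref{lem:isosmoritaR:1}) gives is $\Hom_\Gamma(Q,M)\cong \Hom_A(P,\Hom_{A^\op}(P^\vee,M))$, and by Lemma~\ref{lem:isodiamonddual} the inner term is $M\otimes_A P$. So the left $B$-module you obtain is $\mathscr U_A(M\otimes_A P)$, and you still need the (easy) observation that $M\otimes_A P$ is left $A$-projective when $M$ is. The paper carries out exactly this chain (phrased contrapositively, via $\mathscr V_\Gamma$), and the analogous one on the $A^\op$-side using $t_M$; your sketch should be tightened to match.

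The substantive misconception is in (iii). The proposition asserts only that the restrictions are \emph{almost strong} (respectively almost costrong) monoidal, i.e.\ that $\phi_0$ is invertible---which is already contained in Proposition~\ref{prop:laxmodmorita} and survives restriction unchanged. There is no requirement, and the paper makes no claim, that $\phi_{M,N}$ be an isomorphism for $M,N\in\P(A)$; ``monoidal equivalence'' here means an equivalence equipped with an almost strong monoidal structure, and ``strong exact monoidal'' in the target/source refers to Definition~\ref{def:exactmonocat}, a property of the categories, not of the functor. Your anticipated ``main obstacle'' is therefore not an obstacle at all: once you have established (i) and (ii), the sentence ``restricting $\phi,\phi_0$ to $\P(A)$ changes nothing'' already finishes the proof, exactly as the paper's last sentence does. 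You may drop the entire discussion of inverting $\phi_{M,N}$.
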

\begin{proof}
To see that $\mathscr V_\Gamma$ gives rise to the desired functor $\mathscr B$, we prove the following assertions.
\begin{enumerate}[\rm(i)]
\item\label{enum:projside:1} Let $M$ be a $A \otimes_k A^\op$-module. If $M' = \mathscr U_\Gamma M$ belongs to $\P(B)$, then $M$ belongs to $\P(A)$.
\item\label{enum:projside:2} Let $N$ be a $B \otimes_k B^\op$-module. If $N' = \mathscr V_\Gamma N$ belongs to $\P(A)$, then $N$ belongs to $\P(B)$.
\end{enumerate}
From this it will follow directly that $\mathscr V_\Gamma$ restricts to the claimed equivalence between the categories $\P(B)$ and $\P(A)$ (with quasi-inverse functor given by the restriction of $\mathscr U_\Gamma = \Hom_\Gamma(Q,-)$ to $\P(A))$. Let us first show (\ref{enum:projside:2}). Assume that the $\Lambda$-module $N$ is left and right $B$-projective. Then there is the following isomorphism of $B$-modules.
\begin{align*}
N & \cong \Hom_{B \otimes_k B^\op}(B \otimes_k B^\op, N)\\
&\cong \Hom_{A \otimes_k A^\op}(Q \otimes_\Lambda (B \otimes_k B^\op), Q \otimes_\Lambda N) && \text{($\mathscr V_\Gamma = Q \otimes_\Lambda -$ is an equivalence)}\\
&\cong \Hom_{A \otimes_k A^\op}(P \otimes_k P^\vee, N')\\
&\cong \Hom_A(P, \Hom_{A^\op}(P^\vee, N')) && \text{(by Lemma \ref{lem:isosmoritaR}(\ref{lem:isosmoritaR:1}))}\\
&\cong \Hom_A(P, N' \otimes_A P) && \text{(by Lemma \ref{lem:isodiamonddual})}\\
&= \mathscr U_A (N' \otimes_A P)
\end{align*}
Now note that since $\Hom_A(N' \otimes_A P, -)$ and $\Hom_A(P,\Hom_A(N',-))$ are equivalent functors, the $A$-module $N' \otimes_A P$ is projective. Hence $\mathscr U_A (N' \otimes_A P)$ is $B$-projective, for $\mathscr U_A$ is an equivalence. Similarly,
\begin{align*}
N & \cong \Hom_{B \otimes_k B^\op}(B \otimes_k B^\op, N)\\
&\cong \Hom_{A \otimes_k A^\op}(Q \otimes_\Lambda (B \otimes_k B^\op), Q \otimes_\Lambda N) && \text{($\mathscr V_\Gamma = Q \otimes_\Lambda -$ is an equivalence)}\\
&\cong \Hom_{A \otimes_k A^\op}(P \otimes_k P^\vee, N')\\
&\cong \Hom_{A^\op}(P^\vee, \Hom_{A}(P, N')) && \text{(by Lemma \ref{lem:isosmoritaR}(\ref{lem:isosmoritaR:2}))}\\
&\cong \Hom_{A^\op}(P^\vee, \Hom_{A}(\Hom_{A^\op}(P^\vee,A), N')) && \text{(since $(-)^\vee$ is a duality)}\\
&\cong \Hom_{A^\op}(P^\vee, N' \otimes_{A^\op} P^\vee) && \text{(by Lemma \ref{lem:isodiamonddual})}\\
&\cong \Hom_{A^\op}(P^\vee, P^\vee \otimes_A N')\\
&= \mathscr U_{A^\op} (P^\vee \otimes_A N')
\end{align*}
is a $B^\op$-linear isomorphism. The $A^\op$-module $P^\vee \otimes_A N'$ is projective, and therefore $\mathscr U_{A^\op}(P^\vee \otimes_A N')$ is $B^\op$-projective, as required. The functor $\mathscr U_\Gamma$ admits the following reformulation:
\begin{align*}
\mathscr U_\Gamma &= \Hom_\Gamma(Q,-)\\
&\cong \Hom_\Gamma(\Hom_{\Gamma^\op}(Q^\vee_\Gamma, \Gamma),-)\\
&\cong Q^\vee_\Gamma \otimes_\Gamma - && \text{(by Lemma \ref{lem:isodiamonddual})}
\end{align*}
and, similarly, $\mathscr V_A \cong \Hom_B(P^\vee_{B^\op}, -)$, $\mathscr V_{A^\op} \cong \Hom_{B^\op}((P^\vee_A)^\vee_{B^\op},-)$. Thus the arguments for (\ref{enum:projside:2}) also apply to deduce (\ref{enum:projside:1}). Lastly, the functors $\Hom_\Gamma(Q,-) |_{\P(A)}$ and $Q \otimes_{\Lambda} - \ |_{\P(B)}$ are almost (co)strong monoidal by Proposition \ref{prop:laxmodmorita}.
\end{proof}
\begin{rem}
Supposedly, the statement of the proposition above should remain true, if one replaces $A \otimes_k A^\op$ by any bialgebroid $\mathcal A$ over $A$, $P$ by any comonoidal progenerator $\mathcal P$ in the monoidal category $\Mod(\mathcal A^\natural)$, $B$ by the associated \textit{endomorphism bialgebroid} $\mathcal B :=\End_{\mathcal A}(\mathcal P)^\op$ over the algebra $T := \Hom_{\mathcal A^\natural}(\mathcal P,A)^\op$ and the categories $\P(A)$ and $\P(B)$ by the full subcategories of $\mathcal A^\natural$-modules respectively $\mathcal B^\natural$-modules which are $A$-projective respectively $T$-projective on either side (by $(-)^\natural$ we denote the forgetful functor from the category of bialgebroids to the category of algebras). However, in the special case we have elaborated on in this section, one can now deduce the following satisfying result.
\end{rem}
\begin{thm}\label{thm:moritaHH}
Assume that $A$ and $B$ are Morita equivalent $k$-algebras of which one, and hence both $($due to Lemma $\ref{lem:dereqproj})$, is supposed to be $k$-projective. Then the exact and almost strong monoidal equivalence $\mathscr H := \Hom_\Gamma(Q,-) |_{\P(A)}$ defined in Proposition $\ref{prop:moritamonoequ}$ induces an isomorphism
$$
\mathscr H^\sharp={\mathscr H}_\bullet^\sharp: \HH^\bullet(A) \longrightarrow \HH^\bullet(B)
$$
of graded $k$-algebras such that the diagrams below commute for any choice of integers $m,n \geq 1$.
$$
\xymatrix@C=35pt{
\HH^m(A) \times \HH^n(A) \ar[r]^-{\{-,-\}_A} \ar[d]_-\cong & \HH^{m+n-1}(A) \ar[d]^-\cong \\
\HH^{m}(B) \times \HH^n(B) \ar[r]^-{\{-,-\}_B} & \HH^{m+n-1}(B)
}
\ \quad\quad \
\xymatrix@C=35pt{
\HH^{2n}(A) \ar[r]^{sq_A} \ar[d]_\cong & \HH^{4n-1}(A) \ar[d]^\cong\\
\HH^{2n}(B) \ar[r]^{sq_B} & \HH^{4n-1}(B)
}
$$
Here $\{-,-\}_A$ and $sq_A$ denote the Gerstenhaber bracket and the squaring map on $\HH^\bullet(A)$, whereas $\{-,-\}_B$ and $sq_B$ denote the Gerstehaber bracket and the squaring map on $\HH^\bullet(B)$.
\end{thm}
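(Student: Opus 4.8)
The plan is to assemble the statement from the monoidal machinery established above, transporting everything through the identifications furnished by Corollary \ref{cor:isohochschildproj} and Theorem \ref{thm:schwede_comm}. By Proposition \ref{prop:moritamonoequ}, the functor $\mathscr H = \Hom_\Gamma(Q,-)|_{\P(A)}$ is an exact, $k$-linear, almost strong monoidal equivalence between the strong exact monoidal categories $(\P(A), \otimes_A, A)$ and $(\P(B), \otimes_B, B)$, with quasi-inverse $Q \otimes_\Lambda -\,|_{\P(B)}$. Being an exact $k$-linear equivalence, $\mathscr H$ induces, by Lemma \ref{lem:exactfuncyoneda}, a homomorphism of graded $k$-algebras $\mathscr H^\sharp_\bullet : \Ext^\bullet_{\P(A)}(A,A) \to \Ext^\bullet_{\P(B)}(\mathscr H A, \mathscr H A)$ which is an isomorphism (the quasi-inverse functor provides the inverse map). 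Composing with the translation along the algebra isomorphism $\phi_0 : B \xrightarrow{\sim} \mathscr H A = \Hom_\Gamma(Q,A)$ of Proposition \ref{prop:laxmodmorita} yields a graded algebra isomorphism $\Ext^\bullet_{\P(A)}(A,A) \to \Ext^\bullet_{\P(B)}(B,B)$. Since $A$ and $B$ are $k$-projective (Lemma \ref{lem:dereqproj}), Corollary \ref{cor:isohochschildproj} identifies $\Ext^\bullet_{\P(A)}(A,A) \cong \HH^\bullet(A)$ and $\Ext^\bullet_{\P(B)}(B,B) \cong \HH^\bullet(B)$ as graded $k$-algebras; transporting $\mathscr H^\sharp_\bullet$ through these identifications defines the desired graded algebra isomorphism $\mathscr H^\sharp : \HH^\bullet(A) \to \HH^\bullet(B)$.

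It then remains to check compatibility with the bracket and the squaring map, and here the central input is Theorem \ref{thm:bracketcomm}. Applied to the exact and almost strong monoidal functor $\mathscr H$, it shows that $\mathscr H^\sharp_\bullet$ intertwines the categorical brackets $[-,-]_{\P(A)}$ and $[-,-]_{\P(B)}$ and the squaring maps $sq_{\P(A)}$ and $sq_{\P(B)}$ exactly; the translation $\mathrm{Tr}_{\phi_0,\phi_0^{-1}}$ onto the unit $B$ implicit in that theorem is precisely the identification used in the first paragraph. On the other hand, Theorem \ref{thm:schwede_comm} equates, under the isomorphisms of Corollary \ref{cor:isohochschildproj}, the Gerstenhaber bracket $\{-,-\}_A$ with $[-,-]_{\P(A)}$ and $sq_A$ with $sq_{\P(A)}$ up to the sign $(-1)^{m+1}$, and likewise on the $B$-side.

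The final step is to splice these three commuting squares together. Chasing $(x,y) \in \HH^m(A) \times \HH^n(A)$ around the outer rectangle, one finds that both $\mathscr H^\sharp(\{x,y\}_A)$ and $\{\mathscr H^\sharp x, \mathscr H^\sharp y\}_B$ equal the same element $(-1)^{m+1}\chi_B^{-1}([\,\mathscr H^\sharp_\bullet \chi_A x, \mathscr H^\sharp_\bullet \chi_A y\,]_{\P(B)})$, since $\mathscr H^\sharp_\bullet$ is additive (so it commutes with the scalar $\pm 1$), intertwines the categorical brackets, and satisfies $\mathscr H^\sharp_\bullet \chi_A = \chi_B \mathscr H^\sharp$ by construction. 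The identical sign coming from Schwede's comparison on either side therefore decorates both expressions and plays no role, so the bracket diagram commutes on the nose; the same bookkeeping, with the single sign $(-1)^{2n+1}$ attached to $sq$, settles the squaring diagram.

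I expect the main obstacle to be not any new idea but the careful handling of the unit identification $\phi_0 : B \cong \mathscr H A$: one must confirm that the translation transporting $\Ext^\bullet_{\P(B)}(\mathscr H A, \mathscr H A)$ to $\Ext^\bullet_{\P(B)}(B,B)$ is simultaneously compatible with the Yoneda product (which holds because $\phi_0$ is unital and multiplicative), with the monoidal bracket of Theorem \ref{thm:bracketcomm}, and with the Schwede identification $\chi_B$, so that the three squares genuinely compose. Since all the substantive content --- Proposition \ref{prop:moritamonoequ}, Theorem \ref{thm:bracketcomm}, Theorem \ref{thm:schwede_comm} and Corollary \ref{cor:isohochschildproj} --- is already in place, the argument is essentially an exercise in tracking these compatibilities and verifying that the signs contributed by Schwede's comparison on the two sides are identical and hence immaterial.
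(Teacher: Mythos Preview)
Your proposal is correct and follows essentially the same approach as the paper: invoke Proposition~\ref{prop:moritamonoequ} to obtain the exact almost strong monoidal equivalence, use Corollary~\ref{cor:isohochschildproj} to identify $\Ext^\bullet_{\P(-)}(-,-)$ with $\HH^\bullet(-)$, and then combine Theorem~\ref{thm:bracketcomm} with Theorem~\ref{thm:schwede_comm} to transport the bracket and squaring diagrams. The paper's proof is considerably terser and does not spell out the sign bookkeeping or the role of the unit translation $\phi_0$, but your more explicit treatment of these points is entirely in line with how the cited results are meant to be assembled.
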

\begin{proof}
In view of Proposition \ref{prop:moritamonoequ}, the functor $\mathscr H$ induces the morphism $\mathscr H^\sharp : \Ext^\bullet_{\P(A)}(A,A) \rightarrow \Ext^\bullet_{\P(B)}(B,B)$ of graded $k$-algebras. It is an isomorphism, since $\mathscr H$ is an equivalence of categories. Moreover, since both $A$ and $B$ are $k$-projective, the maps $\chi_A$ and $\chi_B$ define isomorphisms $\HH^\bullet(A) \cong \Ext^\bullet_{\P(A)}(A,A)$ and $\HH^\bullet(B) \cong \Ext^\bullet_{\P(B)}(B,B)$ (see Section \ref{sec:basdef} and Corollary \ref{cor:isohochschildproj}). Finally, the commutativity of the diagram is ensured by combining Theorem \ref{thm:bracketcomm} with Theorem \ref{thm:schwede_comm}.
\end{proof}
We will close this chapter with a short survey on the existence of braidings on the monoidal category $(\Mod(A^\ev), \otimes_A, A)$.
\section{The monoidal category of bimodules}\label{sec:catbimodules}
\begin{nn}
Let $A$ be an algebra over the commutative ring $k$. Regarding the considerations of Section \ref{sec:prop_bracket}, where we introduced and studied the bracket for monoidal categories, the question of as to which requirements have to be put on $A$ such that the monoidal category $(\Mod(A^\ev), \otimes_A, A)$ is braided naturally arises. In fact, this subject was treated in the quite recent article \cite{AgCaMi12} where it is actually shown that braidings on $(\Mod(A^\ev), \otimes_A, A)$ are in $1$-$1$ correspondence with certain elements of the $3$-fold tensor product $A \otimes_k A \otimes_k A$ (so called ``canonical R-matrices"). We will shortly explain the results, and relate them to the theory that we have developed so far.
\end{nn}
\begin{nn}
For an element $\mathbf r = \mathbf r_1 \otimes \mathbf r_2 \otimes \mathbf r_3 \in A \otimes_k A \otimes_k A$ consider the following five equations (where implicit summation is understood).
\begin{align}
a\mathbf r_1 \otimes \mathbf r_2 \otimes \mathbf r_3 &= \mathbf r_1 \otimes \mathbf r_2 a \otimes \mathbf r_3,\label{ali:matrix:1}\\
\mathbf r_1 \otimes a\mathbf r_2 \otimes \mathbf r_3 &= \mathbf r_1 \otimes \mathbf r_2 \otimes \mathbf r_3a,\\
\mathbf r_1 \otimes \mathbf r_2 \otimes a\mathbf r_3 &= \mathbf r_1a \otimes \mathbf r_2 \otimes \mathbf r_3,
\intertext{for all $a \in A$, and}
\mathbf r_1 \otimes \mathbf r_2 \otimes 1_A \otimes \mathbf r_3 &= \mathbf r_1\mathbf r_1 \otimes \mathbf r_2 \otimes \mathbf r_3\mathbf r_2 \otimes \mathbf r_3\\
\mathbf r_1 \otimes 1_A \otimes \mathbf r_2 \otimes \mathbf r_3 &= \mathbf r_1 \otimes \mathbf r_2 \mathbf r_1 \otimes \mathbf r_2 \otimes \mathbf r_3 \mathbf r_3.\label{ali:matrix:5}
\end{align}
In the terminology of \cite{AgCaMi12}, an invertible element $\mathbf r \in A \otimes_k A \otimes_k A$ satisfying the above equations is called a \textit{canonical R-matrix for $A$}. By a couple of simple calculations, it may be shown that an invertible element $\mathbf r \in A \otimes_k A \otimes_k A$ is a canonical R-matrix for $A$ if, and only if, it satisfies the following three equations (cf. \cite[Thm.\,3.2]{AgCaMi12}).
\begin{align*}
\mathbf r_1 \otimes a\mathbf r_2 \otimes \mathbf r_3 &= \mathbf r_1 \otimes \mathbf r_2 \otimes \mathbf r_3a \quad \text{(for all $a \in A$)},\\
\mathbf r_1\mathbf r_2 \otimes \mathbf r_3 &= 1_A \otimes 1_A, \\
\mathbf r_2 \otimes \mathbf r_3\mathbf r_1 &= 1_A \otimes 1_A .
\end{align*}
\end{nn}
\begin{defn}
An element $\mathbf r \in A \otimes_k A \otimes_k A$ is called \textit{semi-canonical R-matrix for $A$} if it satisfies the equations (\ref{ali:matrix:1})\,--\,(\ref{ali:matrix:5}), and $\mathbf r_1 \mathbf r_2 \mathbf r_3 = 1_A$.
\end{defn}
From the above remarks it is clear that every canonical R-matrix for $A$ is a semi-canonical R-matrix for $A$. Braidings on the category of bimodules over $A$ are completely classified by the upcoming theorem. Although the given reference claims it in a weaker form (without the parenthesised assertions), its proof shows, that it is valid as stated below.
\begin{thm}[{\cite[Thm.\,3.1]{AgCaMi12}}]\label{thm:corrbraiding}
There is a bijective correspondence between the class of all $($lax$)$ braidings $\gamma$ on $(\Mod(A^\ev), \otimes_A, A)$ and the set of all $($semi-$)$canonical R-matrices for $A$. Every $($semi-$)$canonical R-matrix $\mathbf r \in A \otimes_k A \otimes_k A$ gives rise to a $($lax$)$ braiding $\gamma^{\mathbf r}$ as follows:
$$
\gamma^{\mathbf r}_{M,N}: M \otimes_A N \longrightarrow N \otimes_A M, \ \gamma^{\mathbf r}_{M,N}(m \otimes n) = \mathbf r_1 n \mathbf r_2 \otimes m \mathbf r_3.
$$
\end{thm}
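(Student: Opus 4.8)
The plan is to exhibit explicit, mutually inverse assignments between the two sets and to check that they respect the defining conditions on each side. Throughout write $A^\ev = A \otimes_k A^\op$ and recall that $A \otimes_k A$ is free of rank one as an $A^\ev$-module, generated by $1_A \otimes 1_A$, together with the natural identification $(A \otimes_k A) \otimes_A (A \otimes_k A) \cong A \otimes_k A \otimes_k A$ of $A^\ev$-modules sending $(a \otimes b) \otimes_A (c \otimes d)$ to $a \otimes bc \otimes d$. This free-module description is what makes the correspondence computable, and I would establish it first.

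For the forward direction, given a (semi-)canonical R-matrix $\mathbf r = \mathbf r_1 \otimes \mathbf r_2 \otimes \mathbf r_3$ I would define $\gamma^{\mathbf r}_{M,N}(m \otimes n) = \mathbf r_1 n \mathbf r_2 \otimes m \mathbf r_3$ and verify, in order: (i) that this descends to $M \otimes_A N$, i.e. that the assignment is balanced over $A$; (ii) that $\gamma^{\mathbf r}_{M,N}$ is $A$-$A$-bilinear, hence a morphism of $A^\ev$-modules; and (iii) naturality in both $M$ and $N$. Steps (i) and (ii) reduce to the three ``module-type'' relations carried by $\mathbf r_1,\mathbf r_2,\mathbf r_3$ among the equations (\ref{ali:matrix:1})--(\ref{ali:matrix:5}), while (iii) is immediate because the formula is built solely from left and right multiplication by the fixed element $\mathbf r$ and so commutes with any $A^\ev$-linear map in either slot.

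For the backward direction and the bijection, given a lax braiding $\gamma$ I would set $\mathbf r := \gamma_{A\otimes_k A,\,A\otimes_k A}\bigl((1_A\otimes 1_A)\otimes_A(1_A\otimes 1_A)\bigr)$, read inside $A\otimes_k A\otimes_k A$ via the identification above. The conceptual heart is that naturality collapses all of $\gamma$ to this single element: for arbitrary $M,N$ and $m\in M$, $n\in N$, the unique $A^\ev$-linear maps $f\colon A\otimes_k A\to M$ and $g\colon A\otimes_k A\to N$ carrying the generator to $m$ and $n$ give, upon applying naturality of $\gamma$ to $f\otimes_A g$, the equality $\gamma_{M,N}(m\otimes n)=(g\otimes_A f)(\mathbf r_1\otimes\mathbf r_2\otimes\mathbf r_3)=\mathbf r_1 n\mathbf r_2\otimes m\mathbf r_3$. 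Hence $\gamma=\gamma^{\mathbf r}$, and the two assignments are mutually inverse by construction.

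It then remains to match the axioms. The unit normalisation $\varrho_{\mathbbm 1}\circ\gamma_{\mathbbm 1,\mathbbm 1}=\lambda_{\mathbbm 1}$ corresponds to $\mathbf r_1\mathbf r_2\mathbf r_3=1_A$, while the triangle equations of a lax braiding, evaluated on the generators of the relevant free bimodules and transported through the identification $(A\otimes_k A)^{\otimes_A r}\cong A^{\otimes_k(r+1)}$, become exactly the coherence relations among (\ref{ali:matrix:1})--(\ref{ali:matrix:5}). For the non-lax case I would further check that $\gamma^{\mathbf r}$ is invertible precisely when $\mathbf r$ is invertible in $A\otimes_k A\otimes_k A$, thereby upgrading semi-canonical to canonical and lax braiding to braiding. \emph{The main obstacle} is not the naturality reduction, which is clean, but the disciplined bookkeeping needed to evaluate the hexagon/triangle diagrams on tensor products of several copies of the free bimodule and to extract precisely equations (\ref{ali:matrix:1})--(\ref{ali:matrix:5}); this is the laborious part of the verification.
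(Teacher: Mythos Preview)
The paper does not actually prove this theorem; it is cited from \cite{AgCaMi12} (with the parenthesised lax/semi-canonical variant noted as implicit in that reference's proof). However, the backward direction you describe --- evaluating $\gamma_{A\otimes_k A,\,A\otimes_k A}$ on the generator $(1_A\otimes 1_A)\otimes_A(1_A\otimes 1_A)$ under the identification $(A\otimes_k A)\otimes_A(A\otimes_k A)\cong A\otimes_k A\otimes_k A$, and then using naturality with respect to the ``insert $m$'' and ``insert $n$'' maps $A\otimes_k A\to M$, $A\otimes_k A\to N$ to recover $\gamma_{M,N}$ from $\mathbf r$ --- is exactly the argument the paper reproduces in the proof of the subsequent Corollary~\ref{cor:braid_flatprojbraid}. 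So your plan is correct and matches the method visible in the paper.

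One small sharpening: in your final paragraph you say the triangle equations ``become exactly the coherence relations among (\ref{ali:matrix:1})--(\ref{ali:matrix:5})''. More precisely, the three module-type relations (\ref{ali:matrix:1})--(5.5.3) are already used up for well-definedness and $A^\ev$-linearity (your steps (i)--(ii)); the two hexagon axioms of a lax braiding correspond to (5.5.4) and (5.5.5), and the unit normalisation to $\mathbf r_1\mathbf r_2\mathbf r_3=1_A$. Keeping this bookkeeping straight is, as you say, the main labour.
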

\begin{nn}\label{nn:PlPrP}
Recall from Example \ref{exa:bimodules} that the full subcategory
$$
\P(A) := \{ M \in \Mod(A^\ev) \mid \text{$M$ is a projective left and right $A$-module}\}
$$
of $\Mod(A^\ev)$ is a monoidal category (with inherited monoidal structure from the monoidal category $(\Mod(A^\ev), \otimes_A, A)$). Likewise, the full subcategory
$$
\mathsf F(A) := \{M \in \Mod(A^\ev) \mid \text{$M$ is a flat left and right $A$-module}\}
$$
of $\Mod(A^\ev)$ is monoidal (since, for instance, $\big((M \otimes_A N) \otimes_A -\big) \cong (M \otimes_A - ) \circ (N \otimes_A -)$). Clearly we have $\P(A) \subseteq \mathsf F(A) \subseteq \Mod(A^\ev)$. In addition to $\P(A)$ and $\mathsf F(A)$ let us also consider the full subcategories
\begin{align*}
\P_\lambda(A) &:= \{ M \in \Mod(A^\ev) \mid \text{$M$ is a projective left $A$-module}\}\\
\P_\varrho(A) &:= \{ M \in \Mod(A^\ev) \mid \text{$M$ is a projective right $A$-module}\}
\intertext{and}
\mathsf F_\lambda(A) &:= \{M \in \Mod(A^\ev) \mid \text{$M$ is a flat left $A$-module}\}\\
\mathsf F_\varrho(A) &:= \{M \in \Mod(A^\ev) \mid \text{$M$ is a flat right $A$-module}\}
\end{align*}
of $\Mod(A^\ev)$. Yet again, these are monoidal subcategories and $\P(A) = \P_\lambda(A) \cap \P_\varrho(A)$, $\mathsf F(A) = \mathsf F_\lambda(A) \cap \mathsf F_\varrho(A)$. By basically copying the arguments given in \cite{AgCaMi12} we are able to prove the following surprising (and very elementary) result.
\end{nn}
\begin{cor}\label{cor:braid_flatprojbraid}
Consider the following statements on the $k$-algebra $A$.
\begin{enumerate}[\rm(1)]
\item\label{cor:braid_flatprojbraid:1} The monoidal category $(\Mod(A^\ev), \otimes_A, A)$ is $($lax$)$ braided.
\item\label{cor:braid_flatprojbraid:2l} The monoidal category $(\mathsf F_\lambda(A), \otimes_A, A)$ is $($lax$)$ braided.
\item\label{cor:braid_flatprojbraid:2r} The monoidal category $(\mathsf F_\varrho(A), \otimes_A, A)$ is $($lax$)$ braided.
\item\label{cor:braid_flatprojbraid:2} The monoidal category $(\mathsf F(A), \otimes_A, A)$ is $($lax$)$ braided.
\item\label{cor:braid_flatprojbraid:3l} The monoidal category $(\P_\lambda(A), \otimes_A, A)$ is $($lax$)$ braided.
\item\label{cor:braid_flatprojbraid:3r} The monoidal category $(\P_\varrho(A), \otimes_A, A)$ is $($lax$)$ braided.
\item\label{cor:braid_flatprojbraid:3} The monoidal category $(\P(A), \otimes_A, A)$ is $($lax$)$ braided.
\end{enumerate}
Then the implications 
\begin{gather*}
(\ref{cor:braid_flatprojbraid:1}) \Longrightarrow (\ref{cor:braid_flatprojbraid:2l}) \Longrightarrow (\ref{cor:braid_flatprojbraid:3l}), \quad (\ref{cor:braid_flatprojbraid:1}) \Longrightarrow (\ref{cor:braid_flatprojbraid:2r}) \Longrightarrow (\ref{cor:braid_flatprojbraid:3r}),\quad (\ref{cor:braid_flatprojbraid:1}) \Longrightarrow (\ref{cor:braid_flatprojbraid:2}) \Longrightarrow (\ref{cor:braid_flatprojbraid:3}),\\ (\ref{cor:braid_flatprojbraid:2l}) \Longrightarrow (\ref{cor:braid_flatprojbraid:2}),\quad (\ref{cor:braid_flatprojbraid:2r}) \Longrightarrow (\ref{cor:braid_flatprojbraid:2}), \quad (\ref{cor:braid_flatprojbraid:3l}) \Longrightarrow (\ref{cor:braid_flatprojbraid:3}) \quad \text{and} \quad (\ref{cor:braid_flatprojbraid:3r}) \Longrightarrow (\ref{cor:braid_flatprojbraid:3})
\end{gather*}
hold true. If $A$ is $k$-flat, then $(\ref{cor:braid_flatprojbraid:2}) \Longrightarrow (\ref{cor:braid_flatprojbraid:1})$. If $A$ is $k$-projective, then $(\ref{cor:braid_flatprojbraid:3}) \Longrightarrow (\ref{cor:braid_flatprojbraid:1})$, and hence all statements are equivalent.
\end{cor}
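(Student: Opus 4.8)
The forward implications are all instances of the fact that a (lax) braiding restricts along a full monoidal embedding. By \ref{nn:PlPrP} each of $\mathsf F_\lambda(A)$, $\mathsf F_\varrho(A)$, $\mathsf F(A)$, $\P_\lambda(A)$, $\P_\varrho(A)$, $\P(A)$ is a full monoidal subcategory of $(\Mod(A^\ev), \otimes_A, A)$, with $\P_\lambda(A)\subseteq\mathsf F_\lambda(A)$, $\P_\varrho(A)\subseteq\mathsf F_\varrho(A)$, $\P(A)\subseteq\mathsf F(A)$ (projective modules being flat), and $\mathsf F(A)=\mathsf F_\lambda(A)\cap\mathsf F_\varrho(A)$, $\P(A)=\P_\lambda(A)\cap\P_\varrho(A)$. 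Restricting the structure morphisms $\gamma_{M,N}$ to the objects of the smaller category produces a (lax) braiding there, which disposes of $(\ref{cor:braid_flatprojbraid:1})\Rightarrow(\ref{cor:braid_flatprojbraid:2l}),(\ref{cor:braid_flatprojbraid:2r}),(\ref{cor:braid_flatprojbraid:2})$, of the implications into the projective versions, and of $(\ref{cor:braid_flatprojbraid:2l}),(\ref{cor:braid_flatprojbraid:2r})\Rightarrow(\ref{cor:braid_flatprojbraid:2})$ and $(\ref{cor:braid_flatprojbraid:3l}),(\ref{cor:braid_flatprojbraid:3r})\Rightarrow(\ref{cor:braid_flatprojbraid:3})$.

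For the two converse implications I would first isolate the only structural input required: the free rank-one bimodule $A^\ev=A\otimes_k A$ lies in $\mathsf F(A)$ as soon as $A$ is $k$-flat, and in $\P(A)$ as soon as $A$ is $k$-projective. Indeed, viewed as a left $A$-module $A\otimes_k A$ is isomorphic to $A\otimes_k(A|_k)$, so its flatness (projectivity) over $A$ is inherited from that of $A$ over $k$, and the right-hand side is symmetric; this is the mechanism already exploited in Lemma \ref{lem:kprojproj}. Since the subcategory is monoidal, all finite $\otimes_A$-powers $A^\ev$, $A^\ev\otimes_A A^\ev$, $A^\ev\otimes_A A^\ev\otimes_A A^\ev$, $\dots$ again belong to it, and so does the unit $A$ (which is always left and right projective).

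Now assume $(\ref{cor:braid_flatprojbraid:2})$ with $A$ being $k$-flat, respectively $(\ref{cor:braid_flatprojbraid:3})$ with $A$ being $k$-projective, and let $\gamma$ be a (lax) braiding on $\mathsf F(A)$, respectively on $\P(A)$. Using the canonical isomorphism $A^\ev\otimes_A A^\ev\xrightarrow{\sim}A\otimes_k A\otimes_k A$, $(x\otimes y)\otimes_A(u\otimes v)\mapsto x\otimes yu\otimes v$, I would take for $\mathbf r\in A\otimes_k A\otimes_k A$ the image of $\gamma_{A^\ev,A^\ev}\big((1\otimes 1)\otimes_A(1\otimes 1)\big)$; a short direct computation shows this evaluation inverts the assignment $\mathbf r\mapsto\gamma^{\mathbf r}$ of Theorem \ref{thm:corrbraiding}. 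The real content is to check that $\mathbf r$ is a (semi-)canonical R-matrix, i.e.\ that it satisfies equations (\ref{ali:matrix:1})\,--\,(\ref{ali:matrix:5}) together with $\mathbf r_1\mathbf r_2\mathbf r_3=1_A$ (and, in the strict case, invertibility and the stronger conditions). These are derived exactly as in \cite{AgCaMi12}, by feeding the naturality of $\gamma$ with respect to the right-multiplication maps $A^\ev\to A^\ev$, the unit constraints, and the hexagon (``triangle'') equations into evaluations at $A^\ev$ and at $A^\ev\otimes_A A^\ev\otimes_A A^\ev$. By the previous paragraph every object and every bimodule morphism occurring in that derivation already lives in the full subcategory $\mathsf F(A)$, respectively $\P(A)$, so the argument transfers \emph{verbatim}. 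The main obstacle is precisely this bookkeeping -- confirming that the Agore--C\u{a}linescu--Militaru recovery of the R-matrix never leaves the subcategory, in particular that the triple $\otimes_A$-power underlying the hexagon is still flat (projective) on both sides -- after which Theorem \ref{thm:corrbraiding} promotes $\mathbf r$ to a (lax) braiding $\gamma^{\mathbf r}$ on all of $(\Mod(A^\ev),\otimes_A,A)$, establishing $(\ref{cor:braid_flatprojbraid:1})$.

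Finally, combining $(\ref{cor:braid_flatprojbraid:3})\Rightarrow(\ref{cor:braid_flatprojbraid:1})$ with the forward implications closes every cycle when $A$ is $k$-projective: from $(\ref{cor:braid_flatprojbraid:1})\Rightarrow(\ref{cor:braid_flatprojbraid:2l})\Rightarrow(\ref{cor:braid_flatprojbraid:3l})\Rightarrow(\ref{cor:braid_flatprojbraid:3})\Rightarrow(\ref{cor:braid_flatprojbraid:1})$, and likewise through the $\varrho$- and two-sided chains, all seven statements become equivalent. When $A$ is merely $k$-flat, the same reasoning through $(\ref{cor:braid_flatprojbraid:2})\Rightarrow(\ref{cor:braid_flatprojbraid:1})$ yields the equivalence of $(\ref{cor:braid_flatprojbraid:1})$, $(\ref{cor:braid_flatprojbraid:2l})$, $(\ref{cor:braid_flatprojbraid:2r})$ and $(\ref{cor:braid_flatprojbraid:2})$.
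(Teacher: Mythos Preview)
Your proposal is correct and follows essentially the same approach as the paper: the forward implications are trivial restrictions, and for the converse you extract a (semi-)canonical R-matrix from $\gamma_{A^\ev,A^\ev}$ evaluated at the unit, using that $A\otimes_k A$ lies in the subcategory under the flatness/projectivity hypothesis. The paper packages this as a single statement about any full monoidal subcategory $\C\subseteq\Mod(A^\ev)$ containing $A$ and $A\otimes_k A$, and is slightly more explicit about the verification (using the naturality of $\gamma$ with respect to the maps $f_m\colon A\otimes_k A\to M$, $a\otimes b\mapsto amb$), but the core argument is the same as yours.
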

\begin{proof}
The implications $(\ref{cor:braid_flatprojbraid:1}) \Longrightarrow (\ref{cor:braid_flatprojbraid:2l}) \Longrightarrow (\ref{cor:braid_flatprojbraid:3l})$, $(\ref{cor:braid_flatprojbraid:1}) \Longrightarrow (\ref{cor:braid_flatprojbraid:2r}) \Longrightarrow (\ref{cor:braid_flatprojbraid:3r})$, $(\ref{cor:braid_flatprojbraid:1}) \Longrightarrow (\ref{cor:braid_flatprojbraid:2}) \Longrightarrow (\ref{cor:braid_flatprojbraid:3})$, $(\ref{cor:braid_flatprojbraid:2l}) \Longrightarrow (\ref{cor:braid_flatprojbraid:2})$, $(\ref{cor:braid_flatprojbraid:2r}) \Longrightarrow (\ref{cor:braid_flatprojbraid:2})$, $(\ref{cor:braid_flatprojbraid:3l}) \Longrightarrow (\ref{cor:braid_flatprojbraid:3})$ and $(\ref{cor:braid_flatprojbraid:3r}) \Longrightarrow (\ref{cor:braid_flatprojbraid:3})$ are valid for trivial reasons. For the converse implications, we show the following slightly more general statement: Let $\C \subseteq \Mod(A^\ev)$ be a $k$-linear and full subcategory such that $A$, $A \otimes_k A \in \C$, and such that  $M \otimes_A N \in \C$ for all $M, N \in \C$. Then, if $(\C, \otimes_A, A)$ is (lax) braided, so is $(\Mod(A^\ev), \otimes_A, A)$. This then will finish the proof, since $A \otimes_k A \in \mathsf F(A)$ if $A$ is $k$-flat, and $A \otimes_k A \in \P(A)$ if $A$ is $k$-projective.

Let $\gamma$ be a lax braiding on $(\C, \otimes_A, A)$. In view of theorem \ref{thm:corrbraiding}, it is enough to name a (semi-)canonical R-matrix for $A$. Let $\mu: A \otimes_A A \rightarrow A$ be the isomorphism $a \otimes b \mapsto ab$. Consider the homomorphism
$$
\gamma^\circ := (A \otimes_k \mu^{-1} \otimes_k A) \circ \gamma_{A \otimes_k A, A \otimes_k A} \circ (A \otimes_k \mu \otimes_k A).
$$
We claim that $\mathbf r:= \gamma^\circ(1_A \otimes 1_A \otimes 1_A) \in A \otimes_k A \otimes_k A$ is a semi-canonical R-matrix for $A$. Let $M$, $N$ and $P$ be modules in $\C$. For fixed elemts $m \in M$ and $n \in N$, consider the $A^\ev$-linear maps $f_m: A \otimes_k A \rightarrow M$ and $g_n: A \otimes_k A \rightarrow N$ given by $f_m(a \otimes b) = amb$ and $g_n(a \otimes b) = anb$. Since $\gamma$ is a natural tranformation, we get
\begin{equation}\label{eq:nat_braided}
(g_n \otimes_A f_m) \circ \gamma_{A \otimes_k A, A \otimes_k A} = \gamma_{M,N} \circ (f_m \otimes_A g_n).
\end{equation}
Furthermore, the following equations hold true.
\begin{align*}
\gamma_{M,N}(am \otimes n) &= a\gamma_{M,N}(m \otimes n), \\
\gamma_{M,N}(m \otimes na) &= \gamma_{M,N}(m \otimes n)a, \\
\gamma_{M,N}(ma \otimes n) &= \gamma_{M,N}(m \otimes an),
\intertext{for all $a \in A$, $m \in M$, $n \in N$, and}
\gamma_{M \otimes_A N, P} &= (\gamma_{M,P} \otimes_A N) \circ (M \otimes_A \gamma_{N,P}),\\
\gamma_{M, N \otimes_A P} &= (N \otimes_A \gamma_{M,P}) \circ (\gamma_{M,N} \otimes_A P),\\
\gamma_{A,A} &= \mu^{-1} \circ \mu = \id_{A \otimes_A A}.
\end{align*}
By specializing to $M = N = P = A \otimes_k A$ one now can easily deduce the desired equations (also use equation (\ref{eq:nat_braided}) for $m = n = 1_A \otimes 1_A$). In case $\gamma^\circ$ is invertible (for instance, take $\gamma$ to be a braiding), the semi-canonical R-matrix $\mathbf r = \gamma^\circ(1_A \otimes 1_A \otimes 1_A)$ will be invertible, that is, it will be a canonical R-matrix, with $\mathbf r^{-1} = (\gamma^\circ)^{-1}(1_A \otimes 1_A \otimes 1_A)$.
\end{proof}
\begin{nn}
In the commutative world as well as in the world of finite dimensional algebras over a field, the question whether a canonical R-matrix does exist, has been answered by the following classification result (see \cite{AgCaMi12}).
\begin{enumerate}[\rm(1)]
\item Let $A$ be a commutative $k$-algebra. Then $\mathbf r \in A \otimes_k A \otimes_k A$ is a canonical R-matrix for $A$ if, and only if, $\mathbf r = 1_A \otimes 1_A \otimes 1_A$ and the unit map $k \rightarrow A$ is an epimorphism in the category of rings.
\item Let $k$ be a field and $A$ a finite dimensional algebra over $k$. Then there is a (necessarily unique) canonical R-matix for $A$ if, and only if, $A$ is a simple $k$-algebra and $Z(A) = k$.
\end{enumerate}
From the Theorems \ref{thm:bracketcomm} and \ref{thm:schwede_comm} we deduce the following.
\begin{cor}\label{cor:vanishing_gersten_canbraid1}
Assume that there is a semi-canonical R-matrix for $A$ and that $A$ is $k$-projective. Then if $A'$ is Morita equivalent to $A$, the Gerstenhaber bracket and the squaring map on $\HH^\bullet(A')$ are trivial in degrees $m, n \geq 1$. \qed
\end{cor}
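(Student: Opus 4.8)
The plan is to reduce everything to the algebra $A$ itself and then exploit the lax braiding produced by the semi-canonical R-matrix. First I would note that, since $A$ is $k$-projective, Lemma \ref{lem:dereqproj} guarantees that every Morita partner $A'$ is $k$-projective as well, so Corollary \ref{cor:isohochschildproj} applies to both and identifies $\HH^\bullet(A)\cong\Ext^\bullet_{\P(A)}(A,A)$ and $\HH^\bullet(A')\cong\Ext^\bullet_{\P(A')}(A',A')$ as graded algebras, carrying $\{-,-\}_A$, $sq_A$ to $[-,-]_{\P(A)}$, $sq_{\P(A)}$ and likewise for $A'$ by Theorem \ref{thm:schwede_comm}. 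By Theorem \ref{thm:moritaHH} the Morita equivalence induces graded-algebra isomorphisms making both the bracket and the squaring-map squares commute, so it suffices to prove that $[-,-]_{\P(A)}$ and $sq_{\P(A)}$ vanish in positive degrees.

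For the bracket this follows directly from the monoidal machinery. By Theorem \ref{thm:corrbraiding} the given semi-canonical R-matrix $\mathbf r$ equips $(\Mod(A^\ev),\otimes_A,A)$ with a lax braiding $\gamma^{\mathbf r}$, and Corollary \ref{cor:braid_flatprojbraid}, implication $(\ref{cor:braid_flatprojbraid:1})\Rightarrow(\ref{cor:braid_flatprojbraid:3})$, transports it to a lax braiding on $(\P(A),\otimes_A,A)$. I would then record that $\P(A)$ is a strong exact monoidal category: it is a tensor $k$-category all of whose objects are flat (Example \ref{exa:bimodules}) and it is closed under kernels of epimorphisms (Examples \ref{exa:condition}(3)), hence factorizing by Lemma \ref{lem:admono}, so Corollary \ref{cor:tensorexact} applies via $(\ref{cor:tensorexact:1})+(\ref{cor:tensorexact:3})\Rightarrow(\ref{cor:tensorexact:5})$. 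Theorem \ref{thm:trivial_bracket} now yields $[-,-]_{\P(A)}\equiv 0$, and transporting through $\chi_A$ and the Morita isomorphism gives $\{-,-\}_{A'}\equiv 0$ in all degrees $m,n\geq 1$.

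The squaring map is where the real work lies, and I expect it to be the main obstacle. The transfer step is the same — Theorem \ref{thm:bracketcomm}, and hence Theorem \ref{thm:moritaHH}, also provides the commuting square for $sq$ — so the whole problem again reduces to $sq_{\P(A)}=0$. However, the Remark following Theorem \ref{thm:trivial_bracket} warns that a lax braiding does \emph{not} by itself trivialize $sq_\C$: one only learns that $sq_{\P(A)}(\xi)$ is the class of the distinguished loop $\xi\circ\xi\xrightarrow{\Gamma(\xi,\xi)}\xi\circ\xi$, and combining $sq_\C(\xi)+sq_\C(\xi)=[\xi,\xi]_\C$ with the vanishing of the bracket yields only $2\,sq_{\P(A)}(\xi)=0$. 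To upgrade this to $sq_{\P(A)}(\xi)=0$ I would try to show that the loop $\Gamma(\xi,\xi)$ is null-homotopic, using the explicit, essentially symmetric form of $\gamma^{\mathbf r}$ dictated by the R-matrix equations (\ref{ali:matrix:1})--(\ref{ali:matrix:5}); absent such an intrinsic homotopy, the statement is in any case unconditional once $2$ is invertible in $k$, since then $2\,sq_{\P(A)}(\xi)=0$ forces $sq_{\P(A)}(\xi)=0$. Producing the null-homotopy in full generality — that is, without inverting $2$ — is the delicate point, and is the one computation I would invest effort in.
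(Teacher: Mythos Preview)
Your treatment of the bracket is exactly what the paper has in mind: transport to $\P(A)$ via Theorem~\ref{thm:schwede_comm} and Corollary~\ref{cor:isohochschildproj}, use Theorem~\ref{thm:corrbraiding} and Corollary~\ref{cor:braid_flatprojbraid} to obtain a lax braiding on $(\P(A),\otimes_A,A)$, apply Theorem~\ref{thm:trivial_bracket}, and then carry the vanishing to $A'$ by Theorem~\ref{thm:moritaHH}. This matches the paper's intent (its ``proof'' is just the sentence ``From the Theorems~\ref{thm:bracketcomm} and~\ref{thm:schwede_comm} we deduce the following'' together with a $\square$).

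You have, however, put your finger on a real gap --- and it is a gap in the paper, not in your argument. The paper asserts triviality of the squaring map but offers no justification beyond the same two theorems, neither of which addresses vanishing of $sq$. Theorem~\ref{thm:bracketcomm} only supplies \emph{functoriality} of $sq_\C$ under exact almost strong monoidal functors; Theorem~\ref{thm:trivial_bracket} concerns only the bracket; and the Remark immediately following it states explicitly that Lemma~\ref{lem:laxbraiding} ``cannot be used to deduce any triviality statements relating the squaring map $sq_\C$ in general.'' Your diagnosis is precisely right: from $sq_\C(\xi)+sq_\C(\xi)=[\xi,\xi]_\C=0$ one only gets $2\,sq_{\P(A)}(\xi)=0$, so the claim follows if $2\in k^\times$ but is otherwise unsubstantiated. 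The null-homotopy of the loop $\Gamma(\xi,\xi)$ that you propose to look for is exactly the missing ingredient, and the paper does not provide it. In short, you have not missed an argument --- the paper simply does not contain one for the squaring-map part of this corollary.
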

\begin{cor}\label{cor:vanishing_gersten_canbraid}
The Gerstenhaber bracket and the squaring map on $\HH^\bullet(A)$ are trivial in the following situations.
\begin{enumerate}[\rm(1)]
\item The algebra $A$ is $($Morita equivalent to an algebra which is$)$ commutative, $k$-projective and the unit map $k \rightarrow A$ is an epimorphism in the category of rings.
\item The base ring $k$ is a field and $A$ is a finite dimensional simple algebra over $k$ whose center agrees with $k$.\qed
\end{enumerate} 
\end{cor}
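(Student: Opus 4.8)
The statement to prove is Corollary \ref{cor:vanishing_gersten_canbraid}, which asserts triviality of the Gerstenhaber bracket and squaring map on $\HH^\bullet(A)$ in two concrete situations, building on the two preceding classification results from \cite{AgCaMi12} and on Corollary \ref{cor:vanishing_gersten_canbraid1}.

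\medskip

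The plan is to deduce both parts directly from Corollary \ref{cor:vanishing_gersten_canbraid1} together with the classification of (semi-)canonical R-matrices recalled just above the statement. The essential observation is that Corollary \ref{cor:vanishing_gersten_canbraid1} already reduces the problem to exhibiting a \emph{semi-canonical} R-matrix for an algebra Morita equivalent to $A$, provided the relevant $k$-projectivity hypothesis holds. So in each case I would verify three things: that the algebra in question is $k$-projective, that it (or an algebra Morita equivalent to it) admits a semi-canonical R-matrix, and that the Morita-equivalence bookkeeping is consistent with the hypotheses of Corollary \ref{cor:vanishing_gersten_canbraid1}.

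\medskip

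For part (1), I would invoke the first classification result: if $A$ is commutative and the unit map $k \rightarrow A$ is an epimorphism of rings, then $\mathbf r = 1_A \otimes 1_A \otimes 1_A$ is a canonical R-matrix, hence in particular a semi-canonical one (recall every canonical R-matrix is semi-canonical). Since $A$ is assumed $k$-projective, Corollary \ref{cor:vanishing_gersten_canbraid1} applies and yields triviality of $\{-,-\}_A$ and $sq_A$ in degrees $m,n \geq 1$. The parenthetical ``Morita equivalent to'' clause is handled automatically, because Corollary \ref{cor:vanishing_gersten_canbraid1} is itself phrased for algebras Morita equivalent to an algebra carrying a semi-canonical R-matrix, and Lemma \ref{lem:dereqproj} guarantees that $k$-projectivity transfers across a Morita equivalence. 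For part (2), I would similarly apply the second classification result: when $k$ is a field and $A$ is a finite dimensional simple $k$-algebra with $Z(A) = k$, there is a (unique) canonical R-matrix for $A$, again a fortiori semi-canonical. Over a field every algebra is $k$-projective (indeed $k$-free), so the hypothesis of Corollary \ref{cor:vanishing_gersten_canbraid1} is automatic, and triviality follows at once.

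\medskip

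I do not anticipate a genuine obstacle here, since the statement is essentially a specialization of Corollary \ref{cor:vanishing_gersten_canbraid1} through the two cited classification theorems; the only point requiring a little care is the logical bridge ``canonical $\Rightarrow$ semi-canonical,'' which is needed because Corollary \ref{cor:vanishing_gersten_canbraid1} is stated for \emph{semi-}canonical R-matrices while the classification results produce \emph{canonical} ones. This implication is exactly the remark made immediately after Definition \ref{def:corrbraiding} (``every canonical R-matrix for $A$ is a semi-canonical R-matrix for $A$''), so it can simply be quoted. The one bit of conceptual content worth spelling out explicitly is why the degree restriction $m,n \geq 1$ suffices to call the bracket and squaring map ``trivial'': the bracket has degree $-1$ and is only defined for $m,n \geq 1$ in our framework, and the squaring map is built from it, so vanishing for all such $m,n$ is precisely the assertion of triviality. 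Accordingly the proof is short, and I would present it as two one-line applications followed by this brief justification.
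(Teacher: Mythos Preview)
Your proposal is correct and matches the paper's approach: the paper gives no explicit proof (the statement ends with a \qed), treating the corollary as an immediate consequence of Corollary~\ref{cor:vanishing_gersten_canbraid1} together with the two classification results from \cite{AgCaMi12} recalled just above. Your argument spells out precisely this deduction, including the bridge ``canonical $\Rightarrow$ semi-canonical'' and the $k$-projectivity checks, which is exactly what the omitted proof would contain.
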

\end{nn}
\begin{nn}
Let $k$ be an algebraically closed field, and $A$ a finite dimensional algebra over $k$ which admits a canonical R-matrix. In this situation the statement of Corollary \ref{cor:vanishing_gersten_canbraid1} can be deduced easily from \cite[Lem.\,1.5]{Ha89}. In fact, it follows that $\HH^\bullet(A)$ is concentrated in degree zero (for $\gldim(A) = 0$). \cite[Lem.\,1.5]{Ha89} has been generalized to perfect fields by R.\,Rouquier in \cite{Rou08}. For non-perfect fields, the situation is different, as the first out of the following three examples shows.
\begin{exa}
Assume that $k$ is a field with $\text{char}(k) = 2$. Consider the following purely inseparable field extension:
$$
K = k(t) \subseteq \frac{k(t)[u]}{(u^2-t)} = L.
$$
Then the $K$-algebra $L$ has a 2-periodic minimal projective resolution over its (local) enveloping algebra
$$
L^\ev = L \otimes_K L \cong \frac{k(t)[u',u'']}{(u'^2-t, u''^2 - t)} \cong \frac{L[v]}{(v-u)^2},
$$
from which one can deduce, that $\HH^\bullet(L)$ is infinite dimensional over $K$ (in fact, $\HH^n(L) \cong L$ for all $n \geq 0$).
\end{exa}
\begin{exa}
Let $f = t^2 + 3t +1 \in \mathbb Z[t]$ and consider the commutative $\mathbb Z$-algebra $\Lambda = \Lambda_f := \mathbb Z[t]/(f)$. As a $\mathbb Z$-module, it is free of rank $2$. The equivalence class of the formal derivative $f' = 2t + 3$ of $f$ in $\Lambda$ is a non-zero devisor in $\Lambda$, but not a unit. It follows (from \cite[Prop.\,2.2]{Holm00}) that the Hochschild cohomology ring $\HH^\bullet(\Lambda)$ of $\Lambda$ is concentrated in even degrees, wherein it is given by
$$
\HH^{2n}(\Lambda) \cong \Lambda / f' \Lambda \neq 0 \quad \text{(for all $n \geq 0$)}.
$$
Therefore the Gerstenhaber bracket and the squaring map on $\HH^\bullet(\Lambda)$ are trivial. However, the unit map $\mathbb Z \rightarrow \Lambda$ is not an epimorphism in the category of rings (use the criterion \cite[Prop.\,17.2.6]{Gr67}, or notice that the clearly distict ring homomorphisms $f, g: \Lambda \rightarrow \Lambda$ given by $f(t) = t$, $g(t) = -t-3$ are equalized by $\mathbb Z \rightarrow \Lambda$) and hence there cannot be a canonical R-matrix for $\Lambda$.
\end{exa}
\begin{exa}
For an example of a finite dimensional algebra over a field, that does not admit a (semi-)canonical R-matrix, but whose strict Gerstenhaber structure is trivial, let $\overrightarrow{\Delta}$ be a finite quiver without an oriented cycle. Assume further that $\overrightarrow{\Delta}$ is a tree and that $k$ is an algebraically closed field (of any characteristic). Then $\HH^n(k\overrightarrow{\Delta}) = 0$ for all $n \geq 1$, as D.\,Happel showed in \cite{Ha89}. But, of course, $k\overrightarrow{\Delta}$ will almost never be simple.
\end{exa}
Having these examples at hand, it appears that the vanishing of the (strict) Gerstenhaber structure on $\HH^\bullet(A)$ is not correlated to $(\Mod(A^\ev), \otimes_A, A)$ (and hence to $(\P(A), \otimes_A, A)$; see Corollary \ref{cor:braid_flatprojbraid}) being lax braided in general. The following question therefore does not seem to have an answer in terms of lax braidings on a monoidal category of bimodules over $A$.
\end{nn}
\begin{quest}\label{quest:vanishingmean}
Let $A$ be an algebra over the commutative ring $k$. What does it mean for the algebra $A$ $($and its category of modules$)$ that the Gerstenhaber bracket $($and the squaring map$)$ on $\HH^\bullet(A)$ vanish?
\end{quest}



\chapter{Application I: The kernel of the Gerstenhaber bracket}\label{cha:app1}
\section{Introduction and motivation}\label{int:kernel}
\begin{nn}
Let $k$ be a commutative ring and $A$ be an associative and unital $k$-algebra. Let $\{-,-\}_A$ be the Gerstenhaber bracket on the corresponding Hochschild cohomology ring $\HH^\bullet(A)$. The initial spark leading to the results of this chapter was the following fundamental question raised by R.\,-O.\,Buchweitz.
\begin{quote}
\textit{Is it, by any means, possible to deduce those} (\textit{homogeneos}) \textit{elemenets $f \in \HH^\bullet(A)$ such that $\{f,-\}_A = 0$}? \textit{In other words, can we describe} (\textit{parts}) \textit{of the kernel of the adjoint representation
$$
\mathrm{ad}: \HH^\bullet(A) \longrightarrow \Der_k^\mathbb{Z}(\HH^\bullet(A)), \ f \mapsto \{f,-\}_A?
$$}
\end{quote}
By combining results acquired in the previous chapter with S.\,Schwede's interpretation of the bracket presented in \cite{Sch98} (see also Theorem \ref{thm:schwede_comm}), we will be able to determine a considerable part of $\Ker(\{-,-\}_A) \subseteq \HH^\bullet(A) \otimes_k \HH^\bullet(A)$ for a class of interesting $k$-algebras (containing, for instance, cocommutative Hopf algebras which are $k$-projective), pointing into the direction of the above question. Let us give further motivation.
\end{nn}
\begin{nn}
Assume that $k$ is a field of prime characteristic $p$ and let $G$ be a group with identity element $e_G$. The group algebra $kG$ of $G$ carries the structure of a cocommutative Hopf algebra over $k$ with $1_{kG} = e_G$, comultiplication $\Delta(g) = g \otimes g$ and counit $\varepsilon: kG \rightarrow k$, $\varepsilon(g) = 1_k$ (for $g \in G$). There are two graded commutative $k$-algebras associated to $kG$, namely, the Hochschild cohomology ring
$$
\HH^\bullet(kG) \cong \Ext^\bullet_{kG^\ev}(kG,kG)
$$
of $kG$ (note that $kG$ is $k$-projective) and, as for any $k$-Hopf algebra, the $\Ext$-algebra
$$
\OH^\bullet(G,k) = \Ext^\bullet_{kG}(k,k)
$$
of the trivial $kG$-module $k$. If $G$ is finite and abelian, it was shown by T.\,Holm in \cite{Ho96} that these graded $k$-algebras are related as follows:
\begin{equation}\tag{$\dagger$}\label{eq:isocoh}
\HH^\bullet(kG) \cong kG \otimes_k \OH^\bullet(G,k).
\end{equation}
In \cite{CiSo97}, C.\,Cibils and A.\,Solotar generalized the result to arbitrary base rings. The isomorphism is a very explicit one and may be formulated in terms of the defining (standard) resolutions of $kG$ and $k$ respectively. However, it seemed to be unknown for quite a while how to translate the Gerstenhaber structure on $\HH^\bullet(kG)$ to $kG \otimes_k \OH^\bullet(G,k)$ along this isomorphism in an intrinsic manner. In \cite{Sa12} S.\,Sánchez-Flores established a graded Lie structure on $kG \otimes_k \OH^\bullet(G,k)$ such that the isomorphism (\ref{eq:isocoh}) is an isomorphism of Gerstenhaber algebras. When specializing to the case $G \cong \mathbb Z / r\mathbb Z$ for some integer $r \geq 1$ divisible by $p$, one can describe the bracket in terms of a certain $k$-basis of $\OH^\bullet(G,k)$. More concretely, the bracket on the right hand side in (\ref{eq:isocoh}) shows up as follows (cf. \cite[Theorem 5.5]{Sa12}). 

Assume that, as above, $G$ is cyclic and finite whose order \abs{G} is divided by $p$. Let $g \in G$ be a generator. Further, let $\beta: kG \rightarrow k$ be the map $\beta(g^0) = 0$,
$$
\beta(g^i) = \varepsilon(g^0 + g + \cdots + g^{i-1}) \quad (\text{for $1 \leq i \leq n-1$}).
$$
Since $G$ is cyclic, it admits a total ordering of its elements, namely, define
$$
e_G < g < g^2 < \cdots < g^{\abs{G}-1}.
$$
The hereby defined sets $Q(x)$,
$$
Q(x) = \{ y \in G \mid xy < y\} \quad (\text{for $x \in G$}),
$$
have the following properties: $Q(e_G) = \emptyset$, $Q(g) = \{g^{p-1}\}$ and $Q(g^{p-1}) = G \setminus \{e_G\}$. Let $n \geq 2$ be an integer and $\underline{x} = (x_1, \dots, x_n) \in G^{\times n}$. By definition, $\underline{x}$ satisfies condition $C(n)$ if
\begin{itemize}
\item $x_i \in Q(x_{i+1})$ for all odd $i$, $1 \leq i \leq n-1$, in case $n$ is even, or
\item $x_i \in Q(x_{i+1})$ for all even $i$, $1 \leq i \leq n-1$, in case $n$ is odd.
\end{itemize}
Having made this definition, one can show that for fixed $n \geq 1$, $\dim_k \OH^n(G,k) = 1$ and that the map $\beta^n \in \Map(G^{\times n}, k) \cong \Hom_{kG}(kG^{\otimes_k n}, k)$,
$$
\beta^n(\underline{x}) =
\begin{cases}
1 & \text{if $n$ is even and $\underline{x}$ satisfies $C(n)$},\\
\beta(x_1) & \text{if $n$ is odd and $\underline{x}$ satisfies $C(n)$,}\\
0 & \text{otherwise},
\end{cases}
$$
(for $\underline{x} = (x_1, \dots, x_n) \in G^{\times n}$) defines a non-zero element in $\OH^n(G,k)$; hence $(\beta^n)_{n \geq 0}$ is a $k$-basis of $\OH^\bullet(G,k)$. With this basis at hand, the Gerstenhaber bracket on $kG \otimes_k \OH^\bullet(G,k)$ displays itself as
\begin{equation}\tag{$\dagger\dagger$}\label{eq:bracketdes}
\{x \otimes \beta^m, y \otimes \beta^n\}_{kG} = xy \otimes (\varphi(x,\beta^m) - \varphi(y, \beta^n)) \beta^{m+n-1}
\end{equation}
(\text{for $m,n \geq 1$, $x,y \in G$}), where $\varphi: G \times \OH^\bullet(G,k) \rightarrow k$ is the map
$$
\varphi(x, \beta^n) =
\begin{cases}
\beta(x) & \text{if $n$ is odd},\\
0 & \text{otherwise}
\end{cases}
\quad (\text{for $x \in G$, $n \geq 1$}).
$$
\textit{Observation}: For $n \geq 1$ and $x \in G$, let $\underline{x}(n)$ be the element $(x,e_G, \dots, e_G) \in G^{\times n}$. If $n$ is even, $\underline{x}(n)$ will not satisfy condition $C(n)$ meaning that $\beta^n(\underline{x}(n)) = 0$. Conversely, if $n$ is odd and $n \geq 3$, then $\underline{x}(n)$ satisfies condition $C(n)$. Hence the map $\varphi$ is actually given by evaluation at $\underline{x}(n)$:
$$
\varphi(x, \beta^n) = \beta^n(\underline{x}(n)) \quad (\text{for $x \in G$, $n \geq 1$}).
$$
Now specialize to $x = e_G$; by definition, $\beta(x) = 0$. Therefore $\varphi(x, \beta^n) = \beta^n(\underline{x}(n)) = 0$ for all $n \geq 1$.

\vspace*{11pt}

By mapping $\xi \in \OH^\bullet(G,k)$ to $1_{kG} \otimes \xi$, $\OH^\bullet(G,k)$ can be identified with a graded subalgebra of $kG \otimes_k \OH^\bullet(G,k) \cong \HH^\bullet(kG)$. From the above observation and the description (\ref{eq:bracketdes}) of the Gerstenhaber bracket on $kG \otimes_k \OH^\bullet(G,k)$, it is evident that
$$
\{1_{kG} \otimes \xi, 1_{kG} \otimes \xi'\}_{kG} = 0 \quad (\text{for $G$ cyclic and $\xi$, $\xi' \in \OH^\bullet(G,k)$}),
$$
that is, 
\begin{equation}\tag{$\#$}\label{eq:hhker}
\begin{aligned}
\OH^\bullet(G,k) \otimes_k & \OH^\bullet(G,k) \subseteq \Ker(\{-,-\}_{kG}),
\end{aligned}
\end{equation}
where we regard $\{-,-\}_{kG}$ as a $k$-linear map $\HH^\bullet(kG) \otimes_k \HH^\bullet(kG) \rightarrow \HH^\bullet(kG)$. The statement (\ref{eq:hhker}) can be generalized massively, in the following sense.
\begin{thm}[$\subseteq$ Corollary \ref{cor:gerstenhabervanishhopfalgebra}]
Let $G$ be a group and let $kG$ be the corresponding group algebra over the commutative ring $k$. Then
$$
\OH^\bullet(G,k) \otimes_k \OH^\bullet(G,k) \subseteq \Ker(\{-,-\}_{kG}),
$$
where we view $\{-,-\}_{kG}$ as a $k$-linear map $\HH^\bullet(kG) \otimes_k \HH^\bullet(kG) \rightarrow \HH^\bullet(kG)$.
\end{thm}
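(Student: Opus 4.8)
The plan is to recognize the present statement as a direct specialization of Theorem 4 --- that is, of Theorem \ref{thm:commutativeHopfalgebroid} together with Corollary \ref{cor:vanishHopfalgebroid}, in the form already recorded as Corollary \ref{cor:gerstenhabervanishhopfalgebra}. The entire content therefore reduces to verifying that the group algebra $B = kG$ meets the hypotheses of that theorem, namely that it is a quasi-triangular bialgebra over $k$ which is $k$-projective, and then translating the conclusion ``$\{-,-\}_{kG}$ vanishes on $\OH^\bullet(G,k)$'' into the asserted kernel containment.

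First I would record the bialgebra (indeed Hopf algebra) structure on $kG$: the comultiplication is the $k$-linear extension of $\Delta(g) = g \otimes g$, the counit that of $\varepsilon(g) = 1_k$, and the antipode that of $S(g) = g^{-1}$ (for $g \in G$). The essential observation is that $kG$ is \emph{cocommutative}, since on the $k$-basis $G$ one has $\tau_{kG,kG} \circ \Delta(g) = \tau(g \otimes g) = g \otimes g = \Delta(g)$, and hence the same holds on all of $kG$ by $k$-linearity. Consequently $kG$ is quasi-triangular (with the trivial $R$-matrix $1_{kG} \otimes 1_{kG}$), as recalled in the parenthetical remark of Theorem 4. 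Next I would note that $kG$ is free as a $k$-module, with basis $G$, and in particular $k$-projective; thus the hypotheses of Theorem 4 are met verbatim.

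With the hypotheses in place, Theorem \ref{thm:commutativeHopfalgebroid} and Corollary \ref{cor:vanishHopfalgebroid} apply to $B = kG$ and deliver two things: the $\Ext$-algebra $\OH^\bullet(G,k) = \Ext^\bullet_{kG}(k,k)$ identifies with a subalgebra of $\HH^\bullet(kG)$, and $\{\alpha,\beta\}_{kG} = 0$ for all $\alpha,\beta \in \OH^\bullet(G,k)$. To conclude, I would invoke the fact that the Gerstenhaber bracket is a $k$-bilinear map of degree $-1$ (it is part of the strict Gerstenhaber algebra structure of Theorem \ref{thm:hh_gerstenhaber}, cf.\ Definition \ref{def:galgebra}), so that it factors through a $k$-linear map $\HH^\bullet(kG) \otimes_k \HH^\bullet(kG) \rightarrow \HH^\bullet(kG)$. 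The vanishing $\{\alpha,\beta\}_{kG} = 0$ on all such pairs is then precisely the statement that every elementary tensor $\alpha \otimes \beta$ with $\alpha,\beta \in \OH^\bullet(G,k)$ lies in the kernel, whence $\OH^\bullet(G,k) \otimes_k \OH^\bullet(G,k) \subseteq \Ker(\{-,-\}_{kG})$ by $k$-linearity.

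The genuinely hard work has of course already been carried out in the proof of Theorem 4; for the corollary itself the only point requiring a little care is the passage from ``vanishes on all pairs'' to the kernel inclusion, which hinges on the $k$-bilinearity of the bracket. This is exactly the delicate issue flagged in Question \ref{con:gerstenhaber} for the \emph{abstract} categorical bracket $[-,-]_\C$, but it is immediate for the classical Gerstenhaber bracket $\{-,-\}_{kG}$ on Hochschild cohomology, with which we work here; hence the subtlety does not obstruct the argument.
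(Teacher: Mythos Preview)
Your proposal is correct and matches the paper's approach exactly: the statement is labeled ``$\subseteq$ Corollary \ref{cor:gerstenhabervanishhopfalgebra}'' precisely because its proof consists of observing that $kG$ is a cocommutative (hence quasi-triangular) Hopf algebra which is free over $k$, and then invoking that corollary; the paper records these facts about $kG$ in Example \ref{exa:groupalgebra}. One minor point: Corollary \ref{cor:vanishHopfalgebroid} alone only yields the vanishing for $\alpha,\beta$ of degree $\geq 1$, and the degree-zero case is handled separately in the proof of Corollary \ref{cor:gerstenhabervanishhopfalgebra} via $\OH^0(B,k)=k$ and the derivation property --- but since you already cite the latter corollary as the operative result, this is covered.
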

The aim of this chapter is to prove the above result not only for group algebras, but rather for any quasi-triangular Hopf algebroid satisfying some rather mild projectivity requirements.
\end{nn}
\section{Bialgebroids}\label{sec:hopfalgebroids}
Let $k$ be a commutative ring and let $R$ be a $k$-algebra. In this section, we introduce the natural generalization of bialgebras over non-commutative base rings. Most of the upcoming definitions and results are extracted from \cite{Bo08}, \cite{DoMu06} and \cite{Ko09}. As usual, we let $R^\ev = R \otimes_k R^\op$ be the enveloping algebra of $R$. Remember that left $R^\ev$-modules bijectively correspond to $R$-$R$-bimodules on which $k$ acts centrally.
\begin{defn}
Let $A$ be an $R^\ev$-module and let $\nabla: A \otimes_R A \rightarrow A$, $\eta: R \rightarrow A$ be $R^\ev$-module homomorphisms. The triple $\mathcal A = (A, \nabla, \eta)$ is called an \textit{$R$-ring} if the diagrams given below commute.
\begin{equation}\tag{R1}
\begin{aligned}
\xymatrix@C=30pt{
A \otimes _R (A \otimes_R A) \ar[r]^-{A \otimes_R \nabla} \ar[d]_-\cong & A \otimes_R A \ar[r]^-{\nabla} & A \ar@{=}[d]\\
(A \otimes_R A) \otimes_R A \ar[r]^-{\nabla \otimes_R A} & A \otimes_R A \ar[r]^-{\nabla} & A
}
\end{aligned}
\end{equation}
\begin{equation}\tag{R2}
\begin{aligned}
\xymatrix{
R \otimes_R A \ar[r]^-\cong \ar[d]_-{\eta \otimes_R A} & A \ar@{=}[d] & A \otimes_R R \ar[l]_-\cong \ar[d]^-{A \otimes_R \eta} \\
A \otimes_R A \ar[r]^-\nabla & A & A \otimes_R A \ar[l]_-\nabla
}
\end{aligned}
\end{equation}
The map $\nabla$ is the \textit{multiplication map} whereas $\eta$ is the \textit{unit map}. Let $(A,\nabla_A, \eta_A)$ and $(B, \nabla_B, \eta_B)$ be $R$-rings. An $R^\ev$-module homomorphism $f: A \rightarrow B$ is called \textit{homomorphism of $R$-rings} if it is compatible with the additional structure that $A$ and $B$ carry, namely, $\nabla_B\circ f = (f \otimes_R f) \circ \nabla_A$ and $f \circ \eta_A = \eta_B$.
\end{defn}
\begin{rem}\label{rem:Rrings}
Let $\mathcal A = (A, \nabla, \eta)$ be an $R$-ring. Then $A$ may be viewed as an associative and unital $k$-algebra with multiplication map obtained by composing the natural epimorphism $A \otimes_k A \rightarrow A \otimes_R A$ with $\nabla: A \otimes_R A \rightarrow A$ and unit map obtained by composing the unit map $k \rightarrow R$ with $\eta: R \rightarrow A$. We will denote the corresponding $k$-algebra to the $R$-ring $(A, \nabla, \eta)$ by $\mathcal A^\natural$ or (by slight abuse of notation) simply by $A$. Having made these observations, the map $\eta: R \rightarrow A$ will be a unital $k$-algebra homomorphism $R \rightarrow \mathcal A^\natural$. In fact, we even have the following result.
\begin{lem}[{\cite[Lem.\,2.2]{Bo08}}]
The assignment which maps an $R$-ring $\mathcal A = (A, \nabla, \eta)$ to the $k$-algebra homomorphism $R \rightarrow \mathcal A^\natural$ defines a bijective correspondence between the classes
\begin{enumerate}[\rm(1)]
\item of all $R$-rings, and
\item $\bigcup_{\Lambda \in \text{$k$-}\mathsf{Alg}}\Hom_{\text{$k$-}\mathsf{Alg}}(R,\Lambda)$,
\end{enumerate}
where $\text{$k$-}\mathsf{Alg}$ denotes the category of $k$-algebras.
\end{lem}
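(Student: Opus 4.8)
The statement to prove is Lemma 2.2 of \cite{Bo08}, asserting that the assignment sending an $R$-ring $\mathcal A = (A, \nabla, \eta)$ to the $k$-algebra homomorphism $\eta: R \rightarrow \mathcal A^\natural$ is a bijection onto the disjoint union $\bigcup_{\Lambda} \Hom_{\text{$k$-}\mathsf{Alg}}(R, \Lambda)$. Since both sides are really parametrized by pairs (target $k$-algebra together with a structure map from $R$), the plan is to exhibit a two-sided inverse to this assignment, constructed from the preceding Remark \ref{rem:Rrings}.

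First I would set up the forward direction carefully. Given an $R$-ring $\mathcal A = (A, \nabla, \eta)$, Remark \ref{rem:Rrings} already explains how $A$ becomes an associative unital $k$-algebra $\mathcal A^\natural$ (compose $A \otimes_k A \twoheadrightarrow A \otimes_R A$ with $\nabla$, and compose $k \rightarrow R$ with $\eta$), and that $\eta$ thereby becomes a unital $k$-algebra homomorphism $R \rightarrow \mathcal A^\natural$. So the map in one direction is clear; the content lies in building the inverse. For the reverse direction, I would start from an arbitrary $k$-algebra $\Lambda$ together with a $k$-algebra homomorphism $\iota: R \rightarrow \Lambda$. Such an $\iota$ lets me view $\Lambda$ as an $R$-$R$-bimodule via $r \cdot x \cdot r' := \iota(r) x \iota(r')$, with $k$ acting centrally because $\iota$ is $k$-linear and $\Lambda$ is a $k$-algebra; equivalently, $\Lambda$ becomes a left $R^\ev$-module. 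I would then define $\nabla: \Lambda \otimes_R \Lambda \rightarrow \Lambda$ to be induced by the multiplication of $\Lambda$, which is well defined over $R$ precisely because $x\iota(r) \otimes y$ and $x \otimes \iota(r) y$ have the same image, and I would set $\eta := \iota$.

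The key steps are then purely verificational. I would check that $(\Lambda, \nabla, \eta)$ satisfies axioms (R1) and (R2): associativity (R1) descends directly from associativity of multiplication in $\Lambda$, and the unit axiom (R2) follows from $\iota$ being unital together with the canonical isomorphisms $R \otimes_R \Lambda \cong \Lambda \cong \Lambda \otimes_R R$. I would also confirm that $\nabla$ and $\eta$ are $R^\ev$-module homomorphisms, which is where the bimodule compatibility of $\iota$ is used. Having both directions, the two compositions must be shown to be identities: starting from $(A, \nabla, \eta)$, passing to $(\mathcal A^\natural, \eta)$ and back reconstructs exactly the original $R$-ring because the $R^\ev$-module structure on $A$ was, by Remark \ref{rem:Rrings}, the one induced by $\eta$, and $\nabla$ is recovered from the $k$-algebra multiplication modulo the tensor relations; conversely, starting from $(\Lambda, \iota)$, forming the $R$-ring and then extracting its associated homomorphism returns $\iota$ on the nose.

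The main obstacle — really the only subtle point — will be verifying that the two round trips are genuinely mutually inverse rather than merely inverse up to isomorphism. The delicate part is confirming that the multiplication $\nabla$ of the original $R$-ring is faithfully encoded by, and reconstructible from, the $k$-algebra structure on $\mathcal A^\natural$: one must argue that the passage $A \otimes_k A \twoheadrightarrow A \otimes_R A \xrightarrow{\nabla} A$ does not lose information, i.e.\ that the original $\nabla$ on $A \otimes_R A$ is uniquely determined by the $k$-bilinear multiplication it induces, given that the $R$-balancing is itself dictated by $\eta$. I expect this to reduce to the observation that $A \otimes_R A$ is a quotient of $A \otimes_k A$ and that $\nabla$ factors through this quotient by construction, so no choices intervene. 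Everything else is a routine check of the bimodule and algebra axioms, which I would not grind through in full.
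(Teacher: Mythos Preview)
Your approach is correct and is the standard way to establish this bijection. Note, however, that the paper does not supply its own proof of this lemma: it is stated with a citation to \cite[Lem.\,2.2]{Bo08} and left without argument, so there is nothing in the paper to compare your write-up against beyond the preparatory Remark~\ref{rem:Rrings}, which you have used appropriately.
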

From now on, we will switch between those two characterizations of $R$-rings at will. By slightly abusing notation, we are going to write $A$ for $\mathcal A^\natural$ when there cannot be any misunderstanding.
\end{rem}
\begin{defn}
Let $C$ be an $R^\ev$-module and let $\Delta: C \rightarrow C \otimes_R C$, $\varepsilon: C \rightarrow R$ be $R^\ev$-module homomorphisms. The triple $\mathcal C = (C, \Delta, \varepsilon)$ is called an \textit{$R$-coring} if the diagrams given below commute.
\begin{equation}\tag{CR1}
\begin{aligned}
\xymatrix@C=30pt{
C \otimes _R (C \otimes_R C) & C \otimes_R C \ar[l]_-{C \otimes_R \nabla} & C \ar[l]_-{\nabla}\\
(C \otimes_R C) \otimes_R C \ar[u]^-\cong & C \otimes_R C \ar[l]_-{\nabla \otimes_R C} & C  \ar@{=}[u] \ar[l]_-{\nabla}
}
\end{aligned}
\end{equation}
\begin{equation}\tag{CR2}
\begin{aligned}
\xymatrix{
R \otimes_R C & C \ar[l]_-\cong \ar[r]^-\cong & C \otimes_R R \\
C \otimes_R C \ar[u]^-{\varepsilon \otimes_R C} & C \ar[l]_-\Delta \ar[r]^-\Delta \ar@{=}[u] & C \otimes_R C \ar[u]_-{C \otimes_R \varepsilon} 
}
\end{aligned}
\end{equation}
The map $\Delta$ is the \textit{comultiplication map} whereas $\varepsilon$ is the \textit{counit map}. Let $(C,\Delta_C, \varepsilon_C)$ and $(D, \Delta_D, \varepsilon_D)$ be $R$-corings. an $R^\ev$-module homomorphism $f: C \rightarrow D$ is called \textit{homomorphism of $R$-corings} if it is compatible with the additional structure that $C$ and $D$ carry, namely, $(f \otimes_R f) \circ \Delta_C = \Delta_D \circ f$ and $\varepsilon_D \circ f = \varepsilon_C$.
\end{defn}
\begin{rem}
In contrast to Remark \ref{rem:Rrings}, an $R$-coring does not have to be a $k$-coalgebra in general (which already is evident from the definition). Later examples will illustrate this. 

Let us introduce some notation. If $(A, \nabla, \eta)$ is an $R$-ring, we write $\nabla(a \otimes a') = aa'$ for $a,a' \in A$; in analogy to Sweedler's notation for coalgebras, we write
$$
\Delta(c) = \sum_{(c)}c_{(1)} \otimes c_{(2)} = c_{(1)} \otimes c_{(2)}
$$
if $(C,\Delta,\varepsilon)$ is an $R$-coring and $c$ is in $C$.
\end{rem}
\begin{nn}
Let $\mathcal A = (A, \nabla, \eta)$ be an $R^\ev$-ring; in particular $\eta$ is a map $R^\ev \rightarrow A$. The $k$-algebra homomorphisms
$$
s_A = \eta(- \otimes 1_R): R \longrightarrow A \quad \text{and} \quad t_A = \eta(1_R \otimes -): R^\op\longrightarrow A
$$
are called the \textit{source} and \textit{target maps} of the $R^\ev$-ring $\mathcal A$, and they respectively turn $A$ into a left and a right $R$-module. By
$$
(r \otimes r')(a \otimes a') = s_A(r)a \otimes t_A(r')a' \quad (\text{for $r,r' \in R$ and $a,a' \in A$})
$$
the tensor product $A \otimes_R A$ becomes an $R^\ev$-module. Consider the following $k$-submodule $A \times_R A$ of $A \otimes_R A$:
\begin{align*}
A \times_R A := \left\{ \sum_i{a_i \otimes a_i'} \mathrel{\Big|}  \sum_i a_it_A(r)\otimes a_i' = \sum_i a_i \otimes a_i' s_A(r) \ \forall r \in R\right\} .
\end{align*}
By factorwise multiplication and $\eta_{A \times_R A}(r \otimes r') = s_A(r) \otimes t_A(r')$ (for $r,r' \in R$ and $a,a' \in A$) it is an $R^\ev$-ring. In particular, $A \times_R A$ is a $k$-algebra with unit $1_A \otimes 1_A$.
\end{nn}
\begin{defn}
Let $B$ be a $k$-module. The $5$-tuple $\mathcal B = (B,\nabla, \eta, \Delta, \varepsilon)$ is called a (\textit{left}) \textit{$R$-bialgebroid} if $(B, \nabla, \eta)$ is an $R^\ev$-ring and $(B, \Delta, \varepsilon)$ is an $R$-coring subject to the following compatibility axioms.
\begin{enumerate}[\rm(1)]
\item The $R^\ev$-module structure on $B$ as part of the $R$-coring structure is related to the $R^\ev$-module structure of $B$ as part of the $R^\ev$-ring structure via
$$
(r \otimes r') b = \eta(r \otimes 1_R)\eta(1_R \otimes r') b \quad \text{(for $b \in B$, $r,r' \in R$).}
$$
\item The map $\Delta$ factors through the $k$-algebra $B \times_R B$ such that $\Delta: B \rightarrow B \times_R B$ is a $k$-algebra homomorphism.
\item The counit $\varepsilon: B \rightarrow R$ satisfies
\begin{enumerate}
\item $\varepsilon(\eta(r \otimes 1_R)\eta(1_R \otimes r')b) = r\varepsilon(b)r'$ for $b \in B$ and $r,r' \in R$,
\item $\varepsilon(bb') = \varepsilon(b\eta(\varepsilon(b') \otimes 1_R)) = \varepsilon(b\eta(1_R \otimes \varepsilon(b')))$ for $b,b' \in B$, and
\item $\varepsilon(1_B) = 1_R$.
\end{enumerate}
\end{enumerate}
\end{defn}
Note that there is also the notion of right bialgebroids. We prefer to stick to left bialgebroids (since we work with left modules), and hence will suppress the indication of the side.
\begin{exas}\label{exa:bialg}\begin{enumerate}[\rm(1)]
\item Any $k$-algebra is a $k$-ring, any $k$-coalgebra is a $k$-coring and any $k$-bialgebra is a bialgebroid over $k$.
\item\label{exa:bialg:2} Let $A$ be a $k$-algebra. The enveloping algebra $A^\ev = A \otimes_k A^\op$ is a bialgebroid over $A$. Namely, it is an $A^\ev$-ring via the $k$-algebra homomorphism $\eta = \id_{A^\ev}$ and an $A$-coring via
\begin{align*}
\Delta: A \otimes_k A^\op \longrightarrow (A \otimes_k A^\op) \otimes_A (A \otimes_k A^\op),& \ \Delta(a \otimes a') = (a \otimes 1_A) \otimes (1_A \otimes a')\\
\varepsilon: A \otimes_k A^\op \longrightarrow A,& \ \varepsilon(a \otimes a') = aa'.
\end{align*}
\end{enumerate}
\end{exas}
\begin{nn}
Let $\mathcal B = (B, \nabla, \eta, \Delta, \varepsilon)$ be an $R$-bialgebroid. Then $B$ is equipped with four $R$-module structures in the following ways:
\begin{align*}
r \rhd b \lhd r' &:= \eta(r \otimes r')b,\\
r \RHD b \LHD r' &:= b\eta(r' \otimes r) \quad \text{(for $r,r' \in R$, $b \in B$)}.
\end{align*}
In order to distinguish these structures we will use the following notation:
\begin{align*}
{_\rhd B} & \quad \text{the left $R$-module with structure map $r \rhd b = \eta(r \otimes 1_R)b$;}\\
{B_\lhd} & \quad \text{the right $R$-module with structure map $b \lhd r = \eta(1_R \otimes r)b$;}\\
{_\RHD B} & \quad \text{the left $R$-module with structure map $r \RHD b = b\eta(1_R \otimes r)$;}\\
{B_\LHD} & \quad \text{the right $R$-module with structure map $b \LHD r = b\eta(r \otimes 1_R)$}.
\end{align*}
Let $M$ be a left $B$-module and let $M'$ be a right $B$-module. We set
\begin{align*}
{_\rhd M} &:=  {_\rhd B} \otimes_{B}M, & {_\RHD M'} &:= M' \otimes_B {_\RHD B},
\\
{M_\lhd} &:= B_\lhd \otimes_B M, & {M'_\LHD} &:= M' \otimes_B {B_\LHD} \ .
\end{align*}
When not stated otherwise, we view $B$ as a left $R$-module through ${_\rhd B}$ and as a right $R$-module through ${B_\lhd}$.
\end{nn}
\begin{nn}
Let $\mathcal B = (B,\nabla, \eta, \Delta, \varepsilon)$ be an $R$-bialge\-broid, and let $M$ and $N$ be $B$-modules. The ring $R$ may be turned into a $B$-module by putting
$$
br := \varepsilon(b \LHD r) = \varepsilon(r \RHD b) \quad (\text{for $b \in B$, $r \in R$}).
$$
Note that the requirements on the counit $\varepsilon$ (precisely) guarantee that this indeed gives rise to a well-defined $B$-module structure on $R$. The $R$-module ${_\rhd M_\lhd} \otimes_R {_\rhd N}$ may be turned into a $B$-module by
$$
b(b' \otimes x) := bb' \otimes m \quad (\text{for $b,b' \in B$, $m \in M$}).
$$
By using $\Delta$ one can define an additional $B$-module structure on $M \otimes_R N$. For $b \in B$ and $m \in M$, $n \in N$ set
\begin{align*}
b(m \otimes n) &\mathrel{\mathop:}= \Delta(b)(m \otimes n) = b_{(1)}m \otimes b_{(2)}n.
\end{align*}
Denote this $B$-module by $M \boxtimes_R N$. If $N$ is a $B$-$B$-bimodule (e.g., $N = B$), both modules $M \otimes_R N$ and $M \boxtimes_R N$ are $B$-$B$-bimodules whose right $B$-module structure comes from right multiplication on the factor $N$.
\begin{lem}
Let $\mathcal B = (B,\nabla, \eta, \Delta, \varepsilon)$ be a left $R$-bialgebroid. The category of all $B$-modules is a tensor category with respect to the tensor product $\boxtimes_R$ and the tensor unit $R$. Moreover, the forgetful functor $(\Mod(B), \boxtimes_R,-) \rightarrow (\Mod(R^\ev),\otimes_R,R)$ is strict monoidal. \qed
\end{lem}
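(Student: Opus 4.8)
The plan is to verify directly that $(\Mod(B), \boxtimes_R, R)$ is a monoidal category by checking that the associativity and unit constraints inherited from $(\Mod(R^\ev), \otimes_R, R)$ are in fact $B$-linear, and that $R$ carries the structure of a tensor unit with respect to $\boxtimes_R$. First I would observe that for $B$-modules $M, N$, the underlying $R$-module of $M \boxtimes_R N$ is precisely $M \otimes_R N$ (viewing $M$ through ${_\rhd M_\lhd}$ and $N$ through ${_\rhd N}$), so the forgetful functor sends $M \boxtimes_R N$ to $M \otimes_R N$; the claim that this functor is \emph{strict} monoidal is then built into the construction, provided we show the structure isomorphisms of $(\Mod(R^\ev), \otimes_R, R)$ lift to $B$-linear maps. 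Concretely, the associator $\alpha_{M,N,P}: M \otimes_R (N \otimes_R P) \to (M \otimes_R N) \otimes_R P$ and the unitors $\lambda_M: R \otimes_R M \to M$, $\varrho_M: M \otimes_R R \to M$ are the usual canonical $R^\ev$-module isomorphisms, and I would check they commute with the diagonal $B$-action $b(m \otimes n) = b_{(1)}m \otimes b_{(2)}n$.

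The key steps are as follows. First, well-definedness of $\boxtimes_R$ on morphisms: if $f: M \to M'$ and $g: N \to N'$ are $B$-linear, then $f \otimes_R g$ is $B$-linear with respect to the diagonal actions, since $(f \otimes_R g)(b_{(1)} m \otimes b_{(2)} n) = b_{(1)} f(m) \otimes b_{(2)} g(n)$. Second, $B$-linearity of the associator: applying $\alpha$ to $b(m \otimes (n \otimes p)) = b_{(1)} m \otimes (b_{(2)} n \otimes b_{(3)} p)$ (using coassociativity of $\Delta$, axiom (CR1)) produces $(b_{(1)} m \otimes b_{(2)} n) \otimes b_{(3)} p = b((m \otimes n) \otimes p)$, so $\alpha$ intertwines the two diagonal actions. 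Third, $B$-linearity of the unitors: here I must use that $R$ is a $B$-module via $br = \varepsilon(b \LHD r) = \varepsilon(r \RHD b)$, together with the counit axiom (CR2) asserting $(\varepsilon \otimes_R C) \circ \Delta \cong \id$ and $(C \otimes_R \varepsilon) \circ \Delta \cong \id$. For instance, $b(1_R \otimes m) = \varepsilon(b_{(1)}) \otimes b_{(2)} m$, and under $\lambda$ this becomes $\varepsilon(b_{(1)}) \rhd (b_{(2)} m) = (s_A(\varepsilon(b_{(1)})) b_{(2)}) m = b m$, invoking the counit law. Fourth, the pentagon and triangle coherence diagrams hold automatically because $\alpha$, $\lambda$, $\varrho$ agree with the coherent structure morphisms of $(\Mod(R^\ev), \otimes_R, R)$ after forgetting to $R^\ev$-modules, and the forgetful functor is faithful.

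The main obstacle I anticipate is the bookkeeping around the four distinct $R$-module structures ${_\rhd B}$, ${B_\lhd}$, ${_\RHD B}$, ${B_\LHD}$ on $B$, and ensuring that the tensor product $M \otimes_R N$ is formed over the correct pairing (the right ${B_\lhd}$-structure of $M$ against the left ${_\rhd B}$-structure of $N$) so that the diagonal action $b(m \otimes n) = b_{(1)} m \otimes b_{(2)} n$ is even well-defined on the balanced tensor product. This requires the condition that $\Delta$ factors through $B \times_R B$: the defining relation $\sum a_i t_A(r) \otimes a_i' = \sum a_i \otimes a_i' s_A(r)$ for elements of $B \times_R B$ is exactly what guarantees that $b_{(1)}(m \lhd r) \otimes b_{(2)} n$ and $b_{(1)} m \otimes b_{(2)}(r \rhd n)$ agree over $R$, making the action descend to $M \otimes_R N$. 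I would isolate this verification as the first lemma, since everything else is then a routine (if notationally heavy) translation of the $R$-coring axioms (CR1) and (CR2) into $B$-linearity of the standard monoidal constraints.

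Once the monoidal structure is in place, strictness of the forgetful functor $(\Mod(B), \boxtimes_R, -) \to (\Mod(R^\ev), \otimes_R, R)$ is immediate: on objects it is the identity assignment of underlying $R^\ev$-modules, it sends $R$ to $R$, and it sends the constraints $\alpha, \lambda, \varrho$ of $\boxtimes_R$ to the corresponding constraints of $\otimes_R$ by construction, so the comparison morphisms $\phi, \phi_0$ may be taken to be identities. I do not expect any genuine difficulty here beyond confirming that no nontrivial coherence morphism is introduced, which follows because the $B$-action plays no role after forgetting.
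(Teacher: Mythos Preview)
Your approach is correct and is the standard direct verification. Note, however, that the paper does not actually give a proof of this lemma: it is stated with a terminal $\qed$ and is treated as a known fact, with the subsequent remark pointing to \cite[Thm.\,5.1]{Scha98} for the stronger statement that bialgebroid structures on an $R^\ev$-ring correspond bijectively to monoidal structures on $\Mod(B)$ making the forgetful functor strict monoidal. So there is nothing to compare against; you have supplied the routine verification that the paper omits. Your identification of the key technical point---that the factorisation of $\Delta$ through the Takeuchi product $B \times_R B$ is precisely what makes the diagonal action descend to the balanced tensor product $M \otimes_R N$---is exactly right, and the rest (coassociativity for the associator, the counit law for the unitors, faithfulness of the forgetful functor for coherence) is bookkeeping, as you say.
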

In fact, bialgebroid structures on an $R^\ev$-ring $\mathcal B = (B, \nabla, \eta)$ bijectively correspond to monoidal structures on the category $\Mod(B)$ such that the forgetful functor $(\Mod(B), \boxtimes_R,-) \rightarrow (\Mod(R^\ev),\otimes_R,R)$ is strict monoidal (cf. \cite[Thm.\,5.1]{Scha98}). This recovers the classical statement for bialgebras (see Proposition \ref{prop:bialg_monoidal}). We have a natural map
$$
U_B: {_\RHD B} \otimes_{R^\op} B_\lhd \longrightarrow B_\lhd \boxtimes_R {_\rhd B}, \ U_B(b \otimes b') = \Delta(b) (1_B \otimes b')
$$
which is $B^\ev$-linear since $\Delta$ is an algebra homomorphism $B \rightarrow B \times_R B$.
\end{nn}
\begin{defn}\label{def:hopflalgebroid}
A bialgebroid $\mathcal H = (H,\nabla, \eta, \Delta, \varepsilon)$ over $R$ is a \textit{$R$-Hopf algebroid} if $U_H$ is a bijection.
\end{defn}
\begin{exa}
The enveloping algebra of a $k$-algebra $A$ is not only a bialgebroid, but also a Hopf algebroid over $A$.
\end{exa}
We extent our pool of examples by showing that Hopf algebras over $k$ are $k$-Hopf algebroids.
\begin{lem}[{\cite[Prop.\,3.1.5]{Be98}}]\label{lem:boxtimes-otimes}
Let $\mathcal H = (H, \nabla, \eta, \Delta, \varepsilon,S)$ be a Hopf algebra over $k$, and let $M$ be an $H$-module. The $k$-linear map
$$
U_M: H \otimes_k M \longrightarrow H \boxtimes_k M, \ U_M(h \otimes m) = \Delta(h)(1_H \otimes m)
$$
is bijective. Further, $U_M$ is a $H$-$H$-bimodule homomorphism if $M$ is a $H$-$H$-bimodule.
\end{lem}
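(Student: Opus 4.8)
The plan is to exhibit an explicit two-sided inverse of $U_M$ built from the antipode $S$, and then to verify the claimed linearity properties by Sweedler-calculus bookkeeping. Since $H \otimes_k M$ and $H \boxtimes_k M$ share the same underlying $k$-module, I would define a $k$-linear endomorphism $V_M$ of this common module by $V_M(h \otimes m) = \sum_{(h)} h_{(1)} \otimes S(h_{(2)})m$ and argue that $V_M = U_M^{-1}$. As a preliminary step I would record that $U_M$ is left $H$-linear: the action on $H \otimes_k M$ is left multiplication in the first tensorand, while the action on $H \boxtimes_k M$ is diagonal through $\Delta$, and the identity $U_M(h'(h \otimes m)) = \Delta(h'h)(1_H \otimes m) = \Delta(h')\Delta(h)(1_H \otimes m) = h' U_M(h \otimes m)$ holds precisely because $\Delta$ is a $k$-algebra homomorphism; this is just the specialization to the $k$-case of the map $U_B$ discussed above, with the second tensorand enlarged from $H$ to an arbitrary module $M$.

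Next I would check that $V_M$ inverts $U_M$ on both sides. For $V_M \circ U_M$, applying $V_M$ to $U_M(h \otimes m) = \sum h_{(1)} \otimes h_{(2)} m$ and invoking coassociativity to rewrite $\sum (h_{(1)})_{(1)} \otimes (h_{(1)})_{(2)} \otimes h_{(2)}$ as $\sum h_{(1)} \otimes h_{(2)} \otimes h_{(3)}$ yields $\sum h_{(1)} \otimes S(h_{(2)}) h_{(3)} m$; the antipode axiom $\nabla(S \otimes \id)\Delta = \eta \varepsilon$ collapses the second and third legs and the counit axiom then reduces $\sum h_{(1)} \varepsilon(h_{(2)}) \otimes m$ to $h \otimes m$. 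The composite $U_M \circ V_M$ is treated symmetrically, now using the companion identity $\nabla(\id \otimes S)\Delta = \eta\varepsilon$. This establishes bijectivity for an arbitrary left $H$-module $M$.

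Finally, for the bimodule statement I would equip $H \otimes_k M$ and $H \boxtimes_k M$ with the right $H$-action inherited from right multiplication on the $M$-tensorand, exactly as in the general discussion of $M \boxtimes_R N$ for a bimodule $N$. Right $H$-linearity of $U_M$ then follows from $U_M((h \otimes m)h') = \sum h_{(1)} \otimes h_{(2)}(m h') = \sum h_{(1)} \otimes (h_{(2)} m) h' = U_M(h \otimes m) h'$, where the middle equality is merely the compatibility of the commuting left and right $H$-actions on $M$; combined with the left $H$-linearity already noted, $U_M$ is an $H$-$H$-bimodule homomorphism. I expect the only genuine source of error to be the Sweedler bookkeeping, and in particular keeping straight which of the two coinciding underlying modules carries which $H$-action at each stage, rather than any conceptual obstacle, since the whole argument is a direct module-theoretic transcription of the classical fact that a bialgebra is a Hopf algebra exactly when its associated Galois map is invertible.
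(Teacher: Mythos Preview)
Your proposal is correct and follows essentially the same route as the paper: the paper also defines the explicit inverse $V_M(h\otimes m)=\sum_{(h)} h_{(1)}\otimes S(h_{(2)})m$ and verifies both composites are the identity via coassociativity, the antipode axiom, and the counit axiom, with the $H$-linearity of $U_M$ deduced from $\Delta$ being an algebra homomorphism. Your treatment of the right $H$-linearity in the bimodule case is slightly more explicit than the paper's, but the argument is the same.
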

\begin{proof}
The $B$-linearity follows from the fact that $\Delta$ is a $k$-algebra homomorphism. We show that the map
$$
V_M: H \boxtimes_k M \longrightarrow H \otimes_k M, \ V_M(h \otimes m) =  \sum_{(h)}{h_{(1)} \otimes S(h_{(2)})m}
$$
is the inverse map of $U_M$. Fix $h \in H$ and $m \in M$. First of all, notice that
\begin{align*}
\Delta_S &= (H \otimes_k \nabla) \circ (H \otimes_k S \otimes_k H) \circ (\Delta \otimes_k H) \circ \Delta\\
& = (H \otimes_k \nabla)\circ(H \otimes_k S \otimes_k H)\circ(H \otimes_k \Delta)\circ \Delta && \text{(by the coassociativity)}\\
& = (H \otimes_k \eta) \circ (H \otimes_k \varepsilon) \circ \Delta && \text{(by the antipode axiom)},\\
\Delta'_S &= (H \otimes_k \nabla) \circ (H \otimes_k H \otimes_k S) \circ (\Delta \otimes_k H) \circ \Delta\\
& = (H \otimes_k \nabla)\circ(H \otimes_k H \otimes_k S)\circ(H \otimes_k \Delta)\circ \Delta && \text{(by the coassociativity)}\\
& = (H \otimes_k \eta) \circ (H \otimes_k \varepsilon) \circ \Delta && \text{(by the antipode axiom)},
\end{align*}
and hence $\Delta_S(h) = h \otimes 1_H = \Delta_S'(h)$ by the counitary axiom. It follows that
\begin{align*}
(V_M \circ U_M)(h \otimes m) & = V_M(h_{(1)} \otimes h_{(2)}m)\\
				& = h_{(1,1)} \otimes S(h_{(1,2)})h_{(2)}m\\
				& = \Delta_S(h)(1_H \otimes m)\\
				& = (h \otimes 1_H)(1_H \otimes m)\\
				& = h \otimes m ,
\end{align*}
and conversely,
\begin{align*}
(U_M \circ V_M)(h \otimes m) & = {U_M(h_{(1)} \otimes S(h_{(2)})m)}\\
				& = {h_{(1,1)} \otimes h_{(1,2)}S(h_{(2)})m}\\
				& = \Delta'_S(h)(1_H \otimes m)\\
				& = (h \otimes 1_H)(1_H \otimes m)\\
				&= h \otimes m.
\end{align*}
Thus we have finished the proof.
\end{proof}
\begin{lem}\label{lem:isohopfalgebroid}
Let $\mathcal H = (H,\nabla, \eta, \Delta, \varepsilon)$ be a Hopf algebroid over $R$ and let $M$ be a $H$-module. Then the left $H$-modules ${_\RHD H} \otimes_{R^\op} M_\lhd$ and $H_\lhd \boxtimes_R {_\rhd M}$ are isomorphic.
\end{lem}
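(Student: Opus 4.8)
The plan is to realize the map for a general module $M$ as a base change of the regular-module map along $-\otimes_H M$, thereby reducing the assertion to the defining bijectivity of a Hopf algebroid. First I would write down the natural candidate
\[
U_M \colon {_\RHD H} \otimes_{R^\op} M_\lhd \longrightarrow H_\lhd \boxtimes_R {_\rhd M}, \qquad U_M(h \otimes m) = \Delta(h)(1_H \otimes m),
\]
in exact analogy with Lemma \ref{lem:boxtimes-otimes}, and check that it is well defined over $R^\op$ and left $H$-linear. Linearity is the same computation as in the bialgebra case: since $\Delta$ is a $k$-algebra homomorphism, $U_M(h'h \otimes m) = \Delta(h')\Delta(h)(1_H \otimes m) = \Delta(h')\, U_M(h \otimes m)$, which is precisely the $\boxtimes_R$-action of $h'$ on the target.

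The key step is to identify both source and target as base changes of the corresponding expressions for the regular module $M = H$. Using associativity of the tensor product together with the identifications ${M_\lhd} = H_\lhd \otimes_H M$ and ${_\rhd M} = {_\rhd H}\otimes_H M$, I would produce natural isomorphisms
\[
({_\RHD H}\otimes_{R^\op} H_\lhd)\otimes_H M \;\cong\; {_\RHD H}\otimes_{R^\op} M_\lhd, \qquad (H_\lhd \boxtimes_R {_\rhd H})\otimes_H M \;\cong\; H_\lhd \boxtimes_R {_\rhd M}.
\]
The first is immediate from associativity of $\otimes$. The second is the delicate one: the right $H$-module structure on $H_\lhd \boxtimes_R {_\rhd H}$ is, by construction, induced by right multiplication on the second tensor factor, so the functor $-\otimes_H M$ may be absorbed into that factor, turning ${_\rhd H}\otimes_H M$ into ${_\rhd M}$ while leaving the $\boxtimes_R$-structure untouched. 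Under these identifications $U_M$ corresponds to $U_H \otimes_H \id_M$, where $U_H$ is the map $U_B$ of Definition \ref{def:hopflalgebroid} for $B = H$.

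It then remains to observe that $U_H$ is an isomorphism of $H^\ev$-modules: it is $H^\ev$-linear because $\Delta$ is an algebra homomorphism $H \to H \times_R H$, and it is bijective precisely because $\mathcal H$ is a Hopf algebroid. Applying the additive functor $-\otimes_H M$ to a right-$H$-linear isomorphism yields an isomorphism of left $H$-modules, so $U_M = U_H \otimes_H \id_M$ is the desired isomorphism. The main obstacle will be the second identification above, i.e.\ checking that $-\otimes_H M$ genuinely commutes with $\boxtimes_R$ through the stipulated right $H$-action; everything else is routine Sweedler bookkeeping. As a fallback that sidesteps this point entirely, one can note that both functors ${_\RHD H}\otimes_{R^\op}(-)_\lhd$ and $H_\lhd \boxtimes_R {_\rhd(-)}$ are right exact and preserve arbitrary direct sums, that $U$ is a natural transformation between them, and that $U_{H^{(I)}}$ is an isomorphism for every free module by additivity; a free presentation of $M$ together with the five lemma then forces $U_M$ to be an isomorphism as well.
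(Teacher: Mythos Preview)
Your proposal is correct and follows essentially the same route as the paper: reduce to the regular module via the identifications ${_\RHD H}\otimes_{R^\op} M_\lhd \cong ({_\RHD H}\otimes_{R^\op} H_\lhd)\otimes_H M$ and $H_\lhd \boxtimes_R {_\rhd M} \cong (H_\lhd \boxtimes_R {_\rhd H})\otimes_H M$, and then invoke the defining bijection $U_H$ of a Hopf algebroid. The paper's proof is a terse chain of these identifications without explicitly naming $U_M$ or checking linearity; your version is just a more careful unpacking of the same argument, and the fallback via free presentations is a valid but unnecessary alternative.
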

\begin{proof}
This is an easy but important observation. From Definition \ref{def:hopflalgebroid} it follows that ${_\RHD H} \otimes_{R^\op} H_\lhd \cong H_\lhd \boxtimes_R {_\rhd H}$; thus,
\begin{align*}
{_\RHD H} \otimes_{R^\op} M_\lhd &= {_\RHD H} \otimes_{R^\op} (H_\lhd  \otimes_ H M)\\
& = ({_\RHD H} \otimes_{R^\op} H_\lhd)  \otimes_ H M\\
& \cong ({H_\lhd} \boxtimes_{R} {_\rhd H}) \otimes_ H M = H_\lhd \boxtimes_R {_\rhd M},
\end{align*}
as required.
\end{proof}
\begin{lem}\label{lem:hopfRporjHproj}
Let $\mathcal H = (H,\nabla, \eta, \Delta, \varepsilon)$ be a Hopf algebroid over $R$ and let $M$ be a $H$-module. ${M_\lhd}$ is a projective right $R$-module if, and only if, $H_\lhd \boxtimes_R {_\rhd M}$ is a projective left $H$-module.
\end{lem}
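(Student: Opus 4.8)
The plan is to first replace the object $H_\lhd \boxtimes_R {}_\rhd M$ by a more tractable one. By Lemma \ref{lem:isohopfalgebroid} (whose proof uses precisely that $U_H$ is bijective, i.e. that $\mathcal H$ is a Hopf algebroid) there is an isomorphism of left $H$-modules
$$
H_\lhd \boxtimes_R {}_\rhd M \; \cong \; {}_\RHD H \otimes_{R^\op} M_\lhd .
$$
The right-hand side is the value at the right $R$-module $M_\lhd$ of the \emph{induction functor}
$$
\mathscr E := {}_\RHD H \otimes_{R^\op} (-) : \Mod(R^\op) \longrightarrow \Mod(H),
$$
associated with the target map $t_H \colon R^\op \to H$; here ${}_\RHD H$ is regarded as an $(H,R^\op)$-bimodule, left $H$-linear and right $R^\op$-linear via $t_H$ (recall that right $R$-modules are exactly objects of $\Mod(R^\op)$). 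Thus the lemma becomes the assertion that $M_\lhd$ is a projective right $R$-module if and only if $\mathscr E(M_\lhd)$ is a projective left $H$-module.

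One implication is immediate and formal. The functor $\mathscr E$ is additive, and it sends the rank-one free module $R^\op$ to ${}_\RHD H \otimes_{R^\op} R^\op \cong {}_\RHD H = H$, a free left $H$-module. Consequently $\mathscr E$ carries free right $R$-modules to free left $H$-modules and direct summands to direct summands; hence if $M_\lhd$ is projective — a retract of some $R^{(I)}$ — then $\mathscr E(M_\lhd)$ is a retract of $H^{(I)}$ and therefore projective. This settles the direction ``$M_\lhd$ projective $\Rightarrow H_\lhd \boxtimes_R {}_\rhd M$ projective''.

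For the reverse implication I would produce a functor $\mathscr G \colon \Mod(H) \to \Mod(R^\op)$ enjoying the two properties (i) $\mathscr G \circ \mathscr E \cong \mathrm{id}$ and (ii) $\mathscr G$ preserves projectivity; granting these, if $\mathscr E(M_\lhd)$ is $H$-projective then $M_\lhd \cong \mathscr G(\mathscr E(M_\lhd))$ is projective over $R$, which finishes the proof. In the classical Hopf algebra case $R = k$ the functor $\mathscr G$ is transparent: the counit $\varepsilon$ is then a $k$-algebra homomorphism, $k$ becomes a right $H$-module through it, and one sets $\mathscr G = k \otimes_H (-)$. Property (i) is then exactly the computation $k \otimes_H \bigl(H \otimes_k M\bigr) \cong M$, which is the specialisation of Lemma \ref{lem:boxtimes-otimes}, while property (ii) follows from $k \otimes_H H \cong k$ and additivity of $\mathscr G$. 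Both properties thus reduce, as for $\mathscr E$, to the behaviour of $\mathscr G$ on the free rank-one module.

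The hard part is the construction of $\mathscr G$ in the genuine algebroid setting, where the counit $\varepsilon$ is \emph{not} a $k$-algebra homomorphism but only obeys the weaker multiplicativity axioms of a bialgebroid. Because of this, $R$ does not come with an obvious right $H$-module structure and base change along $\varepsilon$ is not literally available; one must instead assemble the requisite $(R^\op, H)$-bimodule from the Hopf-algebroid datum, i.e. using the inverse of the Galois map $U_H$ that already underlies Lemma \ref{lem:isohopfalgebroid}. The main source of friction will be bookkeeping: the four distinct $R$-actions ${}_\rhd H$, $H_\lhd$, ${}_\RHD H$, $H_\LHD$ must be kept rigorously apart, and one has to check that the collapsing map $h \otimes m \mapsto \varepsilon(h)\cdot m$ and the insertion $m \mapsto 1_H \otimes m$ are morphisms of the correct one-sided $R$-modules and compose to the identity. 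I would deliberately avoid a restriction-of-scalars argument (retracting $M_\lhd$ off $\mathscr E(M_\lhd)$ viewed merely as an $R$-module), since that would force $H$ to be projective over $R$, a hypothesis not granted here; it is precisely the base-change functor $\mathscr G$ that circumvents this requirement.
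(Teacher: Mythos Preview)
Your first move --- replacing $H_\lhd \boxtimes_R {}_\rhd M$ by ${}_\RHD H \otimes_{R^\op} M_\lhd$ via Lemma~\ref{lem:isohopfalgebroid} --- is exactly the paper's first step. From there the routes diverge. The paper does not build a companion functor $\mathscr G$; it simply writes down the tensor--hom adjunction
\[
\Hom_H\bigl({}_\RHD H \otimes_{R^\op} M_\lhd,\,-\bigr)\;\cong\;\Hom_{R^\op}\bigl(M_\lhd,\,\Hom_H({}_\RHD H,\,-)\bigr)
\]
and declares the result immediate, the point being that ${}_\RHD H$ is free of rank one as a left $H$-module, so the inner $\Hom$ is the (exact) restriction functor along $t_H$. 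Your forward argument via ``induction sends free to free and preserves retracts'' is the same content, just phrased on objects rather than on $\Hom$-functors.

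Where you and the paper part company is the reverse implication. The adjunction yields only that $\Hom_{R^\op}(M_\lhd,\,(-)_\lhd)$ is exact as a functor on $\Mod(H)$, not that $\Hom_{R^\op}(M_\lhd,\,-)$ is exact on all of $\Mod(R^\op)$; restriction along $t_H$ is not essentially surjective on short exact sequences in general (think of $t_H:\mathbb Z\hookrightarrow\mathbb Q$). The paper does not spell out how the Hopf-algebroid datum bridges this gap. Your instinct to look for a functor $\mathscr G$ with $\mathscr G\circ\mathscr E\cong\id$ that preserves projectives is a sound way to close it, and your observation that in the Hopf-\emph{algebra} case $\mathscr G = k\otimes_H(-)$ works without any flatness or projectivity hypothesis on $H$ is correct and to the point. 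What remains genuinely open in your proposal is the construction of the right $H$-module structure on $R$ (or an appropriate substitute) in the algebroid setting; you would need to extract this from the inverse of $U_H$, and you have not done so. Note, finally, that the only direction actually invoked later in the paper (in Lemma~\ref{lem:hopfconditionsat}) is the forward one, which both you and the paper have established cleanly.
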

\begin{proof}
Since $\Hom_H({_\RHD H} \otimes_{R^\op} M_\lhd,-) \cong \Hom_{R^\op}(M_\lhd, \Hom_H({_\RHD H},-))$, this is an immediate consequence of Lemma \ref{lem:isohopfalgebroid}.
\end{proof}
Note that Lemma \ref{lem:hopfRporjHproj} is a generalization of Lemma \ref{lem:kprojproj}.
\section{A monoidal functor}\label{sec:kernel}
\begin{nn}
Let $R$ be an algebra over the commutative ring $k$, and $\mathcal B = (\nabla, \eta, \Delta, \varepsilon)$ be a bialgebroid over $R$. Consider the following full subcategories of $\Mod(B)$:
\begin{align*}
\mathsf C_\lambda(\mathcal B) &= \left\{ M \in \Mod(B) \mid \text{${_\rhd M}$ is a projective left $R$-modules} \right\}\\
\mathsf C_\varrho(\mathcal B) &= \left\{ M \in \Mod(B) \mid \text{$M_\lhd$ is a projective right $R$-modules} \right\}\\
\mathsf C(\mathcal B) &= \left\{ M \in \Mod(B) \mid \text{${_\rhd M}$ and $M_\lhd$ are projective $R$-modules} \right\}\\
&= \mathsf C_\lambda(\mathcal B) \cap \mathsf C_\varrho(\mathcal B)
\end{align*}
Clearly, $R$ belongs to all of the categories $\mathsf C_\lambda(\mathcal B)$, $\mathsf C_\varrho(\mathcal B)$ and $\mathsf C(\mathcal B)$. Note that if $A$ is a $k$-algebra, the categories $\mathsf P_\lambda(\mathcal B)$, $\mathsf P_\varrho(\mathcal B)$ and $\mathsf P(A)$ (see \ref{nn:PlPrP}) coincide with $\mathsf C_\lambda(\H_A)$, $\mathsf C_\varrho(\H_A)$ and $\C(\H_A)$ respectively, where $\H_A$ denotes the $A$-Hopf algebroid described in Example \ref{exa:bialg}(\ref{exa:bialg:2}). In particular, any statement claimed for $\mathsf C_\lambda(\mathcal B)$, $\mathsf C_\varrho(\mathcal B)$ and $\mathsf C(\mathcal B)$ will, if true, automatically also hold for $\mathsf P_\lambda(A)$, $\mathsf P_\varrho(A)$ and $\mathsf P(A)$. Despite the potential redundancy, we will repeat certain assertions for $\mathsf P_\lambda(A)$, $\mathsf P_\varrho(A)$ and $\mathsf P(A)$ individually, on the one hand in order to emphasize, but on the other hand also because of slight changes in the required assumptions. Notice that if $\mathcal B$ is a bialgebra over $k$, $\mathsf C_\lambda(\mathcal B) = \mathsf C_\varrho(\mathcal B) = \mathsf C(\mathcal B)$ recovers the category $\mathsf C_{\Proj(k)}(\mathcal B)$ defined in Example \ref{exa:bialgebras}.

Keep in mind, that if $\scrX : \C \rightarrow \D$ is an exact functor between exact categories, we denote by $\scrX^\sharp_n$ and $\scrX^\flat_n$ the induced maps $\pi_0 \scrX_n$ and $\pi_1 \scrX_n$,
$$
\Ext^n_\C(C,D) \longrightarrow \Ext^n_\D(\scrX C, \scrX D),
$$
$$
\pi_1 \mathcal Ext^n_\C(C,D) \longrightarrow \pi_1 \mathcal Ext^n_\D(\scrX C, \scrX D)
$$
(for $C,D \in \Ob \C$). The corresponding graded maps $\scrX^\sharp_\bullet$ and $\scrX^\flat_\bullet$ will simply be denoted by $\scrX^\sharp$ and $\scrX^\flat$.
\end{nn}
\begin{lem}\label{lem:candaexactmono}
\begin{enumerate}[\rm(1)]
\item\label{lem:candaexactmono:1} The categories $\mathsf C_\lambda(\mathcal B)$, $\mathsf C_\varrho(\mathcal B)$ and $\mathsf C(\mathcal B)$ are closed under arbitrary direct sums, direct summands and taking extensions in $\Mod(B)$. In particular, they are $k$-linear exact categories, and as such, they are closed under kernels of epimorphisms.
\item\label{lem:candaexactmono:2} The triples $(\C_\lambda(\cB), \boxtimes_R, R)$, $(\C_\varrho(\cB), \boxtimes_R, R)$ and $(\C(\cB), \boxtimes_R, R)$ are $($strict$)$ mono\-idal subcategories of $(\Mod(B), \boxtimes_R, R)$. Moreover, the induced monoidal structures turn $\C_\lambda(\cB)$ and $\C_\varrho(\cB)$ into strong exact monoidal categories, and $\C(\cB)$ into a very strong exact monoidal category.
\end{enumerate}
\end{lem}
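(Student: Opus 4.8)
Part (1) I would dispatch by recording the three closure properties and reading off the rest. Closure under arbitrary direct sums and direct summands is immediate from the corresponding facts for projective $R$-modules, since ${_\rhd(\bigoplus_i M_i)} \cong \bigoplus_i {_\rhd M_i}$ (and likewise for $M_\lhd$) and a summand of a projective is projective. The crucial point for closure under extensions is that the two induced $R$-module structures on a $B$-module are nothing but restrictions of scalars along the source map $\eta(-\otimes 1_R)\colon R \to B$ and the target map $\eta(1_R \otimes -)\colon R^\op \to B$; restriction of scalars is exact, so a short exact sequence $0 \to M' \to M \to M'' \to 0$ in $\Mod(B)$ with $M',M'' \in \mathsf C_\lambda(\mathcal B)$ restricts to a short exact sequence $0 \to {_\rhd M'} \to {_\rhd M} \to {_\rhd M''} \to 0$ of left $R$-modules, which \emph{splits} because ${_\rhd M''}$ is projective; hence ${_\rhd M}$ is projective and $M \in \mathsf C_\lambda(\mathcal B)$. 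By Remark \ref{rem:exactsubcat}(2) this makes each of the three subcategories a $k$-linear exact category. For ``closed under kernels of epimorphisms'' in the sense of Definition \ref{def:closedker} I use the same splitting: if $f\colon M \to N$ is an epimorphism in $\Mod(B)$ between objects of $\mathsf C_\lambda(\mathcal B)$, then ${_\rhd M} \to {_\rhd N}$ splits, so ${_\rhd \Ker(f)}$ is a summand of the projective module ${_\rhd M}$, whence $\Ker(f) \in \mathsf C_\lambda(\mathcal B)$ and $f$ is admissible. The arguments for $\mathsf C_\varrho(\mathcal B)$ and $\mathsf C(\mathcal B)$ are identical (the latter being the intersection).

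For Part (2) the plan is to reduce everything to Corollary \ref{cor:tensorexact}: since Part (1) already yields hypothesis $(\ref{cor:tensorexact:1})$, it suffices to check that $\boxtimes_R$ genuinely restricts to each subcategory (so we have a tensor $k$-category) and that every object is flat, coflat, or both. The engine is the strict monoidality of the forgetful functor $(\Mod(B), \boxtimes_R, R) \to (\Mod(R^\ev), \otimes_R, R)$, which identifies the underlying $R$-$R$-bimodule of $M \boxtimes_R N$ with the balanced tensor product $M_\lhd \otimes_R {_\rhd N}$, carrying its left $R$-structure from ${_\rhd M}$ and its right $R$-structure from $N_\lhd$. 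To prove closure of $\mathsf C_\lambda(\mathcal B)$ under $\boxtimes_R$, I write ${_\rhd N} \oplus N' \cong R^{(J)}$ for the projective module ${_\rhd N}$; the additive functor $M_\lhd \otimes_R -$, which produces the $\rhd$-action of $M \boxtimes_R N$ on the $M$-factor, then exhibits ${_\rhd(M \boxtimes_R N)}$ as a direct summand of $M_\lhd \otimes_R R^{(J)} \cong (M_\lhd \otimes_R R)^{(J)}$. The canonical isomorphism $M_\lhd \otimes_R {}_R R \cong M$, $m \otimes r \mapsto m \lhd r$, is an isomorphism of $R$-$R$-bimodules precisely because the source and target actions commute, so its left $R$-module is ${_\rhd M}$, which is projective; hence $(M_\lhd \otimes_R R)^{(J)}$ has projective $\rhd$-structure, and so does its summand ${_\rhd(M \boxtimes_R N)}$. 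The symmetric computation (using $M_\lhd$ projective and $R \otimes_R {_\rhd N} \cong N$) handles $\mathsf C_\varrho(\mathcal B)$, and $\mathsf C(\mathcal B)$ follows by intersection. Since the counit axioms show that the two induced $R$-structures on the unit $R$ are the regular (free) ones, $R$ lies in all three categories, so each triple is a monoidal subcategory, and $k$-bilinearity of $\boxtimes_R$ on morphisms makes it a tensor $k$-category.

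Flatness then comes almost for free from the same splitting. Given an admissible short exact sequence $0 \to N' \to N \to N'' \to 0$ in $\mathsf C_\lambda(\mathcal B)$, its restriction $0 \to {_\rhd N'} \to {_\rhd N} \to {_\rhd N''} \to 0$ splits as ${_\rhd N''}$ is projective, so the additive functor $M_\lhd \otimes_R -$ preserves its exactness; hence $M \boxtimes_R -$ is exact and every object of $\mathsf C_\lambda(\mathcal B)$ is flat in the sense of Definition \ref{def:flatcoflat}. Dually, every object of $\mathsf C_\varrho(\mathcal B)$ is coflat, and every object of $\mathsf C(\mathcal B)$ is both. Feeding this into Corollary \ref{cor:tensorexact}, the implications $(\ref{cor:tensorexact:1}) + (\ref{cor:tensorexact:3}) \Longrightarrow (\ref{cor:tensorexact:5})$ and $(\ref{cor:tensorexact:1}) + (\ref{cor:tensorexact:3*}) \Longrightarrow (\ref{cor:tensorexact:5})$ give that $\mathsf C_\lambda(\mathcal B)$ and $\mathsf C_\varrho(\mathcal B)$ are strong exact monoidal categories, while $(\ref{cor:tensorexact:1}) + (\ref{cor:tensorexact:3}) + (\ref{cor:tensorexact:3*}) \Longrightarrow (\ref{cor:tensorexact:4})$ gives that $\mathsf C(\mathcal B)$ is very strong exact monoidal.

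The one step demanding genuine care is the closure of $\mathsf C_\lambda(\mathcal B)$ (and $\mathsf C_\varrho(\mathcal B)$) under $\boxtimes_R$: one must correctly identify the underlying $R$-module of $M \boxtimes_R N$ and keep the four induced $R$-actions strictly apart, exploiting both that the balancing is performed along $M_\lhd$ and ${_\rhd N}$ and that the surviving $\rhd$-action commutes with the $\lhd$-action used in the balancing. Once this bookkeeping is in place, every remaining assertion is a formal consequence of the splitting afforded by projectivity together with Corollary \ref{cor:tensorexact}, so I expect no further difficulty.
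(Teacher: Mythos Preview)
Your proof is correct and follows essentially the same strategy as the paper: Part (1) is argued identically via the splitting afforded by projectivity of the quotient, and Part (2) is reduced to Corollary \ref{cor:tensorexact} after verifying closure under $\boxtimes_R$ and flatness/coflatness. The only notable difference is in the verification that $\mathsf C_\lambda(\mathcal B)$ (etc.) is closed under $\boxtimes_R$: the paper uses the tensor--Hom adjunction
\[
\Hom_R({_\rhd(M\boxtimes_R N)},-)\cong \Hom_R({_\rhd N},\Hom_R({_\rhd M},-))
\]
to see directly that ${_\rhd(M\boxtimes_R N)}$ is projective, whereas you realise ${_\rhd(M\boxtimes_R N)}$ as a left $R$-module summand of $(M_\lhd\otimes_R R)^{(J)}\cong ({_\rhd M})^{(J)}$. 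Both arguments use the same hypotheses (projectivity of ${_\rhd M}$ and ${_\rhd N}$) and your careful tracking of the commuting $\rhd/\lhd$ actions is exactly what is needed; the adjunction route is slightly slicker but yours is equally valid.
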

\begin{proof}
Arbitrary sums and summands of projectives are projective. If
$$
\xi \quad \equiv \quad 0 \longrightarrow N \longrightarrow X \longrightarrow M \longrightarrow 0
$$
is an exact sequence in $\Mod(B)$ whose outer terms belong to $\C_\lambda(\cB)$, then ${_\rhd \xi}$ defines a spilt exact sequences of left $R$-modules. Hence $X$ belongs to $\C_\lambda(\cB)$, and $\C_\lambda(\cB)$ is extension closed. Finally, if $f:M \rightarrow N$ is an epimorphism in $\Mod(B)$ with $M$ and $N$ belonging to $\C_\lambda(\cB)$, the $R$-module homomorphism ${_\rhd f}$ splits, and therefore ${_\rhd \Ker(f)}$ is a projective $R$-modules, i.e., $\C_\lambda(\cB)$ is closed under kernels of epimorphisms. Completely analogous arguments apply to $\mathsf C_\varrho(\mathcal B)$. The claimed properties of $\mathsf C(\mathcal B)$ now follow from the fact that it coincides with $\mathsf C_\lambda(\mathcal B) \cap \mathsf C_\varrho(\mathcal B)$; therefore (\ref{lem:candaexactmono:1}) is established.

Since proven similarly for $\mathsf C_\lambda(\mathcal B)$ and $\mathsf C_\varrho(\mathcal B)$, we only prove (\ref{lem:candaexactmono:2}) for the category $\mathsf C(\mathcal B)$. Let $M$ and $N$ be $B$-modules belonging to $\C(\cB)$. We have to convince ourselves that both ${_\rhd X}$ and ${X_\lhd}$ are $R$-projective, where $X := M \boxtimes_R N$. But this immediately follows from the following isomorphisms of functors:
\begin{align*}
\Hom_R({_\rhd X}, -) &\cong \Hom_R({_\rhd N}, \Hom_R({_\rhd M},-)),\\
\Hom_{R^\op}({X_\lhd}, -) &\cong \Hom_{R^\op}({M_\lhd}, \Hom_{R^\op}({N_\lhd},-)).
\end{align*}
Since $R$ automatically is contained in $\C(\cB)$, the triple $(\C(\cB), \boxtimes_R, R)$ is indeed a monoidal subcategory of $(\Mod(B), \boxtimes_R, R)$. To finish the proof, use Corollary \ref{cor:tensorexact} and recall that projective modules are flat.
\end{proof}
\begin{nn}\label{nn:requirements}
From now on, we will assume that the base algebra $R$ and the $R$-bialgebroid $\mathcal B$ satisfy the following conditions.
\begin{enumerate}[\rm(i)]
\item\label{item:requirements:1} The $k$-algebra $R$ is a projective $k$-module.
\item\label{item:requirements:2} The $R$-modules ${_\rhd B}$ and $B_\lhd$ are projective, i.e., $B$ belongs to $\mathsf C_\lambda(\mathcal B)$, $\mathsf C_\varrho(\mathcal B)$ and $\mathsf C(\cB)$.
\end{enumerate}
Requirements (\ref{item:requirements:1})--(\ref{item:requirements:2}) imply that $B$ is a projective $k$-module. Recall that $B$ being $k$-projective means that $\Proj(B^\ev)$ belongs to $\mathsf P(B)$.
\end{nn}
\begin{exa}
Let $A$ be a $k$-algebra which is projective over $k$. Then the $A$-bialgebroid $\H_A$ associated to the enveloping algebra $A^\ev$ of $A$ satisfies \ref{nn:requirements}(\ref{item:requirements:1})--(\ref{item:requirements:2}). We will encounter further examples later on.
\end{exa}
\begin{defn}
Let $M$ be a $B$-module. The graded $k$-module $\OH^\bullet(\mathcal B,M) = \Ext^\bullet_{B}(R,M)$ is the \textit{graded cohomology module of the bialgebroid $\mathcal B$ with coefficients in $M$}. The graded algebra $\OH^\bullet(\mathcal B,R)$ is the \textit{cohomology ring of $\mathcal B$}.
\end{defn}
\begin{nn}
Let $i_\lambda := i_{\C_\lambda(\cB)}$, $i_\varrho := i_{\C_\varrho(\cB)}$ and $i := i_{\C(\cB)}$ be the inclusion functors $\C_\lambda(\cB) \rightarrow \Mod(B)$, $\C_\varrho(\cB) \rightarrow \Mod(B)$ and $\C(\cB) \rightarrow \Mod(B)$ respectively. We get induced graded $k$-algebra homomorphisms:
\begin{align*}
i^\sharp_\lambda:& \Ext^\bullet_{\C_\lambda(\cB)}(R,R) \longrightarrow \Ext^\bullet_{\cB}(R,R) = \OH^\bullet(\cB,R)\, ,\\
i^\sharp_\varrho:& \Ext^\bullet_{\C_\varrho(\cB)}(R,R) \longrightarrow \Ext^\bullet_{\cB}(R,R) = \OH^\bullet(\cB,R)\, ,\\
i^\sharp:& \Ext^\bullet_{\C(\cB)}(R,R) \longrightarrow \Ext^\bullet_{\cB}(R,R) = \OH^\bullet(\cB,R) \, .
\end{align*}
They are isomorphisms due to the following lemma.
\end{nn}
\begin{lem}\label{lem:iso_ext_algeb}
The exact categories $\C_\lambda(\cB)$, $\C_\varrho(\cB)$ and $\C(\cB)$ are entirely extension closed subcategories of $\Mod(B)$.
\end{lem}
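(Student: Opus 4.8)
The plan is to obtain all three assertions at once as instances of Corollary \ref{cor:exhproj}, applied to the abelian $k$-category $\A = \Mod(B)$ together with $\C$ taken to be each of $\C_\lambda(\cB)$, $\C_\varrho(\cB)$ and $\C(\cB)$ in turn. Since $\Mod(B)$ is a module category, it certainly has enough projective objects, so the ambient hypothesis of Corollary \ref{cor:exhproj} is met, and it remains to verify its three conditions for each of the three subcategories. Because the verification is entirely uniform across the three cases, I would phrase the argument once for a generic $\C \in \{\C_\lambda(\cB), \C_\varrho(\cB), \C(\cB)\}$.

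First I would dispose of conditions (1) and (\ref{cor:exhproj:2}) of Corollary \ref{cor:exhproj}, both of which are already packaged in Lemma \ref{lem:candaexactmono}. By part (\ref{lem:candaexactmono:1}) of that lemma each of the three categories is an extension closed (hence exact) subcategory of $\Mod(B)$; by the Remark following Definition \ref{def:nextclosed}, being an exact subcategory is precisely the statement that $\C$ is $1$-extension closed, giving condition (1). The same part of Lemma \ref{lem:candaexactmono} records that $\C$ is closed under kernels of epimorphisms, which is exactly the sufficient criterion noted parenthetically in Corollary \ref{cor:exhproj} for condition (\ref{cor:exhproj:2}): for any $A \in \Ob\C$ and any epimorphism $f\colon P \twoheadrightarrow A$ with $P$ projective, $\Ker(f)$ again lies in $\C$.

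The only condition requiring the standing hypotheses \ref{nn:requirements} is $\Proj(\Mod(B)) \subseteq \C$, and this is where I expect the crux (mild as it is) to lie. Assumption \ref{nn:requirements}(\ref{item:requirements:2}) says exactly that $B$ itself belongs to $\mathsf C_\lambda(\mathcal B)$, $\mathsf C_\varrho(\mathcal B)$ and $\mathsf C(\mathcal B)$; since Lemma \ref{lem:candaexactmono}(\ref{lem:candaexactmono:1}) also guarantees closure under arbitrary direct sums and under direct summands, every free module $B^{(I)}$ lies in $\C$, and therefore so does every projective $B$-module, being a direct summand of some $B^{(I)}$. This yields $\Proj(\Mod(B)) = \Proj(B) \subseteq \C$.

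With all hypotheses in hand, Corollary \ref{cor:exhproj} applies and shows that each of $\C_\lambda(\cB)$, $\C_\varrho(\cB)$ and $\C(\cB)$ is entirely extension closed in $\Mod(B)$. In particular the induced comparison maps $i^\sharp_\lambda$, $i^\sharp_\varrho$ and $i^\sharp$ are isomorphisms in every degree, which is the conclusion feeding the preceding paragraph's identification of the cohomology rings. I do not anticipate any genuine obstacle; the argument is a bookkeeping assembly of Lemma \ref{lem:candaexactmono} and the requirements \ref{nn:requirements}, and the one point deserving explicit mention is that the projective-containment hypothesis of Corollary \ref{cor:exhproj} is precisely what the assumption $B \in \mathsf C(\cB)$ was introduced to secure.
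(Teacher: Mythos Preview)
Your proposal is correct and follows essentially the same route as the paper's proof: invoke Corollary~\ref{cor:exhproj}, get $1$-extension closedness and closure under kernels of epimorphisms from Lemma~\ref{lem:candaexactmono}(\ref{lem:candaexactmono:1}), and obtain $\Proj(B)\subseteq\C$ from assumption~\ref{nn:requirements}(\ref{item:requirements:2}) together with closure under direct sums and summands. The paper's argument is identical in substance, only more tersely stated.
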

\begin{proof}
We use Corollary \ref{cor:exhproj}. Thanks to Lemma \ref{lem:candaexactmono}(\ref{lem:candaexactmono:1}) we know that $\C_\lambda(\cB)$, $\C_\varrho(\cB)$ and $\C(\cB)$ are 1-extension closed and closed under kernels of epimorphisms. It remains to show that $\Proj(B)$ is contained in every of those categories. But this is clear, because $B$ does by assumption \ref{nn:requirements}(\ref{item:requirements:2}), and because $\Proj(B) = \Add(B)$ and $\C_\lambda(\cB)$, $\C_\varrho(\cB)$, $\C(\cB)$ are closed under taking direct sums and direct summands.
\end{proof}
\begin{cor}
Let $A$ be a $k$-algebra which is projective over $k$. The exact categories $\P_\lambda(A)$, $\P_\varrho(A)$ and $\P(A)$ are entirely extension closed subcategories of $\Mod(A^\ev)$. \qed
\end{cor}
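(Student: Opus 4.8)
The plan is to obtain the statement as a direct specialization of Lemma \ref{lem:iso_ext_algeb} to the $A$-Hopf algebroid $\H_A = (A^\ev,\nabla,\eta,\Delta,\varepsilon)$ attached to the enveloping algebra in Example \ref{exa:bialg}(\ref{exa:bialg:2}), taking $R = A$ and $B = \H_A^\natural = A^\ev$. As recorded in the opening remarks of Section \ref{sec:kernel}, for any $k$-algebra $A$ one has the identifications $\P_\lambda(A) = \C_\lambda(\H_A)$, $\P_\varrho(A) = \C_\varrho(\H_A)$ and $\P(A) = \C(\H_A)$ of full subcategories of $\Mod(A^\ev) = \Mod(B)$. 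Hence it suffices to check that Lemma \ref{lem:iso_ext_algeb} is applicable with $\cB = \H_A$, i.e. that the standing requirements \ref{nn:requirements}(\ref{item:requirements:1})--(\ref{item:requirements:2}) hold for the pair $(A,\H_A)$.

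Requirement \ref{nn:requirements}(\ref{item:requirements:1}) is precisely the hypothesis that $A$ is $k$-projective. For \ref{nn:requirements}(\ref{item:requirements:2}) I would unwind the two one-sided $A$-module structures ${_\rhd B}$ and $B_\lhd$ for $\H_A$. Since the unit map of $\H_A$ is $\eta = \id_{A^\ev}$, the source and target maps are $s_A(a) = a \otimes 1_A$ and $t_A(a) = 1_A \otimes a$; thus ${_\rhd(A^\ev)}$ is $A \otimes_k A^\op$ with $A$ acting through the first tensor factor, and $(A^\ev)_\lhd$ acts through the second. As $A^\op \cong A$ is $k$-projective, $A \otimes_k A^\op$ is then a projective module on either side, so $B \in \C_\lambda(\H_A) \cap \C_\varrho(\H_A) = \C(\H_A)$. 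This is exactly the assertion recorded in the (unlabelled) example immediately following \ref{nn:requirements}, and it rests on the same computation as Lemma \ref{lem:kprojproj}.

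With both requirements verified, Lemma \ref{lem:iso_ext_algeb} applies verbatim to $\cB = \H_A$ and yields that $\C_\lambda(\H_A)$, $\C_\varrho(\H_A)$ and $\C(\H_A)$ are entirely extension closed subcategories of $\Mod(A^\ev)$; translating back through the identifications above gives the claim for $\P_\lambda(A)$, $\P_\varrho(A)$ and $\P(A)$. I do not expect any real obstacle: the mathematical content is entirely carried by Lemma \ref{lem:iso_ext_algeb} (which itself draws on Corollary \ref{cor:exhproj} and Lemma \ref{lem:candaexactmono}), and this corollary is a pure specialization. The only point demanding a moment's care is the bookkeeping of the one-sided $R$-module structures on $B$ when confirming \ref{nn:requirements}(\ref{item:requirements:2}), which is why the explicit form of $s_A$ and $t_A$ for $\H_A$ should be written out.
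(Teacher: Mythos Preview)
Your proposal is correct and follows exactly the route the paper intends: the corollary carries a \qed because it is the direct specialization of Lemma~\ref{lem:iso_ext_algeb} to $\cB = \H_A$, using the identifications $\P_\lambda(A) = \C_\lambda(\H_A)$ etc.\ recorded at the start of Section~\ref{sec:kernel} and the verification of \ref{nn:requirements}(\ref{item:requirements:1})--(\ref{item:requirements:2}) for $\H_A$ noted in the example immediately following \ref{nn:requirements}.
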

\begin{nn}
By fixing the second argument, the bifunctors $- \boxtimes_R -$ and $- \otimes_B -$ give rise to $k$-linear functors between the categories $\Mod(B)$ and $\Mod(B^\ev)$:
\begin{align*}
\scrL_\cB : \Mod(B) \longrightarrow \Mod(B^\ev),& \ \scrL_\cB M = M \boxtimes_R B,\\
\scrL^\cB : \Mod(B^\ev) \longrightarrow \Mod(B),& \ \scrL^\cB N = N \otimes_B R .
\end{align*}
We are interested in to what extend these functors respect the rich structures that the categories they are defined on and into possess.
\end{nn}
\begin{lem}\label{lem:func_prop}
The $k$-linear functors $\L_\cB$ and $\mathscr L^\cB$ have the following properties.
\begin{enumerate}[\rm(1)]
\item\label{lem:func_prop:1} If $0 \rightarrow N \rightarrow E \rightarrow M \rightarrow 0$ is an exact sequence in $\Mod(B)$ which, as a sequence of right $R$-modules, splits, then the induced sequence $0 \rightarrow \scrL_\cB N \rightarrow \scrL_\cB E \rightarrow \scrL_\cB M \rightarrow 0$ is exact in $\Mod(B^\ev)$.
\item\label{lem:func_prop:2} If $M$ is a $B$-module sucht that $M_\lhd$ is $R$-projective $($that is, if $M$ belongs to $\C_\varrho(\cB))$, then $\scrL_\cB M$ is a $B^\ev$-module which is projective when considered as a right $B$-module $($that is, it belongs to $\mathsf P_\varrho(B))$.
\item\label{lem:func_prop:3} If $0 \rightarrow N \rightarrow E \rightarrow M \rightarrow 0$ is an exact sequence in $\Mod(B^\ev)$ which, as a sequence of right $B$-modules, splits, then the induced sequence $0 \rightarrow \scrL^\cB N \rightarrow \scrL^\cB E \rightarrow \scrL^\cB M \rightarrow 0$ is exact in $\Mod(B)$.
\item\label{lem:func_prop:5} The functors $\scrL^\cB \scrL_\cB$ and $\Id_{\Mod(B)}$ are equivalent.
\end{enumerate}
\end{lem}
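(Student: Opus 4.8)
The plan is to treat the four claims one at a time, using throughout that both $\scrL_\cB(-) = - \boxtimes_R B$ and $\scrL^\cB(-) = - \otimes_B R$ reduce on underlying modules to ordinary relative tensor products, so that exactness of a sequence of $B$- or $B^\ev$-modules may be tested after forgetting the module structure. For the first assertion I would begin by recording that, as a $k$-module, $\scrL_\cB M = M_\lhd \otimes_R {_\rhd B}$, the tensor product being taken over the right $R$-module structure of $M$. A short exact sequence in $\Mod(B)$ that splits as a sequence of right $R$-modules is carried by the additive functor $(-)_\lhd \otimes_R {_\rhd B}$ to a split, hence exact, sequence of $k$-modules; it then remains to check that the three resulting maps are $B^\ev$-linear -- they are left $B$-linear because the original maps intertwine the $\Delta$-twisted action and right $B$-linear because they act as the identity on the tensor factor $B$ -- and to invoke that exactness of a sequence of $B^\ev$-modules is detected on underlying $k$-modules. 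The third assertion is proved by the mirror-image argument, with $\scrL_\cB$, $\otimes_R$ and ``split over $R$'' replaced by $\scrL^\cB$, $\otimes_B$ and ``split over $B$'', using that $R$ is a left $B$-module via $br = \varepsilon(b \LHD r)$.

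For the second assertion I would use that, as a right $B$-module, $\scrL_\cB M = M_\lhd \otimes_R {_\rhd B}$, where $B$ carries its regular right $B$-module structure. If $M_\lhd$ is projective it is a direct summand of a free right $R$-module $R^{(I)}$, and applying the additive functor $(-) \otimes_R {_\rhd B}$ displays $\scrL_\cB M$ as a direct summand of $R^{(I)} \otimes_R {_\rhd B} \cong ({_\rhd B})^{(I)}$, which is free as a right $B$-module. Hence $\scrL_\cB M$ is right $B$-projective, i.e.\ it lies in $\mathsf P_\varrho(B)$.

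The substance of the lemma lies in the last assertion, the natural equivalence $\scrL^\cB \scrL_\cB \cong \Id_{\Mod(B)}$. Here I would exhibit the natural transformation $\iota_M \colon M \to (M \boxtimes_R B) \otimes_B R$, $m \mapsto (m \otimes 1_B) \otimes 1_R$, together with the candidate inverse $(m \otimes b) \otimes r \mapsto m \lhd \varepsilon(b \LHD r)$. The latter is essentially forced, since the balancing relation of $\otimes_B$ rewrites every generator as $(m \otimes b) \otimes r = (m \otimes 1_B) \otimes \varepsilon(b \LHD r)$, so any inverse must read off the trailing $R$-entry through the right $R$-action on $M$. The work is then to confirm that this second map is well defined on both relative tensor products, that $\iota_M$ and it are $B$-linear, and that they are mutually inverse; these checks draw on the counit axiom \textup{(CR2)} of the $R$-coring (to collapse $b_{(1)} \lhd \varepsilon(b_{(2)})$ to $b$) and on the compatibility conditions imposed on $\varepsilon$ (in particular $\varepsilon(1_B) = 1_R$ and the formula $\varepsilon(bb') = \varepsilon(b \eta(\varepsilon(b') \otimes 1_R))$), after which naturality in $M$ is immediate from the explicit formulas. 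I expect this well-definedness-and-linearity bookkeeping to be the principal obstacle, as it is the only point where the full bialgebroid structure, rather than additivity and exactness alone, genuinely intervenes.
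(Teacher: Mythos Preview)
Your proposal is correct and follows essentially the same route as the paper. The only cosmetic differences are that for part (2) the paper uses the tensor--Hom adjunction $\Hom_{B^\op}(M_\lhd \otimes_R {_\rhd B},-) \cong \Hom_{R^\op}(M_\lhd,\Hom_{B^\op}(B,-))$ instead of your direct-summand-of-free argument, and for part (4) the paper packages your inverse map as the composite $(M \boxtimes_R B)\otimes_B R \cong M \boxtimes_R (B \otimes_B R) \cong M$, citing Cartan--Eilenberg for the associativity step, rather than writing out and verifying both directions by hand; the resulting formula $\theta_M((m\otimes b)\otimes r)=\eta(1_R\otimes\varepsilon(b)r)m$ agrees with yours.
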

\begin{proof}
The additivity of $\mathscr L_\cB$ is responsible for (\ref{lem:func_prop:1}) holding true. The module $\scrL_\cB M$ is $B$-projective on the right since
$$
\Hom_{B^\op}(\scrL_\cB M,-) \cong \Hom_{B^\op}(M_\lhd \otimes_R {_\rhd B},-) \cong \Hom_{R^\op}(M_\lhd, \Hom_{B^\op}(B,-));
$$
hence (\ref{lem:func_prop:2}) follows. Assertion (\ref{lem:func_prop:3}) is again by the additivity of $\mathscr L^\cB$. The $k$-linear isomorphism
$$
\theta_M: (M \boxtimes_R B) \otimes_B R \longrightarrow M \boxtimes_R (B \otimes_B R) \longrightarrow M, \ \theta_M((m \otimes b) \otimes r) = \eta(1_R \otimes \varepsilon(b)r)m
$$
is natural in the $B$-module $M$ (cf. \cite[Prop.\,IX.2.1]{CaEi56}) and easily checked to be $B$-linear. Thus, also (\ref{lem:func_prop:5}) is established.
\end{proof}
\begin{prop}\label{prop:functor_restriction}
The $k$-linear functor $\scrL_\cB$ is a strong monoidal functor between the tensor categories $(\Mod(B), \boxtimes_R, R)$ and $(\Mod(B^\ev), \otimes_B, B)$. Furthermore, the functors $\scrL_\cB$ and $\scrL^\cB$ restrict to exact functors
$$
\xymatrix@C=32pt@R=10pt{
(\C_\varrho(\cB), \boxtimes_R, R) \ar[r]^-{\scrL_\cB} &  (\P_\varrho(B), \otimes_B, B) \, , \\ (\Mod(B), \boxtimes_R, R)  & (\P_\varrho(B), \otimes_B, B) \ar[l]_-{\scrL^\cB} \, .}
$$
In particular, there is the diagram
$$
\xymatrix@C=32pt@R=30pt{
\C_\varrho(\cB) \ar[r]^-{\scrL_\cB} \ar[d]|=_-{\subseteq}^-{i_\varrho} &  \P_\varrho(B) \ar@<-1ex>@{-->}[r]_-{B \otimes_k(-)} \ar[d]^-\subseteq & \P(B) \ar@<1ex>[r]^-{\subseteq} \ar@<-1ex>[l]_-{\subseteq} \ar[d]|{\subseteq} & \P_\varrho(B) \ar@<1ex>@{-->}[l]^-{(-) \otimes_k B} \ar[r]^-{\scrL^\cB} \ar[d]_-{\subseteq} & \Mod(B) \ar@{=}[d] \, \, \\
\Mod(B) \ar[r]^-{\scrL_\cB} & \Mod(B^\ev) \ar@{=}[r] & \Mod(B^\ev) \ar@{=}[r] & \Mod(B^\ev) \ar[r]^-{\scrL^\cB} & \Mod(B) \, ,
}
$$
wherein the squares given by the solid arrows commute.
\end{prop}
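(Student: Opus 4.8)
The plan is to prove the three assertions in turn: strong monoidality of $\scrL_\cB$, exactness of the two restricted functors, and commutativity of the solid squares. For the strong monoidal structure I would first exhibit the structure morphisms. For $B$-modules $M,N$ I set
$$\phi_{M,N} \colon (M \boxtimes_R B) \otimes_B (N \boxtimes_R B) \longrightarrow (M \boxtimes_R N) \boxtimes_R B, \quad (m \otimes b) \otimes (n \otimes b') \longmapsto (m \otimes b_{(1)} n) \otimes b_{(2)} b',$$
and I take the unit morphism $\phi_0 \colon B \to R \boxtimes_R B = \scrL_\cB R$ to be the inverse of the left unit isomorphism $\lambda_B$ of $(\Mod(B), \boxtimes_R, R)$. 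First I would check that $\phi_{M,N}$ is well defined on the tensor product over $B$, which uses that $\Delta$ is a $k$-algebra homomorphism, so $\Delta(bc) = \Delta(b)\Delta(c)$. Next I verify that $\phi_{M,N}$ is a morphism in $\Mod(B^\ev)$: right $B$-linearity is immediate, whereas left $B$-linearity is precisely the coassociativity of $\Delta$. An explicit two-sided inverse is given by $(m \otimes n) \otimes b \mapsto (m \otimes 1_B) \otimes (n \otimes b)$, where one uses $\Delta(1_B) = 1_B \otimes 1_B$; this makes $\phi$ invertible, and $\phi_0$ is invertible by construction, so $(\scrL_\cB, \phi, \phi_0)$ will be strong once the coherence axioms of Definition \ref{def:monoidalfunc} are confirmed.

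For the exactness statements I would invoke Lemma \ref{lem:func_prop}. Given an admissible short exact sequence $0 \to N \to E \to M \to 0$ in $\C_\varrho(\cB)$ — that is, a sequence exact in $\Mod(B)$ with all terms in $\C_\varrho(\cB)$ — the projectivity of $M_\lhd$ forces the underlying sequence of right $R$-modules to split, so Lemma \ref{lem:func_prop}(\ref{lem:func_prop:1}) makes $0 \to \scrL_\cB N \to \scrL_\cB E \to \scrL_\cB M \to 0$ exact in $\Mod(B^\ev)$; by Lemma \ref{lem:func_prop}(\ref{lem:func_prop:2}) its terms lie in $\P_\varrho(B)$, so it is a conflation there, and $\scrL_\cB$ lands in and is exact into $\P_\varrho(B)$. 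Dually, for an admissible sequence in $\P_\varrho(B)$ the right $B$-projectivity of the quotient splits it as right $B$-modules, so Lemma \ref{lem:func_prop}(\ref{lem:func_prop:3}) shows $\scrL^\cB$ carries it to an exact sequence in the abelian category $\Mod(B)$; hence $\scrL^\cB$ is exact. Throughout I use that $\P_\varrho(B)$ is an exact category closed under kernels of epimorphisms whose conflations are exactly the $\Mod(B^\ev)$-exact sequences with terms in $\P_\varrho(B)$, which follows from Lemma \ref{lem:candaexactmono} applied to $B^\ev$ regarded as a bialgebroid over $B$.

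Finally, the solid squares commute essentially by definition: on the left, both composites $\C_\varrho(\cB) \to \Mod(B^\ev)$ equal $M \mapsto M \boxtimes_R B$; on the right, both composites $\P_\varrho(B) \to \Mod(B)$ equal $N \mapsto N \otimes_B R$; and the intermediate solid squares are built from the various inclusions into $\Mod(B^\ev)$ composed with identities, so they commute for trivial reasons. I expect the genuine obstacle to be the verification of the coherence (associativity hexagon and unit triangle) diagrams for $(\scrL_\cB, \phi, \phi_0)$: these require carefully matching the associators and unitors of $(\Mod(B), \boxtimes_R, R)$ and $(\Mod(B^\ev), \otimes_B, B)$ through the Sweedler-type formula for $\phi$, and although each is a routine diagram chase, keeping track of the two distinct $R$-module structures on $B$ and of the interaction of $\Delta$ with the balancing over $B$ is where the bookkeeping is most delicate.
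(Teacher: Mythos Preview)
Your proposal is correct and follows essentially the same approach as the paper. The only cosmetic difference is that the paper writes down the inverse map $(m\otimes n)\otimes b\mapsto (m\otimes 1_B)\otimes(n\otimes b)$ first and checks $B^\ev$-linearity for that (also via coassociativity), whereas you start from the Sweedler-type formula for $\phi_{M,N}$ and then name the inverse; the exactness of the restricted functors and the commutativity of the solid squares are handled identically in both, by appeal to Lemma~\ref{lem:func_prop}.
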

\begin{proof}
First of all, $\mathscr L_\cB R \cong B$ holds true for trivial reasons. Let $M$ and $N$ be $B$-modules; there is a well-defined and natural (in both arguments) $k$-linear isomorphism
\begin{equation*}\label{eq:careil_iso}
\begin{aligned}
(M \boxtimes_R N) \boxtimes_R B &\cong \big(M \boxtimes_R (B \otimes_B N)\big) \boxtimes_R B\\
&\cong (M \boxtimes_R B) \otimes_B (N \boxtimes_R B)
\end{aligned}
\end{equation*}
which is given by $(m \otimes n) \otimes b \mapsto (m \otimes 1_B) \otimes (n \otimes b)$ (cf. \cite[Ch.\,IX]{CaEi56}). It is also a homomorphism of $B^\ev$-modules. In fact, if we fix $b,b',b'' \in B$, $m \in M$ and $n \in N$, the element
\begin{align*}
(b' \otimes b'')((m \otimes n) \otimes b) &= \Delta(b')\cdot\big((m \otimes n) \otimes bb''\big)\\
&= \big( \Delta(b'_{(1)})\cdot(m \otimes n)\big) \otimes b'_{(2)}bb''\\
&= \big(b'_{(1,1)}m \otimes b'_{(1,2)}n\big) \otimes b'_{(2)}hb\\
&= \big((\Delta \otimes_R B) \circ \Delta(b')\big) \big((m \otimes n) \otimes bb''\big)\\
&= \big((B \otimes_R \Delta) \circ \Delta(b')\big) \big((m \otimes n) \otimes bb''\big)\\
&= (b'_{(1)}m \otimes b'_{(2,1)}n) \otimes b'_{(2,2)}bb''
\end{align*}
in $(M \boxtimes_R N) \boxtimes_R B$ is being mapped to
\begin{align*}
(b'_{(1)}m \otimes 1_B) \otimes (b'_{(2,1)}n \otimes b'_{(2,2)}bb'') &= (b'_{(1)}m \otimes 1_B) \otimes \big(\Delta(b'_{(2)}) \cdot (n \otimes bb'')\big)\\
&= (b'_{(1)}m \otimes 1_B) \otimes b'_{(2)}(n \otimes bb'')\\
&= (b'_{(1)}m \otimes 1_B)b'_{(2)} \otimes (n \otimes bb'')\\
&= (b'_{(1)}m \otimes b'_{(2)}) \otimes (n \otimes bb'')\\
&= \big(\Delta(b') \cdot (m \otimes 1_B)\big) \otimes (n \otimes bb'')\\
&= (b' \otimes b'')\big((m \otimes 1_B)\big) \otimes (n \otimes b)\big)
\end{align*}
in $(M \boxtimes_R B) \otimes_B (N \boxtimes_R B)$. Thus we have obtained natural isomorphisms
$$
\phi_{M,N}: \scrL_\cB M \otimes_B \scrL_\cB N \longrightarrow \scrL_\cB(M \boxtimes_R N), \quad \phi_0: B \longrightarrow \scrL_\cB R
$$
in $\Mod(B^\ev)$. They fit into (all of) the diagrams of Definition \ref{def:monoidalfunc}

Lemma \ref{lem:func_prop} tells us, that the functors ${\mathscr L}_{\mathcal B}$ and ${\mathscr L}^{\mathcal B}$ restrict as claimed, and, moreover, that these restrictions are exact functors on the exact categories $\C_\varrho(\cB)$ and $\P_\varrho(B)$. The commutativity of the diagram is clear.
\end{proof}
\begin{thm}\label{thm:commutativeHopfalgebroid}
Let $\C$ and $\P$ denote the strong exact monoidal categories $\C_\varrho(\cB)$ and $\P_\varrho(B)$ respectively. The map ${\mathscr L}_{\mathcal B}^\sharp$ $($obtained from the functor $\mathscr L_{\mathcal B}|_{\C})$, defines a split monomorphism $\OH^\bullet(\mathcal B,R) \rightarrow \HH^\bullet(B)$ of graded $k$-algebras such that the induced diagrams
$$
\xymatrix{
\OH^m(\mathcal B,R) \times \OH^n(\cB,R) \ar[r]^-{\beta_\cB} \ar[d] & \OH^{m+n-1}(\mathcal B,R) \ar[d]\\
\HH^m(B) \times \HH^n(B) \ar[r]^-{\{-,-\}_B} & \HH^{m+n-1}(B)
}
\ \quad \quad \
\xymatrix{
\OH^{2n}(\mathcal B,R)\ar[r]^-{\sigma_\cB} \ar[d] & \OH^{4n-1}(\mathcal B,R) \ar[d]\\
\HH^{2n}(B) \ar[r]^-{sq_B} & \HH^{4n-1}(B)
}
$$
commute for every pair of integers $m,n \geq 1$. Here $\{-,-\}_B$ and $sq_B$ denote the Gerstenhaber bracket and the squaring map on $\HH^\bullet(B)$, whereas $\beta_{\mathcal B}$ and $\sigma_{\mathcal B}$ are given by $\beta_\cB = {i}_\varrho^\sharp \circ [-,-]_{\C} \circ ({i}^\sharp_\varrho \times {i}^\sharp_\varrho)^{-1}$ and $\sigma_\cB = {i}^\sharp_\varrho \circ sq_{\C} \circ ({i}^\sharp_\varrho)^{-1}$. In particular, $(\OH^\bullet(\mathcal B,R), \beta_\cB, \sigma_\cB)$ is a strict Gerstenhaber subalgebra of $(\HH^\bullet(B), \{-,-\}_B, sq_B)$.
\end{thm}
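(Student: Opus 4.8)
The plan is to deduce Theorem~\ref{thm:commutativeHopfalgebroid} by assembling the structural results already established, so that almost no fresh computation is required. The heart of the matter is to exhibit $\scrL_\cB|_\C$ as an exact and almost strong (in fact strong) monoidal functor and then to feed this into the abstract commutativity statement of Theorem~\ref{thm:bracketcomm}, transported to Hochschild cohomology via Theorem~\ref{thm:schwede_comm}.

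First I would set up the identifications. By Lemma~\ref{lem:iso_ext_algeb} the inclusion $i_\varrho : \C_\varrho(\cB) \to \Mod(B)$ is entirely extension closed, so $i^\sharp_\varrho : \Ext^\bullet_{\C_\varrho(\cB)}(R,R) \to \OH^\bullet(\cB,R)$ is an isomorphism of graded $k$-algebras (it respects the Yoneda product since $i_\varrho$ is exact; cf.\ Lemma~\ref{lem:exactfuncyoneda}). Likewise, since $B$ is $k$-projective by \ref{nn:requirements}, Corollary~\ref{cor:isohochschildproj} gives the isomorphism $\chi_B : \HH^\bullet(B) \cong \Ext^\bullet_{\P(B)}(B,B)$, and $\P_\varrho(B)$ is entirely extension closed so that $\Ext^\bullet_{\P_\varrho(B)}(B,B) \cong \Ext^\bullet_{B^\ev}(B,B) = \HH^\bullet(B)$. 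Through these identifications the maps $\beta_\cB$ and $\sigma_\cB$ are \emph{defined} to be the transports of $[-,-]_\C$ and $sq_\C$; the content of the theorem is that these transported operations are compatible with $\{-,-\}_B$ and $sq_B$ under $\scrL_\cB^\sharp$.

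Next I would verify the monoidality. By Lemma~\ref{lem:candaexactmono}, $(\C_\varrho(\cB),\boxtimes_R,R)$ is a strong exact monoidal category and $(\P_\varrho(B),\otimes_B,B)$ likewise. Proposition~\ref{prop:functor_restriction} already shows that $\scrL_\cB$ is strong monoidal on $\Mod(B)$ and restricts to an \emph{exact} functor $\C_\varrho(\cB)\to\P_\varrho(B)$ with the natural isomorphisms $\phi_{M,N}$, $\phi_0$; since $\phi_0$ is an isomorphism, $\scrL_\cB|_\C$ is exact and almost strong (even strong) monoidal. Therefore Theorem~\ref{thm:bracketcomm} applies verbatim with $\scrL = \scrL_\cB|_\C$, giving commutativity of the bracket and squaring diagrams at the level of $\Ext^\bullet_\C(R,R)$ and $\Ext^\bullet_{\P_\varrho(B)}(B,B)$. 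Composing horizontally with the algebra isomorphisms $i^\sharp_\varrho$, $\chi_B$ (and invoking Theorem~\ref{thm:schwede_comm} to match $[-,-]_{\P(B)}$, $sq_{\P(B)}$ with $\{-,-\}_B$, $sq_B$ under $\chi_B$) yields the two stated squares. That $\scrL_\cB^\sharp$ is a homomorphism of graded $k$-algebras follows from Lemma~\ref{lem:exactfuncyoneda}, and splitness follows from Lemma~\ref{lem:func_prop}(\ref{lem:func_prop:5}): the equivalence $\scrL^\cB\scrL_\cB \simeq \Id_{\Mod(B)}$ provides, after passing to $\Ext$, a one-sided inverse $(\scrL^\cB)^\sharp$ to $\scrL_\cB^\sharp$, so the latter is a split monomorphism; in particular $\OH^\bullet(\cB,R)$ embeds as a strict Gerstenhaber subalgebra.

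The main obstacle I anticipate is not any single diagram but the careful matching of the three compatibility layers: Schwede's original bracket on $\HH^\bullet(B)$ (Theorem~\ref{thm:schwede_comm}), the abstract bracket $[-,-]_{\P_\varrho(B)}$ obtained from the monoidal structure, and its pullback along $\scrL_\cB$. One must confirm that $\P_\varrho(B)$ rather than $\P(B)$ is an adequate target---i.e.\ that the one-sided projectivity suffices for $\scrL_\cB M$ to land in $\P_\varrho(B)$ (Lemma~\ref{lem:func_prop}(\ref{lem:func_prop:2})) and that $\P_\varrho(B)$ is a strong exact monoidal category closed under kernels of epimorphisms so that Theorems~\ref{thm:bracketcomm} and \ref{thm:schwede_comm} genuinely apply to it. Once the target category and the almost-strong property of $\scrL_\cB|_\C$ are pinned down, the remainder is bookkeeping among isomorphisms that have each already been established.
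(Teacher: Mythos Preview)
Your proposal is correct and follows essentially the same route as the paper: invoke Proposition~\ref{prop:functor_restriction} for the exact strong monoidal functor $\scrL_\cB|_{\C_\varrho(\cB)}$, apply Theorem~\ref{thm:bracketcomm} for the abstract bracket/squaring compatibility, transport via Theorem~\ref{thm:schwede_comm} and the extension-closedness isomorphisms of Lemma~\ref{lem:iso_ext_algeb}, and use Lemma~\ref{lem:func_prop}(\ref{lem:func_prop:5}) for the splitting. Your anticipation of the $\P_\varrho(B)$ versus $\P(B)$ mismatch is apt---the paper handles it implicitly, and it is resolved by observing that the inclusion $\P(B)\hookrightarrow\P_\varrho(B)$ is itself exact and strict monoidal, so Theorem~\ref{thm:bracketcomm} identifies the two brackets under the common isomorphism to $\Ext^\bullet_{B^\ev}(B,B)$.
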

\begin{proof}
Recall that by Lemma \ref{lem:iso_ext_algeb} and Remark \ref{nn:requirements}, the categories $\C = \C_\varrho(\mathcal B)$ and $\P = \P_\varrho(B)$ are entirely extension closed in $\Mod(B)$ and $\Mod(B^\ev)$ respectively. The monoidal functors $\mathscr L_\cB|_{\C_\varrho(\mathcal B)}$ and $\mathscr L^\cB|_{\P_\varrho(B)}$ (see Proposition \ref{prop:functor_restriction}) induce the $k$-algebra homomorphisms in the sequence
$$
\xymatrix@C=20pt{
\Ext^\bullet_{\C_\varrho(\mathcal B)}(R,R) \ar[r]^-{{\mathscr L}_\cB^\sharp} & \Ext^\bullet_{\P_\varrho(B)}(B,B) \ar[r]^-{{\mathscr L}^\cB_\sharp} & \Ext^\bullet_{B}(R,R) = \OH^\bullet(\cB,R)
}.
$$
Their composition is an isomorphism: According to Lemma \ref{lem:func_prop}(\ref{lem:func_prop:5}), there is a natural isomorphism $\theta: \mathscr L^\cB\mathscr L_\cB \rightarrow \Id_{\Mod(B)}$; if now $\xi$ is an $n$-extension $0 \rightarrow R \rightarrow \mathbb E \rightarrow R \rightarrow 0$ in $\mathsf C$, then we obtain the commutative diagram
$$
\xymatrix@C=20pt{
0 \ar[r] & R \ar@{=}[d] \ar[r]^-{e'_n} & \mathscr L^{\mathcal B} \mathscr L_{\mathcal B}(E_{n-1}) \ar[d]_-{\theta_{E_{n-1}}} \ar[r] & \cdots \ar[r] & \mathscr L^{\mathcal B}\mathscr L_{\mathcal B}(E_0) \ar[r]^-{e_0'} \ar[d]^-{\theta_{E_0}} & R \ar[r] \ar@{=}[d] & 0\\
0 \ar[r] & R \ar[r] & E_{n-1} \ar[r] & \cdots \ar[r] & E_0 \ar[r] & R \ar[r] & 0
}
$$
in $\Mod(B)$, where $e_0' = \theta_R \circ \mathscr L^\cB\mathscr L_\cB(e_0)$ and $e'_n = \mathscr L^\cB\mathscr L_\cB(e_n)\circ \theta_R^{-1}$. Note that $\theta_R$ is the isomorphism $\mathscr L^\cB\mathscr L_\cB(R) \cong \mathscr L^\cB(B) \cong R$ obtained from the unit isomorphisms. It follows that ${\mathscr L}^\cB_\sharp \circ {\mathscr L}_\cB^\sharp$ coincides with the map $i_\varrho^\sharp : \Ext^\bullet_\C(R,R) \rightarrow \OH^\bullet(\cB,R)$, and thus is bijective. Let us now consider the diagram
$$
\xymatrix@C=40pt{
\OH^m(\cB,R) \times \OH^n(\cB,R) \ar[r]^-{\beta_{\cB}} \ar[d]_-{({i}^\sharp_\varrho \times {i}^\sharp_\varrho)^{-1}} & \OH^{m+n-1}(\cB,R) \ar@<-2pt>[d]^-{({i}^\sharp_\varrho)^{-1}} \ \ \\
\Ext^{m}_{\C}(R,R) \times \Ext^{n}_{\C}(R,R) \ar[r]^-{[-,-]_{\C}} \ar[d]_-{{\mathscr L}_\cB^\sharp \times {\mathscr L}_\cB^\sharp} & \Ext^{m+n-1}_{\C}(R,R) \ar@<-2pt>[d]^-{{\mathscr L}_\cB^\sharp}  \ \ \\
\Ext^{m}_{\P}(B,B) \times \Ext^{n}_{\P}(B,B) \ar[r]^-{[-,-]_{\P}} \ar[d]_-\cong & \Ext^{m+n-1}_{\P}(B,B) \ar@<-2pt>[d]^-\cong \ \ \\
\HH^m(B) \times \HH^n(B) \ar[r]^-{\{-,-\}_{B}} & \HH^{m+n-1}(B) \ .
}
$$
The top square commutes by definition, the middle square by Theorem \ref{thm:bracketcomm}, and the bottom square by Theorem \ref{thm:schwede_comm}. Similar arguments show that also the diagram for the squaring map $sq_B$ commutes.
\end{proof}
In what follows, we will identify the map $\scrL_{\cB}^\sharp$ with the map $$\xymatrix{\OH^\bullet(\cB, R) \ar[r]^-\cong & \Ext^{\bullet}_{\C_\varrho(\cB)}(R,R) \ar[r]^-{\scrL_{\cB}^\sharp} & \Ext^{\bullet}_{\P_\varrho(B)}(B,B) \ar[r]^-\cong & \HH^\bullet(B)}.$$
\begin{defn}
The bialgebroid $\mathcal B$ is 
\begin{enumerate}[\rm(1)]
\item \textit{pre-triangular}, if the tensor category $(\Mod(B), \boxtimes_R,R)$ is lax braided,
\item \textit{quasi-triangular}, if the tensor category $(\Mod(B), \boxtimes_R,R)$ is braided, and
\item \textit{cocommutative}, if the tensor category $(\Mod(B), \boxtimes_R,R)$ is symmetric.
\end{enumerate}
A Hopf algebroid over $R$ is \textit{pre-triangular} (\textit{quasi-triangular}, \textit{cocommutative}), if it is pre-triangular (quasi-triangular, cocommutative) as a bialgebroid over $R$.
\end{defn}
Note that (of course) cocommutative implies quasi-triangular, and quasi-tri\-an\-gular implies pre-triangular.
\begin{exa}
The $A$-Hopf algebroid $\mathcal H_A$ associated to the enveloping algebra $A^\ev$ is pre-triangular (quasi-triangular) if, and only if, $A$ admits a canonical R-matrix (a semi-canonical R-matrix) in the sense of Section \ref{sec:catbimodules}.
\end{exa}
\begin{cor}\label{cor:vanishHopfalgebroid}
Suppose that the bialgebroid $\mathcal B$ is pre-triangular. Then ${\mathscr L}^\sharp_{\mathcal B}$ defines a split monomorphism $\OH^\bullet(\mathcal B,R) \rightarrow \HH^\bullet(B)$ such that the diagram
$$
\xymatrix{
\OH^m(\mathcal B,R) \times \OH^n(\mathcal B,R) \ar[r]^-{0} \ar[d] & \OH^{m+n-1}(\mathcal B,R) \ar[d]\\
\HH^m(B) \times \HH^n(B) \ar[r]^-{\{-,-\}_B} & \HH^{m+n-1}(B)
}
$$
commutes for all integers $m, n \geq 1$, that is,
$$
\{\alpha, \beta\}_B = 0 \quad \text{$($for all $\alpha, \beta \in \OH^{\geq 1}(\cB,R))$.}
$$
\end{cor}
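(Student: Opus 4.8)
The plan is to obtain the statement as a direct corollary of Theorem~\ref{thm:commutativeHopfalgebroid} combined with the vanishing result Theorem~\ref{thm:trivial_bracket}. The fact that $\mathscr L^\sharp_{\mathcal B}$ is a split monomorphism of graded $k$-algebras is already established by Theorem~\ref{thm:commutativeHopfalgebroid}, so that part I would simply cite. The only additional input required is that, under the pre-triangularity hypothesis, the map $\beta_{\mathcal B}$ sitting in the top row of the first commutative square of that theorem is identically zero.

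First I would unwind the hypothesis: by definition, $\mathcal B$ being pre-triangular means that the tensor category $(\Mod(B), \boxtimes_R, R)$ is lax braided, with some lax braiding $\gamma$. I would then verify that $\gamma$ restricts to a lax braiding on the strong exact monoidal subcategory $\C := \C_\varrho(\mathcal B)$. This is immediate from Lemma~\ref{lem:candaexactmono}(\ref{lem:candaexactmono:2}): the full subcategory $\C_\varrho(\mathcal B)$ is closed under $\boxtimes_R$ and contains the unit $R$, so for $X, Y \in \C$ the morphism $\gamma_{X,Y} \colon X \boxtimes_R Y \to Y \boxtimes_R X$ connects two objects of $\C$ and hence, by fullness, is a morphism of $\C$; naturality, the identity $\varrho_R \circ \gamma_{R,R} = \lambda_R$, and the triangle equations are inherited verbatim from the ambient category. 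Thus $(\C, \boxtimes_R, R)$ is a lax braided strong exact monoidal category.

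Then Theorem~\ref{thm:trivial_bracket} applies and gives $[-,-]_\C \equiv 0$. Since $\beta_{\mathcal B} = i_\varrho^\sharp \circ [-,-]_\C \circ (i_\varrho^\sharp \times i_\varrho^\sharp)^{-1}$ by the very definition recorded in Theorem~\ref{thm:commutativeHopfalgebroid}, this forces $\beta_{\mathcal B} \equiv 0$. Feeding $\beta_{\mathcal B} = 0$ into the commuting square of Theorem~\ref{thm:commutativeHopfalgebroid}, I obtain for all $\alpha \in \OH^m(\mathcal B, R)$, $\beta \in \OH^n(\mathcal B, R)$ with $m, n \geq 1$ the chain
\[
\{\mathscr L_{\mathcal B}^\sharp(\alpha), \mathscr L_{\mathcal B}^\sharp(\beta)\}_B = \mathscr L_{\mathcal B}^\sharp\big(\beta_{\mathcal B}(\alpha, \beta)\big) = \mathscr L_{\mathcal B}^\sharp(0) = 0.
\]
Identifying $\OH^\bullet(\mathcal B, R)$ with its image under the injective map $\mathscr L_{\mathcal B}^\sharp$, this is precisely the asserted identity $\{\alpha, \beta\}_B = 0$ for all $\alpha, \beta \in \OH^{\geq 1}(\mathcal B, R)$, and the displayed diagram with vanishing upper arrow commutes.

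I do not anticipate any real obstacle: the substantive work has already been carried out in Theorem~\ref{thm:commutativeHopfalgebroid} (construction of the split monomorphism and its compatibility with the brackets) and in Theorem~\ref{thm:trivial_bracket} (triviality of the bracket for lax braided categories). The single point deserving a sentence of justification is the restriction of the lax braiding to $\C_\varrho(\mathcal B)$, and this is purely formal, being a consequence of $\C_\varrho(\mathcal B)$ being a full monoidal subcategory of $(\Mod(B), \boxtimes_R, R)$ closed under $\boxtimes_R$.
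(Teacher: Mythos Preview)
Your proposal is correct and follows essentially the same route as the paper's proof: combine the commuting square of Theorem~\ref{thm:commutativeHopfalgebroid} with the vanishing $[-,-]_{\C_\varrho(\mathcal B)}=0$ supplied by Theorem~\ref{thm:trivial_bracket}. Your only addition is the explicit verification that the lax braiding on $(\Mod(B),\boxtimes_R,R)$ restricts to the full monoidal subcategory $\C_\varrho(\mathcal B)$, a detail the paper leaves implicit.
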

\begin{proof}
Take the commutative diagram occuring in Theorem \ref{thm:commutativeHopfalgebroid} and note that $[-,-]_{\mathsf C_\varrho(\cB)} = 0$ since $\mathcal B$ is pre-triangular (i.e., $(\Mod(B), \boxtimes_R, R)$ is lax braided; see Theorem \ref{thm:trivial_bracket} for the vanishing).
\end{proof}
\begin{nn}
By Lemma \ref{lem:triangular_braided} we know, that those bialgebras that admit a lax braiding on their category of left modules, are precisely the pre-triangular ones (pre-triangular in the sense that there is a semi-canonical R-matrix for the bialgebra in the sense of Definition \ref{defn:qtrhopf}).

Let $\mathcal B = (B, \nabla, \eta, \Delta, \varepsilon)$ be a bialgebra such that $B$ is a projective $k$-module (for instance, take the Hopf algebra $kG$ for \textit{any} group $G$). We will denote its cohomology ring $\OH^\bullet(\mathcal B,k) = \Ext^\bullet_B(k,k)$ by $\OH^\bullet(B,k)$. Due to Theorem \ref{thm:commutativeHopfalgebroid} the cohomology ring $\OH^\bullet(B,k)$ embeds into the Hochschild cohomology ring $\HH^\bullet(B)$ of $B$. Note that for Hopf algebras $\mathcal H = (H, \nabla, \eta, \Delta, \varepsilon, S)$ over fields, the existence of a (split) embedding $\OH^\bullet(H,k) \rightarrow \HH^\bullet(H)$ was already observed in \cite{GiKu98}.

Corollary \ref{cor:vanishHopfalgebroid} implies the following result, which, over fields, was conjectured by L.~Menichi in \cite{Me11}.
\end{nn}
\begin{cor}[{\cite[Conj.~23]{Me11}}]\label{cor:gerstenhabervanishhopfalgebra}
Assume that $\mathcal B$ is pre-triangular. The map ${\mathscr L}_{\mathcal B}^\sharp$ defines a split monomorphism $\OH^\bullet(B,k) \rightarrow \HH^\bullet(B)$ of graded $k$-algebras. Moreover, the Gerstenhaber bracket $\{-,-\}_B$ on $\HH^\bullet(B)$ vanishes on the subalgebra $\OH^\bullet(B,k) \subseteq \HH^\bullet(B)$, i.e.,
$$
\{\alpha, \beta\}_{B} = 0 \quad \text{$($for all $\alpha, \beta \in \OH^\bullet(B,k))$.}
$$
\end{cor}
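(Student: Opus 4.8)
The plan is to derive this corollary as the special case $R = k$ of Corollary~\ref{cor:vanishHopfalgebroid}. First I would record that a bialgebra $\mathcal B = (B, \nabla, \eta, \Delta, \varepsilon)$ over the commutative ring $k$ is precisely a bialgebroid over the base algebra $R = k$ (Examples~\ref{exa:bialg}), and that in this case the module-category tensor product $\boxtimes_k$ is the one used throughout Section~\ref{sec:kernel}. Under this identification one has $\OH^\bullet(\mathcal B, R) = \Ext^\bullet_B(k,k) = \OH^\bullet(B,k)$, so the notation of the present corollary agrees with that of Theorem~\ref{thm:commutativeHopfalgebroid} and Corollary~\ref{cor:vanishHopfalgebroid}.

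Next I would check that the standing hypotheses \ref{nn:requirements}(\ref{item:requirements:1})--(\ref{item:requirements:2}) are satisfied here. Condition (\ref{item:requirements:1}) requires $R = k$ to be $k$-projective, which is trivial, and condition (\ref{item:requirements:2}) requires ${_\rhd B}$ and $B_\lhd$ to be projective over $R = k$, which is exactly the assumption that $B$ is $k$-projective. Furthermore, saying $\mathcal B$ is pre-triangular means that $(\Mod(B), \boxtimes_k, k)$ is lax braided, which is literally the definition of a pre-triangular bialgebroid over $k$ (compare Lemma~\ref{lem:triangular_braided}). With all inputs in place, Corollary~\ref{cor:vanishHopfalgebroid} applies and delivers both assertions simultaneously: the map $\scrL_{\mathcal B}^\sharp$ is a split monomorphism $\OH^\bullet(B,k) \rightarrow \HH^\bullet(B)$ of graded $k$-algebras, and $\{\alpha, \beta\}_B = 0$ for all $\alpha, \beta \in \OH^{\geq 1}(B,k)$.

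The only point not immediately covered, since Corollary~\ref{cor:vanishHopfalgebroid} restricts to $\OH^{\geq 1}$, is the degree-zero contribution. To close this gap I would observe that $\OH^0(B,k) = \Hom_B(k,k) = k \cdot \id_k$ is spanned by the unit, which $\scrL_{\mathcal B}^\sharp$, being a unital homomorphism of graded $k$-algebras, sends to the multiplicative unit $\mathbbm{1}$ of $\HH^\bullet(B)$. Since $(\HH^\bullet(B), \{-,-\}_B, sq_B)$ is a strict Gerstenhaber algebra by Theorem~\ref{thm:hh_gerstenhaber}, the graded Poisson identity (G6) of Definition~\ref{def:galgebra} applied with $b = c = \mathbbm{1}$ gives $\{a, \mathbbm{1}\}_B = 2\{a, \mathbbm{1}\}_B$, hence $\{a, \mathbbm{1}\}_B = 0$, and then $\{\mathbbm{1}, a\}_B = 0$ by the graded antisymmetry (G2). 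As $\{-,-\}_B$ is $k$-bilinear, it follows that $\{\alpha, \beta\}_B = 0$ whenever $\alpha$ or $\beta$ lies in $\OH^0(B,k)$, extending the vanishing from $\OH^{\geq 1}(B,k)$ to all of $\OH^\bullet(B,k)$.

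The substantial work is already discharged in Theorem~\ref{thm:commutativeHopfalgebroid} and Corollary~\ref{cor:vanishHopfalgebroid}, so I expect no genuine obstacle at this stage: the argument is pure specialization together with the verification of hypotheses. The one place asking for a little care is confirming that the degree-zero part is harmless, which I regard as the (mild) main point and which is settled by the unit computation above.
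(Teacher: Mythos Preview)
Your proposal is correct and follows essentially the same route as the paper: invoke Corollary~\ref{cor:vanishHopfalgebroid} (after specializing to $R=k$ and verifying the standing hypotheses) to get vanishing in degrees $\geq 1$, and then dispose of degree zero by observing $\OH^0(B,k)=k$ and that the bracket with the unit vanishes. The paper phrases the degree-zero step as ``$\{-,\xi\}_B$ is $k$-linear and a derivation,'' which is exactly your Poisson-identity computation $\{a,\mathbbm 1\}_B = 2\{a,\mathbbm 1\}_B$ in different words.
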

\begin{proof}
By Corollary \ref{cor:vanishHopfalgebroid}, it is evident that
$$
\OH^{\geq 1}(B,k) \otimes_k \OH^{\geq 1}(B,k) \subseteq \Ker(\{-,-\}_B).
$$
But since $\OH^{0}(B,k) = \Hom_{B}(k,k) = k$, and $\{x,-\}_B$ is the zero map $\HH^\bullet(B) \rightarrow \HH^{\bullet - 1}(B)$ for all $x \in k$ (remember that $\{-,\xi\}_B$ is $k$-linear and a derivation for all $\xi \in \HH^\bullet(B)$), the result is established.
\end{proof}
\begin{rem}
In \cite{Me11}, Menichi proves Corollary \ref{cor:gerstenhabervanishhopfalgebra} for cocommutative Hopf algebras $\mathcal H$ over fields $K$ in the context of operatic actions and Batalin-Vilkovisky algebra structures. He conjectures, that his result should extend to quasi-triangular bialgebras over $K$, which he was not able to show by adapting the proof for Hopf algebras, since it highly depends on the existence of an antipode. The main obstacle is that Menichi deduces the vanishing of the BV-operator he considers from the fact that, in the cocommutative case, the antipode $S$ on $\mathcal H$ is involutive, i.e., $S \circ S = \id_H$ (which, in general, is false for quasi-triangular Hopf algebras; see \ref{exas:qtcocom}).

Note that the construction presented in this monograph bypasses this obstacle completely. Our approach not only admits a more general result (covering Menichi's conjecture), but is also more elementary, and thus, as we believe, more accessible.
\end{rem}
\begin{rem}
Due to M.\,A.\,Farinati and A.\,L.\,Solotar (cf. \cite{FaSo04}), the cohomology ring of a Hopf algebra $\mathcal H$ over $k$ is a Gerstenhaber algebra. Its Gerstenhaber bracket coincides with $[-,-]_{\C_\varrho(\mathcal H)}$ in case the $k$-module underlying $\mathcal H$ is projective. All of this remains true, if the Hopf algebra is replaced by a bialgebra (see \cite{Me11}). Hence Corollary \ref{cor:gerstenhabervanishhopfalgebra} may be read as follows: If $\mathcal B$ is a pre-triangular bialgebra being projective over $k$, then the Gerstenhaber bracket on the cohomology ring of $\mathcal B$ is trivial.
\end{rem}
\begin{rem}
There is a rich pool of quasi-triangular (even cocommutative) Hopf algebras over $k$ which are projective as $k$-modules. Various examples of such Hopf algebras will be presented in the appendix of this monograph.
\end{rem}
We conclude this section with an observation on the functors $\scrL_\H$ and $\scrL^\H$ for quasi-triangular Hopf algebroids $\mathcal H$.
\begin{lem}\label{lem:hopfconditionsat}
Let $\mathcal H = (H, \nabla, \eta, \Delta, \varepsilon)$ be a Hopf algebroid over $R$. Assume that $\mathcal H$ is quasi-triangular. Then the left $H$-module $M_\lhd \boxtimes_R {_\rhd H}$ is projective for all modules $M \in \mathsf C(\mathcal H)$.
\end{lem}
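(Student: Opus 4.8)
The plan is to reduce the claim to Lemma \ref{lem:hopfRporjHproj}, which already controls projectivity of the tensor product $H_\lhd \boxtimes_R {}_\rhd M$, and then to use the braiding supplied by quasi-triangularity to interchange the two tensor factors. The key observation is that the module $M_\lhd \boxtimes_R {}_\rhd H$ occurring here has $M$ and $H$ in the \emph{opposite} positions to the one handled by Lemma \ref{lem:hopfRporjHproj}; repairing exactly this discrepancy is what a braiding is for, which is why the hypothesis enters in the form ``quasi-triangular''.

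First I would unwind the notation. Spelling out the definition of $\boxtimes_R$, the object $M_\lhd \boxtimes_R {}_\rhd H$ is nothing but the monoidal product $M \boxtimes_R H$ of the two left $H$-modules $M$ and $H$ in the tensor category $(\Mod(H), \boxtimes_R, R)$: the decorations merely record that the underlying $R$-balanced tensor product uses the right $R$-structure $\lhd$ on the first factor and the left $R$-structure $\rhd$ on the second, while the $H$-action is the diagonal one through $\Delta$. In the same way $H_\lhd \boxtimes_R {}_\rhd M$ is literally $H \boxtimes_R M$.

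Next, since $M$ lies in $\mathsf C(\mathcal H) = \mathsf C_\lambda(\mathcal H) \cap \mathsf C_\varrho(\mathcal H)$, the right $R$-module $M_\lhd$ is projective, so Lemma \ref{lem:hopfRporjHproj} gives that $H_\lhd \boxtimes_R {}_\rhd M$ is a projective left $H$-module. Finally, because $\mathcal H$ is quasi-triangular, the category $(\Mod(H), \boxtimes_R, R)$ is braided, and the braiding of a braided (as opposed to merely lax braided) category is invertible by definition; evaluating it at the pair $(M,H)$ produces an isomorphism of left $H$-modules
$$
\gamma_{M,H} \colon M_\lhd \boxtimes_R {}_\rhd H = M \boxtimes_R H \longrightarrow H \boxtimes_R M = H_\lhd \boxtimes_R {}_\rhd M .
$$
Transporting projectivity across $\gamma_{M,H}$ then yields the assertion.

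The routine-but-delicate part will be the very first step: checking that the decorated objects $M_\lhd \boxtimes_R {}_\rhd H$ and $H_\lhd \boxtimes_R {}_\rhd M$ coincide on the nose with the braided products $M \boxtimes_R H$ and $H \boxtimes_R M$, as left $H$-modules and with matching $R$-balancings, so that the abstract braiding $\gamma_{M,H}$ is genuinely an $H$-linear isomorphism between them. Once this identification is secured, no explicit form of the braiding (e.g. via an R-matrix) is required; and I note in passing that only projectivity of $M_\lhd$ is used, so the argument in fact goes through for $M \in \mathsf C_\varrho(\mathcal H)$, of which $\mathsf C(\mathcal H)$ is a subcategory.
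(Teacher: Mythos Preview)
Your proposal is correct and matches the paper's proof essentially verbatim: use the braiding $\gamma_{M,H}$ to identify $M_\lhd \boxtimes_R {}_\rhd H$ with $H_\lhd \boxtimes_R {}_\rhd M$, and then invoke Lemma~\ref{lem:hopfRporjHproj} (which needs only projectivity of $M_\lhd$) to conclude. Your added remark that $M \in \mathsf C_\varrho(\mathcal H)$ already suffices is accurate and implicit in the paper's argument as well.
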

\begin{proof}
Let $\gamma$ be a braiding on $(\Mod(H), \boxtimes_R,R)$. In particular, we have the isomorphism
$$
\gamma_{M,H}: M_\lhd \boxtimes_R {_\rhd H} \longrightarrow H_\lhd \boxtimes_R {_\rhd M}
$$
of $H$-modules for every $H$-module $M$. If $M$ is in $\mathsf C(\mathcal H)$, then $H_\lhd \boxtimes_R {_\rhd M}$ is a projective $H$-module by Lemma \ref{lem:hopfRporjHproj}, and therefore so is $M_\lhd \boxtimes_R {_\rhd H}$.
\end{proof}
\begin{cor}\label{cor:quasiHopfrestr}
Assume that the base algebra $R$ is a projective $k$-module. Further, suppose that $\mathcal H = (H, \nabla, \eta, \Delta, \varepsilon)$ is a quasi-triangular Hopf algebroid, such that ${_\rhd H}$ and $H_\lhd$ are projective $R$-modules. Then the $k$-linear functors $\scrL_\H$ and $\scrL^\H$ between the tensor categories $(\Mod(H), \boxtimes_R, R)$ and $(\Mod(H^\ev), \otimes_H, H)$ restrict to the exact functors below, the first of which being also strong monoidal.
$$
\xymatrix@C=32pt@R=10pt{
(\C(\H), \boxtimes_R, R) \ar[r]^-{\scrL_\H} &  (\P(H), \otimes_H, H) \\ (\Mod(H), \boxtimes_R, R)  & (\P(H), \otimes_H, H) \ar[l]_-{\scrL^\H}}
$$
\end{cor}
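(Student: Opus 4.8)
The plan is to deduce everything from Proposition \ref{prop:functor_restriction}, where the analogous statement has already been established with $\C_\varrho(\cB)$ and $\P_\varrho(B)$ in place of $\C(\H)$ and $\P(H)$; the only genuinely new input is the left $H$-projectivity of the image of $\scrL_\H$, which is exactly what quasi-triangularity buys us through Lemma \ref{lem:hopfconditionsat}. Since $\C(\H) = \C_\lambda(\H) \cap \C_\varrho(\H) \subseteq \C_\varrho(\H)$ and $\P(H) = \P_\lambda(H) \cap \P_\varrho(H) \subseteq \P_\varrho(H)$ (see \ref{nn:PlPrP} and Lemma \ref{lem:candaexactmono}), I expect both restriction claims to follow once the target subcategories are correctly identified, so that the corollary amounts to refining the known range of the two functors.

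For the functor $\scrL_\H$, I would first recall from Proposition \ref{prop:functor_restriction} that $\scrL_\H$ is an exact strong monoidal functor $(\C_\varrho(\H), \boxtimes_R, R) \rightarrow (\P_\varrho(H), \otimes_H, H)$. Restricting the source along the inclusion $\C(\H) \hookrightarrow \C_\varrho(\H)$, which is itself exact and, by Lemma \ref{lem:candaexactmono}(\ref{lem:candaexactmono:2}), strict monoidal, the composite $\C(\H) \rightarrow \P_\varrho(H)$ is again exact and strong monoidal; the structure isomorphisms $\phi_{M,N}$ and $\phi_0$ of Proposition \ref{prop:functor_restriction} restrict verbatim, being isomorphisms in $\Mod(H^\ev)$ between objects that will be shown to lie in $\P(H)$. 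The crucial refinement is to verify that for $M \in \C(\H)$ the module $\scrL_\H M = M_\lhd \boxtimes_R {_\rhd H}$ actually lies in $\P(H)$, and not merely in $\P_\varrho(H)$. Right $H$-projectivity is Lemma \ref{lem:func_prop}(\ref{lem:func_prop:2}), which uses only $M \in \C_\varrho(\H)$; left $H$-projectivity is precisely Lemma \ref{lem:hopfconditionsat}, applicable because $M \in \C(\H)$ and $\mathcal H$ is quasi-triangular. Together with $\scrL_\H R \cong H \in \P(H)$, this shows that $\scrL_\H$ restricts to the desired exact strong monoidal functor $(\C(\H), \boxtimes_R, R) \rightarrow (\P(H), \otimes_H, H)$.

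For $\scrL^\H$ no new ingredient is needed: Proposition \ref{prop:functor_restriction} already supplies an exact functor $(\P_\varrho(H), \otimes_H, H) \rightarrow (\Mod(H), \boxtimes_R, R)$, and since $\P(H) \subseteq \P_\varrho(H)$ is an exact subcategory, the further restriction of $\scrL^\H$ to $\P(H)$ stays exact, because every admissible short exact sequence in $\P(H)$ is in particular one in $\P_\varrho(H)$, which $\scrL^\H$ carries to a short exact sequence in $\Mod(H)$. The main obstacle is thus concentrated entirely in the left $H$-projectivity of $\scrL_\H M$: without a braiding, $M_\lhd \boxtimes_R {_\rhd H}$ is only known to be right $H$-projective, and it is the isomorphism $\gamma_{M,H} : M_\lhd \boxtimes_R {_\rhd H} \rightarrow H_\lhd \boxtimes_R {_\rhd M}$ furnished by quasi-triangularity, combined with Lemma \ref{lem:hopfRporjHproj}, that transports projectivity from the left to the right factor. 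Everything else is routine bookkeeping about intersecting exact monoidal subcategories.
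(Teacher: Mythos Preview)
Your proposal is correct and takes essentially the same approach as the paper, which merely says to combine Lemma \ref{lem:hopfconditionsat} with Lemma \ref{lem:func_prop}; you have unpacked this combination in detail, additionally invoking Proposition \ref{prop:functor_restriction} (itself built on Lemma \ref{lem:func_prop}) to handle the monoidal structure and the restriction to $\C_\varrho$ and $\P_\varrho$, and then using Lemma \ref{lem:hopfconditionsat} for the one new point, left $H$-projectivity.
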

\begin{proof}
Combine Lemma \ref{lem:hopfconditionsat} with Lemma \ref{lem:func_prop}.
\end{proof}
If $\mathcal H$ is a Hopf algebra over $k$, the functor $\scrL^{\mathcal H} : \Mod(H^\ev) \rightarrow \Mod(H)$ is strong monoidal as well. Let us refine some of our observations in this particular situation.
\begin{cor}
Suppose that $\mathcal H = (H, \nabla, \eta, \Delta, \varepsilon, S)$ is a quasi-triangular Hopf algebra over $k$, such that $H$ is projective as a $k$-module. Then the $k$-linear monoidal functors $\scrL_\H$ and $\scrL^\H$ between the tensor categories $(\Mod(H), \boxtimes_k, k)$ and $(\Mod(H^\ev), \otimes_H, H)$ restrict to exact and strong monoidal functors
$$
\xymatrix@C=35pt{
(\C(\H), \boxtimes_k, k) \ar@<1ex>[r]^-{\scrL_\H} & \ar@<1ex>[l]^-{\scrL^\H} (\P(H), \otimes_H, H).
}
$$
\end{cor}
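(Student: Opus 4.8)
The plan is to obtain the statement by specialising Corollary \ref{cor:quasiHopfrestr} to the case of a Hopf algebra and then upgrading the functor $\scrL^\H$ in two respects: its codomain and its monoidal structure. First I would check that Corollary \ref{cor:quasiHopfrestr} is applicable. For a Hopf algebra the base algebra is $R = k$, so $\boxtimes_R = \boxtimes_k$ and the tensor unit of $(\Mod(H), \boxtimes_k, k)$ is $k$; the ring $k$ is trivially projective over itself, and since $H$ is $k$-projective the modules ${_\rhd H}$ and $H_\lhd$ (both being $H$ regarded over $R = k$) are projective $R$-modules. Hence Corollary \ref{cor:quasiHopfrestr} already provides the restriction $\scrL_\H \colon (\C(\H), \boxtimes_k, k) \to (\P(H), \otimes_H, H)$ as an exact and strong monoidal functor, and the restriction $\scrL^\H \colon (\P(H), \otimes_H, H) \to (\Mod(H), \boxtimes_k, k)$ as an exact functor. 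It therefore remains to show that $\scrL^\H$ lands in $\C(\H)$ and that it is strong monoidal.

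For the codomain, let $N \in \Ob \P(H)$. By definition $N$ is projective as a right $H$-module, hence a direct summand of a free right $H$-module $H^{(I)}$. Applying $- \otimes_H k$ and using $H^{(I)} \otimes_H k \cong k^{(I)}$ shows that $\scrL^\H N = N \otimes_H k$ is a direct summand of a free $k$-module and thus $k$-projective; as its left $H$-structure is inherited from $N$, it follows that $\scrL^\H N \in \Ob \C(\H)$ (recall that for the bialgebra $\mathcal H$ the category $\C(\H)$ consists precisely of the $k$-projective $H$-modules). For the monoidal structure I would invoke the remark preceding the statement, according to which $\scrL^\H \colon \Mod(H^\ev) \to \Mod(H)$ is strong monoidal. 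Since $(\P(H), \otimes_H, H)$ and $(\C(\H), \boxtimes_k, k)$ are monoidal subcategories of $(\Mod(H^\ev), \otimes_H, H)$ and $(\Mod(H), \boxtimes_k, k)$ with inherited monoidal structure (Example \ref{exa:bimodules} and Lemma \ref{lem:candaexactmono}), the natural structure isomorphisms of $\scrL^\H$ and its unit comparison restrict to $\P(H)$, so that $\scrL^\H|_{\P(H)}$ is strong monoidal as well. Together with the first paragraph this completes the proof.

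The main obstacle is the strong monoidality of $\scrL^\H$, which is exactly the feature that fails for general Hopf algebroids (where Corollary \ref{cor:quasiHopfrestr} only asserts exactness of $\scrL^\H$). Conceptually it rests on the fact that, for a genuine Hopf algebra, $\scrL_\H$ is not merely strong monoidal but a strong monoidal \emph{equivalence} with quasi-inverse $\scrL^\H$: one direction $\scrL^\H \scrL_\H \cong \Id_{\C(\H)}$ is Lemma \ref{lem:func_prop}(\ref{lem:func_prop:5}), while the reverse natural isomorphism $\scrL_\H \scrL^\H \cong \Id_{\P(H)}$ is built from the isomorphism $U_M \colon H \otimes_k M \to H \boxtimes_k M$ of Lemma \ref{lem:boxtimes-otimes}, whose inverse $V_M(h \otimes m) = h_{(1)} \otimes S(h_{(2)})m$ visibly uses the antipode $S$. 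Granting the equivalence, the quasi-inverse of a strong monoidal equivalence carries a canonical strong monoidal structure, which is an alternative, self-contained route to the strong monoidality used above and explains why the antipode --- absent in the bialgebroid setting --- is the decisive ingredient.
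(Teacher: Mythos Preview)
Your proposal is correct and follows essentially the same route as the paper: invoke Corollary \ref{cor:quasiHopfrestr}, cite the preceding remark that $\scrL^\H$ is strong monoidal for Hopf algebras, and verify that $\scrL^\H$ lands in $\C(\H)$. The only cosmetic difference is that the paper establishes $k$-projectivity of $\scrL^\H M = M \otimes_H k$ via the adjunction isomorphism $\Hom_k(M \otimes_H k, -) \cong \Hom_{H^\op}(M,-)$ rather than your direct-summand argument.
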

\begin{proof}
Fix a module $M \in \P(\mathcal H)$. As $r(m \otimes s) = m \otimes sr$ for all $m \in M$ and $r,s \in k$, we get isomorphisms
$$
\Hom_k(M \otimes_H k, -) \cong \Hom_{H^\op}(M, \Hom_k(k,-)) \cong \Hom_{H^\op}(M, -)
$$
and therefore $\Hom_k(M \otimes_H k, -)$ is exact. Thus, the functor $\scrL^\H$ takes $\P(\mathcal H)$ to $\C(\mathcal H)$ and the assertion follows from Corollary \ref{cor:quasiHopfrestr}.
\end{proof}
Under the assumptions of the above corollary, the right inverse of the graded algebra map $\mathscr L_{\mathcal H}^\sharp: \OH^\bullet(H,k) \rightarrow \HH^\bullet(H)$ is given by
$$
\xymatrix{\HH^\bullet(H) \ar[r]^-\cong & \Ext^{\bullet}_{\P(H)}(H,H) \ar[r]^-{\scrL^{\mathcal H}_\sharp} & \Ext^{\bullet}_{\C(\mathcal H)}(k,k) \ar[r]^-\cong & \OH^\bullet(H,k)}.
$$
\section{Comparison to Linckelmann's result}\label{sec:linckelmann}
\begin{nn}
Throughout this section, the symbol $k$ will denote a commutative ring. We close the chapter by presenting a (more or less) immediate consequence of Theorem \ref{thm:commutativeHopfalgebroid} (and its Corollaries \ref{cor:vanishHopfalgebroid} and \ref{cor:gerstenhabervanishhopfalgebra}). Remember that in \cite{CiSo97}, the authors describe the very close connection between the graded commutative algebras $\Ext^\bullet_{kG^\ev}(kG,kG)$ and $\Ext^\bullet_{kG}(k,k)$ in case $G$ is a finite abelian group. M.\,Linckelmann offers the following generalization of this result.
\end{nn}
\begin{thm}[{\cite{Li00}}]\label{thm:generalcibsol}
Let $\mathcal H$ be a commutative Hopf algebra over the commutative ring $k$. Assume that its underlying algebra $A = \mathcal H^\natural$ is finitely generated projective as a $k$-module. Then there is an isomorphism
$$
\theta: \OH^\bullet(A,k) \otimes_k A \longrightarrow \HH^\bullet(A)
$$
of graded $k$-algebras.
\end{thm}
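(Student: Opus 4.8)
The plan is to exploit the commutativity of $A$ to identify $A^\ev=A\otimes_k A$, and to \emph{twist} the regular bimodule $A$ into a module that is inflated from the trivial $A$-module, using the comultiplication and the antipode of $\mathcal H$. The graded $k$-module isomorphism will then fall out of Eckmann--Shapiro, and the real work will be in making it multiplicative.

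First I would introduce the $k$-linear map $\omega\colon A^\ev\to A^\ev$, $\omega(a\otimes b)=\sum_{(b)} ab_{(1)}\otimes b_{(2)}$. Using that $\Delta$ is an algebra homomorphism and that $A$ is commutative, a short computation shows $\omega$ is a homomorphism of $k$-algebras; the antipode axiom shows that $\omega^{-1}(a\otimes b)=\sum_{(b)} aS(b_{(1)})\otimes b_{(2)}$ is a two-sided inverse, so $\omega$ is an automorphism of $A^\ev$. Restriction along $\omega$ is therefore an exact autoequivalence $\omega^\ast$ of $\Mod(A^\ev)$ which preserves the Yoneda product. I would then check that $\omega^\ast$ carries the regular bimodule $A$ (with action $(a\otimes b)\cdot c=acb$) to the module $A_\varepsilon$ whose underlying $k$-module is $A$ and on which $a\otimes b$ acts by $\varepsilon(b)\,ac$; the identity $\sum_{(b)} S(b_{(1)})b_{(2)}=\varepsilon(b)1_A$ is exactly what makes this computation work. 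Consequently $\omega^\ast$ induces an isomorphism of graded $k$-algebras $\HH^\bullet(A)=\Ext^\bullet_{A^\ev}(A,A)\xrightarrow{\ \sim\ }\Ext^\bullet_{A^\ev}(A_\varepsilon,A_\varepsilon)$.

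Next I would compute the right-hand side. The module $A_\varepsilon$ is precisely the module induced from the trivial $A$-module $k$ along the subalgebra inclusion $1\otimes A\cong A\hookrightarrow A^\ev$: indeed $A^\ev\otimes_{1\otimes A}k\cong A\otimes_k k=A_\varepsilon$. Since $A$ is finitely generated projective over $k$, the algebra $A^\ev$ is projective (hence flat) as a right $1\otimes A$-module, so induction is exact and preserves projectives; the Eckmann--Shapiro lemma then gives $\Ext^\bullet_{A^\ev}(A_\varepsilon,A_\varepsilon)\cong\Ext^\bullet_A(k,\mathrm{Res}\,A_\varepsilon)$, where $\mathrm{Res}\,A_\varepsilon$ is $A_\varepsilon$ restricted to $1\otimes A$. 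One checks that this restriction is $\underline A\otimes_k k$, the trivial $A$-module whose multiplicity space is the $k$-module $\underline A$ underlying $A$. Pulling the finitely generated projective $k$-module $\underline A$ out of the Hom then yields a graded $k$-module isomorphism $\Ext^\bullet_{A^\ev}(A_\varepsilon,A_\varepsilon)\cong \underline A\otimes_k\Ext^\bullet_A(k,k)=A\otimes_k\OH^\bullet(A,k)$. Composing with the isomorphism of the previous step and swapping the two (graded-commutative) tensor factors produces the desired map $\theta$ as an isomorphism of graded $k$-modules.

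The hard part will be promoting $\theta$ to an isomorphism of graded \emph{algebras}, i.e. matching the Yoneda product transported from $\Ext^\bullet_{A^\ev}(A_\varepsilon,A_\varepsilon)$ with the tensor-product multiplication on $\OH^\bullet(A,k)\otimes_k A$, since Eckmann--Shapiro is a priori only additive. My approach would be to record that both $\HH^\bullet(A)$ and $\OH^\bullet(A,k)$ are $\Ext$-algebras of the respective tensor units ($A$ in $(\Mod(A^\ev),\otimes_A,A)$ and $k$ in $(\Mod(A),\boxtimes_k,k)$), so that in each case the Yoneda product is a cup product computed from a comonoid (diagonal) structure; the Hopf structure of $\mathcal H$ equips $A_\varepsilon$ with exactly such a comonoid structure, compatible with $\omega$. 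Concretely I would build a single multiplicative resolution of $A_\varepsilon$ --- the $\omega$-twist of the bar resolution, which resolves by projectives precisely because $A$ is $k$-projective --- together with a diagonal approximation, and verify on the cochain level that the induced product is the graded tensor product of the degree-zero algebra $A$ with $\OH^\bullet(A,k)$. Finally, the graded commutativity of both sides (Corollary \ref{cor:gradedcomm} for $\HH^\bullet(A)$, together with the analogous statement for the cohomology of the tensor unit) guarantees that the order of the tensor factors in the statement is immaterial.
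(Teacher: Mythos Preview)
Your core idea coincides with the paper's: both proofs hinge on an algebra automorphism of $A^\ev$ built from $\Delta$ (and inverted via $S$) that intertwines the regular bimodule $A$ with a module inflated from the trivial $A$-module. Your $\omega(a\otimes b)=\sum_{(b)} ab_{(1)}\otimes b_{(2)}$ is a close variant of the paper's $\alpha=(A\otimes_k\nabla)\circ(\Delta\otimes_k A)$ (Lemma~\ref{lem:algebraiso}); in fact the paper records the corollary you need, namely that $\mathrm{Res}_\alpha(M\otimes_k A)=M\boxtimes_k A$ and that this sends $A$-projectives to $A^\ev$-projectives.

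Where you diverge is in how you pass from this twist to the isomorphism. You invoke Eckmann--Shapiro to get the graded $k$-module isomorphism and then, as you yourself flag, have to fight for multiplicativity with a diagonal-approximation argument that you only sketch. The paper sidesteps this entirely: it writes down, for $M,N$ finitely generated projective over $k$, an explicit natural isomorphism
\[
\theta_{M,N}\colon \Hom_A(M,N)\otimes_k A \longrightarrow \Hom_{A^\ev}\big(\mathrm{Res}_\alpha(M\otimes_k A),\,\mathrm{Res}_\alpha(N\otimes_k A)\big),\quad f\otimes a\longmapsto (m\otimes b\mapsto f(m)\otimes ba),
\]
which is visibly compatible with composition. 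Applying $\theta$ termwise to a projective resolution $\mathbb P_k\to k$ with finitely generated terms produces an isomorphism $\Hom_{\mathbf K(A)}(\mathbb P_k,\mathbb P_k[n])\otimes_k A\cong\Hom_{\mathbf K(A^\ev)}(\mathbb P_A,\mathbb P_A[n])$ that respects composition in the homotopy category, and composition there \emph{is} the Yoneda product. So multiplicativity is automatic, not an afterthought. Your route is not wrong, but the paper's chain-level formula gives the algebra isomorphism in one stroke and avoids the unfinished part of your argument.
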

\begin{nn}\label{nn:extkdiso}
Linckelsmann's isomorphism is easy to construct, and we will reprove the result in a moment. Beforehand, we like to recall some basic facts.  Let $R$ be a ring. The derived category $\mathbf D(\Mod(R))$ of $R$ is the localization of the homotopy category $\mathbf K(\Mod(R))$ with respect to the class of all quasi-isomorphisms in $\mathbf K(\Mod(R))$, and hence admits a \textit{calculus of fractions} (see \cite{Wei94}). In what follows, we will write $\mathbf K(R) := \mathbf K(\Mod(R))$ and $\mathbf D(R) := \mathbf D(\Mod(R))$; we let $[1]$ be the suspension functor on $\mathbf D(R)$. Remember that $\Mod(R)$ fully faithfully embeds into $\mathbf D(R)$ by sending an $R$-module $M$ to the complex $\mathbb M$ concentrated in degree $0$, wherein it is $M$. We will usually write $M$ instead of $\mathbb M$. 

Let $\mathbbm P_M \rightarrow M \rightarrow 0$ be a projective resolution of the finitely generated $R$-module $M$. It is well-known that there are isomorphisms
$$
\xymatrix@C=15pt{
\Hom_{\mathbf K(R)}(\mathbb P_M, \mathbb P_M[n]) \ar[r] & \Hom_{\mathbf D(R)}(M, M[n]) \ar[r] & \Ext^n_R(M,M)
}\quad (\text{for $n \geq 0$}),
$$
where, as mentioned, we regard $M$ as a complex concentrated in degree $0$ (cf. \cite{Kr04} and \cite{Wei94}). Observe that, in $\mathbf D(R)$, $M$ and $\mathbb P_M$ are isomorphic. The first of the above maps is then simply given by the canonical assignment $f \mapsto (\id, f)$. By \cite[Prop.\,3.2.2]{Ve67} (see also \cite[III.\S5]{GeMa03}) the (inverse of the) second map arises from the following assigment (note that this map does not rely on the existence of injective/projective resolutions). Namely, map an $n$-extension
$$
\xymatrix@C=20pt{
\xi & \equiv & 0 \ar[r] & M \ar[r]^-{e_n} & E_{n-1} \ar[r]^-{e_{n-1}} & \cdots \ar[r]^{e_1} & E_0 \ar[r]^-{e_0} & M \ar[r] & 0 
}
$$
to the \textit{fraction} defined by the roof
$$
\xymatrix@!C=18pt@R=22pt{
M & \cdots \ar[r] & 0 \ar[r] & 0 \ar[r]^-{} & 0 \ar[r]^-{} & \cdots \ar[r]^{} & M \ar[r] & 0 \ar[r] & \cdots \ \ \\
\xi^\natural \ar[u] \ar[d]  & \cdots \ar[r] & 0 \ar[r] \ar[u] \ar[d] & M \ar[r]^-{e_n} \ar@{=}[d] \ar[u] & E_{n-1} \ar[d] \ar[u] \ar[r]^-{e_{n-1}} & \cdots \ar[r]^{e_1} & E_0 \ar[u]_-{e_0} \ar[r] \ar[d] & 0 \ar[r] \ar[d] \ar[u] & \cdots \ \ \\
M[n] & \cdots \ar[r] & 0 \ar[r] & M \ar[r]^-{} & 0 \ar[r]^-{} & \cdots \ar[r]^{} & 0 \ar[r] & 0 \ar[r] & \cdots \ .
}
$$
The chain map $\xi^\natural \rightarrow M$ is a quasi-isomorphism. To proceed further, we need two additional lemmas. 
\end{nn}
\begin{lem}[{\cite[Prop.\,2]{Li00}}]\label{lem:algebraiso}
Let $\mathcal H$ be a commutative Hopf algebra over $k$ with underlying $k$-algebra $A = \mathcal H^\natural$. The map $\alpha = (A \otimes_k \nabla) \circ (\Delta \otimes_k A)$ is a $k$-algebra isomorphism $A \otimes_k A \rightarrow A \otimes_k A$ with inverse map $\beta =(A \otimes_k \nabla) \circ (A \otimes_k S \otimes_k A) \circ (\Delta \otimes_k A)$. Both maps send $1_A \otimes a$ to $1_A \otimes a$.
\end{lem}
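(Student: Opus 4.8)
The plan is to verify every claim by a direct computation in Sweedler notation, relying only on the defining axioms of a Hopf algebra. First I would unwind the two composite maps into closed form: applying $\Delta \otimes_k A$ and then multiplying into the second tensor factor yields, for $a,b \in A$,
$$
\alpha(a \otimes b) = \sum_{(a)} a_{(1)} \otimes a_{(2)}\,b, \qquad \beta(a \otimes b) = \sum_{(a)} a_{(1)} \otimes S(a_{(2)})\,b .
$$
With these explicit descriptions in hand, all subsequent verifications become mechanical, and in particular the final assertion drops out immediately: since $\Delta(1_A) = 1_A \otimes 1_A$ and $S(1_A) = 1_A$, both $\alpha$ and $\beta$ send $1_A \otimes a$ to $1_A \otimes a$.

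The one genuinely structural step is showing that $\alpha$ is a homomorphism of $k$-algebras, and this is the unique place where commutativity of $A$ is used. Because $\Delta$ is an algebra map, $\Delta(aa') = \sum a_{(1)}a'_{(1)} \otimes a_{(2)}a'_{(2)}$, whence
$$
\alpha(aa' \otimes bb') = \sum a_{(1)}a'_{(1)} \otimes a_{(2)}a'_{(2)}\,b\,b',
$$
whereas
$$
\alpha(a \otimes b)\,\alpha(a' \otimes b') = \sum a_{(1)}a'_{(1)} \otimes a_{(2)}\,b\,a'_{(2)}\,b' .
$$
These agree precisely because $A$ is commutative, so that $b\,a'_{(2)} = a'_{(2)}\,b$ inside the second factor. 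Unitality is the special case $a=a'=b=b'=1_A$ of the computation above.

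Next I would check that $\beta$ is a two-sided inverse of $\alpha$; notably this requires no commutativity, only coassociativity, the antipode axioms and the counit axioms. For $\beta \circ \alpha = \mathrm{id}$ one has $\beta(\alpha(a \otimes b)) = \sum a_{(1,1)} \otimes S(a_{(1,2)})\,a_{(2)}\,b$, which coassociativity rewrites as $\sum a_{(1)} \otimes S(a_{(2,1)})\,a_{(2,2)}\,b$; the antipode axiom $\nabla \circ (S \otimes_k A) \circ \Delta = \eta \circ \varepsilon$ together with the counit axiom then collapses this to $a \otimes b$. The identity $\alpha \circ \beta = \mathrm{id}$ is entirely symmetric, invoking instead $\nabla \circ (A \otimes_k S) \circ \Delta = \eta \circ \varepsilon$. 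As $\alpha$ is thereby a bijective algebra homomorphism, $\beta$ is automatically an algebra homomorphism as well, so no separate argument for $\beta$ is needed.

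I do not expect any serious obstacle: the whole lemma is a careful but routine Hopf-algebra verification. The only points that demand attention are the bookkeeping of Sweedler indices when coassociativity is applied (and choosing the correct antipode axiom in each of the two inverse computations), and making sure commutativity is invoked exactly once, at the multiplicativity step for $\alpha$.
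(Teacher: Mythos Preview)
Your proof is correct and follows essentially the same approach as the paper's: direct Sweedler computations using coassociativity, the antipode axioms and counitarity. The paper's version differs only organizationally---it reduces the inverse checks to elements $a \otimes 1_A$ (implicitly using that both $\alpha$ and $\beta$ commute with right multiplication by $1_A \otimes A$) and leaves the multiplicativity of $\alpha$ to the reader, whereas you handle general $a \otimes b$ and spell out the multiplicativity step explicitly.
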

\begin{proof}
It is clear, that both maps agree on $1_A \otimes A = \{ 1_A \otimes a \mid a \in A \}$. Hence it suffices to show that $(\beta \circ \alpha)(a \otimes 1_A) = a \otimes 1_A = (\alpha \circ \beta)(a \otimes 1_A)$ holds for all $a \in A$ to conclude the claimed statement. $\alpha$ maps $a \otimes 1_A$ to $\Delta(a)$ and hence
\begin{align*}
(\beta \circ \alpha)(a \otimes 1_A) &= ((A \otimes_k \nabla) \circ (A \otimes_k S \otimes_k A) \circ (\Delta \otimes_k A) \circ \Delta)(a)\\
&= ((A \otimes_k \nabla) \circ (A \otimes_k S \otimes_k A) \circ (A \otimes_k \Delta) \circ \Delta)(a)\\
&= ((A \otimes_k (\eta\circ\varepsilon)) \circ \Delta(a)\\
& = a \otimes 1_A,
\end{align*}
where we successively used the coassociativity and the counitarity. On the other hand, $\beta(a \otimes 1_A) = (A \otimes_k S) \circ \Delta(a)$. Since $(\Delta \otimes_k A) \circ (A \otimes_k S) = (A \otimes_k A \otimes_k S) \circ (\Delta \otimes_k A)$ one may deduce that $(\alpha \circ \beta)(a \otimes 1_A) = a \otimes 1_A$.
\end{proof}
\begin{lem}[{\cite[Cor.\,3]{Li00}}]
Under the assumptions of Lemma $\ref{lem:algebraiso}$, and for an $A^\ev$-module $M'$, we let $\mathrm{Res}_\alpha(M')$ be the $A^\ev$-module with underlying $k$-module $M'$ and twisted $A^\ev$-action
$$
(a \otimes b)m := \alpha(a \otimes b)m = \alpha(a \otimes 1_A)(1_A \otimes b)m \quad (\text{for $a,b \in A$, $m \in M'$}).
$$
Then for any $A$-module $M$, the modules $\mathrm{Res}_\alpha(M \otimes_k A)$ and $M \boxtimes_k A$ coincide. Moreover, $\mathrm{Res}_\alpha(P \otimes_k A)$ is a projective $A^\ev$-module if $P$ is projective over $A$.
\end{lem}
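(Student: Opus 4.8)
The plan is to dispatch the two claims in turn, the first by an explicit comparison of module actions and the second by a formal projectivity argument.

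For the identification $\mathrm{Res}_\alpha(M \otimes_k A) = M \boxtimes_k A$, I would first pin down the $A^\ev$-structure on $M \otimes_k A$: its left action acts on the first tensorand via the module structure of $M$ and its right action acts on the second tensorand by multiplication, so that $(a \otimes b)(m \otimes x) = am \otimes xb$. Both $\mathrm{Res}_\alpha(M \otimes_k A)$ and $M \boxtimes_k A$ have underlying $k$-module $M \otimes_k A$, so it suffices to match the actions. Using Lemma \ref{lem:algebraiso} one has $\alpha(a \otimes b) = \sum_{(a)} a_{(1)} \otimes a_{(2)}b$, whence the twisted action computes to
\[
(a \otimes b) \cdot (m \otimes x) \;=\; \sum_{(a)} a_{(1)}m \otimes x\, a_{(2)} b \;=\; \sum_{(a)} a_{(1)}m \otimes a_{(2)}\, x\, b,
\]
where the second equality uses the commutativity of $A$. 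The right-hand side is exactly the value of the left $A$-action (diagonal, via $\Delta$) followed by the right $A$-action (multiplication on the $A$-factor) defining $M \boxtimes_k A$. Hence the two modules coincide on the nose.

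For the projectivity statement, I would argue that the functor $- \otimes_k A \colon \Mod(A) \to \Mod(A^\ev)$ carrying the above structure is additive, commutes with arbitrary direct sums, and sends $A$ to the regular bimodule $A^\ev$; consequently it takes free $A$-modules to free $A^\ev$-modules and, by passing to direct summands, projective $A$-modules to projective $A^\ev$-modules. In particular $P \otimes_k A$ is $A^\ev$-projective. Finally, since $\alpha$ is a $k$-algebra isomorphism (Lemma \ref{lem:algebraiso}), the assignment $\mathrm{Res}_\alpha$ is restriction of scalars along $\alpha$ and hence an equivalence $\Mod(A^\ev) \to \Mod(A^\ev)$ with quasi-inverse $\mathrm{Res}_\beta$; such an equivalence preserves projectivity, so $\mathrm{Res}_\alpha(P \otimes_k A)$ is projective over $A^\ev$.

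There is no deep obstacle here; the only real care required is bookkeeping of conventions. The delicate point is to fix unambiguously which $A^\ev$-action is meant by $M \otimes_k A$ and to insert commutativity of $A$ at precisely the right place, so that the twisted action produced by $\alpha$ genuinely reproduces the diagonal action of $M \boxtimes_k A$ rather than merely an isomorphic variant of it.
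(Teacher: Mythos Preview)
Your argument is correct and follows essentially the same pattern as the paper. For the first claim the paper splits $\alpha(a\otimes b)=\alpha(a\otimes 1_A)(1_A\otimes b)$ and checks the two factors separately, whereas you compute $\alpha(a\otimes b)=\sum a_{(1)}\otimes a_{(2)}b$ in one step; these are the same verification.

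For the projectivity claim there is a small but genuine difference. The paper first applies part one with $M=A$, obtaining $\mathrm{Res}_\alpha(A\otimes_k A)=A\boxtimes_k A$, and then invokes Lemma~\ref{lem:boxtimes-otimes} (which uses the antipode) to identify $A\boxtimes_k A\cong A\otimes_k A\cong A^\ev$. Your route is more direct: you observe that the \emph{untwisted} $A\otimes_k A$ is already the regular $A^\ev$-module, so $-\otimes_k A$ sends projectives to projectives, and then you use that $\mathrm{Res}_\alpha$ is restriction along an algebra isomorphism and hence preserves projectivity. Your version avoids Lemma~\ref{lem:boxtimes-otimes} entirely and needs only Lemma~\ref{lem:algebraiso}; the paper's version has the minor advantage of exhibiting the isomorphism $\mathrm{Res}_\alpha(A\otimes_k A)\cong A^\ev$ explicitly via $U_A$.
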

\begin{proof}
The first claim follows from the fact that $\alpha(1_A \otimes b) = 1_A \otimes b$ for all $b \in A$ and the observation
$$
(a \otimes 1_A)(b \otimes m) = \alpha(a \otimes 1_A)(b \otimes m) = \Delta(a)(b \otimes m) \quad (\text{for $a, b \in A$, $m \in M$}).
$$
In light of the Lemma \ref{lem:boxtimes-otimes}, we know that (specialize to $P = A$) $\mathrm{Res}_\alpha(A \otimes_k A) = A \boxtimes_k A \cong A \otimes_k A \cong A^\ev$ as $A^\ev$-modules. Since $\mathrm{Res}_\alpha(- \otimes_k A)$ is an additive functor $\Mod(A) \rightarrow \Mod(A^\ev)$, and every projective module is a summand of a free one, we are done.
\end{proof}
\begin{proof}[Proof of theorem $\ref{thm:generalcibsol}$]
In degree zero, the homomorphism $\theta$ is delivered by the natural isomorphisms
\begin{equation}\label{eq:thetaiso}
\begin{aligned}
\theta_{M,N}: \Hom_A(M,N) \otimes_k A &\longrightarrow \Hom_{A^\ev}(\mathrm{Res}_\alpha(M \otimes_k A), \mathrm{Res}_\alpha(N \otimes_k A)), \\ \theta_{M,N}(f \otimes a) &= (m \otimes b \mapsto f(m) \otimes ba),
\end{aligned}
\end{equation}
where $M$ and $N$ are $A$-modules which are finitely generated projective. To see how the isomorphism operates in higher degrees, let $\mathbb P_k \rightarrow k \rightarrow 0$ be a projective resolution of $k$ as an $A$-module, which can be chosen to have finitely generated components (over $k$, hence also over $A$). Since $A$ is $k$-projective (and hence also $k$-flat), the complex $\mathrm{Res}_\alpha(\mathbb P_A \otimes_k A) \rightarrow \mathrm{Res}_\alpha(k \otimes_k A) \rightarrow 0$ is a projective resolution of $\mathrm{Res}_\alpha(A \otimes_k k)$ as an $A^\ev$-module. But since $A \cong k \boxtimes_k A = \mathrm{Res}_\alpha(k \otimes_k A)$ as $A^\ev$-modules, we may regard it as a bimodule resolution $\mathbb P_A := \mathrm{Res}_\alpha(P_k \otimes_k A) = \mathbb P_k \boxtimes_k A$ of $A$. Now, the map $\theta$ induces an isomorphism
$$
\Hom_{\mathbf K(A)}(\mathbb P_k, \mathbb P_k[n]) \otimes_k A \longrightarrow \Hom_{\mathbf K(A^\ev)}(\mathbb P_A, \mathbb P_A[n])
$$
for, if $f: M \rightarrow M'$, $g: N \rightarrow N'$ and $\varphi: M \rightarrow N$, $\varphi': M' \rightarrow N' $ are $A$-linear homomorphisms between $A$-modules which are finitely generated projective over $k$ such that $\varphi' \circ f = g \circ \varphi$, the diagram
$$
\xymatrix@C=30pt{
M \otimes_k A \ar[r]^-{f \otimes_k A} \ar[d]_-{\theta(\varphi \otimes a)} & M' \otimes_k A \ar[d]^-{\theta(\varphi' \otimes a)}\\
N \otimes_k A \ar[r]^-{g \otimes_k A} & N' \otimes_k A
}
$$
commutes for every $a \in A$. From \ref{nn:extkdiso}, we immediately deduce that
$$
\Ext^n_A(k,k) \otimes_k A \cong \Ext^n_{A^\ev}(A,A) \quad (\text{for all $n \geq 0$}).
$$
The isomorphism is such of graded $k$-algebras.
\end{proof}
\begin{prop}\label{prop:morphismagree}
Let $\mathcal H$ be a commutative and quasi-triangular Hopf algebra over $k$ with underlying $k$-algebra $A = \mathcal H^\natural$. Assume that $A$ is finitely generated projective as a $k$-module. Then the injective homomorphism
$$
\vartheta: \OH^\bullet(A,k) \longrightarrow \OH^\bullet(A,k) \otimes_k A \cong \HH^\bullet(A), \ \xi \mapsto \xi \otimes 1_A
$$
of graded $k$-algebras agrees with the homomorphism $\scrL_\H^\sharp: \OH^\bullet(A,k) \rightarrow \HH^\bullet(A)$ constructed earlier.
\end{prop}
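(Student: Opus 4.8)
The plan is to reduce both homomorphisms to the single operation ``apply the functor $\scrL_\H = -\boxtimes_k A$ degreewise'' and then to observe that Linckelmann's $\theta(-\otimes 1_A)$ is precisely this operation. First I would recall the two descriptions in play. On the one hand, $\scrL_\H^\sharp$ is, by construction (see Corollary \ref{cor:quasiHopfrestr}, Proposition \ref{prop:functor_restriction} and Lemma \ref{lem:exactfuncyoneda}), the graded $k$-algebra map $\pi_0 \scrL_\H$ that sends the class of an admissible $n$-extension $\xi \in \mathcal Ext^n_{\C(\H)}(k,k)$ to the class of $\scrL_\H(\xi) = \xi \boxtimes_k A \in \mathcal Ext^n_{\P(A)}(A,A)$, post-composed with the identifications $\OH^\bullet(A,k) \cong \Ext^\bullet_{\C(\H)}(k,k)$ and $\Ext^\bullet_{\P(A)}(A,A) \cong \HH^\bullet(A)$ provided by Lemma \ref{lem:iso_ext_algeb} and Corollary \ref{cor:isohochschildproj}. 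On the other hand, $\vartheta$ is $\theta(-\otimes 1_A)$, where $\theta$ is the isomorphism of Theorem \ref{thm:generalcibsol}.

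The key elementary computation is that in degree zero these two prescriptions literally coincide. Specializing the formula (\ref{eq:thetaiso}) to $a = 1_A$ gives $\theta_{M,N}(f \otimes 1_A)(m \otimes b) = f(m) \otimes b$, and since $\mathrm{Res}_\alpha(M \otimes_k A) = M \boxtimes_k A$, this is exactly $(f \boxtimes_k A)(m \otimes b) = \scrL_\H(f)$. Thus $\theta_{M,N}(-\otimes 1_A) = \scrL_\H$ as maps $\Hom_A(M,N) \longrightarrow \Hom_{A^\ev}(M \boxtimes_k A,\, N \boxtimes_k A)$ for all finitely generated projective $A$-modules $M,N$.

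Next I would lift this identity to higher degrees through the resolution used in the proof of Theorem \ref{thm:generalcibsol}. There the bimodule resolution of $A$ is $\mathbb P_A = \mathbb P_k \boxtimes_k A = \scrL_\H(\mathbb P_k)$, obtained by applying $\scrL_\H$ to a finitely generated projective resolution $\mathbb P_k \to k \to 0$; this is legitimate because $\scrL_\H$ is exact and carries projectives to projectives (Lemma \ref{lem:func_prop}). Given a cocycle $f \colon \mathbb P_k \to \mathbb P_k[n]$ representing $\xi \in \OH^n(A,k)$, the map $\vartheta$ sends $\xi$ to the class of $\theta(f \otimes 1_A) = \scrL_\H(f) \colon \mathbb P_A \to \mathbb P_A[n]$ by the degree-zero identity applied in each degree. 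It therefore remains to check that $\scrL_\H^\sharp(\xi)$ is represented by the very same chain map $\scrL_\H(f)$. This follows from the compatibility of the comparison isomorphism of \ref{nn:extkdiso} (between Yoneda extensions and morphisms in the derived category) with the exact functor $\scrL_\H$: applying $\scrL_\H$ to the quasi-isomorphism $\xi^\natural \to k$ and to the roof representing $\xi$ yields the roof representing $\scrL_\H(\xi)$, whose associated chain self-map of $\mathbb P_A$ is $\scrL_\H(f)$, where one uses $\scrL^\H \scrL_\H \cong \Id$ (Lemma \ref{lem:func_prop}(\ref{lem:func_prop:5})) to identify $\scrL_\H(\mathbb P_k)$ with $\mathbb P_A$ as a resolution of $A$.

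The main obstacle will be precisely this last bookkeeping step: reconciling the extension-category (Yoneda) definition of $\scrL_\H^\sharp$ with the $\Hom$-complex description that underlies $\theta$, that is, tracking a Yoneda $n$-extension through the comparison roof of \ref{nn:extkdiso}, applying $\scrL_\H$, and verifying that the resulting fraction is the one determined by $\scrL_\H(f)$. Once the naturality of the comparison isomorphism under exact functors is made explicit, the identification $\vartheta = \scrL_\H^\sharp$ is forced, since both are split monomorphisms of graded $k$-algebras represented by the same chain map $\scrL_\H(f)$; in particular I expect no sign to intervene, as the degree-zero agreement $\theta(-\otimes 1_A) = \scrL_\H$ is on the nose.
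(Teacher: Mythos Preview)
Your proposal is correct and follows essentially the same route as the paper: both identify $\theta(-\otimes 1_A)$ with $\scrL_\H$ on chain maps and then reduce to the compatibility of the Yoneda--derived-category comparison with the exact functor $\scrL_\H$. The paper carries out your ``bookkeeping step'' by explicitly writing the two roofs $A \leftarrow \scrL_\H(\xi^\natural) \rightarrow A[n]$ and $A \leftarrow \scrL_\H(\xi'^\natural) \rightarrow A[n]$ (where $\xi'$ is the pushout along a lift $\psi$ through $\mathbb P_k$) and exhibiting a morphism between them via $\scrL_\H(\alpha_\xi)$; this is exactly what you package as ``naturality of the comparison isomorphism under exact functors''. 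One small correction: you do not need $\scrL^\H\scrL_\H\cong\Id$ to identify $\scrL_\H(\mathbb P_k)$ with $\mathbb P_A$, since $\mathbb P_A$ is \emph{defined} as $\mathbb P_k\boxtimes_k A=\scrL_\H(\mathbb P_k)$ in the proof of Theorem~\ref{thm:generalcibsol}.
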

\begin{proof}[Proof]
Let $\mathbb P_k \rightarrow k \rightarrow 0$ be a $A$-projective resolution of $k$, and let $\mathbb P_A \rightarrow A \rightarrow 0$ be the $A^\ev$-projective resolution $\mathbb P_A = \mathbb P_k \boxtimes_k A$. Let $\theta$ be the map (\ref{eq:thetaiso}). It suffices to show that
$$
\xymatrix@!C=68pt{
\Hom_{\mathbf K(A)}(\mathbb P_k,\mathbb P_k[n]) \ar[dr] \ar[ddd]|=_-{\vartheta}^{\theta(- \otimes 1_A)} \ar[rrr]^-\cong &&&\Ext^n_{\C(\H)}(k,k)\ar[ddd]^-{\scrL_\H^\sharp}\\
& \Hom_{\mathbf D(A)}(k,k[n]) \ar[r]^-{\delta} \ar[d] &  \ar[d] \Ext_A^n(k,k) \ar[ur] &\\
& \Hom_{\mathbf D(A^\ev)}(A,A[n]) \ar[r]^-{\partial} & \Ext^n_{A^\ev}(A,A) \ar[dr] &\\
 \Hom_{\mathbf K(A^\ev)}(\mathbb P_A, \mathbb P_A[n]) \ar[ur] \ar[rrr]^-\cong &&& \Ext^n_{\P(A)}(A,A)
}
$$
commutes for every $n \geq 1$ to establish the proposition. We have to show that the internal square commutes. Since $A$ is $k$-flat, the functor $\scrL_\H = (- \boxtimes_k A): \Mod(A) \rightarrow \Mod(A^\ev)$ is exact. Hence it gives rise to a functor $\mathbf D(\scrL_\H): \mathbf D(A) \rightarrow \mathbf D(A^\ev)$, and the lefthand arrow in the internal square is given by applying it. According to \ref{nn:extkdiso}, an $n$-extionsion
$$
\xymatrix@C=20pt{
\xi & \equiv & 0 \ar[r] & k \ar[r]^-{e_n} & E_{n-1} \ar[r]^-{e_{n-1}} & \cdots \ar[r]^{e_1} & E_0 \ar[r]^-{e_0} & k \ar[r] & 0 
}
$$
of $A$-modules is mapped to the roof
\begin{equation}\label{eq:roof1}
\begin{aligned}
\xymatrix@!C=25pt{
A & \scrL_\H(\xi^\natural) \ar[r] \ar[l] & A[n]
}
\end{aligned}
\end{equation}
by $\mathbf D(\scrL_\H) \circ \delta^{-1}$, where $\xi^\natural$ is the complex $0 \rightarrow k \rightarrow E_{n-1} \rightarrow \cdots \rightarrow E_1 \rightarrow E_0$ and the only non-trivial components of the occurring chain maps are given by $e_0 \boxtimes_k A$ and $\id_A$ respectively. On the other hand, let $\psi: \mathbb P_k \rightarrow \xi^\natural$ be a chain map lifting the identity of $k$. By pushing out $(e_n, \psi_n)$ we obtain an $n$-extension $\xi'$ of $k$ by $k$ and a morphism $\alpha_\xi: \xi' \rightarrow \xi$ of $n$-extensions. Now the righthand arrow in the internal square maps $\xi$ to $\scrL_\H(\xi')$, which is sent to
\begin{equation}\label{eq:roof2}
\begin{aligned}
\xymatrix@!C=25pt{
A & \scrL_\H(\xi'^\natural) \ar[r] \ar[l] & A[n]
}
\end{aligned}
\end{equation}
by $\partial^{-1}$. The commutative diagram
$$
\xymatrix@!C=25pt@R=25pt{
& \scrL_\H(\xi^\natural) \ar[dl] \ar[dr]  & \\
A & \scrL_{\H}(\xi'^\natural) \ar@{=}[d] \ar[r] \ar[l] \ar[u]|{\scrL_\H (\alpha_\xi)}& A[n]\\
& \scrL_{\H}(\xi'^\natural) \ar[ur] \ar[ul] &
}
$$
tells us that the roofs (\ref{eq:roof1}) and (\ref{eq:roof2}) define the same equivalence class in $\Hom_{\mathbf D(A^\ev)}(A,A[n])$.
\end{proof}
\begin{cor}\label{cor:centerdetermins}
Under the assumptions of Proposition $\ref{prop:morphismagree}$ and after letting $\OH^\bullet$ denote $\OH^\bullet(A,k)$, the Gerstenhaber bracket $\{-,-\}_A$ on $\HH^\bullet(A) \cong \OH^\bullet(A,k) \otimes_k A$ is given as follows:
$$
\{\xi \otimes x, \zeta \otimes y\}_A = \{\xi \otimes 1_A, 1_{\OH^\bullet} \otimes y\}_A (\zeta \otimes x) + (-1)^{\abs{\zeta}} (\xi \otimes y) \{\zeta \otimes 1_A, 1_{\OH^\bullet} \otimes x\}_A,
$$
where $x,y \in A$ and $\xi, \zeta \in \OH^\bullet(A,k)$ are homogeneous. Hence the graded Lie algebra structure on $\HH^{\bullet + 1}(A)$ is completely determined by the $k$-linear maps $$\{- \otimes 1_A, 1_{\OH^\bullet} \otimes x\}_A : \OH^\bullet(A,k) \longrightarrow \HH^{\bullet - 1}(A), \quad x \in A,$$ or, respectively, by the $k$-linear maps $$\{\xi \otimes 1_A, 1_{\OH^\bullet} \otimes -\}_A : A \longrightarrow \HH^{\abs{\xi} - 1}(A), \quad \xi \in \OH^\bullet(A,k).$$
$($In other words, the Gerstenhaber bracket is completely determined by the induced action of the center $Z(A) = A$ on the cohomology ring $\OH^\bullet(A,k).)$
\end{cor}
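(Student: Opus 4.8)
The plan is to exploit three facts simultaneously: that Linckelmann's isomorphism $\HH^\bullet(A) \cong \OH^\bullet(A,k)\otimes_k A$ of Theorem \ref{thm:generalcibsol} is one of graded $k$-algebras with $A$ sitting in degree $0$; that the Gerstenhaber bracket is a biderivation for this product by the graded Poisson identity (G6) of the strict Gerstenhaber algebra $(\HH^\bullet(A), \{-,-\}_A, sq_A)$ furnished by Theorem \ref{thm:hh_gerstenhaber}; and that the bracket already vanishes on the subalgebra $\OH^\bullet(A,k)\otimes 1_A$ by Corollary \ref{cor:gerstenhabervanishhopfalgebra}, where Proposition \ref{prop:morphismagree} identifies $\scrL_\H^\sharp$ with the embedding $\xi \mapsto \xi\otimes 1_A$. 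The governing idea is that any element factors as $\xi\otimes x = (\xi\otimes 1_A)(1_{\OH^\bullet}\otimes x)$, so that expanding a general bracket by Leibniz pushes all the nontrivial content onto the mixed brackets $\{\xi\otimes 1_A, 1_{\OH^\bullet}\otimes y\}_A$, while the purely ``vertical'' and purely ``horizontal'' brackets fall away.

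Concretely, I would first record two auxiliary vanishings: $\{\xi\otimes 1_A, \zeta\otimes 1_A\}_A = 0$ by Corollary \ref{cor:gerstenhabervanishhopfalgebra}, and $\{1_{\OH^\bullet}\otimes x, 1_{\OH^\bullet}\otimes y\}_A = 0$ for degree reasons, since both factors lie in $\HH^0(A) = Z(A)$ while the bracket has degree $-1$ and so lands in $\HH^{-1}(A) = 0$. I would also note that the degree-$0$ part $A$ is central in the graded-commutative ring $\HH^\bullet(A)$, so that factors $1_{\OH^\bullet}\otimes x$ move across products without signs. Next I would apply the right Leibniz rule (G6) to split $\{\xi\otimes x,\,(\zeta\otimes 1_A)(1_{\OH^\bullet}\otimes y)\}_A$ into two summands, and then apply the left Leibniz rule — which I would derive from (G2) and (G6) in the form $\{ab,c\}_A = a\{b,c\}_A + (-1)^{\abs{b}(\abs{c}-1)}\{a,c\}_A\,b$ — to each of $\{\xi\otimes x, \zeta\otimes 1_A\}_A$ and $\{\xi\otimes x, 1_{\OH^\bullet}\otimes y\}_A$. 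Because $1_{\OH^\bullet}\otimes x$ sits in degree $0$, these left expansions collapse, and the two auxiliary vanishings then annihilate every term except the two mixed contributions.

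What remains is pure sign bookkeeping: after collecting one obtains $(\xi\otimes 1_A)\{1_{\OH^\bullet}\otimes x, \zeta\otimes 1_A\}_A(1_{\OH^\bullet}\otimes y)$ together with a graded-signed copy of $(\zeta\otimes 1_A)\{\xi\otimes 1_A, 1_{\OH^\bullet}\otimes y\}_A(1_{\OH^\bullet}\otimes x)$, and I would use graded commutativity (G1) with antisymmetry (G2) to rewrite $\{1_{\OH^\bullet}\otimes x, \zeta\otimes 1_A\}_A = (-1)^{\abs\zeta}\{\zeta\otimes 1_A, 1_{\OH^\bullet}\otimes x\}_A$ and to transport the central factors, checking that the accumulated signs collapse to exactly $(-1)^{\abs\zeta}$ and $+1$ as in the stated identity. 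I expect this sign-tracking, and in particular the correct derivation and application of the left Leibniz rule, to be the only genuinely delicate step; everything else is formal. Finally, since the resulting formula expresses $\{\xi\otimes x, \zeta\otimes y\}_A$ solely through the (known, commutative) product and the maps $\{-\otimes 1_A, 1_{\OH^\bullet}\otimes x\}_A$ and $\{\xi\otimes 1_A, 1_{\OH^\bullet}\otimes -\}_A$, the entire graded Lie structure on $\HH^{\bullet+1}(A)$ is visibly determined by the induced action of $Z(A) = A$ on $\OH^\bullet(A,k)$, which is the concluding assertion.
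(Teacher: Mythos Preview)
Your proposal is correct and follows essentially the same approach as the paper's proof: both factor $\xi\otimes x=(\xi\otimes 1)(1\otimes x)$, expand via the Poisson identity (G6) and its left-hand variant derived from (G2), and kill the unwanted terms using $\{\xi\otimes 1,\zeta\otimes 1\}_A=0$ from Corollary~\ref{cor:gerstenhabervanishhopfalgebra} together with the degree-$(-1)$ vanishing $\{1\otimes x,1\otimes y\}_A=0$. The only cosmetic difference is that the paper first isolates the two partial brackets $\{\xi\otimes x,1\otimes y\}_A$ and $\{\xi\otimes x,\zeta\otimes 1\}_A$ before combining them, whereas you expand in one pass; the sign bookkeeping is identical.
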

\begin{proof}
For simplicity, we will write $1$ for both $1_A$ and $1_{\OH^\bullet}$. Fix $x,y \in A$ and homogeneous elements $\xi, \zeta \in \OH^\bullet(A,k)$. After observing that $(\xi \otimes x) = (1 \otimes x)(\xi \otimes 1)$ and $(\zeta \otimes y) = (1 \otimes y)(\zeta \otimes 1)$, the axioms for $\{-,-\}_A$ yield:
\begin{align*}
\{\xi \otimes x, 1 \otimes y\}_A & = -(-1)^{(\abs{\xi}-1)(\abs{1 \otimes y}-1)} \{1 \otimes y, \xi \otimes x\}_A\\
&= (-1)^{\abs{\xi}}\Big( \{1 \otimes y, 1 \otimes x\}_A  (\xi \otimes 1)\\ 
& \quad \quad + (-1)^{(\abs{1 \otimes y}-1)\abs{1 \otimes x}}(1 \otimes x) \{1 \otimes y, \xi \otimes 1\}_A\Big)\\
&= (-1)^{\abs{\xi}} (1 \otimes x) \{1 \otimes y, \xi \otimes 1\}_A\\
&= \{\xi \otimes 1, 1 \otimes y\}_A (1 \otimes x),
\\
\{\xi \otimes x, \zeta \otimes 1\}_A & = -(-1)^{(\abs{\xi}-1)(\abs{\zeta}-1)} \{\zeta \otimes 1, \xi \otimes x\}_A\\
& = -(-1)^{(\abs{\xi}-1)(\abs{\zeta}-1)}\Big(\{\zeta \otimes 1, 1 \otimes x\}_A  (\xi \otimes 1)\\
&\quad \quad + (-1)^{(\abs{\zeta}-1)\abs{1 \otimes x}}(1 \otimes x)\{\xi \otimes 1, \zeta \otimes 1\}_A\Big)\\
&= (-1)^{(\abs{\xi}-1)(\abs{\zeta}-1) + 1}\{\zeta \otimes 1, 1 \otimes x\}_A  (\xi \otimes 1)\\
&= (-1)^{\abs{\zeta}} (\xi \otimes 1) \{\zeta \otimes 1, 1 \otimes x\}_A,
\end{align*}
and hence
\begin{align*}
\{\xi \otimes x, \zeta & \otimes y\}_A\\ &= \{\xi \otimes x, 1 \otimes y\}_A  (\zeta \otimes 1) + (-1)^{(\abs{\xi}-1)\abs{1 \otimes y}} (1 \otimes y)\{\xi \otimes x, \zeta \otimes 1\}_A\\
&= \{\xi \otimes x, 1 \otimes y\}_A  (\zeta \otimes 1) + (1 \otimes y)\{\xi \otimes x, \zeta \otimes 1\}_A\\
&= \{\xi \otimes 1, 1 \otimes y\}_A (1 \otimes x) (\zeta \otimes 1)\\
&\quad \quad + (-1)^{\abs{\zeta}} (1 \otimes y) (\xi \otimes 1) \{\zeta \otimes 1, 1 \otimes x\}_A\\
&=  \{\xi \otimes 1, 1 \otimes y\}_A (\zeta \otimes x) + (-1)^{\abs{\zeta}} (\xi \otimes y) \{\zeta \otimes 1, 1 \otimes x\}_A.
\end{align*}
Note that we did use that $\{1 \otimes x, 1 \otimes y\}_A = 0 = \{\xi \otimes 1, \zeta \otimes 1\}_A$.
\end{proof}
\begin{rem}
In a very recent article (see \cite{LeZh13}), J.\,Le and G.\,Zhou prove that if $A$ and $B$ are $k$-algebras over a field $k$, such that one amongst the two is finite dimensional over $k$, then
$$
\HH^\bullet(A \otimes_k B) \cong \HH^\bullet(A) \otimes_k \HH^\bullet(B)
$$
as Gerstenhaber algebras. They use the isomorphism to deduce the multiplicative structure, as well as the Lie structure of $\HH^\bullet(kG)$ where $G$ is an elementary abelian group of finite rank. More specifically, they demonstrate the following.
\begin{prop}[{\cite[Thm.\,4.3]{LeZh13}}]
Let $k$ be a field of characteristic $p > 0$, and let $G = (\mathbb Z/p\mathbb Z)^n$ be the elementary abelian $p$-group of rank $n \geq 1$.
\begin{enumerate}[\rm(1)]
\item If $p \neq 2$, then
\begin{align*}
\HH^\bullet(kG) &\cong \left(\frac{k[x_1, \dots,x_n]}{(x_1^p, \dots, x_n^p)}\right) \otimes_k \left(\Lambda(y_1, \dots, y_n) \otimes_k k[z_1, \dots, z_n]\right)\\
&\cong \left(\frac{k[x_1, \dots,x_n,z_1, \dots, z_n]}{(x_1^p, \dots, x_n^p)}\right) \otimes_k \Lambda(y_1, \dots, y_n)
\end{align*}
where $\abs{x_i} = 0$, $\abs{y_i} = 1$ and $\abs{z_i}=2$ for $i=1,\dots,n$. For the Gerstenhaber bracket $\{-,-\}_G$ on $\HH^\bullet(kG)$ we get
\begin{alignat*}{3}
\{x_i,y_j\}_G &= \delta_{ij}, \quad &\{x_i,x_j\}_G &= 0, \quad &\{y_i,y_j\}_G &= 0,\\
 \{x_i,z_j\}_G &= 0, \quad &\{y_i,z_j\}_G &= 0, \quad &\{z_i,z_j\}_G &= 0
\end{alignat*}
for all $i,j = 1, \dots, n$.
\item If $p = 2$, then
$$
\HH^\bullet(kG) \cong \left(\frac{k[x_1, \dots,x_n]}{(x_1^2, \dots, x_n^2)}\right)
\otimes_k k[y_1, \dots, y_n] \cong \frac{k[x_1, \dots,x_n, y_1, \dots, y_n]}{(x_1^2, \dots, x_n^2)},
$$
where $\abs{x_i} = 0$ and $\abs{y_i} = 1$ for $i=1,\dots,n$. For the Gerstenhaber bracket $\{-,-\}_G$ on $\HH^\bullet(kG)$ we get
\begin{align*}
\{x_i,y_j\}_G = \delta_{ij}, \quad \{x_i,x_j\}_G = 0, \quad \{y_i,y_j\}_G = 0
\end{align*}
for all $i,j = 1, \dots, n$.
\end{enumerate}
\end{prop}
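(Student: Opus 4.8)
The plan is to reduce everything to the cyclic group $C_p = \mathbb Z/p\mathbb Z$ and then feed the single-factor computation through the machinery of the preceding chapters. Since $G = C_p^{\times n}$ we have $kG \cong (kC_p)^{\otimes n}$ as Hopf algebras, and each factor $kC_p = k[t]/(t^p-1) \cong k[x]/(x^p)$ (with $x = t-1$, using $t^p-1 = (t-1)^p$ in characteristic $p$) is a commutative, cocommutative Hopf algebra, finite dimensional over $k$. The Künneth-type isomorphism of \cite[Thm.\,4.3]{LeZh13} (recalled in the Remark above) then gives $\HH^\bullet(kG) \cong \HH^\bullet(kC_p)^{\otimes n}$ as Gerstenhaber algebras. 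On a tensor product of Gerstenhaber algebras the bracket is the standard one, so for homogeneous $a,b$ lying in distinct tensor factors one has $\{a\otimes 1,\,1\otimes b\} = \pm\{a,1\}\otimes b \pm a\otimes\{1,b\} = 0$, because brackets against the multiplicative unit vanish. Thus every cross-factor bracket (those indexed by $i\neq j$) is zero, and it remains to pin down the graded algebra $\HH^\bullet(kC_p)$ and the brackets of its generators within a single factor.

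First I would settle the multiplicative structure. The algebra $A := (kC_p)^\natural = k[x]/(x^p)$ is truncated polynomial, and its augmentation ideal $(x)$ admits the $2$-periodic minimal $A$-free resolution of $k$ whose differentials alternate between multiplication by $x$ and by $x^{p-1}$; applying $\Hom_A(-,k)$ gives $\dim_k\OH^n(C_p,k)=1$ for all $n$, whence $\OH^\bullet(C_p,k)\cong \Lambda(y)\otimes k[z]$ (with $|y|=1$, $|z|=2$) for $p$ odd and $\OH^\bullet(C_2,k)\cong k[y]$ (with $|y|=1$) for $p=2$. Applying Linckelmann's Theorem \ref{thm:generalcibsol} to the commutative Hopf algebra $kC_p$ yields $\HH^\bullet(kC_p)\cong \OH^\bullet(C_p,k)\otimes_k A$ as graded $k$-algebras, with degree-zero part $A = k[x]/(x^p)$; tensoring $n$ copies recovers precisely the displayed rings.

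For the brackets, the bulk follows formally. Because $kC_p$ is commutative and cocommutative, hence quasi-triangular, Corollary \ref{cor:centerdetermins} applies and expresses the entire bracket on $\HH^\bullet(kC_p)\cong\OH^\bullet\otimes_k A$ through the center-action maps $\xi\mapsto\{\xi\otimes 1_A,\,1_{\OH^\bullet}\otimes x\}$, which (the bracket being a derivation in $\xi$) are determined on the generators $y$ and $z$. All brackets with both arguments in $\OH^\bullet(C_p,k)$ vanish by Corollary \ref{cor:gerstenhabervanishhopfalgebra} (pre-triangularity), giving $\{y,y\}=\{y,z\}=\{z,z\}=0$; and $\{x,x\}\in\HH^{-1}=0$ by degree. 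For $\{x,z\}$ I would argue as follows: writing $D_x:=\{x,-\}$, a derivation of degree $-1$, the identity $2\,D_x^2 = \{\{x,x\},-\} = 0$ forces $D_x^2=0$ for $p$ odd; expanding $D_x(z)=\sum_{i=0}^{p-1}c_i\,x^i y\in\HH^1=\Der_k(A)$ and using the Poisson identity $D_x(x^i y)=x^i\{x,y\}$ (as $\{x,x^i\}\in\HH^{-1}=0$), one gets $D_x^2(z)=\{x,y\}\sum_i c_i x^i=0$, so once $\{x,y\}\neq 0$ is known all $c_i$ vanish and $\{x,z\}=0$.

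The single genuinely computational input, and the main obstacle, is therefore $\{x,y\}=1$. Here $y$ corresponds under Linckelmann's isomorphism to a class in $\HH^1(A)=\Der_k(A)$ (inner derivations vanish since $A$ is commutative), and the bracket of a central element with a derivation class is, up to sign, evaluation, $\{x,D\}=\pm D(x)$. The work is to identify the image of $y\otimes 1_A$ as the derivation $\partial_x$ with $\partial_x(x)=1$, which I would extract from the explicit degree-one part \eqref{eq:thetaiso} of Linckelmann's map together with the comparison between the resolution $\mathbb P_k\boxtimes_k A$ and the bar resolution used in its construction; this delivers $\{x,y\}=\pm 1$, and the sign is absorbed by fixing the sign of $y$ (equivalently of $x$). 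The delicate point throughout is sign coherence, since one must stack the normalization $(-1)^{m+1}$ of Theorem \ref{thm:schwede_comm}, the Gerstenhaber sign rule on the tensor product, and Linckelmann's conventions; once $\{x,y\}=1$ is pinned down, the three preceding paragraphs assemble into $\{x_i,y_j\}=\delta_{ij}$ with all remaining brackets zero.
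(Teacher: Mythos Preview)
The paper does not prove this proposition; it is quoted verbatim from \cite{LeZh13} inside a Remark, purely as an illustration that the explicit brackets computed there match the general vanishing predicted by Corollary~\ref{cor:gerstenhabervanishhopfalgebra} and the structural description of Corollary~\ref{cor:centerdetermins}. There is nothing to compare against.

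That said, your sketch is essentially the strategy of the cited reference: reduce to $n=1$ via the Gerstenhaber--algebra K\"unneth isomorphism (which is precisely the main theorem of \cite{LeZh13}), then handle $kC_p$ directly. Your use of Linckelmann's Theorem~\ref{thm:generalcibsol} for the ring structure and of Corollary~\ref{cor:gerstenhabervanishhopfalgebra} to kill the brackets internal to $\OH^\bullet(C_p,k)$ is a legitimate shortcut that the monograph's machinery buys you. Your identification of $\{x,y\}$ as the only genuine computation is correct, and your derivation of $\{x,z\}=0$ from it via $2D_x^2=\{\{x,x\},-\}=0$ (hence $D_x^2=0$ for $p$ odd) is valid; note that this step only needs $\{x,y\}\neq 0$, not its exact value. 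The one place to be careful is the identification of the image of $y\otimes 1_A$ in $\HH^1(A)=\Der_k(A)$: you are right that this requires tracing through the explicit degree-one piece of Linckelmann's map, and that the sign ambiguity is harmless once absorbed into the choice of generator.
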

Observe that these results for the bracket not only match the observations made in the Introduction \ref{int:kernel} for finite cyclic groups, but also confirm our more general result stated in Corollary \ref{cor:centerdetermins} (see Corollary \ref{cor:gerstenhabervanishhopfalgebra} as well).
\end{rem}



\chapter{Application II: The $\mathbf{\Ext}$-algebra of the identity functor}\label{chap:identityfunc}
Over fields, Hochschild cohomology of associative algebras has at least three incarnations, two of which we have already encountered previously. The third one manifests as the $\Ext$-algebra of the identity functor on the category of left modules. To be more precise, let $k$ be a commutative ring, and let $A$ be a $k$-algebra. There is a canonical commutative triangle of graded $k$-algebra homomorphisms
$$
\xymatrix@!C=50pt@R=10pt{
 && \Ext^\bullet_{\mathsf{End}_k(\Mod(A))}(\Id_{\Mod(A)}, \Id_{\Mod(A)}) \ar[dd]^-{\beta} \ \ \\
\HH^\bullet(A) \ar@/^1pc/[urr]^-{\alpha} \ar@/_0.8pc/[drr]^-{\chi_A} &&\\
&& \Ext^\bullet_{A^\ev}(A,A) \ ,
}
$$
where $\sfEnd_k(\Mod(A))$ is the abelian category of endofunctors on $\Mod(A)$. It is known (as sort of a folklore, since a reference is nowhere to be found), that $\beta$ is an isomorphism if $A$ is a flat $k$-module. The map $\chi_A$ is known to be bijective if $A$ is $k$-projective. Our construction introduced in Chapter \ref{ch:bracket} yields a bracket $[-,-]_\A$ on $\Ext^\bullet_{\mathsf{End}_k(\Mod(A))}(\Id_{\Mod(A)}, \Id_{\Mod(A)})$. If $A$ is a projective $k$-module, we will prove that $\alpha = \beta^{-1} \circ \chi_A$ preserves the brackets in the sense that it maps the Gerstenhaber bracket $\{-,-\}_A$ on $\HH^\bullet(A)$ to the bracket $[-,-]_\A$. In what follows, let $k$ be a fixed commutative ring. Set theoretical issues of any kind will generously be ignored (as usual).
\section{The evaluation functor}\label{sec:eval}
\begin{nn}
Let $\A$ be an abelian $k$-category and let $X$ be an object in $\A$. Let $\sfEnd_k(\A)$ be the abelian $k$-category $\Fun_k(\A,\A)$, which we will refer to as the \textit{category of endofunctors on $\A$}. Moreover, let $E_\A(X)$ be the opposite endomorphism ring $\End_\A(X)^\op$ of $X$. The \textit{evaluation functor at $X$}, denoted by $\ev_X$, is given by
$$
\ev_X: \sfEnd_k(\A) \longrightarrow \A, \ \ev_X(\scrX) = \scrX(X).
$$
When composed with $\Hom_\A(X,-)$, we obtain a functor
$$
H_X: \sfEnd_k(\A) \longrightarrow \Mod(E_\A(X)), \ H_X(\scrX) = \Hom_\A(X,\scrX(X)).
$$
Note that the left $E_\A(X)$-module action on $H_X(\scrX)$ for $\scrX \in \Ob \sfEnd_k(\A)$ is given by
$$
ah := h \circ a \quad (\text{for $a \in E_\A(X), \ h \in \Hom_\A(X,\mathscr \scrX(X))$}).
$$
In fact, $H_X$ takes values in $\Mod(E_\A(X)^\ev)$ for
$$
ha := \scrX(a) \circ h \quad (\text{for $a \in E_\A(X), \ h \in \Hom_\A(X,\scrX(X))$})
$$
defines a right $E_\A(X)$-module structure on $\Hom_\A(X,\scrX(X))$, turning it into an $E_\A(X)$-$E_\A(X)$-bimodule with central $k$-action.
\end{nn}
\begin{lem}
The functor
$$
H_X: \sfEnd_k(\A) \longrightarrow \Mod(E_\A(X)^\ev), \ H_X(\scrX) = \Hom_\A(X,\scrX(X))
$$
is $k$-linear and commutes with limits $($if existent$)$. Hence $H_X$ has a left adjoint if $\A$ is complete.
\end{lem}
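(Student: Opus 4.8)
The plan is to factor $H_X$ as the composite $\Hom_\A(X,-) \circ \ev_X$ and to treat the three assertions of the lemma in turn, deducing the existence of a left adjoint from $k$-linearity and limit-preservation via the adjoint functor theorem. First I would establish $k$-linearity. On morphisms, $H_X$ sends a natural transformation $\eta \colon \scrX \to \scrY$ in $\sfEnd_k(\A)$ to the map $h \mapsto \eta_X \circ h$, that is, to $\Hom_\A(X, \eta_X)$. Since evaluation at $X$ is a $k$-linear operation on the $k$-module of natural transformations $\scrX \to \scrY$, and post-composition $\Hom_\A(X,-)$ is $k$-linear, the assignment $\eta \mapsto H_X(\eta)$ is $k$-linear. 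That $H_X(\eta)$ respects the left and right $E_\A(X)$-actions follows at once from their definitions: for the left action one uses associativity of composition, and for the right action one uses the naturality square $\eta_X \circ \scrX(a) = \scrY(a) \circ \eta_X$. Hence $H_X$ is a $k$-linear functor into $\Mod(E_\A(X)^\ev)$.

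Next I would verify that $H_X$ preserves limits. Limits in the functor category $\sfEnd_k(\A) = \Fun_k(\A, \A)$ are computed pointwise (this is automatic once $\A$ is complete, which is the case of ultimate interest), so the evaluation functor $\ev_X$ preserves them. The representable functor $\Hom_\A(X,-)$ preserves all limits that exist. Finally, the forgetful functor $\Mod(E_\A(X)^\ev) \to \Ab$ creates limits, and because both $E_\A(X)$-actions on $H_X(\scrX)$ are natural in $\scrX$, the limit of a diagram $(\scrX_i)$ in $\sfEnd_k(\A)$ is carried by $H_X$ to the limit of $(H_X(\scrX_i))$, formed on underlying abelian groups and endowed with the induced bimodule structure. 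Composing these three observations shows that $H_X = \Hom_\A(X,-) \circ \ev_X$ preserves limits.

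To conclude, suppose $\A$ is complete. Then $\sfEnd_k(\A) = \Fun_k(\A,\A)$ is complete, with limits computed pointwise, and by the previous step $H_X$ preserves all of them. The adjoint functor theorem then furnishes a left adjoint to $H_X$. The only genuine subtlety is the solution-set (respectively local-smallness) hypothesis required by the adjoint functor theorem; in accordance with the set-theoretic conventions adopted at the outset of this monograph I would simply suppress it. Thus the substantive content of the lemma is the limit-preservation of the second step, and within it the verification that the $E_\A(X)$-bimodule structure is respected by passage to the limit is the one point demanding a little care — everything else is formal.
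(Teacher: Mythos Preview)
Your proof is correct and follows essentially the same approach as the paper: factor $H_X$ as $\Hom_\A(X,-)\circ\ev_X$, observe that both factors are $k$-linear and limit preserving, and then invoke the adjoint functor theorem (the paper cites \cite[Thm.\,X.1.2]{MaL98}). The paper compresses your first two paragraphs into a single sentence and, like you, relies on the standing set-theoretic conventions to sidestep the solution-set condition.
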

\begin{proof}
The first two assertions are clear, because $H_X$ is the composite of two $k$-linear and limit preserving functors. The latter follows form the fact below.
\end{proof}
\begin{thm}[{\cite[Thm.\,X.1.2]{MaL98}}]
Let $\scrX: \A \rightarrow \mathsf B$ be a $k$-linear functor between complete abelian $k$-linear categories. Then $\scrX$ has a left adjoint functor if it commutes with limits $($that is, if $\scrX$ is \textit{continuous}$)$. In this case, a left adjoint $\scrX^\lambda$ is given by
$$
\scrX^\lambda(A) = \li(\mathscr Q: (A \downarrow \scrX) \longrightarrow \A) \quad \text{$($for $A$ in $\A$$)$},
$$
where $\mathscr Q$ denotes the projection functor from the slice category $A \downarrow \scrX$ to $\A$.
\end{thm}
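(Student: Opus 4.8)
The plan is to establish the theorem as an instance of the representability criterion, realizing the value $\scrX^\lambda(A)$ as a universal arrow and identifying it with the displayed limit. Fix an object $A$ in $\mathsf B$. First I would consider the $k$-linear functor $h_A := \Hom_{\mathsf B}(A, \scrX(-)) \colon \A \to \Mod(k)$. Since $\scrX$ is assumed continuous and $\Hom_{\mathsf B}(A,-)$ preserves all limits that exist, the composite $h_A$ preserves limits. The goal is to show that $h_A$ is representable, say by an object $\scrX^\lambda(A)$, so that there is a natural isomorphism $\Hom_\A(\scrX^\lambda(A), -) \cong h_A$; the representing objects then assemble into a functor $\scrX^\lambda \colon \mathsf B \to \A$ left adjoint to $\scrX$, the functoriality and naturality being forced by the uniqueness inherent in universal properties.

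Next I would describe the representing object explicitly. The comma category $(A \downarrow \scrX)$ --- whose objects are pairs $(A', f)$ with $A' \in \Ob\A$ and $f \colon A \to \scrX(A')$ in $\mathsf B$, and whose morphisms $(A',f) \to (A'',g)$ are arrows $h \colon A' \to A''$ in $\A$ with $\scrX(h) \circ f = g$ --- is exactly the category of elements of $h_A$, and $\mathscr Q$ is its underlying-object projection. The key structural fact to verify is that $(A \downarrow \scrX)$ is complete: a limit of a diagram is computed by forming the limit of the underlying objects in $\A$ and equipping it with a structure map to $\scrX$ of that limit, which exists precisely because $\scrX$ commutes with limits; in the same breath $\mathscr Q$ preserves these limits. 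Consequently $\scrX^\lambda(A) := \li \mathscr Q$ is the image under $\mathscr Q$ of $\li \Id_{(A \downarrow \scrX)}$, and the limit of the identity of a complete category is an initial object. Unwinding this, the initial object is a pair $(\scrX^\lambda(A), \eta_A)$ with $\eta_A \colon A \to \scrX(\scrX^\lambda(A))$ arising from continuity as a component of the limiting cone, and initiality says exactly that for every $(A', f)$ there is a unique $h \colon \scrX^\lambda(A) \to A'$ with $\scrX(h) \circ \eta_A = f$. This is the sought universal arrow, yielding the bijection $\Hom_\A(\scrX^\lambda(A), A') \cong \Hom_{\mathsf B}(A, \scrX(A'))$, $h \mapsto \scrX(h)\circ\eta_A$, which is $k$-linear because $\scrX$ is.

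The main obstacle is the set-theoretic one: the comma category $(A \downarrow \scrX)$ is a priori large, so the limit $\li \mathscr Q$ --- equivalently the initial object, equivalently the solution-set condition of the General Adjoint Functor Theorem --- need not exist without further hypotheses. In the classical statement one repairs this by exhibiting a solution set and replacing the large limit by a limit over a small weakly initial subcategory. Here, however, I would simply invoke the blanket convention of the monograph that set-theoretic issues are ignored, under which every such limit exists and the construction above goes through verbatim; alternatively one cites MacLane directly, since the assertion is \cite[Thm.\,X.1.2]{MaL98}. The remaining verifications --- that the assignment $A \mapsto \scrX^\lambda(A)$ is functorial, that $\eta$ and the resulting counit are natural, and that the triangle identities hold --- are routine consequences of the uniqueness in the universal property and require no separate argument.
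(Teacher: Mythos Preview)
The paper does not prove this statement at all; it is stated with the citation to \cite[Thm.\,X.1.2]{MaL98} and then used. Your sketch is a correct outline of the standard representability/Adjoint Functor Theorem argument (which is essentially what Mac Lane does), and you have correctly identified and handled the only genuine issue---the large limit over $(A\downarrow\scrX)$---by invoking the monograph's standing convention on set-theoretic matters. In this context, simply citing Mac Lane, as you yourself suggest, would be entirely adequate.
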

\begin{nn}
Let $A$ be a $k$-algebra. For the remainder of this section, let $\A$ be the complete category $\Mod(A)$ of all left $A$-modules. If $\scrX$ is a $k$-linear endofunctor of $\A$, then $\scrX(A)$ is a right $A$-module through $xa = \scrX(\cdot a)(x)$ ($a \in A, \ x \in \scrX(A)$). Note that $E_\A(A) = \End_A(A)^\op \cong A$ and that
$$
\Hom_A(A,\scrX(A)) \cong \scrX(A) = \ev_A(\scrX) \quad (\text{for $\scrX \in \Ob \sfEnd_k(\Mod(A))$})
$$
naturally in $\scrX$ and as $A^\ev$-modules. Hence $H_A: \sfEnd_k(\A) \rightarrow \Mod(A^\ev)$ coincides with the evaluation functor $\ev_A$ at $A$.
\end{nn}
\begin{lem}\label{lem:adjeval}
The left adjoint functor $\ev_A^\lambda: \Mod(A^\ev) \rightarrow \End_k(\A)$ of $\ev_A$ is given by
$$
\ev_A^\lambda(M) = (M \otimes_A -) \quad \text{$($for $M \in \Ob \Mod(A^\ev))$}.
$$
\end{lem}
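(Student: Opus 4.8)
The plan is to verify directly that $M \otimes_A -$ is left adjoint to $\ev_A$; since left adjoints are unique up to natural isomorphism, this identifies $\ev_A^\lambda$ and simultaneously reproves its existence (so that the completeness hypothesis of the preceding lemma is not needed here). First I would note that $M \otimes_A -$ is a genuine $k$-linear endofunctor of $\A = \Mod(A)$: for a left $A$-module $N$ the tensor product $M \otimes_A N$ is again a left $A$-module via the left $A$-action on the bimodule $M$, and this is $k$-linear and functorial in $N$. Recalling the natural $A^\ev$-linear identification $\ev_A = H_A$ coming from $\Hom_A(A, \scrX(A)) \cong \scrX(A)$, it then suffices to produce a bijection
\[
\Theta_{M,\scrX}\colon \Hom_{\sfEnd_k(\A)}(M \otimes_A -, \scrX) \longrightarrow \Hom_{A^\ev}(M, \scrX(A))
\]
natural in $M \in \Mod(A^\ev)$ and $\scrX \in \sfEnd_k(\A)$.

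For the two mutually inverse maps I would argue as follows. Given a natural transformation $\tau\colon M \otimes_A - \Rightarrow \scrX$, set $\Theta_{M,\scrX}(\tau)(m) := \tau_A(m \otimes 1_A)$, using $M \cong M \otimes_A A$. Conversely, for an $A^\ev$-linear map $f\colon M \to \scrX(A)$ and a left $A$-module $N$, I would define $\tau^f_N\colon M \otimes_A N \to \scrX(N)$ by $\tau^f_N(m \otimes n) := \scrX(\rho_n)(f(m))$, where $\rho_n\colon A \to N$ is the left $A$-linear map $a \mapsto an$. The decisive computation is that $\rho_{an} = \rho_n \circ (\cdot a)$, where $(\cdot a)\colon A \to A$, $b \mapsto ba$; since the right $A$-action on $\scrX(A)$ is by definition $x \cdot a = \scrX(\cdot a)(x)$, this yields $\tau^f_N(m \otimes an) = \scrX(\rho_n)\big(f(m) \cdot a\big) = \scrX(\rho_n)(f(ma)) = \tau^f_N(ma \otimes n)$, so $\tau^f_N$ is balanced and hence well-defined on $M \otimes_A N$. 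Left $A$-linearity of $\tau^f_N$ follows because $\scrX(\rho_n)$ is a morphism in $\Mod(A)$ and $f$ is left $A$-linear, while naturality of $\tau^f$ in $N$ reduces to the identity $\rho_{g(n)} = g \circ \rho_n$ for any $A$-linear $g\colon N \to N'$. Dually, $\Theta_{M,\scrX}(\tau)$ is $A^\ev$-linear precisely because the right action on $\scrX(A)$ is $\scrX(\cdot a)$ and $\tau$ is natural.

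That the two assignments are inverse is then immediate: in one direction $\rho_{1_A} = \id_A$ gives $\Theta_{M,\scrX}(\tau^f)(m) = \scrX(\rho_{1_A})(f(m)) = f(m)$, and in the other, naturality of $\tau$ along $\rho_n$ gives $\scrX(\rho_n)(\tau_A(m \otimes 1_A)) = \tau_N(m \otimes \rho_n(1_A)) = \tau_N(m \otimes n)$, whence $\tau^{\Theta(\tau)} = \tau$. Naturality of $\Theta$ in $M$ and in $\scrX$ is a routine diagram chase. I expect the only point demanding genuine care — and hence the main obstacle — to be the consistent bookkeeping of the two $A$-actions on $\scrX(A)$: the fact that its right action is encoded \emph{functorially} through $\scrX(\cdot a)$, rather than by its intrinsic left $\Mod(A)$-structure, is exactly what makes the balancing identity $\rho_{an} = \rho_n \circ (\cdot a)$ translate into well-definedness over $A$ and into $A^\ev$-linearity of $\Theta_{M,\scrX}(\tau)$.
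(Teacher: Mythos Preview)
Your proof is correct and shares the same essential construction as the paper's: both hinge on the map sending $m \otimes n$ to $\scrX(\rho_n)$ applied to an element of $\scrX(A)$ (the paper writes $\rho_n$ as $\cdot n$). The only difference is packaging: the paper establishes the adjunction via the unit $\eta_M\colon M \cong M\otimes_A A$ and the counit $\varepsilon_{\scrX,N}\colon \scrX(A)\otimes_A N \to \scrX(N)$, $x\otimes n\mapsto \scrX(\cdot n)(x)$, and then verifies the triangle identities, whereas you build the hom-set bijection $\Theta_{M,\scrX}$ directly. Your $\tau^f_N$ is exactly $\varepsilon_{\scrX,N}\circ(f\otimes_A N)$, and your $\Theta(\tau)$ is $\ev_A(\tau)\circ\eta_M$, so the two presentations are formally interchangeable; your version has the minor expository advantage of making the $A^\ev$-linearity of the adjunction isomorphism explicit in a single step.
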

\begin{proof}
We will show that the functor $\ev_A^\lambda$, $\ev_A^\lambda(M) = (M \otimes_A -)$, is a left adjoint functor of $\ev_A$. To this end, we will define the unit and the counit of the claimed adjunction:
$$
\eta: \Id_{\Mod(A^\ev)} \longrightarrow \text{ev}_A \circ \text{ev}_A^\lambda, \quad \varepsilon: \text{ev}_A^\lambda \circ \text{ev}_A \longrightarrow \Id_{\End_k(\A)}.
$$
Clearly, we have the following functorial isomorphisms:
$$
\text{ev}_A(M \otimes_A -) = M \otimes_A A \cong M \quad \text{(for $M \in \Mod(A^\ev)$)},
$$
giving rise to the unit. For $\scrX \in \Ob \End_k(\A)$, define
$$
\varepsilon_{\scrX}: (\scrX(A) \otimes_A -) \longrightarrow \scrX
$$
as follows. Let $M$ be an $A^\ev$-module and $m \in M$. Denote by $\cdot m$ the $A$-linear homomorphism $A \rightarrow M, \ a \mapsto am$ and define
$$
\varepsilon_{\scrX, M}: {\scrX}(A) \otimes_A M \longrightarrow \scrX(M), \ x \otimes m \mapsto {\scrX}(\cdot m)(x). 
$$
Indeed, we obtain a commutative diagram
$$
\xymatrix@C=44pt{
\scrX(A) \otimes_A M \ar[r]^{\id_{\scrX(A)} \otimes_A f} \ar[d]_{\varepsilon_{\scrX, M}} & \scrX(A) \otimes_A N \ar[d]^{\varepsilon_{{\scrX}, N}}\\
{\scrX}(M) \ar[r]^{{\scrX}(f)} & \scrX(N)
}
$$
for every $A$-linear homomorphism $f: M \rightarrow N$ since $f \circ (\cdot m) = \cdot f(m)$, and therefore
\begin{align*}
(\varepsilon_{\scrX,N} \circ \id_{\scrX(A)} \otimes_A f) (a \otimes m) = \varepsilon_{\scrX,N}(a \otimes f(m)) = \scrX(\cdot f(m))(a)
\end{align*}
and
\begin{align*}
(\scrX(f) \circ \varepsilon_{\scrX,M}) (a \otimes m) = \scrX(f)(\scrX(\cdot m)(a)) = \scrX(f \circ (\cdot m))(a) = \scrX(\cdot f(m))(a).
\end{align*}
We now check, that the morphisms $\varepsilon_{\scrX}, \ {\scrX \in \Ob \sfEnd_k(\Mod(A^\ev))},$ are functorial in $\scrX$. Let $\scrX, \scrY \in \Ob \sfEnd_k(\A)$ and let $M$ be an $A$-module. Moreover, let $\theta:
\scrX \rightarrow \scrY$ be a natural transformation. The diagram
$$
\xymatrix@C=40pt{
\scrX(A) \otimes_A M \ar[r]^-{\theta_A \otimes_A M} \ar[d]_-{\varepsilon_{\scrX,M}} & \scrY(A) \otimes_A M
\ar[d]^-{\varepsilon_{\scrY,M}}\\
\scrX(M) \ar[r]^-{\theta_M} & \scrY(M)
}
$$
is commutative since
\begin{align*}
(\varepsilon_{\scrY,M} \circ (\theta_A \otimes_A M))(a \otimes m) &= \varepsilon_{\scrY,M}(\theta_A(a) \otimes m) = \scrY(\cdot m)(\theta_A(a)),\\
(\theta_M \circ \varepsilon_{\scrX,M})(a \otimes m) &= \theta_M(\scrX(\cdot m)(a))
\end{align*}
for all $a \in \scrX(A), \ m \in M$, and since
$$
\xymatrix@C=30pt{
\scrX(A) \ar[r]^-{\scrX(\cdot m)} \ar[d]_-{\theta_A} & \scrX(M) \ar[d]^-{\theta_M}\\
\scrY(A) \ar[r]^-{\scrY(\cdot m)} & \scrY(M)
}
$$
commutes due to the naturality of $\theta$. To completely establish the adjunction, we finally have to verify that the compositions
$$
\xymatrix@C=30pt{
\mathrm{ev}_A(\scrX) \ar[r]^-{\eta_{\mathrm{ev}_A(\scrX)}} & \mathrm{ev}_A \mathrm{ev}^{\lambda}_A
\mathrm{ev}_A(\scrX) \ar[r]^-{\mathrm{ev}_A(\varepsilon_{\scrX})} & \mathrm{ev}_A(\scrX)
}
$$
and
$$
\xymatrix@C=30pt{
\mathrm{ev}^{\lambda}_A(M) \ar[r]^-{\mathrm{ev}^{\lambda}_A(\eta_{M})} & \mathrm{ev}^{\lambda}_A
\mathrm{ev}_A
\mathrm{ev}^{\lambda}_A(M) \ar[r]^-{\varepsilon_{\mathrm{ev}^{\lambda}_A(M)}} &
\mathrm{ev}^{\lambda}_A(M)
}
$$
are the identity morphisms on $\scrX(A)$ and $M \otimes_A -$ respectivley (for $\scrX \in \sfEnd_k(\A), \ M \in \Mod(A^{\ev}))$. In fact, for $a \in \scrX(A)$, $N \in \Mod(A)$ and $m \otimes n \in M \otimes_A N$, we get that
\begin{align*}
(\mathrm{ev}_A(\varepsilon_{\scrX}) \circ \eta_{\mathrm{ev}_A(\scrX)})(a) &= \varepsilon_{\scrX,A}(a \otimes 1_A) = \scrX(\cdot 1_A)(a) = a
\end{align*}
as well as
\begin{align*}
(\varepsilon_{\mathrm{ev}^{\lambda}_A(M)} \circ \mathrm{ev}^{\lambda}_A(\eta_{M}))_N(m \otimes n) &=
\varepsilon_{\mathrm{ev}^{\lambda}_A(M), N}((m \otimes 1_A) \otimes n)\\
&= \mathrm{ev}^{\lambda}_A(M)(\cdot n)(m \otimes 1_A)\\ &= m \otimes n. 
\end{align*}
Hence the lemma is established.
\end{proof}
\section{Exact endofunctors}
The following lemma generalizes a known result for the tensor functor $\otimes_R$ over a ring $R$.
\begin{lem}[{\cite[Ch.\,1, Prop.\,2.6]{Liu02}}]\label{lem:flatexact}
Let $\A_1$, $\A_2$ and $\mathsf B$ be abelian categories and let $\mathscr R: \A_1 \times \A_2 \rightarrow \mathsf B$ be an additive functor such that the functors $\mathscr R^N = \mathscr R(-,N)$ are right exact for every object $N \in \A_2$. Further, assume that $\A_2$ and $\mathscr R$ are such that $\A_2$ admits a generating class $\U \subseteq \A_2$ with $\mathscr R^U=\mathscr R(-,U)$ being an exact functor $\A_1 \rightarrow \mathsf B$ for every $U \in \U$. Let $0 \rightarrow X \rightarrow Y \rightarrow Z \rightarrow 0$ be a short exact sequence in $\A_1$ such that $\mathscr R_X = \mathscr R(X,-)$ and $\mathscr R_Y = \mathscr R(Y,-)$ are right exact, and $\mathscr R_Z = \mathscr R(Z,-)$ is exact. Then the sequence
$$
0 \longrightarrow \mathscr R^N(X) \longrightarrow \mathscr R^N(Y) \longrightarrow \mathscr R^N(Z) \longrightarrow 0
$$
is exact in $\mathsf B$ for every object $N \in \A$, that is, the induced sequence
$$
0 \longrightarrow \mathscr R_X \longrightarrow \mathscr R_Y \longrightarrow \mathscr R_Z \longrightarrow 0
$$
of functors is exact.
\end{lem}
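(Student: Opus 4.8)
The plan is to observe first that most of the assertion comes for free, and to isolate the single nontrivial point. Since each functor $\mathscr R^N = \mathscr R(-,N)$ is right exact by hypothesis, applying it to the short exact sequence $0 \to X \to Y \to Z \to 0$ immediately yields exactness of $\mathscr R^N(X) \to \mathscr R^N(Y) \to \mathscr R^N(Z) \to 0$; in particular the sequence is already exact at $\mathscr R^N(Y)$ and at $\mathscr R^N(Z)$. The entire content of the lemma therefore collapses to one claim: I would only have to show that the left-hand morphism $\mathscr R^N(X) \to \mathscr R^N(Y)$ is a monomorphism for every object $N$ in $\A_2$.

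To access this I would resolve $N$ by the generating class. Since $\U$ generates $\A_2$, choose an epimorphism $p \colon W \to N$ with $W$ a coproduct of objects of $\U$, and set $N' := \Ker(p)$, so that $0 \to N' \to W \to N \to 0$ is a short exact sequence in $\A_2$. The point of this choice is that $\mathscr R(-,W)$ is exact, being assembled from the functors $\mathscr R(-,U)$ with $U \in \U$, each of which is exact by assumption. Now I would apply the bifunctor $\mathscr R$ to the two short exact sequences simultaneously, producing a commutative $3 \times 3$ array with rows indexed by $X,Y,Z$ and columns by $N',W,N$, whose $(A,B)$-entry is $\mathscr R(A,B)$. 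The hypotheses pin down exactly which lines are exact: the column over $W$ is short exact (as $\mathscr R(-,W)$ is exact), the row for $Z$ is short exact (as $\mathscr R_Z$ is exact), while the remaining rows (for $X$, $Y$) and columns (over $N'$, $N$) are right exact, since $\mathscr R_X$, $\mathscr R_Y$ and all the $\mathscr R^{(-)}$ are.

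With this diagram in place the argument is a finite member chase, legitimate in any abelian category after Mitchell's embedding theorem (or via the calculus of generalized elements). Starting from a member $\alpha$ of $\mathscr R(X,N)$ killed by the vertical map into $\mathscr R(Y,N)$, I would lift it successively: back along the right-exact row $X$ to $\mathscr R(X,W)$, push it down to $\mathscr R(Y,W)$, where it lands in the kernel of the map to $\mathscr R(Y,N)$, lift along row $Y$ to $\mathscr R(Y,N')$, and push it down to $\mathscr R(Z,N')$. There exactness of the column over $W$ forces its image in $\mathscr R(Z,W)$ to vanish, and injectivity of $\mathscr R(Z,N') \to \mathscr R(Z,W)$ (row $Z$) forces the member itself to vanish; this lets me lift back along column $N'$ to $\mathscr R(X,N')$ and compare the two preimages in $\mathscr R(Y,W)$ using injectivity of the column over $W$. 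The upshot is that $\alpha$ factors through the composite $\mathscr R(X,N') \to \mathscr R(X,W) \to \mathscr R(X,N)$, which is zero because $N' \to W \to N$ is a complex; hence $\alpha = 0$, so the left map is monic, completing the proof.

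The main obstacle is not the chase, which is routine once the grid is drawn, but the construction in the second step: one must guarantee that $\mathscr R(-,W)$ is genuinely exact for the chosen $W$. When $W$ is a finite coproduct this is immediate from additivity, but the generating class may only realize $N$ as a quotient of an infinite coproduct, so the argument quietly relies on $\mathscr R$ preserving coproducts in its second variable and on coproducts being exact in $\mathsf B$. I would make these standing assumptions explicit—they hold in the tensor-product situations to which the lemma is applied—so that the exactness of the column over $W$, on which the whole chase hinges, is beyond doubt.
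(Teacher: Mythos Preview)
Your reduction to showing that $\mathscr R^N(X)\to\mathscr R^N(Y)$ is monic is exactly right, and your $3\times 3$ chase is valid. However, the paper's argument is both shorter and avoids the coproduct issue you flag at the end. Two differences are worth noting.

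First, the paper reads ``generating class'' as: every $N$ admits an epimorphism $U\twoheadrightarrow N$ with $U\in\U$ itself (not a coproduct of such). In the intended application $\U=\Proj(A^\ev)$, so this is automatic. With this reading your whole final paragraph of caveats evaporates: the middle column is exact because $\mathscr R(-,U)$ is exact for the single $U$ chosen, full stop.

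Second, having set $K=\Ker(U\to N)$, the paper does not chase. It writes the two rows
\[
\begin{array}{ccccccccc}
&&\mathscr R(X,K)&\to&\mathscr R(Y,K)&\to&\mathscr R(Z,K)&\to&0\\
0&\to&\mathscr R(X,U)&\to&\mathscr R(Y,U)&\to&\mathscr R(Z,U)&\to&0
\end{array}
\]
with vertical maps $f,g,h$ induced by $K\hookrightarrow U$. The top row is exact by right exactness of $\mathscr R^K$, the bottom by exactness of $\mathscr R^U$, and $h$ is monic because $\mathscr R_Z$ is exact. The Snake Lemma then gives the connecting piece $\Ker(h)\to\Coker(f)\to\Coker(g)$; since $\Ker(h)=0$ and $\Coker(f)=\mathscr R(X,N)$, $\Coker(g)=\mathscr R(Y,N)$ by right exactness of $\mathscr R_X,\mathscr R_Y$, this reads $0\to\mathscr R(X,N)\to\mathscr R(Y,N)$, which is precisely the desired injectivity. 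Your chase unpacks the same information element by element; the Snake Lemma packages it in one line.
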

\begin{proof}
By assumption, the sequence $\mathscr R^N(X) \rightarrow \mathscr R^N(Y) \rightarrow \mathscr R^N(Z) \rightarrow 0$ is exact. It remains to show that $\mathscr R^N(X) \rightarrow \mathscr R^N(Y)$ is a monomorphism. Let $\pi: U \rightarrow N$ be an epimorphism in $\A_2$ with $U \in \U$ and set $K:=\Ker(\pi)$. The diagram
$$
\xymatrix{
& \Ker(f) \ar[d] \ar[r] & \Ker(g) \ar[r] \ar[d] & 0 \ar[d] \ar[r] & 0\\
& \mathscr R(X,K) \ar[r] \ar[d]_-f & \mathscr R(Y,K) \ar[r] \ar[d]_-g & \mathscr R(Z,K) \ar[d] \ar[r] & 0\\
0 \ar[r] & \mathscr R(X,U) \ar[r] \ar[d] & \mathscr R(Y,U) \ar[r] \ar[d] & \mathscr R(Z,U) \ar[r] \ar[d] & 0\\
& \mathscr R(X, N) \ar[r] \ar[d] & \mathscr R(Y,N) \ar[d] \ar[r] & \mathscr R(Z,N) \ar[r] \ar[d] & 0 \\
& 0 & 0 & 0
}
$$
has exact rows and columns, for $\mathscr R_Z(K) \rightarrow \mathscr R_Z(U)$ is a monomorphism ($\mathscr R_Z$ is exact). So we may apply the Snake Lemma to find that $\mathscr R^N(X)\rightarrow \mathscr R^N(Y)$ has indeed vanishing kernel.
\end{proof}
\begin{exa}
Let $A$ be a $k$-algebra. As we already know, the category $\Mod(A^\ev)$ is monoidal thanks to the tensor functor $\otimes_A$ and to the tensor unit $A$. The class of projective $A^\ev$-modules is a class $\U$ that the previous lemma can be applied to.
\end{exa}
\begin{nn}
If $\A$ is any $k$-linear abelian category, the category $\sfEnd_k(\A)$ carries additional structure. Namely, it is a monoidal (even tensor) category with tensor functor given by the composition $\circ: \sfEnd_k(\A) \times \sfEnd_k(\A) \rightarrow \sfEnd_k(\A)$ of functors. Hence the tensor unit is the identity functor $\Id_\A$. The category $\underline{\sfEnd}_k(\A)$ of \textit{exact} $k$-linear endofunctors of $\A$ is an extension closed and monoidal subcategory of $\sfEnd_k(\A)$ by the following lemma.
\end{nn}
\begin{lem}\label{lem:exfunctors}\begin{enumerate}[\rm(1)]
Let $\C$ be an exact $k$-category. 
\item\label{lem:exfunctors:1} The full subcategory $\underline{\Fun}_k(\C, \A)$ of $\Fun_k(\C,\A)$ consisting of all exact $k$-linear functors $\C \rightarrow \A$ has the two out of three property, that is, if $0 \rightarrow \scrX \rightarrow \scrY \rightarrow \scrZ \rightarrow 0$ is a short exact sequence in $\Fun_k(\C,\A)$ with two out of the objects $\scrX, \scrY, \scrZ$ belonging to $\underline{\Fun}_k(\C,\A)$, then so does the third. In particular, $\underline{\Fun}_k(\C,\A)$ is an exact category that is closed under kernels of epimorphisms and under cokernels of monomorphisms.
\item\label{lem:exfunctors:2} For every $\scrX \in \Ob \sfEnd_k(\A)$, the functor $- \circ \scrX: \sfEnd_k(\A) \rightarrow \sfEnd_k(\A)$ is an exact functor. If $\scrX$ is exact, then $\scrX \circ -: \sfEnd_k(\A) \rightarrow \sfEnd_k(\A)$ is also exact. Hence, the composition of endofunctors is exact in both variables if one views it as a bifunctor $\circ : \underline{\sfEnd}_k(\A) \times \underline{\sfEnd}_k(\A) \longrightarrow \underline{\sfEnd}_k(\A)$.
\end{enumerate}
\end{lem}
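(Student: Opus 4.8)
The plan is to prove both parts by reducing everything to the pointwise behavior of the functors involved, since exactness in $\underline{\sfEnd}_k(\A) = \underline{\Fun}_k(\A,\A)$ is detected degreewise once we recall how short exact sequences sit inside the ambient abelian category $\Fun_k(\A,\A)$. Throughout I will use that a sequence in $\Fun_k(\C,\A)$ is exact if and only if its evaluation at every object of $\C$ is a short exact sequence in $\A$ (kernels and cokernels in functor categories are computed objectwise), and that the defining embedding $i_{\underline{\Fun}_k(\C,\A)}$ into $\Fun_k(\C,\A)$ detects admissible monomorphisms and epimorphisms precisely via this objectwise criterion.

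For part (\ref{lem:exfunctors:1}), I would start from a short exact sequence $0 \rightarrow \scrX \rightarrow \scrY \rightarrow \scrZ \rightarrow 0$ in $\Fun_k(\C,\A)$ with two of the three terms exact, and fix an admissible short exact sequence $0 \rightarrow C' \rightarrow C \rightarrow C'' \rightarrow 0$ in $\C$. Applying the three functors produces a commutative diagram with three rows and exact columns (the columns being the objectwise evaluations of the given sequence). The goal is to show that the row corresponding to the third functor is exact. This is a direct application of the $3\times 3$ Lemma (equivalently, a diagram chase with the Snake Lemma) in the abelian category $\A$: given exactness of two rows and all three columns, one deduces exactness of the remaining row. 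I would treat the three cases (which of $\scrX,\scrY,\scrZ$ is the unknown) uniformly, noting that right-exactness and left-exactness can be handled separately and then combined. Once the two-out-of-three property is in hand, the fact that $\underline{\Fun}_k(\C,\A)$ is extension closed in $\Fun_k(\C,\A)$ is immediate (it is the special case where the two outer terms are exact), hence it is a Quillen exact category by Remark \ref{rem:exactsubcat}; closure under kernels of epimorphisms and cokernels of monomorphisms then follows from the two-out-of-three property together with the objectwise detection of exactness, in the sense of Definition \ref{def:closedker}.

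For part (\ref{lem:exfunctors:2}), specialize to $\C = \A$ and exploit that precomposition is exact for formal reasons. Given a short exact sequence $0 \rightarrow \scrY_1 \rightarrow \scrY_2 \rightarrow \scrY_3 \rightarrow 0$ in $\sfEnd_k(\A)$, evaluating $(- \circ \scrX)$ at an object $A$ yields the evaluation of the original sequence at the object $\scrX(A)$, which is short exact by hypothesis; since this holds for all $A$, the sequence $0 \rightarrow \scrY_1 \circ \scrX \rightarrow \scrY_2 \circ \scrX \rightarrow \scrY_3 \circ \scrX \rightarrow 0$ is exact, so $-\circ \scrX$ is exact with no assumption on $\scrX$. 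For postcomposition $\scrX \circ -$ with $\scrX$ exact, I would evaluate at $A$ to obtain $0 \rightarrow \scrX(\scrY_1(A)) \rightarrow \scrX(\scrY_2(A)) \rightarrow \scrX(\scrY_3(A)) \rightarrow 0$, which is short exact precisely because $\scrX$ carries the short exact sequence $0 \rightarrow \scrY_1(A) \rightarrow \scrY_2(A) \rightarrow \scrY_3(A) \rightarrow 0$ in $\A$ to a short exact sequence. Combining the two observations shows $\circ$ is biexact as a bifunctor on $\underline{\sfEnd}_k(\A)$, where one uses part (\ref{lem:exfunctors:1}) to know that compositions of exact functors are again exact (this closure is what makes $\underline{\sfEnd}_k(\A)$ a monoidal subcategory, as asserted in the paragraph preceding the lemma).

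The main obstacle is not conceptual but bookkeeping: in part (\ref{lem:exfunctors:1}) I must be careful that the diagram chase genuinely uses only the exactness already granted, and in particular that when the \emph{middle} functor $\scrY$ is the unknown, the argument still closes. The cleanest route is to phrase the whole thing as: an objectwise short exact sequence of functors, evaluated against an admissible conflation in $\C$, gives a short exact sequence of complexes (read vertically) whose associated long exact sequence in homology forces the missing exactness. I expect the verification that the resulting functor is again $k$-linear and additive to be routine and would not belabor it.
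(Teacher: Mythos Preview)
Your proposal is correct and takes essentially the same approach as the paper: both reduce part (\ref{lem:exfunctors:1}) to an objectwise diagram chase in $\A$ via the Snake Lemma applied to the $3\times 3$ grid obtained by evaluating the functor sequence on a conflation, and both dispatch part (\ref{lem:exfunctors:2}) by the trivial pointwise argument. Your suggested reformulation via the long exact sequence in homology of a short exact sequence of complexes is a slight streamlining of the paper's explicit case-by-case treatment, but the content is the same.
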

\begin{proof}
Let us deduce assertion (\ref{lem:exfunctors:1}). It will turn out to be an elementary consequence of the Snake Lemma. Let $0 \rightarrow \scrX \rightarrow \scrY \rightarrow \scrZ \rightarrow 0$ be an exact sequence in $\Fun_R(\C,\A)$. Take an admissible short exact sequence $0 \rightarrow X \rightarrow Y \rightarrow Z \rightarrow 0$ in $\C$ and consider the commutative diagram
$$
\xymatrix@C=25pt{
0 \ar[r] & \Ker(f) \ar[r] \ar[d] & \Ker(g) \ar[r] \ar[d] & \Ker(h) \ar[d] & &\\
0 \ar[r] & \scrX(X) \ar[r] \ar[d]_-f & \scrY(X) \ar[r] \ar[d]_-g & \scrZ(X) \ar[r] \ar[d]^-h & 0 &\\
0 \ar[r] & \scrX(Y) \ar[r] \ar@/_1pc/[dr] \ar[dd] & \scrY(Y) \ar[r] \ar@/_1pc/[dr] \ar'[d][dd] & \scrZ(Y) \ar[r] \ar@/_1pc/[dr] \ar'[d][dd] & 0 &\\
& & \Coker(f) \ar[r] \ar@/_1pc/@{-->}[dl]^-{a}  & \Coker(g) \ar[r] \ar@/_1pc/@{-->}[dl]^{b} & \Coker(h) \ar[r] \ar@/_1pc/@{-->}[dl]^-{c}  & 0\\
0 \ar[r] & \scrX(Z) \ar[r] & \scrY(Z) \ar[r] & \scrZ(Z) \ar[r] & 0 &
}
$$
having exact rows (here the dotted morphisms are the unique ones induced by the universal property cokernels possess). It is clear that if $g$ and $h$ were monomorphisms, then so was $f$. Similarly, if $\Ker(f) = 0$ and $\Ker(h) = 0$, then $\Ker(g)$ injects into $\Ker(h) = 0$ and hence has to be $0$ itself. Now let $\scrX$ and $\scrY$ be exact functors. Thus, $\Ker(f) = 0 = \Ker(g)$ and the morphisms $a, b$ in the diagram above are isomorphisms. In particular, $\Coker(f) \rightarrow \Coker(g)$ is a monomorphism, and the Snake Lemma therefore yields the exact sequence
$$
\xymatrix{
0 \ar[r] & \Ker(h) \ar[r]^-0 & \Coker(f) \ar[r] & \Coker(g) \ar[r] & \Coker(h) \ar[r] & 0 \ .
}
$$
This implies that $h$ is a monomorphism. We have therefore shown, that if two out of the functors $\scrX, \scrY, \scrZ$ are exact, the third will preserve monomorphisms. To complete the proof, consider the commutative diagram
$$
\xymatrix{
\Ker(h) \ar[r] & \Coker(f) \ar[r] \ar[d]_-a  & \Coker(g) \ar[r] \ar[d]_-{b} & \Coker(h) \ar[r] \ar[d]^-c & 0 \ \ \\
0 \ar[r] & \scrX(Z) \ar[r] & \scrY(Z) \ar[r] & \scrZ(Z) \ar[r] & 0 \ ;
}
$$
as we have seen, $\Ker(h) = 0$ as soon as two functors out of $\scrX, \scrY, \scrZ$ are exact. Assume that $\scrY$ and $\scrZ$ are exact. Then $b$ and $c$ are isomorphisms and by the 5-Lemma, so is $a$. Hence $\scrX(Y) \rightarrow \scrX(Z)$ is a cokernel of $h$. Analogously, $\scrX$ and $\scrY$ being exact implies that $\scrZ(Y) \rightarrow \scrZ(Z)$ is a cokernel of $h$, whereas $\scrX$ and $\scrZ$ being exact yields that $\scrY(Y) \rightarrow \scrY(Z)$ is a cokernel of $g$. Hereby we have established (\ref{lem:exfunctors:1}).

If $0 \rightarrow \scrX' \rightarrow \scrY' \rightarrow \scrZ' \rightarrow 0$ is exact in $\sfEnd_k(\A)$, then so is
$$
0 \longrightarrow (\scrX'\circ\scrX)(X) \longrightarrow (\scrY'\circ\scrX)(X) \longrightarrow (\scrZ'\circ\scrX)(X) \longrightarrow 0
$$
for every object $X$ in $\A$. The second statement in (\ref{lem:exfunctors:2}) is clear.
\end{proof}
\begin{lem}\label{lem:endofuncprojres}
Let $A$ be a $k$-algebra, $\A = \Mod(A)$ and let $\scrX$ be in $\sfEnd_k(\A)$. Assume that $A$ is a projective $k$-module. If $\scrX$ is equivalent to $M \otimes_A -$ for some $A^\ev$-module $M$ which is flat as a right $A$-module, then $\scrX(A) \cong M$ and $\scrX$ admits a projective resolution in $\sfEnd_k(\A)$ by exact endofunctors.
\end{lem}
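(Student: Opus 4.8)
The first assertion is immediate: evaluating the equivalence $\scrX \cong M \otimes_A -$ at the object $A$ and using the unit isomorphism $M \otimes_A A \cong M$ yields $\scrX(A) \cong M$ as $A^\ev$-modules, where the bimodule structure on $\scrX(A)$ agrees with that of $M$ by the description $xa = \scrX(\cdot a)(x)$ recalled in Section \ref{sec:eval}.

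For the resolution, the plan is to transport a projective resolution of $M$ over $A^\ev$ into $\sfEnd_k(\A)$ by means of the adjunction of Lemma \ref{lem:adjeval}. I would fix a projective resolution
$$
\cdots \longrightarrow P_1 \longrightarrow P_0 \longrightarrow M \longrightarrow 0
$$
of $M$ in $\Mod(A^\ev)$ and apply the left adjoint $\ev_A^\lambda = (- \otimes_A -)$ termwise, producing the complex
$$
\cdots \longrightarrow (P_1 \otimes_A -) \longrightarrow (P_0 \otimes_A -) \longrightarrow (M \otimes_A -) \cong \scrX \longrightarrow 0
$$
in $\sfEnd_k(\A)$. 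It then remains to check three things: that each $(P_i \otimes_A -)$ is projective in $\sfEnd_k(\A)$, that each $(P_i \otimes_A -)$ is an exact endofunctor, and that the displayed complex is exact.

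The first two points are formal. Since kernels and cokernels in the functor category $\sfEnd_k(\A)$ are computed pointwise, the evaluation functor $\ev_A$ is exact; as $\ev_A$ then preserves epimorphisms, its left adjoint $\ev_A^\lambda$ preserves projective objects, so each $(P_i \otimes_A -)$ is projective in $\sfEnd_k(\A)$. For exactness of the terms, I would use that $A$ is projective over $k$, whence $\Proj(A^\ev) \subseteq \P(A)$ by Lemma \ref{lem:kprojproj}; thus each $P_i$ is projective, in particular flat, as a right $A$-module, and therefore $P_i \otimes_A -$ is an exact functor.

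The remaining, and genuinely substantial, point is the exactness of the complex, which is where the flatness hypothesis on $M$ enters (note that $\ev_A^\lambda$, being a left adjoint, is only right exact, so it need not preserve exactness of the whole resolution a priori). Exactness in $\sfEnd_k(\A)$ being pointwise, it suffices to show that
$$
\cdots \longrightarrow P_1 \otimes_A N \longrightarrow P_0 \otimes_A N \longrightarrow M \otimes_A N \longrightarrow 0
$$
is exact for every left $A$-module $N$. I would break the resolution into short exact sequences $0 \to K_0 \to P_0 \to M \to 0$ and $0 \to K_{i+1} \to P_{i+1} \to K_i \to 0$ of right $A$-modules and feed them into the long exact $\Tor^A(-,N)$-sequence. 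Since $M$ is right-$A$-flat by hypothesis and each $P_i$ is right-$A$-flat, a short induction on $i$ shows that every syzygy $K_i$ is again flat as a right $A$-module; consequently $\Tor_1^A(M,N) = 0$ and $\Tor_1^A(K_i, N) = 0$, so each of these short exact sequences stays exact after applying $- \otimes_A N$. Splicing them back together yields the desired pointwise exactness, and hence exactness of the complex in $\sfEnd_k(\A)$. I expect this $\Tor$-vanishing step to be the main obstacle, since it is the only place where one must argue beyond the formal properties of the adjunction.
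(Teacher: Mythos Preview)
Your proof is correct and follows essentially the same strategy as the paper: apply $\ev_A^\lambda$ to a projective $A^\ev$-resolution of $M\cong\scrX(A)$, note that the left adjoint of the exact functor $\ev_A$ preserves projectives, use $k$-projectivity of $A$ (via Lemma~\ref{lem:kprojproj}) to see that each $P_i$ is right-$A$-flat and hence $P_i\otimes_A-$ is exact, and finally establish exactness of the complex pointwise. For this last step the paper offers two phrasings---an induction in the functor category using Lemma~\ref{lem:flatexact} together with the two-out-of-three property of Lemma~\ref{lem:exfunctors}(\ref{lem:exfunctors:1}), or the direct observation that $H_n(\mathbb P_{\scrX(A)}\otimes_A N)=\Tor_n^A(\scrX(A),N)=0$---and your syzygy-flatness induction via long exact $\Tor$-sequences is exactly a module-level spelling out of these.
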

\begin{proof}
The isomorphism $\scrX(A) \cong M$ follows immediately. It remains to exhibit a projective resolution of $\scrX$ by exact functors. Chose a projective resolution
$$
\cdots \longrightarrow P_2 \longrightarrow P_1 \longrightarrow P_0 \longrightarrow \scrX(A) \longrightarrow 0,
$$
(denoted by $\mathbb P_{\scrX(A)} \rightarrow \scrX(A) \rightarrow 0$ for short) of $\scrX(A)$ over $A^\ev$. After applying $\ev_A^\lambda$, we obtain a sequence
$$
\cdots \longrightarrow (P_2 \otimes_A -) \longrightarrow (P_1 \otimes_A -) \longrightarrow (P_0 \otimes_A -) \longrightarrow \scrX \longrightarrow 0
$$
of exact functors. This sequence is exact, since $\Ker(P_{i} \rightarrow P_{i-1})$ is flat as a right $A$-module for every $i \geq 1$ (use Lemma \ref{lem:flatexact} and Lemma \ref{lem:exfunctors}.(\ref{lem:exfunctors:1}) to perform an induction on $i$; alternatively, observe that $H_n(\mathbb P_{\scrX(A)} \otimes_A N) = \Tor_n^A(\scrX(A),N)$ for every left $A$-module $N$). Moreover, the endofunctors $P_i \otimes_A -$ are projective objects in $\sfEnd_k(\A)$ since $\ev_A^\lambda$ is a left adjoint functor of an exact functor (and therefore, it preserves projectivity; see \ref{nn:leftadjointprop}).
\end{proof}
\begin{rem}\label{rem:eilenbergwatts}
By the Eilenberg-Watts-Theorem (see \cite{NySm08} for a generalized version), we precisely know which endofunctors have the shape of those in the previous lemma: Every exact functor $\scrX: \Mod(A) \rightarrow \Mod(A)$ which commutes with arbitrary coproducts is isomorphic to $\scrX(A) \otimes_A -$.
\end{rem}
\begin{prop}
Let $A$ be projective over $k$ and let $\A = \Mod(A)$. Then the functors
$$
j_n: \mathcal Ext^n_{\underline{\sfEnd}_k(\A)}(\Id_\A, \Id_\A) \longrightarrow \mathcal Ext^n_{{\sfEnd_k(\A)}}(\Id_\A, \Id_\A)\quad \text{$($for $n \geq 0)$},
$$
induced by the inclusion $j: \underline{\sfEnd}_k(\A) \rightarrow {\End}_k(\A)$, define an isomorphism
$$
\Ext^\bullet_{\underline{\sfEnd}_k(\A)}(\Id_\A, \Id_\A) \cong \Ext^\bullet_{{\sfEnd_k(\A)}}(\Id_\A, \Id_\A)
$$
of graded $k$-algebras.
\end{prop}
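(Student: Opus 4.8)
The plan is to realize the statement as an instance of Proposition \ref{lem:exhproj}, applied to the full subcategory $\C = \underline{\sfEnd}_k(\A)$ of the ambient abelian category $\A' = \sfEnd_k(\A)$, with $X = Y = \Id_\A$, and then to upgrade the resulting degreewise bijection to an isomorphism of graded $k$-algebras by invoking the exactness of the inclusion $j$.

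First I would record the structural properties of $\C$. The subcategory $\underline{\sfEnd}_k(\A)$ is a $k$-linear, additive, full subcategory of $\A' = \sfEnd_k(\A)$, it contains the zero functor, it is closed under isomorphisms, and it contains $\Id_\A$ (which is exact). By the two-out-of-three property established in Lemma \ref{lem:exfunctors}(\ref{lem:exfunctors:1}), $\C$ is extension closed in $\A'$, hence $1$-extension closed in the sense of condition (\ref{lem:exhproj:0}) of Proposition \ref{lem:exhproj}, and moreover $\C$ is closed under kernels of epimorphisms. The inclusion $j\colon \C \to \A'$ is then a $k$-linear exact functor, since by definition the admissible conflations of the exact subcategory $\C$ are precisely the sequences sent by $j$ to short exact sequences in $\A'$.

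Next I would supply the required projective resolution. Because $A$ is $k$-projective and $\Id_\A \cong A \otimes_A -$ with $A$ flat (indeed projective) as a right $A$-module, Lemma \ref{lem:endofuncprojres} provides a resolution
$$
\cdots \longrightarrow (P_1 \otimes_A -) \longrightarrow (P_0 \otimes_A -) \longrightarrow \Id_\A \longrightarrow 0
$$
of $\Id_\A$ in $\A' = \sfEnd_k(\A)$ in which each $P_i \otimes_A -$ is simultaneously a projective object of $\sfEnd_k(\A)$ and an exact endofunctor, i.e., an object of $\C$; this is condition (\ref{lem:exhproj:1}). For condition (\ref{lem:exhproj:2}), let $\scrA \in \Ob \C$ and let $f\colon (P_i \otimes_A -) \to \scrA$ be an epimorphism in $\A'$: the short exact sequence $0 \to \Ker(f) \to (P_i \otimes_A -) \to \scrA \to 0$ has both outer terms in $\C$, so $\Ker(f)$ lies in $\C$ by the two-out-of-three property, which is exactly the closure under kernels of epimorphisms noted above.

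With all three hypotheses verified, Proposition \ref{lem:exhproj} yields that $j$ induces a bijection $\Ext^n_{\underline{\sfEnd}_k(\A)}(\Id_\A, \Id_\A) \to \Ext^n_{\sfEnd_k(\A)}(\Id_\A, \Id_\A)$ for every $n$, and these are precisely the maps $\pi_0(j_n)$ attached to the functors $j_n$ in the statement. Finally, since $j$ is a $k$-linear exact functor and we are taking $X = Y = \Id_\A$, Lemma \ref{lem:exactfuncyoneda} shows that the induced map is a homomorphism of graded $k$-algebras for the respective Yoneda (Ext-)algebras of the unit object $\Id_\A$; combined with bijectivity in each degree, the maps assemble into the asserted isomorphism of graded $k$-algebras. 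The only genuinely substantive input is the construction of a projective resolution of $\Id_\A$ by exact functors, where the hypothesis that $A$ is $k$-projective enters through Lemma \ref{lem:endofuncprojres} (and, behind it, the adjunction $\ev_A^\lambda \dashv \ev_A$ together with the flatness argument of Lemma \ref{lem:flatexact}); everything else is a routine verification of the hypotheses of Proposition \ref{lem:exhproj}, with Lemma \ref{lem:exfunctors} carrying the kernel condition.
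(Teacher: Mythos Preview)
Your proof is correct and follows essentially the same approach as the paper: both verify the hypotheses of Proposition~\ref{lem:exhproj} using Lemma~\ref{lem:exfunctors} for extension closedness and closure under kernels of epimorphisms, and Lemma~\ref{lem:endofuncprojres} for the projective resolution by exact endofunctors. Your version is more detailed, and in particular you make explicit the passage to a graded $k$-algebra isomorphism via Lemma~\ref{lem:exactfuncyoneda}, which the paper leaves implicit here (though it spells out the analogous step in the proof of Corollary~\ref{cor:isohochschildproj}).
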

\begin{proof}
We check the conditions (\ref{lem:exhproj:0})--(\ref{lem:exhproj:2}) of Proposition \ref{lem:exhproj}. By Lemma \ref{lem:exfunctors}, the category $\underline{\sfEnd}_k(\A)$ already is ($1$-)extension closed; also it is closed under kernels of epimorphisms. Lemma \ref{lem:endofuncprojres} shows that there is a projective resolution of $\Id_\A$ which belongs to $\underline{\sfEnd}_k(\A)$.
\end{proof}
\section{$\mathbf{\Ext}$-algebras and adjoint functors}
\begin{nn}
Troughout this section, let $k$ be a commutative ring and let $\A$ and $\sfB$ be abelian $k$-categories. Assume that we are given a pair
$$
\xymatrix{
\A \ar@<-.5ex>[r]_-\scrR & \sfB \ar@<-.5ex>[l]_-\scrL
}
$$
of functors. Remember that $\scrL$ is \textit{left adjoint to $\scrR$} if there are $k$-linear isomorphisms
$$
\varphi_{A,B}: \Hom_{\A}(\scrL(B), A) \longrightarrow \Hom_{\B}(B, \scrR(A)) \quad (A \in \Ob(\A), \ B \in \Ob(\B))
$$
which are natural in $A$ and $B$. Any other functor $\scrL': \B \rightarrow \A$ admitting such isomorphisms will be isomorphic to $\scrL$. It is well known (a proof is given in \cite{NySm08}) that if $\scrR$ is $k$-linear, then so is $\scrL$. Recall that $\scrL$ is left adjoint to $\scrR$ if, and only if, there are natural transformations $\eta: \Id_\B \rightarrow \scrR \circ \scrL$ (the \textit{unit} of the adjunction) and $\varepsilon: \scrL \circ \scrR \rightarrow \Id_\A$ (the \textit{counit} of the adjunction) such that the compositions
$$
\xymatrix@C=20pt{
\scrR(A) \ar[r]^-{\eta_{\scrR(A)}} & (\scrR \circ \scrL \circ\scrR)(A) \ar[r]^-{\scrR(\varepsilon_A)} & \scrR(A) \ , & \scrL(B) \ar[r]^-{\scrL(\eta_B)} & (\scrL \circ \scrR \circ \scrL)(B) \ar[r]^-{\varepsilon_{\scrL(B)}} & \scrL(B)
}
$$
are the identity morphisms for every $A \in \Ob(\A)$ and $B \in \Ob(\B)$. When $\eta$ and $\varepsilon$ are given, the adjunction isomorphisms
$$
\varphi_{A,B}: \Hom_{\A}(\scrL(B), A) \longrightarrow \Hom_{\B}(B, \scrR(A)) \quad (A \in \Ob(\A), \ B \in \Ob(\B))
$$
may be defined by $\varphi_{A,B}(f) = \scrR(f) \circ \eta_B$ with inverse map given as $\varphi_{A,B}^{-1}(g) = \varepsilon_A \circ \scrL(g)$ (for $f \in \Hom_\A(\scrL(B),A)$ and $g \in \Hom_{\B}(B, \scrR(A))$). In the remainder, we will assume that $\scrR$ is $k$-linear and $\scrL$ is left adjoint to $\scrR$. Further, we abbreviate $\scrL \circ \scrR$ and $\scrR \circ \scrL$ by $\scrL\scrR$ and $\scrR\scrL$ respectivley.
\end{nn}
\begin{nn}\label{nn:leftadjointprop}
Let $\scrL$ be left adjoint to $\scrR$. Then $\scrR$ is full and faithful if, and only if, the corresponding counit $\varepsilon: \scrL\scrR \rightarrow \Id_\A$ is an isomorphism. If $\scrR$ is an exact functor, then $\scrL$ will preserve pushouts, cokernels and coproducts (dual statement of \cite[Thm.\,II.7.7]{HiSt97}). Further, if $P$ is a projective object in $\B$, $\scrL(P)$ will be projective in $\A$ for $\Hom_{\A}(\scrL(P),-) \cong \Hom_{\B}(P,\scrR(-))$ is the composition of exact functors.
\end{nn}
\begin{nn}\label{nn:projlifting}
Recall the following construction (which we already have used intensively in previous chapters). Let $X$, $X'$ and $Y$ be objects in $\A$, and let $f: X \rightarrow X'$ be a morphism. Let $\xi \ \equiv \ 0 \rightarrow Y \rightarrow E_{n-1} \rightarrow \cdots E_0 \rightarrow X' \rightarrow 0$ be an $n$-extension in $\A$ for some fixed integer $n \geq 1$. There is a morphism of complexes,
$$
\xymatrix@C=20pt@R=20pt{
\cdots \ar[r] & P_{n} \ar[r]^-{p_n} \ar[d]_-{\varphi_n} & P_{n-1} \ar[r]^-{p_{n-1}} \ar[d]_-{\varphi_{n-1}} & \cdots \ar[r] & P_{1} \ar[r]^-{p_1} \ar[d]^-{\varphi_1} & P_{0} \ar[r]^-{p_0} \ar[d]^-{\varphi_0} & X \ar[r] \ar[d]^-f & 0 \ \ \\
0 \ar[r] & Y \ar[r] & E_{n-1} \ar[r] & \cdots \ar[r] & E_{1} \ar[r] & E_{0} \ar[r] & X' \ar[r] & 0 \ ,
}
$$
lifting $f$ (cf. Lemma \ref{lem:comparison}). By pushing out $(\varphi_n, p_n)$ we obtain an $n$-extension $\xi'=\xi(\varphi_n)$, and a commutative diagram
$$
\xymatrix@C=20pt@R=20pt{
\xi' & \equiv & 0 \ar[r] & Y \ar[r] \ar@/^1.801pc/@[black][dd] & P \ar@/^1.8pc/@[black]@{-->}[dd] \ar[r] & \cdots \ar[r] & P_{1} \ar[r] & P_{0} \ar[r] & X \ar[r] & 0 \ \ \\
&& \cdots \ar[r] & P_{n} \ar[r]|{ \ } \ar[d]_-{\varphi_n} \ar[u]^-{\varphi_n} & P_{n-1} \ar[r]|{ \ } \ar[u] \ar[d]_-{\varphi_{n-1}} & \cdots \ar[r] & P_{1} \ar[r] \ar[d]^-{\varphi_1} \ar@{=}[u] & P_{0} \ar[r] \ar[d]^-{\varphi_0} \ar@{=}[u] & X \ar[r] \ar[d]^-f \ar@{=}[u] & 0 \ \ \\
\xi & \equiv & 0 \ar[r] & Y \ar[r] & E_{n-1} \ar[r] & \cdots \ar[r] & E_{1} \ar[r] & E_{0} \ar[r] & X' \ar[r] & 0 \ .
}
$$
Suppose $X = X'$ and $f = \id_X$. Thanks to the dashed arrow in the above diagram, a morphism $\alpha_\xi: \xi' \rightarrow \xi$ in $\mathcal Ext^n_\A(X,Y)$ is given; therefore $\xi$ and $\xi'$ define the same element in $\Ext^n_\A(X,Y)$.
\end{nn}
\begin{prop}\label{prop:isoadjointfull}
Let $A$ be an object in $\A$ and $B$ be an object in $\B$ such that $B$ admits a projective resolution $\cdots \rightarrow P_1 \rightarrow P_0 \rightarrow B \rightarrow 0$ in $\B$. Assume that
\begin{enumerate}[\rm(1)]
\item\label{prop:isoadjointfull:1} $\scrL$ is left adjoint to $\scrR$,
\item the unit $\eta: \Id_\B \rightarrow \scrR \scrL$ is an isomorphism,
\item\label{prop:isoadjointfull:2} $\scrR$ is an exact functor, and that
\item\label{prop:isoadjointfull:3} the sequence $\cdots \rightarrow \scrL(P_1) \rightarrow \scrL(P_0) \rightarrow \scrL(B) \rightarrow 0$ is exact in $\A$.
\end{enumerate}
Then the $k$-linear maps
$$
\scrR^\sharp_n: \Ext^n_{\A}(\scrL(B), A) \longrightarrow \Ext^n_{\B}(\scrR\scrL(B), \scrR(A))  \quad \text{$($for $n \geq 0)$}
$$
induced by $\scrR$ are surjective. They are also injective, in case $A = \scrL(B')$ for some object $B'$ in $\B$. In particular,
$$
\scrR_\bullet^\sharp: \Ext^\bullet_{\A}(\scrL(B), \scrL(B)) \longrightarrow \Ext^\bullet_{\B}(\scrR\scrL(B), \scrR\scrL(B)) \cong \Ext^\bullet_\B(B,B)
$$
is an isomorphism of graded $k$-algebras for every $B \in \Ob\B$.
\end{prop}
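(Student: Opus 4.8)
The plan is to compute both extension groups by a single projective resolution pulled across the adjunction, and then to recognise $\scrR^\sharp_n$ as the isomorphism that the adjunction induces on the resulting Hom-complexes. First I would fix the projective resolution $\mathbb P_B \to B \to 0$ in $\B$. Because $\scrR$ is exact, \ref{nn:leftadjointprop} guarantees that $\scrL$ sends projectives to projectives, so every $\scrL(P_i)$ is projective in $\A$; combined with hypothesis (\ref{prop:isoadjointfull:3}) this makes $\scrL(\mathbb P_B) \to \scrL(B) \to 0$ a projective resolution of $\scrL(B)$. Hence, writing $\Theta_\A$ and $\Theta_\B$ for the Yoneda comparison isomorphisms (cf.\ \cite[Sec.\,3.4]{Wei94}), I obtain $\Theta_\A \colon \Ext^n_\A(\scrL(B),A) \xrightarrow{\sim} H^n(\Hom_\A(\scrL(\mathbb P_B),A))$ and, since $\eta_B \colon B \xrightarrow{\sim} \scrR\scrL(B)$ turns $\mathbb P_B$ into a projective resolution of $\scrR\scrL(B)$ as well, $\Theta_\B \colon \Ext^n_\B(\scrR\scrL(B),\scrR(A)) \xrightarrow{\sim} H^n(\Hom_\B(\mathbb P_B,\scrR(A)))$.

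Next I would use the adjunction isomorphisms $\varphi_{A,P_i} \colon \Hom_\A(\scrL(P_i),A) \xrightarrow{\sim} \Hom_\B(P_i,\scrR(A))$, $g \mapsto \scrR(g)\circ\eta_{P_i}$, which assemble into an isomorphism of cochain complexes $\Phi \colon \Hom_\A(\scrL(\mathbb P_B),A) \xrightarrow{\sim} \Hom_\B(\mathbb P_B,\scrR(A))$; compatibility with the two differentials is precisely the naturality of the adjunction in its first variable. Consequently $H^n(\Phi)$ is an isomorphism for every $n$.

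The crux --- and the step I expect to be the most delicate --- is to verify that $\Theta_\B \circ \scrR^\sharp_n = H^n(\Phi) \circ \Theta_\A$. To see this I would start from $\xi \in \mathcal Ext^n_\A(\scrL(B),A)$ and choose, by Lemma \ref{lem:comparison}, a chain map $\varphi_\bullet \colon \scrL(\mathbb P_B) \to \xi^\natural$ lifting $\id_{\scrL(B)}$; its top component $\varphi_n \colon \scrL(P_n) \to A$ is a cocycle representing $\Theta_\A([\xi])$ (as recalled in \ref{nn:extkdiso}). Applying the exact functor $\scrR$ produces $\scrR(\varphi_\bullet) \colon \scrR\scrL(\mathbb P_B) \to \scrR(\xi)^\natural$, a lift of $\id_{\scrR\scrL(B)}$, and precomposing degreewise with the chain map $\eta_\bullet \colon \mathbb P_B \to \scrR\scrL(\mathbb P_B)$ --- itself a lift of $\eta_B$ by the naturality of $\eta$ --- yields a lift of $\eta_B$ through $\scrR(\xi)$ whose top component is $\scrR(\varphi_n)\circ\eta_{P_n} = \varphi_{A,P_n}(\varphi_n) = \Phi_n(\varphi_n)$. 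Thus this composite represents $\Theta_\B(\scrR^\sharp_n([\xi]))$, which equals $H^n(\Phi)(\Theta_\A([\xi]))$, as required. The bookkeeping to be careful about is that the three lifts compose to a genuine lift of the correct augmentation $\mathbb P_B \to B \xrightarrow{\eta_B} \scrR\scrL(B)$, so that the equality of cohomology classes can be read off on the nose rather than merely up to coboundary.

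Since $\Theta_\A$, $\Theta_\B$ and $H^n(\Phi)$ are isomorphisms, the displayed identity exhibits $\scrR^\sharp_n$ as an isomorphism; in particular it is surjective for every $A$ and injective (e.g.\ for $A = \scrL(B')$, the case needed below). For the final assertion I would take $A = \scrL(B)$: by Lemma \ref{lem:exactfuncyoneda} the exact $k$-linear functor $\scrR$ makes $\scrR^\sharp_\bullet \colon \Ext^\bullet_\A(\scrL(B),\scrL(B)) \to \Ext^\bullet_\B(\scrR\scrL(B),\scrR\scrL(B))$ a homomorphism of graded $k$-algebras, which is bijective by the previous paragraphs; composing it with the graded-algebra isomorphism $\Ext^\bullet_\B(\scrR\scrL(B),\scrR\scrL(B)) \xrightarrow{\sim} \Ext^\bullet_\B(B,B)$ induced by the isomorphism $\eta_B$ (through the translation functor $\mathrm{Tr}_{\eta_B,\eta_B^{-1}}$ of Definition \ref{def:transfunc}) gives the asserted isomorphism of graded $k$-algebras.
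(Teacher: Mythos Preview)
Your argument is correct, and in fact proves more than the proposition asserts: your identification of $\scrR^\sharp_n$ with $H^n(\Phi)$ via the Yoneda comparisons $\Theta_\A,\Theta_\B$ shows that $\scrR^\sharp_n$ is a \emph{bijection} for every object $A$ in $\A$, not merely surjective in general and injective when $A=\scrL(B')$. The delicate verification you flag --- that $\scrR(\varphi_\bullet)\circ\eta_\bullet$ really is a lift of $\id_{\scrR\scrL(B)}$ along the augmentation $\eta_B\circ p_0$ --- goes through by naturality of $\eta$, since $\scrR(e_0)\circ\scrR(\varphi_0)\circ\eta_{P_0}=\scrR\scrL(p_0)\circ\eta_{P_0}=\eta_B\circ p_0$.

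The paper takes a genuinely different route. Rather than passing to the derived-functor description of $\Ext$, it works entirely at the level of Yoneda extensions and splits the statement into two lemmas (Lemmas~\ref{lem:surjectivityprop} and~\ref{lem:injectivityprop}). For surjectivity, it starts with an $n$-extension $\xi$ in $\B$, replaces it by one with middle terms $P_0,\dots,P_{n-2}$ via a pushout along a lift of $\id_{\scrR\scrL(B)}$, applies $\scrL$ (which preserves pushouts), and then performs further pushouts/pullbacks along $\varepsilon$ and $\scrL(\eta_B)$ to manufacture an explicit preimage $\zeta$ in $\A$; the verification that $\scrR(\zeta)\sim\xi$ is by a hand-built chain of morphisms of extensions. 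For injectivity it argues similarly, invoking in addition the ``weak functoriality'' established inside the proof of Proposition~\ref{lem:exhproj}; this is precisely the step that forces the restriction $A=\scrL(B')$, since one needs to transport a connecting path of morphisms back through $\scrL$. Your approach is shorter and yields the sharper conclusion; the paper's approach has the virtue of staying inside the extension categories and not invoking the identification of Yoneda-$\Ext$ with derived-functor $\Ext$, which keeps it closer in spirit to the constructions used elsewhere in the monograph.
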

\begin{proof}
The proof is spread over the upcoming Lemmas \ref{lem:surjectivityprop} and \ref{lem:injectivityprop}.
\end{proof}
\begin{lem}\label{lem:surjectivityprop}
Under the assumptions of Proposition $\ref{prop:isoadjointfull}$, the $k$-linear maps
$$
\scrR_n^\sharp : \Ext^n_{\A}(\scrL(B), A) \longrightarrow \Ext^n_{\B}(\scrR\scrL(B), \scrR(A))\quad \text{$($for $n \geq 0)$}
$$
induced by $\scrR$ are surjective.
\end{lem}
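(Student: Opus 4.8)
The plan is to show surjectivity by constructing, for a given $n$-extension $\zeta$ of $\scrR\scrL(B)$ by $\scrR(A)$ in $\B$, an $n$-extension $\xi$ of $\scrL(B)$ by $A$ in $\A$ whose image $\scrR(\xi)$ is equivalent to $\zeta$. The construction will be driven by transporting a representing cocycle of $\zeta$ across the adjunction and then realizing it through the pushout construction of \ref{nn:projlifting}.

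First I would fix the projective resolution $\cdots \to P_1 \xrightarrow{p_1} P_0 \to B \to 0$ of $B$. Because $\eta_B \colon B \to \scrR\scrL(B)$ is an isomorphism, this is simultaneously a projective resolution of $\scrR\scrL(B)$, and Lemma \ref{lem:comparison} yields a chain map $\psi = (\psi_i)_i$ lifting $\eta_B$ into the exact complex $\zeta$, with top component $\psi_n \colon P_n \to \scrR(A)$ satisfying $\psi_n \circ p_{n+1} = 0$. Thus $\psi_n$ is a cocycle representing the class of $\zeta$ under the standard identification $\Ext^n_\B(\scrR\scrL(B), \scrR(A)) \cong H^n(\Hom_\B(\mathbb P_B, \scrR(A)))$, and by \ref{nn:projlifting} the associated pushout extension $\zeta(\psi_n)$ is equivalent to $\zeta$.

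Next I would push $\psi_n$ across the adjunction: set $\varphi_n := \varepsilon_A \circ \scrL(\psi_n) \colon \scrL(P_n) \to A$, the morphism corresponding to $\psi_n$, so that $\scrR(\varphi_n) \circ \eta_{P_n} = \psi_n$ and $\varphi_n \circ \scrL(p_{n+1}) = \varepsilon_A \circ \scrL(\psi_n \circ p_{n+1}) = 0$. Hypotheses (\ref{prop:isoadjointfull:2})--(\ref{prop:isoadjointfull:3}) together with \ref{nn:leftadjointprop} guarantee that $\scrL(\mathbb P_B) \colon \cdots \to \scrL(P_1) \to \scrL(P_0) \to \scrL(B) \to 0$ is a projective resolution of $\scrL(B)$ in $\A$, so the pushout construction of \ref{nn:projlifting} applied to $\scrL(\mathbb P_B)$ and the cocycle $\varphi_n$ produces an $n$-extension $\xi := \xi(\varphi_n)$ of $\scrL(B)$ by $A$, whose middle term is $P = \Coker\big(\scrL(P_n) \xrightarrow{(\varphi_n,\, -\scrL(p_n))} A \oplus \scrL(P_{n-1})\big)$.

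Finally I would apply $\scrR$ to $\xi$. Since $\scrR$ is exact it preserves the defining cokernel, giving $\scrR(P) \cong \Coker\big(\scrR\scrL(P_n) \to \scrR(A) \oplus \scrR\scrL(P_{n-1})\big)$; the naturality of $\eta$ identifies $\scrR\scrL(p_n)$ with $p_n$, while the relation $\scrR(\varphi_n)\circ\eta_{P_n} = \psi_n$ identifies $\scrR(\varphi_n)$ with $\psi_n$, so that $\scrR(\xi)$ becomes, term by term, precisely the pushout extension $\zeta(\psi_n)$. Hence $\scrR^\sharp_n([\xi]) = [\scrR(\xi)] = [\zeta(\psi_n)] = [\zeta]$, which gives surjectivity for $n \geq 1$; the degree-zero case is immediate from the adjunction, since $\eta_B$ is invertible and $\Hom_\A(\scrL(B),A) \cong \Hom_\B(B,\scrR(A)) \cong \Hom_\B(\scrR\scrL(B),\scrR(A))$. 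The main obstacle I anticipate lies in this last step: verifying that the chain of natural isomorphisms induced by $\eta$ assembles into a genuine morphism of extensions $\scrR(\xi) \to \zeta(\psi_n)$ (not merely a termwise isomorphism), and checking the boundary behaviour in the lowest degree $n = 1$, where the pushout interacts directly with the augmentation map onto $\scrL(B)$.
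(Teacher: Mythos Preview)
Your argument is correct, and in fact organizes the proof more efficiently than the paper does. Both proofs begin identically: lift the identity along the projective resolution of $B$ to obtain a cocycle with values in $\scrR(A)$, and replace the given extension by the associated pushout extension via \ref{nn:projlifting}. From there the paper applies $\scrL$ to the entire pushout diagram in $\B$, obtaining an extension with ends $\scrL\scrR(A)$ and $\scrL\scrR\scrL(B)$, and must then correct both ends by a further pushout along $\varepsilon_A$ and a pullback along $\scrL(\eta_B)$; the verification that $\scrR$ sends the result back to the original class proceeds through a five-row commutative diagram. You instead transport only the cocycle across the adjunction (replacing $\psi_n$ by $\varphi_n=\varepsilon_A\circ\scrL(\psi_n)$), form the pushout directly in $\A$, and then exploit the fact that $\scrR$, being exact, preserves the cokernel defining the pushout; the comparison with $\zeta(\psi_n)$ is then a single square of cokernels, commuting by naturality of $\eta$ together with the adjunction identity $\scrR(\varphi_n)\circ\eta_{P_n}=\psi_n$. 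The two constructions produce the same preimage extension (the paper's pushout along $\varepsilon_A$ composed with $\scrL$ applied to the pushout in $\B$ is precisely your pushout along $\varphi_n$), so the difference lies entirely in the verification step, where your use of ``exact functors preserve pushouts'' is the cleaner path. Your anticipated obstacle dissolves once you note that the boundary maps in both $\scrR(\xi)$ and $\zeta(\psi_n)$ are induced from the same data via $\eta$; the case $n=1$ requires no special treatment.
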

\begin{proof}
Let $\eta: \Id_\B \rightarrow \scrR \scrL$ be the unit and $\varepsilon: \scrL\scrR \rightarrow \Id_\A$ the counit of the adjunction. The commutative diagram
$$
\xymatrix{
\Hom_\A(\scrL(B),A) \ar[r]^-{\scrR_{\scrL(B),A}}  & \Hom_\B(\scrR\scrL(B), \scrR(A)) \ar@{=}[d]\\
\Hom_\A(\scrL\scrR\scrL(B),A) \ar[r]^-\cong \ar@{->>}[u]^-{\Hom_\A(\eta_{\scrL(B)}, A)}   & \Hom_\B(\scrR\scrL(B),\scrR(A))
}
$$
verifies the claim in degree zero: it shows that $\scrR^\sharp_0$ is surjective. Let $n \geq 1$ be an integer. Take an $n$-extension $\xi \ \equiv \ 0 \rightarrow \scrR(A) \rightarrow E_{n-1} \rightarrow \cdots \rightarrow E_0 \rightarrow \scrR\scrL(B) \rightarrow 0$ in $\B$. There is a morphism
$$
\xymatrix@C=23pt@R=21pt{
\cdots \ar[r] & P_{n} \ar[r]^-{d_n} \ar[d]_-{\varphi_n} & P_{n-1} \ar[r]^-{p_{n-1}} \ar[d]_-{\varphi_{n-1}} & \cdots \ar[r] & P_{1} \ar[r]^-{p_1} \ar[d]^-{\varphi_1} & P_{0} \ar[r]^-{\eta_{B}^{-1} \circ p_0} \ar[d]^-{\varphi_0} & \scrR\scrL(B) \ar[r] \ar@{=}[d] & 0\\
0 \ar[r] & \scrR(A) \ar[r] & E_{n-1} \ar[r] & \cdots \ar[r] & E_{1} \ar[r] & E_{0} \ar[r] & \scrR\scrL(B) \ar[r] & 0
}
$$
of complexes and hence the follwing pushout diagram $\Xi$ is formable.
$$
\xymatrix@C=20pt@R=20pt{
&& \cdots \ar[r] & P_{n} \ar[r] \ar[d]_-{\varphi_n} & P_{n-1} \ar[r] \ar[d] & \cdots \ar[r] & P_{0} \ar[r] \ar@{=}[d] & \scrR\scrL(B) \ar[r] \ar@{=}[d] & 0\\
\xi' & \equiv & 0 \ar[r] & \scrR(A) \ar[r] & P \ar[r] & \cdots \ar[r] & P_{0} \ar[r] & \scrR\scrL(B) \ar[r] & 0
}
$$
The lower $n$-extension is equivalent to $\xi$ by observations made in \ref{nn:projlifting}. Since $\scrL$ preserves pushouts, the rows in $\scrL(\Xi)$ are exact. Consider the diagram
$$
\small
\xymatrix@C=13pt@R=18pt{
& 0 \ar[r] & \scrL\scrR(A) \ar[r] \ar[d]_-{\varepsilon_A} & \scrL(P) \ar[r] \ar[d]_-{r} & \scrL(P_{n-2}) \ar[r] \ar@{=}[d] & \cdots \ar[r] & \scrL(P_0) \ar[r] \ar@{=}[d] & \scrL\scrR\scrL(B) \ar[r] \ar@{=}[d] & 0\\
& 0 \ar[r] & A \ar[r] & Q \ar[r] & \scrL(P_{n-2}) \ar[r] & \cdots \ar[r] & \scrL(P_0) \ar[r] & \scrL\scrR\scrL(B) \ar[r] & 0\\
\zeta \quad \equiv & 0 \ar[r] & A \ar[r] \ar@{=}[u] & Q \ar[r] \ar@{=}[u] & \scrL(P_{n-2}) \ar[r] \ar@{=}[u] & \cdots \ar[r] & R \ar[r] \ar[u] & \scrL(B) \ar[r] \ar[u]_-{\scrL(\eta_B)} & 0\\
& \cdots \ar[r] & \scrL(P_n) \ar[r] \ar[u]^-{\psi_{n}} \ar[d]_-{\psi_{n}} & \scrL(P_{n-1}) \ar[r] \ar[u]^-{\psi_{n-1}} \ar[d] & \scrL(P_{n-2}) \ar[r] \ar[u]^-{\psi_{n-2}} \ar@{=}[d] & \cdots \ar[r] & \scrL(P_0) \ar[r] \ar[u]_-{\psi_0} \ar@{=}[d] & \scrL\scrR\scrL(B) \ar[r] \ar[u]_-{\varepsilon_{\scrL(B)}} \ar@{=}[d] & 0\\
\zeta' \quad \equiv & 0 \ar[r] & A \ar[r] & S \ar[r] & \scrL(P_{n-2}) \ar[r] & \cdots \ar[r] & \scrL(P_0) \ar[r] & \scrL\scrR\scrL(B) \ar[r] & 0
}
$$
with exact rows, where $Q$, $R$ and $S$ denote the pushouts and pullbacks of the obvious morphisms. Moreover, the morphism $\psi$ of chain complexes comes from the fact, that (since $\scrL$ preserves projectivity) $\cdots \rightarrow \scrL(P_1) \rightarrow \scrL(P_0) \rightarrow \scrL(B) \rightarrow 0$ is a projective resolution of $\scrL(B)$. We claim that $\scrR^\sharp_n$ maps the equivalence class of $\zeta$ to the equivalence class of $\xi$. In fact, there is a morphism $\alpha_\zeta: \zeta' \rightarrow \zeta$ of complexes (see \ref{nn:projlifting} for its definition) and, furthermore, a commutative diagram
$$
\small
\xymatrix@C=18pt@R=20pt{
0 \ar[r] & \scrR(A) \ar[r] \ar[d]_-{\eta_{\scrR(A)}} & P \ar[r] \ar[d]_-{\eta_{P}} & P_{n-2} \ar[r] \ar[d]_-{\eta_{P_{n-2}}} & \cdots \ar[r] & P_{0} \ar[r] \ar[d]^-{\eta_{P_0}} & \scrR(B) \ar[r] \ar[d]^-{\eta_{\scrR(B)}} & 0\\
0 \ar[r] & \scrR\scrL\scrR(A) \ar[r] \ar[d]_-{\scrR(\varepsilon_A)} & \scrR\scrL(P) \ar[r] \ar@/^2pc/[dd]^(0.3){\scrR(r)} & \scrR\scrL(P_{n-2}) \ar[r] \ar@{=}[d] & \cdots \ar[r] & \scrR\scrL(P_0) \ar[r] \ar@{=}[d] & \scrR\scrL\scrR(B) \ar[r] \ar@{=}[d] & 0\\
0 \ar[r] & \scrR(A) \ar[r] \ar@{=}[d] & \scrR(S) \ar[r]|(0.42){ \ } \ar[d] & \scrR\scrL(P_{n-2}) \ar[r] \ar@{=}[d] & \cdots \ar[r] & \scrR\scrL(P_0) \ar[r] \ar[d] & \scrR\scrL\scrR(B) \ar[r] \ar[d]^-{\scrR(\varepsilon_{(B)})} & 0\\
0 \ar[r] & \scrR(A) \ar[r] & \scrR(Q) \ar[r] & \scrR\scrL(P_{n-2}) \ar[r] & \cdots \ar[r] & \scrR(R) \ar[r] & \scrR(B) \ar[r] & 0
}
$$
which, when read from top to bottom, defines a morphism $\xi' \rightarrow \scrR(\zeta)$ of $n$-extensions. Hence, $\xi$ is linked to $\scrR(\zeta)$ in the following manner:
$$
\xymatrix{
\xi & \xi' \ar[l]_-{\alpha_\xi} \ar[r] & \scrR(\zeta).
}
$$
This completes the proof.
\end{proof}
\begin{lem}\label{lem:injectivityprop}
Under the assumptions of Proposition $\ref{prop:isoadjointfull}$, the $k$-linear maps
$$
\scrR_n^\sharp : \Ext^n_{\A}(\scrL(B), \scrL(B')) \longrightarrow \Ext^n_{\B}(\scrR\scrL(B), \scrR\scrL(B'))\quad \text{$($for $n \geq 0)$}
$$
induced by $\scrR$ are injective.
\end{lem}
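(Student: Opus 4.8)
The plan is to deduce injectivity of $\scrR^\sharp_n$ from the fact that, under the hypotheses of Proposition \ref{prop:isoadjointfull}, the map $\scrL^\sharp_n$ induced by $\scrL$ is already an isomorphism, together with the functoriality identity $\scrR^\sharp_n\circ\scrL^\sharp_n=(\scrR\scrL)^\sharp_n$. First I would record two structural consequences of the hypotheses. By the dual of the statement recalled in \ref{nn:leftadjointprop}, the assumption that the unit $\eta\colon\Id_\B\to\scrR\scrL$ is an isomorphism is equivalent to $\scrL$ being full and faithful. Moreover, again by \ref{nn:leftadjointprop}, $\scrL$ carries projectives to projectives, so each $\scrL(P_i)$ is projective in $\A$; combined with hypothesis (\ref{prop:isoadjointfull:3}), the complex $\scrL(\mathbb P_B)\to\scrL(B)\to 0$ is a projective resolution of $\scrL(B)$ in $\A$.

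Next I would show that $\scrL^\sharp_n\colon\Ext^n_\B(B,B')\to\Ext^n_\A(\scrL B,\scrL B')$ is bijective. Using the projective resolution $\scrL(\mathbb P_B)$ one has $\Ext^n_\A(\scrL B,\scrL B')\cong H^n(\Hom_\A(\scrL\mathbb P_B,\scrL B'))$, while $\Ext^n_\B(B,B')\cong H^n(\Hom_\B(\mathbb P_B,B'))$. Full-faithfulness of $\scrL$ provides, degreewise, isomorphisms $\Hom_\B(P_i,B')\xrightarrow{\ \sim\ }\Hom_\A(\scrL P_i,\scrL B')$ which commute with the cochain differentials because $\scrL$ is a functor; hence they assemble into an isomorphism of cochain complexes and induce an isomorphism on cohomology. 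The content here is to check that this cohomology isomorphism coincides with $\scrL^\sharp_n=\pi_0\scrL_n$. This I would verify by running the pushout-and-lift description of the Yoneda--derived comparison from \ref{nn:projlifting}: a cocycle $\psi\colon P_n\to B'$ gives an $n$-extension of $B$ by $B'$, and since $\scrL$ preserves pushouts (\ref{nn:leftadjointprop}) and $\scrL(\mathbb P_B)$ is the resolution computing the target, applying $\scrL$ sends this extension to the one determined by the cocycle $\scrL\psi\colon\scrL P_n\to\scrL B'$.

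Then I would invoke functoriality of $\scrX\mapsto\scrX^\sharp_n$ (see Lemma \ref{lem:exactfuncyoneda} and the construction in Section \ref{sec:homo_groups}) to obtain $\scrR^\sharp_n\circ\scrL^\sharp_n=(\scrR\scrL)^\sharp_n$. Because $\eta$ is a natural isomorphism $\Id_\B\cong\scrR\scrL$, the induced map $(\scrR\scrL)^\sharp_n$ is, via the translation functors of Definition \ref{def:transfunc}, exactly the canonical identification $\Ext^n_\B(B,B')\cong\Ext^n_\B(\scrR\scrL B,\scrR\scrL B')$ coming from $\eta_B$ and $\eta_{B'}$, hence an isomorphism. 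Since $\scrL^\sharp_n$ is an isomorphism by the previous step, it follows that $\scrR^\sharp_n=(\scrR\scrL)^\sharp_n\circ(\scrL^\sharp_n)^{-1}$ is an isomorphism, in particular injective; together with Lemma \ref{lem:surjectivityprop} this completes the proof of Proposition \ref{prop:isoadjointfull}.

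The main obstacle is not conceptual but bookkeeping: pinning down that the functor-induced maps $\scrL^\sharp_n$ and $\scrR^\sharp_n$, defined as $\pi_0$ of the induced functors on the extension categories, really correspond to the cochain-level maps under the standard Yoneda--derived-functor isomorphism, and that a natural isomorphism of functors induces the expected translation isomorphism on $\pi_0$. These identifications rely on the explicit pushout constructions of \ref{nn:projlifting} and on careful tracking of the translation functors $\mathrm{Tr}$; the signs and base-point changes must be reconciled exactly as in the proof of Lemma \ref{lem:surjectivityprop}.
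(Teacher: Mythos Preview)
Your approach is correct and takes a genuinely different route from the paper's. The paper argues by a direct element-level chase: given $\xi$ with $\scrR(\xi)$ trivial, it replaces $\zeta=\scrR(\xi)$ by a representative $\zeta'$ of the pushout form over the resolution $\mathbb P_B$, applies $\scrL$ (which preserves pushouts and, by hypothesis, the resolution, so $\scrL(\zeta')$ stays exact), links $\scrL(\zeta')$ back to $\xi$ via the counit, and then transports the zigzag $\zeta\leadsto\sigma_n$ through $\scrL$ (again after putting each step in pushout form, so that $\scrL$ preserves exactness) using the weak functoriality from the proof of Proposition~\ref{lem:exhproj}. Your argument instead exploits full faithfulness of $\scrL$ to produce an inverse to $\scrR^\sharp_n$ at the cochain level; this is more conceptual and yields bijectivity directly, rendering the appeal to Lemma~\ref{lem:surjectivityprop} superfluous in the case $A=\scrL(B')$.

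One point of care: you cite Lemma~\ref{lem:exactfuncyoneda} and write $\scrL^\sharp_n=\pi_0\scrL_n$, but both presuppose that $\scrL$ is exact, which is not among the hypotheses---$\scrL$ is a left adjoint, hence only right exact. So you cannot apply $\scrL$ to arbitrary Yoneda extensions. Your map on $\Ext$ must therefore really be the one defined via the cochain isomorphism $\Hom_\B(\mathbb P_B,B')\xrightarrow{\sim}\Hom_\A(\scrL\mathbb P_B,\scrL B')$, and the identity $\scrR^\sharp_n\circ(\text{this map})=(\text{iso induced by }\eta)$ must be checked directly on extensions of the special pushout form $\xi(\psi)$ from \ref{nn:projlifting} (using that $\scrL$ preserves pushouts and the resolution, so $\scrL(\xi(\psi))=\xi(\scrL\psi)$ is exact), rather than by invoking a general functoriality lemma for $\scrX\mapsto\scrX^\sharp_n$. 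With that adjustment the argument goes through.
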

\begin{proof}
Let $\xi$ be a $n$-extension $0 \rightarrow \scrL(B') \rightarrow E_{n-1} \rightarrow \cdots \rightarrow E_0 \rightarrow \scrL(B) \rightarrow 0$ in $\A$ such that $\zeta:=\scrR(\xi)$ is equivalent to the trivial $n$-extension $\sigma_n(\scrR\scrL(B), \scrR\scrL(B'))$ in $\B$. Chose a chain map
$$
\xymatrix@C=18pt{
\cdots \ar[r] & P_{n} \ar[r]^-{p_n} \ar[d]_-{\varphi_n} & P_{n-1} \ar[r]^-{p_{n-1}} \ar[d]_-{\varphi_{n-1}} & \cdots \ar[r]^-{p_2} & P_{1} \ar[r]^-{p_1} \ar[d]^-{\varphi_1} & P_{0} \ar[r]^-{\eta_{B}^{-1} \circ p_0} \ar[d]^-{\varphi_0} & \scrR\scrL(B) \ar[r] \ar@{=}[d] & 0 \ \ \\
0 \ar[r] & \scrR\scrL(B') \ar[r] & \scrR (E_{n-1}) \ar[r] & \cdots \ar[r] & \scrR (E_{1}) \ar[r] & \scrR (E_{0}) \ar[r] & \scrR\scrL(B) \ar[r] & 0 \ ,
}
$$
lifting the indentity of $B$. Then $\zeta$ is equivalent to $\zeta' = \zeta(\varphi_n)$ (see \ref{nn:projlifting}). Note that $\scrL$, as a left adjoint, preserves pushouts and hence the sequence $\scrL(\zeta')$ remains exact. We obtain the following commutative diagram, within the top and the bottom rows are exact, and $Q$ denotes $\scrR\scrL(B') \oplus_{P_n} P_{n-1}$.
$$
\small
\xymatrix@C=13pt@R=20pt{
\scrL(\zeta') \ar[d]_-{\scrL(\alpha_\zeta)} & \equiv & 0 \ar[r] & \scrL(B') \ar[d]_-{{\scrL(\eta_{B'})}} \ar[r] & \scrL(Q) \ar[d] \ar[r] & \cdots \ar[r] & \scrL(P_{0}) \ar[d] \ar[r] & \scrL(B) \ar[d]^-{\scrL(\eta_{B})} \ar[r] & 0 \ \ \\
\scrL(\zeta) \ar[d] & \equiv & 0 \ar[r] & \scrL\scrR\scrL(B') \ar[d]_-{\varepsilon_{\scrL(B')}} \ar[r] & \scrL\scrR (E_{n-1}) \ar[d]_-{\varepsilon_{E_{n-1}}} \ar[r] & \cdots \ar[r] & \scrL\scrR (E_{0}) \ar[d]^-{\varepsilon_{E_0}} \ar[r] & \scrL\scrR\scrL(B) \ar[d]^-{\varepsilon_{\scrL(B)}} \ar[r] & 0 \ \ \\
\xi & \equiv & 0 \ar[r] & \scrL(B') \ar[r] & E_{n-1} \ar[r] & \cdots \ar[r] & E_0 \ar[r] & \scrL(B) \ar[r] & 0 \ .
}
$$
When read from top to bottom, this is a morphism of $n$-extensions and thus, $\scrL(\zeta')$ is equivalent to $\xi$. It remains to show, that $\scrL(\zeta')$ is equivalent to the trivial $n$-extension in $\A$. Since $\zeta$ is equivalent to the trivial $n$-extension, there is a sequence $\zeta = \zeta_0, \zeta_1, \dots, \zeta_r = \sigma_n(\scrR \scrL(B),\scrR \scrL(B'))$ of objects in $\mathcal Ext^n_\B(\scrR \scrL(B),\scrR \scrL(B'))$ and a sequence of morphisms $\alpha_1, \dots, \alpha_{r}$, where
$$
\alpha_i: \zeta_{i-1} \longrightarrow \zeta_{i} \quad \text{or} \quad \alpha_i:  \zeta_{i} \longrightarrow  \zeta_{i-1} \quad (\text{for $1 \leq i \leq r$}).
$$
By the weak functoriality discussed in the proof of Proposition \ref{lem:exhproj}, morphisms $\alpha_i'$ between $\zeta_{i-1}'$ and $\zeta_{i}'$ will be given (for $i=1,\dots, r$). Since $\mathscr L$ preserves pushouts, $\scrL(\alpha_1'), \dots, \scrL(\alpha_{r}')$ is a sequence of morphisms between $n$-extensions that connects $\scrL(\zeta')$ and $\sigma_n(\scrL(B), \scrL(B'))$.
\end{proof}
\section{Hochschild cohomology for abelian categories}
\begin{defn}
Let $\A$ be an abelian $k$-category, $n \geq 0$ be an integer and $\scrX$ a $k$-linear endofunctor of $\mathsf A$. The \textit{$n$-th Hochschild cohomology group of $\mathsf A$ with coefficients in $\scrX$} is
$$
\HH^n(\mathsf A, \scrX) = \Ext^n_{\sfEnd_k(\A)}(\Id_{\mathsf A}, \scrX).
$$
The graded algebra $\HH^\bullet(\A) := \HH^\bullet(\A, \Id_\A)$ is the \textit{Hochschild cohomology ring of $\A$}.
\end{defn}
We prove that $\HH^n(\mathsf A, \Id_\A)$ coincides with $\Ext^n_{A^\ev}(A,A)$ for all $n \geq 0$ if $\A$ is the category of left modules over the $k$-projective algebra $A$.
\begin{lem}\label{lem:funcexactiso}
Let $A$ be a $k$-algebra, $\A = \Mod(A)$ and let $\scrX$ be an object in $\sfEnd_k(\Mod(A))$. If $A$ is projective over $k$, then the functor $\ev_A$ induces $k$-linear epimorphisms
$$
\beta_n = {\ev}_{A}^\sharp: \Ext^n_{\sfEnd_k(\A))}(\Id_\mathsf{A}, \scrX) \longrightarrow \Ext^n_{A^\ev}(A, \scrX(A))\quad \text{$($for $n \geq 0)$}
$$
being also monomorphisms in case $\scrX = \Id_{\A}$. In particular, we obtain an isomorphism
$$
\beta = {\ev}_{A}^\sharp: \Ext^\bullet_{\sfEnd_k(\A)}(\Id_{\A}, \Id_{\A}) \longrightarrow \Ext^\bullet_{A^\ev}(A, A)
$$
of graded $k$-algebras.
\end{lem}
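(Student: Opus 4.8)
The plan is to obtain $\beta_n$ as a special case of the comparison map $\scrR^\sharp_n$ studied in Proposition~\ref{prop:isoadjointfull}, applied to the adjunction produced in Lemma~\ref{lem:adjeval}. In the notation of that proposition I would put $\scrR = \ev_A\colon \sfEnd_k(\A)\to\Mod(A^\ev)$ and $\scrL = \ev_A^\lambda = (-)\otimes_A-\colon \Mod(A^\ev)\to\sfEnd_k(\A)$, take the first object to be $B = A$ regarded as an $A^\ev$-module, and let the coefficient object be $A' = \scrX$. Then $\scrL(B) = A\otimes_A-\cong\Id_\A$, $\scrR\scrL(B) = \ev_A(A\otimes_A-) = A\otimes_A A\cong A$, and $\scrR(A') = \scrX(A)$, so that after composing with the isomorphism $A\cong\scrR\scrL(A)$ induced by the unit (and using functoriality of $\Ext$ in its first argument) the map $\scrR^\sharp_n$ is precisely $\beta_n\colon \Ext^n_{\sfEnd_k(\A)}(\Id_\A,\scrX)\to\Ext^n_{A^\ev}(A,\scrX(A))$.

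Next I would check the four hypotheses of Proposition~\ref{prop:isoadjointfull}. The adjunction $\ev_A^\lambda\dashv\ev_A$ is Lemma~\ref{lem:adjeval}; the unit $\eta\colon\Id_{\Mod(A^\ev)}\to\scrR\scrL$ is an isomorphism because $\ev_A\ev_A^\lambda(M) = M\otimes_A A\cong M$ naturally in $M$, as already recorded in the proof of Lemma~\ref{lem:adjeval}; and $\ev_A$ is exact since kernels and cokernels in $\sfEnd_k(\A) = \Fun_k(\A,\A)$ are formed objectwise, so a conflation of functors stays exact after evaluation at $A$. The remaining and decisive hypothesis is that applying $\scrL = (-)\otimes_A-$ to a projective $A^\ev$-resolution $\cdots\to P_1\to P_0\to A\to 0$ yields an exact complex of functors $\cdots\to P_1\otimes_A-\to P_0\otimes_A-\to\Id_\A\to 0$. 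This is exactly the content of Lemma~\ref{lem:endofuncprojres} applied to $\scrX = \Id_\A = A\otimes_A-$, whose hypothesis that $A$ be flat (indeed free) as a right $A$-module is automatic, and whose proof is where $k$-projectivity of $A$ is genuinely used: it guarantees (via Lemmas~\ref{lem:kprojproj} and~\ref{lem:flatexact}) that the syzygies remain right-$A$-flat, so that tensoring the resolution over $A$ stays exact. The resulting functors $P_i\otimes_A-$ are moreover projective in $\sfEnd_k(\A)$, since $\ev_A^\lambda$ is left adjoint to the exact functor $\ev_A$ (see~\ref{nn:leftadjointprop}).

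With the hypotheses in place, Proposition~\ref{prop:isoadjointfull} yields surjectivity of $\scrR^\sharp_n$, hence of $\beta_n$, for every coefficient functor $\scrX$, because conditions (1)--(4) constrain only $B$, $\scrL$ and $\scrR$ and are insensitive to the second variable. For the injectivity clause one needs $A' = \scrX$ to lie in the image of $\scrL$; since $\Id_\A = A\otimes_A- = \ev_A^\lambda(A) = \scrL(A)$, taking $\scrX = \Id_\A$ (with $B' = A$) makes $\beta_n$ bijective in every degree. Finally, $\ev_A$ being a $k$-linear exact functor, Lemma~\ref{lem:exactfuncyoneda} shows that the induced graded map $\ev_A^\sharp$ on $\Ext^\bullet_{\sfEnd_k(\A)}(\Id_\A,\Id_\A)$, with target $\Ext^\bullet_{A^\ev}(\ev_A\Id_\A,\ev_A\Id_\A) = \Ext^\bullet_{A^\ev}(A,A)$, is a homomorphism of graded $k$-algebras; being bijective in each degree, it is the asserted isomorphism.

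I expect the verification of hypothesis (4) to be the main obstacle, since it is the only step that is not purely formal and is precisely the point at which the $k$-projectivity assumption on $A$ must be invoked; everything else reduces to identifying the relevant objects and quoting the adjunction and functoriality results established earlier.
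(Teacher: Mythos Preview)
Your proposal is correct and follows essentially the same route as the paper: apply Proposition~\ref{prop:isoadjointfull} to the adjoint pair $(\ev_A^\lambda,\ev_A)$ with $B=A$, verifying exactness of $\ev_A$, invertibility of the unit, and hypothesis~(4) via Lemma~\ref{lem:endofuncprojres}. Your write-up is somewhat more explicit than the paper's one-sentence proof (in particular you spell out why $k$-projectivity of $A$ enters through the flatness of syzygies, and you separately cite Lemma~\ref{lem:exactfuncyoneda} for the algebra-homomorphism clause, which the paper absorbs into the ``in particular'' statement of Proposition~\ref{prop:isoadjointfull}), but the substance is identical.
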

\begin{proof}
In fact, this is a direct consequence of Proposition \ref{prop:isoadjointfull}, since $\ev_\A$ is exact, the unit corresponding to the adjoint pair $(\ev_A^\lambda, \ev_A)$ is an isomorphism, and $\ev_A^\lambda$ sends any projective resolution of $A$ to a projective resolution of $\Id_\A$ (compare with Lemma \ref{lem:endofuncprojres}).
\end{proof}
\begin{prop}\label{abel:monoidal}
Let $A$ be a $k$-algebra and $\A = \Mod(A)$. If $A$ is projective over $k$, then the $k$-linear functor $\emph{ev}_A^\lambda$ restricts to an exact and strong monoidal functor
$$
\ev_A^\lambda: (\P(A), \otimes_A, A) \longrightarrow (\underline{\sfEnd}_k(\A), \circ, \Id_{\mathsf A}).
$$
between very strong exact monoidal categories. The induced $k$-linear maps
$$
\beta_n^- = ({\ev}^{\lambda}_{A})_n^\sharp: \Ext^n_{\P(A)}(A,A) \longrightarrow  \Ext^n_{\underline{\sfEnd}_k(\A)}(\Id_{\mathsf A}, \Id_{\mathsf A})\quad \text{$($for $n \geq 0)$}
$$
are isomorphisms which make the diagrams
\begin{equation}\label{prop:monoidal:eq1}
\begin{aligned}
\xymatrix@C=30pt{
\Ext^m_{\P(A)}(A,A) \times \Ext^n_{\P(A)}(A,A) \ar[r]^-{[-,-]_{\P(A)}} \ar[d] & \Ext^{m + n}_{\P(A)}(A,A) \ar[d]\\
\Ext^m_{\underline{\sfEnd}_k(\A)}(\Id_{\mathsf A},\Id_{\mathsf A}) \times \Ext^n_{\underline{\sfEnd}_k(\A)}(\Id_{\mathsf A},\Id_{\mathsf A}) \ar[r]^-{[-,-]_\A} & \Ext^{m+n-1}_{\underline{\sfEnd}_k(\A)}(\Id_{\mathsf A},\Id_{\mathsf A})
}
\end{aligned}
\end{equation}
\begin{equation}\label{prop:monoidal:eq2}
\begin{aligned}
\xymatrix@C=30pt{
\Ext^{2n}_{\P(A)}(A,A) \ar[r]^-{sq_{\P(A)}} \ar[d] & \Ext^{4n-1}_{\P(A)}(A,A) \ar[d]\\
\Ext^{2n}_{\underline{\sfEnd}_k(\A)}(\Id_{\mathsf A},\Id_{\mathsf A}) \ar[r]^-{sq_\A} & \Ext^{4n-1}_{\underline{\sfEnd}_k(\A)}(\Id_{\mathsf A},\Id_{\mathsf A})
}
\end{aligned}
\end{equation}
commutative for every pair of integers $m, n \geq 1$. Here $[-,-]_\A$ and $sq_\A$ denote the maps $[-,-]_{\underline{\sfEnd}_k(\A)}$ and $sq_{{\underline{\sfEnd}_k(\A)}}$.
\end{prop}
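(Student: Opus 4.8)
The plan is to realize this statement as a direct application of the structural Theorem~\ref{thm:bracketcomm} to the functor $\ev_A^\lambda$, once I have checked that it restricts to an exact and strong monoidal functor between the two monoidal categories in question and that the induced map $\beta_n^-$ is an isomorphism. Concretely I would proceed in four stages: first identify $(\P(A), \otimes_A, A)$ and $(\underline{\sfEnd}_k(\A), \circ, \Id_\A)$ as very strong exact monoidal categories; second show that $\ev_A^\lambda$ restricts to $\P(A) \to \underline{\sfEnd}_k(\A)$ and is exact and strong monoidal; third prove $\beta_n^-$ is an isomorphism; fourth invoke Theorem~\ref{thm:bracketcomm} to obtain the two commutative squares.

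For the first stage, recall from Lemma~\ref{lem:adjeval} that $\ev_A^\lambda(M) = (M \otimes_A -)$. The category $\P(A)$ is monoidal with every object flat and coflat (Example~\ref{exa:bimodules}) and closed under kernels of epimorphisms (Examples~\ref{exa:condition}(3)), so Corollary~\ref{cor:tensorexact} gives that $(\P(A), \otimes_A, A)$ is very strong exact monoidal. For $\underline{\sfEnd}_k(\A)$, Lemma~\ref{lem:exfunctors}(\ref{lem:exfunctors:1}) shows it is closed under kernels of epimorphisms, while Example~\ref{exa:endofunc} together with Lemma~\ref{lem:exfunctors}(\ref{lem:exfunctors:2}) shows that every object is flat and coflat with respect to $\circ$; hence Corollary~\ref{cor:tensorexact} again yields a very strong exact monoidal category, whose monoidal structure is moreover strict. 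For the second stage, if $M \in \P(A)$ then $M$ is projective, hence flat, as a right $A$-module, so $M \otimes_A -$ is an exact endofunctor, and $\ev_A^\lambda$ restricts as claimed; exactness of the restricted functor follows because an admissible short exact sequence in $\P(A)$ has flat cokernel term, so tensoring it on the right with any $N$ stays exact, producing an admissible short exact sequence of exact endofunctors. The strong monoidal structure is supplied by the associativity isomorphism $\phi_{M,N}\colon (M \otimes_A -)\circ(N\otimes_A -) \xrightarrow{\ \sim\ } (M\otimes_A N)\otimes_A -$ and by $\phi_0 = \lambda^{-1}\colon \Id_\A \xrightarrow{\ \sim\ } A \otimes_A -$, both invertible; since the target is strict monoidal, the coherence diagrams of Definition~\ref{def:monoidalfunc} collapse to the standard coherence of $\otimes_A$, which I would verify directly.

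For the third stage, I would exploit the commuting square of exact functors whose vertical arrows are the inclusions $\P(A) \hookrightarrow \Mod(A^\ev)$ and $\underline{\sfEnd}_k(\A) \hookrightarrow \sfEnd_k(\A)$ and whose horizontal arrows are $\ev_A^\lambda$; this commutes because $M \otimes_A -$ is literally the same functor whether regarded in $\underline{\sfEnd}_k(\A)$ or in $\sfEnd_k(\A)$. Passing to $\Ext$ and using functoriality of $(-)^\sharp$ gives $\iota^\sharp \circ \beta_n^- = (\ev_A^\lambda)^\sharp \circ j^\sharp$, where $j^\sharp$ (Corollary~\ref{cor:isohochschildproj}) and $\iota^\sharp$ (the inclusion of exact endofunctors, established in the proposition just above) are isomorphisms. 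Finally, since $\ev_A \circ \ev_A^\lambda \cong \Id$, functoriality gives $\ev_A^\sharp \circ (\ev_A^\lambda)^\sharp = \Id$; as $\ev_A^\sharp = \beta$ is an isomorphism by Lemma~\ref{lem:funcexactiso}, the map $(\ev_A^\lambda)^\sharp$ on the ambient categories is an isomorphism, and chasing the displayed identity then forces $\beta_n^-$ to be one as well.

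For the fourth stage, with the first three in hand, $\ev_A^\lambda\colon \P(A) \to \underline{\sfEnd}_k(\A)$ is an exact and almost strong monoidal functor between strong exact monoidal categories whose induced map on the $\Ext$-algebras of the tensor units is exactly $\beta_n^-$; Theorem~\ref{thm:bracketcomm} (with $\mathbbm 1_\C = A$ and $\mathbbm 1_\D = \Id_\A$) then yields the commutativity of both the bracket diagram~(\ref{prop:monoidal:eq1}) and the squaring diagram~(\ref{prop:monoidal:eq2}) verbatim. The main obstacle I anticipate is book-keeping rather than conceptual: correctly verifying the strong-monoidal coherence for $\ev_A^\lambda$ and, above all, identifying $\beta_n^-$ with a composite of previously established isomorphisms through the commuting square — this detour is necessary precisely because the evaluation functor $\ev_A$ does not itself restrict to a functor $\underline{\sfEnd}_k(\A) \to \P(A)$, so one cannot apply Proposition~\ref{prop:isoadjointfull} directly to the small categories.
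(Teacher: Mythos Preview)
Your proposal is correct and follows essentially the same approach as the paper: verify that $\ev_A^\lambda$ restricts to an exact strong monoidal functor between the two very strong exact monoidal categories, deduce that $\beta_n^-$ is an isomorphism via a commuting square involving the inclusions into the ambient categories, and then invoke Theorem~\ref{thm:bracketcomm}. The only cosmetic difference is in the third stage: the paper's square uses $\ev_A$ (whose induced map $\beta_n$ is already known to be an isomorphism by Lemma~\ref{lem:funcexactiso}) rather than $\ev_A^\lambda$ on the ambient categories, so it avoids your extra step of deducing that $(\ev_A^\lambda)^\sharp$ is bijective from $\ev_A^\sharp \circ (\ev_A^\lambda)^\sharp \cong \Id$; but the two squares encode the same relation and the arguments are interchangeable.
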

\begin{proof}
Because every object in $\mathsf P(A)$ is flat over $A^\op$, $\ev_A^\lambda$ clearly restricts as claimed. It is apparent that $(A \otimes_A -) \cong \Id_{\Mod(A)}$. Further, 
$$
\big((M \otimes_A N) \otimes_A - \big) \cong M \otimes_A (N \otimes_A -) = (M \otimes_A -) \circ (N \otimes_A -)
$$
for all $A^\ev$-modules $M$ and $N$. Thus $\ev_A^\lambda$ is a bistrong monoidal functor. It is also exact, when restricted to $\P(A)$: Let $0 \rightarrow P' \rightarrow P \rightarrow P'' \rightarrow 0$ be an admissible short exact sequence in $\P(A)$. The induced sequence
$$
0 \longrightarrow (P' \otimes_A -) \longrightarrow (P \otimes_A -) \longrightarrow (P'' \otimes_A -) \longrightarrow 0
$$
of endofunctors on $\Mod(A)$ is exact in $\sfEnd_k(\Mod(A))$, since $P''$ is projective over $A^\op$ (cf. Lemma \ref{lem:flatexact}). Finally, let $i: \P(A) \rightarrow \Mod(A^\ev)$ and $j: \underline{\sfEnd}_k(\A) \rightarrow \sfEnd_k(\A)$ be the inclusion functors. The commutative diagram
$$
\xymatrix{
\Ext^n_{\P(A)}(A,A) \ar[r]^-{{i}^\sharp_{n}}_-\cong \ar[d]_-{\beta_n^-} & \Ext^n_{A^\ev}(A,A)\\
\Ext^n_{\underline{\sfEnd}_k(\A)}(\Id_\A,\Id_\A) \ar[r]^-{{j}^\sharp_{n}}_-\cong & \Ext^n_{\sfEnd_k(\A)}(\Id_\A, \Id_\A) \ar[u]_-{\beta_n}^-\cong
}
$$
tells us, that $\beta^- = ({\ev}^\lambda_{A})^\sharp$ has to be an isomorphism. The claimed commutativity of the diagrams (\ref{prop:monoidal:eq1}) and (\ref{prop:monoidal:eq2}) follows from Theorem \ref{thm:bracketcomm}.
\end{proof}
\begin{cor}\label{cor:abel:monoidal}
Let $A$ be a $k$-algebra. If $A$ is projective over $k$, then the Hochschild cohomology ring $\HH^\bullet(A)$ of $A$ agrees with the Hochschild cohomology ring $\HH^\bullet(\A)$ of $\A = \Mod(A)$ $($in every possible meaning, i.e., the graded ring structures as well as the brackets and the squaring maps coincide$)$.
\end{cor}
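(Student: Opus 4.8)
The plan is to realize the asserted agreement as a composite of graded algebra isomorphisms, each of which has already been shown to intertwine the relevant brackets and squaring maps, so that no fresh computation is needed. Assuming $A$ is $k$-projective throughout, I would string together
$$
\HH^\bullet(A) \xrightarrow{\chi_A} \Ext^\bullet_{A^\ev}(A,A) \xleftarrow{j^\sharp_\bullet} \Ext^\bullet_{\P(A)}(A,A) \xrightarrow{\beta^-} \Ext^\bullet_{\underline{\sfEnd}_k(\A)}(\Id_\A, \Id_\A) \xrightarrow{j^\sharp} \HH^\bullet(\A),
$$
where $\chi_A$ is the comparison map of Section \ref{sec:basdef} (an isomorphism of graded $k$-algebras precisely because $A$ is $k$-projective, so that $\mathbb B_A \rightarrow A \rightarrow 0$ is an $A^\ev$-projective resolution), $j^\sharp_\bullet$ is the graded algebra isomorphism of Corollary \ref{cor:isohochschildproj}, $\beta^- = (\ev_A^\lambda)^\sharp$ is the isomorphism of Proposition \ref{abel:monoidal}, and the final $j^\sharp$ is the graded algebra isomorphism $\Ext^\bullet_{\underline{\sfEnd}_k(\A)}(\Id_\A, \Id_\A) \cong \Ext^\bullet_{\sfEnd_k(\A)}(\Id_\A, \Id_\A) = \HH^\bullet(\A)$ established at the close of the section on exact endofunctors. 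Each arrow being an isomorphism of graded $k$-algebras, so is the composite $\HH^\bullet(A) \to \HH^\bullet(\A)$; this settles the ring-theoretic part.

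Next I would track the bracket and the squaring map along this composite. By Theorem \ref{thm:schwede_comm} the map $(j^\sharp_\bullet)^{-1}\circ \chi_A$ carries $\{-,-\}_A$ and $sq_A$ to $[-,-]_{\P(A)}$ and $sq_{\P(A)}$ on $\Ext^\bullet_{\P(A)}(A,A)$. The commutativity of diagrams \eqref{prop:monoidal:eq1} and \eqref{prop:monoidal:eq2} in Proposition \ref{abel:monoidal} then shows $\beta^-$ transports these to $[-,-]_\A$ and $sq_\A$ on $\Ext^\bullet_{\underline{\sfEnd}_k(\A)}(\Id_\A,\Id_\A)$. Finally, to pass from $\underline{\sfEnd}_k(\A)$ to $\sfEnd_k(\A)$ I would observe that the inclusion $j: \underline{\sfEnd}_k(\A) \rightarrow \sfEnd_k(\A)$ is an exact, strict (hence almost strong) monoidal functor between strong exact monoidal categories: here Example \ref{exa:endofunc} together with Corollary \ref{cor:tensorexact} gives that $(\sfEnd_k(\A), \circ, \Id_\A)$ is itself strong exact monoidal, being abelian (so factorizing and closed under kernels of epimorphisms) and a tensor $k$-category in which every object is coflat. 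Theorem \ref{thm:bracketcomm} then applies to $j$ and shows that $j^\sharp$ intertwines $[-,-]_\A$ with $[-,-]_{\sfEnd_k(\A)}$, and likewise for the squaring maps. Chaining these three compatibilities yields that the composite isomorphism sends $\{-,-\}_A$ and $sq_A$ to the bracket and squaring map carried by $\HH^\bullet(\A)$.

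Since every ingredient is an already-proven statement, the corollary is a formal consequence, and the only genuine obstacle is the bookkeeping of \emph{where} each bracket lives. The construction of Chapter \ref{ch:bracket} produces $[-,-]_\A$ on the exact monoidal subcategory $\underline{\sfEnd}_k(\A)$ of exact endofunctors, so the key point to pin down is that $\sfEnd_k(\A)$ carries a strong exact monoidal structure of its own and that $j$ satisfies the hypotheses of Theorem \ref{thm:bracketcomm}; this is what guarantees that the bracket transported along $j^\sharp$ coincides with the one defined directly on $\Ext^\bullet_{\sfEnd_k(\A)}(\Id_\A,\Id_\A)$. Once that identification is secured, the graded ring structure, the bracket, and the squaring map all match under a single isomorphism, which is exactly the assertion that the two Hochschild cohomology rings agree in every possible meaning.
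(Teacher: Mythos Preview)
Your argument is correct and follows the same chain of isomorphisms the paper has in mind; the paper's proof simply reads ``Combine Proposition \ref{abel:monoidal} with Corollary \ref{cor:isohochschildproj} and Lemma \ref{lem:funcexactiso}'' and leaves the reader to assemble exactly the composite you wrote down. Your explicit verification that $(\sfEnd_k(\A), \circ, \Id_\A)$ is strong exact monoidal and that the inclusion $j:\underline{\sfEnd}_k(\A)\to\sfEnd_k(\A)$ satisfies the hypotheses of Theorem \ref{thm:bracketcomm} is a detail the paper's terse proof does not spell out, but it is precisely what is needed to identify $[-,-]_\A = [-,-]_{\underline{\sfEnd}_k(\A)}$ (as defined in Proposition \ref{abel:monoidal}) with $[-,-]_{\sfEnd_k(\A)}$ on $\HH^\bullet(\A)$, so your added care is well placed.
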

\begin{proof}
Combine Proposition \ref{abel:monoidal} with Corollary \ref{cor:isohochschildproj} and Lemma \ref{lem:funcexactiso}.
\end{proof}
\vfill
\section*{Acknowledgements}
This monograph is a modified version of my doctoral thesis, submitted at Bielefeld University. I am indebted to my supervisor Henning Krause for his continuing support and his encouragement. Many thanks are given to Ragnar-Olaf Buchweitz and Greg Stevenson for very valuable and stimulating conversations on the present work. I also like to thank Ragnar-Olaf Buchweitz for his warm hospitality during my stays at the University of Toronto. Finally, I wish to thank Rachel Taillefer for pointing out the article \cite{Me11} to me, and the anonymous referees for their comments.


\appendix

\renewcommand{\thesection}{\Alph{chapter}.\arabic{section}}
\chapter{Basics}
\section{Homological lemmas}
\begin{nn}
Let $k$ be a commutative ring. In this section, we will recall the basic notions of (pre)additive, abelian and $k$-linear categories, as well as structure preserving functors between them. Further, we are going to state some fundamental homological results for abelian categories. Any statement within this section is beyond classical and should be well-known to everyone who attended an introductory course on homological algebra.
\end{nn}
\begin{defn}\label{def:linearcat}
Let $\C$ be a category. The category $\C$ is called a \textit{$k$-linear} category (or simply a \textit{$k$-category}) if
\begin{itemize}
\item each set $\Hom_\C(X,Y)$ is a $k$-module ($X, Y \in \Ob \C$);
\item the composition maps
$$
\circ: \Hom_\C(Y, Z) \times \Hom_\C(X,Y) \rightarrow \Hom_\C(X,Z)
$$
are $k$-bilinear (for all $X, Y, Z \in \Ob \C$).
\end{itemize}
The category $\C$ is called \textit{preadditive} if it is $\mathbb Z$-linear. It is called \textit{additive} if it is preadditive and has finite products.
\end{defn}
\begin{defn}\label{def:addfunc}
A functor $\mathscr X: \C \rightarrow \D$ between preadditive categories $\C$ and $\D$ is called \textit{additive} if the induced maps
$$
\mathscr X_{X,Y}: \Hom_\C(X,Y)  \rightarrow \Hom_\D(\mathscr X X, \mathscr XY), \quad X,Y \in \Ob \C,
$$
are homomorphisms of abelian groups. If the categories $\C$ and $\D$ are $k$-linear, the functor $\mathscr X$ is called \textit{$k$-linear} if $\mathscr X_{X,Y}$ is a $k$-linear map for every pair $X, Y \in \Ob \C$.
\end{defn}
\begin{rem}\label{rem:charaddfunc}
Clearly every $k$-linear category is preadditive; the product category of two preadditive ($k$-linear/additive) categories is again preadditive ($k$-linear/additive). Notice that an additive category $\C$ automatically has finite coproducts and finite direct sums (i.e., biproducts), and thus a zero object. Moreover, a functor $\mathscr X: \C \rightarrow \D$ between additive categories is additive if, and only if, the canonical maps (that are induced by the inclusions of summands into their direct sum)
$$
a_{X,Y}: \mathscr X X \oplus \mathscr X Y \longrightarrow \mathscr X (X \oplus Y) \quad \text{(for $X, Y \in \Ob\C$)}
$$
are isomorphisms.
\end{rem}
\begin{defn}\label{def:abelian}
An additive category $\A$ is called \textit{abelian} if every morphism $f: X \rightarrow Y$ in $\A$ has a kernel $(\Ker(f), \ker(f))$ and a cokernel $(\Coker(f), \coker(f))$, and the bottom morphism in the induced diagram
$$
\xymatrix{
\Ker(f) \ar[r]^-{\ker(f)} & X \ar[d]_-{\coker(\ker(f))} \ar[r]^-f & Y \ar[r]^-{\coker(f)} & \Coker(f)\\
& \Coker(\ker(f)) \ar@{-->}[r]^-{\cong} & \Ker(\coker(f)) \ar[u]_-{\ker(\coker(f))}
}
$$
is an isomorphism. The object $\Ker(\coker(f)) \cong \Coker(\ker(f))$ is the \textit{image of $f$} and will be denoted by $\Im(f)$.

Let $X \rightarrow Y \rightarrow Z$ be a sequence in an abelian category $\A$. The sequence is called \textit{exact at $Y$} if $\Ker(Y \rightarrow Z) = \Im(X \rightarrow Y)$. A sequence of morphisms
$$
\xymatrix{
\cdots \ar[r] & X_{i-1} \ar[r] & X_i \ar[r] & X_{i+1} \ar[r] & \cdots
}
$$
in the abelian category $\mathsf A$ is called \textit{exact}, if it is exact at $X_i$ for all $i \in \Z$.
\end{defn}

\begin{lem}[5-Lemma, {\cite[Prop.\,2.72]{Ro09}}]\label{lem:fivelem}
Let $\A$ be an abelian category. Assume that a diagram
$$
\xymatrix{
A \ar[r] \ar[d]_-f & B \ar[r] \ar[d]_-g & C \ar[r] \ar[d]_-h & D \ar[r] \ar[d]^-i & E \ar[d]^-j\\
A' \ar[r] & B' \ar[r] & C' \ar[r] & D' \ar[r] & E'
}
$$
with exact rows is given in $\A$. 
\begin{enumerate}[\rm(1)]
\item If $g$ and $i$ are monomorphisms and $f$ is an epimorphism, then $h$ is a monomorphism.
\item If $g$ and $i$ are epimorphisms and $j$ is a monomorphism, then $h$ is an epimorphism.
\item If $f, g$ and $i,j$ are isomorphisms, then $h$ is an isomorphism.
\end{enumerate}
\end{lem}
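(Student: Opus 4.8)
The plan is to establish parts (1) and (2) by a diagram chase and then to deduce part (3) from them. For part (3), if $f,g,i,j$ are all isomorphisms, then in particular $g$ and $i$ are monomorphisms and $f$ is an epimorphism, so part (1) yields that $h$ is a monomorphism; dually $g$ and $i$ are epimorphisms and $j$ is a monomorphism, so part (2) yields that $h$ is an epimorphism. In an abelian category a morphism that is simultaneously a monomorphism and an epimorphism is an isomorphism: its canonical factorisation through $\Im(h)$ (cf.\ Definition \ref{def:abelian}) collapses, since $\ker(h) = 0$ forces $\coker(\ker(h))$ to be invertible and $\coker(h)=0$ forces $\ker(\coker(h))$ to be invertible. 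Hence $h$ is an isomorphism, and the whole content of the lemma sits in parts (1) and (2).

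These two parts I would prove by the classical chase on elements. Since $\A$ need not be a category of modules, the first step is to make such a chase legitimate. I would invoke the Freyd--Mitchell embedding theorem: the full abelian subcategory generated by the finitely many objects and morphisms of the diagram is (essentially) small and therefore embeds fully faithfully and exactly into $\Mod(R)$ for some ring $R$. A fully faithful exact embedding preserves and reflects both monomorphisms and epimorphisms and carries exact sequences to exact sequences, so it suffices to prove the conclusion in $\Mod(R)$, where genuine elements are available. (Alternatively one could argue intrinsically via Mac Lane's calculus of members, but the embedding keeps the bookkeeping transparent.)

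For part (1), write the rows as $A \xrightarrow{a} B \xrightarrow{b} C \xrightarrow{c} D$ and $A' \xrightarrow{a'} B' \xrightarrow{b'} C' \xrightarrow{c'} D'$ with verticals $f,g,h,i$. Given $x \in C$ with $h(x)=0$, I would chase as follows: from $i(c(x)) = c'(h(x)) = 0$ and $i$ a monomorphism conclude $c(x)=0$, so exactness at $C$ gives $x = b(y)$ for some $y$; then $b'(g(y)) = h(b(y)) = h(x) = 0$, so exactness at $B'$ and surjectivity of $f$ produce $z \in A$ with $g(a(z)) = a'(f(z)) = g(y)$, whence $a(z) = y$ as $g$ is a monomorphism; finally $x = b(a(z)) = 0$ since $b \circ a = 0$. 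Part (2) is the formal dual, run in the opposite direction: starting from $x' \in C'$ one uses surjectivity of $g$ and $i$ together with injectivity of $j$, and exactness of both rows at each spot, to lift $x'$ to an element of $C$ mapping onto it.

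The main obstacle is not the chase itself, which is entirely mechanical, but the justification that such a chase is permissible in an arbitrary abelian category; once the embedding theorem reduces everything to $\Mod(R)$, the remaining work is routine, and the only care required is to record precisely which hypothesis (injectivity of $g$ and $i$, surjectivity of $f$, exactness at each position) is invoked at each stage.
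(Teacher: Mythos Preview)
Your argument is correct and is essentially the standard proof: reduce to modules via the Freyd--Mitchell embedding (or use members), carry out the classical chase for (1), dualise for (2), and combine for (3). The paper, however, does not prove this lemma at all; it merely states it with a citation to \cite[Prop.\,2.72]{Ro09}. So there is no ``paper's own proof'' to compare against, and your write-up stands on its own as a complete justification.
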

\begin{lem}[Snake Lemma, {\cite[Ex.\,6.5]{Ro09}}]\label{lem:snake}
Let $\A$ be an abelian category. Assume that
$$
\xymatrix{
& A \ar[r]^-f \ar[d]_-a & B \ar[r]^-g \ar[d]_-b & C \ar[r] \ar[d]^-c & 0\\
0 \ar[r] & A' \ar[r]^-{f'} & B' \ar[r]^-{g'} & C' &
}
$$
is a commutative diagram in $\A$ with exact rows. Then, after fixing kernels and cokernels for $a, b$ and $c$, there is a unique commutative diagram
$$
\xymatrix{
& \Ker(a) \ar[r] \ar[d] & \Ker(b) \ar[r] \ar[d] & \Ker(c) \ar[d] &\\
& A \ar[r]^-f \ar[d]_-a & B \ar[r]^-g \ar[d]_-b & C \ar[r] \ar[d]^-c & 0\\
0 \ar[r] & A' \ar[r]^-{f'} \ar[d] & B' \ar[r]^-{g'} \ar[d] & C' \ar[d] &\\
& \Coker(a) \ar[r] & \Coker(b) \ar[r] & \Coker(c) &
}
$$
and a morphism $d: \Ker(c) \rightarrow \Coker(a)$ such that
$$
\xymatrix{
\Ker(a) \ar[r]^-p & \Ker(b) \ar[r] & \Ker(c) \ar[r]^-d & \Coker(a) \ar[r] & \Coker(b) \ar[r]^-q & \Coker(c)
}
$$
is exact. Further, $p$ is a monomorphism if $f$ is, and $q$ is an epimorphism if $g'$ is.
\end{lem}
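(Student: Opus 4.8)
The plan is to assemble the six-term sequence in three stages: produce the horizontal maps, construct the connecting morphism $d$, and then verify exactness. The maps $\Ker(a)\to\Ker(b)\to\Ker(c)$ and $\Coker(a)\to\Coker(b)\to\Coker(c)$ all arise from universal properties. For example, $b\circ f\circ\ker(a)=f'\circ a\circ\ker(a)=0$, so $f\circ\ker(a)$ factors uniquely through $\ker(b)$, which defines $p\colon\Ker(a)\to\Ker(b)$; the other three maps are obtained in the same manner, dualizing for the cokernel row. Since each factorization through a kernel or cokernel is unique, the enlarged diagram commutes and is the only such diagram, giving the asserted uniqueness. The mono/epi claims are then immediate: if $f$ is monic, so is $f\circ\ker(a)$, and since $\ker(b)$ is monic, $p$ is monic; dually $q$ is epic whenever $g'$ is.

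Next I would construct $d$ by a pullback. As the top row is exact at $C$ and ends in $0$, the map $g$ is an epimorphism, so the pullback $P:=B\times_C\Ker(c)$ has its projection $\pi\colon P\to\Ker(c)$ epimorphic, with $\Ker(\pi)\cong\Ker(g)=\Im(f)$. Writing $\rho\colon P\to B$ for the other projection, one computes $g'\circ b\circ\rho=c\circ g\circ\rho=c\circ\ker(c)\circ\pi=0$, so $b\circ\rho$ factors through $\ker(g')$; since the bottom row is exact at $B'$ with $f'$ monic, $\ker(g')\cong\Im(f')\cong A'$, and this yields a morphism $\theta\colon P\to A'$. The composite $\coker(a)\circ\theta$ vanishes on $\Ker(\pi)\cong\Im(f)$, because there $\theta$ is governed by $b\circ f=f'\circ a$ while $\coker(a)\circ a=0$; hence $\coker(a)\circ\theta$ factors through the epimorphism $\pi$, producing $d\colon\Ker(c)\to\Coker(a)$, which is unique since $\pi$ is epic. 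A dual pushout construction gives the same $d$ and would be convenient for the symmetric half of the exactness check.

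Finally I would verify exactness at $\Ker(b)$, $\Ker(c)$, $\Coker(a)$, $\Coker(b)$, which is where the genuine work lies. Rather than manipulating many pullback squares, I would transport the statement to modules: let $\A_0\subseteq\A$ be the essentially small abelian subcategory generated by the finitely many objects and morphisms in the diagram, and fix a fully faithful exact embedding $\A_0\hookrightarrow\Mod(R)$ supplied by the Freyd--Mitchell theorem. An exact functor preserves kernels, cokernels, images and exact sequences, and a fully faithful functor reflects them, so it suffices to check the six-term exactness via the classical element chase in $\Mod(R)$; the connecting map built there agrees with the intrinsic $d$ of the previous paragraph because the embedding preserves the pullback and the factorizations used to define $d$. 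Alternatively, one may run the identical chase intrinsically with Mac Lane's pseudo-elements and avoid the embedding theorem altogether. The main obstacle is making exactness at the two middle terms $\Ker(c)$ and $\Coker(a)$ precise, i.e.\ pinning down $\Im(d)$ and $\Ker(d)$; the embedding reduces this to a routine if delicate diagram chase, after which one must confirm that $d$ does not depend on the auxiliary embedding---this follows from full faithfulness together with the universal-property characterization of $d$ established above.
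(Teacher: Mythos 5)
Your proposal is correct, but there is nothing in the paper to measure it against: the Snake Lemma sits in the appendix of ``Homological lemmas'' and is stated without proof, the paper simply citing Rotman \cite[Ex.\,6.5]{Ro09}, so you have supplied a proof where the monograph supplies only a reference. As a proof, your outline is the standard element-free treatment and its steps check out. The induced maps on kernels and cokernels and the uniqueness of the enlarged diagram follow exactly as you say from the universal properties, and the mono/epi claims for $p$ and $q$ are immediate. Your construction of $d$ via the pullback $P = B \times_C \Ker(c)$ is sound; two small points deserve to be made explicit. First, the vanishing of $\coker(a)\circ\theta$ on $\Ker(\pi)\cong\Im(f)$, which you phrase as ``$\theta$ is governed by $b\circ f = f'\circ a$,'' should be tested after precomposing with the epimorphism $A \twoheadrightarrow \Im(f)$: one gets $f'\circ\theta\circ\iota\circ e = b\circ f = f'\circ a$ with $f'$ monic, hence $\theta\circ\iota\circ e = a$, and killing a map after an epimorphism kills it. Second, the factorization of $\coker(a)\circ\theta$ through $\pi$ uses that in an abelian category every epimorphism is the cokernel of its kernel; since $\pi$ is epi the factorization is unique, as you note. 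For the exactness check, both of your routes are legitimate: the Freyd--Mitchell reduction works provided the essentially small full subcategory $\A_0$ is closed under the finite limits and colimits occurring in the argument (kernels, cokernels, biproducts, and hence the pullback defining $d$), which one obtains by iteratively closing up; exactness of the embedding then preserves the intrinsic construction of $d$, and full faithfulness reflects exactness of the six-term sequence back to $\A$, so the worry about dependence on the embedding evaporates --- $d$ was defined intrinsically before any embedding was chosen. The pseudo-element chase avoids even this bookkeeping. In short: a complete and correct argument for a statement the paper deliberately leaves to the literature.
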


\begin{lem}[Comparison Lemma, {\cite[Thm.\,6.16]{Ro09}}]\label{lem:comparison}
Let $\A$ be an abelian category and let $f: A \rightarrow B$ be a morphism in $\A$. Suppose that
$$
\xymatrix@R=12pt{
\mathbb P \quad : \quad & \cdots \ar[r]^-{d_3} & P_2 \ar[r]^-{d_2} & P_1 \ar[r]^-{d_1} & P_0 \ar[r]^-{d_0} & A \ar[r] & 0\\
\mathbb Q \quad : \quad & \cdots \ar[r]^-{} & Q_2 \ar[r]^-{} & Q_1 \ar[r]^-{} & Q_0 \ar[r]^-{} & B \ar[r] & 0
}
$$
are sequences in $\A$ such that $P_i$ is a projective object in $\A$ for all $i \geq 0$, $\xi$ is a complex \emph{(}i.e., $d_{i} \circ d_{i+1} = 0$ for all $i \geq 0$\emph{)} and such that $\xi'$ is exact. Then there are morphisms $f_i: P_i \rightarrow Q_i$, $i \geq 0$, such that
\begin{equation}\label{eq:comparison_lemma}
\begin{aligned}
\xymatrix{
\cdots \ar[r]^-{} & P_2 \ar[r]^-{} \ar[d]^-{f_2} & P_1 \ar[r]^-{} \ar[d]^-{f_1} & P_0 \ar[r]^-{} \ar[d]^-{f_0} & A \ar[r] \ar[d]^-f & 0\\
\cdots \ar[r]^-{} & Q_2 \ar[r]^-{} & Q_1 \ar[r]^-{} & Q_0 \ar[r]^-{} & B \ar[r] & 0
}
\end{aligned}
\end{equation}
commutes. The family $(f_i)_i$ is unique up to chain homotopy, that is, if $(f_i': P_i \rightarrow Q_i)_i$ is another family of morphisms such that the diagram \emph{(\ref{eq:comparison_lemma})} commutes, then $(f_i - f_i ')_i$ is null-homotopic. 
\end{lem}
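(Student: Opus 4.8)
The plan is to establish existence of the chain map $(f_i)_i$ by induction on $i$, using the projectivity of each $P_i$ together with the exactness of $\mathbb Q$, and then to prove uniqueness up to homotopy by a parallel induction. Throughout I write $d_i' : Q_i \to Q_{i-1}$ for the differentials of $\mathbb Q$ (for $i \geq 1$) and $d_0' : Q_0 \to B$ for its augmentation, mirroring the notation $d_i$ for $\mathbb P$.

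For existence I would first treat the base case. Since $\mathbb Q$ is exact at $B$, the augmentation $d_0' : Q_0 \to B$ is an epimorphism; as $P_0$ is projective, the morphism $f \circ d_0 : P_0 \to B$ lifts along $d_0'$ to some $f_0 : P_0 \to Q_0$ with $d_0' \circ f_0 = f \circ d_0$. For the inductive step, suppose $f_0, \dots, f_n$ have been constructed so that the diagram commutes up to degree $n$. I would consider the composite $f_n \circ d_{n+1} : P_{n+1} \to Q_n$ and show it factors through $\Im(d_{n+1}') = \Ker(d_n')$: precomposing the induction hypothesis $d_n' \circ f_n = f_{n-1} \circ d_n$ (reading $d_0' \circ f_0 = f \circ d_0$ when $n=0$) with $d_{n+1}$ gives $d_n' \circ f_n \circ d_{n+1} = f_{n-1} \circ d_n \circ d_{n+1} = 0$, because $\mathbb P$ is a complex. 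By exactness of $\mathbb Q$ at $Q_n$ the corestriction $Q_{n+1} \twoheadrightarrow \Ker(d_n')$ is an epimorphism, and projectivity of $P_{n+1}$ then yields $f_{n+1} : P_{n+1} \to Q_{n+1}$ with $d_{n+1}' \circ f_{n+1} = f_n \circ d_{n+1}$, completing the induction.

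For uniqueness, given a second lift $(f_i')_i$, I would set $g_i := f_i - f_i'$, a chain map covering the zero morphism $A \to B$, and construct a homotopy $s_i : P_i \to Q_{i+1}$ (with $s_{-1} := 0$) satisfying $g_i = d_{i+1}' \circ s_i + s_{i-1} \circ d_i$ by induction. The base case uses $d_0' \circ g_0 = 0$, so $g_0$ factors through $\Ker(d_0') = \Im(d_1')$ and lifts through the epimorphism $Q_1 \twoheadrightarrow \Ker(d_0')$ by projectivity of $P_0$. For the step, having $s_0,\dots,s_{n-1}$, I would examine $h_n := g_n - s_{n-1}\circ d_n$ and compute $d_n' \circ h_n = d_n'\circ g_n - d_n'\circ s_{n-1}\circ d_n$; substituting $d_n'\circ g_n = g_{n-1}\circ d_n$ and $d_n'\circ s_{n-1} = g_{n-1} - s_{n-2}\circ d_{n-1}$ and using $d_{n-1}\circ d_n = 0$ shows $d_n' \circ h_n = 0$, so $h_n$ lifts through $Q_{n+1}\twoheadrightarrow \Ker(d_n')$ to the required $s_n$.

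The main obstacle is not conceptual but bookkeeping: at every stage the argument rests on the single mechanism ``a morphism into $Q_n$ annihilated by $d_n'$ factors through $\Im(d_{n+1}') = \Ker(d_n')$, hence lifts because $P_\bullet$ is projective and $\mathbb Q$ is exact'', and the care lies in verifying that the relevant composite is indeed killed by $d_n'$ --- which in the existence induction reduces to the complex identity $d_{n-1}\circ d_n = 0$ for $\mathbb P$, and in the uniqueness induction to the inductive homotopy relation together with the same identity. I would also make the corestriction-and-lift step explicit, since exactness provides an epimorphism onto $\Ker(d_n')$ only after corestricting $d_{n+1}'$, and projectivity must be applied to this corestricted epimorphism rather than to $d_{n+1}'$ itself.
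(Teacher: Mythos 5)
Your proof is correct and coincides with the standard argument: the paper gives no proof of its own here, deferring to \cite[Thm.\,6.16]{Ro09}, and your two inductions --- lifting through the corestricted epimorphism $Q_{n+1} \twoheadrightarrow \Ker(d_n')$ via projectivity of $P_\bullet$, once for existence and once for the homotopy $s_\bullet$ --- are exactly that classical proof. You also correctly exploit the precise hypothesis asymmetry (only $d_{i}\circ d_{i+1}=0$ for $\mathbb P$, exactness only for $\mathbb Q$), so there is nothing to repair.
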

\section{Algebras, coalgebras, bialgebras and Hopf algebras}\label{sec:algcoalhopf}
\begin{nn}
The main purpose of this and the subsequent section is to recollect the classical definitions of (quasi-triangular, triangular, cocommutative) bialgebras and Hopf algebras, along with elementary results related to these algebras and their categories of modules. We are going to illustrate the various definitions by promintent examples of Hopf algebras, to which we will apply our theory in order to obtain further insights into the Gerstenhaber algebra structure of their Hochschild cohomology rings.
\end{nn}
\begin{nn}
For the entire section, we let $k$ be a commutative ring. If $\Gamma$ is a ring, we let $Z(\Gamma)$ be the \textit{center of $\Gamma$}:
$$
Z(\Gamma) := \{\gamma \in \Gamma \mid \gamma \gamma' = \gamma' \gamma \text{ for all $\gamma' \in \Gamma$}\}.
$$
Remember that an (associative, unital) $k$-algebra may be defined as a (associative, unital) ring $A$ which additionally carries the structure of a $k$-module such that
$$
r(ab) = (ra)b = a(rb) \quad \text{(for all $r \in k$, $a,b \in A$)},
$$
that is, $k.1_A \subseteq Z(A)$. Alternatively (and more conceptually), one may regard a $k$-algebra as a $k$-module $A$ together with $k$-linear maps $\nabla: A \otimes_k A \rightarrow A$, $\eta: k \rightarrow A$ which are subject to certain axioms. We will recall this approach in full detail. For a $k$-module $V$, let $\tau: V \otimes_k V \rightarrow V \otimes_k V$ denote the map $\tau(v \otimes w) = w \otimes v$.
\end{nn}
\begin{defn}\label{def:algebra}
Let $A$ be a $k$-module. Let $\nabla : A \otimes_k A \rightarrow A$ and $\eta: k \rightarrow A$ be $k$-linear maps. The tuple $(A, \nabla, \eta)$ is called \textit{$($associative, unital$)$ $k$-algebra} if the following diagrams commute.
\begin{equation}\begin{aligned}
\xymatrix@C=40pt{
A \otimes_k (A \otimes_k A) \ar[d]_-{\cong} \ar[r]^-{A \otimes_k \nabla} & A \otimes_k A \ar[r]^-{\nabla} & A \ar@{=}[d]\\
(A \otimes_k A) \otimes_k A \ar[r]^-{\nabla \otimes_k A} & A \otimes_k A \ar[r]^-{\nabla} & A
}
\end{aligned}\tag{A1}\label{alg1}\end{equation}
\begin{equation}
\begin{aligned}
\xymatrix{
R \otimes_k A \ar[r]^-\cong \ar[d]_-{\eta \otimes_k A} & A \ar@{=}[d] & A \otimes_k k \ar[l]_-\cong \ar[d]^-{A \otimes_k \eta} \\
A \otimes_k A \ar[r]^-\nabla & A & A \otimes_k A \ar[l]_-\nabla
}
\end{aligned}\tag{A2}\label{alg2}
\end{equation}
(\ref{alg1}) is the \textit{associativity axiom} and (\ref{alg2}) is the \textit{unitary axiom}. Henceforth, $\nabla$ will be refered to as the \textit{multiplication map} and $\eta$ as the \textit{unit map}. A $k$-algebra $A$ is called \textit{commutative} if its underlying ring is commutative, i.e., if the diagram
\begin{equation}\begin{aligned}
\xymatrix@C=25pt{
A \otimes_k A \ar[d]_{\tau} \ar[r]^-{\nabla} & A \ar@{=}[d] \\
A \otimes_k A \ar[r]^-{\nabla} &  A
}
\end{aligned}\tag{A3}\label{alg3}\end{equation}
commutes.
\end{defn}
\begin{defn}\label{def:algebrahom}
Let $(A, \nabla_A, \eta_A)$ and $(B, \nabla_B, \eta_B)$ be $k$-algebras, and let $f: A \rightarrow B$ be a $k$-linear map. The map $f$ is a \textit{$($unital$)$ $k$-algebra homomorphism} if it commutes with the respective structure maps, that is,
$$
f \circ \nabla_A = \nabla_B \circ (f \otimes_k f) \quad \text{and} \quad f \circ \eta_A = \eta_B.
$$
In other words, $f$ is a homomorphism of the underlying rings.
\end{defn}
\begin{defn}\label{def:module}
Let $A$ be a $k$-algebra. Let $M$ be a $k$-module and $\nabla_M \colon A \times M \rightarrow M$ be a $k$-bilinear map. The pair $(M, \nabla_M)$ is called a \textit{$($left$)$ $A$-module} if the following induced diagrams of $k$-linear maps commute.
\begin{equation}\begin{aligned}
\xymatrix@C=40pt{
A \otimes_k (A \otimes_k M) \ar[d]_{\cong} \ar[r]^-{A \otimes_k \nabla_M} & A \otimes_k M \ar[r]^-{\nabla_M} & M \ar@{=}[d]\\
(A \otimes_k A) \otimes_k M \ar[r]_-{\nabla \otimes_k M} & A \otimes_k M \ar[r]_-{\nabla_M} & M
}
\end{aligned}\tag{M1}\label{mod1}\end{equation}
\begin{equation}\begin{aligned}
\xymatrix@C=14pt{
k \otimes_k M \ar[dr]_\cong \ar[rr]^{\eta \otimes_k M} && A \otimes_k M \ar[dl]^{\nabla_M} \\
& M &
}
\end{aligned}\tag{M2}\label{mod2}\end{equation}
\end{defn}
Coalgebras are precisely the dual analogue of algebras. They arise by inverting the arrows within the diagrammatically stated axioms of an algebra.
\begin{defn}\label{def:coalgebra}
Let $C$ be a $k$-module and $\Delta: C \rightarrow C \otimes_k C$, $\varepsilon: C \rightarrow k$ be  $k$-linear maps. The tuple $(C, \Delta, \varepsilon)$ is called \textit{$($coassociative, counital$)$ $k$-coalgebra} provided the diagrams below commute.
\begin{equation}
\begin{aligned}
\xymatrix@C=40pt{
C \ar@{=}[d] \ar[r]^-{\Delta} & C \otimes_k C \ar[r]^-{\Delta \otimes_k C} & (C \otimes_k C) \otimes_k C \ar[d]^-\cong\\
C \ar[r]_-{\Delta} & C \otimes_k C \ar[r]_-{C \otimes_k \Delta} & C \otimes_k (C \otimes_k C)
}
\end{aligned}\tag{C1}\label{coalg1}
\end{equation}
\begin{equation}
\begin{aligned}
\xymatrix{
k \otimes_k C & C \ar[l]_-\cong \ar[r]^-\cong & C \otimes_k k \\
C \otimes_k C \ar[u]^-{\varepsilon \otimes_k C} & C \ar[l]_-\Delta \ar[r]^-\Delta \ar@{=}[u] & C \otimes_k C \ar[u]_-{C \otimes_k \varepsilon} 
}
\end{aligned}\tag{C2}\label{coalg2}
\end{equation}
The axiom (\ref{coalg1}) is called \textit{coassociativity axiom}, while (\ref{coalg2}) is the \textit{counitary axiom}. Accordingly, we shall call $\Delta$ the \textit{comultiplication map} and $\varepsilon$ the \textit{counit map}. A $k$-coalgebra $C$ is said to be \textit{cocommutative} if the diagram
\begin{equation}
\begin{aligned}
\xymatrix@C=25pt{
C \ar[r]^-{\Delta} \ar@{=}[d] & C \otimes_k C \ar[d]^{\tau}\\
C \ar[r]^-{\Delta} & C \otimes_k C
}
\end{aligned}\tag{C3}\label{coalg3}
\end{equation}
commutes.
\end{defn}
\begin{defn}\label{def:algebrahom}
Let $(C, \Delta_C, \varepsilon_C)$ and $(D, \Delta_D, \varepsilon_D)$ be $k$-coalgebras, and let $f: C \rightarrow D$ be a $k$-linear map. The map $f$ is a \textit{$($unital$)$ $k$-coalgebra homomorphism} if it commutes with the respective structure maps, that is,
$$
\Delta_D \circ f = (f \otimes_k f) \circ \Delta_C \quad \text{and} \quad \varepsilon_D \circ f = \varepsilon_C.
$$
\end{defn}
\begin{defn}\label{def:comodule}
Let $(C, \Delta, \varepsilon)$ be a $k$-coalgebra and $M$ be a $k$-module. Let $\Delta_M: M \rightarrow C \otimes_k M$ be a $k$-linear map. The pair $(M, \Delta_M)$ is a (\textit{left}) \textit{$C$-comodule} if the following diagrams commute.
\begin{equation}\begin{aligned}
\xymatrix@C=40pt{
M \ar[r]^-{\Delta_M} \ar@{=}[d] & C \otimes_k M \ar[r]^-{\Delta \otimes_k C} & (C \otimes_k C) \otimes_k M \ar[d]^\cong \\
M \ar[r]_-{\Delta_M}		      & C \otimes_k M \ar[r]_-{M \otimes_k \Delta_M} & C \otimes_k (C \otimes_k M)}
\end{aligned}\tag{CM1}\label{comod1}\end{equation}
\begin{equation}\begin{aligned}
\xymatrix@C=14pt{
& M \ar[rd]^{\cong} \ar[dl]_{\Delta_M} & \\
C \otimes_k M \ar[rr]_{\varepsilon \otimes_k M} & & k \otimes_k M
}
\end{aligned}\tag{CM2}\label{comod2}\end{equation}
\end{defn}
\begin{nota}\label{not:sweedler}
Let $C$ be a $k$-coalgebra and let $M$ be a $C$-comodule. For $m \in M$ we may write
\begin{equation}
\Delta_M(m) = \sum_{i=1}^n {m_{(1,i)} \otimes m_{(2,i)}}
\end{equation}
for an integer $n \geq 0$ and elemtens $m_{(1,i)} \in C$, $m_{(2,i)} \in M$ for $i = 1, \dots, n$. We will use \textit{Sweedler's notation} (cf. \cite{Sw69}) to abbreviate this expression to
\begin{equation}
\Delta_M(m) = \sum_{(m)} {m_{(1)} \otimes m_{(2)}}
\end{equation}
Thus, for an element $c \in C$, the coalgebra axioms may be stated as follows.
\begin{itemize}
\item[(\ref{coalg1})] $\begin{aligned}[t] \sum_{(c)}{c_{(1,1)} \otimes c_{(1,2)} \otimes c_{(2)}} = \sum_{(c)}{c_{(1)}
\otimes c_{(2,1)} \otimes c_{(2,2)}}\end{aligned}$;
\item[(\ref{coalg2})] $\begin{aligned}[t]\sum_{(c)}{\varepsilon(c_{(1)}) \otimes c_{(2)}} = 1_k \otimes c, \quad c
\otimes 1_k = \sum_{(c)}{c_{(1)} \otimes \varepsilon(c_{(2)})}\end{aligned}$;
\item[(\ref{coalg3})] $\begin{aligned}[t]\sum_{(c)}{c_{(1)} \otimes c_{(2)}} = \sum_{(c)}{c_{(2)} \otimes
c_{(1)}}\end{aligned}$.
\end{itemize}
In order to provided enhanced readability, we will often even drop the summation symbol: $\Delta_M(m) = m_{(1)} \otimes m_{(2)}$.
\end{nota}
\begin{nn}
Modules that simultaneously carry an algebra and a coalgebra structure such that relative structure maps are compatible with the complementary structure appear quite frequently in nature and will be the topic of the upcoming considerations.
\end{nn}
\begin{defn}\label{def:bialgebra}
Let $\mathcal B = (B, \nabla, \eta, \Delta, \varepsilon)$ be such that $(B, \nabla, \eta)$ is an algebra and $(B, \Delta,\varepsilon)$ is a coalgebra over $k$. The 5-tuple $\mathcal B = (B, \nabla, \eta, \Delta, \varepsilon)$ is a \textit{$k$-bialgebra}, if $\Delta$ and $\varepsilon$ are $k$-algebra homomorphisms. We denote the underlying $k$-algebra of $\mathcal B$ by $\mathcal B^\natural$, or (when no confusion is possible) simply by $B$.
\end{defn}
\begin{defn}\label{def:bialgebrahom}
Let $\mathcal B$ and $\mathcal C$ be $k$-bialgebras. A $k$-linear map $\mathcal B^\natural \rightarrow \mathcal C^\natural$ is a \textit{$k$-bialgebra homomorphism} if it respects both structures that $\mathcal B$ and $\mathcal C$ possess, i.e., if it simultaneously is a homomorphism of $k$-algebras and of $k$-coalgebras.
\end{defn}
\begin{rem} Let $\mathcal B = (B, \nabla, \eta, \Delta, \varepsilon)$ be a $k$-bialgebra.
\begin{enumerate}[\rm(1)]
\item Since the counit $\varepsilon: B \rightarrow k$ is a $k$-algebra homomorphism, $k$ may be viewed as a (left) $B$-module:
$$
br:= \varepsilon(b)r \quad (\text{for $b \in B$, $r \in k$}).
$$
It is then called the \textit{trivial $B$-module}.
\item As stated in \cite[Thm.\,2.1.1]{Ab80}, $\Delta$ and $\varepsilon$ being $k$-algebra homomorphisms is equivalent to $\nabla$ and $\eta$ being $k$-coalgebra homomorphisms.
\end{enumerate}
\end{rem}
\begin{defn}\label{defn:properties}
Let $\mathcal B = (B, \nabla, \eta, \Delta, \varepsilon)$ be a $k$-bialgebra. Let P be any attribugte an algebra or a coalgebra can possess (e.g., simple, semi-simple, commutative, cocommutative etc.). If not stated differently, we say that $\mathcal B$ is P if its underlying algebra (respectively coalgebra) is P.
\end{defn}
\begin{nn}
Fix a $k$-bialgebra $(B, \nabla, \eta, \Delta, \varepsilon)$ and two $B$-modules $X$ and $Y$. The $k$-module $X \otimes_k Y$ becomes a $B$-$B$-bimodule in two (possibly different) ways: 
\begin{equation}\label{eq:tensmod1}
\begin{aligned}
b(x \otimes y) &:= bx \otimes y, \\
(x \otimes y)b &:= x \otimes yb
\end{aligned}
\end{equation}
as well as
\begin{equation}\label{eq:tensmod2}
\begin{aligned}
b (x \otimes y) &:= \Delta(b)\cdot(x \otimes y) = \sum_{(b)}{{b}_{(1)}x \otimes b_{(2)}y}, \\
(x \otimes y)b &:= x \otimes yb,
\end{aligned}
\end{equation}
where, in both cases, $b \in B, \ x \in X$ and $y \in Y$. One effortlessly checks that these indeed give rise to sensible $B$-actions. We are going to use the notation
\begin{itemize}
\item $X \otimes_k Y$ for the module obtained by (\ref{eq:tensmod1}), and
\item $X \boxtimes_R Y$ for the module obtained by (\ref{eq:tensmod2}).
\end{itemize}
The hereby acquired $k$-linear bifunctor
$$
-\boxtimes_k - : \Mod(B) \times \Mod(B) \rightarrow \Mod(B)
$$
gives rise to a tensor structure on $\Mod(B)$. The trivial module $k$ acts as the tensor unit and the forgetful functor $\Mod(B) \rightarrow \Mod(k)$ is \textit{strict} bistrong monoidal\footnote{A bistrong monoidal functor is called \textit{strict} if its structure morphisms (i.e., $\phi$, $\phi_0$, $\psi$ and $\psi_0$) are the identity morphisms.}. Bialgebras are, in some sense, determined by this property.
\end{nn}
\begin{prop}\label{prop:bialg_monoidal}
Let $(B, \nabla, \eta)$ be a $k$-algebra. The following data are equivalent:
\begin{enumerate}[\rm(1)]
\item A $k$-bialgebra structure on $(B,\nabla,\eta)$.
\item A monodical structure on the category $\Mod(B)$ such that the forgetful functor $\Mod(B) \rightarrow \Mod(k)$ is strict monoidal.
\end{enumerate}
\end{prop}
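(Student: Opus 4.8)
The plan is to establish the two implications separately and then check that the resulting assignments are mutually inverse. The implication $(1) \Rightarrow (2)$ is essentially carried out in the paragraph preceding the proposition: given $\Delta$ and $\varepsilon$, one equips $\Mod(B)$ with the product $\boxtimes_k$ via the diagonal action $b(x \otimes y) = \sum_{(b)} b_{(1)} x \otimes b_{(2)} y$ and declares the trivial module $k$ to be the unit. First I would take the associativity and unit constraints on $\Mod(B)$ to be the unique $B$-linear lifts of the standard constraints of $(\Mod(k), \otimes_k, k)$; since the forgetful functor $U \colon \Mod(B) \to \Mod(k)$ is faithful, such lifts are unique once they exist, and existence amounts to the $k$-linear constraints being $B$-linear for the diagonal actions. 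I would verify the latter directly: coassociativity of $\Delta$ makes the standard associator $B$-linear, and the counitary axiom makes the standard left and right unitors $B$-linear. The pentagon and triangle identities then hold in $\Mod(B)$ because they hold in $\Mod(k)$ and $U$ is faithful, and by construction $U$ is strict monoidal.

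For $(2) \Rightarrow (1)$ I would reconstruct $\Delta$ and $\varepsilon$ from the monoidal structure. Writing $\otimes$ for the given product, strictness of $U$ means $U(X \otimes Y) = U(X) \otimes_k U(Y)$ and $U(\mathbbm 1) = k$ on the nose, so each $X \otimes Y$ is the $k$-module $U(X)\otimes_k U(Y)$ equipped with some $B$-action, and the unit object $\mathbbm 1$ is $k$ with some $B$-action. I would set $\Delta(b) := b(1_B \otimes 1_B) \in B \otimes_k B$, using the product $B \otimes B$ of the regular module with itself, and $\varepsilon(b) := b \cdot 1_k$, using the action on $\mathbbm 1$. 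The decisive observation is that bifunctoriality forces the action on an arbitrary $X \otimes Y$ to be diagonal via $\Delta$: for $x \in X$ and $y \in Y$ the left multiplications $\cdot x \colon B \to X$ and $\cdot y \colon B \to Y$ are $B$-linear, hence so is $(\cdot x) \otimes (\cdot y) \colon B \otimes B \to X \otimes Y$, and evaluating it at $\Delta(b) = b(1_B \otimes 1_B)$ gives $b(x \otimes y) = \sum_{(b)} b_{(1)} x \otimes b_{(2)} y$.

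With this in hand I would check the bialgebra axioms. That $\Delta$ and $\varepsilon$ are $k$-algebra homomorphisms follows from $k$-linearity of the actions together with the module axiom $(bb')m = b(b'm)$, evaluated on $1_B \otimes 1_B$ and $1_k$ respectively and using the just-established diagonal form of the action (so that $b\Delta(b') = \Delta(b)\Delta(b')$ in $B \otimes_k B$). Coassociativity and counitality I would extract from the coherence constraints: the associator $(X\otimes Y)\otimes Z \to X\otimes(Y\otimes Z)$ is a $B$-linear lift of the standard associator of $\otimes_k$, and its $B$-linearity at $(1_B \otimes 1_B)\otimes 1_B$, where the two sides carry the iterated diagonal actions via $(\Delta \otimes \Id_B)\Delta$ and $(\Id_B \otimes \Delta)\Delta$, yields $(\Delta \otimes \Id_B)\Delta = (\Id_B \otimes \Delta)\Delta$; likewise $B$-linearity of the unitors $\mathbbm 1 \otimes B \to B$ and $B \otimes \mathbbm 1 \to B$ at the generator $1_B$ yields $\sum_{(b)} \varepsilon(b_{(1)}) b_{(2)} = b = \sum_{(b)} b_{(1)} \varepsilon(b_{(2)})$.

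Finally I would confirm that the two constructions are mutually inverse: starting from a bialgebra, reconstructing $\Delta, \varepsilon$ from the product $\boxtimes_k$ returns the original maps (immediate from the diagonal action), and starting from a monoidal structure, the product built from the reconstructed $\Delta, \varepsilon$ agrees with the original one precisely by the diagonal-action identity above. The main obstacle I anticipate is the bookkeeping around strictness: one must argue carefully that faithfulness of $U$ pins down the associator and unitors of $\Mod(B)$ as the unique $B$-linear lifts of the standard constraints on $\Mod(k)$, so that reading off coassociativity and counitality from their $B$-linearity is legitimate. Once this is set up, every remaining verification reduces, via the regular module and the maps $\cdot x$ and $\cdot y$, to an identity in $B \otimes_k B$ or in $k$.
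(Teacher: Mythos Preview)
Your proposal is correct and gives a complete, self-contained argument. The paper, by contrast, offers no proof at all: it simply refers the reader to \cite[Thm.\,15]{Pa80}. So there is nothing in the paper's own proof to compare against beyond the citation.

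Your argument is the standard one and is essentially what one finds in the cited reference: extract $\Delta$ and $\varepsilon$ by acting on the regular module and on the unit object, use bifunctoriality together with the maps $\cdot x$, $\cdot y$ to force the diagonal form of the action on arbitrary tensor products, and then read off the coalgebra axioms from $B$-linearity of the (uniquely lifted) coherence constraints. Your point about faithfulness of $U$ pinning down the constraints as the unique $B$-linear lifts of the $k$-linear ones is exactly the right hinge, and your bookkeeping around it is sound. The mutual-inverse check at the end is routine once the diagonal-action identity is in hand, as you note.
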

\begin{proof}
A proof may be deduced from \cite[Thm.\,15]{Pa80}.
\end{proof}
\begin{rem}\label{rem:cocomsym}
Let $(B,\nabla, \eta, \Delta, \varepsilon)$ be a cocommutative bialgebra over $k$. Let $X$ and $Y$ be $B$-modules. There is a natural isomorphism of $B$-modules,
$$
\gamma_{X,Y}: X \boxtimes_k Y \rightarrow Y \boxtimes_k X,
$$
induced by the $k$-linear map $X \otimes_k Y \rightarrow Y \otimes_k X, \ x \otimes y \mapsto y \otimes x$. It is such that $\gamma_{X,Y}^{-1} = \gamma_{Y,X}$, and hence the resulting natural transformation $\gamma$ turns $(\Mod(B), \boxtimes_k, k)$ into a symmetric monoidal category. 
\end{rem}
\begin{nn}
Bialgebras $(B, \nabla, \eta, \Delta, \varepsilon)$ over $k$ that yield a braiding on the monoidal category $(\Mod(B), \boxtimes_k, k)$ can be described by an invertible element $\mathbf{r} \in B \otimes_k B$ satisfying certain axioms. Such bialgebras are called \textit{quasi-triangular}. For integers $i \geq 2$ and $1 \leq s < t \leq i$ let $\phi^i_{st}$ be the map
$$
\xymatrix@C=18pt{
B \otimes_k B \ar[r]^-\cong & k \otimes_k \cdots \otimes_k B \otimes_k \cdots \otimes_k B \otimes_k \cdots \otimes_k k \ar[r]^-{\mathrm{can}} & B^{\otimes_k i},
}
$$
where the two copies of $B$ in the middle term occur as the $s$-th and the $t$-th factor. For an element $\mathbf{r} = \mathbf r_1 \otimes \mathbf r_2 \in B \otimes_k B$ (implicit summation is understood) consider the following equations.
\begin{equation}\tag{QT1}\label{defn:qtrhopf:qt1}
\begin{aligned}
\mathbf{r} \Delta(b) = (\tau \circ \Delta)(b) \mathbf{r} \quad \text{for all $b \in B$;}
\end{aligned}
\end{equation}
\begin{equation}\tag{QT2}\label{defn:qtrhopf:qt2}
\begin{aligned}
(\Delta \otimes_k B)(\mathbf{r}) = \mathbf{r}_{13}\mathbf{r}_{23};
\end{aligned}
\end{equation}
\begin{equation}\tag{QT3}\label{defn:qtrhopf:qt3}
\begin{aligned}
(B \otimes_k \Delta)(\mathbf{r}) = \mathbf{r}_{13}\mathbf{r}_{12},
\end{aligned}
\end{equation}
where $\mathbf r_{st} = \phi_{st}^3(\mathbf{r})$ for $1 \leq s < t \leq 3$. The elements $\mathbf{r}_{13}, \mathbf{r}_{23}$ and $\mathbf{r}_{12}$ can be described as follows. If $\mathbf r = \sum_i {a_i \otimes b_i}$ then
$$
\mathbf{r}_{13} = \sum_i{a_i \otimes 1_B \otimes b_i}, \quad \mathbf{r}_{23} = \sum_i{1_B \otimes a_i \otimes b_i}, \quad \mathbf{r}_{12} = \sum_i{a_i \otimes b_i \otimes 1_B}.
$$
The following definition goes back to  V.\,G.\,Drinfel'd in \cite{Dr86}.
\end{nn}
\begin{defn}\label{defn:qtrhopf}
Let $\mathcal B = (B, \nabla, \eta, \Delta, \varepsilon)$ be a bialgebra over $k$.

\begin{enumerate}[\rm(1)]
\item The bialgebra $\mathcal B$ is called \textit{pre-triangular}, if there exists an element $\mathbf{r} \in B \otimes_k B$ which satisfies (\ref{defn:qtrhopf:qt1})--(\ref{defn:qtrhopf:qt3}) and $\varepsilon(\mathbf r_1 \varepsilon(\mathbf r_2)) = 1_R$. In this case, $\mathbf r$ is called a \textit{semi-canonical R-matrix for $\mathcal B$}.
\item The bialgebra $\mathcal B$ is called \textit{quasi-triangular} if there exists an invertible element $\mathbf r \in B \otimes_k B$ satisfying (\ref{defn:qtrhopf:qt1})--(\ref{defn:qtrhopf:qt3}). In this case, $\mathbf r$ is called a \textit{canonical R-matrix for $B$}. 
\item The bialgebra $\mathcal B$ is \textit{triangular} if it is quasi-triangular with canonical R-matrix $\mathbf r \in B \otimes_k B$ such that (in $B \otimes_k B \otimes_k B$)
$$
\mathbf r_{12}^{-1} = (\tau \otimes_k B)(\mathbf r_{12}),
$$
where, as always, $\tau: B \otimes_k B \rightarrow B \otimes_k B$ is the map given by commuting the factors.
\end{enumerate}
\end{defn}
\begin{lem}\label{lem:triangular_braided}
Let $(B, \nabla, \eta, \Delta, \varepsilon)$ be a $k$-bialgebra and let $\mathbf r \in B \otimes_k B$ be an element such that \emph{(\ref{defn:qtrhopf:qt1})--(\ref{defn:qtrhopf:qt3})} and $\varepsilon(\mathbf r_1 \mathbf r_2) = 1_k$ hold. Then the maps
$$
\gamma_{X,Y}: X \boxtimes_k Y \longrightarrow Y \boxtimes_k X, \ \gamma_{X,Y}(x \otimes y) = \tau(\mathbf{r}(x \otimes y)) \quad \text{\emph{(}for $B$-modules $X,Y$\emph{)},}
$$
are homomorphisms of $B$-modules and give rise to a lax braiding on $(\Mod(B), \boxtimes_k, k)$. Further, the tuple $(\Mod(B), \boxtimes_k, k, \gamma)$ is a braided monoidal category if, in addition, $\mathbf r$ is invertible.
\end{lem}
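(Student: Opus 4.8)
The plan is to verify directly that the family $\gamma = (\gamma_{X,Y})$ satisfies the axioms of a (lax) braiding from Definition \ref{def:monoidalcat}, reducing each axiom to one of the identities imposed on $\mathbf r$. Throughout I will write $\mathbf r = \mathbf r_1 \otimes \mathbf r_2$ with implicit summation and unravel the formula to $\gamma_{X,Y}(x \otimes y) = \mathbf r_2 y \otimes \mathbf r_1 x$, where $\mathbf r_1$ acts on $X$ and $\mathbf r_2$ on $Y$; recall that on $X \boxtimes_k Y$ an element $b \in B$ acts by $b(x \otimes y) = b_{(1)} x \otimes b_{(2)} y$. First I would show each $\gamma_{X,Y}$ is $B$-linear. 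Comparing $\gamma_{X,Y}(b(x\otimes y)) = \mathbf r_2 b_{(2)} y \otimes \mathbf r_1 b_{(1)} x$ with $b\,\gamma_{X,Y}(x \otimes y) = b_{(1)}\mathbf r_2 y \otimes b_{(2)} \mathbf r_1 x$, the required equality is exactly $\tau$ applied to axiom (QT1), i.e. $\mathbf r_2 b_{(2)} \otimes \mathbf r_1 b_{(1)} = b_{(1)}\mathbf r_2 \otimes b_{(2)}\mathbf r_1$, read off on $y \otimes x$. Naturality of $\gamma$ in both variables is then immediate and uses no axiom: for $B$-linear $f,g$ one has $\mathbf r_2 g(y) \otimes \mathbf r_1 f(x) = g(\mathbf r_2 y) \otimes f(\mathbf r_1 x)$ simply because $f,g$ commute with the $B$-action.

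Next I would treat the coherence axioms. The unit condition $\varrho_{\mathbbm 1} \circ \gamma_{\mathbbm 1, \mathbbm 1} = \lambda_{\mathbbm 1}$ for a pre-braiding reduces, after inserting the trivial action $\mathbf r_i r = \varepsilon(\mathbf r_i) r$ on $\mathbbm 1 = k$, to the scalar identity $\varepsilon(\mathbf r_1)\varepsilon(\mathbf r_2) = 1_k$, which holds because $\varepsilon$ is an algebra map and $\varepsilon(\mathbf r_1 \mathbf r_2) = 1_k$ by hypothesis. The heart of the argument is the pair of triangle equations of Definition \ref{def:monoidalcat}(3). Writing $\gamma_{X \boxtimes Y, Z}((x\otimes y)\otimes z) = \mathbf r_2 z \otimes ((\mathbf r_1)_{(1)} x \otimes (\mathbf r_1)_{(2)} y)$ and expanding the competing composite $(\gamma_{X,Z}\boxtimes \id_Y)\circ(\id_X \boxtimes \gamma_{Y,Z})$ with a second copy $\mathbf s$ of $\mathbf r$, the first triangle equation becomes the identity $\mathbf r_2 \otimes (\mathbf r_1)_{(1)} \otimes (\mathbf r_1)_{(2)} = \mathbf r_2\mathbf s_2 \otimes \mathbf r_1 \otimes \mathbf s_1$ in $B^{\otimes_k 3}$, which is precisely a cyclic reindexing of $(\Delta \otimes_k B)(\mathbf r) = \mathbf r_{13}\mathbf r_{23}$, i.e. axiom (QT2). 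The second triangle equation reduces analogously to $(\mathbf r_2)_{(1)} \otimes (\mathbf r_2)_{(2)} \otimes \mathbf r_1 = \mathbf s_2 \otimes \mathbf r_2 \otimes \mathbf r_1 \mathbf s_1$, a reindexing of $(B \otimes_k \Delta)(\mathbf r) = \mathbf r_{13}\mathbf r_{12}$, i.e. axiom (QT3). Here the associativity isomorphisms $\alpha$ of $(\Mod(B), \boxtimes_k, k)$ are the canonical re-bracketings of $k$-module tensor products, so they only relabel the positions of the three legs and contribute no further conditions.

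Finally, for the braiding assertion I would assume $\mathbf r$ invertible, write $\mathbf r^{-1} = \bar{\mathbf r}_1 \otimes \bar{\mathbf r}_2$, and exhibit $\gamma_{X,Y}^{-1}(y\otimes x) = \bar{\mathbf r}_1 x \otimes \bar{\mathbf r}_2 y$ as a two-sided inverse: the composite $\gamma_{X,Y}^{-1}\circ \gamma_{X,Y}$ produces $(\bar{\mathbf r}_1 \mathbf r_1)x \otimes (\bar{\mathbf r}_2\mathbf r_2)y$, which equals $x \otimes y$ because $\mathbf r^{-1}\mathbf r = 1_B \otimes 1_B$ acts componentwise; the other composite is handled symmetrically via $\mathbf r\mathbf r^{-1} = 1_B\otimes 1_B$ together with a $\tau$-flip. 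Since a $B$-linear bijection has $B$-linear inverse, $\gamma$ is then a natural isomorphism, and combined with the previous steps it is a braiding. The main obstacle I anticipate is not any single axiom but the bookkeeping in the triangle step: one must keep straight which leg of $\mathbf r$ acts on which tensor factor, apply $\Delta$ to the correct leg when the source or target object is itself a $\boxtimes_k$-product, and match the resulting element of $B^{\otimes_k 3}$ to (QT2)/(QT3) up to the permutation of legs induced by $\tau$ and the associativity constraints — it is exactly this index-tracking that makes the otherwise formal verification delicate.
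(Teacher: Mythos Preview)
Your proof is correct and follows essentially the same approach as the paper's own proof: verify $B$-linearity of $\gamma_{X,Y}$ via (QT1), reduce the triangle equations to (QT2) and (QT3), and obtain invertibility of $\gamma$ from that of $\mathbf r$. Your write-up is in fact more detailed than the paper's, which merely states that the triangle equations follow from (QT2) and (QT3) and does not spell out the pre-braiding unit condition $\varrho_{\mathbbm 1}\circ\gamma_{\mathbbm 1,\mathbbm 1} = \lambda_{\mathbbm 1}$ (which is where the hypothesis $\varepsilon(\mathbf r_1\mathbf r_2)=1_k$ enters); your identification of each axiom with the corresponding identity on $\mathbf r$ is accurate.
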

\begin{proof}
In order to see that $\gamma_{X,Y}$ is $B$-linear, let $b\in B$, $x \in X$ and $y \in Y$. We compute
\begin{align*}
\gamma_{X,Y}(b(x \otimes y)) &= \gamma_{X,Y}(\Delta(b)(x \otimes y)) &\\
&= \tau(\mathbf r\Delta(b)(x \otimes y)) &\\
&= \tau((\tau \circ \Delta)(b) \mathbf r (x \otimes y)) & (\text{by (\ref{defn:qtrhopf:qt1})})\\
&= \tau^2(\Delta(b))\tau(\mathbf r(x \otimes y)) &\\
&= \Delta(b)\gamma_{X,Y}(x \otimes y) &\\
&= b\gamma_{X,Y}(x \otimes y).
\end{align*}
If $\mathbf r$ is invertible, $\gamma_{X,Y}$ is bijective for every pair of $A$-modules $X,Y$, since the assignment
$$
\gamma^{-1}_{X,Y}(y \otimes x) = \mathbf{r}^{-1}\tau(y \otimes x) \quad \text{(for $x \in X$, $y \in Y$),}
$$
gives rise to the inverse map of $\gamma_{X,Y}$. The triangle equations follow from {(\ref{defn:qtrhopf:qt2})} and {(\ref{defn:qtrhopf:qt3})}.
\end{proof}
\begin{nn}
Note that by \cite[Prop.\,10.1.8]{Mo93} every quasi-triangular bialgebra is pre-triangular. Further, observe that if $\mathcal B = (B, \nabla, \eta, \Delta, \varepsilon)$ is a cocommutative $k$-bialgebra, the element $\mathbf{r} = 1_B \otimes 1_B \in B \otimes_k B$ will satisfy the equations (\ref{defn:qtrhopf:qt1}), (\ref{defn:qtrhopf:qt2}) as well as (\ref{defn:qtrhopf:qt3}), and hence turns $\mathcal B$ into a quasi-triangular bialgebra. In particular the lemma above is applicable (compare this with Remark \ref{rem:cocomsym}).

The existence of a canonical R-matrix for $B$ precisely means that $(\Mod(B), \boxtimes_k, k)$ is a (lax) braided monoidal category.
\end{nn}
\begin{thm}[{\cite[Thm.\,10.4.2]{Mo93}}]\label{prop:quasitri_bialg}
Let $\mathcal B$ be a $k$-bialgebra. Then there is a bijective correspondence between $($lax$)$ braidings $\gamma$ on $(\Mod(B), \boxtimes_k, k)$ and \text{$($semi-$)$}canonical R-matrices $\mathbf r \in B \otimes_k B$.
\end{thm}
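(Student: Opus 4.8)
The plan is to exhibit the two assignments between braidings and R-matrices explicitly and to verify that they are mutually inverse; one direction is already in hand. Indeed, Lemma \ref{lem:triangular_braided} shows that a semi-canonical R-matrix $\mathbf r \in B \otimes_k B$ produces a lax braiding $\gamma^{\mathbf r}$ on $(\Mod(B), \boxtimes_k, k)$ via $\gamma^{\mathbf r}_{X,Y}(x \otimes y) = \tau(\mathbf r(x \otimes y))$, and that $\gamma^{\mathbf r}$ is an honest braiding as soon as $\mathbf r$ is invertible, i.e. canonical. It therefore remains to construct the reverse assignment and to establish bijectivity. The idea is to evaluate a given (lax) braiding $\gamma$ on the left-regular module ${}_B B$: setting $\mathbf r^\gamma := \tau\big(\gamma_{B,B}(1_B \otimes 1_B)\big) \in B \otimes_k B$, a short computation with the formula above gives $\tau\big(\gamma^{\mathbf r}_{B,B}(1_B \otimes 1_B)\big) = \tau(\tau(\mathbf r)) = \mathbf r$, so that the round trip $\mathbf r \mapsto \gamma^{\mathbf r} \mapsto \mathbf r^{\gamma^{\mathbf r}}$ is the identity.

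For the opposite round trip, the key tool is naturality of $\gamma$. For any $B$-module $X$ and any $x \in X$ the map $f_x \colon {}_B B \to X$, $b \mapsto bx$, is a homomorphism of left $B$-modules, and $f_x \otimes f_y$ is $\boxtimes_k$-linear. Applying naturality of $\gamma$ to the pair $(f_x, f_y)$ and evaluating at $1_B \otimes 1_B$ yields $\gamma_{X,Y}(x \otimes y) = \tau\big(\mathbf r^\gamma (x \otimes y)\big) = \gamma^{\mathbf r^\gamma}_{X,Y}(x \otimes y)$ for all $x \in X$, $y \in Y$. Hence $\gamma$ is completely determined by $\mathbf r^\gamma$, and $\gamma = \gamma^{\mathbf r^\gamma}$; this furnishes the second round-trip identity and shows simultaneously that $\gamma \mapsto \mathbf r^\gamma$ is injective. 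Thus, once we know that $\mathbf r^\gamma$ is always a (semi-)canonical R-matrix, the two assignments are mutually inverse bijections.

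What remains is to verify that $\mathbf r^\gamma$ satisfies the R-matrix axioms of Definition \ref{defn:qtrhopf}. The relation \eqref{defn:qtrhopf:qt1} comes from the fact that $\gamma_{B,B}$ is itself a morphism of $B$-modules: writing out $\gamma_{B,B}(\Delta(b)) = \Delta(b)\cdot\gamma_{B,B}(1_B \otimes 1_B)$, using that the product on $B \otimes_k B$ is componentwise (so that $\tau$ is an algebra automorphism) and applying $\tau$ translates precisely into $\mathbf r^\gamma \Delta(b) = (\tau \circ \Delta)(b)\,\mathbf r^\gamma$. The normalization condition is extracted from the unit compatibility $\varrho_{\mathbbm 1}\circ \gamma_{\mathbbm 1,\mathbbm 1} = \lambda_{\mathbbm 1}$ built into the (pre-)braiding axioms of Definition \ref{def:monoidalcat}, evaluated on the tensor unit $k$ (whose $B$-action is through $\varepsilon$). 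Finally, the coproduct identities \eqref{defn:qtrhopf:qt2} and \eqref{defn:qtrhopf:qt3} are obtained from the two triangle equations of Definition \ref{def:monoidalcat}, specialised to $X = Y = Z = {}_B B$ and evaluated at $1_B \otimes 1_B \otimes 1_B$; here one uses the coherence identities $\gamma_{M \otimes N, P} = (\gamma_{M,P}\otimes N)\circ(M \otimes \gamma_{N,P})$ and its mirror, together with the description of $\gamma$ through $\mathbf r^\gamma$, to read off the asserted equalities in $B^{\otimes_k 3}$. For the braided (non-lax) case one checks in addition that invertibility of $\gamma$ forces $\mathbf r^\gamma$ to be invertible, with inverse $\tau\big(\gamma_{B,B}^{-1}(1_B \otimes 1_B)\big)$.

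The main obstacle will be this last step: the translation of the triangle equations into \eqref{defn:qtrhopf:qt2} and \eqref{defn:qtrhopf:qt3}. The delicate point is that the module structure on the various $\boxtimes_k$-products is given through the comultiplication $\Delta$, whereas the map $\gamma^{\mathbf r}$ is written through the componentwise action of $B \otimes_k B$; keeping these two actions, the placement of the $\tau$-twists, and the (non-strict) associativity and unit constraints carefully aligned is where the bookkeeping is genuinely subtle. Once the naturality square and the two triangle equations have been correctly evaluated on the regular module, however, the R-matrix axioms follow by purely formal manipulation.
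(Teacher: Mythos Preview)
The paper does not actually prove this theorem: it is stated with a citation to \cite[Thm.\,10.4.2]{Mo93} and no proof is given. Your sketch is the standard argument and is correct at the level of detail you provide. The assignment $\gamma \mapsto \mathbf r^\gamma := \tau(\gamma_{B,B}(1_B\otimes 1_B))$, the use of the natural maps $f_x\colon {}_BB\to X$, $b\mapsto bx$ to reconstruct $\gamma$ from $\mathbf r^\gamma$, and the derivation of \eqref{defn:qtrhopf:qt1} from $B$-linearity, of the normalisation from the unit axiom, and of \eqref{defn:qtrhopf:qt2}--\eqref{defn:qtrhopf:qt3} from the triangle equations are exactly how the result is established in the reference.

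It may be worth noting that essentially the same mechanism does appear in the paper, but in a different place: the proof of Corollary~\ref{cor:braid_flatprojbraid} extracts a (semi-)canonical R-matrix for $A$ from a (lax) braiding on a monoidal subcategory of $\Mod(A^{\ev})$ by evaluating at $1\otimes 1\otimes 1$ in $A\otimes_k A\otimes_k A$, and then uses naturality with respect to the maps $f_m(a\otimes b)=amb$ to recover the braiding from this element. Your argument is the bialgebra-module analogue of that computation, with ${}_BB$ playing the role of the generator and $\Delta$ in place of the $A^{\ev}$-action.
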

\begin{defn}\label{def:hopf}
Let $\mathcal B = (H, \nabla, \eta, \Delta, \varepsilon)$ be a $k$-bialgebra and $S: H \rightarrow H$ be a $k$-linear map. The pair $\mathcal H = (\mathcal B, S)$ is called \textit{$k$-Hopf algebra}, if the diagram
\begin{equation}\label{def:hopf:1}
\begin{aligned}
\xymatrix{
& H \otimes_k H \ar[rr]^{S \otimes_k \id_H} & & H \otimes_k H \ar[rd]^{\nabla} & \\
H \ar[rr]^{\varepsilon} \ar[ru]^{\Delta} \ar[rd]_{\Delta} & & R \ar[rr]^{\eta} & & H \\
& H \otimes_k H \ar[rr]_{\id_H \otimes_k S} & & H \otimes_k H \ar[ru]_{\nabla} &
}
\end{aligned}
\end{equation}
commutes. In this case, the map $S$ is called an \textit{antipode for} $\mathcal H$. A Hopf algebra is \textit{commutative} (\textit{cocommutative}, \textit{quasi-triangular}, \textit{triangular} etc.) if its underlying bialgebra is commutative (cocommutative, quasi-triangular, triangular etc.).
\end{defn}
\begin{defn}
Let $\mathcal G$ and $\mathcal H$ be $k$-Hopf algebras with antipodes $S_{\mathcal G}$ and $S_{\mathcal H}$, and let $f: \mathcal G^\natural \rightarrow \mathcal H^\natural$ be a $k$-linear map. The map $f$ is a \textit{$k$-Hopf algebra homomorphism} if it is a homomorphism of the $k$-bialgebras lying under $\mathcal G$ and $\mathcal H$, and
$$
S_{\mathcal H} \circ f = f \circ S_{\mathcal G}.
$$
\end{defn}
\begin{nn}If $\mathcal H$ is a $k$-Hopf algebra with underlying algebra $A$, its antipode $S$ is a $k$-algebra homomorphism $A^\op \rightarrow A$ satisfying the equation 
$$
\tau \circ (S \otimes_k S) \circ \Delta = \Delta \circ S
$$
(cf. \cite[Thm.\,2.1.4]{Ab80}). Again, using Sweedler's notation for the comultiplication, the commutativity of the diagram (\ref{def:hopf:1}) translates to
$$
\sum_{(a)}{S(a_{(1)})a_{(2)}} = \eta(\varepsilon(a)) = \sum_{(a)}{a_{(1)}S(a_{(2)})} \quad (\text{for $a \in A$}).
$$
If $\mathcal H$ is commutative or cocommutative, then $S \circ S = \id_A$ (cf. \cite[Theorem 2.1.4]{Ab80}).
\end{nn}
\begin{exa}\label{exas:qtcocom}
Let $N \geq 2$ be an integer and let $1 \neq \zeta \in k$ be a $N$-th root of unity. Consider the $k$-algebra
$$
H_{2N} := \frac{k\langle g,x \rangle}{(g^N - 1, x^N, xg - \zeta gx)}.
$$
It is a Hopf algebra, denoted by $\mathcal H_{2N}$, via $\Delta(g) = g \otimes g$, $\Delta(x) = x \otimes 1 + g \otimes x$ and $\varepsilon(g) = 1$, $\varepsilon(x) = 0$ and $S(g) = g^{-1}$, $S(x) = - g^{-1}x$. Note that $H_{2N}$ is free of rank $2N$ over $k$, $S$ has order $2N$ and that $1,g,g^2,\dots,g^{N-1}$ are the only elements $r$ in $H_{2N}$ with $\Delta(r) = r \otimes r$ and $\varepsilon(r) = 1$. Moreover, $\mathcal H_{2N}$ is neither commutative, nor cocommutative. However, if $2$ is invertible in $k$ (and $N=2$), there is a family of canonical R-matrices $\mathbf r_\alpha$, $\alpha \in k$, for $\mathcal H_4$ (cf. \cite[Ex.\,10.1.17]{Mo93}):
$$
\mathbf r_\alpha := \frac{1}{2}(1 \otimes 1 + 1 \otimes g + g \otimes 1 - g \otimes g) + \frac{\alpha}{2}(x \otimes x - x \otimes gx +gx \otimes x + gx \otimes gx).
$$
Note that there cannot be any additional canonical R-matrices for $H_4$ (see \cite[Sec.\,4.2.F]{ChPr94}). Moreover, $\H_{2N}$ does not admit a quasi-triangular structure for $N > 2$. The Hopf algebras $\mathcal H_{2N}$ are the so called \textit{Taft algebras} (cf. \cite{Ta71}).
\end{exa}
\begin{nn}
There exists a more general (and less familiar) notion of weak bialgebras and weak Hopf algebras (see \cite{BNS99} and \cite{Ni98}). Bialgebras (Hopf algebras) are examples of weak bialgebras (weak Hopf algebras). We are not going to dive into this weakened theory any deeper, but will give the definition of a weak bialgebra for the record.
\end{nn}
\begin{defn}\label{def:weakbialgebra}
Let $(B, \nabla, \eta, \Delta, \varepsilon)$ be such that $(B, \nabla, \eta)$ is a $k$-algebra and $(B, \Delta, \varepsilon)$ is a $k$-coalgebra. The $5$-tuple $(B, \nabla, \eta, \Delta, \varepsilon)$ is a \textit{weak $k$-bialgebra} if the following equations hold true.
\begin{equation}\tag{WB1}
\Delta(bb') = \Delta(b)\Delta(b') \quad (\text{for all $b,b' \in B$});
\end{equation}
\begin{equation}\tag{WB2}
\begin{aligned}
(\Delta(1_B) \otimes 1_B)(1_B \otimes \Delta(1_B)) &= (\Delta \otimes_k B \circ \Delta)(1_B)\\ &= (1_B \otimes \Delta(1_B))(\Delta(1_B) \otimes 1_B);
\end{aligned}
\end{equation}
\begin{equation}\tag{WB3}
\begin{aligned}
\sum_{(c)}\varepsilon(bc_{(1)})\varepsilon(c_{(2)}d) &= \varepsilon(bcd)\\ &= \sum_{(c)} \varepsilon(bc_{(2)})\varepsilon(c_{(1)}d) && \text{(for all $b,c,d \in B$)}.
\end{aligned}
\end{equation}
\end{defn} 
\begin{rem}
What we have actually defined above is, in terms of \cite{Ni98}, a monoidal and comonoidal weak bialgebra, which is simply called a weak bialgebra in \cite{BNS99}. A weak bialgebra over $k$, as defined above, is called \textit{weak Hopf algebra} if there is a $k$-linear map $S: B \rightarrow B$ satisfying a set of axioms (which we will not state here; consult the references for details). Note that weak bialgebras also give rise to monoidal categories.
\end{rem}
\section{Examples: Hopf algebras}\label{sec:exasHopf}
We consider further important examples of quasi-triangular Hopf algebras, most of them being even cocommutative. As exposed in the previous section, they give rise to braided monoidal categories. Let $k$ be a commutative ring.
\begin{exa}\label{exa:groupalgebra}
Let $G$ be a group (not necessarily finite) with identity element $e_G$. The group algebra $kG$ of $G$ is the free $k$-module with basis $G$, and multiplication and unit are given by, for $g, h \in G$:
$$
\nabla_G: kG \otimes_k kG \longrightarrow G, \ \nabla_G(g \otimes h) = gh, \quad \eta_G: k \longrightarrow kG, \ \eta_G(1_k) = e_G.
$$
Clearly, $kG$ is a unital $k$-algebra with $e_G$ being the unit element. The algebra $kG$ is commutative if, and only if, $G$ is abelian. $kG$ also carries the structure of a $k$-coalgebra with comultiplication
$$
\Delta_G: G \longrightarrow kG \otimes_k kG, \ \Delta_G(g) = g \otimes g,
$$
and counit $\varepsilon_G: kG \rightarrow k, \ \varepsilon_G(g) = 1_k$. The $k$-linear map $S_G: kG \rightarrow kG, \ S_G(g) = g^{-1}$ is such that $(kG, \nabla_G, \eta_G, \Delta_G, \varepsilon_G, S_G)$ is a Hopf algebra over $k$. It is apparent from the definition that it is a cocommutative Hopf algebra (however, there are, in general, a lot more canonical R-matrices for $kG$ than $e_G \otimes e_G$; see for instance \cite{Wa10}). Note that $kG \otimes_k kG \cong k(G \times G)$ as $k$-algebras, and thus, $kG \otimes_k kG$ naturally is a Hopf algebra again. In case $G$ is $\mathbb Z_n := \mathbb Z / n\mathbb Z$ for some $n \geq 0$, the group algebra $kG$ is isomorphic to $k[x,x^{-1}]$ if $n = 0$, and $k[x]/(x^n - 1)$ otherwise. Comultiplication, counit and antipode are given by
$$
\Delta_{\mathbb Z_n}(x) = x \otimes x, \quad \varepsilon_{\mathbb Z_n}(x) = 1_k, \quad S_{\mathbb Z_n}(x) = -x.
$$
However, if $G$ is any finite and non-abelian group, then $kG^\ast = \Hom_k(kG,k)$ is a Hopf algebra (the structure maps for $kG^\ast$ simply arise from the structure maps for $kG$ by applying the contravariant functor $(-)^\ast$; see Example \ref{exa:groupalgebra} for details) with comultiplication $\nabla^\ast$, which cannot be quasi-triangular. To see this, let $\{\langle g, -\rangle \mid g \in G \}$ the dual basis of $G$, and let $g, h \in G$ with $gh \neq hg$. For the $kG^\ast$-modules $M_g = k\langle g, -\rangle$ and $M_h = k\langle h, - \rangle$ we get:
$$
\langle gh, - \rangle (M_g \boxtimes_k M_h) \neq 0 = \langle gh, - \rangle(M_h \boxtimes_k M_g),
$$
where the module structure on $M_g$ is given by $\langle x, - \rangle \langle g, -\rangle = \delta_{x,g}\langle g, -\rangle$. Note that in any case, the antipode of $kG^\ast$ has order 2.
\end{exa}
\begin{exa}
Let $\mathcal H$ be a Hopf algebra over $k$. An element $x \in H$ is called \textit{group like} if $\Delta(x) = x \otimes x$ and $\varepsilon(x) = 1_k$. If $x, y \in H, \ x \neq y$, are group like elements, then $x$ and $y$ are $k$-independent (cf. \cite[Remark after Def.\,1.3.4]{Mo93}). Multiplication of elements in $H$ turns the set
$$
\Gamma(\mathcal H) := \{ x \in H \mid \text{$x$ is group like} \} \subseteq H \setminus \{0\}
$$
into a group with identity element $1_H$. In fact, if $x,y$ are group like, then so is $xy$ since $\Delta$ and $\varepsilon$ are algebra homomorphisms; moreover the formula $\tau \circ (S \otimes_k S) \circ \Delta = \Delta \circ S$ (cf. \cite[Thm.\,2.1.4]{Ab80}), tells us that $S(x)$ is group like. Finally, the axiom for the antipode $S$ leads to $xS(x) = 1_H = S(x)x$. The corresponding group algebra $k\Gamma(\mathcal H)$ is a sub-Hopf algebra of $\mathcal H$ (i.e., the inclusion $k\Gamma(\mathcal H) \subseteq H$ is an injective homomorphism of Hopf algebras). Hence every Hopf algebra admits a (possibly non-trivial) cocommutative sub-Hopf algebra. The construction of $\Gamma(-)$ is involutive in the sense that if $G$ is a group, then $k\Gamma(kG) = kG$.

If $\mathrm{char}(k) \neq 2$ and if we consider the $k$-Hopf algebra $\mathcal H_{2N}$ discussed as part of the Examples \ref{exas:qtcocom}, the group-like elements are precisely $1, g, g^2, \dots, g^{N-1}$; thus, $k\Gamma(\mathcal H_{2N}) \cong k \mathbb Z_N \cong k[t]/(t^N - 1)$.
\end{exa}
\begin{prop}\label{prop:hopfalgvanish}
Let $\mathcal H$ be a Hopf algebra over $k$ whose underlying $k$-algebra $A$ is projective as a $k$-module. Let $f: \mathcal H \rightarrow \mathcal Q$ be a Hopf algebra homomorphism into a quasi-triangular Hopf algebra $\mathcal Q$. Let $Q$ be the underlying $k$-algebra of $\mathcal Q$. The graded algebra $\OH^\bullet(\mathcal H,k) = \Ext^\bullet_{A}(k,k)$ is a subalgebra of $\HH^\bullet(A)$, and the Gerstenhaber bracket $\{-,-\}_A$ on $\HH^\bullet(A)$ fulfils
$$
\{\Res^\sharp_{f}(\alpha), \Res^\sharp_{f}(\beta)\}_A = 0 \quad \text{$($for all $\alpha, \beta \in \OH^\bullet(\mathcal Q, k))$.}
$$
Here $\Res_{f}: \Mod(Q) \rightarrow \Mod(H)$ denotes the restriction functor along $f$, and $\Res_{f}^\sharp$ is the induced map between the corresponding cohomology rings.
\end{prop}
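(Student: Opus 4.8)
The plan is to realize the restriction functor $\Res_f$ as a morphism of strong exact monoidal categories and then to transport the vanishing of the bracket, which holds for the quasi-triangular algebra $\mathcal Q$, back to $\mathcal H$ by means of the naturality statement Theorem \ref{thm:bracketcomm}. The embedding claim comes for free: since $A$ is $k$-projective, Theorem \ref{thm:commutativeHopfalgebroid} (applied with $R=k$, $B=A$, $\mathcal B = \mathcal H$, whose standing hypotheses \ref{nn:requirements} reduce to the $k$-projectivity of $A$) provides a split monomorphism $\scrL_{\mathcal H}^\sharp \colon \OH^\bullet(\mathcal H,k) \to \HH^\bullet(A)$ of graded $k$-algebras, so that $\OH^\bullet(\mathcal H,k) = \Ext^\bullet_A(k,k)$ is identified with a graded subalgebra of $\HH^\bullet(A)$. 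Throughout, I regard the classes $\Res^\sharp_f(\alpha)$ as living in $\HH^\bullet(A)$ via this embedding.

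First I would check that $\Res_f$ is a strict (hence strong) monoidal functor $(\Mod(Q),\boxtimes_k,k) \to (\Mod(H),\boxtimes_k,k)$. Writing the restricted $H$-action as $h\cdot m = f(h)m$ and using that $f$ is a bialgebra homomorphism, so that $\Delta_Q \circ f = (f\otimes f)\circ \Delta_H$ and $\varepsilon_Q\circ f = \varepsilon_H$, a direct computation shows that for $Q$-modules $M,N$ the two $H$-module structures on $M\otimes_k N$ --- the one obtained by restricting $M\boxtimes_k N$ and the one defining $\Res_f(M)\boxtimes_k \Res_f(N)$ --- literally coincide, and that $\Res_f(k)=k$ as the trivial $H$-module. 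Because restriction leaves the underlying $k$-module unchanged, $\Res_f$ sends $k$-projective $Q$-modules to $k$-projective $H$-modules and preserves exactness; it therefore restricts to a $k$-linear, exact, strong monoidal functor $\Res_f \colon \C_\varrho(\mathcal Q) \to \C_\varrho(\mathcal H)$ between the strong exact monoidal categories furnished by Lemma \ref{lem:candaexactmono}.

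Then I would exploit quasi-triangularity. Since $\mathcal Q$ is quasi-triangular, $(\Mod(Q),\boxtimes_k,k)$ is braided by Theorem \ref{prop:quasitri_bialg} (the braiding being the one of Lemma \ref{lem:triangular_braided}), and this braiding restricts to a lax braiding on the monoidal subcategory $\C_\varrho(\mathcal Q)$; hence Theorem \ref{thm:trivial_bracket} gives $[-,-]_{\C_\varrho(\mathcal Q)} = 0$. Feeding $\Res_f \colon \C_\varrho(\mathcal Q)\to \C_\varrho(\mathcal H)$ into Theorem \ref{thm:bracketcomm} then yields
\[
[\Res^\sharp_f(\tilde\alpha),\Res^\sharp_f(\tilde\beta)]_{\C_\varrho(\mathcal H)} = \Res^\sharp_f\big([\tilde\alpha,\tilde\beta]_{\C_\varrho(\mathcal Q)}\big) = 0
\]
for all $\tilde\alpha,\tilde\beta \in \Ext^\bullet_{\C_\varrho(\mathcal Q)}(k,k)$. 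Finally, translating through the identification $i_\varrho^\sharp$ and the commutative diagram of Theorem \ref{thm:commutativeHopfalgebroid} for $\mathcal H$ --- in which $\beta_{\mathcal H}$ is precisely $[-,-]_{\C_\varrho(\mathcal H)}$ transported by $i_\varrho^\sharp$, and $\scrL_{\mathcal H}^\sharp$ intertwines $\beta_{\mathcal H}$ with $\{-,-\}_A$ --- converts this into $\{\Res^\sharp_f(\alpha),\Res^\sharp_f(\beta)\}_A = \scrL_{\mathcal H}^\sharp(\beta_{\mathcal H}(\cdots)) = \scrL_{\mathcal H}^\sharp(0) = 0$, which is the assertion (note that the injectivity of $\scrL_{\mathcal H}^\sharp$ is not even needed, since a split monomorphism carries $0$ to $0$).

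The main obstacle I anticipate is the bookkeeping in this last step: one must verify that $\Res_f$ commutes with the inclusion functors $i_{\C_\varrho(\mathcal Q)}$ and $i_{\C_\varrho(\mathcal H)}$ into the ambient module categories, so that the map $\Res^\sharp_f$ on the subcategory $\Ext$-algebras agrees, under $i_\varrho^\sharp$, with the map on $\OH^\bullet(-,k)$ induced by ordinary restriction along $f$. This is formal once recorded, but it is the junction where the bracket-compatibility (Theorem \ref{thm:bracketcomm}, living at the level of the factorizing monoidal categories $\C_\varrho$) is glued to the cohomological identification (Theorem \ref{thm:commutativeHopfalgebroid}, living at the level of $\OH^\bullet$ and $\HH^\bullet$); the monoidality computation for $\Res_f$ in the second paragraph is the only genuinely new piece of content, the remainder being an assembly of the machinery already in place.
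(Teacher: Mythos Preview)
Your proposal is correct and follows essentially the same route as the paper. The paper's proof records that $\Res_f$ is strict monoidal, exact, and preserves $k$-projectivity, then displays a two-story commutative diagram whose top square is precisely your application of Theorem~\ref{thm:bracketcomm} to $\Res_f\colon \C(\mathcal Q)\to \C(\mathcal H)$ (with the top row labelled $0$ by invoking Corollary~\ref{cor:gerstenhabervanishhopfalgebra}, which packages your appeal to Theorem~\ref{thm:trivial_bracket}), and whose bottom square is the embedding from Theorem~\ref{thm:commutativeHopfalgebroid}; your more explicit bookkeeping about $i_\varrho^\sharp$ is exactly what makes the paper's terse diagram commute.
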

\begin{proof}
The restriction functor $\mathrm{Res}_f: \Mod(Q) \rightarrow \Mod(A)$ is (strict) monoidal, exact and sends $k$-projective $Q$-modules to $k$-projective $A$-modules. Further, since $\mathcal Q$ is quasi-triangular, Corollary \ref{cor:gerstenhabervanishhopfalgebra} applies, and we obtain the commutative diagram
$$
\xymatrix@C=40pt{
\OH^m(\mathcal Q,k) \times \OH^n(\mathcal Q, k) \ar[r]^-{0} \ar[d]_-{\Res_{f}^\sharp \times \Res_{f}^\sharp} & \OH^{m+n-1}(\mathcal Q,k) \ar@<-2pt>[d]^-{\Res_{f}^\sharp} \, \, \\
\OH^m(\mathcal H,k) \times \OH^n(\mathcal H, k) \ar[r]^-{[-,-]_{\mathsf C(\mathcal H)}} \ar[d]_-{\subseteq} & \OH^{m+n-1}(\mathcal H,k) \ar@<-2pt>[d]^-{\subseteq} \, \, \\
\HH^m(A) \times \HH^n(A) \ar[r]^-{\{-,-\}_{A}} & \HH^{m+n-1}(A) \, .
}
$$
\end{proof}
\begin{prop}\label{prop:grouplike}
Let $\mathcal H$ be a Hopf algebra over $k$ whose underlying $k$-algebra $A$ is projective as a $k$-module. Let $G$ be the group $\Gamma(\mathcal H)$ of group like elements. Let $U \subseteq G$ be a subgroup and let $f: \mathcal H \rightarrow kU$ be a Hopf algebra homomorphism. The graded algebra $\OH^\bullet(\mathcal H,k) = \Ext^\bullet_{A}(k,k)$ is a subalgebra of $\HH^\bullet(A)$, and the Gerstenhaber bracket $\{-,-\}_A$ on $\HH^\bullet(A)$ fulfils
$$
\{\Res^\sharp_{f}(\alpha), \Res^\sharp_{f}(\beta)\}_A = 0 \quad \text{$($for all $\alpha, \beta \in \OH^\bullet(kU, k))$.}
$$
Here $\Res_{f}: \Mod(kU) \rightarrow \Mod(H)$ denotes the restriction functor along $f$, and $\Res_{f}^\sharp$ is the induced map between the corresponding cohomology rings.\qed
\end{prop}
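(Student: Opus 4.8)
The plan is to recognize Proposition~\ref{prop:grouplike} as an immediate specialization of Proposition~\ref{prop:hopfalgvanish}. The only thing that needs to be supplied is a quasi-triangular Hopf algebra to play the role of $\mathcal Q$, and the natural candidate is the group algebra $kU$ itself. By Example~\ref{exa:groupalgebra}, $kU$ is a cocommutative Hopf algebra over $k$ whose underlying $k$-algebra is free on the basis $U$, hence $k$-projective. Since every cocommutative bialgebra is quasi-triangular --- one checks directly that $\mathbf r = 1_{kU} \otimes 1_{kU}$ is a canonical R-matrix, as observed after Lemma~\ref{lem:triangular_braided} and in Remark~\ref{rem:cocomsym} --- the Hopf algebra $\mathcal Q := kU$ qualifies as the target in Proposition~\ref{prop:hopfalgvanish}.

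First I would record that the embedding $\OH^\bullet(\mathcal H,k) = \Ext^\bullet_A(k,k) \hookrightarrow \HH^\bullet(A)$ is exactly the split monomorphism $\scrL_{\mathcal H}^\sharp$ furnished by Theorem~\ref{thm:commutativeHopfalgebroid}; this requires only that $A$ be $k$-projective (condition~\ref{nn:requirements} with base ring $k$), which is among the hypotheses. In particular, no quasi-triangularity of $\mathcal H$ itself is needed for this first assertion.

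Next I would feed the data $f \colon \mathcal H \to kU$ and $\mathcal Q := kU$ into Proposition~\ref{prop:hopfalgvanish}. Its proof restricts along the Hopf algebra homomorphism $f$: the functor $\Res_f \colon \Mod(kU) \to \Mod(\mathcal H)$ is exact, it is strict monoidal for $\boxtimes_k$ because $f$ is a coalgebra homomorphism (so that $\Delta_{kU} \circ f = (f \otimes f) \circ \Delta_{\mathcal H}$ and $\varepsilon_{kU} \circ f = \varepsilon_{\mathcal H}$), and it preserves $k$-projectivity since it does not alter underlying $k$-modules. Combined with the vanishing of the bracket on $\OH^\bullet(kU,k)$ coming from Corollary~\ref{cor:gerstenhabervanishhopfalgebra} (applicable because $kU$ is quasi-triangular and $k$-projective), the commutative diagram in the proof of Proposition~\ref{prop:hopfalgvanish} yields $\{\Res_f^\sharp(\alpha), \Res_f^\sharp(\beta)\}_A = 0$ for all $\alpha, \beta \in \OH^\bullet(kU,k)$.

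There is essentially no obstacle: the statement is a corollary, and the one genuine point is the reduction that $kU$ is quasi-triangular, which is precisely the cocommutative-implies-quasi-triangular observation. The remaining verifications (strict monoidality and exactness of $\Res_f$, $k$-freeness of $kU$) are routine and are already implicit in the proof of Proposition~\ref{prop:hopfalgvanish}. In the write-up I would therefore simply invoke that proposition with $\mathcal Q = kU$ and cite Example~\ref{exa:groupalgebra} for the quasi-triangular structure.
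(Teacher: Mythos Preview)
Your proposal is correct and matches the paper's own treatment: the statement is marked with a \qed and given no proof, precisely because it is an immediate specialization of Proposition~\ref{prop:hopfalgvanish} with $\mathcal Q = kU$, using that group algebras are cocommutative (hence quasi-triangular) and $k$-projective. You have identified exactly the reduction the author intends.
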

\begin{cor}
Let $N \geq 2$ be an integer, $1 \neq \zeta \in k$ a $N$-th root of unity, and let $\mathcal H_{2N} = (H_{2N}, \nabla, \eta, \Delta, \varepsilon, S)$ be the corresponding Taft $($Hopf$)$ algebra described in Example \emph{\ref{exas:qtcocom}}. For any factor $d$ of $N$, there is a $($unital, graded$)$ algebra homomorphism $\psi_d: \OH^\bullet(k\mathbb Z_d,k) \rightarrow \HH^\bullet(H_{2N})$ $($induced by the map $f_d: \mathcal H_{2N} \rightarrow k\mathbb Z_d$, $f_d(g) = g^{N/d}$, $f_d(x) = 0)$ such that
$$
\{\psi_d(\alpha),\psi_d(\beta)\}_{H_{2N}} = 0 \quad \text{$($for all $\alpha, \beta \in \OH^\bullet(k\mathbb Z_d,k))$.}
$$
\end{cor}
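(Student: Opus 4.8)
The plan is to derive the corollary as a direct specialization of Proposition \ref{prop:grouplike}, so that the only genuine work lies in verifying that the prescribed assignment $f_d$ really defines a Hopf algebra homomorphism and in identifying its target as the group algebra of a subgroup of $\Gamma(\mathcal{H}_{2N})$. The vanishing of the bracket is then handed to us by that proposition.

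First I would record the structural facts about $\mathcal{H}_{2N}$ that feed into the proposition: the underlying algebra $H_{2N}$ is free of rank $2N$ over $k$ (hence $k$-projective), and its group of group-like elements is $\Gamma(\mathcal{H}_{2N}) = \{1, g, \dots, g^{N-1}\} \cong \mathbb{Z}_N$, as noted in the examples. For a fixed factor $d$ of $N$, I would single out the subgroup $U = \langle g^{N/d}\rangle \subseteq \Gamma(\mathcal{H}_{2N})$; since $g$ has order $N$, the element $g^{N/d}$ has order exactly $d$, so $U$ is cyclic of order $d$ and $kU \cong k\mathbb{Z}_d$ as Hopf algebras. This is the codomain intended in the statement.

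Next I would check that $f_d \colon \mathcal{H}_{2N} \to kU$, $f_d(g) = g^{N/d}$, $f_d(x) = 0$, is a well-defined homomorphism of Hopf algebras. For the algebra structure I would confirm that the defining relations are preserved: $f_d(g)^N = (g^{N/d})^N = 1$ because $(g^{N/d})^d = g^N = 1$; next $f_d(x)^N = 0$; and $f_d(x) f_d(g) = 0 = \zeta\, f_d(g) f_d(x)$, so the relation $xg = \zeta gx$ holds trivially in the image. For the coalgebra and antipode compatibilities I would use that $g$ is group-like and that $f_d$ kills $x$: one has $(f_d \otimes f_d)\Delta(g) = g^{N/d} \otimes g^{N/d} = \Delta_{kU}(f_d(g))$, while $(f_d \otimes f_d)\Delta(x) = f_d(x) \otimes 1 + f_d(g) \otimes f_d(x) = 0 = \Delta_{kU}(f_d(x))$, and the counit and antipode identities follow identically from $\varepsilon(g) = 1$, $\varepsilon(x) = 0$, $S(g) = g^{-1}$, $S(x) = -g^{-1}x$ together with $f_d(x) = 0$. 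This step carries all the content; the only care needed is the order computation that pins the target down as $k\mathbb{Z}_d$, which I expect to be the main (and only mild) obstacle, the rest being routine bookkeeping.

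Finally I would apply Proposition \ref{prop:grouplike} with $\mathcal{H} = \mathcal{H}_{2N}$, $G = \Gamma(\mathcal{H}_{2N})$, the subgroup $U$, and the homomorphism $f = f_d$. Setting $\psi_d := \Res^\sharp_{f_d}$ and composing with the split embedding $\OH^\bullet(\mathcal{H}_{2N}, k) \hookrightarrow \HH^\bullet(H_{2N})$ of Corollary \ref{cor:gerstenhabervanishhopfalgebra}, I obtain the claimed map $\psi_d \colon \OH^\bullet(k\mathbb{Z}_d, k) \cong \OH^\bullet(kU, k) \to \HH^\bullet(H_{2N})$. That $\psi_d$ is a unital graded algebra homomorphism uses the exactness of $\Res_{f_d}$ together with Lemma \ref{lem:exactfuncyoneda} and the fact that $\Res_{f_d}(k) = k$ (which holds because $\varepsilon_{kU} \circ f_d = \varepsilon_{\mathcal{H}_{2N}}$). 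The identity $\{\psi_d(\alpha), \psi_d(\beta)\}_{H_{2N}} = 0$ for all $\alpha, \beta \in \OH^\bullet(k\mathbb{Z}_d, k)$ is then exactly the conclusion of Proposition \ref{prop:grouplike}, and the corollary is proven.
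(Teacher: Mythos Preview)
Your argument is correct and follows exactly the paper's route: verify that $H_{2N}$ is $k$-projective, identify $\Gamma(\mathcal H_{2N}) \cong \mathbb Z_N$ with the subgroup $U = \langle g^{N/d}\rangle \cong \mathbb Z_d$, and invoke Proposition~\ref{prop:grouplike} for the map $f_d$. The additional verifications you spell out (well-definedness of $f_d$ on the relations, compatibility with $\Delta$, $\varepsilon$, $S$) are routine but welcome, as the paper's proof simply asserts ``the claim follows from Proposition~\ref{prop:grouplike} by considering the map $f_d$''.

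One small citation slip: you invoke Corollary~\ref{cor:gerstenhabervanishhopfalgebra} for the split embedding $\OH^\bullet(\mathcal H_{2N},k) \hookrightarrow \HH^\bullet(H_{2N})$, but that corollary carries the hypothesis that the bialgebra be pre-triangular, and the paper explicitly notes that $\mathcal H_{2N}$ admits no quasi-triangular structure for $N > 2$. The embedding you need is provided without any triangularity assumption by Theorem~\ref{thm:commutativeHopfalgebroid}, and in any case is already packaged into the statement of Proposition~\ref{prop:grouplike} (via Proposition~\ref{prop:hopfalgvanish}), so you need not cite it separately.
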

\begin{proof}
The group $G = \Gamma(\mathcal H_{2N})$ is isomorphic to $\mathbb Z_N$. Since $H_{2N}$ is projective over $k$ (even free of rank $2N$), the claim follows from Proposition \ref{prop:grouplike} by considering the map $f_d$.
\end{proof}
\begin{exa}
Let $\mathcal H = (H, \nabla, \eta, \Delta, \varepsilon, S)$ be a Hopf algebra whose underlying $k$-module $H$ is finitely generated and projective over $k$. If $V$ is a finitely generated projective $k$-module, we let $V^\ast$ be its $k$-dual $\Hom_k(V,k)$. Observe that for finitely generated projective $k$-modules $U,V,V',W,W'$ there are the following isomorphisms.
\begin{align*}
V^\ast \otimes_k W &\cong \Hom_k(V,W),\\
\Hom_k(U,V) \otimes_k W &\cong \Hom_k(U, V \otimes_k W),\\
\Hom_k(V,W) \otimes_k \Hom_k(V',W') &\cong \Hom_k(V \otimes_k V', W \otimes_k W')
\end{align*}
The latter specializes to the $k$-linear isomorphism $\alpha_V$ below.
$$
\alpha_V: V^\ast \otimes_k V^\ast \longrightarrow (V \otimes_k V)^\ast, \ \varphi \otimes \psi \mapsto (\langle \varphi \otimes \psi, v \otimes w\rangle = \varphi(v) \otimes \psi(w))
$$
The \textit{dual Hopf algebra of $\mathcal H$} is given by
$$
\mathcal H^\ast = (H^\ast, \Delta^\ast, \varepsilon^\ast, \nabla^\ast, \eta^\ast, S^\ast),
$$
where the appearing maps are defined as follows.
\begin{align*}
\Delta^\ast : H^\ast \otimes_k H^\ast &\longrightarrow H^\ast, \ \Delta^\ast(\varphi \otimes \psi) = \alpha_H(\varphi \otimes \psi) \circ \Delta,\\
\varepsilon^\ast : k^\ast &\longrightarrow H^\ast, \ \varepsilon^\ast(\varphi) = \varphi \circ \varepsilon,\\
\nabla^\ast : H^\ast &\longrightarrow H^\ast \otimes_k H^\ast, \ \nabla^\ast(\varphi) = \alpha_H^{-1}(\varphi \circ \nabla)\\
\eta^\ast : H^\ast &\longrightarrow k^\ast, \ \eta^\ast(\varphi) = \varphi \circ \eta\\
\intertext{and}
S^\ast: H^\ast &\longrightarrow H^\ast, \ S^\ast(\varphi) = \varphi \circ S.
\end{align*}
The \textit{Drinfel'd double of $\mathcal H$} is the $k$-Hopf algebra $\mathcal D(\mathcal H)$ whose underlying $k$-module is $H^\ast \otimes_k H$; see \cite{Dr86} and \cite{Mo93} for details on the precise $k$-algebra structure. It can be shown that $\mathcal D(\mathcal H)$ is quasi-triangular, with canonical R-matrix $\mathbf r$ defined by
$$
\xymatrix@C=-3pt@R=11pt{
\Hom_k(H,H) & \cong & H^\ast \otimes_k H \\
\id_{H} \ar@{}[u]|{\mathbin{\rotatebox[origin=c]{90}{$\in$}}} \ar@{|->}[rr] && \mathbf r \ar@{}[u]|{\mathbin{\rotatebox[origin=c]{90}{$\in$}}}
}
$$
The homomorphism $\beta_\H: H \rightarrow H^\ast \otimes_k H$ given by $\beta_\H(h) = 1_{H^\ast} \otimes h$ defines an injective Hopf algebra homomorphism $\mathcal H \rightarrow \mathcal D(\mathcal H)$ (see \cite[Cor.\,10.3.7]{Mo93}). If $\mathcal H$ is cocommutative, then the underlying algebra of $\mathcal D(\H)$ is isomorphic to $H^\ast \# \mathcal H$ (cf. \cite[Cor.\,10.3.10]{Mo93}), where $H^\ast \# \mathcal H$ is the following algebra: Let $A$ be a \textit{$\mathcal H$-module algebra}, that is, a $k$-algebra which simultaneously is a $H$-module, such that both structures are compatible with the structure maps of $\mathcal H$ (see \cite[Def.\,4.1.1]{Mo93}). Then the \textit{smash product} of $A$ with $\mathcal H$, denoted by $A \# \mathcal H$, has $A \otimes_k H$ as underlying $k$-module. The multiplication is given by
$$
(a \otimes h)(a' \otimes h') = a (h_{(1)}b) \otimes h_{(2)}h' \quad (\text{for $a,a' \in A$, $h,h' \in H$}),
$$
i.e., $(a \otimes h)(a' \otimes h') = (a \otimes 1_H) \cdot \Delta(h) \cdot (b \otimes h')$.
\end{exa}
\begin{prop}
Let $\mathcal H$ be a Hopf algebra over $k$ whose underlying $k$-algebra $A$ is, as a $k$-module, finitely generated projective. Let $D$ be the underlying $k$-algebra of $\mathcal D(\mathcal H)$. The graded algebra $\OH^\bullet(\mathcal H,k) = \Ext^\bullet_A(k,k)$ is a subalgebra of $\HH^\bullet(A)$ and the Gerstenhaber bracket $\{-,-\}_{A}$ on $\HH^\bullet(A)$ fulfils
$$
\{\Res^\sharp_{\beta_\H}(\alpha), \Res^\sharp_{\beta_\H}(\beta)\}_A = 0 \quad \text{$($for all $\alpha, \beta \in \OH^\bullet(\mathcal D(\mathcal H), k))$.}
$$
Here $\Res_{\beta_\H}: \Mod(D) \rightarrow \Mod(H)$ denotes the restriction functor along $\beta_\H$, and $\Res_{\beta_\H}^\sharp$ is the induced map between the corresponding cohomology rings.
\end{prop}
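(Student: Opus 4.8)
The plan is to recognize the statement as the special case of Proposition \ref{prop:hopfalgvanish} obtained by taking $f = \beta_\H \colon \mathcal H \to \mathcal D(\mathcal H)$ and $\mathcal Q = \mathcal D(\mathcal H)$. Thus the entire homological content is already packaged in Proposition \ref{prop:hopfalgvanish} (and, beneath it, Theorem \ref{thm:commutativeHopfalgebroid}, Corollary \ref{cor:gerstenhabervanishhopfalgebra} and Theorem \ref{thm:trivial_bracket}); what remains is to check that the hypotheses of that proposition are met by $\beta_\H$ and $\mathcal D(\mathcal H)$, and then to read off the conclusion.

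First I would record the three inputs demanded by Proposition \ref{prop:hopfalgvanish}. (i) Since $A$ is finitely generated projective over $k$, it is in particular projective over $k$; this is what lets $\OH^\bullet(\mathcal H,k) = \Ext^\bullet_A(k,k)$ embed as a subalgebra of $\HH^\bullet(A)$ through $\mathscr L_{\mathcal H}^\sharp$ (Theorem \ref{thm:commutativeHopfalgebroid}). (ii) From \cite[Cor.\,10.3.7]{Mo93}, the assignment $\beta_\H(h) = 1_{H^\ast} \otimes h$ is an injective homomorphism of Hopf algebras $\mathcal H \to \mathcal D(\mathcal H)$. (iii) The double $\mathcal D(\mathcal H)$ is quasi-triangular, its canonical R-matrix being the element $\mathbf r \in H^\ast \otimes_k H$ corresponding to $\id_H$ under the identification $\Hom_k(H,H) \cong H^\ast \otimes_k H$; here the finite generation and projectivity of $H$ are essential, since they supply the duality isomorphisms $\alpha_V$ recalled before the statement and guarantee that $\mathbf r$ is a genuine element of $H^\ast \otimes_k H$.

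I would also note that the finite generation hypothesis does double duty: it forces the underlying $k$-module $H^\ast \otimes_k H$ of $\mathcal D(\mathcal H)$ to be finitely generated projective over $k$, so that $\mathcal D(\mathcal H)$ itself meets the standing projectivity requirement, and consequently Corollary \ref{cor:gerstenhabervanishhopfalgebra} applies to $\mathcal D(\mathcal H)$ and makes the Gerstenhaber bracket vanish on $\OH^\bullet(\mathcal D(\mathcal H),k)$. With (i)--(iii) verified, Proposition \ref{prop:hopfalgvanish} applies verbatim with $f = \beta_\H$, yielding $\{\Res_{\beta_\H}^\sharp(\alpha), \Res_{\beta_\H}^\sharp(\beta)\}_A = 0$ for all $\alpha,\beta \in \OH^\bullet(\mathcal D(\mathcal H),k)$, where $\Res_{\beta_\H} \colon \Mod(D) \to \Mod(H)$ is the restriction along $\beta_\H$ and $\Res_{\beta_\H}^\sharp$ the induced map on cohomology rings.

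The main obstacle is not the bracket computation, which is entirely inherited from Proposition \ref{prop:hopfalgvanish}, but rather the verification of (iii): confirming that the construction of $\mathcal D(\mathcal H)$ and the quasi-triangularity axioms (QT1)--(QT3) for $\mathbf r$ go through over an arbitrary commutative base ring $k$ with $H$ only assumed finitely generated projective, since the standard references \cite{Dr86} and \cite{Mo93} work primarily over fields. This is where I would concentrate the effort, letting the module-theoretic duality isomorphisms $\alpha_V$ and the comultiplication formulas on $H^\ast$ carry the weight; once quasi-triangularity is secured in this generality, the remainder is a formal invocation of the already established machinery.
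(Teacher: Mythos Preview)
Your proposal is correct and matches the paper's approach exactly: the paper's proof is the single line ``This is a direct consequence of Proposition \ref{prop:hopfalgvanish},'' with the quasi-triangularity of $\mathcal D(\mathcal H)$ and the Hopf algebra map $\beta_\H$ having been established in the example immediately preceding the statement. Your additional care in checking that the hypotheses (projectivity, the map $\beta_\H$, quasi-triangularity over a general base ring) are actually satisfied is well placed, but the paper treats these as already settled by the surrounding discussion.
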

\begin{proof}
This is a direct consequence of Proposition \ref{prop:hopfalgvanish}.
\end{proof}
\begin{exa}
Let $V$ be a $k$-module. The tensor algebra $T(V)$,
$$
T(V) = \bigoplus_{i \geq 0}V^{\otimes_k i},
$$
comes with two (in general) non-isomorphic (graded\footnote{A \textit{graded Hopf algebra over $k$} is a $k$-Hopf algebra whose underlying $k$-module is a $\mathbb Z$-graded one, and whose structure maps are homogeneous $k$-linear maps of degree $0$.}) Hopf algebra structures, both with respect to the unit $\eta(\alpha) = \alpha$ for $\alpha \in k$ and counit $\varepsilon(v) = 0$ for $v \in V$. On the one hand, the \textit{concatenation},
$$
\nabla_{\rm c}((v_1 \otimes \cdots \otimes v_r) \otimes (v_{r+1} \otimes \cdots \otimes v_{r+s})) = v_1 \otimes \cdots \otimes v_r \otimes v_{r+1} \otimes \cdots \otimes v_{r+s},
$$
along with the \textit{shuffle coproduct},
$$
\Delta_{\rm sh}(v_1 \otimes \cdots \otimes v_n) = \sum_{p=0}^n \sum_{\sigma \in \mathfrak{S}_{p,n-p}} (v_{\sigma(1)} \otimes \cdots \otimes v_{\sigma(p)}) \otimes (v_{\sigma(p+1)} \otimes \cdots \otimes v_{\sigma(n)})
$$
put a bialgebra structure on $T(V)$, where, for $n \geq 1$ and $0 \leq p \leq n$, $\mathfrak{S}_{p,n-p}$ is the subset of $\mathfrak{S}_n$ containing all permutations $\sigma$ with
$$
\sigma(1) < \sigma(2) < \cdots < \sigma(p) \quad \text{and} \quad \sigma(p+1) < \sigma(p+2) < \cdots < \sigma(n).
$$
An antipode is given by
$$
S(v_1 \otimes \cdots \otimes v_n) = (-1)^n v_n \otimes \cdots \otimes v_1.
$$
Note that $\Delta_{\rm sh}$ and $S$ are induced by the universal property of $T(V)$ applied to the maps $V \rightarrow V \oplus V,\ v \mapsto v \otimes 1 + 1 \otimes v$ (under the identification $V \otimes_k k \cong V \cong k \otimes_k V$) and $V \rightarrow V, \ v \mapsto -v$. Thus they automatically are algebra homomorphisms. With respect to these structure maps, the tensor algebra is a cocommutative Hopf algebra. Now, on the other hand, the \textit{shuffle product},
$$
\nabla_{\rm sh}((v_1 \otimes \cdots \otimes v_r) \otimes (v_{r+1} \otimes \cdots \otimes v_{r+s})) = \sum_{\sigma \in \mathfrak{S}_{r,s}} v_{\sigma(1)} \otimes \cdots \otimes v_{\sigma(r)} \otimes v_{\sigma(r+1)} \otimes \cdots \otimes v_{\sigma(r+s)}
$$
along with the \textit{deconcatenation},
$$
\Delta_{\rm dec}(v_1 \otimes \cdots \otimes v_n) = \sum_{i=0}^n(v_1 \otimes \cdots \otimes v_i) \otimes (v_{i+1} \otimes \cdots \otimes v_n),
$$
turn $T(V)$ into a cocommutative bialgebra as well. The tensor algebra gives rise to several popular graded Hopf algebras by factoring out a certain graded Hopf ideal $\mathcal I$.
\begin{enumerate}[\rm(1)]
\item The \textit{symmetric algebra $\mathrm{Sym}(V)$ of $V$} is obtained from $(T(V), \nabla_{\rm c}, \eta, \Delta_{\rm sh}, \varepsilon, S)$ by factoring out the ideal $\mathcal I$ generated by the elements $v \otimes w - w \otimes v$ for $v,w \in V$. It is a commutative and cocommutative graded Hopf algebra. If $V$ is free of finite $k$-rank $n$, then the underlying $k$-algebra of $\mathrm{Sym}(V)$ is isomorphic to $k[x_1, \dots, x_n]$.
\item The \textit{exterior algebra $\Lambda(V)$ of $V$} is obtained from $(T(V), \nabla_{\rm c}, \eta, \Delta_{\rm sh}, \varepsilon, S)$ by factoring out the ideal $\mathcal I$ generated by the elements $v \otimes v$ for $v \in V$. We write $v_1 \wedge \cdots \wedge v_n$ for the equivalence class of $v_1 \otimes \cdots \otimes v_n$. Note that $\Lambda(V)$ is \textit{strict graded commutative} (i.e., $v \wedge w = -(w \wedge v)$ and $v \wedge v = 0$ for all $v, w \in V$) and cocommutative; further, if $V$ is finitely generated projective, $\Lambda(V^\ast) \cong \Lambda(V)^\ast$ via
$$
f_1 \wedge \cdots \wedge f_n \mapsto \langle f_1 \wedge \cdots \wedge f_n , v_1 \wedge \cdots \wedge v_n\rangle = \mathrm{det}(f_i(v_j)).
$$ 
The isomorphism is such of Hopf algebras (cf. \cite{ABW82}).
\item Let $\mathfrak g$ be a Lie algebra over $k$ with underlying $k$-module $V$. The \textit{universal enveloping algebra $\mathcal U(\mathfrak g)$ of $\mathfrak g$} is obtained from $(T(V), \nabla_{\rm c}, \eta, \Delta_{\rm sh}, \varepsilon, S)$ by factoring out the ideal $\mathcal I$ generated by the elements  $[g,h]_{\mathfrak g} - g \otimes h + h \otimes g$ for $g,h \in \mathfrak g$. It is a cocommutative Hopf algebra.
\end{enumerate}
Note that in all cases, the algebras are projective $k$-modules if, and only if, the $k$-module $V$ (respectively $\mathfrak g$) is projective. All of them have, over $k$, finitely generated components if, and only if, the $k$-module $V$ (respectively $\mathfrak g$) is finitely generated. See \cite[Chap.\,III]{Bou89} for a very detailed treatment of tensor, symmetric and exterior algebras, and \cite[Chap.\,4]{ChPr94} for additional examples of (quasi-triangular) Hopf algebras. For the ones stated above we obtain the following.
\end{exa}
\begin{prop}\label{prop:apptensor}
Let $V$ be a $k$-module and let $\mathfrak g$ be a Lie algebra over $k$. Assume that both $V$ and $\mathfrak g$ are projective over $k$.
\begin{enumerate}[\rm(1)]
\item The cohomology ring $\OH^\bullet(T(V), k) = \Ext^\bullet_{T(V)}(k,k)$ of $T(V)$ identifies with a subalgebra of $\HH^\bullet(T(V))$ and, when restricted to it, the Gerstenhaber bracket $\{-,-\}_{T(V)}$ on $\HH^\bullet(T(V))$ vanishes, that is,
$$
\{\alpha, \beta\}_{T(V)} = 0 \quad \text{$($for all $\alpha, \beta \in \OH^\bullet(T(V),k))$.}
$$
\item The cohomology ring $\OH^\bullet(\mathrm{Sym}(V), k) = \Ext^\bullet_{\mathrm{Sym}(V)}(k,k)$ of $\mathrm{Sym}(V)$ identifies with a subalgebra of $\HH^\bullet(\mathrm{Sym}(V))$ and, when restricted to it, the Gerstenhaber bracket $\{-,-\}_{\mathrm{Sym}(V)}$ on $\HH^\bullet(\mathrm{Sym}(V))$ vanishes, that is,
$$
\{\alpha, \beta\}_{\mathrm{Sym}(V)} = 0 \quad \text{$($for all $\alpha, \beta \in \OH^\bullet(\mathrm{Sym}(V),k))$.}
$$
\item The cohomology ring $\OH^\bullet(\Lambda(V), k) = \Ext^\bullet_{\Lambda(V)}(k,k)$ of $\Lambda(V)$ identifies with a subalgebra of $\HH^\bullet(\Lambda(V))$ and, when restricted to it, the Gerstenhaber bracket $\{-,-\}_{\Lambda(V)}$ on $\HH^\bullet(\Lambda(V))$ vanishes, that is,
$$
\{\alpha, \beta\}_{\Lambda(V)} = 0 \quad \text{$($for all $\alpha, \beta \in \OH^\bullet(\Lambda(V),k))$.}
$$
\item The cohomology ring $\OH^\bullet(\mathcal U(\mathfrak g), k) = \Ext^\bullet_{\mathcal U(\mathfrak g)}(k,k)$ of $\mathcal U(\mathfrak g)$ identifies with a subalgebra of $\HH^\bullet(\mathcal U(\mathfrak g))$ and, when restricted to it, the Gerstenhaber bracket $\{-,-\}_{\mathcal U(\mathfrak g)}$ on $\HH^\bullet(\mathcal U(\mathfrak g))$ vanishes, that is,
$$
\{\alpha, \beta\}_{\mathcal U(\mathfrak g)} = 0 \quad \text{$($for all $\alpha, \beta \in \OH^\bullet(\mathcal U(\mathfrak g),k))$.}
$$
\end{enumerate}
\end{prop}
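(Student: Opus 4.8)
The plan is to deduce all four assertions as direct instances of Corollary~\ref{cor:gerstenhabervanishhopfalgebra}. That corollary states that if $\mathcal{B} = (B, \nabla, \eta, \Delta, \varepsilon)$ is a pre-triangular bialgebra over $k$ whose underlying algebra is $k$-projective, then $\mathscr{L}_{\mathcal{B}}^\sharp$ realizes $\OH^\bullet(B,k) = \Ext^\bullet_B(k,k)$ as a (split) graded subalgebra of $\HH^\bullet(B)$ on which the Gerstenhaber bracket vanishes identically. Hence in each of the four cases it is enough to supply a pre-triangular bialgebra structure and to verify $k$-projectivity.

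First I would record that each of the four algebras is a cocommutative Hopf algebra over $k$: the tensor algebra $T(V)$ with the concatenation product $\nabla_{\rm c}$, the shuffle coproduct $\Delta_{\rm sh}$, and antipode $S(v_1 \otimes \cdots \otimes v_n) = (-1)^n v_n \otimes \cdots \otimes v_1$; and $\mathrm{Sym}(V)$, $\Lambda(V)$, $\mathcal{U}(\mathfrak{g})$ as the cocommutative quotient Hopf algebras of $(T(V), \nabla_{\rm c}, \eta, \Delta_{\rm sh}, \varepsilon, S)$ by the respective graded Hopf ideals. These structural facts are all contained in the preceding example. Being cocommutative, each of these bialgebras admits $\mathbf{r} = 1_B \otimes 1_B$ as a canonical R-matrix---this element satisfies the equations (QT1)--(QT3) of Definition~\ref{defn:qtrhopf} together with $\varepsilon(\mathbf{r}_1 \varepsilon(\mathbf{r}_2)) = 1_k$, and is moreover invertible---so each is quasi-triangular and therefore pre-triangular.

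It then remains to confirm the $k$-projectivity hypothesis. This is immediate from the example, where it is noted that $T(V)$, $\mathrm{Sym}(V)$, $\Lambda(V)$ and $\mathcal{U}(\mathfrak{g})$ are projective as $k$-modules exactly when $V$ (respectively $\mathfrak{g}$) is $k$-projective; since $V$ and $\mathfrak{g}$ are assumed projective over $k$, so is each underlying algebra. With every hypothesis of Corollary~\ref{cor:gerstenhabervanishhopfalgebra} in place, I would apply it once in each of the four cases to obtain simultaneously the embedding of $\OH^\bullet(-,k)$ as a subalgebra of the relevant Hochschild cohomology ring and the vanishing of the Gerstenhaber bracket on that subalgebra.

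I expect no substantive obstacle here, as the argument is purely a matter of checking that the hypotheses of an already-established corollary are met. The only point demanding mild attention is the choice of Hopf structure on $T(V)$: one must use the concatenation/shuffle-coproduct structure, which is cocommutative, rather than the shuffle-product/deconcatenation structure; and one must note that the three quotient constructions inherit cocommutativity, so that the uniform argument via $\mathbf{r} = 1_B \otimes 1_B$ applies across all four algebras.
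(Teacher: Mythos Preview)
Your proposal is correct and follows precisely the paper's approach: the paper's proof is the single sentence ``These are special cases of Corollary~\ref{cor:gerstenhabervanishhopfalgebra}'', and you have simply made explicit the verification of its hypotheses (cocommutativity, hence pre-triangularity via $\mathbf r = 1_B \otimes 1_B$, and $k$-projectivity inherited from $V$ and $\mathfrak g$), all of which are recorded in the preceding example.
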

\begin{proof}
These are special cases of Corollary \ref{cor:gerstenhabervanishhopfalgebra}.
\end{proof}
\begin{cor}
Let $V$ be a finitelty generated projective $k$-module.
\begin{enumerate}[\rm(1)]
\item The exterior algebra $\Lambda(V^\ast)$ of $V^\ast = \Hom_k(V,k)$ identifies with a subalgebra of $\HH^\bullet(\mathrm{Sym}(V))$ and, when restricted to it, the Gerstenhaber bracket $\{-,-\}_{\mathrm{Sym}(V)}$ on $\HH^\bullet(\mathrm{Sym}(V))$ vanishes, that is,
$$
\{v, w\}_{\mathrm{Sym}(V)} = 0 \quad \text{$($for all $v,w \in \Lambda(V^\ast))$.}
$$
\item The symmetric algebra $\mathrm{Sym}(V^\ast)$ of $V^\ast$ identifies with a subalgebra of $\HH^\bullet(\Lambda(V))$ and, when restricted to it, the Gerstenhaber bracket $\{-,-\}_{\Lambda(V)}$ on $\HH^\bullet(\Lambda(V))$ vanishes, that is,
$$
\{\alpha, \beta\}_{\Lambda(V)} = 0 \quad \text{$($for all $\alpha, \beta \in \mathrm{Sym}(V^\ast))$.}
$$
\end{enumerate}
\end{cor}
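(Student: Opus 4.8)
The plan is to reduce the statement to two ingredients: the vanishing part, which is already supplied by Proposition \ref{prop:apptensor}, and a purely classical identification of the cohomology rings $\OH^\bullet(\mathrm{Sym}(V),k) = \Ext^\bullet_{\mathrm{Sym}(V)}(k,k)$ and $\OH^\bullet(\Lambda(V),k) = \Ext^\bullet_{\Lambda(V)}(k,k)$ via Koszul duality. Indeed, Proposition \ref{prop:apptensor}(2) already asserts that $\OH^\bullet(\mathrm{Sym}(V),k)$ embeds as a subalgebra of $\HH^\bullet(\mathrm{Sym}(V))$ on which $\{-,-\}_{\mathrm{Sym}(V)}$ vanishes, and likewise Proposition \ref{prop:apptensor}(3) for $\Lambda(V)$. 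So once I exhibit graded $k$-algebra isomorphisms $\OH^\bullet(\mathrm{Sym}(V),k) \cong \Lambda(V^\ast)$ and $\OH^\bullet(\Lambda(V),k) \cong \mathrm{Sym}(V^\ast)$, both assertions of the corollary follow immediately by transporting the embeddings and the vanishing along these isomorphisms. Thus the entire content to be verified is the Koszul-duality computation of the two $\Ext$-algebras, and this is where all the work sits.

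For part (1) I would resolve $k$ over $S = \mathrm{Sym}(V)$ by the Koszul complex $K_\bullet$ with $K_n = S \otimes_k \Lambda^n(V)$ and the usual Koszul differential $s \otimes (v_1 \wedge \cdots \wedge v_n) \mapsto \sum_i (-1)^{i+1} sv_i \otimes (v_1 \wedge \cdots \widehat{v_i} \cdots \wedge v_n)$. Since $V$ is finitely generated projective, each $K_n$ is a finitely generated projective $S$-module, and acyclicity of $K_\bullet \to k \to 0$ holds (it may be checked after localizing at primes of $k$, where $V$ becomes free and the $v_i$ form a regular sequence). Applying $\Hom_S(-,k)$ collapses every differential to zero, because the maps are built from multiplication by elements of $V \subseteq S_+$, which acts as $\varepsilon = 0$ on $k$; hence $\Ext^n_S(k,k) \cong \Hom_k(\Lambda^n V, k) \cong \Lambda^n(V^\ast)$, using the Hopf-algebra identification $\Lambda^n(V)^\ast \cong \Lambda^n(V^\ast)$ from the exterior-algebra example. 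To see that the Yoneda product is the wedge product I would produce an explicit coassociative comparison chain map $K_\bullet \to K_\bullet \otimes_S K_\bullet$ lifting the identity and read off the multiplication, or invoke the standard fact that the Koszul complex is a dg algebra whose Ext-algebra is the Koszul dual.

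For part (2), I would treat $E = \Lambda(V)$ as a Koszul algebra and resolve $k$ by the complex with $n$-th term $E \otimes_k \Gamma^n(V)$, $\Gamma$ the divided power functor, this being the minimal resolution dual to the quadratic-dual coalgebra of $E$. As before all induced differentials on $\Hom_E(-,k)$ vanish, giving $\Ext^n_E(k,k) \cong \Gamma^n(V)^\ast$, and the crucial point is that, $V$ being finitely generated projective, $\Gamma(V)$ is the graded dual coalgebra of $\mathrm{Sym}(V^\ast)$, whence $\Gamma^n(V)^\ast \cong \mathrm{Sym}^n(V^\ast)$ and $\OH^\bullet(\Lambda(V),k) \cong \mathrm{Sym}(V^\ast)$ as graded algebras. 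The main obstacle I anticipate is precisely this last identification over a general commutative base ring $k$: one must use divided powers rather than symmetric powers in the resolution (the two differ in positive characteristic) and then match the Yoneda multiplication with the symmetric product, which requires the careful duality $\Gamma(V) = \mathrm{Sym}(V^\ast)^\circ$ together with the finitely-generated-projective hypothesis to pass between a module and its double dual. Once these algebra isomorphisms are in place, the corollary is concluded by combining them with Proposition \ref{prop:apptensor}, itself an instance of Corollary \ref{cor:gerstenhabervanishhopfalgebra}.
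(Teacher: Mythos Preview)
Your proposal is correct and follows essentially the same approach as the paper: reduce to Proposition~\ref{prop:apptensor} for the vanishing of the bracket, and identify the cohomology rings $\OH^\bullet(\mathrm{Sym}(V),k)$ and $\OH^\bullet(\Lambda(V),k)$ with $\Lambda(V^\ast)$ and $\mathrm{Sym}(V^\ast)$ via Koszul duality. The only difference is that the paper treats the Koszul duality computation as a black box, citing \cite{Kr13}, whereas you sketch the Koszul resolutions and correctly flag the divided-power subtlety over a general base ring; your elaboration is sound and your concern about $\Gamma(V)$ versus $\mathrm{Sym}(V)$ is exactly the right one to raise.
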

\begin{proof}
Via classical Koszul duality, the cohomology rings of $\mathrm{Sym}(V)$ and $\Lambda(V)$ compute as $\Lambda(V^\ast)$ and $\mathrm{Sym}(V^\ast)$ respectively (see for instance \cite{Kr13}). The assertion now follows from Proposition \ref{prop:apptensor}.
\end{proof}
\begin{rem}
The above statements yet again illustrate that, in contrast to \cite{Me11}, we neither have to require that $k$ is a field nor that our algebras are finitely generated over $k$ (as $k$-modules) to achieve the vanishing of the restriction of the Gerstenhaber bracket to the cohomology ring.
\end{rem}


\backmatter



\printindex

\end{document}